\documentclass{amsart}
\usepackage{cO2}
\theoremstyle{remark}
\newtheorem{exa}[thm]{Example}
\usepackage[all]{xy}
\let\Prad=U
\newcommand\vPrad{\vec\Prad}
\newcommand\bPrad{\mo\Prad}
\begin{document}
\title[Asymptotic expansions II]{Explicit asymptotic expansions in $p$-adic harmonic analysis II}
\subjclass[2000]{Primary 22E50, 22E35}
\keywords
	{$p$-adic group,
	supercuspidal representation,
	orbital integral,
	Harish-Chandra character}

\author{Loren Spice}
\address{Department of Mathematics \\
Texas Christian University \\
Fort Worth, TX 76129}
\email{l.spice@tcu.edu}
\thanks{The author was partially supported by Simons Foundation grant 636151,
and also thanks the American Institute of Mathematics, which supported this research as part of a SQuaRE program, for their hospitality.}

\begin{abstract}
We unwind the induction implicit in the ``one-step'' asymptotic expansions of \cite{spice:asymptotic} to describe how to turn asymptotic expansions, such as (but not limited to) the Harish-Chandra--Howe local character expansion, for depth-\(0\) supercuspidal characters into Kim--Murnaghan-type asymptotic expansions for arbitrary positive-depth, tame, supercuspidal characters.  Doing so requires analogous results for Fourier transforms of orbital integrals on the Lie algebra of a reductive group, which are of independent interest.
\end{abstract}

\maketitle

\tableofcontents

\section{Introduction}
\label{sec:intro}

\subsection{Motivation}
\label{subsec:motivation}

Recent work in the representation theory of \(p\)-adic groups has lent increasing support to the idea, articulated by Paul Sally but due essentially to Harish-Chandra, that ``characters tell all''.  See, for example, the work of Tasho Kaletha \cites{kaletha:simple-wild,kaletha:epipelagic,kaletha:regular-sc} on the supercuspidal local Langlands correspondence, in which character formulas play a crucial role.  However, the difficulty of obtaining character formulas has made it difficult to realise their full potential.  At least for supercuspidal characters (and later for discrete-series representations \cite{rader-silberger:submersion}), Harish-Chandra \cite{hc:harmonic} proved an integral formula, but it can be difficult explicitly to evaluate.  This approach has been used successfully in a few cases \cites{sally-shalika:characters,corwin-moy-sally:gll,adler-corwin-sally:division-formulas,debacker:thesis,spice:thesis,adler-spice:explicit-chars}, but it seems impractical to carry out in a general setting.  For example, this author tried for some time to generalise the work in \cite{spice:thesis} to \(\SL_4\), but was not able to do so.  In retrospect, it seems that direct evaluation of the Harish-Chandra integral is most suited to cases where we have a single-orbit theory of the type that seems first to have been discussed in \cites{murnaghan:chars-u3,murnaghan:chars-sln,murnaghan:chars-gln}.  When this is not the case, it seems most useful to study the behaviour of characters, not \textit{via} the local character expansion in terms of Fourier transforms of nilpotent orbital integrals, but rather \textit{via} a modified expansion in terms of Fourier transforms of orbital integrals having a fixed, or a collection of fixed, semisimple parts.  See \cites{jkim-murnaghan:charexp,jkim-murnaghan:gamma-asymptotic}.

These ``asymptotic expansions'' were initially understood only near the identity.  In \cite{spice:asymptotic}, we showed that such expansions could be constructed and centred at essentially \emph{any} semisimple element, by analogy with the Harish-Chandra local character expansion \cite{adler-korman:loc-char-exp}.  The program developed in most of \cite{spice:asymptotic} built on the homogeneity work of DeBacker \cite{debacker:homogeneity}, especially on its later development by Kim and Murnaghan \cites{jkim-murnaghan:charexp,jkim-murnaghan:gamma-asymptotic}, to give a general framework that could, in principle, be used to study arbitrary characters, and even arbitrary invariant distributions on the group.  See \xcite{spice:asymptotic}*{\S\xref{sec:dist}} and, specifically, \xcite{spice:asymptotic}*{Theorem \xref{thm:dist-G-to-G'}}.  However, implementing this study requires a way of matching distributions on groups and their tame, twisted Levi subgroups.  A general perspective on this matching, perhaps informed by Hecke-algebra isomorphisms, is not yet available, but, in the case of tame, supercuspidal representations constructed by Yu \cite{yu:supercuspidal}, a substitute is built in:  the data Yu use include enough information to construct a representation, not just of a single group, but also of a tower of tame, twisted Levi subgroups.  This provides the needed matching, and so allows us to apply the general framework of \cite{spice:asymptotic} to the study of supercuspidal characters.  The main result is \xcite{spice:asymptotic}*{Theorem \xref{thm:asymptotic-pi-to-pi'}}.

Despite this, there are two questions that are left unanswered by \xcite{spice:asymptotic}*{Theorem \xref{thm:asymptotic-pi-to-pi'}}.  The first is an unexpected technical one.  Namely, there is an apparently trivial mismatch between \cite{gerardin:weil}*{(2.7)}, where a representation of a parabolic subgroup \(P\) of a symplectic group is twisted by a quadratic character \(\chi^{E_+}\) of \(P\), and the statement of \cite{gerardin:weil}*{Theorem 2.4(b)}, where that twist is omitted.  The proof of \cite{yu:supercuspidal}*{Lemma 14.6} uses the version without the twist, and is incorrect, so the question arises of how to fix it while preserving the essential idea of Yu's construction.  While writing \cite{spice:asymptotic}, the author expected the resolution of this to be a minor fix, but it turned out to involve 5 years of work and the diverse perspectives and expertise of several collaborators.  A key aspect of the eventual fix was that, while the author found this problem while investigating character computations, Kaletha discovered an apparently different issue while investigating the Langlands correspondence.  Both of us encountered a formula in which a factor that we expected to see did not appear, but neither of us could figure out an appropriate modification to make our factor appear.  The unexpected punchline was that the obvious factors that each of us knew must appear were different!  It turned out that, very nearly, the actual correction factor that was needed was not either one, but rather their \emph{product}---another indication of the tight interplay between character computations and the Langlands correspondence.  For more details, see \xcite{fintzen-kaletha-spice:twist}*{introduction and Theorem \xref{thm:main}}.  See also \cites{hakim:supercuspidal,fintzen:supercuspidal}.

The second question that arises comes from the inductive nature of Yu's construction and our approach to it.  Namely, as mentioned, Yu's construction produces not just a supercuspidal representation \(\pi\) of a group \(G\), but also a usually smaller-depth supercuspidal representation \(\pi'\) of a tame, twisted Levi subgroup \(G'\), and it is this structure that we use in \xcite{spice:asymptotic}*{Theorem \xref{thm:asymptotic-pi-to-pi'}}; but Yu's construction \emph{also} produces ever smaller-depth supercuspidal representations \(\pi_i\) of ever smaller, twisted Levi subgroups \(G^i\), eventually bottoming out in \(\pi_0\), which is a twist of a depth-\(0\) supercuspidal representation.  It is natural to hope that one could iterate \xcite{spice:asymptotic}*{Theorem \xref{thm:asymptotic-pi-to-pi'}} to relate the character of \(\pi\) to that of \(\pi_0\), but doing so is unexpectedly subtle.  Namely, starting at the bottom for notational clarity (although our proofs proceed the other way), we could use an asymptotic expansion of the character of \(\pi_0\) to obtain an asymptotic expansion of the character of \(\pi_1\), and an asymptotic expansion of the character of \(\pi_1\) to obtain an asymptotic expansion of the character of \(\pi_2\), but \emph{the two kinds of asymptotic expansion of \(\pi_1\) do not match}:  they involve different orbits.  It turns out that the right perspective on addressing this problem is to look, not just at the characters themselves, but at the Fourier transforms of orbital integrals that appear in their asymptotic expansions.  This requires an analogue for invariant distributions on Lie algebras of the results on invariant distributions on the group from \cite{spice:asymptotic}.  Fortunately, the proofs involved are similar where they are not easier, so establishing this generalisation is not difficult once it is clear it is necessary.

This paper thus allows us directly and explicitly to reflect information about depth-\(0\), supercuspidal character formulas in positive-depth, supercuspidal character formulas.  See Theorem \ref{thm:char-unwind}.  In the optimal case where the depth-\(0\) supercuspidal character has a single-orbit theory---for example, for Deligne--Lusztig representations, as shown in \cite{debacker-reeder:depth-zero-sc}---we obtain a fully explicit character formula for regular supercuspidal representations that carries a pleasing combination of number-theoretic and harmonic-analysis information.  See \xcite{fintzen-kaletha-spice:twist}*{\S\xref{sub:char}}.

\subsection{Structure of the paper}

In \S\ref{sec:notation}, we lay out the standard notation that we will use.  Most of \S\ref{sec:cuspidal} is also about notation, but we do point out the important discussion of the `twisted' Weil representation (our remedy to the error created by the mis-stated \cite{gerardin:weil}*{Theorem 2.4(b)}, using \xcite{fintzen-kaletha-spice:twist}*{Theorem \xref{thm:main}}), which we use in \S\ref{subsec:Weil-rep} to define the `twisted' Gauss sum (Notation \ref{notn:twisted-Gauss}) needed in \xcite{spice:asymptotic}*{Definition \xref{defn:rho-pi}}.

\S\S\ref{sec:dual-Jordan} and \ref{sec:funny-centraliser} involve general structure theory, with little or nothing that is particular to groups over \(p\)-adic fields.  The main content of \S\ref{sec:dual-Jordan} is Definition \ref{defn:dual-disc}, which defines a discriminant function on the dual Lie algebra in a uniform fashion, avoiding the \textit{ad hoc} approach taken in \xcite{spice:asymptotic}*{Definition \xref{defn:normal-harm}}, and Lemma \ref{lem:disc-as-det}, which computes the discriminant in a way that parallels \xcite{debacker-spice:stability}*{Remark \xref{rem:disc:roots}}.
This section is based crucially on the ideas of Kac and Weisfeiler \cite{kac-weisfeiler:Zg}.
In an earlier version of this paper, we said that,
with our Proposition \ref{prop:dual-Borel} in place of \cite{kac-weisfeiler:Zg}*{Lemma 3.3},
the proofs of \cite{kac-weisfeiler:Zg}*{\S3, Theorem 4(ii--v)} go through unchanged without the requirement that \bG be almost simple, even when the characteristic of \field is \(2\) and some almost-simple component of \bG is an odd orthogonal group.  We thank Cheng-Chiang Tsai for pointing out to us that, actually, \cite{kac-weisfeiler:Zg}*{\S3.8 and Theorem 4iv)} are not correct even under the assumptions of \cite{kac-weisfeiler:Zg}.  In particular, uniqueness of Jordan decompositions on the dual Lie algebra can fail in small characteristic; see \cite{spice-tsai:jordan}*{\S7}.  This complicates the proof of Lemma \ref{lem:ss-of-dual-blob}, but the main results of \cite{spice-tsai:jordan} still allow us to prove that result.

In \S\ref{sec:funny-centraliser}, we define the generalised centraliser subgroups \(\CC\bG i(\gamma)\) attached to a semisimple element \(\gamma\) of \(G\) or \(\Lie(G)\) whose existence we had to hypothesise in \xcite{spice:asymptotic}*{Hypothesis \xref{hyp:funny-centraliser}}.  Such groups had already been defined for \(\gamma\) a semisimple element of \(G\conn\) (rather than of some possibly non-identity component) in \xcite{adler-spice:good-expansions}*{Definition \xref{defn:funny-centralizer}}, but \S\ref{sec:funny-centraliser} has new content even if \bG is connected, since it allows the subgroups \(\CC\bG i(\gamma)\) to be disconnected.  This change means that they are much better behaved under descent, obeying the property \(\CC\bG i(\gamma) \cap \bG' = \CC{\bG'}i(\gamma)\) (Remark \initref{rem:gp-dfc-facts}\subpref{sub}) that their identity components in general do not.

Because of \S\ref{sec:funny-centraliser}, some of what was a hypothesis in \cite{spice:asymptotic} becomes a theorem in this paper.  In addition, there were a few errors in \cite{spice:asymptotic}, which we describe in Appendix \ref{app:errata}.  In light of this, rather than specifying the changes necessary to \cite{spice:asymptotic}, it seemed to make sense simply to re-state the affected hypotheses, with the necessary changes included, in this paper.  We do so in \S\ref{sec:hyps}.  These changed hypotheses also affect \xcite{spice:asymptotic}*{\S\xref{sec:depth-matrix}}, requiring a different way of specifying depths, which we introduce in Appendix \ref{app:depth}, and a different way of constructing compact, open subgroups of \(G\) when \(\gamma\) is not compact, which we discuss in \S\ref{sec:subgps}.

As mentioned in \S\ref{subsec:motivation}, although our goal in this paper is to understand characters, we do so, as usual, by using an exponential-type map to reduce crucial questions to the Lie algebra, where we must study Fourier transforms of orbital integrals.  We discuss this situation in \S\ref{sec:orbits}.  The machinery here is in almost all respects the same as, or easier than, that on the group, so \S\ref{sec:orbits} mostly contains the statements of results on the Lie algebra to indicate the few changes that are necessary from the group case.  The exception is Theorem \ref{thm:sample-orb-to-orb'}, which is the analogue of \xcite{spice:asymptotic}*{Theorem \xref{thm:isotypic-pi-to-pi'} and Corollary \xref{cor:isotypic-pi-to-pi'}} but requires a geometric rather than spectral approach.  Although much of this material is intended for use in character computations, it may also be interesting in its own right; for example, Theorem \ref{thm:orb-asymptotic-exists} provides a quantitative version of the Shalika-germ expansion as a special case, and moreover allows us to move it (so that it is centred at an element other than the origin) and to refine it, as in the passage from the local character expansion to the asymptotic expansions of Kim and Murnaghan.

Similarly, in \S\ref{sec:characters}, we address the mild issues created by a subtle change in perspective from \cite{spice:asymptotic}*{\S\xref{sec:cuspidal}} when we want to compute the character value at an element not of the form \(\sbjhd\gamma r\dotm\mexp(\sbjtl Y r)\), but rather one of the form \(\sbjhd\gamma{R_{-1}}\dotm\mexp(\sbjhd Y r + \sbjtl Y r)\).  These difficulties come mostly from the fact that we do not require our mock-exponential map to be multiplicative even on commuting elements, so most of \S\ref{sec:characters} is not needed if, for example, we work in characteristic \(0\) and \mexp is the usual exponential map.  An exception is Lemma \ref{lem:gp-Gauss-combine}, on the interaction of Gauss sums attached to \(G\) and to \(\Lie(G)\), which has content even when \mexp is the exponential map.

Finally, \S\ref{sec:unwind} ties all these strands together by unwinding the induction implicit in \S\S\ref{sec:orbits}, \ref{sec:characters}.  We found the process of carrying out the induction subtle, and enjoy the unexpected way that the pieces fit together in Theorem \ref{thm:orb-unwind} and its near-, but not completely, identical sibling Theorem \ref{thm:char-unwind}.

\S\ref{subsec:orb-unwind} allows us, in most cases, to reduce the problem of computing an asymptotic expansion for the Fourier transform of an orbital integral to the analogous problem for Fourier transforms of \emph{nilpotent} orbital integrals.  When dealing with Fourier transforms of semisimple orbital integrals, we recover and generalise a version of a formula of Waldspurger \cite{waldspurger:loc-trace-form}*{Proposition VIII.1}; in particular, we explicitly compute its range of validity.  See Theorem \ref{thm:orb-unwind} and Corollary \ref{cor:orb-lc}.  \S\ref{subsec:char-unwind} is the analogous discussion for supercuspidal characters.  It completely reduces the study of positive-depth, tame, supercuspidal representations to that of depth-\(0\), supercuspidal representations, and is effective in the sense that, given an explicit asymptotic expansion for the depth-\(0\) representation, we can immediately turn it into an explicit asymptotic expansion for the positive-depth representation.

\subsection{Acknowledgements}

This paper benefited greatly from my work with several other people.  Chief among these were my conversations with Tasho Kaletha that led to our work with Jessica Fintzen; in particular, the realisation that we held two pieces of the solution to the same puzzle.  Kaletha also provided crucial encouragement to overcome the technical hurdles in unwinding the induction of \cite{spice:asymptotic}.  Yeansu Kim and Sandeep Varma were very patient with my long delay in completing my portion of \cite{ykim-spice-varma:bernstein}.  Varma particularly encouraged me to try to streamline the hypotheses of \xcite{spice:asymptotic}*{\S\xref{sec:depth-matrix}}, and focussed my attention on the importance of the strong representability hypothesis imposed on the characters \(\phi_i\) in \S\ref{subsec:char-unwind}.  My work on this paper stalled for a long time because of technical difficulties in \S\ref{sec:funny-centraliser}, and the work with Jeff Adler and Josh Lansky that became \cite{adler-lansky-spice:actions} turned out to give me exactly the tools to overcome that remaining hurdle.
Cheng-Chiang Tsai pointed out to me the relevance of \cite{waldspurger:endoscopie} in providing conditions for the validity of the hypotheses in \S\ref{sec:hyps}, and identified some subtleties involving the results of \cite{kac-weisfeiler:Zg}, particularly the possible failure of uniqueness of Jordan decompositions on the dual Lie algebra, that affect the proof of Lemma \ref{lem:ss-of-dual-blob}.
I thank all of these people, as well as Stephen DeBacker, with whom I had many useful discussions.
I also thank again the American Institute of Mathematics and the Simons Foundation, whose collaboration-focussed funding encouraged this work.

While I was writing my thesis, my advisor, Paul Sally, told me that ``the character theory of \(p\)-adic groups is still in its infancy'' (a diagnosis that appears in slightly different form in the introduction to \cite{spice:thesis}).  My experience with this paper indicates that the character theory has passed through its terrible twos, in which the only things it seemed to know how to say were ``why?\@'' and ``no!''  I hope that, with this paper, the character theory may be said to have moved at least into its awkward teenage years.

As will be obvious, reading this paper requires careful attention to many subtle details, although some familiarity with, say, the early works on the subject, such as \cite{corwin-moy-sally:gll} (where I first encountered these ideas), may help to see through the bookkeeping to the essential ideas.  I thank the anonymous referee who took the time thoroughly to understand both the big-picture and small-picture views of this paper, and who made many valuable suggestions that removed errors and improved the exposition.

The most recent part of this work was completed while sheltering at home during the COVID-19 pandemic.  This meant that I relied crucially, even more so than I usually would, on the support and encouragement of my wife, Anna Spice.  To make a long story short, I thank her for her endless supplies of both.

\numberwithin{thm}{subsection}
\section{Notation}
\label{sec:notation}

Let \field be a field, and \(\matnotn{kalg}\algfield/\field\) an algebraic closure.  Let \(\matnotn{ksep}\sepfield/\field\) be the maximal separable extension inside \(\algfield\).  All schemes we consider are of finite type, but not necessarily smooth.

We write \matnotn R\tR for \(\R \sqcup \set{\Rp r}{r \in \R} \sqcup \sset\infty\), and impose on \(\tR \sqcup \sset{-\infty}\) the order and addition discussed in \xcite{spice:asymptotic}*{\S\xref{sec:p-adic-group-basics}}.

We continue the convention, due to Cheng-Chiang Tsai and used in \cite{spice:asymptotic}*{\S\xref{sec:p-adic-group-basics}}, by which, if \(S\) is a set and \map x S{\tR \cup \sset{-\infty}} is a function, then we write \matnotn[\sbtl{\mc G}x r]{Gxr}{\sbtl S x r} for the subset \(\set{g \in S}{x(g) \ge r}\), and \matnotn[\sbtlp{\mc G}x r]{Gxrplus}{\sbtlp S x r} for \(\sbtl S x{\Rp r}\), for every \(r \in \tR \cup \sset{-\infty}\).  If \(S = \mc G\) is a group and \(x\) is a group valuation, in the sense that we have \(x(g) = x(g\inv)\) and \(x(g h\inv) \ge \min \sset{x(g), x(h)}\) for all \(g, h \in \mc G\), then each \(\sbtl{\mc G}x r\) is a group, and we put \(\matnotn{Gxr}{\sbat{\mc G}x r} = \sbtl{\mc G}x r/\sbtlp{\mc G}x r\) for every \(r \in \tR \cup \sset{-\infty}\).  If there is a canonical choice of \(x\), as on a valued field, then we often suppress the name of it, writing just \matnotn[\sbjtl{\mc G}r]{Gr}{\sbjtl S r}, \matnotn[\sbjtlp{\mc G}r]{Grplus}{\sbjtlp S r}, and \matnotn{Gr}{\sbjat{\mc G}r}.  Thus, for example, \(\sbjtl\tR 0\) means \(\set{r \in \tR}{r \ge 0}\); and, if \field is a valued field, then \(\sbjtl\field 0\) is its ring of integers, \(\sbjtlp\field 0\) is the maximal ideal in the ring of integers, and \(\sbjat\field 0\) is the residue field of \field.

\subsection{Affine algebraic groups}

We will always use ``scheme over \field'' to mean ``affine scheme of finite type over \field'', and ``algebraic group over \field'' to mean ``affine group scheme of finite type over \field.''  We do not assume that schemes or algebraic groups are smooth, unless explicitly stated.

Let \bG be an algebraic group over \field.  Write \matnotn{LieG}{\Lie(\bG)} for the Lie algebra of \bG, and \matnotn{LieGdual}{\Lie^*(\bG)} for its dual.
We put \(\matnotn{XG}{\clat(\bG)} = \Hom(\bG, \GL_1)\) and \(\matnotn{XG}{\cclat(\bG)} = \Hom(\GL_1, \bG)\).  We have that \(\clat(\bG)\) is a group and, if \bG is commutative, so is \(\cclat(\bG)\), with the operation in both cases being pointwise multiplication.  Despite this, we will usually write the group operations additively.

We write \matnotn{DerG}{\Der\bG} for the derived subgroup of \bG.
If \vbG is a sequence \((\bG^0, \bG^1, \dotsc, \bG^\ell)\) of groups, then we write \matnotn{DerG}{\Der\vbG} for the sequence of derived subgroups \((\Der\bG^0, \Der\bG^1, \dotsc, \Der\bG^\ell)\).

Let \bX be a scheme over \field, and \anonmap{\bG \times \bX}\bX an action.
If \(\bX'\) is a subscheme of \bX, then we write \matnotn{Norm}{\gNorm_\bG(\bX')}
(respectively, \matnotn{Cent}{\Cent_\bG(\bX')}) for the stabiliser (respectively, the fixer) of \(\bX'\) in \(\bG\).
Thus, \(\Cent_\bG(\bX')\) is normal in \(\gNorm_\bG(\bX')\).  We write \matnotn{Weyl}{\Weyl(\bG, \bX')} for \(\gNorm_\bG(\bX')/{\Cent_\bG(\bX')}\).  (This notation is most familiar when
\bG is a connected, reductive group acting on \(\bX = \bG\) by conjugation, and
\(\bX'\) is a maximal split torus in \bG, but we do not require that.)  If \bX is a group, \bG is a subgroup of \bX (acting on \bX by conjugation), and \(\bX'\) is a \bG-stable subgroup of \bX such that \anonmap{\bG \ltimes \bX'}\bX is an isomorphism, then we have not just the notation \(\Cent_\bG(\bX')\), for the subgroup of \bG that acts trivially on \(\bX'\), but also \(\Cent_{\bX'}(\bG)\), for the subgroup of \bG-fixed points in \(\bX'\).

If \bX is a pointed scheme over \field, such as an algebraic group, then we write \matnotn{Xconn}{\bX\conn} for the component of \bX containing the distinguished point.  If we denote a variety over \field by a boldface letter, like \bX, then we sometimes denote its set of rational points by the corresponding non-boldface letter, like \(\bX(\field) = X\).  For example,
	\(\Cent_G(\Gamma)\) means \(\Cent_\bG(\Gamma)(\field)\),
	\(\Lie(G)\) and \(\Lie^*(G)\) mean \(\Lie(\bG)(\field)\) and \(\Lie^*(\bG)(\field)\),
	\matnotn{DerG}{\Der G} means \((\Der\bG)(\field)\),
and	\matnotn{Gconn}{G\conn} means \(\bG\conn(\field)\).
In particular, \(\Der G\) need not be the derived subgroup of \(G\), and \(G\conn\) need not be the identity component of \(G\), even if \(G\) carries a topology.
However, if \(g\) belongs to \(\bG(\field)\), then we always mean by \matnotn g{\sgen g} the smallest algebraic subgroup of \(\bG\) containing \(g\), not the abstract group generated by \(g\).

We say that a subgroup \bP of \bG is a \term[subgroup!parabolic]{parabolic subgroup} of \bG if \(\bP_\algfield\) contains a Borel subgroup of \(\bG\conn_\algfield\).  Thus, \bP is a parabolic subgroup of \bG if and only if \(\bP \cap \bG\conn\) is a parabolic subgroup of \(\bG\conn\).  We say that a subgroup \bM of \bG is a \term[subgroup!parabolic!Levi component of]{Levi component} of \bG if and only if the restriction to \(\bM_\algfield\) of the canonical quotient map \anonmap{\bG_\algfield\conn}{\bG_\algfield\conn/{\operatorname R\textsub u(\bG_\algfield\conn)}} is an isomorphism, where \(\operatorname R\textsub u(\bG_\algfield\conn)\) is the unipotent radical of \(\bG_\algfield\conn\).  Thus, as for parabolic subgroups, \bM is a Levi component of \bG if and only if \(\bM \cap \bG\conn\) is a Levi component of \(\bG\conn\).  Thus, if \bM is a Levi component of \bG, then \(\bG_\algfield\conn\) is the semi-direct product of \(\bM_\algfield \cap \bG_\algfield\conn\) and \(\operatorname R\textsub u(\bG_\algfield\conn)\); but we need not have that \(\bM_\algfield\) is a semi-direct factor of \(\bG_\algfield\).
We say that \bG is \term[group!reductive]{reductive} if \bG is smooth and \(\operatorname R\textsub u(\bG_\algfield\conn)\)
is trivial.  In particular, we do not require that a reductive group be connected; and Levi components are reductive.
A subgroup \bM of a reductive group \bG is called a \term[subgroup!Levi]{Levi subgroup} of \bG if \bM is a Levi component of a parabolic subgroup of \bG, and a \term[subgroup!Levi!twisted]{twisted Levi subgroup} of \bG if \(\bM_\algfield\) is a Levi subgroup of \(\bG_\algfield\).  Note that a Levi subgroup of \bG is usually \emph{not} a Levi component of \bG.

If \bG is reductive, then we write \matnotn{rss}{\bG\rss} and \matnotn{rss}{\Lie(\bG)\rss} for the varieties of regular, semisimple elements of \bG and \(\Lie(\bG)\).

If \(\Gamma\) is a diagonalisable group and \bV is a \(\Gamma\)-module, then, for each \(\alpha \in \clat(\Gamma)\), we write \(\bV_\alpha\) for the \(\alpha\)-weight space for \(\Gamma\) on \bV; and write \matnotn{Weight}{\Weight(\bV, \Gamma)} and \matnotn{Root}{\Root(\bV, \Gamma)} for the sets of weights, and of non-\(0\) weights, of \(\Gamma\) on \bV.  If \bV equals \(\Lie(\bG)\), with the linear action coming from a group action of \(\Gamma\) on \bG, then we write \matnotn{Weight}{\Weight(\bG, \Gamma)} and \matnotn{Root}{\Root(\bG, \Gamma)} in place of \(\Weight(\Lie(\bG), \Gamma)\) and \(\Root(\Lie(\bG), \Gamma)\).  (This notation is most familiar when \bG is a connected, reductive group and \(\Gamma\) is a maximal split torus in \bG, acting on \bG by conjugation, but we do not require that.)

\subsection{Reductive groups over local fields}
\label{subsec:p-adic}

Beginning with \S\ref{sec:hyps}, we will assume that \field is a field that is complete and locally compact with respect to a non-trivial discrete valuation.  We say in brief that \field is a \term[field!non-Archimedean local]{non-Archimedean local field}.  Note that we do not require a non-Archimedean local field to have characteristic \(0\).

Let \field be a non-Archimedean local field, and \bG a reductive group over \field.

Write \(\matnotn{kun}\unfield/\field\) for the maximal unramified extension inside \(\algfield/\field\), and \(\matnotn{ktame}\tamefield/\field\) for the maximal tame extension inside \(\algfield/\field\).  We say that a subgroup \(\bG'\) of \bG is a \term[subgroup!Levi!twisted!tame]{tame, twisted Levi subgroup} if \(\bG'_\tamefield\) is a Levi subgroup of \(\bG_\tamefield\).

Fix a complex, additive character \matnotn{Lambda}\AddChar of \field that is trivial on \(\sbjtlp\field 0\) but not on \(\sbjtl\field 0\).  We will denote the resulting character of \(\sbjat\field 0\) also by \AddChar.  If \(V\) is a vector space and \(X^*\) is an element of the dual space \(V^*\), then we write \matnotn{LambdaX}{\AddChar_{X^*}} for the character \(\AddChar \circ X^*\) of \(V\), and \matnotn{LambdaXbar}{\AddChar_{X^*}^\vee} for its contragredient (i.e., complex conjugate).

If \(U\) is an open subset of \(G\), then we write \matnotn{Hecke}{\Hecke(U)} for the space of locally constant, compactly supported functions on \(G\) that vanish outside \(U\).  If \(K\) is a compact, open subgroup of \(G\) such that \(K\dotm U\dotm K\) equals \(U\), and \(\rho\) is a representation of \(K\) on a complex vector space \(V\), then \(K \times K\) acts (by simultaneous left and right multiplication) on \(\Hecke(U) \otimes_\C \End_\C(V)\), and we write \(\Hecke(U\sslash K, \rho)\) for the (\(\rho \times \rho\))-isotypic subspace of \(\Hecke(U) \otimes_\C \End_\C(V)\).

There is a canonical choice of Haar measure on \(G\) and \(\Lie(G)\) described in \cite{debacker-reeder:depth-zero-sc}*{\S5.1}.  For every point \(x \in \BB(G)\), these measures assign mass \(\card{\sbat G x 0}\cdot\card{\sbat{\Lie(G)}x 0}^{-1/2}\) to \(\sbtl G x 0\) and mass \(\card{\sbat{\Lie(G)}x 0}^{1/2}\) to \(\sbtl{\Lie(G)}x 0\).  We will always use these Haar measures, denoting them by \matnotn{dg}{\upd g} and \matnotn{dY}{\upd Y}.

If \(U\) is a non-empty, compact, open subset of \(G\), then, for every \(f \in \Hecke(U)\), we write \matnotn{char}{\chrc{U, f}} for the function that equals \(\meas(U)\inv f\) on \(U\), and is \(0\) elsewhere; and put \(\matnotn{int}{\uint_U f(g)\upd g} = \int_G \chrc{U, f}(g)\upd g\).  We use similar notation on the Lie algebra.

If \(V\) is a finite-dimensional \field-vector space and \(f\) belongs to \(\Hecke(V)\), then the Fourier transform \matnotn{fhat}{\hat f} of \(f\) is the element of \(\Hecke(V^*)\) given by
\[
\hat f(v^*) = \int_V f(v)\AddChar_{v^*}(v)\upd v
\qforall{\(v^* \in V^*\).}
\]
Similarly, for every \(f^* \in \Hecke(V^*)\), we define
\[
\matnotn{fcheck}{\check f^*}(v)
= \int_{V^*} f^*(v^*)\ol{\AddChar_v(v^*)}\upd{v^*}
\qforall{\(v \in V\).}
\]
Note that these depend on the choice of Haar measures \(\upd v\) and \(\upd{v^*}\) on \(V\) and \(V^*\).  Given a choice of \(\upd v\), there is a unique choice of \(\upd{v^*}\), called the \term[Haar measure!dual]{dual Haar measure} to \(\upd v\), so that \(\Check{\Hat f}\) equals \(f\) for all \(f \in \Hecke(V)\).  Technically speaking, since we have not specified a choice of \(\upd v\), the functions \(\hat f\) and \(\check f^*\) are not well defined, although the measures \(\hat f(v^*)\upd{v^*}\) and \(\check f^*(v)\upd v\) are.  In practice, \(V\) will be the Lie algebra of a reductive \(p\)-adic group, equipped with its canonical measure, so that this ambiguity will not cause a problem; and then we will equip the Lie-algebra dual \(V^*\) with the dual measure.

As in \xcite{debacker-spice:stability}*{Definition \xref{defn:normal-harm}}, if \(\pi\) is an admissible representation of \(G\), then we write \matnotn{Theta}{\Theta_\pi} for the scalar character of \(\pi\), which is the function on \(G\rss\) that represents the distribution character in the sense that
\[
\tr \pi(f) \qeqq \int_G \Theta_\pi(g)f(g)\upd g
\]
for all \(f \in \Hecke(G)\) \cite{adler-korman:loc-char-exp}*{Proposition 13.1}; and \matnotn{Phi}{\Phi_\pi} for the function \anonmapto g{\abs{\redD_G(g)}^{1/2}\Theta_\pi(g)} on \(G\rss\).  Similarly, if \OO is a coadjoint orbit in \(\Lie^*(G)\), then we write \matnotn{mu}{\mu^G_\OO} for the orbital-integral distribution on \(\Lie^*(G)\) given by integration against some invariant measure on \OO, if it converges, and \matnotn{muhat}{\muhat^G_\OO} for the function on \(\Lie(G)\rss\) that represents its Fourier transform, in the sense that
\[
\mu^G_\OO(\hat f) \qeqq \int_{\Lie(H)} \muhat^G_\OO(Y)f(Y)\upd Y
\]
for all \(f \in \Hecke(\Lie(G)\rss)\); and \(\matnotn{Ohat}{\Ohat^G_\OO}\) for the function \anonmapto Y{\abs{\redD_G(Y)}^{1/2}\abs{\redD_G(\OO)}^{1/2}\muhat^G_\OO(Y)} on \(\Lie(G)\rss\), where \(\redD_G\) is defined as a function on \(\Lie^*(G)\) in Definition \ref{defn:dual-disc} below.
(Although convergence of orbital integrals as distributions on all of \(\Lie^*(G)\) is not known in general, it is known regardless of characteristic that, \emph{when they converge}, their Fourier transforms are represented on \(\Lie(G)\rss\) by a locally constant function.  See \cite{adler-debacker:mk-theory}*{Theorem A.1.2}.)
We sometimes write \(\mu^G_{\mc S}\) in place of \(\mu^G_\OO\), and similarly for \(\muhat\) and \(\Ohat\), if \mc S is a non-empty subset of \OO.

\label{OO-defn}
If \mc S is a subset of \(\Lie^*(G)\), then we write \(\OO^G(\mc S)\) for the set of all co-adjoint orbits of \(G\) in \(\Lie^*(G)\) that intersect every neighbourhood of \mc S in the analytic topology on \(\Lie^*(G)\).  In general, if \mc S and \(\Lie^*(G)\) belong to some common superset, then we write \(\OO^G(\mc S)\) for \(\OO^G(\mc S \cap \Lie^*(G))\).
(See, for example, Theorem \ref{thm:orb-asymptotic-exists}.)

\subsection{Moy--Prasad filtration of reductive groups over local fields}
\label{subsec:MP}

We continue with the non-Archimedean local field \field, and reductive group \bG over \field, of \S\ref{subsec:p-adic}.

We have the (enlarged) Bruhat--Tits building \matnotn{BG}{\BB(\bG, \field)} of \bG, which we usually denote by just \matnotn{BG}{\BB(G)}.  For every point \(x \in \BB(G)\), we have associated Moy--Prasad depth functions on \(G\), \(\Lie(G)\), and \(\Lie^*(G)\), all of which we denote by \matnotn{depth}{\depth_x}.

If a maximally split maximal torus in \(\bG_\unfield\) is induced, or becomes so after a tame extension---for example, if \bG is adjoint or simply connected, or if \(\bG_\tamefield\) is split---then Moy and Prasad defined, for each \(x \in \BB(G)\), associated canonical filtrations of a compact open subgroup \(\sbtl G x 0\) of \(G\), and of \(\Lie(G)\).  However, as discussed in \cite{kaletha-prasad:bt-theory}*{\S13.1}, defining such filtrations for general reductive groups requires first defining them for tori.  This definition should be admissible, schematic, connected, and congruent, in the sense of \cite{yu:models}*{\S\S4.3, 4.4}.  While the Moy--Prasad filtration is admissible and schematic \loccit*{Proposition 4.5}, it is neither connected nor congruent
\citelist{
	\cite{yu:models}*{\S4.6}
	\cite{kaletha-prasad:bt-theory}*{Example B.10.6}
}.  We thank Gopal Prasad for discussing these issues with us.

Because of problems that can occur when dealing with non-tame tori, even induced ones (see Example \ref{exa:not-most-singular}), we require in \S\S\ref{sec:orbits}--\ref{sec:unwind} that \(\bG_\tamefield\) be split.
(This should also be regarded as being in force for \xcite{spice:asymptotic}*{\S\S\xref{sec:qualitative}, \xref{sec:quantitative}}.  See Remark \ref{rem:explicit-tame}.)
In this setting, since the Moy--Prasad filtration is the unique admissible filtration on a tame torus \cite{kaletha-prasad:bt-theory}*{Proposition B.10.5}, and since Moy--Prasad filtrations admit good descent properties for split groups, or along tame extensions (see Lemma \initref{lem:dual-MP-ascent}\subpref{tame-or-split}), there is no ambiguity in the definition.
It is probably possible to find a way around this explicit tameness restriction, but, since our most natural sufficient conditions require tameness anyway (see \S\ref{subsec:sufficient}), we do not do so here.

The depth function on \(G\) is often taken to be defined only on the parahoric subgroup of \(G\) at \(x\), but, for convenience, we extend it to take the value \(-\infty\) outside the parahoric subgroup \(\sbtl G x 0\) (in particular, outside \(G\conn\)).

These depth functions are group valuations, and, as usual, we denote the associated filtrations by, for example, \matnotn{Gxr}{\sbtl G x r} rather than by \(\sbtl G{\depth_x}r\).  Thus, for example, for every \(x \in \BB(G)\), we have that \(\sbtl G x 0\) is a parahoric subgroup, \(\sbtlp G x 0\) is the pro-unipotent radical of \(\sbtl G x 0\), and \(\sbat G x 0\) is the group of \(\sbjat\field 0\)-rational points of the associated reductive quotient.
Note that \(\sbtl G x r\) equals \(\sbtl{G\conn}x r\) for all \(x \in \BB(G)\) and \(r \in \sbjtl\tR 0\).

Recall that, for every \(x \in \BB(G)\) and \(r \in \tR\), the corresponding Moy--Prasad sublattice of \(\Lie^*(G)\) is defined by
\[
\sbtl{\Lie^*(G)}x r = \sett{X^* \in \Lie^*(G)}{\(\pair{X^*}Y \in \sbjtlp{\field\unram}0\) for all \(Y \in \sbtlpp{\Lie(\bG)(\unfield)}x{-r}\)},
\]
where \(\sbtlpp{\Lie(\bG)(\unfield)}x{-r}\) means \(\Lie(\bG)(\unfield)\) if \(r\) is \(\infty\), and \(\sbtl{\Lie(\bG)(\unfield)}x{-s}\) if \(r\) equals \(\Rp s\) with \(s \in \R\).

\begin{rem}
\label{rem:dual-MP-by-MP}
Fix real numbers \(r\) and \(r^*\), and elements \(X^* \in \sbtl{\Lie^*(G)}x{r^*}\) and \(Y \in \sbtl{\Lie(G)}x r\).  We claim that \(\pair{X^*}Y\) belongs to \(\sbjtl\field{r + r^*}\).  If not, then \(\pair{X^*}Y\inv Y\) belongs to \(\sbtlpp{\Lie(G)}x{-r^*}\), so \(1 = \pair[\big]{X^*}{\pair{X^*}Y\inv Y}\) belongs to \(\sbjtlp\field 0\), which is a contradiction.

The analogous claim where \(r\) and \(r^*\) are allowed to range over all of \tR follows easily.
\end{rem}

For every algebraic extension \(E/\field\) of finite ramification degree, every \(x \in \BB(G)\), and every \(r \in \tR\), we have that \(\sbtl{\Lie(G)}x r\) is contained in \(\sbtl{\Lie(\bG)(E)}x r\), and that \(\sbtl G x r\) is contained in \(\sbtl{\bG(E)}x r\) if \(r\) is non-negative, but, in full generality, the analogous containment can fail on the dual Lie algebra.  We spend some time discussing this issue.

Lemma \ref{lem:dual-MP-by-root} shows how to compute depths on a root space in the dual Lie algebra.  Lemma \ref{lem:dual-MP-by-cclat} shows how to compute the depth on the dual Lie algebra of a \emph{split} torus.

\begin{lem}
\label{lem:dual-MP-by-root}
Suppose that \bG is quasi-split.

Let \bS be a maximal split torus in \bG, and put \(\bT = \Cent_\bG(\bS)\).  Let \(a\) be an element of \(\Root(\bG, \bS)\), and \(\alpha\) a weight of \(\bT_\sepfield\) on \((\Lie(G)_a)_\sepfield\).  For \(X^* \in \Lie^*(G)_{-a}\) and \(Y \in \Lie(G)_a\), put \(\pair{X^*}Y_\alpha = \pair{X^*_{-\alpha}}{Y_\alpha}\), where the subscripts denote projection on appropriate weight spaces for \bT.

Write \(\field_\alpha\) for the fixed field in \(\sepfield\) of the stabiliser in \(\Gal(\sepfield/\field)\) of \(\alpha\).  If \(X^* \in \Lie^*(G)_{-a}\) and \(Y \in \Lie(G)_a\) satisfy \(\ord \pair{X^*_{-\alpha}}{Y_\alpha} = \sup \ord(\tr_{\field_\alpha/\field}\inv(1))\), where the subscripts denote \(\bT_\sepfield\)-equivariant projections on the appropriate weight spaces, then \(\depth_x(X^*)\) equals \(-\depth_x(Y)\).
\end{lem}

\begin{note}
Recall that \(\Lie(\bG)_a\) denotes the \(a\)-weight space of \bS in the Lie algebra of \bG, \emph{not} necessarily the Lie algebra of the associated root subgroup of \bG.  The latter also contains \(\Lie(\bG)_{2a}\), which may be non-trivial.
\end{note}

\begin{proof}
Put \(r = \depth_x(Y)\).  Note that \(\pair{X^*}Y\) equals \(\tr_{\field_\alpha/\field} \pair{X^*}Y_\alpha\).

Write \(\Lie(\bG_{\field_\alpha})_\alpha\) for the descent to \(\field_\alpha\) of \(\Lie(\bG_\sepfield)_\alpha\).  The projection map \anonmap{\Lie(G)_a}{\Lie(\bG_{\field_\alpha})_\alpha(\field_\alpha)} is an isomorphism, and so equips \(\Lie(G)_a\) canonically with the structure of a \(1\)-dimensional \(\field_\alpha\)-vector space.  Moreover, we have that \(\Lie(G)_a \cap \sbtl{\Lie(G)}x{r + d}\) equals \(\sbjtl{(\field_\alpha)}d\dotm Y\) for all \(d \in \R\) \cite{bruhat-tits:reductive-groups-2}*{\S4.2.7}.

By assumption, there is some \(t_0 \in \sbjtl{(\field_\alpha)}0\) such that \(\pair{X^*}{t_0 Y} = \tr_{\field_\alpha/\field}(t_0\pair{X^*}Y_\alpha)\) equals \(1\).  Since \(t_0 Y\) belongs to \(\sbtl{\Lie(G)}x r\), we have that \(X^*\) does not belong to \(\sbtlpp{\Lie^*(G)}x{-r}\).

If \(Y'\) belongs to \(\sbtlp{\Lie(G)}x r\), then \(\pair{X^*}{Y'}\) equals \(\pair{X^*}{Y'_a}\), where \(Y'_a\) is the \bS-equivariant projection of \(Y'\) on \(\Lie(\bG)_a\).  Since \(Y'_a\) still belongs to \(\sbtlp{\Lie(G)}x r\), it equals \(t_+ Y\) for some \(t_+ \in \sbjtlp{(\field_\alpha)}0\).  Therefore, \(\pair{X^*}{Y'}\) equals \(\tr_{\field_\alpha/\field}(t_+\pair{X^*}Y_\alpha)\), which belongs to \(\sbjtlp\field 0\).

Since \(Y' \in \sbtlpp{\Lie(G)}x r\) was arbitrary, we have that \(X^*\) belongs to \(\sbtl{\Lie^*(G)}x{-r}\).
\end{proof}

\begin{lem}
\label{lem:dual-MP-by-cclat}
Let \bT be a split torus.
Fix \(r \in \tR\) and \(X^* \in \Lie^*(T)\).
Then \(X^*\) belongs to \(\sbjtl{\Lie^*(T)}r\) if and only if \(\pair{X^*}{\upd\lambda(1)}\) lies in \(\sbjtl\field r\) for all \(\lambda \in \cclat(\bT)\).
\end{lem}

\begin{proof}
Suppose first that \(X^*\) belongs to \(\sbjtl{\Lie^*(T)}r\).  For every \(\lambda \in \cclat(\bT)\), since \(\pair{\upd\chi}{\upd\lambda(1)} = \pair\chi\lambda\) belongs to \(\Z \subseteq \sbjtl\field 0\) for every \(\chi \in \clat(\bT)\), we have that \(\upd\lambda(1)\) belongs to \(\sbjtl{\Lie(T)}0\), and hence by Remark \ref{rem:dual-MP-by-MP} that \(\pair{X^*}{\upd\lambda(1)}\) belongs to \(\sbjtl\field r\).

Conversely, suppose that \(X^* \in \Lie^*(T)\) satisfies \(\pair{X^*}{\upd\lambda(1)} \in \sbjtl\field r\) for all \(\lambda \in \cclat(\bT)\).  Fix an element \(Y\) of \(\sbjtlpp{\Lie(T)}{-r}\), so that \(\pair{\upd\chi}Y\) belongs to \(\sbjtlpp\field{-r}\) for all \(\chi \in \clat(\bT)\).  If we choose dual bases \((\chi_i)_i\) and \((\lambda_i)_i\) for \(\clat(\bT)\) and \(\cclat(\bT)\), then we have that \(Y\) equals \(\sum_i \pair{\upd\chi_i}Y\cdot\upd\lambda_i(1)\), so that \(\pair{X^*}Y = \sum_i \pair{\upd\chi_i}Y\pair{X^*}{\upd\lambda_i(1)}\) is a sum of elements of \(\sbjtlpp\field{-r}\dotm\sbjtl\field r \subseteq \sbjtlp\field 0\), and hence belongs to \(\sbjtlp\field 0\).  Since \(Y \in \sbjtlpp{\Lie(T)}{-r}\) was arbitrary, we have that \(X^*\) belongs to \(\sbjtl{\Lie^*(T)}r\) as desired.
\end{proof}

Example \ref{exa:not-most-singular} shows that dual Moy--Prasad sublattices might not grow upon passing to field extensions.  Actually, examples of this behaviour are not hard to manufacture, but ours shows one surprising consequence of this failure, namely, that our analogue Lemma \ref{lem:unique-approx*} of \xcite{adler-spice:good-expansions}*{Proposition \xref{prop:unique-approx}} can fail for non-split groups.

\begin{exa}
\label{exa:not-most-singular}
Let \field be a local, non-Archimedean field of residual characteristic \(3\) whose residue field has at least \(9\) elements, and \(E/\field\) a totally wildly ramified, cubic, Galois extension.  Choose elements \(a_+, a_- \in \sbjtl\field 0\) whose images in \(\sbjat\field 0\) are non-\(0\), and neither equal nor opposite.  Let \bG be the quasi-split, adjoint group over \field of type \(\prescript3{}{\mathsf D_4}\) that splits over \(E\).  Let \(\bB^+\) be a Borel subgroup of \bG, \bT a maximal torus in \(\bB^+\), and \(\Simple(\bB^+_\sepfield, \bT_\sepfield)\) the set of roots in \(\Root(\bG_\sepfield, \bT_\sepfield)\) that are simple with respect to the order coming from \(\bB^+_\sepfield\).  Write \(\alpha_+\) for the element of \(\Simple(\bB^+_\sepfield, \bT_\sepfield)\) that is fixed by \(\Gal(\sepfield/\field)\), and \(\alpha_-\) for any other element of \(\Simple(\bB^+_\sepfield, \bT_\sepfield)\).  There are unique elements \(\Gamma\) and \(X^*\) of \(\Lie^*(T)\) such that
\(\pair\Gamma{\upd\alpha_+^\vee(1)}\) equals \(a_+\),
\(\pair{X^*}{\upd\alpha_+^\vee(1)}\) equals \(0\), and
\(\pair\Gamma{\upd\alpha_-^\vee(1)}\) and \(\pair{X^*}{\upd\alpha_-^\vee(1)}\) both equal \(a_-\).

Write \(\tr_{E/\field}\) for the natural map \anonmap{\Lie(\bT)(E)}{\Lie(T)}.
Since \bT is induced, there is only one choice of admissible filtration \cite{kaletha-prasad:bt-theory}*{Proposition B.10.5}.  Namely, for all \(d \in \tR\), we have that \(\sbjtl{\Lie(T)}d\) is spanned, as a \(\sbjtl\field 0\)-module, by \(\upd\alpha_+(\sbjtl\field d)\) and \(\tr_{E/\field}(\upd\alpha_-(\sbjtl E d))\).  For all \(t \in \sbjtlp\field 0\), we have that \(\pair\Gamma{\upd\alpha_+(t)}\) equals \(a_+ t\), and so belongs to \(\sbjtlp\field 0\); for all \(t \in \sbjtl\field 0\), we have that \(\pair{X^*}{\upd\alpha_+(t)}\) equals \(0\), and so belongs to \(\sbjtlp\field 0\); and, for all \(t \in \sbjtl E 0\), we have that \(\pair\Gamma{\tr_{E/\field}(\upd\alpha_-(t))}\) and \(\pair{X^*}{\tr_{E/\field}(\upd\alpha_-(t))}\) equal \(\tr_{E/\field}(a_- t) \in \tr_{E/\field}(\sbjtl E 0)\), and so belong to \(\sbjtlp\field 0\).  That is, \(\Gamma\) belongs to \(\sbjtl{\Lie^*(T)}0\) and \(X^*\) belongs to \(\sbjtlp{\Lie^*(T)}0\).  Since \(\pair\Gamma{\upd\alpha_+(1)}\) equals \(a_+\), which does not belong to \(\sbjtlp\field 0\), we have that \(\Gamma\) does not belong to \(\sbjtlp{\Lie^*(T)}0\).

By our choice of \(a_+\) and \(a_-\) (and an inspection of the root system of type \(\mathsf D_4\)), we have that \(\ord \pair\Gamma{\upd\alpha^\vee(1)}\) equals \(0\) for all \(\alpha \in \Root(\bG_\sepfield, \bT_\sepfield)\).  This is \cite{yu:supercuspidal}*{\S8, \textbf{GE1}}, with \(\bG' = \bT\), and then \cite{yu:supercuspidal}*{Lemma 8.1} shows that \cite{yu:supercuspidal}*{\S8, \textbf{GE2}} is also satisfied (since \(2\) is the only torsion prime for the root system of type \(\mathsf D_4\)).

Finally, we have that \(\Gamma - X^*\) belongs to \(\Gamma + \sbjtlp{\Lie^*(T)}0\), but \(\pair{\Gamma - X^*}{\upd\alpha_-^\vee(1)}\) equals \(0\).  Thus \(\Cent_\bG(\Gamma - X^*)\) is strictly larger than \(\Cent_\bG(\Gamma)\).

We have that \(\Cent_\bG(\Gamma - X^*)\) is a Levi subgroup of \bG.  In particular, we may regard \(\BB(\Cent_G(\Gamma - X^*))\) as a subset of \(\BB(G)\).  The dual-Lie-algebra analogue of \cite{adler-debacker:bt-lie}*{Lemma 3.5.3} gives that the depths in \(\Lie^*(G)\) of \(\Gamma\) and \(\Gamma - X^*\) are the same as their depths in \(\Lie^*(T)\), hence equal \(0\).  For every \(x \in \BB(\Cent_G(\Gamma - X^*))\), we have that \(\depth_x(\Gamma - X^*)\) equals \(\depth(\Gamma - X^*) = 0\).  If \(x\) is sufficiently close to \(\BB(T)\) that \(\depth_x(X^*)\) remains positive, then \(\depth_x(\Gamma)\) also equals \(0 = \depth(\Gamma)\).  That is, \cite{jkim-murnaghan:charexp}*{Theorem 2.3.1} fails in this case (which is not a counterexample to the theorem, since the hypothesis \cite{jkim-murnaghan:charexp}*{\S1.4, (HGT)} also fails).
\end{exa}

Example \ref{exa:not-most-singular} involves a wildly ramified extension.  Lemma \initref{lem:dual-MP-ascent}\subpref{tame-or-split} shows that, as long as we can restrict to tame ramification---a common restriction when dealing with Moy--Prasad filtrations---things are much better behaved.

\begin{lem}
\initlabel{lem:dual-MP-ascent}
Fix \(x \in \BB(G)\) and \(r \in \tR\), and let \(E/\field\) be an algebraic extension of finite ramification degree.
\begin{enumerate}
\item\sublabel{arbitrary}
\(\sbtl{\Lie^*(G)}x r\) contains \(\sbtl{\Lie^*(\bG)(E)}x r \cap \Lie^*(G)\).
\item\sublabel{unram}
\(\sbtl{\Lie^*(G)}x r\) equals \(\sett{X^* \in \Lie^*(G)}{\(\pair{X^*}Y \in \sbjtlp\field 0\) for all \(Y \in \sbtlpp{\Lie(G)}x{-r}\)}\).
\item\sublabel{tame-or-split}
If \(E/\field\) is tamely ramified, or if \bG is split, then \(\sbtl{\Lie^*(G)}x r\) equals \(\sbtl{\Lie^*(\bG)(E)}x r \cap \Lie^*(G)\).
\end{enumerate}
\end{lem}

\begin{note}
The difference between \locpref{unram} and the definition of \(\sbtl{\Lie^*(G)}x r\) is that, in \locpref{unram}, we impose the stated condition only for \(Y \in \sbtlpp{\Lie(G)}x{-r}\), not for all \(Y \in \sbtlpp{\Lie(\bG)(\unfield)}x{-r}\).
\end{note}

\begin{proof}
%
%
Fix an element \(X^*\) of \(\sbtl{\Lie(\bG)(E)}x r \cap \Lie^*(G)\).  Since \(\sbtlpp{\Lie(\bG)(\unfield)}x{-r}\) is contained in \(\sbtlpp{\Lie(\bG)(E\unram)}x{-r}\), we have for every \(Y \in \sbtlpp{\Lie(\bG)(\unfield)}x{-r}\) that \(\pair{X^*}Y\) belongs to \(\unfield \cap \sbjtlp{E\unram}0 = \sbjtlp{\field\unram}0\), and hence that \(X^*\) belongs to \(\sbtl{\Lie^*(G)}x{-r}\).  This shows \locpref{arbitrary}, and one containment in \locpref{tame-or-split}.

Now \locpref{unram}, and the case of \locpref{tame-or-split} where \(E/\field\) is tame, will follow by ``approximation by finite, Galois subextensions'' if we prove the following claim.
Suppose that
\(X^* \in \Lie^*(G)\) satisfies \(\pair{X^*}Y \in \sbjtlpp\field 0\) for all \(Y \in \sbtlpp{\Lie^*(G)}x{-r}\).
Let \(E/\field\) be a
	finite,
	Galois,
	tamely ramified extension,
and let \(Y\) be an element of \(\sbtlpp{\Lie^*(\bG)(E)}x{-r}\).
Then \(\pair{X^*}Y\) belongs to \(\sbjtlp E 0\).
To prove this, it suffices to observe that, for every \(t \in \sbjtl E 0\),
the value of \(X^*\) at \(\sum_{\sigma \in \Gal(E/\field)} \sigma(t Y)\) equals \(\tr_{E/\field} t\pair{X^*}Y\), and belongs to \(\sbjtlp\field 0\).
Since \(E/\field\) is tamely ramified, this implies that \(\pair{X^*}Y\) belongs to \(\sbjtlp E 0\), as desired.

It remains only to prove \locpref{tame-or-split} when \bG is split.  Let \bT be a split maximal torus in \bG such that \(x\) belongs to the apartment of \bT.  We have the direct-sum decomposition
\[
\sbtlpp{\Lie(G)}x{-r} =
\sbjtlpp{\Lie(T)}{-r} \oplus
\bigoplus_{\alpha \in \Root(\bG, \bT)}
	(\Lie(G)_{-\alpha} \cap \sbtlpp{\Lie^*(G)}x{-r}),
\]
which dualises to give
\[
\sbtl{\Lie^*(G)}x r =
\sbjtl{\Lie^*(T)} r \oplus
\bigoplus_{\alpha \in \Root(\bG, \bT)}
	(\Lie^*(G)_\alpha \cap \sbtl{\Lie^*(G)}x r),
\]
and analogous decompositions hold over \(E\).  Lemma \ref{lem:dual-MP-by-cclat} gives that \(\sbjtl{\Lie^*(T)}r\) is contained in \(\sbjtl{\Lie^*(\bT)(E)}r\), and hence in \(\sbtl{\Lie^*(\bG)(E)}x r\).  Now fix \(\alpha \in \Root(\bG, \bT)\), and suppose that \(X^*\) belongs to \(\Lie^*(G)_\alpha \cap \sbtl{\Lie^*(G)}x r\).  Let \(Y\) be the element of \(\Lie(G)_{-\alpha}\) such that \(\pair{X^*}Y\) equals \(1\).  Then Lemma \ref{lem:dual-MP-by-root} gives that \(Y\) does not belong to \(\sbtlpp{\Lie(G)}x{-r}\); since \(\Lie(G)_{-\alpha} \cap \sbtlpp{\Lie(\bG)(E)}x{-r}\) is contained in \(\sbtlpp{\Lie(G)}x{-r}\), it follows that \(Y\) does not belong to \(\sbtlpp{\Lie(\bG)(E)}x{-r}\); and another application of Lemma \ref{lem:dual-MP-by-root} gives that \(X^*\) belongs to \(\sbtl{\Lie^*(\bG)(E)}x r\).  Thus \(\sbtl{\Lie^*(G)}x r\) is contained in \(\Lie^*(G) \cap \sbtl{\Lie^*(\bG)(E)}x r\).  The reverse containment, and hence equality, follows from \locpref{arbitrary}.
\end{proof}

\numberwithin{thm}{section}
\section{Jordan decompositions on the dual Lie algebra}
\label{sec:dual-Jordan}

Throughout \S\ref{sec:dual-Jordan}, let
	\field be an algebraically closed field
and	\bG a smooth algebraic group over \field.
We will eventually assume (before Definition \ref{defn:dual-Jordan}) that \bG is reductive, but we do not do so yet.  Let \(\bB^\pm\) be a pair of opposite Borel subgroup of \bG with unipotent radicals \(\bPrad^\pm\), and \bT a maximal torus in \(\bB^+ \cap \bB^-\).

In \xcite{spice:asymptotic}*{Definition \xref{defn:normal-harm}}, an \textit{ad hoc} definition was given for a discriminant function on the dual Lie algebra of a reductive group.  That definition involved some auxiliary choices that made it not completely satisfactory.  We devote \S\ref{sec:dual-Jordan} to remedying this deficiency.  The discriminant function itself is defined in Definition \ref{defn:dual-disc}.

\begin{defn}
\label{defn:embed-dual}
If \bH is a smooth algebraic subgroup of \bG that is normalised by \bT, and for which \(\Lie(\bH)\) is a sum of weight spaces of \bT on \(\Lie(\bG)\), then we write \matnotn{Lie}{\Lie(\bH)^\perp} for the \bT-stable complement to \(\Lie(\bH)\) in \(\Lie(\bG)\), and embed \(\Lie^*(\bH)\) in \(\Lie^*(\bG)\) by extending trivially across \(\Lie(\bH)^\perp\).
\end{defn}

\begin{rem}
\label{rem:embed-dual}
Preserve the notation of Definition \ref{defn:embed-dual}.

Our embeddings of \(\Lie^*(\bB^\pm)\) and \(\Lie^*(\bPrad^\pm)\) in \(\Lie^*(\bG)\) differs from the one in \cite{kac-weisfeiler:Zg}*{\S3, p.~140}.  Namely, our \(\Lie^*(\bB^+)\) is their \(\Lie^*(\bB^-)\), and, similarly, our \(\Lie^*(\bPrad^+)\) is their \(\Lie^*(\bPrad^-)\).  Our choice has the advantage that it makes sense for more subgroups \bH (not just parabolic subgroups of reductive subgroups of reductive groups \bG, and their unipotent radicals), but their choice has the advantage that the extension maps \anonmap{\Lie^*(\bB^+)}{\Lie^*(\bG)} and \anonmap{\Lie^*(\bPrad^+)}{\Lie^*(\bG)} are \(\bB^+\)-equivariant.  Fortunately, since it is the space \(\Lie^*(\bB^+)\) itself, not its labelling (by \(\bB^+\) or \(\bB^-\)), that matters to us, this will cause no difficulty.

Note that \(\Lie^*(\bH)\) is the sum of the \(\alpha\)-weight spaces of \bT on \(\Lie^*(\bG)\), as \(\alpha\) ranges over \(-\Weight(\bH, \bT)\).

The embedding \(\Lie^*(\bH) \to \Lie^*(\bG)\) can depend on the choice of \bT, even if the characteristic of \field is large (or \(0\)), \bG is reductive, and we insist that \bT is actually contained in \bH.  For example, if \bG is \(\GL_2\) and \bH is the Borel subgroup consisting of upper-triangular matrices, then choosing the maximal torus \(\bT_1\) in \bH consisting of diagonal matrices gives \(\Lie(\bH)^\perp_1\) spanned by \(\begin{pmatrix} 0 & 0 \\ 1 & 0 \end{pmatrix}\), but choosing \(\bT_2 = \Int\begin{pmatrix} 1 & 1 \\ 0 & 1 \end{pmatrix}\bT_1\) gives \(\Lie(\bH)^\perp_2\) spanned by \(\begin{pmatrix} 1 & -1 \\ 1 & -1 \end{pmatrix}\).

Contrary to the claim in \cite{kac-weisfeiler:Zg}*{\S3, p.~140}, this dependence can also occur in small residual characteristic even if both \bG and \bH are reductive.  For example, when the characteristic of \field is \(2\), the orthogonal group of a non-degenerate quadratic form \(q\) in \(4\) variables is contained in the symplectic group of the polar \anonmapto{(x, y)}{q(x + y) - q(x) - q(y)} of \(q\).
Thus we obtain an embedding of \(\bH = \operatorname{SO}_4\) in \(\bG = \operatorname{Sp}_4\).  Let \(\bT_1\) be a 
torus that is maximal in \bH, hence also in \bG.  Then \(\Root(\bH, \bT_1)\) consists of the short roots in \(\Root(\bG, \bT_1)\).  If we let \(\alpha\) be a short root and \(\beta\) a long root in \(\Root(\bG, \bT_1)\) so that \(\alpha + \beta\) also belongs to \(\Root(\bG, \bT_1)\), hence is a short root, then the conjugate of a non-\(0\) element of the root space \(\Lie(\bG)_\beta \subseteq \Lie(\bH)^\perp_1\) by a non-\(1\) element \(u\) in the \(\alpha\)-root group \(\bU_\alpha\) has a non-\(0\) component in \(\Lie(\bG)_{\alpha + \beta} \subseteq \Lie(\bH)\), hence does not lie in \(\Lie(\bH)^\perp_1\).  Thus, choosing the maximal torus \(\bT_2 = \Int(u)\bT_1\) gives \(\Lie(\bH)^\perp_2 = \Ad(u)\Lie(\bH)^\perp_1\), which does not equal \(\Lie(\bH)^\perp_1\).

Suppose, though, that \bG and \bH are reductive and \bH contains \bT.  If further \(\Root(\bH, \bT)\) is closed in \(\Root(\bG, \bT)\) (as always happens when we have \(6 \ne 0\) in \field, or, regardless of characteristic, if \bH is the fixer in \bG of a subscheme of \bT or \(\Lie(\bT)\)), then \(\Lie(\bH)^\perp\) is \bH-stable, even if \bH is not connected.  Indeed, \(\Lie(\bH)^\perp\) is clearly \(\gNorm_\bH(\bT)\)-stable.  Suppose that \(\alpha, \beta \in \Root(\bG, \bT)\) satisfy \(\alpha \in \Root(\bH, \bT)\) and \(\beta \notin \Root(\bH, \bT)\).  Then, for all \(i \in \sbjtl\Z 0\), closure implies that \(i\alpha + \beta\) does not belong to \(\Root(\bH, \bT)\).  Since the conjugate of an element of the root space \(\Lie(\bG)_\beta\) by an element of the root subgroup \(\bU_\alpha\) lies in \(\sum_{i \in \sbjtl\Z 0} \Lie(\bG)_{i\alpha + \beta} \subseteq \Lie(\bH)^\perp\), we have that \(\Lie(\bH)^\perp\) is stabilised by \(\bU_\alpha\).  Since \bH is generated by its root subgroups and \(\gNorm_\bH(\bT)\), we have shown the desired stability.

It follows that, under these conditions on \bG and \bH, the subspace \(\Lie(\bH)^\perp\), and hence the embedding of \(\Lie^*(\bH)\) in \(\Lie^*(\bG)\), is independent of the choice of maximal torus \bT in \bG that is contained in \bH.
\end{rem}

Our Proposition \ref{prop:dual-Borel} is \cite{kac-weisfeiler:Zg}*{Lemma 3.3}.  Unfortunately, the proof of that result involves restrictions that we do not want to impose, and without which it is possible for the set \(\Omega\) of regular semisimple elements \cite{kac-weisfeiler:Zg}*{\S3, p.~14} to be empty (for example, if \(p = 2\) and \(\bG = \operatorname{PGL}_2\)%
).
Instead, we follow \cite{borel:linear}*{Proposition 14.25} in considering the orbit of a regular nilpotent element.

\begin{prop}
\label{prop:dual-Borel}
Every orbit of \(G\) in \(\Lie^*(G)\) contains an element of \(\Lie^*(B^+)\).
\end{prop}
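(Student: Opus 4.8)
The plan is to mimic the classical argument that every $\mathbf{G}$-orbit in $\Lie(\mathbf{G})$ meets $\Lie(\mathbf{B}^+)$, transported to the dual side. The key point is that $\Lie^*(\mathbf{B}^+)$, as embedded via Definition \ref{defn:embed-dual} using the torus $\mathbf{T} \subseteq \mathbf{B}^+$, is precisely the sum of the non-positive weight spaces of $\mathbf{T}$ on $\Lie^*(\mathbf{G})$ (by Remark \ref{rem:embed-dual}), i.e.\ the annihilator of $\Lie(\mathbf{U}^+)$ where $\mathbf{U}^+ = \operatorname R\textsub u(\mathbf{B}^+)$. Dually, $\Lie^*(\mathbf{B}^-)$ for the opposite Borel $\mathbf{B}^-$ is the annihilator of $\Lie(\mathbf{U}^-)$, which is the sum of the non-negative weight spaces. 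So I want to show that each coadjoint orbit meets the sum of the non-positive weight spaces; equivalently (replacing $\mathbf{B}^+$ by its opposite), it suffices to produce in each orbit an element killing $\Lie(\mathbf{U}^-)$, and then conjugate.

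First I would reduce to the connected case: since $\mathbf{B}^+$ is a Borel of $\mathbf{G}$, it meets every component of $\mathbf{G}$ — indeed $\mathbf{B}^+ \cap \mathbf{G}\conn$ is a Borel of $\mathbf{G}\conn$ and $\mathbf{B}^+ \to \mathbf{G}/\mathbf{G}\conn$ is surjective — and the $\mathbf{G}$-orbit of any point contains the $\mathbf{G}\conn$-orbit of a point in each component's worth of... actually more simply: it is enough to show the $\mathbf{G}\conn$-orbit of $\lambda$ meets $\Lie^*(\mathbf{B}^+)$, since $\mathbf{G}\conn \subseteq \mathbf{G}$. And here $\Lie(\mathbf{G}) = \Lie(\mathbf{G}\conn)$, $\Lie^*(\mathbf{G}) = \Lie^*(\mathbf{G}\conn)$, and $\mathbf{B}^+ \cap \mathbf{G}\conn$ is a Borel of $\mathbf{G}\conn$ with the same Lie algebra and the same $\mathbf{T}$. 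So assume $\mathbf{G}$ connected. Following the hint (\cite{borel:linear}*{Proposition 14.25}), I would take an arbitrary $\lambda \in \Lie^*(\mathbf{G})$, consider the orbit map $g \mapsto \operatorname{Ad}^*(g)\lambda$, and look at the closure $\overline{\mathbf{G}\cdot\lambda}$. Pick $\mathbf{T}$-cocharacter / one-parameter subgroup considerations: choose a regular cocharacter $\nu \colon \GL_1 \to \mathbf{T}$ so that the weights of $\nu$ on $\Lie^*(\mathbf{G})$ are negative exactly on $\Lie^*(\mathbf{B}^+)^{\perp}$ (the span of the positive root spaces of $\Lie^*$, which by Remark \ref{rem:embed-dual} are the $(-\alpha)$-weight spaces for negative $\alpha$). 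Then $\lim_{t\to 0}\operatorname{Ad}^*(\nu(t))\lambda$ exists if and only if $\lambda$ already lies in the sum of the $\ge 0$ weight spaces — so this limiting trick on $\lambda$ itself does not immediately land in $\Lie^*(\mathbf{B}^+)$, and I must instead move $\lambda$ by a suitable $g \in \mathbf{G}$ first.

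The cleaner route, and the one I would actually carry out, is via the projective variety $\mathbf{G}/\mathbf{B}^+$ and a fixed-point argument. Consider the morphism $\phi\colon \mathbf{G}/\mathbf{B}^+ \times \mathbb{A}^1 \to \cdots$, or more directly: fix $\lambda \in \Lie^*(\mathbf{G})$ and consider the map $f_\lambda\colon \mathbf{G}/\mathbf{B}^+ \to \mathbb{A}(\Lie^*(\mathbf{G})/\Lie^*(\mathbf{B}^+))$ sending $g\mathbf{B}^+$ to the image of $\operatorname{Ad}^*(g^{-1})\lambda$ in the quotient $\Lie^*(\mathbf{G})/\Lie^*(\mathbf{B}^+)$; this is well-defined because $\Lie^*(\mathbf{B}^+)$ is $\mathbf{B}^+$-stable — here I need that $\operatorname{Ad}^*(\mathbf{B}^+)$ preserves $\Lie^*(\mathbf{B}^+)$, which holds since $\Lie^*(\mathbf{B}^+)$ is the annihilator of the $\mathbf{B}^+$-stable subspace $\Lie(\mathbf{U}^+)$ of $\Lie(\mathbf{G})$. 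Wait — for this I need $\operatorname{Ad}(\mathbf{B}^+)$ to preserve $\Lie(\mathbf{U}^+)$, which is true since $\mathbf{U}^+$ is normal in $\mathbf{B}^+$. Now apply a Borel fixed-point / completeness argument: the coadjoint orbit $\mathbf{G}\cdot\lambda$ is a $\mathbf{G}$-stable locally closed subvariety of $\Lie^*(\mathbf{G})$, and I want to find $g$ with $\operatorname{Ad}^*(g)\lambda \in \Lie^*(\mathbf{B}^+)$, i.e.\ $f_\lambda(g^{-1}\mathbf{B}^+) = 0$. Equivalently, I want $0$ to be in the image of $f_\lambda$. Since $\mathbf{G}/\mathbf{B}^+$ is complete, $f_\lambda$ has closed image; and using a generic one-parameter subgroup $\nu$ of $\mathbf{T}$ acting on $\mathbf{G}/\mathbf{B}^+$, every point flows under $t \to 0$ into the unique $\mathbf{T}$-fixed point $e\mathbf{B}^+$ lying in the big cell, at which $f_\lambda$ takes the value $0$ (the image of $\lambda$ in $\Lie^*(\mathbf{G})/\Lie^*(\mathbf{B}^+)$... no, that is the image of $\lambda$, not $0$). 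Let me instead argue: for $x \in \mathbf{G}/\mathbf{B}^+$ in the big Bruhat cell $\mathbf{U}^-\mathbf{B}^+/\mathbf{B}^+$, the limit $\lim_{t\to 0}\nu(t)\cdot x = e\mathbf{B}^+$, so by continuity $\lim_{t\to 0}\nu(t)\cdot f_\lambda(x) = f_\lambda(e\mathbf{B}^+)$, i.e.\ $\lim_{t\to 0}\operatorname{Ad}^*(\nu(t))(\text{image of }\operatorname{Ad}^*(x^{-1})\lambda)$ equals the image of $\lambda$; but the target of $f_\lambda$ is the span of \emph{positive}-weight spaces for $\nu$ (since $\Lie^*(\mathbf{B}^+)$ is the non-positive part), so this limit is $0$ unless the image of $\lambda$ is already $0$. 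Hence $f_\lambda(e\mathbf{B}^+) = 0$... that only handles $\lambda \in \Lie^*(\mathbf{B}^+)$ again.

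I think the correct and honest approach is the nilpotent-orbit argument Borel uses: take $\lambda$ and look at $\overline{\mathbf{G}\cdot\lambda}$, which is a closed $\mathbf{G}$-stable cone (after noting the orbit of a scalar multiple), contains a closed orbit $\mathbf{G}\cdot\lambda_0$, and a closed orbit is projective-type so meets $\Lie^*(\mathbf{B}^+)$ by Borel's fixed point theorem applied to $\mathbf{B}^+$ acting on the complete variety $\mathbf{G}\cdot\lambda_0 \cap (\text{suitable projectivization})$; then deform back. The cleanest correct statement: I would use that the projection $\Lie^*(\mathbf{G}) \to \Lie^*(\mathbf{G})/\Lie^*(\mathbf{B}^+) =: V$ restricted to any single orbit, composed with $\mathbf{G}/\mathbf{B}^+$, and invoke that a morphism from a complete variety to an affine variety is constant — specifically, define $\psi\colon \mathbf{G}/\mathbf{B}^+ \to V$ by $g\mathbf{B}^+ \mapsto \bar{\operatorname{Ad}}^*(g)\lambda$ wait this is not well-defined either. \textbf{The real obstacle} — and what I would spend the bulk of the proof on — is exactly this: setting up the right equivariant map from the complete flag variety so that a completeness argument (morphism from complete to affine is constant, or Borel's fixed point theorem) forces an orbit element into $\Lie^*(\mathbf{B}^+)$, handling the subtlety that $\Lie^*(\mathbf{B}^+)$ need not be $\mathbf{G}$-stable but only $\mathbf{B}^+$-stable, and that in bad characteristic one cannot freely use $\mathfrak{sl}_2$-theory or exponentials. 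I expect to resolve it by fixing a generic $\nu \in \cclat(\mathbf{T})$, defining $n_\lambda\colon \mathbf{G} \to \tR$ by $n_\lambda(g) = $ the largest $\nu$-weight occurring in $\operatorname{Ad}^*(g)\lambda$, showing $g\mapsto n_\lambda(g)$ descends to $\mathbf{G}/\mathbf{P}$ for an appropriate parabolic, is upper semicontinuous, hence attains its minimum, and at a minimizing $g$ the element $\operatorname{Ad}^*(g)\lambda$ has all $\nu$-weights $\le 0$ — forcing it into $\Lie^*(\mathbf{B}^+)$ after possibly conjugating by $\gNorm_{\mathbf{G}}(\mathbf{T})$ to fix the sign convention — combined with a limit argument $\lim_{t\to 0}\operatorname{Ad}^*(\nu(t)g)\lambda$ to see the minimum is actually achieved in the right chamber. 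Once an orbit element lies in the sum of the $\le 0$ weight spaces, Remark \ref{rem:embed-dual} identifies that sum with $\Lie^*(\mathbf{B}^-)$ for the opposite Borel, and conjugating by a Weyl element (which exists in $\gNorm_{\mathbf{G}\conn}(\mathbf{T}) \subseteq \mathbf{G}$) carrying $\mathbf{B}^-$ to $\mathbf{B}^+$ finishes the proof.
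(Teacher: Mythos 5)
Your proposal does not reach a complete proof, and you correctly flag this yourself: the crucial step is left as an expectation rather than an argument, and it is exactly the step where all the content lies.

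Concretely, your final strategy is: fix a $\bB^+$-dominant regular cocharacter $\nu \in \cclat(\bT)$, define $n_\lambda(g)$ to be the largest $\nu$-weight present in $\Ad^*(g)\lambda$, observe that $n_\lambda$ descends to a function on the complete variety $\bG/\bB$ with closed sublevel sets (hence attains a minimum), and then claim that at a minimising $g_0$ the element $\Ad^*(g_0)\lambda$ has all $\nu$-weights $\le 0$, i.e.\ lies in $\Lie^*(\bB^+)$. The gap is that nothing forces the minimal value of $n_\lambda$ to be $\le 0$. Attaining a minimum over $\bG/\bB$ is automatic on a noetherian space once sublevel sets are closed, but the assertion that this minimum is non-positive is a genuine theorem, not a formal consequence of semicontinuity and completeness. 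The ``limit argument'' you gesture at, $\lim_{t\to0}\Ad^*(\nu(t)g)\lambda$, does not help: $\Ad^*(\nu(t))$ merely rescales the $\nu$-weight components of $\Ad^*(g)\lambda$ without changing which are non-zero, so $n_\lambda(\nu(t)g) = n_\lambda(g)$ identically and the limit does not exist unless you are already in the case you want. You would need to produce, whenever the top weight is positive, a deformation by some other one-parameter subgroup (a root subgroup) that strictly lowers the top weight, and in bad characteristic (the whole point of avoiding \cite{kac-weisfeiler:Zg}*{Lemma 3.3}) that kind of argument is delicate. Your earlier attempts (the direct limit on $\lambda$, the map $f_\lambda$ from the flag variety) are acknowledged dead ends for the reasons you give; the Borel fixed-point argument in $\mathbb P(\Lie^*(\bG))$ only produces an element of $\Lie^*(\bB^+)$ in the \emph{closure} of the projectivised orbit, not in the orbit of $\lambda$ itself.

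The paper sidesteps this entirely by following \cite{borel:linear}*{Proposition 14.25}. It fixes a specific ``regular nilpotent'' element $E^* = \sum_{\alpha \in \Simple(\bB^+,\bT)} E^*_\alpha \in \Lie^*(\bB^+)$, proves by an explicit Bruhat-cell calculation that the transporter $\set{g \in \bG}{\Ad^*(g)E^* \in \Lie^*(\bB^+)}$ is exactly $\bB^-$, and then runs a dimension count: the saturation $\widetilde\bV \subseteq \bG/\bB^- \times \Lie^*(\bG)$ of $\bG \times \Lie^*(\bB^+)$ is closed, equi-dimensional of dimension $\dim\Lie^*(\bG)$, has a finite (in fact singleton) fibre over $E^*$, hence dominates $\Lie^*(\bG)$; since $\bG/\bB^-$ is complete the projection to $\Lie^*(\bG)$ is closed, so the image is everything. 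No minimisation, no fixed-point theorem, no semicontinuity of a weight function. Also note the paper first reduces not just to the connected case (which you handle) but to the reductive case by passing to $\bG/\operatorname{R_u}(\bG\conn)$, using that $\operatorname{R_u}(\bG\conn)\subseteq\bB^+$; your proposal does not address this reduction.
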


\begin{proof}
We may, and do, assume,
upon replacing \bG by its quotient by its unipotent radical (which is contained in \(\bB^+\)), that \bG is reductive.
Note that \(\bB^-\) normalises the Lie algebra of its unipotent radical \(\bPrad^-\), and hence normalises the annihilator in \(\Lie^*(\bG)\) of \(\Lie(\bPrad^-)\), which is just (the image of) \(\Lie^*(\bB^+)\).

Write \(\Simple(\bB^+, \bT)\) for the set of roots in \(\Root(\bG, \bT)\) that are simple with respect to the order coming from \(\bB^+\).
For each \(\alpha \in \Simple(\bB^+, \bT)\), choose a non-\(0\) vector \(E^*_\alpha \in \Lie^*(G)\) that lies in (the image of) the dual of the root space \(\Lie(G)_\alpha\) on \(\Lie(\bG)\), i.e., in the (\(-\alpha\))-root space on \(\Lie^*(\bG)\); and then write \(E^*\) for the sum of the \(E^*_\alpha\).  We claim that the transporter scheme \(\bS \ldef \operatorname{Trans}_\bG(E^*, \Lie^*(\bB^+)) = \set{g \in \bG}{\Ad^*(g)E^* \in \Lie^*(\bB^+)}\) equals \(\bB^-\).  Indeed, since \bS is clearly stable under left translation by \(\bB^-\), it suffices by the Bruhat decomposition \cite{milne:algebraic-groups}*{Proposition 21.70} to show that, if \(n \in \gNorm_G(\bT)\) satisfies \(n\bPrad^- \cap \bS \ne \emptyset\), then \(n\) lies in \(T\).

Choose \(u^- \in \Prad^-\) such that \(\Ad^*(n u^-)E^*\) belongs to \(\Lie^*(B^+)\).
For every \(\alpha \in \Simple(\bB^+, \bT)\), we have that
	\begin{itemize}
	\item the \(\alpha\)-root space is mapped by \(1 - \Ad(u^-)\inv\) into \(\Lie(B^-)\), on which \(E^*\) is trivial;
	\item so \((1 - \Ad^*(u^-))E^*\) is trivial on the \(\alpha\)-root space;
	\item so \(\Ad^*(u^-)E^*\) equals \(E^*\), which equals \(E^*_\alpha\), on the \(\alpha\)-root space, hence is \emph{non}-trivial there;
	\item so \(\Ad^*(n u^-)E^*\) is non-trivial on the \(n\dota\alpha\)-root space;
	\item so \(n\dota\alpha\) is \emph{not} a root of \bT in \(\Lie(\bPrad^-)\).
	\end{itemize}
Since \(\alpha \in \Simple(\bB^+, \bT)\) was arbitrary, this implies that \(n\) lies in \(T\), as desired.

Now consider the maps \anonmap[\overset b\mapdefaultarrow]{\anonmap[\overset a\mapdefaultarrow]{\bG \times \Lie^*(\bG)}{\bG \times \Lie^*(\bG)}}{\bG/\bB^- \times \Lie^*(\bG)}, where \(a\) is given by \anonmapto{(g, X^*)}{(g\bB^-, \Ad^*(g)X^*)} and \(b\) is the natural quotient map.  Write \(\widetilde\bV\) for the image of \(\bG \times \Lie^*(\bB^+)\) under \(b \circ a\).  Then one computes easily that \(b\inv(\widetilde\bV)\) equals \(a(\bG \times \Lie^*(\bB^+))\), which is closed since \(a\) is an isomorphism of varieties; so \(\widetilde\bV\) is closed, since \(b\) is a quotient map, hence an open map.  Since the projection \(\anonmap{\widetilde\bV}{\bG/\bB^-}\) is surjective, and the fibres are conjugates of \(\Lie^*(\bB^+)\), which has dimension \(\dim(\bB^+) = \dim(\bB^-)\), we have that \(\widetilde\bV\) is equi-dimensional of dimension \(\dim \bG = \dim \Lie^*(\bG)\); and that the fibre of the projection \(\anonmap{\widetilde\bV}{\Lie^*(\bG)}\) over \(E^*\) is a singleton.  It follows that, if \bV is the irreducible component of \(\widetilde\bV\) containing \((\bB^-, E^*)\), then the fibres over some Zariski-dense, Zariski-open subset of the image of \bV in \(\Lie^*(\bG)\) are finite \cite{borel:linear}*{Theorem AG.10.1}.  Since \bV is equi-dimensional of dimension \(\dim \Lie^*(\bG)\), it follows that the image of \bV in \(\Lie^*(\bG)\), hence also the image of \(\widetilde\bV\), contains a Zariski-dense and Zariski-open subset of \(\Lie^*(\bG)\).  Finally, since \(\bG/\bB^-\) is complete, we have that the projection \(\anonmap{\bG/\bB^- \times \Lie^*(\bG)}{\Lie^*(\bG)}\) is closed, and so the image of \(\widetilde\bV\) is closed.  It follows that the image of \(\widetilde\bV\) is all of \(\Lie^*(\bG)\).
\end{proof}

For the rest of this section, we assume (in addition to the standing assumption that \field is algebraically closed) that \bG is reductive.

\begin{defn}[\cite{kac-weisfeiler:Zg}*{\S3, p.~140}]
\label{defn:dual-Jordan}
An element of \(\Lie^*(G)\) is semisimple if it belongs to the dual Lie algebra of some maximal torus of \bG, and nilpotent if it belongs to the dual Lie algebra of the unipotent radical of some Borel subgroup of \bG.

For every \(X^* \in \Lie^*(B^+)\), \(S^* \in \Lie^*(T)\), and \(N^* \in \Lie^*(\Prad^+)\), we say that \((S^*, N^*)\) is a \((\bB^+, \bT)\)-standard Jordan decomposition of \(X^*\) if and only if
\(X^*\) equals \(S^* + N^*\), and
\(N^*\) annihilates the root space for \bT in \(\Lie(\bG)\) corresponding to \(\alpha\)
for every \(\alpha \in \Root(\bG, \bT)\) such that \(\pair{S^*}{\upd\alpha^\vee(1)}\) is non-\(0\).

In general, for every triple \((X^*, S^*, N^*)\) of elements of \(\Lie^*(G)\), we say that \((S^*, N^*)\) is a \term[Jordan decomposition!of dual-Lie-algebra element]{Jordan decomposition} of \(X^*\) if there is some \(g \in G\conn\) such that \(\Ad^*(g)(X^*, S^*, N^*)\) is a \((\bB^+, \bT)\)-standard Jordan decomposition of \(\Ad^*(g)X^*\).
\end{defn}

By
\cite{spice-tsai:jordan}*{Theorem 1(ii)},
every element \(X^* \in \Lie^*(G)\) admits a Jordan decomposition \((\matnotn{Xsemi}{X^*\semi}, \matnotn{Xnil}{X^*\nilp})\).  However, contrary to the assertion in \cite{kac-weisfeiler:Zg}*{Theorem 4(iv)}, this decomposition can fail to be unique; see
\cite{spice-tsai:jordan}*{\S7}.

\begin{rem}
\label{rem:which-Borus}
For every \(X^* \in \Lie^*(G)\), Jordan decomposition \((X^*\semi, X^*\nilp)\) of \(X^*\), and \(g \in G\), we have that \((\Ad^*(g)X^*\semi, \Ad^*(g)X^*\nilp)\) is a Jordan decomposition of \(\Ad^*(g)X^*\).  In particular, the definition of a Jordan decomposition of \(X^*\) does not depend on the choice of Borel--torus pair \((\bB^+, \bT)\) in \bG.
\end{rem}

\project[Q:  Is \(\Cent_\bG(X^*)\) always equal to \(\Cent_\bG(X^*\semi) \cap \Cent_\bG(X^*\nilp)\) (equivalently, contained in \(\Cent_\bG(X^*\semi)\))?  This is obvious for the maximal smooth subgroups.
\endgraf
A:  No.  Cheng-Chiang's example that gave rise to \cite{spice-tsai:jordan}*{\S\ref{sec:nonuni}} shows that this can fail, even for maximal smooth subgroups.]

\begin{lem}
\label{lem:dual-contract}
For every \(X^* \in \Lie^*(G)\) and Jordan decomposition \((X^*\semi, X^*\nilp)\) of \(X^*\), there is a cocharacter \(\lambda\) of \(\Cent_\bG(X^*\semi)\) such that \(\lim_{t \to 0} \Ad^*(\lambda(t))X^*\nilp\) equals \(0\).
\end{lem}

\begin{proof}
We may, and do, assume, upon replacing \((X^*, X^*\semi, X^*\nilp)\) by a suitable \(G\conn\)-conjugate, that \((X^*\semi, X^*\nilp)\) is a \((\bB^+, \bT)\)-standard Jordan decomposition of \(X^*\).  Remember that, for our choice of embedding, \(\Lie^*(\bPrad^+)\) is the sum of the root spaces for \bT in \(\Lie^*(\bG)\) corresponding to roots in \(\Root(\bB^-, \bT)\).

In particular, since \(X^*\nilp\) belongs to \(\Lie^*(\Cent_\bG(X^*\semi))\), it lies in the sum of the root spaces corresponding to roots \(-\alpha\) with \(\alpha \in \Root(\Cent_{\bB^+}(X^*\semi), \bT)\).
We may take \(\lambda\) to be a cocharacter of \bT, hence of \(\Cent_\bG(X^*\semi)\), such that \(\bB^-\) is the parabolic subgroup of \bG associated to \(\lambda\) (for example, the sum of the coroots for \bT in \bG associated to elements of \(\Root(\bB^-, \bT)\)).
\end{proof}

\begin{lem}
\label{lem:dual-centre}
For every \(X^* \in \Lie^*(G)\), Jordan decomposition \((X^*\semi, X^*\nilp)\) of \(X^*\), and \(Z^* \in \Lie^*(G)^{G\conn}\), we have that \((X^*\semi + Z^*, X^*\nilp)\) is a Jordan decomposition of \(X^* + Z^*\).
\end{lem}

\begin{proof}
We may, and do, assume, upon replacing \((X^*, X^*\semi, X^*\nilp)\) by a suitable \(G\conn\)-conjugate, that \((X^*\semi, X^*\nilp)\) is a \((\bB^+, \bT)\)-standard Jordan decomposition of \(X^*\).  Since \(Z^*\) is \(G\conn\)-fixed, it already has a \((\bB^+, \bT)\)-standard Jordan decomposition \((Z^*\semi, Z^*\nilp)\).

Since \bG is smooth, we have that \(Z^*\) is \(\bG\conn\)-fixed.
Lemma \ref{lem:dual-contract} gives that there is some cocharacter \(\lambda\) of \bG such that \(\lim_{t \to 0} \Ad^*(\lambda(t))Z^*\nilp\) equals \(0\), so that \(Z^* = \lim_{t \to 0} \Ad^*(\lambda(t))Z^*\) equals \(Z^*\semi\).  In particular, \(Z^*\) belongs to \(\Lie^*(T)\), so \(X^*\semi + Z^*\) belongs to \(\Lie^*(T)\).

Since \(Z^*\) is \(\bG\conn\)-fixed, we have that \(\Cent_{\bG\conn}(X^*\semi + Z^*)\) equals \(\Cent_{\bG\conn}(X^*\semi)\), so \(\Lie^*(\Cent_\bG(X^*\semi + Z^*))\) equals \(\Lie^*(\Cent_\bG(X^*\semi))\), and hence contains \(X^*\nilp\).  Since also \(X^*\nilp\) is nilpotent,
the result follows.
\end{proof}

\begin{cor}
\label{cor:dual-centre}
Every element of \(\Lie^*(G)^{G\conn}\) belongs to \(\Lie^*(T)\).
\end{cor}

\begin{proof}
Applying Lemma \ref{lem:dual-centre} to the Jordan decomposition \((0, 0)\) of \(X^* = 0 \in \Lie^*(G)\) gives that \((Z^*, 0)\) is a Jordan decomposition of \(Z^*\).  That is, \(Z^*\) is semisimple.  \textit{A priori} this means only that there is a maximal torus \(\bT_1\) in \bG such that \(Z^*\) lies in \(\Lie^*(T_1)\); but then there exists \(g \in G\conn\) such that \(\Int(g)\bT_1\) equals \bT, and then \(Z^* = \Ad^*(g)Z^*\) belongs to \(\Ad^*(g)\Lie^*(T_1) = \Lie^*(T)\).
\end{proof}

\begin{lem}
\label{lem:class-disc}
Let \mc S be a finite subset of \(\Lie(T)\) stable under \(\Weyl(\bG, \bT)(\field)\).  The functions \(\map{d_{\mc S}}{\Lie^*(B^+)}\field\) and \(\map{d_{\mc S}\red}{\Lie^*(B^+)}\field\), defined by
\[
d_{\mc S}(X^*) = \prod_{Y \in \mc S} \pair{X^*}Y
\qandq
d_{\mc S}\red(X^*) = \prod_{\substack
	{Y \in \mc S \\
	\pair{X^*}Y \ne 0}
} \pair{X^*}Y
\]
for all \(X^* \in \Lie^*(B^+)\), are constant on the intersection with \(\Lie^*(B^+)\) of every orbit of \(G\) in \(\Lie^*(G)\).
\end{lem}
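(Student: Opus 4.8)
The plan is to deduce the lemma from the Chevalley restriction theorem for the coadjoint representation, in the form that the restriction map \(\field[\Lie^*(\bG)]^\bG \to \field[\Lie^*(\bT)]^{\Weyl(\bG, \bT)(\field)}\) (where \(\Lie^*(\bT)\) is embedded in \(\Lie^*(\bG)\) as in Definition \ref{defn:embed-dual}) is surjective.  As indicated just after Proposition \ref{prop:dual-Borel}, this holds with no restriction on \(\operatorname{char}\field\), cf. \cite{kac-weisfeiler:Zg}*{\S3, Theorem 4}, whose proof goes through using Proposition \ref{prop:dual-Borel} in place of \cite{kac-weisfeiler:Zg}*{Lemma 3.3}; the case of disconnected \bG reduces to the connected one by passing to \(\bG\conn\) and taking invariants under the finite group \(\bG/\bG\conn\).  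Granting this, let \(n\) be the number of elements of \mc S and, for \(1 \le k \le n\), let \(e_k \in \field[\Lie^*(\bT)]\) be the \(k\)th elementary symmetric function in the \(n\) linear forms \(X^* \mapsto \pair{X^*}Y\), \(Y \in \mc S\).  Since \mc S is stable under \(\Weyl(\bG, \bT)(\field)\), which permutes these forms, each \(e_k\) is \(\Weyl(\bG, \bT)(\field)\)-invariant; so, by the surjectivity just recalled, there is a \bG-invariant regular function \(E_k\) on \(\Lie^*(\bG)\) whose restriction to \(\Lie^*(\bT)\) is \(e_k\).

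Now fix a coadjoint orbit \(\OO\) of \(G\) in \(\Lie^*(\bG)\) and an element \(X^* \in \OO \cap \Lie^*(B^+)\).  As in the proof of Corollary \ref{cor:dual-ss}, let \(X^*_0\) be the element of \(\Lie^*(T)\) that agrees with \(X^*\) on \(\Lie(\bT)\) and vanishes on \(\Lie(\bT)^\perp\); that proof exhibits a \(\lambda \in \cclat(\bT)\) with \(X^*_0 = \lim_{t \to 0} \Ad^*(\lambda(t))X^*\), so \(X^*_0\) lies in the Zariski closure \(\ol\OO\) of \(\OO\), being a limit of elements \(\Ad^*(\lambda(t))X^*\) of \(\OO\).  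Since \(\mc S \subseteq \Lie(T)\), and \(X^*\) and \(X^*_0\) agree on \(\Lie(T)\), we have \(\pair{X^*}Y = \pair{X^*_0}Y\) for every \(Y \in \mc S\), so \(d_{\mc S}(X^*) = d_{\mc S}(X^*_0)\) and \(d_{\mc S}\red(X^*) = d_{\mc S}\red(X^*_0)\).  Writing \(M\) for the multiset of the values \(\pair{X^*_0}Y\), \(Y \in \mc S\), we see that \(d_{\mc S}(X^*_0)\) is the product of the elements of \(M\) and \(d_{\mc S}\red(X^*_0)\) the product of its non-zero elements, so it suffices to show that \(M\) depends only on \(\OO\).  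But the elementary symmetric functions of \(M\) are \(e_1(X^*_0), \dotsc, e_n(X^*_0)\), which---because \(X^*_0\) lies in \(\Lie^*(\bT)\)---equal \(E_1(X^*_0), \dotsc, E_n(X^*_0)\); and, as \field is algebraically closed, a multiset of \(n\) elements of \field is determined by its \(n\) elementary symmetric functions.  Each \(E_k\) is a \bG-invariant regular function, hence constant on \(\OO\) and so on \(\ol\OO \ni X^*_0\); thus \(E_k(X^*_0) = E_k(X^*)\), which depends only on \(\OO\).  Therefore \(M\)---and with it \(d_{\mc S}(X^*) = d_{\mc S}(X^*_0)\) and \(d_{\mc S}\red(X^*) = d_{\mc S}\red(X^*_0)\)---depends only on \(\OO\); that is, \(d_{\mc S}\) and \(d_{\mc S}\red\) are constant on \(\OO \cap \Lie^*(B^+)\).

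I expect the only substantive point to be the invariant-theoretic input: that Chevalley restriction is surjective (indeed bijective) for the \emph{dual} Lie algebra of a reductive group in \emph{arbitrary} characteristic---this is precisely what makes it convenient to build the discriminant on \(\Lie^*\) rather than on \(\Lie\)---so the real work is in locating and confirming that reference.  Everything else is a formal consequence of the orbit-closure fact \(X^*_0 \in \ol\OO\), which is already contained in the proof of Corollary \ref{cor:dual-ss}; the passage from \(d_{\mc S}\) to the reduced function \(d_{\mc S}\red\) costs nothing extra, since both are recovered from the single orbit-invariant datum \(M\).
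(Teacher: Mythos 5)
Your proof hinges on the dual Chevalley restriction theorem---surjectivity of $\field[\Lie^*(\bG)]^\bG \to \field[\Lie^*(\bT)]^{\Weyl(\bG,\bT)}$---and you assert that this ``holds with no restriction on $\operatorname{char}\field$,'' citing \cite{kac-weisfeiler:Zg}*{\S3, Theorem 4}. That reading is not supported by the paper: the sentence after Proposition \ref{prop:dual-Borel} states only that the proofs of parts (ii--v, vii) of that theorem go through using Proposition \ref{prop:dual-Borel}, and deliberately omits part (i), which is the restriction statement. More to the point, Remark \ref{rem:dual-disc} produces an explicit counterexample to surjectivity of that restriction map: for $\bG = \operatorname{PGL}_2$ in characteristic $2$, with $\mc S$ the singleton containing the image of $\operatorname{diag}(1,0)$, the $\bG$-invariant extension of $d_{\mc S}$ is $\begin{pmatrix} a & b \\ c & -a \end{pmatrix} \mapsto a + \sqrt{bc}$, which is not polynomial---so your $e_1$ admits no lift $E_1$. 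The step $e_k(X^*_0) = E_k(X^*_0)$ thus has no $E_k$ to speak of in this case, and the argument breaks down exactly where you flagged the ``only substantive point.''

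It is worth noting that what you \emph{do} get right is the reduction: everything in your argument beyond the lifting of the $e_k$ is sound (the limit argument placing $X^*_0$ in $\ol\OO$, the observation that $d_{\mc S}$ and $d_{\mc S}\red$ are both recovered from the multiset $M$, and the determination of a multiset over an algebraically closed field by its elementary symmetric functions). So your approach does prove the lemma whenever the dual Chevalley restriction theorem holds---for instance in the simply-connected case, or when $p > 3$---but it cannot cover the generality the paper wants. The paper's proof avoids invariant theory entirely: given $g \in G$ with $X^*, \Ad^*(g)X^* \in \Lie^*(B^+)$, it writes $g = u^- n v^-$ via the Bruhat decomposition, uses that $X^*$ and $\Ad^*(g)X^*$ kill $\Lie(U^-)$ to trade conjugation by $u^-$ and $v^-$ for nothing, and concludes that the multiset $\set{\pair{\Ad^*(g)X^*}{Y}}{Y \in \mc S}$ is obtained from $\set{\pair{X^*}{Y}}{Y \in \mc S}$ by the permutation of $\mc S$ induced by $\Ad(n)\inv$. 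That direct computation works in every characteristic. If you want to salvage your route you would need a replacement for the lift $E_k$ that works for $\operatorname{PGL}_2$ in characteristic $2$; no such lift exists, so the Bruhat argument is not just more general but genuinely necessary.
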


\begin{proof}
Recall that \(\Lie^*(\bB^+)\) is identified with the annihilator of \(\Lie(\bPrad^-)\) in \(\Lie^*(\bG)\).

Suppose that \(X^* \in \Lie^*(B^+)\) and \(g \in G\) are such that \(\Ad^*(g)X^*\) lies in \(\Lie^*(B^+)\).  Write \(g\) as \(u^- n v^-\), with \(u^-\) and \(v^-\) in \(\Prad^-\) and \(n\) in \(\gNorm_G(\bT)\).  By hypothesis, \(X^*\) and \(\Ad^*(g)X^*\) are trivial on \(\Lie(\Prad^-)\), hence constant on \(\Prad^-\)-orbits through elements of \(\Lie(T)\), so that \(\pair{\Ad^*(g)X^*}Y\) equals \(\pair{\Ad^*(g)X^*}{\Ad(u^-)Y} = \pair{\Ad^*(v^-)X^*}{\Ad(n)\inv Y}\), and \(\pair{X^*}Y\) equals \(\pair{X^*}{\Ad(v^-)\inv Y} = \pair{\Ad^*(v^-)X^*}Y\), for all \(Y \in \mc S\).  Since \(\mc S\) is stabilised by \(n^{-1}\), it follows that
\begin{align*}
d_{\mc S}\red(\Ad^*(g)X^*) \qeqq
& \prod_{\substack
	{Y \in \mc S \\
	\pair{\Ad^*(v^-)X^*}{\Ad(n)\inv Y} \ne 0}
} \pair{\Ad^*(v^-)X^*}{\Ad(n)\inv Y} \\
={} & \prod_{\substack
	{Y' \in \mc S \\
	\pair{\Ad^*(v^-)X^*}{Y'} \ne 0}
} \pair{\Ad^*(v^-)X^*}{Y'} = d_{\mc S}\red(X^*).
\end{align*}
The argument for \(d_{\mc S}\) is almost identical.
\end{proof}

Definition \ref{defn:dual-disc} produces an extension to \(\Lie^*(G)\) of the function \(\redD_G\) defined only on \(\Lie^*(T)\) in \xcite{debacker-spice:stability}*{Definition \xref{defn:disc}}.

\begin{defn}
\label{defn:dual-disc}
With the notation of Lemma \ref{lem:class-disc}, there are unique extensions of \(d_{\mc S}\) and \(d_{\mc S}\red\) from \(\Lie^*(B^+)\) to \(G\)-invariant functions on \(\Lie^*(G)\).  We denote these extensions again by the same symbols.

We write \matnotn{DG}{\redD_\bG} for \(\map{d_{\mc S_\bG}\red}{\Lie^*(G)}{\field\mult}\), where \(\mc S_\bG\) is \(\set{\upd\coroot\alpha(1)}{\alpha \in \Root(\bG, \bT)}\).  If \bH is a reductive subgroup of \bG containing \bT, then we write \matnotn{DGH}{\Disc_{\bG/\bH}} for \(\map{d_{\mc S_{\bG/\bH}}}{\Lie^*(\bH)(\field)}\field\), where \(\mc S_{\bG/\bH}\) is \(\set{\upd\coroot\alpha(1)}{\alpha \in \Root(\bG/\bH, \bT)}\).
\end{defn}

\begin{rem}
\label{rem:dual-disc}
Whenever the dual Chevalley restriction theorem holds, in the sense that the restriction map \(\anonmap{\field[\Lie^*(\bG)]^\bG}{\field[\Lie^*(\bT)]^{\Weyl(\bG, \bT)}}\) is an isomorphism, the function \(d_{\mc S}\) of Definition \ref{defn:dual-disc} extends to an element of \(\field[\Lie^*(\bG)]^\bG\) (although \(d_{\mc S}\red\) is never polynomial unless it is identically \(1\)).  For example, this happens when \bG is almost simple and the characteristic of \field is not \(2\) \cite{kac-weisfeiler:Zg}*{Theorem 3.4(i)};
or, more generally, when the characteristic of \field is not \(2\), and \bG has no factor of odd orthogonal type \cite{spice-tsai:jordan}*{Theorem 1(iv, v)}.
However, \(d_{\mc S}\) can fail to be polynomial when the characteristic of \field equals \(2\).  For example, suppose that \(\bG = \operatorname{PGL}_2\); \bT, respectively \(\bB^+\), is the image in \bG of the group of diagonal, respectively upper-triangular, matrices in \(\GL_2\); and \mc S is the singleton containing the image in \(\Lie(\bT)\) of \(\operatorname{diag}(1, 0)\) (which is \(\Weyl(\bG, \bT)(\field)\)-fixed by our assumption on the characteristic).
Then the (non-degenerate, \(\GL_2\)-invariant) trace pairing \anonmap{\gl_2 \otimes \gl_2}{\gl_1} descends to a non-degenerate, \(\GL_2\)-invariant pairing \anonmap{\mf{pgl}_2 \otimes \mf{sl}_2}{\gl_1}, thus furnishing a \(\GL_2\)-equivariant isomorphism \(\Lie^*(\bG) = \mf{pgl}_2^* \cong \mf{sl}_2\) that identifies \(\Lie^*(\bB^+)\) with the subalgebra of \(\mf{sl}_2\) consisting of \emph{lower}-triangular matrices; and, with respect to this identification, the function \(d_{\mc S}\) is given on lower-triangular matrices by \(\anonmapto{\begin{pmatrix} a & 0 \\ c & -a \end{pmatrix}}a\).  Since the \(\operatorname{PGL}_2(\field)\)-orbit (which is the same as the \(\GL_2(\field)\)- and the \(\operatorname{SL}_2(\field)\)-orbits) of an arbitrary element \(\begin{pmatrix} a & b \\ c & -a \end{pmatrix} \in \mf{sl}_2(\field)\) contains an element of the form \(\begin{pmatrix} \sqrt{a^2 - b c} & 0 \\ * & -\sqrt{a^2 - b c} \end{pmatrix}\),
we find that the function \(d_{\mc S}\) is given on \(\mf{pgl}_2^*(\field) \cong \mf{sl}_2(\field)\) by the non-polynomial function \anonmapto{\begin{pmatrix} a & b \\ c & -a \end{pmatrix}}{\sqrt{a^2 - b c} = a + \sqrt{b c}}.

Nonetheless, even when the characteristic of \field equals \(2\), we have that \(d_{\mc S}\) extends to a polynomial function on \(\Lie^*(\bG)\) whenever \(\mc S \subseteq \Lie(T)\) admits a lift to the Lie algebra of the simply connected cover of \bG; for then we may, and do, assume that \bG itself is simply connected.  In this case, we have that \(\Lie^*(\bT)\) is spanned by the images of the fundamental weights, so that the proof of \cite{springer-steinberg:conj}*{\S II.3.17\textquotesingle} goes through unchanged.  This applies, in particular, to \(\Disc_{\bG/\bH}\).
\end{rem}

\begin{lem}
\label{lem:disc-as-det}
Let \(\Gamma\) be an element of \(\Lie^*(T)\), viewed as a subspace of \(\Lie^*(G)\).  Put \(\bG' = \Cent_\bG(\Gamma)\).
Then the map
\[
\map{i_\Gamma \ldef \ad^*(\anondot)\Gamma}{\Lie(G)/{\Lie(G')}}{\Lie^*(G)/{\Lie^*(G')}}
\]
is invertible; and, for every \(X^* \in \Lie^*(G')\), the determinant of the endomorphism \(i_\Gamma\inv \circ \bigl(\ad^*(\anondot)X^*\bigr)\) of \(\Lie(G)/{\Lie(G')}\) is \(\Disc_{G/G'}(X^*)\Disc_{G/G'}(\Gamma)\inv\).
\end{lem}

\begin{proof}
By \cite{kac-weisfeiler:Zg}*{Lemma 3.1(ii, iii)}, we have that \(\bG'\) is reductive, and \(\Root(\bG', \bT)\) equals \(\set{\alpha \in \Root(\bG, \bT)}{\pair\Gamma{\upd\alpha^\vee(1)} = 0}\).

For each \(\alpha \in \Root(\bG, \bT)\), let \(E_\alpha\) be a non-\(0\) vector in the corresponding root subspace \(\Lie(G)_\alpha\) of \(\Lie(G)\).  Then the image of \(\set{E_\alpha}{\alpha \in \Root(\bG/\bG', \bT)}\) in \(\Lie(G)/\Lie(G')\) is a basis of \(\Lie(G)/\Lie(G')\).  If \(\alpha\) and \(\beta\) belong to \(\Root(\bG/\bG', \bT)\), then \(\pair{i_\Gamma(E_\alpha)}{E_\beta} = \pair{\ad^*(E_\alpha)\Gamma}{E_\beta} = \pair\Gamma{[E_\beta, E_\alpha}]\) is \(0\) unless \(\alpha + \beta\) equals \(0\), in which case it is a non-\(0\) multiple of \(\pair\Gamma{\upd\alpha^\vee(1)}\), hence is itself non-\(0\).  That is, the matrix of \(i_\Gamma\) with respect to our chosen basis is anti-diagonal, with non-\(0\) entries on the anti-diagonal.  In particular, \(i_\Gamma\) is invertible.

Fix \(X^* \in \Lie^*(G')\).  By Proposition \ref{prop:dual-Borel}, there is a Borel subgroup \(\bB\primethen+\) of \(\bG'\) such that \(X^*\) belongs to \(\Lie^*(B\primethen+)\).  There is some \(g' \in G'\) such that \(\Int(g')\bT\) is contained in \(\bB\primethen+\).  Upon replacing \bT by \(\Int(g')\bT\), we may, and do, assume that \bT is contained in \(\bB\primethen+\).

Choose a total ordering on \(\Root(\bG/\bG', \bT)\) extending the partial order coming from \(\bB\primethen+\).  For every \(\alpha \in \Root(\bG/\bG', \bT)\), we have that \(i_\Gamma\inv(\ad^*(E_\alpha)X^*)\) lies in \(\Lie(G)_\alpha + \sum_{\beta > \alpha} \Lie(G)_\beta\), and satisfies
\[
\pair\Gamma{\comm{E_{-\alpha}}{i_\Gamma\inv(\ad^*(E_\alpha)X^*)}}
= \pair{\ad^*(E_\alpha)X^*}{E_{-\alpha}}
= \pair{X^*}{\comm{E_{-\alpha}}{E_\alpha}},
\]
so that its \(\Lie(G)_\alpha\)-component is the product of the scalar \(\pair{X^*}{\comm{E_{-\alpha}}{E_\alpha}}\pair\Gamma{\comm{E_{-\alpha}}{E_\alpha}}\inv\) and the vector \(E_\alpha\).  Since \(\comm{E_{-\alpha}}{E_\alpha}\) is a non-\(0\) multiple of \(\upd\coroot\alpha(1)\), the scalar equals \(\pair{X^*}{\upd\coroot\alpha(1)}\pair\Gamma{\upd\coroot\alpha(1)}\inv\).

The matrix of the indicated endomorphism with respect to our chosen basis is thus upper triangular with explicitly computed diagonal entries.  The result follows from Definition \ref{defn:dual-disc}.
\end{proof}

\numberwithin{thm}{subsection}
\section{The subgroups $\CC\bG i(\gamma)$}
\label{sec:funny-centraliser}

Throughout \S\ref{sec:funny-centraliser}, let \field be a field, and \bG an algebraic group over \field.  Unless otherwise stated, we do not assume that \bG is smooth or connected.  We will eventually assume (in \S\ref{subsec:reductive-gp-funny}) that \bG is reductive, but we do not do so yet.

In \xcite{spice:asymptotic}*{Hypothesis \xref{hyp:funny-centraliser}}, the existence of groups \(\CC\bG i(\gamma\pinv)\) (and, in \xcite{spice:asymptotic}*{Hypothesis \xref{hyp:gamma}}, also \(\CC\bG i(\gamma\pinv[2])\)) depending on \(i \in \tR\) and satisfying certain conditions is postulated.  In \S\ref{sec:funny-centraliser}, we define these groups for all \(i \in \sbjtl\tR 0\), in which case the superscript `\(\pinv\)' is irrelevant, as well as analogues associated to an element of \(\Lie(G)\).  See Definition \ref{defn:vGvr} for how we handle the absence of these groups when \(i\) is negative.

\subsection{Group actions and Lie algebras}
\label{subsec:act-Lie}

Throughout \S\ref{subsec:act-Lie}, we fix
	an algebraic group \(\Gamma\) over \field
and	an action \anonmap{\Gamma \times \bG}\bG of \(\Gamma\) on \bG by automorphisms.
Unless otherwise indicated, we do not assume that \(\Gamma\) is smooth or connected.

Lemmas \ref{lem:Lie-cent} and \ref{lem:fixed-surjective} are structural results that we use constantly throughout \S\ref{sec:funny-centraliser}, usually without explicit mention.

\begin{lem}
\label{lem:Lie-cent}
We have that \(\Lie(\Cent_\bG(\Gamma))\) equals \(\Cent_{\Lie(\bG)}(\Gamma)\).
\end{lem}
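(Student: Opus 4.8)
The plan is to argue entirely through dual numbers. First note that the right-hand side makes sense because the action of $\Gamma$ on $\bG$ fixes the identity section and so differentiates, at the identity, to an action of $\Gamma$ on $\Lie(\bG)$; concretely, this action is the restriction of the action of $\Gamma$ on the functor $R \mapsto \bG(R[\epsilon])$ to its subfunctor $R \mapsto \ker\bigl(\bG(R[\epsilon]) \to \bG(R)\bigr) = \Lie(\bG) \otimes_\field R$, where $\field[\epsilon] = \field[t]/(t^2)$, $\epsilon$ is the image of $t$, $R[\epsilon] = R \otimes_\field \field[\epsilon]$, and the displayed map is induced by $\epsilon \mapsto 0$. (The identification $\ker\bigl(\bH(R[\epsilon]) \to \bH(R)\bigr) = \Lie(\bH) \otimes_\field R$, functorial in $R$, holds for any affine $\field$-group scheme $\bH$ of finite type, with the finiteness hypothesis used precisely to pass from $\Hom_\field((\Omega_{\bH})_e, R)$ to $(\Omega_{\bH})_e^* \otimes_\field R$; in particular $\Lie(\bH) = \ker\bigl(\bH(\field[\epsilon]) \to \bH(\field)\bigr)$.) Apply this with $\bH = \Cent_\bG(\Gamma)$, the (closed-subscheme) fixed-point group of $\Gamma$ on $\bG$. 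Writing $X^A \in \bG(A)$ for the image of $X \in \Lie(\bG) \subseteq \bG(\field[\epsilon])$ under the structure map $\field[\epsilon] \to A$ of a $\field[\epsilon]$-algebra $A$, and unwinding the two definitions, I find that $\Lie(\Cent_\bG(\Gamma))$ consists of the $X \in \Lie(\bG)$ with
\[
\gamma \cdot X^{R'} = X^{R'} \text{ in } \bG(R')
\qforall{\(\field[\epsilon]\)-algebras \(R'\) and all \(\gamma \in \Gamma(R')\),}
\]
whereas $\Cent_{\Lie(\bG)}(\Gamma)$ consists of the $X \in \Lie(\bG)$ with
\[
\gamma \cdot X^{R[\epsilon]} = X^{R[\epsilon]} \text{ in } \bG(R[\epsilon])
\qforall{\(\field\)-algebras \(R\) and all \(\gamma \in \Gamma(R)\),}
\]
the element $\gamma$ acting in the latter through $\Gamma(R) \to \Gamma(R[\epsilon])$.

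The inclusion $\Lie(\Cent_\bG(\Gamma)) \subseteq \Cent_{\Lie(\bG)}(\Gamma)$ is then immediate: given $X$ in the first group, a $\field$-algebra $R$, and $\gamma \in \Gamma(R)$, apply the first displayed condition to $R' = R[\epsilon]$ and to the image of $\gamma$ in $\Gamma(R[\epsilon])$. The reverse inclusion is the substance, and I would prove it by showing that it suffices to test the fixed-point condition on the ``constant'' $\field[\epsilon]$-algebras $R[\epsilon]$. So let $X \in \Cent_{\Lie(\bG)}(\Gamma)$, let $R'$ be any $\field[\epsilon]$-algebra, and let $\gamma' \in \Gamma(R')$; put $\eta$ equal to the image of $\epsilon$ under $\field[\epsilon] \to R'$, so that $\eta^2 = 0$. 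Regarding $R'$ merely as a $\field$-algebra $R$ and $\gamma'$ as the corresponding $\gamma \in \Gamma(R)$, the second displayed condition gives $\gamma \cdot X^{R[\epsilon]} = X^{R[\epsilon]}$ in $\bG(R[\epsilon])$. There is a homomorphism of $\field[\epsilon]$-algebras $\psi \colon R[\epsilon] \to R'$ that is the identity on $R = R'$ and sends $\epsilon$ to $\eta$; I would check directly that $\Gamma(\psi)$ carries the image of $\gamma$ in $\Gamma(R[\epsilon])$ to $\gamma' \in \Gamma(R')$ and that $\bG(\psi)$ carries $X^{R[\epsilon]}$ to $X^{R'}$. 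Applying $\bG(\psi)$ to the relation $\gamma \cdot X^{R[\epsilon]} = X^{R[\epsilon]}$ and using that the action morphism $\Gamma \times \bG \to \bG$ commutes with base change, I obtain $\gamma' \cdot X^{R'} = X^{R'}$ in $\bG(R')$, which is exactly the first displayed condition.

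I expect the only real obstacle to be the routine but fiddly bookkeeping of the structure, base-change, and comparison maps in the verification of the reverse inclusion; the argument uses nothing about $\bG$ or $\Gamma$ beyond their being affine of finite type, and in particular is insensitive to smoothness or connectedness, matching the generality imposed in this section. If one wants the equality as schemes (or vector groups) rather than merely on $\field$-points, I would simply rerun the whole argument with $\field$ replaced throughout by an arbitrary $\field$-algebra, which changes nothing essential.
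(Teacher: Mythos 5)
Your proof is correct, and it is in essence the same argument the paper points to. The paper handles this lemma by citing \cite{milne:algebraic-groups}*{Proposition 10.34} and remarking that its proof goes through \textit{literatim} once one drops the assumption that $\Gamma$ is a subgroup of $\bG$ acting by conjugation; you have written out that dual-numbers argument in full, and the step you flag as ``the substance''---reducing the fixed-point test over an arbitrary $\field[\epsilon]$-algebra $R'$ to the test over $R[\epsilon]$, via a $\field[\epsilon]$-algebra map $\psi\colon R[\epsilon] \to R'$ that restricts to the identity on $R$ and sends $\epsilon$ to the square-zero image of $\epsilon$ in $R'$---is exactly the point on which Milne's proof turns and where nothing beyond ``affine of finite type'' is used. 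The paper also sketches a second, shorter route that you did not take: embed $\bG$ and $\Gamma$ in a common overgroup $\widetilde\bG$ so that $\Gamma \subseteq \gNorm_{\widetilde\bG}(\bG)$, apply Milne's result verbatim to the conjugation action on $\widetilde\bG$, and then intersect with $\bG$ and $\Lie(\bG)$ to get $\Lie(\Cent_\bG(\Gamma)) = \Cent_{\Lie(\widetilde\bG)}(\Gamma) \cap \Lie(\bG) = \Cent_{\Lie(\bG)}(\Gamma)$. That reduction is quicker on the page but hides the mechanism; your version is longer but self-contained and makes the generality transparent.
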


\begin{proof}
This is almost \cite{milne:algebraic-groups}*{Proposition 10.34}, except that that result requires that \(\Gamma\) be a subgroup of \bG acting by conjugation.  The proof goes through \textit{literatim} without that requirement.  Alternatively, we could embed \bG and \(\Gamma\) in some common overgroup \(\widetilde\bG\), such as \(\Gamma \ltimes \bG\), so that \(\Gamma\) is contained in \(\gNorm_{\widetilde\bG}(\bG)\); and then apply \cite{milne:algebraic-groups}*{Proposition 10.34} directly to deduce the equalities
\[
\Lie(\Cent_\bG(\Gamma))
= \Lie(\Cent_{\widetilde\bG}(\Gamma) \cap \bG)
= \Cent_{\Lie(\widetilde\bG)}(\Gamma) \cap \Lie(\bG)
= \Cent_{\Lie(\bG)}(\Gamma).\qedhere
\]
\end{proof}

\begin{lem}[\cite{conrad-gabber-prasad:prg}*{Example A.1.12 and Proposition A.8.14(1)}]
\label{lem:fixed-surjective}
If
	\bG is smooth,
	\(\Gamma\) is of multiplicative type,
and	\anonmap\bG{\bG'} is a quotient by a \(\Gamma\)-stable, normal subgroup,
then \anonmap{\Cent_\bG(\Gamma)\conn}{\Cent_{\bG'}(\Gamma)\conn} is a quotient map.
\end{lem}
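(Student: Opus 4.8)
The plan is to deduce the lemma from the two cited results of \cite{conrad-gabber-prasad:prg}, together with routine structure theory of algebraic groups. First I would record the ambient smoothness: since \bG is smooth and \(\bG \to \bG'\) is a quotient, hence faithfully flat, \(\bG'\) is again smooth; and since \(\Gamma\) is of multiplicative type, the fixed-point groups \(\Cent_\bG(\Gamma)\) and \(\Cent_{\bG'}(\Gamma)\) are smooth (this is standard for actions of groups of multiplicative type; see \cite{conrad-gabber-prasad:prg}*{\S A.8}). The quotient map \(\bG \to \bG'\) is \(\Gamma\)-equivariant, so it restricts to a homomorphism \(f \colon \Cent_\bG(\Gamma) \to \Cent_{\bG'}(\Gamma)\), and the task is to show that \(f\) restricts to a faithfully flat map on identity components.

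The crux, and what I expect to be the main obstacle, is to show that \(f\) has open image --- equivalently, since source and target are smooth, that \(f\) is smooth. When the subgroup we quotient by is itself smooth this is routine: then \(\Lie(\bG) \to \Lie(\bG')\) is surjective, and, as \(\Gamma\)-invariants is an exact functor on \(\Gamma\)-modules for \(\Gamma\) of multiplicative type, the differential of \(f\) --- which by Lemma~\ref{lem:Lie-cent} is the map induced by \(\Lie(\bG) \to \Lie(\bG')\) between the fixed subspaces \(\Cent_{\Lie(\bG)}(\Gamma)\) and \(\Cent_{\Lie(\bG')}(\Gamma)\) --- is again surjective, so \(f\) is smooth. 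In general, however, \(\bG \to \bG'\) need not be smooth and \(\Lie(\bG) \to \Lie(\bG')\) need not be surjective (as already for the endomorphism of \(\GL_1\) raising to the \(p\)-th power when \field has characteristic \(p > 0\)), and it is exactly here that one needs the real content of \cite{conrad-gabber-prasad:prg}*{Proposition A.8.14(1)}: the multiplicative-type hypothesis on \(\Gamma\) forces \(f\) to be smooth nonetheless.

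Granting that \(f\) has open image, the remaining steps are formal. The image of \(f\) is a subgroup of \(\Cent_{\bG'}(\Gamma)\) that is open, hence also closed and containing the identity component; by \cite{conrad-gabber-prasad:prg}*{Example A.1.12} the image of the identity component \(\Cent_\bG(\Gamma)\conn\) is the identity component of the image, which by openness is all of \(\Cent_{\bG'}(\Gamma)\conn\). Thus \(f\) restricts to a surjective homomorphism \(\Cent_\bG(\Gamma)\conn \to \Cent_{\bG'}(\Gamma)\conn\) of smooth, connected, affine algebraic groups over \field; any such homomorphism is faithfully flat, since it factors as a faithfully flat quotient followed by a closed immersion, and that closed immersion is an isomorphism because its image is all of the reduced group \(\Cent_{\bG'}(\Gamma)\conn\). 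Hence \(\Cent_\bG(\Gamma)\conn \to \Cent_{\bG'}(\Gamma)\conn\) exhibits its target as a quotient of its source, as claimed.
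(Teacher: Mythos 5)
The paper does not supply a proof of this lemma; the bracketed citation \emph{is} the proof, and you are reconstructing the glue that connects the two cited results.  Your reconstruction is essentially correct: \(\bG'\) is smooth (the quotient of a smooth group scheme of finite type over a field by any closed normal subgroup scheme is again smooth), the fixed-point schemes are smooth because \(\Gamma\) is of multiplicative type, and once one knows that \(f\colon\Cent_\bG(\Gamma) \to \Cent_{\bG'}(\Gamma)\) is surjective on identity components, a surjective homomorphism of smooth affine algebraic groups over a field is automatically faithfully flat (factor through \(H/\ker f\) and note the resulting closed immersion into the reduced target is an isomorphism), which is what ``quotient'' means here.

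One small but genuine slip: you assert that having open image is ``equivalently'' (given smooth source and target) the same as \(f\) being smooth.  That equivalence is false --- the relative Frobenius of \(\mathbb{G}_{\mathrm a}\) is a surjective, hence open-image, homomorphism of smooth groups whose differential is zero, so it is not smooth.  What you actually need, and what \cite{conrad-gabber-prasad:prg}*{Proposition A.8.14(1)} directly supplies, is surjectivity of \(\Cent_\bG(\Gamma)\conn \to \Cent_{\bG'}(\Gamma)\conn\); the detour through smoothness of \(f\) as a morphism is unnecessary and, absent a smoothness statement in the cited proposition, unjustified.  Your illustrative Lie-algebra computation in the case of a smooth kernel (exactness of \(\Gamma\)-invariants for \(\Gamma\) linearly reductive) is correct and is a nice sanity check, and your correct observation that this argument alone breaks down for non-smooth kernels is precisely why the citation carries the load.
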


\begin{lem}
\label{lem:cent-by-Lie}
Suppose that \bJ is a 
\(\Gamma\)-stable, connected subgroup of \bG such that \(\Cent_\bJ(\Gamma)\) is smooth.
If \(\Lie(\bJ)\) contains (respectively, is contained in) \(\Cent_{\Lie(\bG)}(\Gamma)\), then \bJ contains (respectively, is contained in) \(\Cent_\bG(\Gamma)\conn\).
\end{lem}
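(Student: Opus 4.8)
The plan is to handle the two directions of the ``respectively'' statement by a single template: convert the hypothesis on Lie algebras into an equality of dimensions, using Lemma~\ref{lem:Lie-cent} and the smoothness of \(\Cent_\bJ(\Gamma)\), and then promote that dimension equality to the asserted containment of groups by exploiting that a connected algebraic group is irreducible, so that any closed subscheme of it attaining the full dimension is the whole thing.

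First I would record the bookkeeping. Since \bJ is \(\Gamma\)-stable, the action of \(\Gamma\) on \bG restricts to \bJ, and scheme-theoretically \(\Cent_\bJ(\Gamma) = \Cent_\bG(\Gamma) \cap \bJ\); likewise, because \(\Lie(\bJ)\) is a \(\Gamma\)-submodule of \(\Lie(\bG)\), we have \(\Cent_{\Lie(\bJ)}(\Gamma) = \Cent_{\Lie(\bG)}(\Gamma) \cap \Lie(\bJ)\). Lemma~\ref{lem:Lie-cent}, applied to the action of \(\Gamma\) on \bJ and on \bG, gives \(\Lie(\Cent_\bJ(\Gamma)) = \Cent_{\Lie(\bJ)}(\Gamma)\) and \(\Lie(\Cent_\bG(\Gamma)) = \Cent_{\Lie(\bG)}(\Gamma)\). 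Since \(\Cent_\bJ(\Gamma)\) is smooth by hypothesis, its dimension equals \(\dim \Cent_{\Lie(\bJ)}(\Gamma) = \dim\bigl(\Cent_{\Lie(\bG)}(\Gamma) \cap \Lie(\bJ)\bigr)\).

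For the containment direction, suppose \(\Lie(\bJ) \subseteq \Cent_{\Lie(\bG)}(\Gamma)\). Then the intersection above is \(\Lie(\bJ)\), so \(\dim \Cent_\bJ(\Gamma) = \dim \Lie(\bJ) \ge \dim \bJ \ge \dim \Cent_\bJ(\Gamma)\), the last inequality because \(\Cent_\bJ(\Gamma) \subseteq \bJ\); hence \(\Cent_\bJ(\Gamma)\) and \bJ have equal dimension. The identity component \(\Cent_\bJ(\Gamma)\conn\) is then a connected, hence irreducible, closed subscheme of the irreducible scheme \bJ of full dimension, so it equals \bJ. Therefore \(\bJ = \Cent_\bJ(\Gamma)\conn \subseteq \Cent_\bG(\Gamma)\), and connectedness of \bJ forces \(\bJ \subseteq \Cent_\bG(\Gamma)\conn\). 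For the reverse-inclusion direction, suppose \(\Cent_{\Lie(\bG)}(\Gamma) \subseteq \Lie(\bJ)\). The same computation gives \(\dim \Cent_\bJ(\Gamma) = \dim \Cent_{\Lie(\bG)}(\Gamma) = \dim \Lie(\Cent_\bG(\Gamma)) \ge \dim \Cent_\bG(\Gamma) \ge \dim \Cent_\bJ(\Gamma)\), using \(\Cent_\bJ(\Gamma) \subseteq \Cent_\bG(\Gamma)\); so \(\Cent_\bJ(\Gamma)\) and \(\Cent_\bG(\Gamma)\) have the same dimension. Now \(\Cent_\bJ(\Gamma)\conn \subseteq \Cent_\bG(\Gamma)\conn \cap \bJ \subseteq \Cent_\bJ(\Gamma)\), so the middle group has the same dimension as \(\Cent_\bG(\Gamma)\conn\), and hence so does its identity component \(\bigl(\Cent_\bG(\Gamma)\conn \cap \bJ\bigr)\conn\); being connected, hence irreducible, closed in the irreducible scheme \(\Cent_\bG(\Gamma)\conn\), and of full dimension, it equals \(\Cent_\bG(\Gamma)\conn\). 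Thus \(\Cent_\bG(\Gamma)\conn \subseteq \bJ\).

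I do not anticipate a serious obstacle: the only points requiring care are the two invocations of the standard fact that a closed subscheme of an irreducible, finite-type \field-scheme attaining the ambient dimension is the whole scheme (which uses that connected algebraic groups are irreducible, together with dimension theory for varieties), and, in the second case, keeping \(\Cent_\bG(\Gamma)\conn \cap \bJ\) distinct from \(\Cent_\bJ(\Gamma)\) while noting that the two are squeezed between the same identity component and so share a dimension. Note also that smoothness of \bG is never used; it is forced on \bJ in the first case and on \(\Cent_\bG(\Gamma)\) in the second as a by-product of the dimension equalities.
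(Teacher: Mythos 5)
Your proof is correct and follows essentially the same route as the paper's. The paper handles both directions by first computing \(\Lie(\Cent_\bJ(\Gamma))\) via Lemma~\ref{lem:Lie-cent} and then invoking \cite{milne:algebraic-groups}*{Proposition 10.15} (a smooth subgroup of a connected group with the same Lie algebra as the whole group equals the whole group); your dimension count, together with the irreducibility of a connected group scheme and the observation that the dimension equalities force the ambient group to be smooth and hence reduced, is precisely the content of that cited proposition, so you have merely unpacked the black box rather than taken a different path.
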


\begin{proof}
We have that \(\Lie(\Cent_\bG(\Gamma))\) equals \(\Cent_{\Lie(\bG)}(\Gamma)\) and \(\Lie(\Cent_\bJ(\Gamma))\) equals \(\Cent_{\Lie(\bJ)}(\Gamma)\).

We now use \cite{milne:algebraic-groups}*{Proposition 10.15}, which says that, if a smooth subgroup of a connected group has the same Lie algebra as the full group, then the smooth subgroup and the connected group are equal.

If \(\Lie(\bJ)\) is contained in \(\Cent_{\Lie(\bG)}(\Gamma)\), then \(\Lie(\Cent_\bJ(\Gamma))\) equals \(\Lie(\bJ)\), so \(\Cent_\bJ(\Gamma)\conn\) equals \(\bJ\conn = \bJ\), so that \bJ is contained in \(\Cent_\bG(\Gamma)\conn\).

If \(\Lie(\bJ)\) contains \(\Cent_{\Lie(\bG)}(\Gamma)\), then \(\Lie(\Cent_\bJ(\Gamma))\) equals \(\Lie(\Cent_\bG(\Gamma))\), so \(\Cent_\bJ(\Gamma)\conn\) equals \(\Cent_\bG(\Gamma)\conn\), so that \bJ contains \(\Cent_\bG(\Gamma)\conn\).
\end{proof}

\subsection{$\gamma$ an automorphism of \bG}
\label{subsec:gp-funny}

{
\let\usualCC=\CC
\let\usualCCp=\CCp
\let\usualZZ=\ZZ
\renewcommand\CC[2]{\Cent_{#1}^{(#2)}}
\renewcommand\ZZ[2]{\Zent_{#1}^{(#2)}}
\newcommand\wCC[2]{\vphantom{\widetilde\Cent}\smash{\widetilde\Cent}_{#1}^{(#2)}}

Definition \ref{defn:GL-fc} begins with an elementary linear-algebra definition on which we pin the entirety of our construction.

The groups \(\ZZ{\GL(\bV)}{\mc L}(\gamma)\) and \(\CC{\GL(\bV)}{\mc L}(\gamma)\) in Definition \ref{defn:GL-fc} are not obviously intrinsic; that is, they appear to depend not just on the group \(\GL(\bV)\), but on its specific representation on \bV.  Proposition \initref{prop:gp-dfc-rep}\subpref{regular} shows that in fact \(\CC{\GL(\bV)}{\mc L}(\gamma)\), hence also its centre \(\ZZ{\GL(\bV)}{\mc L}(\gamma)\), \emph{can} be defined purely in terms of the algebraic-group structure on \(\GL(\bV)\).

\begin{defn}
\label{defn:GL-fc}
Suppose that
	\begin{itemize}
	\item \bV is a (possibly infinite dimensional) vector space over \field,
	\item \(\gamma\) is a locally diagonalisable automorphism of \bV,
and	\item \mc L is a subgroup of \(\field\mult\).
	\end{itemize}
For every \(\lambda \in \field\mult\), we define the \mc L-close \(\lambda\)-eigenspace for \(\gamma\) in \bV to be the sum of the eigenspaces for \(\gamma\) in \bV corresponding to eigenvalues in \(\lambda\mc L\), and write \(\CC\bV{\mc L}(\gamma)\) for the \mc L-close \(1\)-eigenspace for \(\gamma\) in \bV.

If \bV is finite dimensional, then we write \(\ZZ{\GL(\bV)}{\mc L}(\gamma)\) for the split torus in \(\GL(\bV)\) corresponding to the direct-sum decomposition of \bV as the sum of its \mc L-close eigenspaces for \(\gamma\), and \(\CC{\GL(\bV)}{\mc L}(\gamma)\) for \(\Cent_{\GL(\bV)}(\ZZ{\GL(\bV)}{\mc L}(\gamma))\).

If \field is equipped with a discrete valuation, then, for each \(i \in \sbjtl\tR 0\), put
\(\usualCC\bV i(\gamma) = \CC\bV{\sbjtl\field i}(\gamma)\), \(\usualZZ{\GL(\bV)}i(\gamma) = \ZZ{\GL(\bV)}{\sbjtl\field i}(\gamma)\), and \(\usualCC{\GL(\bV)}i(\gamma) = \CC{\GL(\bV)}{\sbjtl\field i}(\gamma)\).
\end{defn}

In the notation of Definition \ref{defn:GL-fc}, we have that \(\CC{\GL(\bV)}{\mc L}(\gamma)\) is the subgroup of \(\GL(\bV)\) preserving every \mc L-close eigenspace of \(\gamma\) in \(\bV\).

Let \anonmap\bG{\GL(\matnotn V\bV)} be a faithful, finite-dimensional representation of \bG, and fix a semisimple element \(\matnotn{gamma}\gamma \in \gNorm_{\GL(V)}(\bG)\).  Thus, the automorphism of \bG induced by \(\gamma\) is semisimple, in the sense of \cite{steinberg:endomorphisms}*{p.~51}, so the automorphism of \(\bG_\algfield\) induced by \(\gamma\) is quasi-semisimple, in the sense of \cite{steinberg:endomorphisms}*{p.~59}, by \cite{steinberg:endomorphisms}*{Theorem 7.5}.
We do not require that \(\gamma\) belong to \(G\).

\begin{prop}
\initlabel{prop:gp-dfc-rep}
Let \mc L be a subgroup of \(\field\mult\).
Suppose that \(\gamma\) is diagonalisable.
\begin{enumerate}
\item\sublabel{regular}
The group \(\bG \cap \CC{\GL(\bV)}{\mc L}(\gamma)\) is the subgroup of \bG that stabilises every \mc L-close eigenspace for \(\gamma\) in the regular representation \(\field[\bG]\) of \bG.
\item\sublabel{all}
For every (\(\sgen\gamma \ltimes \bG\))-module \bW (not necessarily faithful or finite dimensional), we have that \(\bG \cap \CC{\GL(\bV)}{\mc L}(\gamma)\) stabilises every \mc L-close eigenspace for \(\gamma\) in \bW.
\end{enumerate}
\end{prop}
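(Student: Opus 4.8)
The plan is to replace the \textit{ad hoc}, representation-dependent torus $\ZZ{\GL(\bV)}{\mc L}(\gamma)$ by an intrinsic diagonalisable subgroup of $\sgen\gamma$ that normalises $\bG$, and then to argue uniformly from the weight-space decompositions of $\sgen\gamma$-modules. Write $\bD = \sgen\gamma$; since $\gamma$ is diagonalisable, $\bD$ is split diagonalisable, and $\bD \subseteq \gNorm_{\GL(\bV)}(\bG)$ because the latter is a closed subgroup containing $\gamma$, so $\bD$ acts on $\bG$ by conjugation inside $\GL(\bV)$. Evaluation at $\gamma$ injects $\clat(\bD)$ into $\field\mult$; let $\bL \subseteq \clat(\bD)$ be the preimage of $\mc L$ and set $\bA = \bigcap_{\chi \in \bL} \ker\chi$, a split diagonalisable subgroup of $\bD$ --- hence normalising $\bG$ --- with $\clat(\bA) = \clat(\bD)/\bL$. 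Next I would record the elementary bookkeeping: for any rational $\bD$-module $\bM = \bigoplus_{\chi \in \clat(\bD)} \bM_\chi$, the element $\gamma$ acts on $\bM_\chi$ by the scalar $\chi(\gamma)$, and two characters of $\bD$ restrict to the same character of $\bA$ exactly when their difference lies in $\bL$, i.e.\ exactly when their values at $\gamma$ are $\mc L$-close; so the $\mc L$-close eigenspaces of $\gamma$ in $\bM$ are precisely the weight spaces of $\bA$ on $\bM$, with pairwise distinct weights on the nonzero pieces, and (since $\clat(\bD) \to \clat(\bA)$ is onto) every weight space of $\bA$ arises this way. Applying this with $\bM = \bV$ shows that $\ZZ{\GL(\bV)}{\mc L}(\gamma)$ and $\bA$ have the same centraliser in $\GL(\bV)$, so $\bG \cap \CC{\GL(\bV)}{\mc L}(\gamma) = \bG \cap \Cent_{\GL(\bV)}(\bA) = \Cent_\bG(\bA)$.

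Given this identification both parts follow from one observation: for $g \in \bG$, we have $g \in \Cent_\bG(\bA)$ if and only if $g$ commutes with $\bA$ inside $\bD \ltimes \bG$, and in that case, for \emph{any} $(\bD \ltimes \bG)$-module $\bW$, the action of $g$ on $\bW$ commutes with that of $\bA$ and hence preserves every $\bA$-weight space of $\bW$ --- which, by the bookkeeping applied to $\bW$, are exactly the $\mc L$-close eigenspaces of $\gamma$ in $\bW$. This is \subpref{all}. For \subpref{regular} one takes $\bW = \field[\bG]$, a $(\bD \ltimes \bG)$-module via right translations $R_g$ of $\bG$ and the action $L_a$ of $\bD$ by automorphisms, these satisfying $L_a R_g = R_{a \cdot g} L_a$ (with $a \cdot g$ the image of $g$ under $a$); the observation gives the inclusion of $\Cent_\bG(\bA)$ in the stabiliser of the $\mc L$-close eigenspaces in $\field[\bG]$. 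For the reverse, if $g$ stabilises every $\mc L$-close eigenspace of $\gamma$ in $\field[\bG]$, hence every $\bA$-weight space $\field[\bG]_\chi$, then on $\field[\bG]_\chi$ the operator $R_g$ commutes with $L_a$ (which acts there by the scalar $\chi(a)$); as the weight spaces span $\field[\bG]$, $R_g$ commutes with every $L_a$, giving $R_{a \cdot g} = R_g$ and hence $a \cdot g = g$ for every $a \in \bA$ (the right regular representation being faithful), so that $g \in \Cent_\bG(\bA)$.

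The only genuinely non-formal step is the reduction in the first paragraph: one must check both that $\bA$ normalises $\bG$ (so that $\Cent_\bG(\bA) = \bG \cap \Cent_{\GL(\bV)}(\bA)$) and that $\Cent_{\GL(\bV)}(\bA)$ coincides with $\CC{\GL(\bV)}{\mc L}(\gamma) = \Cent_{\GL(\bV)}(\ZZ{\GL(\bV)}{\mc L}(\gamma))$ --- which amounts to identifying the $\bA$-weight decomposition of $\bV$ with the $\mc L$-close eigenspace decomposition, so that the centralisers of $\bA$ and of the ``close-eigenspace torus'' are both the stabiliser of that decomposition (here it matters that the weights that occur are pairwise distinct). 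This is exactly where the construction of $\bA$ earns its keep, since $\ZZ{\GL(\bV)}{\mc L}(\gamma)$ itself is tied to $\bV$ and need not normalise $\bG$. The remaining points are routine: the $\bD$-action on $\bG$, and hence the actions on $\field[\bG]$, are algebraic because $\bD$ normalises $\bG$ inside $\GL(\bV)$; a rational $\bD$-module decomposes into weight spaces regardless of its dimension, which disposes of the ``not necessarily finite dimensional'' clause in \subpref{all}; and faithfulness of $\bW$ is never used.
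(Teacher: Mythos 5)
Your proof is correct, and it takes a genuinely different route from the paper. The paper proves the proposition by module-theoretic bookkeeping: it observes that the class of $(\sgen\gamma \ltimes \bG)$-modules for which the stabilising property holds is closed under subspaces, quotients, duals, directed unions, direct sums, and tensor products, then works its way from $\field[\gl(\bV)] \otimes \field[\gl(\contra\bV)]$ through a matrix-coefficient embedding of an arbitrary $\bW$ into $\field[\Gamma \ltimes \bG] \otimes \bW_\textsub{triv}$. You instead replace the ``close-eigenspace torus'' $\ZZ{\GL(\bV)}{\mc L}(\gamma)$, which is tied to the particular representation $\bV$ and need not normalise $\bG$, by the intrinsic diagonalisable subgroup $\bA = \bigcap_{\chi \in \bL}\ker\chi$ of $\sgen\gamma$, whose weight spaces on \emph{every} rational $\sgen\gamma$-module are the $\mc L$-close eigenspaces for $\gamma$, and then reduce both halves of the proposition to the classical fact that centralising a diagonalisable subgroup is the same as preserving its weight decomposition. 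Your reduction subsumes the paper's Remark \initref{rem:gp-dfc-facts}\subpref{section} (which achieves the same kind of identification under the \emph{extra} hypothesis that an embedding $f: \mc G/(\mc G \cap \mc L) \to \field\mult$ exists, allowing one to use a single element $\delta$ rather than a subgroup $\bA$), and it would also make Proposition \initref{prop:gp-dfc-smooth}\subpref{smooth} immediate from the smoothness of fixed-point schemes of multiplicative-type actions (\cite{conrad-gabber-prasad:prg}*{Proposition A.8.10}), instead of requiring the paper's explicit regularity argument. The paper's route is more hands-on and self-contained, avoiding the anti-equivalence between abelian groups and diagonalisable group schemes; yours is shorter, more conceptual, and makes the underlying structure visible. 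Two small points are worth stating explicitly when you write this up: the injectivity of $\chi \mapsto \chi(\gamma)$ from $\clat(\sgen\gamma)$ to $\field\mult$ uses that $\sgen\gamma$ is by definition the smallest closed subgroup through $\gamma$, so a character trivial on $\gamma$ is trivial on $\sgen\gamma$; and the equality $\bG \cap \Cent_{\GL(\bV)}(\bA) = \Cent_\bG(\bA)$ (in the paper's sense of a fixed-point scheme) is a scheme-theoretic identification that relies on $\bA$ normalising $\bG$, which you correctly get from $\sgen\gamma \subseteq \gNorm_{\GL(\bV)}(\bG)$.
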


\begin{proof}
Put \(\Gamma = \sgen\gamma\).  (Remember that this means the smallest subgroup scheme of \bG containing \(\gamma\), not the abstract group generated by \(\gamma\).)

Our central observation is that, if \(\wtilde\bG'\) is a subgroup functor of \bG, then the set of (\(\Gamma \ltimes \bG\))-modules \bW for which \(\wtilde\bG'\) stabilises every \mc L-close eigenspace for \(\gamma\) in \bW
is closed under
	\begin{itemize}
	\item subspaces, since close eigenspaces in a submodule \(\bW'\) of \bW are intersections with \(\bW'\) of close eigenspaces in \bW;
	\item quotients, since close eigenspaces in a quotient module \(\bW'\) of \bW are the images in \(\bW'\) of close eigenspaces in \bW;
	\item duals, since close eigenspaces in \(\contra\bW\) are dual to close eigenspaces in \bW;
	\item directed unions, since an eigenvector in \(\bigcup_{i \in I} \bW_i\) lies in some \(\bW_i\);
	\item direct sums, since close eigenspaces in \(\bW_1 \oplus \bW_2\) are sums of close eigenspaces in \(\bW_1\) and \(\bW_2\);
and	\item tensor products, since close eigenspaces in \(\bW_1 \otimes \bW_2\) are sums of tensor products of eigenspaces in \(\bW_1\) and \(\bW_2\).
	\end{itemize}
(There is `coalescing' in the tensor-product case, where different close eigenspaces in \(\bW_1\) and \(\bW_2\) contribute to the same close eigenspace in \(\bW_1 \otimes \bW_2\), but this does not cause trouble.)

Write \(\wCC\bG{\mc L}(\gamma)\) for the subgroup functor of \bG that preserves every \mc L-close eigenspace for \(\gamma\) in \(\field[\bG]\), and analogously for \(\wCC{\Gamma \ltimes \bG}{\mc L}(\gamma)\).

For every \field-vector space \bM, we write \(\bM\textsub{\bG-triv}\) for the trivial \bG-module structure on \bM, and similarly for `(\(\Gamma \ltimes \bG\))-triv'.  Then
\anonmap{\field[\Gamma]\textsub{\bG-triv} \otimes \field[\bG]}{\field[\Gamma \ltimes \bG]} is a \bG-module isomorphism, so the map \anonmap{\wCC\bG{\mc L}(\gamma)}{\Gamma \ltimes \bG} factors through \anonmap{\wCC{\Gamma \ltimes \bG}{\mc L}(\gamma)}{\Gamma \ltimes \bG}.

For \locpref{all}, since \bW is the union of its finite-dimensional (\(\Gamma \ltimes \bG\))-submodules, we may, and do, assume that \bW is finite dimensional.  Then the matrix-coefficient map \anonmap{\bW \otimes \contra\bW\textsub{(\(\Gamma \ltimes \bG\))-triv}}{\field[\Gamma \ltimes \bG]} given by \anonmapto{w \otimes \contra w}{\anonmapto{(\delta, g)}{\pair{\contra w}{(\delta, g)\dota w}}} dualises to an embedding \anonmap\bW{\field[\Gamma \ltimes \bG] \otimes \bW\textsub{(\(\Gamma \ltimes \bG\))-triv}}.  In particular, since \(\wCC{\Gamma \ltimes \bG}{\mc L}(\gamma)\) stabilises every \mc L-close eigenspace for \(\gamma\) in \(\field[\Gamma \ltimes \bG] \otimes \bW\textsub{(\(\Gamma \ltimes \bG\))-triv}\), it also stabilises every \mc L-close eigenspace for \(\gamma\) in \bW.  Applying this to \(\bW = \bV\) gives that \(\wCC\bG{\mc L}(\gamma) \subseteq \wCC{\Gamma \ltimes \bG}{\mc L}(\gamma)\) is contained in \(\bG \cap \CC{\GL(\bV)}{\mc L}(\gamma)\).  This establishes \locpref{all}.

It remains only to show that \(\bG \cap \CC{\GL(\bV)}{\mc L}(\gamma)\) is contained in \(\wCC\bG{\mc L}(\gamma)\).  As in \cite{deligne:hodge-cycles}*{proof of Proposition 3.1(a)}, the closed embedding \anonmap{\anonmap\bG{\GL(\bV)}}{\gl(\bV) \times \gl(\contra\bV)} realises \(\field[\bG]\) as a quotient of \(\field[\gl(\bV)] \otimes \field[\gl(\contra\bV)]\).  Since \(\bG \cap \CC{\GL(\bV)}{\mc L}(\gamma)\) preserves every \mc L-close eigenspace for \(\gamma\) in the symmetric algebras \(\field[\gl(\bV)]\) and \(\field[\gl(\contra\bV)]\), it also preserves every such close eigenspace in \(\field[\bG]\), as desired.  This establishes \locpref{regular}.
\end{proof}

Now let \matnotn L{\mc L} be a \(\Gal(\sepfield/\field)\)-stable subgroup of \(\field\twosup\sepsup\multsup\).

\begin{defn}
\label{defn:gp-dfc}
Write \(\CC\bV{\mc L}(\gamma)\) and \(\CC\bG{\mc L}(\gamma)\) for the descents to \field of \(\CC{\bV_\sepfield}{\mc L}(\gamma)\) and \(\CC{\GL(\bV_\sepfield)}{\mc L}(\gamma) \cap \bG_\sepfield\).
If \sepfield is equipped with a \(\Gal(\sepfield/\field)\)-fixed discrete valuation, then, for each \(i \in \sbjtl\tR 0\), put \(\usualCC\bV i(\gamma) = \CC\bV{\sbjtl{\field\twosup\sepsup\multsup}i}(\gamma)\) and \(\usualCC\bG i(\gamma) = \CC\bG{\sbjtl{\field\twosup\sepsup\multsup}i}(\gamma)\).
\end{defn}


\begin{rem}
\initlabel{rem:gp-dfc-facts}
\hfill\begin{enumerate}
\item\sublabel{central}
Proposition \initref{prop:gp-dfc-rep}\subpref{regular} gives that the group \(\CC\bG{\mc L}(\gamma)\) depends only on (\bG, \mc L, and) the automorphism of \bG induced by \(\gamma\), not on the specific element \(\gamma \in \GL(V)\) inducing that automorphism or the faithful representation \anonmap\bG{\GL(\bV)}.
\item\sublabel{sub}
If \(\bG'\) is a \(\gamma\)-stable subgroup of \bG, then \(\CC{\bG'}{\mc L}(\gamma)\) equals \(\CC\bG{\mc L}(\gamma) \cap \bG'\).
\item\sublabel{unfunny}
If \(\mc L = 1\) is the trivial subgroup, then \(\Cent_\bV(\gamma)\) equals \(\CC\bV{\mc L}(\gamma)\) and \(\Cent_\bG(\gamma)\) equals \(\CC\bG{\mc L}(\gamma)\).
\item\sublabel{monotone}
If \(\mc L'\) is a \(\Gal(\sepfield/\field)\)-stable subgroup of \(\field\twosup\sepsup\multsup\), then \(\CC\bV{\mc L}(\gamma) \cap \CC\bV{\mc L'}(\gamma)\) equals \(\CC\bV{\mc L \cap \mc L'}(\gamma)\) and, since every \mc L- and every \(\mc L'\)-close eigenspace is a sum of (\(\mc L \cap \mc L'\))-close eigenspaces, also \(\CC\bG{\mc L}(\gamma) \cap \CC\bG{\mc L'}(\gamma)\) equals \(\CC\bG{\mc L \cap \mc L'}(\gamma)\).
\item\sublabel{section}
If \(\gamma\) is diagonalisable, \mc G is the subgroup \(\set{\chi(\gamma)}{\chi \in \clat(\sgen\gamma)}\) of \(\field\mult\) generated by the eigenvalues of \(\gamma\) on \bV, and there is an embedding \map f{\mc G/(\mc G \cap \mc L)}{\field\mult}, then the element \(\delta \ldef f \circ \gamma\) of \(\Hom(\clat(\sgen\gamma), \field\mult) = \sgen\gamma(\field)\) lies in \(\ZZ{\GL(V)}{\mc L}(\gamma)\) and satisfies \(\CC\bG{\mc L}(\gamma) = \Cent_\bG(\delta)\).
\end{enumerate}
\end{rem}

Note that Lemma \ref{lem:gp-dfc-im} does not claim that the image of \(\CC{\wtilde\bG}{\mc L}(\gamma)\) \emph{equals} \(\CC\bG{\mc L}(\gamma)\), and indeed this can fail in general (as is already seen by considering \(\wtilde\bG = \GL_2\), \(\bG = \GL_2\), \(\mc L = 1\), \(\tilde\gamma = \operatorname{diag}(1, -1)\), and the natural map \anonmap{\wtilde\bG}\bG, if \(-1\) does not equal \(1\) in \field); but see Proposition \initref{prop:gp-cfc-facts}\subpref{quotient}.

Recall that \(\sgen\gamma\) denotes the smallest subgroup scheme of \(\GL(\bV)\) containing \(\gamma\), not the abstract group generated by \(\gamma\); and similarly for \(\sgen{\tilde\gamma}\), regarded as a subgroup scheme of the implicit overgroup of \(\wtilde\bG\) in which \(\tilde\gamma\) lies.

\begin{lem}
\label{lem:gp-dfc-im}
Suppose that
	\(\wtilde\bG\) is an algebraic group over \field,
	\(\tilde\gamma\) is a semisimple automorphism of \(\wtilde\bG\),
and	\map f{\sgen{\tilde\gamma} \ltimes \wtilde\bG}{\sgen\gamma \ltimes \bG} is a homomorphism sending \(\tilde\gamma\) to \(\gamma\).
Then the image of \(\CC{\wtilde\bG}{\mc L}(\tilde\gamma)\) under \(f\) is contained in \(\CC\bG{\mc L}(\gamma)\).
\end{lem}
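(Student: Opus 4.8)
The plan is to deduce the lemma directly from Proposition \ref{prop:gp-dfc-rep}. First I would reduce to the case that \field is separably closed: both \(\CC\bG{\mc L}(\gamma)\) and \(\CC{\wtilde\bG}{\mc L}(\tilde\gamma)\) are by definition the descents to \field of the corresponding subgroups over \sepfield, \(f\) is defined over \field, and a containment of closed subgroup schemes may be checked after the faithfully flat extension \(\sepfield/\field\); so it suffices to argue over \sepfield. There \(\gamma\) and \(\tilde\gamma\) are diagonalisable, so Proposition \ref{prop:gp-dfc-rep} applies, both with \((\bG, \gamma, \bV)\) as given and --- after choosing a faithful representation \(\wtilde\bG \hookrightarrow \GL(\wtilde\bV)\) in which \(\tilde\gamma\) normalises \(\wtilde\bG\), as the definition of \(\CC{\wtilde\bG}{\mc L}(\tilde\gamma)\) presupposes --- with \((\wtilde\bG, \tilde\gamma, \wtilde\bV)\) in place of \((\bG, \gamma, \bV)\). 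In particular \(\CC\bG{\mc L}(\gamma) = \bG \cap \CC{\GL(\bV)}{\mc L}(\gamma)\), which by Proposition \ref{prop:gp-dfc-rep}\subpref{regular} is precisely the subgroup of \bG stabilising every \mc L-close eigenspace for \(\gamma\) in the regular representation \(\field[\bG]\) of \bG.

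The key step is to feed the right module into Proposition \ref{prop:gp-dfc-rep}\subpref{all}. Regard \(\field[\bG]\) as a \((\sgen\gamma \ltimes \bG)\)-module, with \bG acting by right translation and \(\gamma\) acting through the automorphism it induces on \bG; one checks at once that these two actions are compatible with the semidirect-product structure. Pulling back along \(f\) makes \(\field[\bG]\) into a \((\sgen{\tilde\gamma} \ltimes \wtilde\bG)\)-module \bW, which need not be faithful for \(\wtilde\bG\) --- and that is exactly why the ``every module, faithful or not, finite-dimensional or not'' strength of part \subpref{all} is what we need here. Since \(f\) sends \(\tilde\gamma\) to \(\gamma\), the operator by which \(\tilde\gamma\) acts on \bW is literally the operator by which \(\gamma\) acts on \(\field[\bG]\); hence the \mc L-close \(\tilde\gamma\)-eigenspaces in \bW coincide with the \mc L-close \(\gamma\)-eigenspaces in \(\field[\bG]\). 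Applying Proposition \ref{prop:gp-dfc-rep}\subpref{all} to \(\wtilde\bG\) then shows that \(\CC{\wtilde\bG}{\mc L}(\tilde\gamma) = \wtilde\bG \cap \CC{\GL(\wtilde\bV)}{\mc L}(\tilde\gamma)\) stabilises each of these eigenspaces; equivalently, every element of \(f(\CC{\wtilde\bG}{\mc L}(\tilde\gamma))\) stabilises every \mc L-close eigenspace for \(\gamma\) in \(\field[\bG]\).

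To conclude, observe that \(f\) maps \(\wtilde\bG\) into \bG, so \(f(\CC{\wtilde\bG}{\mc L}(\tilde\gamma))\) lies in \bG; combining this with the previous paragraph and the description of \(\CC\bG{\mc L}(\gamma)\) furnished by Proposition \ref{prop:gp-dfc-rep}\subpref{regular} gives \(f(\CC{\wtilde\bG}{\mc L}(\tilde\gamma)) \subseteq \CC\bG{\mc L}(\gamma)\), as desired. I expect the one genuine subtlety to be the bookkeeping in the second paragraph --- keeping straight that it is the \emph{same} algebra \(\field[\bG]\), carrying one \(\sgen\gamma\)-action, that is being read through \(f\) as carrying the ``same'' \(\sgen{\tilde\gamma}\)-action, so that its \(\gamma\)-eigenspace decomposition \emph{is} its \(\tilde\gamma\)-eigenspace decomposition --- together with the routine verification that \(\field[\bG]\) genuinely carries a \((\sgen\gamma \ltimes \bG)\)-module structure. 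Everything else is formal once Proposition \ref{prop:gp-dfc-rep} is in hand.
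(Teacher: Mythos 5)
Your proof is correct and shares the paper's core idea — apply Proposition \ref{prop:gp-dfc-rep}\subpref{all} to the group \(\wtilde\bG\) with a module pulled back along \(f\), and then observe that the \(\mc L\)-close eigenspaces for \(\tilde\gamma\) and for \(\gamma\) coincide because \(f\) sends one to the other. The only divergence is in the choice of test module: you feed in \(\field[\bG]\), which then forces a second invocation of part \subpref{regular} to recognise the stabiliser of its eigenspaces as \(\CC\bG{\mc L}(\gamma)\); the paper instead feeds in \(\bV\) itself, for which the stabiliser of eigenspaces is \(\CC{\GL(\bV)}{\mc L}(\gamma)\) by the very construction of \(\ZZ{\GL(\bV)}{\mc L}(\gamma)\), so it can conclude directly from the definition \(\CC\bG{\mc L}(\gamma) = \bG \cap \CC{\GL(\bV)}{\mc L}(\gamma)\) without routing through the regular representation. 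Both arguments are sound; the paper's is one step shorter because \(\bV\) is the representation \emph{used to define} \(\CC\bG{\mc L}(\gamma)\), whereas \(\field[\bG]\) gives an equivalent but indirect characterisation. (The ``routine verification'' you flag — that right translation and the conjugation action of \(\gamma\) assemble into a \((\sgen\gamma \ltimes \bG)\)-module structure on \(\field[\bG]\) — does hold, and is in any case already implicit in the proof of Proposition \ref{prop:gp-dfc-rep} itself, so it is not a gap.)
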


\begin{proof}
We may, and do, assume, upon replacing \field by \sepfield, that \field is separably closed.

Composing \(f\) with the faithful representation \anonmap{\sgen\gamma \ltimes \bG}{\GL(\bV)} yields a representation of \(\sgen{\tilde\gamma} \ltimes \wtilde\bG\) on \bV.

Proposition \initref{prop:gp-dfc-rep}\subpref{all} gives that \(\CC{\wtilde\bG}{\mc L}(\gamma)\) preserves every \mc L-close eigenspace for \(\tilde\gamma\) in \bV, so that its image in \(\GL(\bV)\) lies in \(\CC{\GL(\bV)}{\mc L}(\tilde\gamma) = \CC{\GL(\bV)}{\mc L}(\gamma)\); but this means that its image in \bG lies in \(\bG \cap \CC{\GL(\bV)}{\mc L}(\gamma) = \CC\bG{\mc L}(\gamma)\).
\end{proof}

Lemma \initref{lem:gp-dfc-power}\subpref{exact} refines Remark \initref{rem:gp-dfc-facts}\subpref{section}.  Together with Remark \initref{rem:gp-dfc-facts}\subpref{monotone}, it establishes part of \xcite{spice:asymptotic}*{Hypothesis \initxref{hyp:gamma}(\subxref{bi-Lie-Lie})} (the part that does not involve Moy--Prasad filtrations) when \field is a valued field in which the valuation of \(2\) is \(0\).

\begin{lem}
\initlabel{lem:gp-dfc-power}
\hfill\begin{enumerate}
\item\sublabel{contain}
For every \(n \in \Z\), we have the containments
\(\CC\bV{\mc L}(\gamma) \subseteq \CC\bV{\mc L^n}(\gamma^n)\) and \(\CC\bG{\mc L}(\gamma) \subseteq \CC\bG{\mc L^n}(\gamma^n)\).
If \(\mu_n(\sepfield)\) is contained in \mc L, then both containments are equalities.
\item\sublabel{exact}
If
	the automorphism of \bG induced by \(\gamma\) has finite order \(N\),
and	if \(n \in \Z\) is such that \(\mu_n(\sepfield)\) contains \(\mu_N(\sepfield) \cap \mc L\) and is contained in \mc L,
then \(\Cent_\bG(\gamma^n)\) equals \(\CC\bG{\mc L}(\gamma)\).
\end{enumerate}
\end{lem}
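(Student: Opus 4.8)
The plan is to establish \locpref{contain} by an elementary computation with eigenspaces over a separable closure, and then to bootstrap \locpref{exact} out of it using Proposition \initref{prop:gp-dfc-rep} together with Remark \initref{rem:gp-dfc-facts}.

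For \locpref{contain}: all four groups in sight are, by Definition \ref{defn:gp-dfc}, descents to \field{} of groups defined over \sepfield{} (note that \(\mc L^n\) is again \(\Gal(\sepfield/\field)\)-stable, being the image of \mc L{} under the Galois-equivariant \(n\)th-power map), so by faithfully flat descent it suffices to work over \sepfield, where the semisimple automorphism \(\gamma\) is diagonalisable. Writing \(\bV = \bigoplus_\mu \bV_\mu\) for the decomposition into \(\gamma\)-eigenspaces, each \(\bV_\mu\) sits inside the \(\mu^n\)-eigenspace of \(\gamma^n\), so \(\CC\bV{\mc L}(\gamma) = \bigoplus_{\mu \in \mc L} \bV_\mu\) while \(\CC\bV{\mc L^n}(\gamma^n) = \bigoplus_{\mu^n \in \mc L^n} \bV_\mu\); the inclusion is immediate, and when \(\mu_n(\sepfield) \subseteq \mc L\) it is an equality, since then \(\mu^n = \ell^n\) for some \(\ell \in \mc L\) forces \(\mu/\ell \in \mu_n(\sepfield) \subseteq \mc L\), hence \(\mu \in \mc L\). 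For the group statement, recall from Definition \ref{defn:GL-fc} that \(\CC{\GL(\bV)}{\mc L}(\gamma) = \Cent_{\GL(\bV)}(\ZZ{\GL(\bV)}{\mc L}(\gamma))\) is precisely the stabiliser in \(\GL(\bV)\) of the decomposition of \bV{} into \mc L-close \(\gamma\)-eigenspaces. Because any two \(\gamma\)-eigenvalues lying in one \(\mc L\)-coset have \(n\)th powers in one \(\mc L^n\)-coset, the \(\mc L^n\)-close eigenspace decomposition of \bV{} for \(\gamma^n\) is a coarsening of the \mc L-close one for \(\gamma\); hence \(\ZZ{\GL(\bV)}{\mc L^n}(\gamma^n) \subseteq \ZZ{\GL(\bV)}{\mc L}(\gamma)\) and therefore \(\CC{\GL(\bV)}{\mc L}(\gamma) \subseteq \CC{\GL(\bV)}{\mc L^n}(\gamma^n)\); intersecting with \bG{} and descending gives the containment, and when \(\mu_n(\sepfield) \subseteq \mc L\) the two decompositions --- hence the two tori, hence the two centralisers --- coincide, giving equality.

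For \locpref{exact}: again reduce to \sepfield, so \(\gamma\) is diagonalisable. Since \(\gamma\) induces an automorphism of \bG{} of order \(N\), the element \(\gamma^N\) centralises \bG{} and so acts trivially on the regular representation \(\field[\bG]\); thus every eigenvalue of \(\gamma\) on \(\field[\bG]\) lies in \(\mu_N(\sepfield)\). By Proposition \initref{prop:gp-dfc-rep}\subpref{regular}, \(\CC\bG{\mu_N(\sepfield)}(\gamma)\) is the subgroup of \bG{} stabilising every \(\mu_N(\sepfield)\)-close \(\gamma\)-eigenspace in \(\field[\bG]\); since all those eigenvalues already lie in \(\mu_N(\sepfield)\), there is only one such eigenspace, namely all of \(\field[\bG]\), so \(\CC\bG{\mu_N(\sepfield)}(\gamma) = \bG\). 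Combining this with Remark \initref{rem:gp-dfc-facts}\subpref{monotone} gives, for any \(\Gal(\sepfield/\field)\)-stable subgroup \(\mc M\) of the multiplicative group of \sepfield, \(\CC\bG{\mc M}(\gamma) = \CC\bG{\mc M}(\gamma) \cap \CC\bG{\mu_N(\sepfield)}(\gamma) = \CC\bG{\mc M \cap \mu_N(\sepfield)}(\gamma)\). Now the hypotheses \(\mu_N(\sepfield) \cap \mc L \subseteq \mu_n(\sepfield) \subseteq \mc L\) give at once \(\mc L \cap \mu_N(\sepfield) = \mu_n(\sepfield) \cap \mu_N(\sepfield)\), so applying the previous identity to \(\mc M = \mc L\) and to \(\mc M = \mu_n(\sepfield)\) yields \(\CC\bG{\mc L}(\gamma) = \CC\bG{\mu_n(\sepfield)}(\gamma)\). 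Finally, applying \locpref{contain} with \(\mu_n(\sepfield)\) in place of \mc L{} and the same \(n\) --- for which \(\mu_n(\sepfield)^n = 1\) and the equality hypothesis \(\mu_n(\sepfield) \subseteq \mu_n(\sepfield)\) is automatic --- gives \(\CC\bG{\mu_n(\sepfield)}(\gamma) = \CC\bG 1(\gamma^n)\), which equals \(\Cent_\bG(\gamma^n)\) by Remark \initref{rem:gp-dfc-facts}\subpref{unfunny} applied to the semisimple element \(\gamma^n\).

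The eigenspace bookkeeping and the lattice identity \(\mc L \cap \mu_N(\sepfield) = \mu_n(\sepfield) \cap \mu_N(\sepfield)\) are routine. The one point demanding genuine care --- and which I expect to be the main obstacle --- is the step in \locpref{exact} that passes from \bV{} to the regular representation: \(\gamma\) need not act with finite order on the chosen faithful module \bV, so the observation that its eigenvalues on \(\field[\bG]\) \emph{do} lie in \(\mu_N(\sepfield)\) (and hence that \(\CC\bG{\mu_N(\sepfield)}(\gamma)\) is everything) genuinely needs Proposition \initref{prop:gp-dfc-rep}, and one must also keep the various descents from \sepfield{} to \field{} straight throughout.
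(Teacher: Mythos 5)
Your proof is correct. Part \initref{lem:gp-dfc-power}\subpref{contain} is exactly the paper's argument. For \initref{lem:gp-dfc-power}\subpref{exact} you take a genuinely different route: the paper invokes Remark \initref{rem:gp-dfc-facts}\subpref{central} to replace $\gamma$ by its image in $\Weyl(\Gamma, \bG)$ and switch to a faithful representation of $\Weyl(\Gamma, \bG) \ltimes \bG$ on which $\gamma$ itself (not merely the automorphism it induces) has order $N$, whence the $\mc L$-close eigenspaces in that new $\bV$ visibly coincide with the $\gamma^n$-eigenspaces and the conclusion drops out at once. You instead keep the original $\bV$, use Proposition \initref{prop:gp-dfc-rep}\subpref{regular} and the triviality of $\gamma^N$ on $\sepfield[\bG]$ to obtain $\CC\bG{\mu_N(\sepfield)}(\gamma) = \bG$, and then bootstrap via Remark \initref{rem:gp-dfc-facts}\subpref{monotone}, the identity $\mc L \cap \mu_N(\sepfield) = \mu_n(\sepfield) \cap \mu_N(\sepfield)$, and an application of part \initref{lem:gp-dfc-power}\subpref{contain} with $\mu_n(\sepfield)$ in place of $\mc L$. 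Both approaches exploit the same observation --- that $\gamma$ acts with eigenvalues in $\mu_N(\sepfield)$ on a suitable stand-in for $\bV$ --- together with the same lattice arithmetic; the paper's is more direct, while yours avoids changing the faithful representation and isolates the reusable intermediate fact $\CC\bG{\mu_N(\sepfield)}(\gamma) = \bG$. The step you flag as the main obstacle, passing from $\bV$ (where $\gamma$ need not have finite order) to a representation where it does, is exactly the point the paper handles by changing representations, and you navigate it correctly.
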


\begin{proof}
We may, and do, assume, upon replacing \field by \sepfield, that \(\gamma\) is diagonalisable.

\locpref{contain} follows from the fact that, for every \(\lambda \in \field\mult\), the \(\mc L^n\)-close \(\lambda^n\)-eigenspace for \(\gamma^n\) is the sum of the \mc L-close \(\zeta\lambda\)-eigenspaces for \(\gamma\), as \(\zeta\) ranges over \(\mu_n(\field)\).  In particular, if \(\mu_n(\field)\) is contained in \mc L, then the \(\mc L^n\)-close \(\lambda^n\)-eigenspace for \(\gamma^n\) equals the \mc L-close \(\lambda\)-eigenspace for \(\gamma\).


For \locpref{exact}, use Remark \initref{rem:gp-dfc-facts}\subpref{central} to replace \(\gamma\) by its image in \(\Weyl(\Gamma, \bG)(\field)\), and \anonmap\bG{\GL(\bV)} by a faithful representation \anonmap{\Weyl(\Gamma, \bG) \ltimes \bG}{\GL(\bV)}.  Now the element \(\gamma \in \GL(V)\) itself, not just its action on \bG, has order \(N\).  We have that the \mc L-close eigenspaces for \(\gamma\) in \bV are precisely the eigenspaces for \(\gamma^n\) in \bV, so that \(\CC{\GL(\bV)}{\mc L}(\gamma)\) equals \(\Cent_{\GL(\bV)}(\gamma^n)\), and hence that \(\CC\bG{\mc L}(\gamma)\) equals \(\Cent_\bG(\gamma^n)\).
\end{proof}

\begin{lem}
\label{lem:reg-rep-roots}
If \bG is a split, connected, reductive group and \bT is a split, maximal torus in \bG, then every weight of \bT in \(\field[\bG]\) belongs to \(\Z\Root(\bG, \bT)\).
\end{lem}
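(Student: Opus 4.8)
The plan is to realise $\field[\bG]$ inside the coordinate ring of the big Bruhat cell, where the $\bT$-weights — here for the action of $\bT$ on $\field[\bG]$ induced by the inner automorphisms $\Int(t)$, $t \in \bT$, this being the case of Proposition \ref{prop:gp-dfc-rep} relevant to elements of $\bT$ — are visibly integral combinations of roots.

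First I would fix the opposite Borel subgroup $\bB^-$ of $\bB^+$ with respect to $\bT$, with unipotent radicals $\bU$ of $\bB^+$ and $\bU^-$ of $\bB^-$, and recall that the multiplication map $\bU^- \times \bT \times \bU \to \bG$ is an open immersion onto the big cell $\Omega$. Because $\bT$ normalises each of $\bU^-$, $\bT$, and $\bU$, the open set $\Omega$ is stable under the conjugation action of $\bT$ on $\bG$, and the multiplication map above is equivariant for this action on $\bG$ and the product of the conjugation actions on the three factors. It therefore induces a $\bT$-equivariant isomorphism $\field[\Omega] \cong \field[\bU^-] \otimes_\field \field[\bT] \otimes_\field \field[\bU]$.

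Next I would read off the $\bT$-weights on each tensor factor. Conjugation by $\bT$ on $\bT$ is trivial, so $\field[\bT]$ contributes only the zero weight. For $\field[\bU]$, I would fix an enumeration $\alpha_1, \dots, \alpha_m$ of $\Root(\bU, \bT)$ for which the product map $\bU_{\alpha_1} \times \dots \times \bU_{\alpha_m} \to \bU$ is an isomorphism of varieties; this isomorphism is again equivariant for the conjugation actions, and conjugation by $\bT$ acts on the coordinate of the $i$-th factor through the character $\alpha_i$ (up to inversion, according to the sign convention chosen for the action on functions). Hence $\field[\bU]$ is a polynomial algebra whose generators span $\bT$-weight spaces of weights $\pm\alpha_1, \dots, \pm\alpha_m$, so every weight of $\bT$ on $\field[\bU]$ lies in $\Z\Root(\bG, \bT)$; the same holds for $\field[\bU^-]$. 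Taking tensor products, every $\bT$-weight of $\field[\Omega]$ lies in $\Z\Root(\bG, \bT)$.

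Finally, since $\bG$ is connected and reductive, it is irreducible as a variety, so the non-empty open subscheme $\Omega$ is dense and the restriction map $\field[\bG] \to \field[\Omega]$ is injective; it is moreover $\bT$-equivariant. Consequently every $\bT$-weight occurring in $\field[\bG]$ also occurs in $\field[\Omega]$, hence lies in $\Z\Root(\bG, \bT)$, as claimed. I expect the only genuinely fiddly point to be confirming that the product decomposition of $\bU$ (and of $\bU^-$) is $\bT$-equivariant with the weights indicated; the remainder of the argument is formal.
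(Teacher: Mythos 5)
Your proof is correct, but takes a genuinely different route from the paper's. The paper's argument is a one-liner: the conjugation action of $\bT$ on $\bG$, and hence on $\field[\bG]$, factors through the adjoint torus $\bT/\Zent(\bG)$, and $\clat(\bT/\Zent(\bG)) = \Z\Root(\bG, \bT)$, so every weight occurring lands in $\Z\Root(\bG, \bT)$. You instead embed $\field[\bG]$ into $\field[\Omega]$ for $\Omega$ the big cell, exploit the $\bT$-equivariant open immersion $\bU^- \times \bT \times \bU \to \bG$ together with the root-group parametrisation of $\bU^\pm$ to see that the weights on $\field[\Omega]$ are integral combinations of roots, and finish by noting that the restriction $\field[\bG] \to \field[\Omega]$ is injective and $\bT$-equivariant, so weight spaces inject. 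Both arguments are sound; the trade-off is which piece of structure theory one leans on. The paper's approach is shorter, compressing the work into the identity $\clat(\bT/\Zent(\bG)) = \Z\Root(\bG, \bT)$, which in turn rests on the scheme-theoretic fact that $\Zent(\bG) = \bigcap_{\alpha} \ker(\alpha)$. Your approach substitutes the big-cell decomposition and direct weight computation on the coordinate rings of root groups; it is longer but arguably more self-contained, and has the minor virtue of never invoking the centre at all. One small remark: your ``only fiddly point'' is not really fiddly --- the product map $\bU_{\alpha_1} \times \cdots \times \bU_{\alpha_m} \to \bU$ is $\bT$-conjugation-equivariant simply because $\bT$ normalises each root group and conjugation is a group automorphism, and the action on each coordinate function is then by $\pm\alpha_i$ by definition of the root groups.
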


\begin{proof}
The action of \bT on \bG, and hence on \(\field[\bG]\), factors through \(\bT/{\Zent(\bG)}\); so all weights of \bT on \(\field[\bG]\) lie in \(\clat(\bT/{\Zent(\bG)}) = \Z\Root(\bG, \bT)\).
\end{proof}

\begin{prop}
\label{prop:gp-dfc-weights}
If \(\gamma\) is diagonalisable, then \(\field[\CC\bG{\mc L}(\gamma)]\) is the maximal quotient algebra of \(\field[\bG]\) on which all eigenvalues of \(\gamma\) lie in \mc L.
\end{prop}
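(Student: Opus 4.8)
The plan is to describe the maximal such quotient explicitly and then to identify it with $\field[\CC\bG{\mc L}(\gamma)]$ by means of Proposition \initref{prop:gp-dfc-rep}\subpref{regular}. We may assume (replacing $\field$ by $\sepfield$ if necessary) that $\field$ is separably closed; since $\gamma$ is diagonalisable it then acts semisimply on $\field[\bG]$, as in the proof of Proposition \initref{prop:gp-dfc-rep}, with all eigenvalues in $\field\mult$. Let $\field[\bG]_{\bar\lambda}$ denote the $\mc L$-close $\lambda$-eigenspace for $\gamma$ in $\field[\bG]$, so that $\field[\bG] = \bigoplus_{\bar\lambda} \field[\bG]_{\bar\lambda}$, with $\bar\lambda$ running over the cosets $\lambda\mc L$; this is an algebra grading, and, because $\gamma$ induces a \emph{group} automorphism of $\bG$, the comultiplication is graded, carrying $\field[\bG]_{\bar\lambda}$ into $\sum_{\bar\mu\bar\nu = \bar\lambda} \field[\bG]_{\bar\mu} \otimes \field[\bG]_{\bar\nu}$. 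The first, elementary step is to observe that the ideal $\mf a$ generated by $\sum_{\bar\mu \ne \bar 1} \field[\bG]_{\bar\mu}$---equivalently, by all $\gamma$-eigenvectors whose eigenvalue lies outside $\mc L$---is the \emph{smallest} $\gamma$-stable ideal $\mf b$ for which every $\gamma$-weight on $\field[\bG]/\mf b$ lies in $\mc L$: any such $\mf b$ must contain every $\gamma$-eigenvector with eigenvalue outside $\mc L$, lest that eigenvalue survive in the quotient, so $\mf b \supseteq \mf a$; while $\field[\bG]/\mf a = \field[\bG]_{\bar 1}/(\mf a \cap \field[\bG]_{\bar 1})$ visibly has every $\gamma$-weight in $\mc L$. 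Thus $\field[\bG]/\mf a$ is the maximal quotient in the statement, and the task reduces to proving that $\mf a$ equals $\ker\bigl(\field[\bG] \to \field[\CC\bG{\mc L}(\gamma)]\bigr)$. (One can also check, using that $\gamma$ fixes the identity---hence commutes with the counit---and commutes with the antipode, that $\mf a$ is a Hopf ideal, so the same equality will exhibit $\field[\CC\bG{\mc L}(\gamma)]$ as maximal even among quotient \emph{Hopf} algebras with this property.)

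For the inclusion $\mf a \subseteq \ker\bigl(\field[\bG] \to \field[\CC\bG{\mc L}(\gamma)]\bigr)$---that $\field[\CC\bG{\mc L}(\gamma)]$ itself carries only $\gamma$-weights in $\mc L$---note first that $\gamma$ normalises $\CC\bG{\mc L}(\gamma)$: it normalises $\bG$, and it centralises the split torus $\ZZ{\GL(\bV)}{\mc L}(\gamma)$ (both being diagonal for the decomposition of $\bV$ into $\mc L$-close eigenspaces), hence normalises $\CC{\GL(\bV)}{\mc L}(\gamma) = \Cent_{\GL(\bV)}\bigl(\ZZ{\GL(\bV)}{\mc L}(\gamma)\bigr)$ (Definition \ref{defn:GL-fc}) and so also its intersection $\CC\bG{\mc L}(\gamma)$ with $\bG$. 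Now $\CC{\GL(\bV)}{\mc L}(\gamma)$ is the block-diagonal group $\prod_{\bar\lambda} \GL(\bV_{\bar\lambda})$ cut out by the $\mc L$-close eigenspaces $\bV_{\bar\lambda}$ of $\bV$, and the automorphism induced by $\gamma$ is conjugation, block by block, by $\gamma|_{\bV_{\bar\lambda}}$; since the eigenvalues of $\gamma|_{\bV_{\bar\lambda}}$ all lie in the \emph{single} coset $\lambda\mc L$, conjugation by $\gamma|_{\bV_{\bar\lambda}}$ has all its weights---which are ratios of two such eigenvalues---in $\mc L$. Hence $\gamma$ has all its weights on $\field[\CC{\GL(\bV)}{\mc L}(\gamma)]$ in $\mc L$, a fortiori on its $\gamma$-equivariant quotient $\field[\CC\bG{\mc L}(\gamma)]$; so $\ker\bigl(\field[\bG] \to \field[\CC\bG{\mc L}(\gamma)]\bigr)$ belongs to the family of ideals above and therefore contains $\mf a$.

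The reverse inclusion is the crux. Writing $\bH$ for the closed subscheme of $\bG$ with $\field[\bH] = \field[\bG]/\mf a$, it asserts that $\bH \subseteq \CC\bG{\mc L}(\gamma)$, which by Proposition \initref{prop:gp-dfc-rep}\subpref{regular} amounts to showing that every $R$-point $g$ of $\bH$, for an arbitrary $\field$-algebra $R$, stabilises each $\mc L$-close eigenspace $\field[\bG]_{\bar\lambda}$ for $\gamma$ under (say) right translation. Such a $g$ is an algebra homomorphism $\field[\bG] \to R$ killing $\mf a$, hence vanishing on $\field[\bG]_{\bar\mu}$ for every $\bar\mu \ne \bar 1$; if we write the comultiplication of $f \in \field[\bG]_{\bar\lambda}$ as $\sum_i a_i \otimes b_i$ with the $a_i \otimes b_i$ homogeneous and $(\deg a_i)(\deg b_i) = \bar\lambda$, then the right translate of $f$ by $g$ equals $\sum_i a_i\,g(b_i)$, in which only the summands with $\deg b_i = \bar 1$, hence $\deg a_i = \bar\lambda$, survive; so $g$ carries $\field[\bG]_{\bar\lambda} \otimes R$ into itself. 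As this holds for every $\bar\lambda$, and $g$ acts as an $R$-algebra automorphism of $\field[\bG] \otimes R = \bigoplus_{\bar\lambda}\bigl(\field[\bG]_{\bar\lambda} \otimes R\bigr)$ preserving each summand, it must restrict to an automorphism of each summand; so $g$ lies in $\CC\bG{\mc L}(\gamma)(R)$ by Proposition \initref{prop:gp-dfc-rep}\subpref{regular}. Since $R$ and $g$ were arbitrary, $\bH \subseteq \CC\bG{\mc L}(\gamma)$, i.e.\ $\ker\bigl(\field[\bG] \to \field[\CC\bG{\mc L}(\gamma)]\bigr) \subseteq \mf a$; together with the previous paragraph this yields $\mf a = \ker\bigl(\field[\bG] \to \field[\CC\bG{\mc L}(\gamma)]\bigr)$, which is the claim (and descends to the original base field, both sides commuting with separable base change). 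I expect this last step to be the main obstacle: one has to run the scheme-theoretic inclusion through $R$-points and, above all, exploit that $\gamma$ induces a \emph{group} automorphism, so that the comultiplication respects the $\mc L$-grading---without that, the $R$-points of $\bH$ need not preserve the $\mc L$-close eigenspaces, and the identification breaks down.
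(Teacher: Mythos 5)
Your proof is correct and takes a genuinely different route from the paper's. The paper reduces to the case $\bG = \GL(\bV)$ via the fibred-coproduct description $\field[\CC\bG{\mc L}(\gamma)] = \field[\CC{\GL(\bV)}{\mc L}(\gamma)] \sqcup_{\field[\GL(\bV)]} \field[\bG]$, then embeds $\GL(\bV)$ as a closed subscheme of $\gl(\bV) \times \gl_1$ and identifies the relevant maximal quotient by inspecting the explicit polynomial generators $\contra v \otimes w$ of $\field[\gl(\bV)]$, each of which is at once a $\gamma$-eigenvector and a weight vector for the split torus defining $\CC{\GL(\bV)}{\mc L}(\gamma)$, with trivial torus weight precisely when the $\gamma$-eigenvalue lies in $\mc L$. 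You instead work intrinsically with the Hopf algebra $\field[\bG]$: you describe the maximal quotient explicitly as $\field[\bG]/\mf a$ with $\mf a$ the ideal generated by the $\gamma$-eigenvectors with eigenvalue outside $\mc L$, obtain $\mf a \subseteq \ker\bigl(\field[\bG] \to \field[\CC\bG{\mc L}(\gamma)]\bigr)$ from the block-diagonal structure of $\CC{\GL(\bV)}{\mc L}(\gamma)$, and obtain the reverse inclusion by an $R$-point argument whose two ingredients are that the comultiplication respects the $\mc L$-grading (because $\gamma$ acts by a Hopf-algebra automorphism) and Proposition \initref{prop:gp-dfc-rep}\subpref{regular}. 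The trade-off: the paper's matrix-coefficient computation is shorter and avoids the functor-of-points bookkeeping, while your route makes explicit where Proposition \initref{prop:gp-dfc-rep}\subpref{regular} and the Hopf structure enter, and records as a byproduct that $\mf a$ is a Hopf ideal (so $\field[\CC\bG{\mc L}(\gamma)]$ is maximal even among quotient Hopf algebras), an observation the paper does not make.
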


\begin{proof}
Since \(\CC\bG{\mc L}(\gamma)\) equals \(\CC{\GL(\bV)}{\mc L}(\gamma) \cap \bG\), so that \(\field[\CC\bG{\mc L}(\gamma)]\) 
equals \(\field[\CC{\GL(\bV)}{\mc L}(\gamma)] \otimes_{\field[\GL(\bV)]} \field[\bG]\), it suffices to handle the case \(\bG = \GL(\bV)\).


In this case, by Definition \ref{defn:GL-fc}, we have that \(\CC\bG{\mc L}(\gamma)\) equals \(\Cent_\bG(\ZZ\bG{\mc L}(\gamma))\).  Upon using the closed embedding \anonmap\bG{\Lie(\bG) \oplus \gl_1} given by \anonmapto g{(g, \det(g)\inv)} to identify \bG with a closed subset of \(\Lie(\bG) \oplus \gl_1\), we find that \(\CC\bG{\mc L}(\gamma) = \Cent_\bG(\ZZ\bG{\mc L}(\gamma))\) equals \(\Cent_{\Lie(\bG) \oplus \gl_1}(\ZZ\bG{\mc L}(\gamma)) \cap \bG\).   Put \(\bW = \Lie(\bG) \oplus \gl_1\), \(\bW' = \Cent_\bW(\ZZ\bG{\mc L}(\gamma))\), \(A = \field[\bW]\), and \(A' = \field[\bW']\).  As before, it suffices to show that \(A'\) is the maximal quotient algebra of \(A\) on which all eigenvalues of \(\gamma\) lie in \mc L.

Since \(\ZZ\bG{\mc L}(\gamma)\) is a split torus (by Definition \ref{defn:GL-fc}), we have that \(\bW\) is diagonalisable as a \(\ZZ\bG{\mc L}(\gamma)\)-module \cite{milne:algebraic-groups}*{Theorem 12.12}.  In particular, the sum \(\bW^\perp\) of the non-trivial-weight spaces for \(\ZZ\bG{\mc L}(\gamma)\) in \bW is a \(\ZZ\bG{\mc L}(\gamma)\)-stable complement to \(\bW' \ldef \Cent_\bW(\ZZ\bG{\mc L}(\gamma))\) in \bW.  We use this complement to furnish a \(\ZZ\bG{\mc L}(\gamma)\)-equivariant embedding of \(\bW\thendual\perp\) in \(\bW^*\), with image the sum of the non-trivial-weight spaces for \(\ZZ\bG{\mc L}(\gamma)\) in \(\bW^*\).  Thus, the ideal of \(A\) generated by the non-trivial-weight vectors for \(\ZZ\bG{\mc L}(\gamma)\) in \(A\) contains \(\field[\bW'] \otimes_\field \bW\thendual\perp\), and hence the kernel of \anonmap{A \cong \field[\bW'] \otimes_\field \field[\bW^\perp]}{\field[\bW'] = A'}.  That is, \(A'\) is the maximal quotient algebra of \(A\) on which \(\ZZ\bG{\mc L}(\gamma)\) acts with only trivial weights.

We are finally trying to show that the maximal quotient algebra of \(A\) on which \(\ZZ\bG{\mc L}(\gamma)\) acts with only trivial weights is the same as the maximal quotient algebra of \(A\) on which \(\gamma\) acts with all eigenvalues in \mc L, i.e., that the ideals of \(A\) generated by the non-trivial-weight spaces for \(\ZZ\bG{\mc L}(\gamma)\), and by the eigenvectors for \(\gamma\) corresponding to eigenvalues outside \mc L, are equal.

Let \mc B be a basis of \bV consisting of eigenvectors for \(\gamma\).  For each \(v \in \mc B\), write \(v^*\) for the corresponding element of the dual basis of \(\bV^*\).  We have that \(\ZZ\bG{\mc L}(\gamma)\) and \(\gamma\) both act trivially on \(\field[\gl_1]\).  It remains to understand their action on \(\field[\Lie(\bG)]\), which is the polynomial algebra on \(\set{v^* \otimes w}{v, w \in \mc B}\), where we abuse notation by writing \(v^* \otimes w\) for the matrix coefficient \anonmapto g{\pair{v^*}{g\dota w}}.  Each \(v^* \otimes w\) is an eigenvector for \(\gamma\) and a weight vector for \(\ZZ\bG{\mc L}(\gamma)\), with trivial weight for \(\ZZ\bG{\mc L}(\gamma)\) if and only if the eigenvalue for \(\gamma\) lies in \mc L.  The result follows.
\end{proof}

Proposition \initref{prop:gp-dfc-smooth}\subpref{Lie} shows that an appropriate restatement of \xcite{spice:asymptotic}*{Hypothesis \initxref{hyp:funny-centraliser}(\subxref{Lie})} holds.

\begin{prop}
\initlabel{prop:gp-dfc-smooth}
\hfill\begin{enumerate}
\item\sublabel{Lie}
\(\Lie(\CC\bG{\mc L}(\gamma))\) equals \(\CC{\Lie(\bG)}{\mc L}(\gamma)\).
\item\sublabel{smooth}
If \bG is smooth, then so is \(\CC\bG{\mc L}(\gamma)\).
\end{enumerate}
\end{prop}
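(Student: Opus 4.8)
The plan is to reduce both assertions at once to the separably closed case, which is legitimate: the subgroups $\CC\bV{\mc L}(\cdot)$, $\CC\bG{\mc L}(\cdot)$ and $\CC{\Lie(\bG)}{\mc L}(\cdot)$ are all defined by descent from $\sepfield$, $\Lie$ commutes with base change, and smoothness of a finite-type $\field$-scheme may be checked over $\sepfield$. So assume $\field = \sepfield$; then $\gamma$, being semisimple, is diagonalisable on $\bV$, hence on $\gl(\bV)$ and on $\Lie(\bG)$, and by Definition \ref{defn:gp-dfc} one has $\CC\bG{\mc L}(\gamma) = \CC{\GL(\bV)}{\mc L}(\gamma) \cap \bG = \Cent_{\GL(\bV)}(\bT_0) \cap \bG$, where $\bT_0 \ldef \ZZ{\GL(\bV)}{\mc L}(\gamma)$ is the split torus attached to the decomposition of $\bV$ into its $\mc L$-close eigenspaces $\bV_i$ for $\gamma$.

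For the first assertion I would use that $\Lie$ carries an intersection of closed subgroup schemes of $\GL(\bV)$ to the intersection of their Lie algebras inside $\gl(\bV)$, so that $\Lie(\CC\bG{\mc L}(\gamma)) = \Lie(\Cent_{\GL(\bV)}(\bT_0)) \cap \Lie(\bG)$. By Lemma \ref{lem:Lie-cent}, applied to $\bT_0$ acting on $\GL(\bV)$ by conjugation, $\Lie(\Cent_{\GL(\bV)}(\bT_0)) = \Cent_{\gl(\bV)}(\bT_0) = \bigoplus_i \gl(\bV_i)$, and a one-line check with the eigenspace decomposition $\gl(\bV) = \bigoplus_{\lambda,\mu} \Hom(\bV_\mu, \bV_\lambda)$ --- on which $\gamma$ acts by the scalar $\lambda\mu\inv$ --- identifies this block-diagonal subalgebra with the $\mc L$-close $1$-eigenspace $\CC{\gl(\bV)}{\mc L}(\gamma)$. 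Since $\gamma$ normalises $\bG$, the subspace $\Lie(\bG) \subseteq \gl(\bV)$ is $\gamma$-stable, so its $\mc L$-close $1$-eigenspace is simply its intersection with that of $\gl(\bV)$; hence $\Lie(\CC\bG{\mc L}(\gamma)) = \CC{\gl(\bV)}{\mc L}(\gamma) \cap \Lie(\bG) = \CC{\Lie(\bG)}{\mc L}(\gamma)$, as wanted.

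For the second assertion the idea is to exhibit $\CC\bG{\mc L}(\gamma)$ as the (scheme-theoretic) fixed-point subgroup $\Cent_\bG(\mathbf D)$ of a group scheme $\mathbf D$ of multiplicative type acting on $\bG$, and then invoke the standard fact that the fixed-point scheme of such an action on a smooth affine scheme is smooth (see, e.g., \cite{conrad-gabber-prasad:prg}). To build $\mathbf D$: the automorphism $\gamma$ acts locally finitely and diagonalisably on $\field[\gl(\bV)] \otimes \field[\gl_1]$, hence on the quotient $\field[\GL(\bV)]$ (via $g \mapsto (g, \det(g)\inv)$, as in the proof of Proposition \ref{prop:gp-dfc-weights}), hence on the further quotient $\field[\bG]$. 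Let $\mc E \subseteq \field\mult$ be the group of eigenvalues of $\gamma$ on $\field[\bG]$; since $\field[\bG]$ is finitely generated as an algebra and the eigenvalues of $\gamma$ on a generating set of eigenvectors generate $\mc E$, the group $\mc E$, hence also $\mc M \ldef \mc E/(\mc E \cap \mc L)$, is finitely generated abelian. As $\gamma$ is a group automorphism, its action on $\field[\bG]$ respects comultiplication, counit and antipode, so coalescing the eigenspace decomposition of $\field[\bG]$ into cosets of $\mc E \cap \mc L$ is a Hopf-algebra grading of $\field[\bG]$ by $\mc M$; equivalently, the diagonalisable $\field$-group $\mathbf D$ with character group $\mc M$ acts on $\bG$ by group automorphisms, with the $\mc L$-close eigenspaces of $\gamma$ on $\field[\bG]$ as its weight spaces. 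By Proposition \ref{prop:gp-dfc-weights} (or Proposition \initref{prop:gp-dfc-rep}\subpref{regular}), $\field[\CC\bG{\mc L}(\gamma)]$ is the maximal quotient of $\field[\bG]$ on which all weights of $\gamma$ --- equivalently all weights of $\mathbf D$ --- are trivial; that is, $\CC\bG{\mc L}(\gamma) = \Cent_\bG(\mathbf D)$, which is smooth because $\bG$ is smooth and affine and $\mathbf D$ is of multiplicative type.

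The step I expect to need the most care is this last realisation of $\CC\bG{\mc L}(\gamma)$ as an honest fixed-point scheme $\Cent_\bG(\mathbf D)$ for a group $\mathbf D$ of multiplicative type. One might hope to take $\mathbf D = \sgen\delta$ for the element $\delta$ of Remark \initref{rem:gp-dfc-facts}\subpref{section}, but in positive characteristic the finitely generated abelian group $\mc E/(\mc E \cap \mc L)$ can have $p$-torsion and so need not embed in $\field\mult$, whence $\delta$ need not exist; passing instead to the Hopf-algebra grading of $\field[\bG]$ by $\mc M$ supplies $\mathbf D$ with no such restriction. The first assertion, by contrast, is essentially formal once one has the fibre-product description of $\CC\bG{\mc L}(\gamma)$ together with Lemma \ref{lem:Lie-cent}.
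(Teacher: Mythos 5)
Your proof is correct, and it takes a genuinely different route from the paper's on both parts. For \subpref{Lie}, the paper dualises a statement about the cotangent space at the identity: it uses Proposition~\ref{prop:gp-dfc-weights} to identify $\sbat{\field(\CC\bG{\mc L}(\gamma))}{e_\bG}1$ as the maximal quotient of $\sbat{\field(\bG)}{e_\bG}1$ with weights in $\mc L$, then takes linear duals. You instead pass through $\GL(\bV)$ via $\CC\bG{\mc L}(\gamma) = \Cent_{\GL(\bV)}(\bT_0) \cap \bG$ and the fact that $\Lie$ of a scheme-theoretic intersection of closed subgroup schemes is the intersection of Lie algebras. For \subpref{smooth}, the paper mimics the proof of \cite{milne:algebraic-groups}*{Lemma 13.6} (smoothness of $\Cent_\bG(\gamma)$ for a semisimple automorphism) with a carefully modified choice of lift of a basis for $\mf m_{\bG, e_\bG}/\mf m_{\bG, e_\bG}^2$; you instead construct a diagonalisable group $\mathbf D$ whose fixed-point scheme is $\CC\bG{\mc L}(\gamma)$ and invoke the general smoothness theorem for fixed points of groups of multiplicative type on smooth affine schemes. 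Your approach is cleaner and more conceptual, and, as you observe, it sidesteps the delicate question of whether the ``section'' $\delta$ of Remark~\initref{rem:gp-dfc-facts}\subpref{section} exists (since $\mc E/(\mc E \cap \mc L)$ may have $p$-torsion) by working at the Hopf-algebra level. One bonus of your argument that you did not exploit: once $\CC\bG{\mc L}(\gamma) = \Cent_\bG(\mathbf D)$ is established, part \subpref{Lie} also follows directly from Lemma~\ref{lem:Lie-cent} applied to $\mathbf D$, since $\Cent_{\Lie(\bG)}(\mathbf D)$ is the trivial-weight subspace, which is precisely the $\mc L$-close $1$-eigenspace of $\gamma$ in $\Lie(\bG)$. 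One very small point of wording: the set of eigenvalues of $\gamma$ on $\field[\bG]$ need not a priori be a group; what you want is the group generated by them, which is still finitely generated, and the coset map still gives the desired $\mc M$-grading, so the argument is unaffected.
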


\begin{proof}
We may, and do, assume, upon replacing \field by \sepfield, that \(\gamma\) is diagonalisable.

Write \((\sbtl{\field(\bG)}{e_\bG}n)_{n \in \Z}\) and \((\sbtl{\field(\CC\bG{\mc L}(\gamma))}{e_\bG}n)_{n \in \Z}\) for the filtrations on \(\field(\bG)\) and \(\field(\CC\bG{\mc L}(\gamma))\) induced by the integer-valued valuation corresponding to the identity element \(e_\bG\).  Thus, for example, \(\sbtl{\field(\bG)}{e_\bG}0\) is the local ring \(\mc O_{\bG, e_\bG}\) of \bG at \(e_\bG\), \(\sbtlp{\field(\bG)}{e_\bG}0 = \sbtl{\field(\bG)}{e_\bG}1\) is the maximal ideal \(\mf m_{\bG, e_\bG}\) of \(\mc O_{\bG, e_\bG}\), and \(\sbat{\field(\bG)}{e_\bG}1\) is \(\mf m_{\bG, e_\bG}/\mf m_{\bG, e_\bG}^2\).

Since \(\gamma\) is diagonalisable and \anonmap{\field[\bG] \cap \sbtl{\field(\bG)}{e_\bG}1}{\sbat{\field(\bG)}{e_\bG}1} is surjective, we have that every eigenvector for \(\gamma\) on \(\sbat{\field(\bG)}{e_\bG}1\) admits a lift to an eigenvector for \(\gamma\) on \(\field[\bG] \cap \sbtl{\field(\bG)}{e_\bG}1\) with the same eigenvalue.  That is, if we write \(I\) for the ideal in \(\field[\bG]\) generated by the eigenvectors for \(\gamma\) corresponding to eigenvalues not in \mc L, then the image of \(I \cap \sbtl{\field(\bG)}{e_\bG}1\) in \(\sbat{\field(\bG)}{e_\bG}1\) is the kernel of its maximal quotient \bX on which \(\gamma\) acts with all eigenvalues in \mc L.  Proposition \ref{prop:gp-dfc-weights} gives that \(I\) is the ideal of functions vanishing identically on \(\CC\bG{\mc L}(\gamma)\).  Therefore, the composition \anonmap{\field[\bG] \cap \sbtl{\field(\bG)}{e_\bG}1}\bX, which is surjective, factors through the quotient \anonmap{\field[\bG] \cap \sbtl{\field(\bG)}{e_\bG}1}{\field[\CC\bG{\mc L}(\gamma)] \cap \sbtl{\field(\CC\bG{\mc L}(\gamma))}{e_\bG}1}.  It follows that \bX is a quotient of \(\sbat{\field(\CC\bG{\mc L}(\gamma))}{e_\bG}1\).  Since \bX is the largest quotient of \(\sbat{\field(\bG)}{e_\bG}1\) on which \(\gamma\) acts with all eigenvalues in \mc L, and since all eigenvalues of \(\gamma\) on \(\field[\bG]/I \cong \field[\CC\bG{\mc L}(\gamma)]\), hence on \(\field(\CC\bG{\mc L}(\gamma))\), lie in \mc L, we have that the quotient \anonmap{\sbat{\field(\CC\bG{\mc L}(\gamma))}{e_\bG}1}\bX is an isomorphism.  That is, \(\sbat{\field(\CC\bG{\mc L}(\gamma))}{e_\bG}1\) is the maximal quotient of \(\sbat{\field(\bG)}{e_\bG}1\) on which \(\gamma\) acts with all eigenvalues in \mc L.

Dualising shows that \(\Lie(\CC\bG{\mc L}(\gamma)) = \Hom(\sbat{\field(\CC\bG{\mc L}(\gamma))}{e_\bG}1, \field)\) is the maximal subspace of \(\Hom(\sbat{\field(\bG)}{e_\bG}1, \field) = \Lie(\bG)\) on which \(\gamma\) acts with all eigenvalues in \mc L.  This establishes \locpref{Lie}.

If \bG is smooth, then we show that \(\sbtl{\field(\CC\bG{\mc L}(\gamma))}{e_\bG}0\) is regular exactly as in \cite{milne:algebraic-groups}*{Lemma 13.6}, except that we choose our lift of a basis for \(\sbat{\field(\bG)}{e_\bG}1\) not as a subset of \(\Cent_{\sbtlp{\field(\bG)}{e_\bG}0}(\gamma) \cup \set{f - \gamma\dota f}{f \in \Cent_{\sbtlp{\field(\bG)}{e_\bG}0}(\gamma)^\perp}\) but among \(\CC{\sbtlp{\field(\bG)}{e_\bG}0}{\mc L}(\gamma)\) and those functions of the form \(\lambda f - \gamma\dota f\), where \(\lambda\) lies in \mc L and \(f\) lies in the sum of the eigenspaces for \(\gamma\) in \(\CC{\sbtlp{\field(\bG)}{e_\bG}0}{\mc L}(\gamma)\) corresponding to eigenvalues outside of \mc L.  This establishes \locpref{smooth}.
\end{proof}

\begin{rem}
\label{rem:concrete-dfc}
Suppose that \(g \in G\) is such that \(\Int(g)\gamma\) commutes with \(\gamma\).

If \(g\) belongs to \(\CC G{\mc L}(\gamma)\), hence to \(\CC{\GL(V)}{\mc L}(\gamma)\), then we deduce successively the equalities \(\CC\bV{\mc L}([g, \gamma]) = \bV\), then \(\CC{\GL(\bV)}{\mc L}([g, \gamma]) = \GL(\bV)\), and finally \(\CC\bG{\mc L}([g, \gamma]) = \bG\).

The converse does not hold in general (as is already seen by considering \(\bG = \GL_2\), \(\mc L = 1\), \(\gamma = \operatorname{diag}(1, -1)\), and \(g = \operatorname{antidiag}(1, 1)\), if \(-1\) does not equal \(1\) in \field), but does when the adjoint representation is faithful.  If \(\CC\bG{\mc L}([g, \gamma])\) equals \bG, then Proposition \initref{prop:gp-dfc-smooth}\subpref{Lie} gives the equality \(\CC{\Lie(\bG)}{\mc L}([g, \gamma]) = \Lie(\bG)\).  That is,
\(\Ad(g)\) belongs to \(\CC{\GL(\Lie(G))}{\mc L}(\gamma)\).  If the adjoint representation of \bG is faithful, then this implies that \(g\) belongs to \(\CC G{\mc L}(\gamma)\).  If \bG is connected and reductive, then this can be leveraged to give a concrete description of \(\CC\bG{\mc L}(\gamma)\), along the lines of Definition \ref{defn:G'}.
\end{rem}

As in Definition \ref{defn:embed-dual}, we need to specify a way of regarding \(\Lie^*(\CC\bG{\mc L}(\gamma))\) as a subspace of \(\Lie^*(\bG)\).
We do so as in the discussion preceding \xcite{spice:asymptotic}*{Definition \xref{defn:vGvr}}.

\begin{defn}
\label{defn:embed-fc-dual}
There is a unique \(\gamma\)-stable complement \matnotn[\Lie(\bH)^\perp]{Lie}{\Lie(\CC\bG{\mc L}(\gamma))^\perp} to \(\Lie(\CC\bG{\mc L}(\gamma)) = \CC{\Lie(\bG)}{\mc L}(\gamma)\) in \(\Lie(\bG)\), namely, the sum of the eigenvalues for \(\gamma\) in \(\Lie(\bG)\) corresponding to eigenvalues outside \mc L.  We identify \(\Lie^*(\CC\bG{\mc L}(\gamma))\) with the subset of \(\Lie^*(\bG)\) that annihilates \(\Lie(\CC\bG{\mc L}(\gamma))^\perp\).
\end{defn}

\begin{lem}
\initlabel{lem:gp-dfc-nearby}
Suppose that \(t \in \gNorm_{\GL(V)}(\bG)\) is semisimple and commutes with \(\gamma\).
\begin{enumerate}
\item\sublabel{rep}
If \bW is a (\(\sgen{\gamma, t} \ltimes \bG\))-module and \(\lambda\) belongs to \(\field\twosup\sepsup\multsup\), then the intersections with \(\CC{\bW_\sepfield}{\mc L}(t)\) of the \mc L-close \(\lambda\)-eigenspaces for \(\gamma\) and \(\gamma t\inv\) in \(\bW_\sepfield\) are equal.  In particular, the intersections with \(\CC\bW{\mc L}(t)\) of \(\CC\bW{\mc L}(\gamma)\) and \(\CC\bW{\mc L}(\gamma t\inv)\) are equal.
\item\sublabel{equal}
The intersections with \(\CC\bG{\mc L}(t)\) of \(\CC\bG{\mc L}(\gamma)\) and \(\CC\bG{\mc L}(\gamma t\inv)\) are equal.
\item\sublabel{sub}
Let \(\mc L'\) be a \(\Gal(\sepfield/\field)\)-stable subgroup of \mc L.
If
	\(\CC\bG{\mc L}(t)\) contains both \(\CC\bG{\mc L}(\gamma)\) and \(\CC\bG{\mc L}(\gamma t\inv)\), and
	\(\CC\bG{\mc L}(\gamma)\) equals \(\Cent_\bG(\gamma)\),
then \(\CC\bG{\mc L'}(\gamma t\inv)\) equals \(\Cent_\bG(\gamma) \cap \CC\bG{\mc L'}(t)\).
\end{enumerate}
\end{lem}

\begin{proof}
We may, and do, assume, upon replacing \field by \sepfield, that \(\gamma\) and \(t\) are diagonalisable.

Since we may replace \(\gamma\) by \(\gamma t\inv\) and then \(t\) by \(t\inv\), a containment in one direction immediately implies equality.

We prove a slightly more general version of \locpref{rep}, namely, that, for every \(\lambda, \mu \in \field\mult\), the intersections with the \mc L-close \(\mu\)-eigenspace for \(t\) in \bW of the close \(\lambda\)-eigenspace for \(\gamma\) and the close \(\lambda\mu\inv\)-eigenspace for \(\gamma t\inv\) are equal.  Indeed, the intersection of the close \(\mu\)-eigenspace for \(t\) with the close \(\lambda\)-eigenspace for \(\lambda\) is a sum of simultaneous eigenspaces for \(\gamma\) and \(t\) in \bW.  On each such eigenspace, the eigenvalue of \(\gamma\) lies in \(\lambda\mc L\) and that of \(t\) lies in \(\mu\mc L\), so \(\gamma t\inv\) acts by multiplication by an element of \(\lambda\mu\inv\mc L\).  This shows that the intersection is contained in the close \(\lambda\mu\inv\)-eigenspace for \(\gamma t\inv\).  As mentioned, this implies the desired equality.

\locpref{rep} follows by taking \(\mu = 1\).

Each close eigenspace for \(\gamma\) in \bV is a sum of simultaneous close eigenspaces for \(\gamma\) and \(t\) in \bV.  We have shown that each such simultaneous eigenspace is also a simultaneous close eigenspace for \(\gamma t\inv\) and \(t\), hence is preserved by \(\CC\bG{\mc L}(t) \cap \CC\bG{\mc L}(\gamma t\inv)\).  This shows that \(\CC\bG{\mc L}(t) \cap \CC\bG{\mc L}(\gamma t\inv)\) is contained in \(\CC\bG{\mc L}(\gamma)\), hence in \(\CC\bG{\mc L}(t) \cap \CC\bG{\mc L}(\gamma t\inv)\).  As mentioned, this implies \locpref{equal}.

In the situation of \locpref{sub}, we have by \locpref{equal} that \(\CC\bG{\mc L}(\gamma t\inv)\) equals \(\CC\bG{\mc L}(\gamma) = \Cent_\bG(\gamma)\).  Then Remark \initref{rem:gp-dfc-facts}(\subref{central}, \subref{sub}, \subref{monotone}) gives that \(\CC\bG{\mc L'}(\gamma t\inv)\) equals \(\CC\bG{\mc L}(\gamma t\inv) \cap \CC\bG{\mc L'}(\gamma t\inv) = \Cent_\bG(\gamma) \cap \CC\bG{\mc L'}(t\inv) = \CC{\Cent_\bG(\gamma)}{\mc L'}(\gamma t\inv) = \CC{\Cent_\bG(\gamma)}{\mc L'}(t\inv) = \Cent_\bG(\gamma) \cap \CC\bG{\mc L'}(t\inv)\).  Since \(\CC\bG{\mc L'}(t\inv)\) equals \(\CC\bG{\mc L'}(t)\), this establishes \locpref{sub}.
\end{proof}

Lemma \ref{lem:shallow-dfc} is another refinement of Remark \initref{rem:gp-dfc-facts}\subpref{section}.  Together with Remark \initref{rem:gp-dfc-facts}\subpref{monotone}, it shows that an appropriate restatement of \xcite{spice:asymptotic}*{Hypothesis \initxref{hyp:funny-centraliser}(\subxref{basic})} holds.  First, we need Notation \ref{notn:ord-gamma}, whose notation is essentially as in \cite{deligne:support}*{\S1, p.~155}.

\begin{notn}
\label{notn:ord-gamma}
If \sepfield is equipped with a discrete valuation \(\ord\), then we write \matnotn{mgamma}{m_\gamma} for the element of \(\cclat(\sgen\gamma_\sepfield) \otimes_\Z \Q\) satisfying \(\pair\chi{m_\gamma} = \ord \chi(\gamma)\) for all \(\chi \in \clat(\sgen\gamma_\sepfield)\).
\end{notn}

\begin{rem}
\label{rem:ord-gamma}
Using Notation \ref{notn:ord-gamma}, if \(\ord\) is \(\Gal(\sepfield/\field)\)-fixed, then so is \(m_\gamma\); so we may, and do, regard \(m_\gamma\) as an element of \(\cclat(\sgen\gamma) \otimes_\Z \Q\).
\end{rem}

We abuse notation slightly in Lemma \initref{lem:shallow-dfc}\subpref0 by writing \(\Cent_\bG(m_\gamma)\) in place of ``\(\Cent_\bG(\operatorname{im}(N m_\gamma))\), where \(N\) is a positive integer so that \(N m_\gamma\) belongs to \(\cclat(\sgen\gamma)\)''.

\begin{lem}
\initlabel{lem:shallow-dfc}
Suppose that \sepfield is equipped with a \(\Gal(\sepfield/\field)\)-fixed valuation.
\hfill\begin{enumerate}
\item\sublabel0
\(\usualCC\bG 0(\gamma)\) equals \(\Cent_\bG(m_\gamma)\).
\item\sublabel{0+}
Write \(\sbjtl{\ol\gamma}0\) for the automorphism of \(\usualCC\bG 0(\gamma)\) induced by \(\gamma\).  Suppose that the characteristic \(p\) of \(\sbjat\field 0\) is positive, and that there is a \(p\)-power \(q\) such that \(\sbjat{\ol\gamma}0 \ldef \lim_{n \to \infty} \sbjtl{\ol\gamma}0^{q^n}\) exists (in the analytic topology on \(\Weyl(\GL(\bV), \usualCC\bG 0(\gamma))(\field)\)).  Then \(\usualCCp\bG 0(\gamma)\) equals \(\Cent_{\usualCC\bG 0(\gamma)}(\sbjat{\ol\gamma}0)\).
\end{enumerate}
\end{lem}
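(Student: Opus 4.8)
The plan is to handle the two assertions in turn, reducing each to an eigenspace computation in the sense of Definition~\ref{defn:GL-fc}; in both cases I would first replace \field by \sepfield---harmless, since $m_\gamma$ (Remark~\ref{rem:ord-gamma}) and $\sbjat{\ol\gamma}0$ are $\Gal(\sepfield/\field)$-fixed---so that $\gamma$ becomes diagonalisable. For the first assertion, the relevant subgroup $\mc L = \sbjtl{\field\twosup\sepsup\multsup}0$ is the unit group of \sepfield, so two eigenvalues of $\gamma$ on \bV lie in a common coset of $\mc L$ exactly when they have the same valuation. Hence the decomposition of \bV into $\mc L$-close eigenspaces for $\gamma$ coincides with the weight-space decomposition under (a sufficiently divisible positive integral multiple $N$ of) the cocharacter $m_\gamma$, on whose distinct pieces $N m_\gamma$ takes pairwise distinct weights; so the split torus defining $\usualCC\bG 0(\gamma)$ and the one-parameter subgroup $N m_\gamma$ have the same centraliser in $\GL(\bV)$. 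Over \sepfield this yields
\[
\usualCC\bG 0(\gamma) = \Cent_{\GL(\bV)}(N m_\gamma) \cap \bG = \Cent_\bG(N m_\gamma) = \Cent_\bG(m_\gamma),
\]
and the identification of closed subschemes descends to \field.

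For the second assertion, write $\bG' \ldef \usualCC\bG 0(\gamma)$ and let $\mc L \ldef \sbjtlp{\field\twosup\sepsup\multsup}0 = 1 + \sbjtlp\sepfield 0$ be the group of one-units of \sepfield. Using the first assertion, Remark~\initref{rem:gp-dfc-facts}\subpref{monotone} (which gives $\usualCCp\bG 0(\gamma) \subseteq \usualCC\bG 0(\gamma) = \bG'$), and Remark~\initref{rem:gp-dfc-facts}\subpref{sub}, I would rewrite $\usualCCp\bG 0(\gamma)$ as the $\mc L$-close centraliser $\CC{\bG'}{\mc L}(\gamma)$ computed \emph{inside} $\bG'$. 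By Proposition~\initref{prop:gp-dfc-rep}\subpref{regular} (applied to $\bG'$) this is the subgroup of $\bG'$ stabilising every $\mc L$-close eigenspace for $\gamma$ in the regular representation $\field[\bG']$, and by Proposition~\ref{prop:gp-dfc-weights} (applied to $\bG' = \usualCC\bG 0(\gamma)$) every eigenvalue of $\gamma$ on $\field[\bG']$ is a unit of \sepfield. The crux is then to match these $\mc L$-close $\gamma$-eigenspaces in $\field[\bG']$ with the ordinary eigenspaces of $\sigma \ldef \sbjat{\ol\gamma}0$. Both $\gamma$---through the automorphism $\sbjtl{\ol\gamma}0$ it induces on $\bG'$---and $\sigma$ act on $\field[\bG']$, and on any finite-dimensional submodule $M$ one has $\sigma|_M = \lim_n(\sbjtl{\ol\gamma}0|_M)^{q^n}$ by continuity of the algebraic action of $\Weyl(\GL(\bV), \bG')$ on $\field[\bG']$. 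From $\sigma = \lim_n \sbjtl{\ol\gamma}0^{q^{n+1}} = (\lim_n\sbjtl{\ol\gamma}0^{q^n})^q = \sigma^q$ one gets $\sigma^{q-1} = 1$, so $\sigma$ is semisimple with eigenvalues among the $(q-1)$-st roots of unity in \sepfield, which (as $q-1$ is a unit) are pairwise incongruent modulo $\sbjtlp\sepfield 0$. If $\lambda$ is an eigenvalue of $\gamma$ on $M$, with $\lambda$-eigenspace $M_\lambda$, then $\lambda$ is a unit, its residue $\bar\lambda$ has order dividing $q-1$ (the residue field having characteristic $p$, and $\lim_n\lambda^{q^n}$ existing), and $\sigma$ acts on $M_\lambda$ by the scalar $\lim_n\lambda^{q^n}$, which is the unique $(q-1)$-st root of unity $\omega(\lambda)$ congruent to $\lambda$ modulo $\sbjtlp\sepfield 0$. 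Since $\omega(\lambda) = \omega(\lambda')$ precisely when $\lambda/\lambda' \in \mc L$, the eigenspaces of $\sigma$ in $M$ are exactly the $\mc L$-close eigenspaces of $\gamma$ in $M$; the same then holds in $\field[\bG']$ by taking unions. Finally, an element of $\bG'$ lies in $\Cent_{\bG'}(\sigma)$ if and only if its regular-representation action commutes with that of the semisimple operator $\sigma$, that is, stabilises every $\sigma$-eigenspace in $\field[\bG']$; hence $\usualCCp\bG 0(\gamma) = \CC{\bG'}{\mc L}(\gamma) = \Cent_{\bG'}(\sigma) = \Cent_{\usualCC\bG 0(\gamma)}(\sbjat{\ol\gamma}0)$.

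I expect the main obstacle to be bookkeeping rather than a single hard idea: one must check that the limit defining $\sbjat{\ol\gamma}0$---which a priori lives only in $\Weyl(\GL(\bV), \bG')(\field)$ and only names an automorphism of $\bG'$---genuinely controls the operators, and hence the eigenvalues, on the finite-dimensional constituents of $\field[\bG']$, and one must keep track of the interplay between $\sbjtl{\field\twosup\sepsup\multsup}0$ and $\sbjtlp{\field\twosup\sepsup\multsup}0$ along the reductions through Remark~\initref{rem:gp-dfc-facts}\subpref{sub} and Proposition~\initref{prop:gp-dfc-rep}\subpref{regular}. The genuinely arithmetic inputs---separability of $X^{q-1}-1$ over \sepfield and the absence of $p$-torsion in the residue field---are elementary.
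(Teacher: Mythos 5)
Your part (0) is the paper's argument exactly (the eigenspace decomposition by valuation coincides with the weight decomposition under $N m_\gamma$, then intersect with $\bG$), so nothing to add there.

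For part (0+), your route is genuinely different from the paper's, and both work. After the common reduction to $\bG = \usualCC\bG 0(\gamma)$, the paper does \emph{not} stay with $\field[\bG']$: it first invokes Remark~\initref{rem:gp-dfc-facts}\subpref{central} to replace $\gamma$ by its image in the quotient and to change the faithful representation $\bV$, so that the limit $\sbjat\gamma 0 = \lim \gamma^{q^n}$ exists literally as an element of $\GL(V)$ (the Landvogt reference supplies the compatibility of the analytic topologies under the quotient and the new embedding). With an honest element $\sbjat\gamma 0 \in \GL(V)$ in hand, it factors $\gamma = \sbjat\gamma 0 \cdot \sbjtlp\gamma 0$ with $\sbjtlp\gamma 0$ pro-$p$-torsion, applies Lemma~\initref{lem:gp-dfc-nearby}\subpref{equal} to get $\usualCCp\bG 0(\gamma) = \usualCCp\bG 0(\sbjat\gamma 0)$, and then applies Lemma~\initref{lem:gp-dfc-power}\subpref{exact} (using that $\sbjat\gamma 0$ has order $q-1$ and $\mu_{q-1} \cap \sbjtlp{\field\mult}0$ is trivial) to conclude $\usualCCp\bG 0(\sbjat\gamma 0) = \Cent_\bG(\sbjat\gamma 0)$. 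Your argument reproves, inside $\field[\bG']$ and at the level of individual eigenvalues, the essential content of those two lemmas: your observation that each eigenvalue $\lambda$ factors as its Teichmüller representative $\omega(\lambda)$ times a one-unit is the eigenvalue-level analogue of the paper's factorisation $\gamma = \sbjat\gamma 0 \sbjtlp\gamma 0$, and your observation that $\omega(\lambda) = \omega(\lambda')$ iff $\lambda/\lambda' \in \mc L$ is the eigenvalue-level analogue of Lemma~\initref{lem:gp-dfc-power}\subpref{exact}. The trade-off: your version is more self-contained and avoids the representation change, at the cost of reproving those lemmas \emph{ad hoc}. What you flag as ``bookkeeping''---making precise how a limit in $\Weyl(\GL(\bV), \bG')(\field)$ controls operators, hence eigenvalues, on finite-dimensional submodules of $\field[\bG']$---is in fact the exact difficulty the paper's change of $\bV$ is designed to eliminate up front, so do not underestimate that step: you need every finite-dimensional $\sbjtl{\ol\gamma}0$-stable submodule to sit inside a finite-dimensional $\Weyl$-submodule so that the analytic continuity from Landvogt applies, and this is precisely where the paper's cleaner choice of $\bV$ earns its keep.
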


\begin{proof}
We may, and do, assume, upon replacing \field by \sepfield, that \(\gamma\) is diagonalisable.

For every \(\lambda \in \field\mult\), the \(\sbjtl{\field\mult}0\)-close \(\lambda\)-eigenspace for \(\gamma\) on \bV is precisely the sum of eigenspaces whose eigenvalues have the same valuation as \(\lambda\).  Thus \(\usualCC{\GL(\bV)}0(\gamma) = \Cent_{\GL(\bV)}(\usualZZ{\GL(\bV)}0(\gamma))\) equals \(\Cent_{\GL(\bV)}(m_\gamma)\).  \locpref0 follows upon intersecting with \bG.

Remark \initref{rem:gp-dfc-facts}(\subref{sub}, \subref{monotone}) give that \(\usualCCp\bG 0(\gamma)\) equals \(\usualCCp\bG 0(\gamma) \cap \usualCC\bG 0(\gamma) = \usualCCp{\usualCC\bG 0(\gamma)}0(\gamma)\).  Thus, for \locpref{0+}, we may, and do, assume that \bG equals \(\usualCC\bG 0(\gamma)\).  As in the proof of Lemma \initref{lem:gp-dfc-power}\subpref{exact}, but now referring to \cite{landvogt:compactification}*{\S0.6, (iii)} for the compatibility properties of the analytic topology, we may, and do, assume, upon replacing \(\gamma\) by its image in \(\sgen\gamma/{\Cent_{\sgen\gamma}(\bG)}\) and then changing \bV, that \(\sbjat\gamma 0 \ldef \lim_{n \to \infty} \gamma^{p^n}\) exists in the analytic topology on \(\GL(V)\).

Now \(\sbjtlp\gamma 0 \ldef \sbjat\gamma 0\inv\gamma\) is pro-\(p\)-torsion in the analytic topology on \(\GL(V)\), so that all the eigenvalues of \(\sbjtlp\gamma 0\) on \bV lie in \(\sbjtlp{\field\mult}0\); that is, that \(\usualCCp\bV 0(\sbjtlp\gamma 0)\) equals \bV.  It follows from Lemma \ref{lem:gp-dfc-nearby} that \(\usualCCp\bG 0(\sbjat\gamma 0)\) equals \(\usualCCp\bG 0(\gamma)\).  Since \(\sbjat\gamma 0\) has order \(q - 1\), and \(\mu_{q - 1}(\field) \cap \sbjtlp{\field\mult}0\) is trivial, we have by Lemma \initref{lem:gp-dfc-power}\subpref{exact} that \(\usualCCp\bG 0(\sbjat\gamma 0)\) equals \(\Cent_\bG(\sbjat\gamma 0)\).
\end{proof}

\begin{rem}
\label{rem:shallow-dfc}
We discuss a variant of Lemma \initref{lem:shallow-dfc}\subpref{0+}, in which both the hypotheses and the conclusion are strengthened slightly.
Continue to suppose that \sepfield is equipped with a \(\Gal(\sepfield/\field)\)-fixed valuation.

Suppose that \(\gamma\) belongs to \bG.  Let \bA be the maximal split, normal torus in \(\usualCC\bG 0(\gamma)\), and write \(\ol\gamma\) for the image of \(\gamma\) in \((\usualCC\bG 0(\gamma)/\bA)(\field)\).  Suppose that the characteristic \(p\) of \(\sbjat\field 0\) is positive, and that there is a \(p\)-power \(q\) such that \(\sbjat{\ol\gamma}0 \ldef \lim_{n \to \infty} \sbjtl{\ol\gamma}0^{q^n}\) exists.  (This notation conflicts with that in Lemma \initref{lem:shallow-dfc}\subpref{0+}, but only slightly; the automorphisms of \(\usualCC\bG 0(\gamma)\) induced by the elements denoted here and there by \(\sbjat{\ol\gamma}0\) are the same.)

Since the \(\Gal(\sepfield/\field)\)-cohomology of \(\bA(\sepfield)\) vanishes, there is a lift \(\sbjat\gamma 0\) of \(\sbjat{\ol\gamma}0\) to \(\usualCC G 0(\gamma)\).  Lemma \initref{lem:shallow-dfc}\subpref{0+} gives that \(\usualCCp\bG 0(\gamma)\) equals \(\Cent_{\usualCC\bG 0(\gamma)}(\sbjat\gamma 0)\).  If \field contains an element \(t\) of non-\(0\) valuation, then we can replace \(\sbjat\gamma 0\) by its translate by \((N m_\gamma)(t) \in A\) for a suitable positive integer \(N\), and then Lemma \initref{lem:shallow-dfc}\subpref0 implies that \(\usualCCp\bG 0(\gamma)\) actually equals \(\Cent_\bG(\sbjat\gamma 0)\).
\end{rem}

Lemma \initref{lem:gp-dfc-unique}\subpref{nearby-sub} is similar to Lemma \initref{lem:gp-dfc-nearby}\subpref{sub}, but both the hypotheses and the conclusions are weaker.

\begin{lem}
\initlabel{lem:gp-dfc-unique}
\hfill\begin{enumerate}
\item\sublabel{around}
If \bJ is a \(\gamma\)-stable, connected, smooth subgroup of \bG such that \(\Lie(\bJ)\) is contained in (respectively, contains) \(\CC{\Lie(\bG)}{\mc L}(\gamma)\), then \bJ is contained in (respectively, contains) \(\CC\bG{\mc L}(\gamma)\conn\).
\setcounter{tempenumi}{\value{enumi}}
\end{enumerate}
Suppose that \bG is smooth.
\begin{enumerate}
\setcounter{enumi}{\value{tempenumi}}
\item\sublabel{unique}
\(\CC\bG{\mc L}(\gamma)\conn\) is the unique \(\gamma\)-stable, connected, smooth subgroup of \bG whose Lie algebra is \(\CC{\Lie(\bG)}{\mc L}(\gamma)\).
\item\sublabel{nearby-equal}
Suppose that \(t \in \gNorm_{\GL(V)}(\bG)\) is semisimple and commutes with \(\gamma\).  If \(\CC{\Lie(\bG)}{\mc L}(t)\) contains \(\CC{\Lie(\bG)}{\mc L}(\gamma) + \CC{\Lie(\bG)}{\mc L}(\gamma t\inv)\), then \(\CC\bG{\mc L}(\gamma t\inv)\conn\) equals \(\CC\bG{\mc L}(\gamma)\conn\).
\item\sublabel{nearby-sub}
Let \(\mc L'\) be a \(\Gal(\sepfield/\field)\)-stable subgroup of \mc L.
If, in addition to the hypotheses of \locpref{nearby-equal}, we have that \(\CC{\Lie(\bG)}{\mc L}(\gamma)\) equals \(\Cent_{\Lie(\bG)}(\gamma)\), and if \(\mc L'\) is a \(\Gal(\sepfield/\field)\)-stable subgroup of \mc L, then \(\CC\bG{\mc L'}(\gamma t\inv)\conn\) is the identity component of \(\Cent_\bG(\gamma) \cap \CC\bG{\mc L'}(t)\).
\end{enumerate}
\end{lem}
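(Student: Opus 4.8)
My plan is to prove the four parts in the order stated, using \locpref{around} as the lever for the rest; throughout I would pass to \sepfield and assume \(\gamma\) (and \(t\)) diagonalisable, as in the other proofs of this subsection. For \locpref{around}, I would apply Proposition~\initref{prop:gp-dfc-smooth}(\subref{Lie}, \subref{smooth}) to the pair \((\bJ, \gamma)\) (restricting the faithful representation \(\bG \to \GL(\bV)\) to \bJ), which, with Remark~\initref{rem:gp-dfc-facts}\subpref{sub}, shows that \(\CC\bJ{\mc L}(\gamma) = \bJ \cap \CC\bG{\mc L}(\gamma)\) is smooth with Lie algebra \(\CC{\Lie(\bJ)}{\mc L}(\gamma) = \Lie(\bJ) \cap \CC{\Lie(\bG)}{\mc L}(\gamma)\) (the last equality because \(\Lie(\bJ)\) is a \(\gamma\)-submodule of \(\Lie(\bG)\)). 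If \(\Lie(\bJ) \subseteq \CC{\Lie(\bG)}{\mc L}(\gamma)\), this Lie algebra is \(\Lie(\bJ)\), so \(\CC\bJ{\mc L}(\gamma)\) is a smooth subgroup of the connected group \bJ with full Lie algebra, and \cite{milne:algebraic-groups}*{Proposition 10.15} (invoked as in the proof of Lemma~\ref{lem:cent-by-Lie}) forces \(\bJ = \CC\bJ{\mc L}(\gamma)\conn \subseteq \CC\bG{\mc L}(\gamma)\conn\); if instead \(\Lie(\bJ) \supseteq \CC{\Lie(\bG)}{\mc L}(\gamma) = \Lie(\CC\bG{\mc L}(\gamma))\) (Proposition~\initref{prop:gp-dfc-smooth}\subpref{Lie}), the same Lie algebra is \(\Lie(\CC\bG{\mc L}(\gamma))\), so \(\CC\bJ{\mc L}(\gamma)\conn\) is a smooth subgroup of the connected group \(\CC\bG{\mc L}(\gamma)\conn\) with full Lie algebra, hence equal to it, whence \(\CC\bG{\mc L}(\gamma)\conn = \CC\bJ{\mc L}(\gamma)\conn \subseteq \bJ\).

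Part~\locpref{unique} then follows immediately: \(\CC\bG{\mc L}(\gamma)\conn\) is connected, smooth with Lie algebra \(\CC{\Lie(\bG)}{\mc L}(\gamma)\) (Proposition~\initref{prop:gp-dfc-smooth}(\subref{Lie}, \subref{smooth})), and \(\gamma\)-stable (a semisimple \(\gamma\) centralises \(\ZZ{\GL(\bV)}{\mc L}(\gamma)\), so lies in \(\CC{\GL(\bV)}{\mc L}(\gamma)\) and normalises \(\CC\bG{\mc L}(\gamma)\)), hence is a subgroup of the asserted kind; and any other such \bJ satisfies both hypotheses of \locpref{around}, so \(\CC\bG{\mc L}(\gamma)\conn \subseteq \bJ \subseteq \CC\bG{\mc L}(\gamma)\conn\). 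For \locpref{nearby-equal}, I would observe that \(\CC{\Lie(\bG)}{\mc L}(t) = \Lie(\bG)\) and Proposition~\initref{prop:gp-dfc-smooth}(\subref{Lie}, \subref{smooth}) make \(\CC\bG{\mc L}(t)\) smooth with full Lie algebra, so \(\bG\conn = \CC\bG{\mc L}(t)\conn \subseteq \CC\bG{\mc L}(t)\) and hence \(\CC{\bG\conn}{\mc L}(t) = \CC\bG{\mc L}(t) \cap \bG\conn = \bG\conn\) (Remark~\initref{rem:gp-dfc-facts}\subpref{sub}). Then Lemma~\initref{lem:gp-dfc-nearby}\subpref{equal} applied to \(\bG\conn\) gives \(\CC{\bG\conn}{\mc L}(\gamma t\inv) = \CC{\bG\conn}{\mc L}(\gamma)\); and since, for any semisimple \(\sigma \in \gNorm_{\GL(V)}(\bG)\), the group \(\CC\bG{\mc L}(\sigma)\conn\) is connected and so contained in \(\bG\conn\), Remark~\initref{rem:gp-dfc-facts}\subpref{sub} yields \(\CC\bG{\mc L}(\sigma)\conn = (\CC\bG{\mc L}(\sigma) \cap \bG\conn)\conn = \CC{\bG\conn}{\mc L}(\sigma)\conn\) for \(\sigma \in \{\gamma, \gamma t\inv\}\), and combining these proves \locpref{nearby-equal}.

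I expect \locpref{nearby-sub} to be the main obstacle, because in positive characteristic the scheme-theoretic intersection \(\Cent_\bG(\gamma) \cap \CC\bG{\mc L'}(t)\), and even \(\Cent_\bG(\gamma)\) or \(\CC\bG{\mc L}(\gamma)\) on their own, may be disconnected, so Lemma~\initref{lem:gp-dfc-nearby}\subpref{sub} --- stated for groups, not identity components --- cannot be quoted directly; my plan is to carry out its proof on Lie algebras and transfer back using \locpref{unique}. Applying Lemma~\initref{lem:gp-dfc-nearby}\subpref{rep} to the adjoint module \(\bW = \Lie(\bG)\) (legitimate since \(\CC{\Lie(\bG)}{\mc L}(t) = \Lie(\bG)\)) would give \(\CC{\Lie(\bG)}{\mc L}(\gamma t\inv) = \CC{\Lie(\bG)}{\mc L}(\gamma) = \Cent_{\Lie(\bG)}(\gamma)\), and, as every \(\mc L'\)-close \(1\)-eigenspace for \(\gamma t\inv\) lies inside this \mc L-close one, on which \(\gamma\) acts trivially, it would follow that \(\CC{\Lie(\bG)}{\mc L'}(\gamma t\inv) = \Cent_{\Lie(\bG)}(\gamma) \cap \CC{\Lie(\bG)}{\mc L'}(t\inv) = \CC{\Lie(\Cent_\bG(\gamma))}{\mc L'}(t\inv)\) (the last step by Lemma~\ref{lem:Lie-cent}). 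Using \locpref{nearby-equal} and \cite{milne:algebraic-groups}*{Proposition 10.15} (comparing the smooth groups \(\Cent_\bG(\gamma) \subseteq \CC\bG{\mc L}(\gamma)\), which have equal Lie algebra by the hypothesis \(\CC{\Lie(\bG)}{\mc L}(\gamma) = \Cent_{\Lie(\bG)}(\gamma)\)), I would get \(\CC\bG{\mc L}(\gamma t\inv)\conn = \Cent_\bG(\gamma)\conn\), so (since \(\mc L' \subseteq \mc L\), Remark~\initref{rem:gp-dfc-facts}\subpref{monotone}) \(\bD \ldef \CC\bG{\mc L'}(\gamma t\inv)\conn \subseteq \CC\bG{\mc L}(\gamma t\inv)\conn \subseteq \Cent_\bG(\gamma)\); thus \(\bD\) is a connected, smooth subgroup of \(\Cent_\bG(\gamma)\), stable under \(\gamma t\inv\) and hence --- as \(\gamma\) acts trivially on \(\Cent_\bG(\gamma)\) --- under \(t\inv\), with Lie algebra \(\CC{\Lie(\Cent_\bG(\gamma))}{\mc L'}(t\inv)\). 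Finally, since \(\Cent_\bG(\gamma) = \CC\bG{1}(\gamma)\) is smooth (Proposition~\initref{prop:gp-dfc-smooth}\subpref{smooth} and Remark~\initref{rem:gp-dfc-facts}\subpref{unfunny}), \locpref{unique} applied to \((\Cent_\bG(\gamma), t\inv)\) would identify \(\bD\) with \(\CC{\Cent_\bG(\gamma)}{\mc L'}(t\inv)\conn\), which by Remark~\initref{rem:gp-dfc-facts}\subpref{sub} and Lemma~\initref{lem:gp-dfc-power}\subpref{contain} (with \(n = -1\), so \((\mc L')\inv = \mc L'\)) equals \((\Cent_\bG(\gamma) \cap \CC\bG{\mc L'}(t))\conn\) --- precisely the assertion of \locpref{nearby-sub}.
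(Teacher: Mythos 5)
Your proof is correct, and its overall architecture matches the paper's: \locpref{around} via Proposition~\initref{prop:gp-dfc-smooth} and Milne's Proposition~10.15 exactly as in Lemma~\ref{lem:cent-by-Lie}, \locpref{unique} as an immediate corollary, and the ``nearby'' parts by computing Lie algebras via Lemma~\ref{lem:gp-dfc-nearby} and then matching connected smooth subgroups. The variations are in routing rather than substance: for \locpref{nearby-equal} the paper passes directly through the Lie algebra equality \(\CC{\Lie(\bG)}{\mc L}(\gamma t\inv) = \CC{\Lie(\bG)}{\mc L}(\gamma)\) and then invokes Proposition~\ref{prop:gp-dfc-smooth} together with \locpref{unique}, whereas you descend to \(\bG\conn\), establish \(\CC{\bG\conn}{\mc L}(t) = \bG\conn\), apply Lemma~\initref{lem:gp-dfc-nearby}\subpref{equal} at the group level, and transfer back; both routes rest on the same two ingredients (the Lie computation and Milne 10.15), so the difference is cosmetic. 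For \locpref{nearby-sub}, the paper packages the argument as a chain of applications of Remark~\initref{rem:gp-dfc-facts}(\subref{sub}, \subref{monotone}, \subref{central}), computing \(\CC\bG{\mc L'}(\gamma t\inv)\conn = \CC{\Cent_\bG(\gamma)\conn}{\mc L'}(\gamma t\inv)\conn\), while you compute the Lie algebra \(\CC{\Lie(\Cent_\bG(\gamma))}{\mc L'}(t\inv)\) and then invoke \locpref{unique} a second time, applied to the pair \((\Cent_\bG(\gamma), t\inv)\), to pin down the identity component --- a legitimate and arguably cleaner use of the uniqueness statement. Your caution about the hypothesis \(\CC{\Lie(\bG)}{\mc L}(\gamma) = \Cent_{\Lie(\bG)}(\gamma)\) being strictly weaker than the group-level hypothesis of Lemma~\initref{lem:gp-dfc-nearby}\subpref{sub} is exactly why the paper, too, argues only about identity components here, so your instinct was sound.
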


\begin{proof}
The proof of \locpref{around} is almost exactly the same as that of Lemma \ref{lem:cent-by-Lie}.  Namely, if \(\Lie(\bJ)\) is contained in \(\CC{\Lie(\bG)}{\mc L}(\gamma)\), then \(\CC{\Lie(\bJ)}{\mc L}(\gamma) = \CC{\Lie(\bG)}{\mc L}(\gamma) \cap \Lie(\bJ)\) equals \(\Lie(\bJ)\), so Proposition \ref{prop:gp-dfc-smooth} gives that \(\CC\bJ{\mc L}(\gamma)\conn\) is a smooth subgroup of \bJ whose Lie algebra equals \(\CC{\Lie(\bJ)}{\mc L}(\gamma) = \Lie(\bJ)\), and hence that \(\CC\bJ{\mc L}(\gamma)\conn\) equals \bJ.  In particular, by Remark \initref{rem:gp-dfc-facts}\subpref{sub}, we have that \(\bJ = \CC\bJ{\mc L}(\gamma)\conn\) is contained in \(\CC\bG{\mc L}(\gamma)\conn\).

On the other hand, if \(\Lie(\bJ)\) contains \(\CC{\Lie(\bG)}{\mc L}(\gamma)\), then \(\CC{\Lie(\bJ)}{\mc L}(\gamma)\) equals \(\CC{\Lie(\bG)}{\mc L}(\gamma) \cap \Lie(\bJ) = \CC{\Lie(\bG)}{\mc L}(\gamma)\), so again Proposition \ref{prop:gp-dfc-smooth} gives that \(\CC\bJ{\mc L}(\gamma)\conn\) is a smooth subgroup of \(\CC\bG{\mc L}(\gamma)\conn\) whose Lie algebra equals \(\CC{\Lie(\bJ)}{\mc L}(\gamma) = \CC{\Lie(\bG)}{\mc L}(\gamma) = \Lie(\CC\bG{\mc L}(\gamma))\), and hence that \(\CC\bJ{\mc L}(\gamma)\conn\) equals \(\CC\bG{\mc L}(\gamma)\conn\).  In particular, we have that \(\CC\bG{\mc L}(\gamma)\conn = \CC\bJ{\mc L}(\gamma)\conn\) is contained in \bJ.  This establishes \locpref{around}.

\locpref{unique} follows from Proposition \initref{prop:gp-dfc-smooth}\subpref{Lie} and \locpref{around}.

In the situation of \locpref{nearby-equal}, Lemma \initref{lem:gp-dfc-nearby}\subpref{rep} gives that \(\CC{\Lie(\bG)}{\mc L}(\gamma t\inv)\) equals \(\CC{\Lie(\bG)}{\mc L}(\gamma)\).  Then \locpref{nearby-equal} follows from yet another application of Proposition \ref{prop:gp-dfc-smooth}, and \locpref{unique}.

Finally, as in the proof of Lemma \initref{lem:gp-dfc-nearby}\subpref{sub}, if \(\CC{\Lie(\bG)}{\mc L}(\gamma)\) equals \(\Cent_{\Lie(\bG)}(\gamma)\), then Lemma \ref{lem:cent-by-Lie} and Proposition \initref{prop:gp-dfc-smooth}\subpref{Lie} give that \(\CC\bG{\mc L}(\gamma)\conn\), which we have just shown equals \(\CC\bG{\mc L}(\gamma t\inv)\conn\), equals \(\Cent_\bG(\gamma)\conn\); so Remark \initref{rem:gp-dfc-facts}(\subref{sub}, \subref{monotone}, \subref{central}) gives that \(\CC\bG{\mc L'}(\gamma t\inv)\conn\) equals \(\CC{\CC\bG{\mc L}(\gamma t\inv)\conn}{\mc L'}(\gamma t\inv)\conn = \CC{\Cent_\bG(\gamma)\conn}{\mc L'}(\gamma t\inv)\conn\), which is the identity component of \(\Cent_\bG(\gamma) \cap \CC\bG{\mc L'}(t\inv)\).  Since
\(\CC\bG{\mc L'}(t\inv)\) equals \(\CC\bG{\mc L'}(t)\), we have shown \locpref{nearby-sub}.
\end{proof}

\subsection{$\gamma$ an automorphism of \bG and \bG reductive}
\label{subsec:reductive-gp-funny}

Throughout \S\ref{subsec:reductive-gp-funny}, suppose that \bG is reductive, but not necessarily connected, and that \(\gamma\) is a semisimple automorphism of \bG.

\begin{defn}
\label{defn:gamma-large}
A torus \bT in \bG is called \term[torus!gamma-large@{$\gamma$}-large]{\(\gamma\)-large} if \(\Cent_\bG(\gamma)\conn \cap \bT\) is a maximal torus in \(\Cent_\bG(\gamma)\conn\).
\end{defn}

Lemma \initref{lem:gamma-large}\subpref{quass} is \emph{nearly} contained in \cite{digne-michel:non-connexe}*{Th\'eor\`eme 1.8(iv)}, but that result works over an algebraically, not a separably, closed field.

\begin{lem}
\initlabel{lem:gamma-large}
\hfill\begin{enumerate}
\item\sublabel{reductive}
\(\Cent_\bG(\gamma)\conn\) is reductive.
\item\sublabel{bijection}
The map \anonmapto\bT{\Cent_\bG(\gamma)\conn \cap \bT} from \(\gamma\)-large maximal tori in \bG to maximal tori in \(\Cent_\bG(\gamma)\conn\) is a bijection, with inverse \anonmapto\bS{\Cent_\bG(\bS)}.
\item\sublabel{Levi}
If \bP is a \(\gamma\)-stable, parabolic subgroup of \bG, and \bM is a \(\gamma\)-stable Levi component of \bP, then every \(\gamma\)-large torus in \bM is also \(\gamma\)-large in \bG.
\item\sublabel{stays-large}
Every \(\gamma\)-large maximal torus is \(\gamma^n\)-large for all integers \(n\), and \(\gamma\)-stable.
\item\sublabel{quass}
Every split, \(\gamma\)-large maximal torus in \bG is contained in a \(\gamma\)-stable Borel subgroup of \bG.
\end{enumerate}
\end{lem}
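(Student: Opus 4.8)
The plan is to make parts~\locpref{reductive} and~\locpref{bijection} carry the weight, with~\locpref{Levi},~\locpref{stays-large}, and~\locpref{quass} then following by elementary centraliser bookkeeping. Throughout I would reduce to \(\bG\) connected, using \(\Cent_\bG(\gamma)\conn = \Cent_{\bG\conn}(\gamma)\conn\) (and, for~\locpref{Levi}, \(\Cent_\bM(\gamma)\conn = \Cent_{\bM\conn}(\gamma)\conn\) with \(\bM\conn\) a Levi of \(\bG\conn\)), together with the fact that \(\gamma\) restricts to a semisimple, hence (by \cite{steinberg:endomorphisms}*{Theorem 7.5}) quasi-semisimple, automorphism of \(\bG\conn\). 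I would then import three facts from the structure theory of fixed points of quasi-semisimple automorphisms of connected reductive groups: that such fixed-point groups are reductive (which is exactly~\locpref{reductive}); that if such an automorphism stabilises a parabolic \(\bP\) together with a Levi component \(\bM\) of it, then \(\Cent_\bM(\gamma)\conn\) is a Levi component of the parabolic \(\Cent_\bP(\gamma)\conn\) of \(\Cent_\bG(\gamma)\conn\); and that a \(\gamma\)-stable Borel--torus pair of \(\bG\conn\) yields a Borel--torus pair of \(\Cent_\bG(\gamma)\conn\). For these I would cite \cite{adler-lansky-spice:actions} (and the classical \cite{steinberg:endomorphisms}) rather than reprove them; in the disconnected setting, marshalling these facts is where the real content sits.

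For~\locpref{bijection}: the map \(\bT \mapsto \Cent_\bG(\gamma)\conn \cap \bT\) is well defined by the definition of \(\gamma\)-largeness. Going back, let \(\bS\) be a maximal torus of \(\Cent_\bG(\gamma)\conn\). Since \(\bS \subseteq \Cent_\bG(\gamma)\), the element \(\gamma\) centralises \(\bS\), hence normalises \(\bH \ldef \Cent_\bG(\bS)\conn\), a \(\gamma\)-stable connected reductive subgroup of \(\bG\conn\) in which \(\bS\) is central; and the identity component of \(\Cent_\bH(\gamma)\) is \(\Cent_{\Cent_\bG(\gamma)\conn}(\bS) = \bS\), because a maximal torus of the connected reductive group \(\Cent_\bG(\gamma)\conn\) is self-centralising. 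Thus \(\bH\) is a connected reductive group carrying a quasi-semisimple automorphism whose fixed-point identity component is central, which forces \(\bH\) to be a torus. (I would cite this implication; alternatively, were \(\bH\) not a torus, then for a \(\gamma\)-stable Borel--torus pair \((\bB', \bT')\) of \(\bH\) and a simple root \(\alpha\), the sum of the coroots over the \(\gamma\)-orbit of \(\alpha\) would be a \(\gamma\)-fixed cocharacter pairing to \(2\) with \(\alpha\) --- the cross terms vanishing because a Dynkin-diagram automorphism of a forest carries no node to a neighbour --- contradicting centrality of the fixed points.) Now \(\bH\) is a torus containing every maximal torus \(\bT_1\) of \(\bG\conn\) with \(\bS \subseteq \bT_1\), so \(\bH = \bT_1\) is a maximal torus of \(\bG\); and \(\bT_1\) is \(\gamma\)-large, since every element of \(\bT_1 \cap \Cent_\bG(\gamma)\conn\) centralises \(\bS\), hence lies in \(\Cent_{\Cent_\bG(\gamma)\conn}(\bS) = \bS\), so \(\Cent_\bG(\gamma)\conn \cap \bT_1 = \bS\). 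This shows that \(\bS \mapsto \Cent_\bG(\bS)\conn\) lands among \(\gamma\)-large maximal tori and that one composite is the identity; the other is the identity because, for \(\gamma\)-large \(\bT\) with \(\bS = \Cent_\bG(\gamma)\conn \cap \bT\), we have \(\bS \subseteq \bT\), hence \(\bT \subseteq \Cent_\bG(\bS)\conn\), a torus by the above, whence \(\bT = \Cent_\bG(\bS)\conn\). (When \(\bG\) is disconnected \(\Cent_\bG(\bS)\) can be strictly larger than its identity component, so the stated inverse should be read with an implicit \(\conn\).)

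Parts~\locpref{Levi} and~\locpref{stays-large} then come out formally. For~\locpref{Levi}: if \(\bT\) is \(\gamma\)-large in \(\bM\), then \(\bS \ldef \Cent_\bM(\gamma)\conn \cap \bT\) is a maximal torus of \(\Cent_\bM(\gamma)\conn\); by the parabolic--Levi compatibility quoted above it is then a maximal torus of \(\Cent_\bG(\gamma)\conn\) as well, and, exactly as in~\locpref{bijection}, every element of \(\bT \cap \Cent_\bG(\gamma)\conn\) centralises \(\bS\) hence lies in \(\bS\), so \(\Cent_\bG(\gamma)\conn \cap \bT = \bS\) and \(\bT\) is \(\gamma\)-large in \(\bG\). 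For~\locpref{stays-large}: \(\gamma\)-stability of a \(\gamma\)-large \(\bT\) is immediate, since \(\bT = \Cent_\bG(\bS)\conn\) by~\locpref{bijection} (with \(\bS = \Cent_\bG(\gamma)\conn \cap \bT\)) and \(\gamma\) centralises, hence normalises, \(\bS\). For \(\gamma^n\)-largeness, \(\gamma^n\) is again a semisimple element of \(\gNorm_{\GL(V)}(\bG)\), so~\locpref{bijection} applies to it; passing to \(\sepfield\) (the condition being geometric), the bijection of~\locpref{bijection} is \(\Cent_\bG(\gamma)\conn\)-equivariant and the maximal tori of the connected group \(\Cent_\bG(\gamma)\conn\) are conjugate, so all \(\gamma\)-large maximal tori are \(\Cent_\bG(\gamma)\conn(\sepfield)\)-conjugate. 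Since there is a \(\gamma\)-large maximal torus lying in a \(\gamma\)-stable Borel--torus pair (the torus of any such pair, which is simultaneously \(\gamma^n\)-large), and since \(\Cent_\bG(\gamma)\conn \subseteq \Cent_\bG(\gamma^n)\conn\) so that conjugating by an element of the former preserves \(\gamma^n\)-largeness, every \(\gamma\)-large \(\bT\) is \(\gamma^n\)-large for \(n > 0\); the case of general \(n\) follows from \(\Cent_\bG(\gamma) = \Cent_\bG(\gamma\inv)\).

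Finally,~\locpref{quass} (for \(\field\) separably closed) follows by the same conjugacy device: \(\gamma\) restricts to a quasi-semisimple automorphism of \(\bG\conn\), hence stabilises some Borel--torus pair \((\bB_1, \bT_1)\) of \(\bG\conn\), and \(\bT_1\) is then \(\gamma\)-large; an arbitrary \(\gamma\)-large maximal torus \(\bT\) equals \(g\bT_1 g\inv\) for some \(g \in \Cent_\bG(\gamma)\conn(\field)\), so \(\bT\) lies in the \(\gamma\)-stable Borel \(g\bB_1 g\inv\). The one genuinely hard point is the quasi-semisimple structure theory in the possibly-disconnected setting --- reductivity of fixed points, their compatibility with parabolic--Levi pairs, and the assertion that a quasi-semisimple automorphism with central fixed-point identity component acts on a torus --- which I would take from \cite{adler-lansky-spice:actions}; everything else is the elementary torus-centraliser bookkeeping above.
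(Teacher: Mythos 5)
Your proof is correct and rests on the same underlying facts from \cite{adler-lansky-spice:actions} that the paper uses: reductivity of the fixed-point group of a quasi-semisimple automorphism, and compatibility of fixed points with parabolic--Levi pairs. The difference is one of emphasis. The paper cites the bijection and the Borel statement directly from \cite{adler-lansky-spice:actions}, and deduces the claim that $\gamma$-large tori are $\gamma^n$-large and $\gamma$-stable from the bijection; you instead reconstruct the bijection via the torus--centraliser argument (showing $\Cent_\bG(\bS)\conn$ is a torus because the identity component of its $\gamma$-fixed-point group is central) and derive the stability statement and the Borel statement by conjugacy of $\gamma$-large maximal tori under $\Cent_\bG(\gamma)\conn(\sepfield)$. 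Your reconstruction is correct and more self-contained; the paper's route is shorter and delegates more to the companion paper.

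One flaw to flag in your parenthetical alternative argument for ``$\bH$ is a torus'': the claim that a Dynkin-diagram automorphism of a forest carries no node to a neighbour is false --- the nontrivial automorphism of $A_{2n}$ carries the middle node to its neighbour. The argument can still be salvaged: since a Dynkin diagram has no cycles, if the $\gamma$-orbit of a simple root $\alpha$ contains an adjacent pair then the orbit has exactly two elements, joined by a single edge (a multiple edge would link roots of different lengths, which a length-preserving diagram automorphism cannot exchange). Hence $\pair\alpha{\sum_i \coroot{\gamma^i\alpha}}$ equals $2$ or $2 - 1 = 1$, and in either case is positive, so the resulting $\gamma$-fixed cocharacter is noncentral. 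Your primary plan of citing the implication is of course the safer course. Your closing note that the stated inverse $\bS \mapsto \Cent_\bG(\bS)$ should carry an implicit $\conn$ when $\bG$ is disconnected is a fair reading of the statement.
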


\begin{proof}
Remember that the automorphism of \(\bG_\algfield\) induced by \(\gamma\) is quasi-semisimple \cite{steinberg:endomorphisms}*{Theorem 7.5}.

\locpref{reductive} follows from \cite{nagata:cr}*{p.\ 95, Theorem 2}
and \cite{conrad-gabber-prasad:prg}*{Proposition A.8.12}.  That the map \anonmapto\bS{\Cent_\bG(\bS)} of \locpref{bijection} sends maximal tori in \(\Cent_\bG(\gamma)\conn\) to \(\gamma\)-large maximal tori in \bG is contained in \cite{digne-michel:non-connexe}*{Th\'eor\`eme 1.8(iv)}.  Since \locpref{reductive} gives that the centraliser in \(\Cent_\bG(\gamma)\conn\) of a maximal torus \bS in \(\Cent_\bG(\gamma)\conn\) is \bS, we have that the map \anonmapto\bT{\Cent_\bG(\gamma)\conn \cap \bT} of \locpref{bijection} is a left inverse to \anonmapto\bS{\Cent_\bG(\bS)}.  To show that it is also a right inverse, let \bT be a \(\gamma\)-large maximal torus in \bG, and put \(\bS = \Cent_\bG(\gamma)\conn \cap \bT\).  Since \(\Cent_\bG(\bS)\) clearly contains \bT, and we have already shown that it is a torus, we have that \(\Cent_\bG(\bS)\) equals \bT.  This establishes \locpref{bijection}.

\locpref{stays-large} follows from \locpref{bijection}.

In the situation of \locpref{Levi}, we have by
\cite{digne-michel:non-connexe}*{Proposition 1.11(ii)} that \(\Cent_\bM(\gamma)\conn\) is a Levi subgroup of \(\Cent_\bG(\gamma)\conn\).  Thus, if \bT is a \(\gamma\)-large torus in \bM, so that \(\Cent_\bT(\gamma)\conn\) is a maximal torus in \(\Cent_\bM(\gamma)\conn\), then \(\Cent_\bT(\gamma)\conn\) is also a maximal torus in \(\Cent_\bG(\gamma)\conn\); that is, \bT is \(\gamma\)-large in \bG.

For \locpref{quass}, let \bT be a split, \(\gamma\)-large maximal torus in \bG, put \(\bS = \Cent_\bG(\gamma)\conn \cap \bT\), and let \(\lambda\) be a cocharacter of \bS such that \(\Cent_\bG(\bS)\) equals \(\Cent_\bG(\lambda)\).  Then the parabolic subgroup \(\bB \ldef P_\bG(\lambda)\) of \bG associated to \(\lambda\) \cite{springer:lag}*{Proposition 8.4.5} admits \(\Cent_\bG(\lambda) = \Cent_\bG(\bS)\) as a Levi component by \cite{borel:linear}*{Proposition 20.5}.  Since \(\Cent_\bG(\bS)\) equals \bT by \locpref{bijection}, it follows that \(\bB\) is a Borel subgroup of \bG.  It is \(\gamma\)-stable because \(\lambda\) is fixed by \(\gamma\).
\end{proof}

Proposition \initref{prop:gp-cfc-facts}(\subref{Levi}, \subref{quotient}), together with Proposition \initref{prop:gp-dfc-smooth}\subpref{smooth}, show that \xcite{spice:asymptotic}*{Hypothesis \initxref{hyp:funny-centraliser}(\subxref{Levi})} holds.

\begin{prop}
\initlabel{prop:gp-cfc-facts}
\hfill\begin{enumerate}
\item\sublabel{reductive}
\(\CC\bG{\mc L}(\gamma)\conn\) is reductive.
\item\sublabel{torus}
If \bT is a \(\gamma\)-large maximal torus in \bG, then \(\CC\bG{\mc L}(\gamma)\conn \cap \bT\) is a maximal torus in \(\CC\bG{\mc L}(\gamma)\), hence equals \(\CC\bT{\mc L}(\gamma)\conn\); and \(\Cent_\bG(\CC\bG{\mc L}(\gamma)\conn \cap \bT)\) equals \bT.
\item\sublabel{regular}
If \bS is a maximal torus in \(\CC\bG{\mc L}(\gamma)\), then there is a unique maximal torus \bT in \bG containing \bS, and \bS equals \(\CC\bG{\mc L}(\gamma)\conn \cap \bT\).
\item\sublabel{Levi}
If \(\bP^+\) is a \(\gamma\)-stable, parabolic subgroup of \bG and \bM is a \(\gamma\)-stable Levi component of \(\bP^+\), then \(\CC{\bP^+}{\mc L}(\gamma)\) is a parabolic subgroup of \(\CC\bG{\mc L}(\gamma)\), \(\CC\bM{\mc L}(\gamma)\) is a Levi component of \(\CC{\bP^+}{\mc L}(\gamma)\), and the unipotent radical of \(\CC{\bP^+}{\mc L}(\gamma)\conn\) is \(\CC{\bPrad^+}{\mc L}(\gamma)\), where \(\bPrad^+\) is the unipotent radical of \(\bP\thenconn+\).
\item\sublabel{big-cell}
If, with the notation of \locpref{Levi},
	\begin{itemize}
	\item \bG is connected,
	\item \(\bP^-\) is the parabolic subgroup of \bG opposite to \(\bP^+\) relative to \bM,
and	\item \(\bPrad^-\) is the unipotent radical of \(\bP^-\),
	\end{itemize}
then \(\CC\bG{\mc L}(\gamma) \cap (\bPrad^-\dotm\bM\dotm\bPrad^+)\) equals \(\CC{\bPrad^-}{\mc L}(\gamma)\dotm\CC\bM{\mc L}(\gamma)\dotm\CC{\bPrad^+}{\mc L}(\gamma)\), and is an open, dense subscheme of \(\CC\bG{\mc L}(\gamma)\).
\item\sublabel{quotient}
The image of \(\CC\bG{\mc L}(\gamma)\conn\) under a quotient \anonmap\bG{\bG'} by a \(\gamma\)-stable, central subgroup of \bG is \(\CC{\bG'}{\mc L}(\gamma)\conn\).
	\project[Can we change this to:  Quotient by a subgroup of multiplicative type or by a smooth subgroup?]
\end{enumerate}
\end{prop}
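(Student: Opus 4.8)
The plan is to deduce all seven assertions from the structure theory of fixed-point groups of quasi-semisimple automorphisms, available on identity components through \xcite{adler-lansky-spice:actions}*{Proposition \xref{prop:quass-parabolic}} and packaged for our needs in Lemma~\initref{lem:gamma-large}. Each assertion is geometric — centralisers, identity components, reductivity (in the paper's geometric sense), parabolicity, being a maximal torus, and being a quotient map all survive separable extension of the base — so I would first replace \field by \sepfield and assume \(\gamma\) diagonalisable. In that situation I would realise \(\CC\bG{\mc L}(\gamma)\) as an honest fixed-point group \(\Cent_\bG(\delta)\) for a single semisimple element \(\delta\) of the diagonalisable group \(\sgen\gamma\) acting on \bG: via Remark~\initref{rem:gp-dfc-facts}\subpref{section} (whose hypothesis that \(\mc G/(\mc G \cap \mc L)\) embed in \(\field\mult\) is automatic when \(\mc L\) is a valuation-filtration subgroup, and can be arranged in general after a further separable extension), or, when the automorphism of \bG induced by \(\gamma\) has finite order, via Lemma~\initref{lem:gp-dfc-power}\subpref{exact} with \(\delta = \gamma^n\). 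In either case \(\delta\) lies in a diagonalisable group, so it induces a semisimple, hence quasi-semisimple (\cite{steinberg:endomorphisms}*{Theorem 7.5}), automorphism of \bG; and Remark~\initref{rem:gp-dfc-facts}\subpref{sub} rewrites \(\CC{\bP^+}{\mc L}(\gamma) = \CC\bG{\mc L}(\gamma) \cap \bP^+ = \Cent_{\bP^+}(\delta)\), and likewise for \bM and \(\bU^\pm\). Under these identifications, parts~\subpref{reductive}--\subpref{Levi} become assertions about \(\Cent_\bG(\delta)\) and its interaction with the \(\delta\)-stable subgroups \(\bP^+\), \bM, \(\bU^\pm\).

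Granting this reduction, part~\subpref{reductive} is immediate from Lemma~\initref{lem:gamma-large}\subpref{reductive} applied to \(\delta\), together with the smoothness in Proposition~\initref{prop:gp-dfc-smooth}\subpref{smooth}. For part~\subpref{torus}, I would check that a \(\gamma\)-large maximal torus \bT of \bG is \(\delta\)-large — when \(\delta = \gamma^n\) this is Lemma~\initref{lem:gamma-large}\subpref{stays-large}, and in general it follows by a comparison of maximal tori, using that \(\Cent_\bG(\gamma)\conn \subseteq \Cent_\bG(\delta)\conn\) share the common maximal torus \(\Cent_\bT(\gamma)\conn\) — whence \(\CC\bG{\mc L}(\gamma)\conn \cap \bT = \Cent_\bG(\delta)\conn \cap \bT\) is a maximal torus of \(\Cent_\bG(\delta)\conn\), hence of \(\Cent_\bG(\delta) = \CC\bG{\mc L}(\gamma)\) since maximal tori lie in the identity component. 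For part~\subpref{regular}, Lemma~\initref{lem:gamma-large}\subpref{bijection} applied to \(\delta\) produces, for a maximal torus \bS of \(\CC\bG{\mc L}(\gamma) = \Cent_\bG(\delta)\), the \(\delta\)-large maximal torus \(\bT \ldef \Cent_\bG(\bS)\) of \bG, with \(\CC\bG{\mc L}(\gamma)\conn \cap \bT = \Cent_\bG(\delta)\conn \cap \bT = \bS\); uniqueness is formal, since any maximal torus of \bG containing \bS is commutative, hence lies in \(\Cent_\bG(\bS) = \bT\) and equals \bT.

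Part~\subpref{Levi} is the analogue for \(\delta\) of the structure theory already invoked for Lemma~\initref{lem:gamma-large}\subpref{Levi}: \xcite{adler-lansky-spice:actions}*{Proposition \xref{prop:quass-parabolic}} gives that \(\Cent_{\bP^+}(\delta)\) is parabolic in \(\Cent_\bG(\delta)\), that \(\Cent_\bM(\delta)\) is a Levi component of it, and that the unipotent radical of \(\Cent_{\bP^+}(\delta)\conn\) is \(\Cent_{\bU^+}(\delta)\). For part~\subpref{big-cell}, where \bG is connected, I would pick a cocharacter \(\lambda\) with \(\operatorname U_\bG(\lambda) = \bU^+\), \(\operatorname U_\bG(-\lambda) = \bU^-\), and \(\Cent_\bG(\lambda) = \bM\), replace it by the average of its \(\gamma\)-orbit in \(\cclat(\bM) \otimes \Q\) — that orbit is finite because \(\gamma\) acts on the cocharacter lattice of the central torus of \bM, a torus in \(\GL(V)\), through a finite group — and clear denominators to obtain a \(\gamma\)-fixed such \(\lambda\); the claim is then exactly Lemma~\ref{lem:big-cell}. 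For part~\subpref{quotient}: Lemma~\ref{lem:gp-dfc-im} shows the image of \(\CC\bG{\mc L}(\gamma)\conn\) under \(\anonmap\bG{\bG'}\) lies in \(\CC{\bG'}{\mc L}(\gamma)\), hence, being connected, in \(\CC{\bG'}{\mc L}(\gamma)\conn\); and for surjectivity I would apply Lemma~\ref{lem:fixed-surjective} to the multiplicative-type group \(\sgen\delta\) acting on \bG (which stabilises the \(\gamma\)-stable central kernel, since the stabiliser of that kernel in \(\sgen\gamma\) is a closed subgroup containing \(\gamma\)), obtaining that \(\Cent_\bG(\delta)\conn \to \Cent_{\bG'}(\delta)\conn\) is a quotient, where, since the eigenspaces of \(\delta\) on any representation are the \(\mc L\)-close eigenspaces of \(\gamma\), the same \(\delta\) gives \(\CC{\bG'}{\mc L}(\gamma) = \Cent_{\bG'}(\delta)\). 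Routing surjectivity through Lemma~\ref{lem:fixed-surjective} rather than through Lie algebras is what lets us avoid assuming the central kernel smooth.

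I expect the main obstacle to be part~\subpref{Levi}, specifically the passage from identity components — the level at which \xcite{adler-lansky-spice:actions} operates — to the possibly disconnected groups \(\CC\bG{\mc L}(\gamma) = \Cent_\bG(\delta)\), \(\CC{\bP^+}{\mc L}(\gamma)\), and \(\CC\bM{\mc L}(\gamma)\): one must see that \(\Cent_{\bP^+}(\delta)\) meets exactly the components of \(\Cent_\bG(\delta)\) needed for it to contain a Borel of \(\Cent_\bG(\delta)\), using the \(\gamma\)-stability of \(\bP^+\) and \bM and the exact sequences relating these groups to their intersections with \(\bG\conn\). Secondary but genuine points are the torus comparison behind the \(\gamma\)-large-implies-\(\delta\)-large step of part~\subpref{torus} and the characteristic-\(p\) care in the reduction to a single \(\delta\); neither obstructs the filtration subgroups \(\mc L\) that arise in the character applications.
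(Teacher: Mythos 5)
Your central move — realising $\CC\bG{\mc L}(\gamma)$ as the honest fixed-point group $\Cent_\bG(\delta)$ of a single semisimple element $\delta \in \sgen\gamma(\sepfield)$ — is exactly the simplification that does \emph{not} go through in general, and the paper's proof of part~\subpref{reductive} is built around avoiding it. The hypothesis of Remark~\initref{rem:gp-dfc-facts}\subpref{section} requires an embedding $\mc G/(\mc G \cap \mc L) \hookrightarrow \sepfield\mult$. You assert this is automatic for valuation-filtration subgroups and arrangeable after separable extension, but neither is true in positive residual characteristic: take $\field = \mathbb F_p((t))$, $\bG = \GL_2$ embedded identically, $\gamma = \operatorname{diag}(1, 1+t)$, and $\mc L = \sbjtl{\sepfield\mult}2$. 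Then $\mc G = \langle 1 + t\rangle$, and because $(1+t)^p = 1 + t^p$ lies in $\mc L$ while $(1+t)^n$ for $0 < n < p$ does not, the quotient $\mc G/(\mc G \cap \mc L)$ is $\Z/p\Z$, which embeds in no field of characteristic $p$, separably closed or not. Lemma~\initref{lem:gp-dfc-power}\subpref{exact} is no rescue either, since the automorphism of $\bG$ induced by $\gamma$ is inner of infinite order. So your closing remark that the obstruction ``does not obstruct the filtration subgroups that arise in the character applications'' is wrong: it obstructs precisely the $\mc L = \sbjtl{\sepfield\mult}i$ with $i > 0$, over $\mathbb F_q((t))$.

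The consequence is that part~\subpref{reductive}, and everything you route through ``$\delta$'' (including the surjectivity half of \subpref{quotient} via Lemma~\ref{lem:fixed-surjective} applied to $\sgen\delta$, and the use of \xcite{adler-lansky-spice:actions}*{Proposition~\xref{prop:quass-parabolic}} for \subpref{Levi}), is not established. This is why the paper proves \subpref{reductive} by a three-case induction on $N = \card{\pi_0(\sgen\gamma)}$: the base case $N = 1$ is exactly the setting of the counterexample above, where no auxiliary element exists, and one instead constructs the candidate subgroup $\bH = \Cent_\bG(\bigcap_\alpha \ker\alpha)\conn$ directly from the closed subsystem $\Root(\CC{\Lie(\bG)}{\mc L}(\gamma), \bT)$ and checks its Lie algebra using Lemma~\initref{lem:gp-dfc-unique}\subpref{unique}. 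The intermediate case $\mu_N(\field) \cap \mc L = 1$ does produce a substitute $\gamma t\inv$ for your $\delta$, but its existence is hard-won from the hypotheses on $\mc L$ and $N$, not automatic. The paper also handles \subpref{Levi} and \subpref{big-cell} by the dynamic method (the cocharacter $\lambda = \sum_{\alpha \in \Root(\bP^+, \bT)} \coroot\alpha$ in a $\gamma$-large torus, plus \cite{conrad-gabber-prasad:prg} and Lemma~\ref{lem:big-cell}), not through a single $\delta$; your averaging trick for producing a $\gamma$-fixed $\lambda$ is a perfectly reasonable alternative for that step, but the step was never the bottleneck.

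Where your outline is on solid ground is in the parts that do not need $\delta$: the argument for \subpref{torus} and \subpref{regular} via $\Cent_\bG(\CC\bG{\mc L}(\gamma)\conn \cap \bT) \subseteq \Cent_\bG(\Cent_\bT(\gamma)\conn) = \bT$ can be run directly, as the paper does, using Lemma~\initref{lem:gamma-large}\subpref{bijection} without ever identifying a single fixing element; and the containment half of \subpref{quotient} via Lemma~\ref{lem:gp-dfc-im} is fine. The genuine missing idea is a proof of \subpref{reductive} that works when $\mc G/(\mc G \cap \mc L)$ has $p$-torsion, and the paper's three-case induction is precisely that.
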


\begin{proof}
We may, and do, assume, upon replacing \field by \sepfield and \bG by \(\bG\conn\), that \field is separably closed and \bG is connected.

Note that \(\CC\bG{\mc L}(\gamma)\conn\) contains \(\Cent_\bG(\gamma)\conn\).

In the situation of \locpref{torus}, put \(\bS' = \Cent_\bG(\gamma)\conn \cap \bT\) and \(\bS = \CC\bG{\mc L}(\gamma)\conn \cap \bT\).  Lemma \initref{lem:gamma-large}\subpref{bijection} gives that \(\bS'\) is a torus, and \(\Cent_\bG(\bS')\) equals \bT.  In particular, \(\bS = \CC\bG{\mc L}(\gamma)\conn \cap \bT\) equals \(\CC\bG{\mc L}(\gamma)\conn \cap \Cent_\bG(\bS') = \Cent_{\CC\bG{\mc L}(\gamma)\conn}(\bS')\), which is smooth \cite{conrad-gabber-prasad:prg}*{Proposition A.8.10(2)} and connected \cite{borel:linear}*{Corollary 11.12}, and therefore a torus.  Further, \(\Cent_\bG(\bS)\), which clearly contains \bT, is also contained in \(\Cent_\bG(\bS') = \bT\), so that we have equality; and \(\Cent_{\CC\bG{\mc L}(\gamma)\conn}(\bS) = \CC\bG{\mc L}(\gamma)\conn \cap \Cent_\bG(\bS)\) equals \(\CC\bG{\mc L}(\gamma)\conn \cap \bT = \bS\), so that \bS is a maximal torus in \(\CC\bG{\mc L}(\gamma)\conn\).  This gives \locpref{torus}.

For (\locref{Levi}, \locref{big-cell}), use Lemma \initref{lem:gamma-large}\subpref{Levi} to obtain a \(\gamma\)-large maximal torus \bT in \bM.  Put \(\delta_{\bP^+} = \sum_{\alpha \in \Root(\bP^+, \bT)} \coroot\alpha\), so that \(\delta\) is a \(\gamma\)-fixed cocharacter of \bT, hence a cocharacter of \(\Cent_\bT(\gamma)\conn \subseteq \CC\bG{\mc L}(\gamma)\conn\).  Then \(\bP^+\) is the associated parabolic subgroup \(\operatorname P_{\bG\conn}(\delta)\) of \(\bG\conn\), and \bM is its associated Levi component \(\Cent_{\bG\conn}(\delta)\), so \cite{conrad-gabber-prasad:prg}*{p.~49 and Lemma 2.1.5} give that \(\CC\bG{\mc L}(\gamma)\conn \cap \bP^+ = \CC\bG{\mc L}(\gamma)\conn \cap \CC{\bP^+}{\mc L}(\gamma)\) and \(\CC\bG{\mc L}(\gamma)\conn \cap \bM = \CC\bG{\mc L}(\gamma)\conn \cap \CC\bM{\mc L}(\gamma)\) are the analogous subgroups \(\operatorname P_{\CC\bG{\mc L}(\gamma)\conn}(\delta)\) and \(\Cent_{\CC\bG{\mc L}(\gamma)\conn}(\delta)\) of \(\CC\bG{\mc L}(\gamma)\conn\).  For the statements about \(\bPrad^+\), we must take a bit more care.  Namely, we observe that \(\bPrad^+\) equals \(\operatorname U_\bG(\delta)\), so that \(\CC{\bPrad^+}{\mc L}(\gamma) = \CC\bG{\mc L}(\gamma) \cap \bPrad^+\) equals \(\operatorname U_{\CC\bG{\mc L}(\gamma)}(\delta)\).  Since \(\operatorname U_{\CC\bG{\mc L}(\gamma)}(\delta)\) is connected \cite{conrad-gabber-prasad:prg}*{Proposition 2.1.8(4)}, it equals \(\operatorname U_{\CC\bG{\mc L}(\gamma)\conn}(\delta)\).
(\locref{Levi}, \locref{big-cell}) will follow once we have proven \locpref{reductive} (using 
\cite{conrad-gabber-prasad:prg}*{Proposition 2.1.8(3)} for \locpref{big-cell}).

For (\locref{reductive}, \locref{regular}, \locref{quotient}), use Lemma \initref{lem:gamma-large}\subpref{bijection} to choose a \(\gamma\)-large maximal torus \bT in \bG.  Write \(\bT'\) for the image of \bT in \(\bG'\).
By \cite{demazure-grothendieck:SGA-3.3}*{Expos\'e XXIV, Corollaire 1.7}, there is a positive integer \(N = \card{\pi_0(\sgen\gamma)}\) such that the automorphism of \bG induced by \(\gamma^N\) belongs to \(\Int_\bG(\bG)(\field) \cap \Cent_{\Aut(\bG)}(\bT)(\field) = \Int_\bG(\bT)(\field)\).  There is thus a natural extension map \anonmap{\Z\Root(\bG, \bT) = \clat(\Int_\bG(\bT))}{\clat(\sgen{\gamma^N, \Int_\bG(\bT)})}, which we use to make sense of the notation \(\chi(\gamma^N)\) for \(\chi \in \Z\Root(\bG, \bT)\).

\locpref{regular} follows from \locpref{torus} for the particular maximal torus \(\bS = \CC\bG{\mc L}(\gamma)\conn \cap \bT\); but, since the maximal tori in \(\CC\bG{\mc L}(\gamma)\conn\) are \(\CC G{\mc L}(\gamma)\conn\)-conjugate, \locpref{regular} follows in general.

To prove \locpref{quotient}, we will temporarily assume that we have proven \locpref{reductive}.
Write \(q\) for the quotient map \anonmap\bG{\bG'}.
It follows from Lemma \ref{lem:fixed-surjective} that \anonmap{\Cent_\bT(\gamma)\conn}{\Cent_{\bT'}(\gamma)\conn} is surjective,
and from Lemma \initref{lem:gamma-large}\subpref{bijection} that \(\Cent_{\bT'}(\gamma)\conn\) equals \(\Cent_{\bG'}(\gamma)\conn \cap \bT\) and that there is a cocharacter \(\lambda'\) of \(\Cent_{\bT'}(\gamma)\conn\) such that \(\Cent_{\bG'}(\lambda)\conn\) equals \(\bT'\).  Upon replacing \(\lambda'\) by a positive-integer multiple if necessary, we may, and do, arrange that \(\lambda'\) is of the form \(q \circ \lambda\) for some cocharacter \(\lambda\) of \(\Cent_\bT(\gamma)\conn\).
We have that \(\bP^\pm \ldef \operatorname P_\bG(\pm\lambda)\) are opposite \(\gamma\)-stable, parabolic subgroups of \bG with common Levi component \(\Cent_\bG(\lambda) = \bT\) and unipotent radicals \(\bPrad^\pm \ldef \operatorname U_\bG(\pm\lambda)\); and \(\bP\primethen\pm \ldef \operatorname P_{\bG'}(\lambda')\) are opposite \(\gamma\)-stable, parabolic subgroups of \(\bG'\) with common Levi component \(\bPrad\primethen\pm \ldef \operatorname U_\bG(\pm\lambda')\).
It follows from Lemmas \ref{lem:fixed-surjective} and \initref{lem:gp-dfc-power}\subpref{exact} 
that \(q(\CC\bT{\mc L}(\gamma)\conn)\) equals \(\CC{\bT'}{\mc L}(\gamma)\conn\).

Since \(\bPrad^\pm\) are unipotent, they intersect the multiplicative-type subgroup \(\ker(q)\) trivially, so that \(q\) restricts to isomorphisms \(q^\pm\) of \(\bPrad^\pm\) onto \(q(\bPrad^\pm) = \bPrad\primethen\pm\).  Let us temporarily write \bH for the subgroup of \bG generated by \(\CC\bT{\mc L}(\gamma)\conn\) and \(q^{\pm\,{-1}}(\CC{\bPrad\primethen\pm}{\mc L}(\gamma))\).  Then \(q(\bH)\) is generated by \(q(\CC\bT{\mc L}(\gamma)\conn) = \CC{\bT'}{\mc L}(\gamma)\conn\) and \(\CC{\bPrad\primethen\pm}{\mc L}(\gamma)\), hence contains a dense open subgroup of \(\CC{\bG'}{\mc L}(\gamma)\) by \locpref{big-cell}, hence equals \(\CC{\bG'}{\mc L}(\gamma)\).  Further, \(\Lie(\bH)\) is generated by \(\Lie(\CC\bT{\mc L}(\gamma))\) and \(\upd q^{\pm\,{-1}}(\Lie(\CC{\bPrad\primethen\pm}{\mc L}(\gamma)))\).  Proposition \initref{prop:gp-dfc-smooth}\subpref{Lie} gives that \(\Lie(\CC{\bPrad^\pm}{\mc L}(\gamma))\) equals \(\CC{\Lie(\bPrad^\pm)}{\mc L}(\gamma)\), and analogously with \(\bPrad\primethen\pm\) in place of \bT.  Thus \(\upd q^{\pm\,{-1}}(\Lie(\CC{\bPrad\primethen\pm}{\mc L}(\gamma))) = \upd q^{\pm\,{-1}}(\CC{\Lie(\bPrad\primethen\pm)}{\mc L}(\gamma)\) equals \(\CC{\Lie(\bPrad^\pm)}{\mc L}(\gamma) = \Lie(\CC{\bPrad^\pm}{\mc L}(\gamma))\).  It follows that \(\Lie(\bH)\) is generated by \(\Lie(\CC\bT{\mc L}(\gamma))\) and \(\Lie(\CC{\bPrad^\pm}{\mc L}(\gamma))\), hence, by \locpref{big-cell} again, equals \(\Lie(\CC\bG{\mc L}(\gamma))\).  Since \bH is smooth and connected, we have by Lemma \initref{lem:gp-dfc-unique}\subpref{unique} that \bH equals \(\CC\bG{\mc L}(\gamma)\conn\).  That is, \(q(\CC\bG{\mc L}(\gamma)\conn)\) equals \(\CC{\bG'}{\mc L}(\gamma)\conn\), as desired.

It remains only to prove \locpref{reductive}.  By Lemma \initref{lem:gp-dfc-unique}\subpref{unique}, it suffices to show that there is a \(\gamma\)-stable, connected, reductive subgroup of \bG whose Lie algebra is \(\CC{\Lie(\bG)}{\mc L}(\gamma)\).  We prove this by induction on \(N\), splitting our argument into three cases.



The first, base, case is when \(N = 1\).
Since evaluation at \(\gamma = \gamma^N\) furnishes a homomorphism \anonmap{\Z\Root(\bG, \bT)}{\field\mult}, we have that \(\Root(\CC{\Lie(\bG)}{\mc L}(\gamma), \bT) = \set{\alpha \in \Root(\bG, \bT)}{\alpha(\gamma) \in \mc L}\) is closed in \(\Root(\bG, \bT)\).
Put \(\bH = \Cent_\bG\bigl(\bigcap_{\alpha \in \Root(\CC{\Lie(\bG)}{\mc L}(\gamma), \bT)} \ker(\alpha)\bigr)\conn\).
(We emphasise that we are taking the identity component of the centraliser, not the centraliser of the identity component, so that \(\bigcap \ker(\alpha)\), not just its identity component, is central in \(\CC\bG i(\gamma)\conn\).)
We have by \cite{borel:linear}*{Proposition 13.20} that \(\Lie(\bH)\) is the sum of \(\Lie(\bT)\) and the root subspaces for \bT in \(\Lie(\bG)\) corresponding to roots
in \(\Z\Root(\CC{\Lie(\bG)}{\mc L}(\gamma), \bT) \cap \Root(\bG, \bT) = \Root(\CC{\Lie(\bG)}{\mc L}(\gamma), \bT)\);
that is, \(\Lie(\bH)\) equals
\(\CC{\Lie(\bG)}{\mc L}(\gamma)\).




In the other two cases, we identify an appropriate non-trivial divisor \(n\) of \(N\).  By induction, we then may, and do, assume, upon replacing \bG by \(\CC\bG{\mc L^n}(\gamma^n)\conn\) and \bT by \(\CC\bG{\mc L^n}(\gamma^n) \cap \bT\), that \(\Lie(\bG)\) equals \(\CC{\Lie(\bG)}{\mc L^n}(\gamma^n)\).


The second of three cases is when \(\mu_N(\field) \cap \mc L\) is trivial, in which case we put \(n = N\).


Remember that we may evaluate elements \(\chi\) of \(\Z\Root(\bG, \bT)\) at \(\gamma^N = \gamma^n\), and that, by the assumption that \(\CC{\Lie(\bG)}{\mc L^n}(\gamma^n)\) equals \(\Lie(\bG)\), all \(\chi(\gamma^n)\) lie in \(\mc L^n\).  Thus there is a unique element \(t \in (\bT/{\Zent(\bG)})(\field)\) such that, for all \(\chi \in \clat(\bT/{\Zent(\bG)})\), we have that \(\chi(t)\) lies in \(\mc L\) and \(\chi(t)^n\) equals \(\chi(\gamma^n)\).  Note that \(t\) belongs to \(\Cent_{\bT/{\Zent(\bG)}}(\gamma)(\field)\).

Put \(\bH = \Cent_\bG(\gamma t\inv)\conn\).
By construction, the only eigenvalue for \(\gamma t\inv\) on \(\Lie(\bG)\) that lies in \mc L is \(1\), so \(\Lie(\bH) = \Cent_{\Lie(\bG)}(\gamma t\inv)\) equals \(\CC{\Lie(\bG)}{\mc L}(\gamma t\inv)\).  Lemma \initref{lem:gp-dfc-unique}\subpref{nearby-equal} gives that \(\CC{\Lie(\bG)}{\mc L}(\gamma t\inv)\) equals \(\CC{\Lie(\bG)}{\mc L}(\gamma)\).
The third case is when \(\mu_N(\field) \cap \mc L\) is non-trivial.  Let \(n\) be a divisor of \(N\) such that \(\mu_N(\field) \cap \mc L\) equals \(\mu_n(\field)\).  Then \(\Lie(\bG) = \CC{\Lie(\bG)}{\mc L^n}(\gamma^n)\) equals \(\CC{\Lie(\bG)}{\mc L}(\gamma)\) by Lemma \initref{lem:gp-dfc-power}\subpref{contain}.
%
%
%
%
\end{proof}

\subsection{$\gamma \in \Lie(G)$}
\label{subsec:Lie-funny}

The results of \S\ref{subsec:Lie-funny} almost exactly parallel those of \S\S\ref{subsec:gp-funny}, \ref{subsec:reductive-gp-funny}, except that now we regard the element \(\gamma\) as living in \(\Lie(G)\) rather than being an automorphism of \bG.  Note that, though \(\gamma\) now lives on the Lie algebra, we are still defining subgroups of \bG, not just sub-Lie algebras of \(\Lie(\bG)\).  Since the proofs are identical or easier, we omit them, instead indicating the analogous results in \S\S\ref{subsec:gp-funny}, \ref{subsec:reductive-gp-funny}.

There is a slight notational conflict, in which our notation now requires us to determine whether to view an automorphism \(\gamma\) of a finite-dimensional vector space \bV as lying in \(\GL(\bV)\) or \(\mathfrak{gl}(\bV)\) before understanding how to make sense of notation like \(\CC{\GL(\bV)}{\mc L}(\gamma)\).  We will rely on context to make this clear.

We could perhaps generalise the results of \S\ref{subsec:Lie-funny} to handle more general derivations of \(\Lie(\bG)\) rather than elements of \(\Lie(G)\), but we have no need of this generalisation, and it is not clear to us whether `semisimple derivations' have the same nice properties as semisimple automorphisms; so we do not pursue this generalisation.

\begin{defn}[Definition \ref{defn:GL-fc}]
\label{defn:gl-fc}
Suppose that
	\begin{itemize}
	\item \bV is a (possibly infinite dimensional) vector space over \field,
	\item \(\gamma\) is a locally diagonalisable endomorphism of \bV,
and	\item \mc L is a subgroup of \field.
	\end{itemize}
For every \(\lambda \in \field\), we define the \mc L-close \(\lambda\)-eigenspace for \(\gamma\) in \bV to be the sum of the eigenspaces for \(\gamma\) in \bV corresponding to eigenvalues in \(\lambda + \mc L\), and write \(\CC\bV{\mc L}(\gamma)\) for the \mc L-close \(0\)-eigenspace for \(\gamma\) in \bV.
\end{defn}

Throughout \S\ref{subsec:Lie-funny}, fix a semisimple element \(\matnotn{gamma}\gamma \in \Lie(G)\), and a \(\Gal(\sepfield/\field)\)-stable subgroup \matnotn L{\mc L} of \sepfield.

\begin{defn}[Definition \ref{defn:gp-dfc} and Proposition \ref{prop:gp-dfc-weights}]
\label{defn:Lie-dfc}
Let \(\sepfield[\bG]_{\gamma \in \mc L}\) be the maximal quotient algebra of \(\sepfield[\bG]\) on which all eigenvalues of \(\gamma\) lie in \mc L.  Write \(\CC\bG{\mc L}(\gamma)\) for the descent to \field of \(\operatorname{Spec} \sepfield[\bG]_{\gamma \in \mc L}\).
If \sepfield is equipped with a \(\Gal(\sepfield/\field)\)-fixed valuation, then, for every \(i \in \tR\), put \(\usualCC\bG i(\gamma) = \CC\bG{\sbjtl{\field\sep}i}(\gamma)\).
\end{defn}

\begin{prop}[Propositions \ref{prop:gp-dfc-rep}, \ref{prop:gp-dfc-weights}, and \ref{prop:gp-dfc-smooth}]
\initlabel{prop:Lie-dfc-facts}
\hfill\begin{enumerate}
\item
\(\CC\bG{\mc L}(\gamma)\) is a subgroup of \bG.
\item
\(\Lie(\CC\bG{\mc L}(\gamma))\) equals \(\CC{\Lie(\bG)}{\mc L}(\gamma)\).
\item
If \bG is smooth, then so is \(\CC\bG{\mc L}(\gamma)\).
\item
If \anonmap{\wtilde\bG}\bG is a morphism and \(\tilde\gamma \in \Lie(\wtilde G)\) is a semisimple element mapping to \(\gamma\), then the image of \(\CC{\wtilde\bG}{\mc L}(\tilde\gamma)\) is contained in \(\CC\bG{\mc L}(\gamma)\).
\item\sublabel{sub}
If \(\bG'\) is a subgroup of \bG such that \(\gamma\) belongs to \(\Lie(G')\), then \(\CC\bG{\mc L}(\gamma) \cap \bG'\) equals \(\CC{\bG'}{\mc L}(\gamma)\).
\item
For every \bG-module \bW, we have that \(\CC\bG{\mc L}(\gamma)\) stabilises every \mc L-close eigenspace for \(\gamma\) in \bW.
\setcounter{tempenumi}{\value{enumi}}
\end{enumerate}
Suppose that \(t \in \Lie(G)\) is semisimple and commutes with \(\gamma\).
\begin{enumerate}
\setcounter{enumi}{\value{tempenumi}}
\item\sublabel{nearby-equal}
The intersections with \(\CC\bG{\mc L}(t)\) of \(\CC\bG{\mc L}(\gamma)\) and \(\CC\bG{\mc L}(\gamma - t)\) are equal.
\item\sublabel{nearby-sub}
Let \(\mc L'\) be a \(\Gal(\sepfield/\field)\)-stable subgroup of \mc L.
If
	\(\CC\bG{\mc L}(t)\) contains both \(\CC\bG{\mc L}(\gamma)\) and \(\CC\bG{\mc L}(\gamma - t)\), and
	\(\CC\bG{\mc L}(\gamma)\) equals \(\Cent_\bG(\gamma)\), then
\(\CC\bG{\mc L'}(\gamma - t)\) equals \(\CC\bG{\mc L}(\gamma) \cap \CC\bG{\mc L'}(t)\).
\end{enumerate}
\end{prop}

\begin{prop}[Proposition \ref{prop:gp-dfc-smooth} and Lemma \ref{lem:gp-dfc-unique}]
\initlabel{prop:Lie-smooth-dfc-facts}
\hfill\begin{enumerate}
\item
If \bJ is a \(\gamma\)-stable, connected, smooth subgroup of \bG such that \(\Lie(\bJ)\) is contained in (respectively, contains) \(\CC{\Lie(\bG)}{\mc L}(\gamma)\), then \bJ is contained in (respectively, contains) \(\CC\bG{\mc L}(\gamma)\conn\).
\setcounter{tempenumi}{\value{enumi}}
\end{enumerate}
Suppose that \bG is smooth.
\begin{enumerate}
\setcounter{enumi}{\value{tempenumi}}
\item
\(\CC\bG{\mc L}(\gamma)\) is smooth.
\item\sublabel{unique}
\(\CC\bG{\mc L}(\gamma)\conn\) is the unique \(\gamma\)-stable, smooth, connected subgroup of \bG whose Lie algebra is \(\CC{\Lie(\bG)}{\mc L}(\gamma)\).
\item\sublabel{nearby-equal}
Suppose that \(t \in \Lie(G)\) is semisimple and commutes with \(\gamma\).  If \(\CC{\Lie(\bG)}{\mc L}(t)\) contains \(\CC{\Lie(\bG)}{\mc L}(\gamma) + \CC{\Lie(\bG)}{\mc L}(\gamma - t)\), then \(\CC\bG{\mc L}(\gamma - t)\conn\) equals \(\CC\bG{\mc L}(\gamma)\conn\).
\item\sublabel{nearby-sub}
If, in addition to the hypotheses of \locpref{nearby-equal}, we have that \(\CC{\Lie(\bG)}{\mc L}(\gamma)\) equals \(\Cent_{\Lie(\bG)}(\gamma)\), and if \(\mc L'\) is a \(\Gal(\sepfield/\field)\)-stable subgroup of \mc L, then \(\CC\bG{\mc L'}(\gamma - t)\conn\) is the identity component of \(\CC\bG{\mc L}(\gamma) \cap \CC\bG{\mc L'}(t)\).
\end{enumerate}
\end{prop}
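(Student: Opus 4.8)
The plan is to transcribe, almost verbatim, the proofs of Proposition \ref{prop:gp-dfc-smooth} and Lemma \ref{lem:gp-dfc-unique} from the group-automorphism setting of \S\ref{subsec:gp-funny}, under the dictionary in which a multiplicative eigenvalue coset \(\lambda\mc L \subseteq \field\mult\) becomes the additive coset \(\lambda + \mc L \subseteq \field\), and the automorphism action \(f \mapsto \gamma\dota f\) of \(\gamma\) on \(\field[\bG]\) becomes the derivation of \(\field[\bG]\) induced by \(\gamma \in \Lie(G)\) (obtained, for instance, from a faithful representation \(\bG \to \GL(\bV)\) as in Definition \ref{defn:Lie-dfc} by letting \(\gamma \in \gl(\bV)\) act by a derivation on \(\field[\gl(\bV)] \otimes \field[\gl(\contra\bV)]\), which surjects onto \(\field[\bG]\)). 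As in the group case, I may, and do, reduce throughout to the situation in which \(\field = \sepfield\) is separably closed and \(\gamma\)---and, wherever \(t\) appears, also \(t\)---is diagonalisable. The substantive inputs are all available: Proposition \ref{prop:Lie-dfc-facts} supplies the identity \(\Lie(\CC\bG{\mc L}(\gamma)) = \CC{\Lie(\bG)}{\mc L}(\gamma)\), the intersection formula \(\CC\bG{\mc L}(\gamma) \cap \bG' = \CC{\bG'}{\mc L}(\gamma)\), and the fact that \(\CC\bG{\mc L}(\gamma)\) stabilises every \(\mc L\)-close eigenspace for \(\gamma\) in an arbitrary \(\bG\)-module; and \cite{milne:algebraic-groups}*{Proposition 10.15 and Lemma 13.6} supply the facts about smooth subgroups and regularity of local rings used below.

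I would begin with the smoothness assertion, since it is needed for the first assertion (applied there to the smooth group \bJ). Exactly as in the proof of Proposition \initref{prop:gp-dfc-smooth}\subpref{smooth}---itself a variant of \cite{milne:algebraic-groups}*{Lemma 13.6}---one shows that, when the ambient group is smooth, the local ring \(\sbtl{\field(\CC\bG{\mc L}(\gamma))}{e_\bG}0\) is regular; the only change is that a basis of the cotangent space \(\sbat{\field(\bG)}{e_\bG}1\) is now lifted to a subset of \(\CC{\sbtlp{\field(\bG)}{e_\bG}0}{\mc L}(\gamma)\) together with the elements \(\gamma\dota f - \lambda f\) for \(\lambda \in \mc L\) and \(f\) ranging over \(\CC{\sbtlp{\field(\bG)}{e_\bG}0}{\mc L}(\gamma)^\perp\), the additive analogue of the set used there. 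Granted this, the first assertion follows just as Lemma \initref{lem:gp-dfc-unique}\subpref{around} does: one combines the identity \(\CC{\Lie(\bJ)}{\mc L}(\gamma) = \CC{\Lie(\bG)}{\mc L}(\gamma) \cap \Lie(\bJ)\)---which is \(\Lie(\bJ)\) when \(\Lie(\bJ) \subseteq \CC{\Lie(\bG)}{\mc L}(\gamma)\), and is \(\CC{\Lie(\bG)}{\mc L}(\gamma) = \Lie(\CC\bG{\mc L}(\gamma))\) when \(\Lie(\bJ) \supseteq \CC{\Lie(\bG)}{\mc L}(\gamma)\)---with \cite{milne:algebraic-groups}*{Proposition 10.15} and the intersection formula \(\CC\bJ{\mc L}(\gamma) = \CC\bG{\mc L}(\gamma) \cap \bJ\) of Proposition \ref{prop:Lie-dfc-facts} to conclude that \bJ is contained in, respectively contains, \(\CC\bG{\mc L}(\gamma)\conn\). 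The uniqueness statement \locpref{unique} is then a formal consequence of the first assertion and the Lie-algebra identity of Proposition \ref{prop:Lie-dfc-facts}, just as Lemma \initref{lem:gp-dfc-unique}\subpref{unique} follows from Lemma \initref{lem:gp-dfc-unique}\subpref{around}.

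For \locpref{nearby-equal} I would first establish the Lie-algebra identity \(\CC{\Lie(\bG)}{\mc L}(\gamma - t) = \CC{\Lie(\bG)}{\mc L}(\gamma)\) by applying the elementary linear-algebra observation underlying Lemma \initref{lem:gp-dfc-nearby}\subpref{rep} to the adjoint module \(\bW = \Lie(\bG)\): the hypothesis \(\CC{\Lie(\bG)}{\mc L}(t) = \Lie(\bG)\) says that every eigenvalue of \(t\) on \(\Lie(\bG)\) lies in \(\mc L\), whence the \(\mc L\)-close eigenspaces for \(\gamma\) and for \(\gamma - t\) on \(\Lie(\bG)\) coincide. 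Since \(\gamma\) and \(t\) commute, \(\CC\bG{\mc L}(\gamma)\conn\) and \(\CC\bG{\mc L}(\gamma - t)\conn\) are both smooth, connected, stable under \(\gamma\) (indeed under \(t\) as well), and have the same Lie algebra, so \locpref{unique} forces them to agree. For \locpref{nearby-sub} I would mimic the identity-component bookkeeping of the proof of Lemma \initref{lem:gp-dfc-unique}\subpref{nearby-sub}: the extra hypothesis \(\CC{\Lie(\bG)}{\mc L}(\gamma) = \Cent_{\Lie(\bG)}(\gamma)\), together with Lemma \ref{lem:cent-by-Lie} and Proposition \ref{prop:Lie-dfc-facts}, gives \(\CC\bG{\mc L}(\gamma - t)\conn = \CC\bG{\mc L}(\gamma)\conn = \Cent_\bG(\gamma)\conn\); then the descent, monotonicity, and intrinsicness properties---the Lie-algebra analogues of Remark \initref{rem:gp-dfc-facts}(\subref{sub}, \subref{monotone}, \subref{central})---together with the fact that \(\gamma\) acts trivially on \(\field[\Cent_\bG(\gamma)]\) (the analogue of \(\gamma\)'s fixing \(\Cent_\bG(\gamma)\) pointwise in the group case) and the equality \(\CC\bG{\mc L'}(-t) = \CC\bG{\mc L'}(t)\) (valid because \(\mc L'\) is a subgroup of the additive group \(\sepfield\)), identify \(\CC\bG{\mc L'}(\gamma - t)\conn\) with the identity component of \(\CC\bG{\mc L}(\gamma) \cap \CC\bG{\mc L'}(t)\).

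I do not expect a serious obstacle. The only real content beyond bookkeeping is the regularity computation for the smoothness assertion, which is a line-by-line copy of the group case with \(\lambda f - \gamma\dota f\) replaced by \(\gamma\dota f - \lambda f\). The one residual delicacy is to check, in \locpref{nearby-equal} and \locpref{nearby-sub}, that the two groups being compared are stable under one and the same infinitesimal action, so that the uniqueness in \locpref{unique} may legitimately be applied---and this holds automatically because \(\gamma\) and \(t\) commute.
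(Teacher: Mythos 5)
Your proposal is correct and takes essentially the same approach the paper intends: \S\ref{subsec:Lie-funny} explicitly declares that ``the proofs are identical or easier'' than those of \S\S\ref{subsec:gp-funny}--\ref{subsec:reductive-gp-funny} and omits them, and your line-by-line transcription under the multiplicative-to-additive dictionary is precisely that omitted argument. The residual checks you flag---commutativity of \(\gamma\) and \(t\) providing the \(\gamma\)-stability needed to invoke the uniqueness of \locpref{unique}, negation-closure of the additive subgroup \(\mc L'\) supplying \(\CC\bG{\mc L'}(-t) = \CC\bG{\mc L'}(t)\), and the vanishing of the infinitesimal-conjugation vector field along \(\Cent_\bG(\gamma)\)---are exactly the ones that need verification, and you handle them correctly.
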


\begin{prop}[Proposition \ref{prop:gp-cfc-facts}]
\initlabel{prop:Lie-cfc-facts}
Suppose that \bG is reductive.
\begin{enumerate}
\item
\(\CC\bG{\mc L}(\gamma)\conn\) is reductive.
\item
If \bT is a maximal torus in \bG such that \(\gamma\) belongs to \(\Lie(T)\), then \bT is also a maximal torus in \(\CC\bG{\mc L}(\gamma)\).
\item\sublabel{Levi}
If \(\bP^+\) is a parabolic subgroup of \bG and \bM is a Levi component of \(\bP^+\) such that \(\gamma\) belongs to \(\Lie(M)\), then \(\CC{\bP^+}{\mc L}(\gamma)\) is a parabolic subgroup of \(\CC\bG{\mc L}(\gamma)\) and \(\CC\bM{\mc L}(\gamma)\) is a Levi component of \(\CC{\bP^+}{\mc L}(\gamma)\).
\item\sublabel{big-cell}
If, with the notation of \locpref{Levi},
	\begin{itemize}
	\item \bG is connected,
	\item \(\bP^-\) is a parabolic subgroup of \bG opposite to \(\bP^+\) relative to \bM,
and	\item \(\bPrad^-\) is the unipotent radical of \(\bP^-\),
	\end{itemize}
then \(\CC\bG{\mc L}(\gamma) \cap (\bPrad^-\dotm\bM\dotm\bPrad^+)\) equals \(\CC{\bPrad^-}{\mc L}(\gamma)\dotm\CC\bM{\mc L}(\gamma)\dotm\CC{\bPrad^+}{\mc L}(\gamma)\).
\item\sublabel{quotient}
If \anonmap\bG{\bG'} is a quotient by a central subgroup of \bG and \(\gamma'\) is the image of \(\gamma\) in \(\Lie(\bG')\), then \anonmap{\CC\bG{\mc L}(\gamma)\conn \subseteq \bG}{\bG'} factors through a quotient map \anonmap{\CC\bG{\mc L}(\gamma)\conn}{\CC{\bG'}{\mc L}(\gamma')\conn}.
\end{enumerate}
\end{prop}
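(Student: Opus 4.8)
The plan is to transcribe the proof of Proposition \initref{prop:gp-cfc-facts} essentially line by line, replacing each appeal to a result of \S\ref{subsec:gp-funny} or \S\ref{subsec:reductive-gp-funny} by its Lie-algebra counterpart in Propositions \initref{prop:Lie-dfc-facts} and \initref{prop:Lie-smooth-dfc-facts}; the argument becomes strictly shorter, because an element of \(\Lie(G)\) behaves like a semisimple automorphism \emph{of order one}, so the passage to powers \(\gamma^n\) that drives the group case disappears. As there, we may and do assume, upon replacing \field by \sepfield and \bG by \(\bG\conn\), that \field is separably closed and \bG is connected, and we record that \(\CC\bG{\mc L}(\gamma)\conn\) contains \(\Cent_\bG(\gamma)\conn\). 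For the torus statement, if \bT is a maximal torus of \bG with \(\gamma \in \Lie(T)\), then \bT, being commutative, is contained in \(\Cent_\bG(\gamma)\); the Lie-algebra analogue of Remark \initref{rem:gp-dfc-facts} (triviality and monotonicity in \mc L) gives \(\Cent_\bG(\gamma) \subseteq \CC\bG{\mc L}(\gamma)\), and a maximal torus of \bG lying in an intermediate subgroup is a maximal torus of that subgroup. Such tori exist precisely because \(\gamma\) is semisimple, and they take over the role of the \(\gamma\)-large tori of the group case; a bonus of working on the Lie algebra is that the hypotheses ``\(\gamma\)-stable parabolic'' and ``\(\gamma\)-stable Levi'' of Proposition \initref{prop:gp-cfc-facts}\subpref{Levi} coalesce into the single condition ``\(\gamma \in \Lie(M)\)''.

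The real content is the reductivity of \(\CC\bG{\mc L}(\gamma)\). By Proposition \initref{prop:Lie-smooth-dfc-facts}\subpref{unique}, it suffices to exhibit a \(\gamma\)-stable, connected, reductive subgroup of \bG with Lie algebra \(\CC{\Lie(\bG)}{\mc L}(\gamma)\). Fix a maximal torus \bT with \(\gamma \in \Lie(T)\). Because \mc L is a subgroup of the \emph{additive} group \sepfield and \(\upd(\alpha + \beta)(\gamma) = \upd\alpha(\gamma) + \upd\beta(\gamma)\) whenever \(\alpha\), \(\beta\), and \(\alpha + \beta\) are roots, the set
\[
\Root(\CC{\Lie(\bG)}{\mc L}(\gamma), \bT) = \set{\alpha \in \Root(\bG, \bT)}{\upd\alpha(\gamma) \in \mc L}
\]
is closed in \(\Root(\bG, \bT)\); this is exactly the input that the base case \(N = 1\) of the proof of Proposition \initref{prop:gp-cfc-facts}\subpref{reductive} requires, with \(\upd\alpha(\gamma)\) playing the role of \(\alpha(\gamma)\). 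Since there is no finite order \(N\) on which to induct, the other two cases of that induction do not arise, and we may simply take \(\bH = \Cent_\bG\bigl(\bigcap_\alpha \ker(\alpha)\bigr)\conn\), the intersection being over \(\alpha \in \Root(\CC{\Lie(\bG)}{\mc L}(\gamma), \bT)\); then \cite{borel:linear}*{Proposition 13.20} and closure give \(\Lie(\bH) = \Lie(T) + \sum_\alpha \Lie(\bG)_\alpha = \CC{\Lie(\bG)}{\mc L}(\gamma)\), while \bH is connected reductive and, since \(\gamma \in \Lie(T) \subseteq \Lie(\bH)\), also \(\gamma\)-stable.

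The Levi, big-cell, and quotient statements then follow as in the group case. For the first two, choose a maximal torus \bT of \bM with \(\gamma \in \Lie(T)\), put \(\lambda = \sum_{\alpha \in \Root(\bP^+, \bT)} \coroot\alpha\), observe by the dynamic description (\cite{milne:algebraic-groups}*{Proposition 13.29}) that \(\bP^+ = \operatorname P_\bG(\lambda)\), \(\bM = \Cent_\bG(\lambda)\), and \(\bU^+ = \operatorname U_\bG(\lambda)\), and intersect with \(\CC\bG{\mc L}(\gamma)\), which is reductive by the first part, using \cite{conrad-gabber-prasad:prg}*{Lemma 2.1.5} and the Lie-algebra analogue of Lemma \ref{lem:big-cell}; no ``\(\gamma\)-fixedness'' of \(\lambda\) need be checked, since \(\lambda\) is a cocharacter of \(\bT \subseteq \Cent_\bG(\gamma)\). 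For the quotient statement, combine surjectivity of connected fixed-point groups (Lemma \ref{lem:fixed-surjective}) with the torus statement and Proposition \initref{prop:Lie-dfc-facts} to match root data across the quotient, exactly as in Proposition \initref{prop:gp-cfc-facts}\subpref{quotient}. I expect the only obstacle to be bookkeeping rather than mathematics: one must pin down that ``\(\gamma\)-stable'' in the present, Lie-algebra sense (stability under \(\ad\gamma\)) is the notion under which Proposition \initref{prop:Lie-smooth-dfc-facts}\subpref{unique} and the subgroups \(\bH\), \(\CC{\bP^+}{\mc L}(\gamma)\), \(\CC\bM{\mc L}(\gamma)\) interact correctly, and confirm that the Lie analogue of Lemma \ref{lem:big-cell} goes through with the same proof; the payoff of passing to the Lie algebra is precisely that the order-\(N\) induction of Proposition \initref{prop:gp-cfc-facts}\subpref{reductive} collapses to the single closed-subsystem construction above.
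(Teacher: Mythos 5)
Your proposal is correct and carries out precisely what the paper intends: the paper omits the proof of this proposition, stating only that the arguments of \S\ref{subsec:gp-funny}--\ref{subsec:reductive-gp-funny} are ``identical or easier'' in the Lie-algebra setting, and your transcription is that intended argument made explicit. The one genuine observation you add is worth preserving: you have pinned down \emph{why} the proof is easier. In the reductivity argument of Proposition \initref{prop:gp-cfc-facts}\subpref{reductive}, the three-way induction on the order \(N\) of the automorphism exists only because, on the group, \(\alpha(\gamma)\) is defined only after passing to \(\gamma^N\); on the Lie algebra, \(\gamma \in \Lie(T)\) gives a genuine additive homomorphism \(\anonmapto\alpha{\upd\alpha(\gamma)}\) from \(\Z\Root(\bG, \bT)\) to \(\sepfield\), and because \mc L is an additive subgroup this directly exhibits \(\Root(\CC{\Lie(\bG)}{\mc L}(\gamma), \bT)\) as the root-preimage of a subgroup, hence closed. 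Since a derivation has no nontrivial finite order, the two other cases of the induction never arise, and one lands immediately in the base case. The rest (the torus, Levi, big-cell, and quotient statements) follows the group proof with the translations you indicate, the main simplification again being that ``\(\gamma\)-large'' is replaced by the trivial condition ``\(\gamma \in \Lie(T)\),'' which every maximal torus containing \(\gamma\) in its Lie algebra satisfies. Your bookkeeping worry about what ``\(\gamma\)-stable subgroup'' should mean for \(\gamma \in \Lie(G)\) is the right thing to flag; the intended reading is that \(\ad(\gamma)\) preserves the Lie algebra, and this is automatic here since \(\gamma \in \Lie(T) \subseteq \Lie(\bH)\), so \(\Lie(\bH)\) being a subalgebra is \(\ad(\gamma)\)-stable. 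The Lie analogue of Lemma \ref{lem:big-cell} does go through with the same proof once one works with simultaneous \(\ad(\gamma)\)- and \(\lambda\)-weight spaces; nothing further is hidden there.
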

}

\numberwithin{thm}{subsection}
\section{Hypotheses}
\label{sec:hyps}

Throughout \S\ref{sec:hyps} and the remainder of the paper, we assume that
	\matnotn k\field is a non-Archimedean local field
with valuation \matnotn{ord}\ord,
and	\matnotn G\bG is a (not necessarily connected) reductive group over \field.
Our main results require that the residual characteristic of \field is not \(2\), but we do not assume this yet.

For convenience, we will say
``{\let\noterm=\relax\term[torus!tame]{tame torus}}''
instead of ``torus \bT in \bG such that \(\bT_\tamefield\) is split.''  It is common to call an element of \(G\conn\) tame if it belongs to some tame torus.  We shall extend this to the disconnected case by saying that an element \(\gamma\) of \(G\) is \term[element!tame]{tame} if there are a \(\gamma\)-stable, tame torus \bS in \bG such that some prime-to-\(p\) power of \(\gamma\) belongs to \(S\), where \(p\) is the characteristic of \(\sbjat\field 0\), and a \(\gamma\)-stable Borel subgroup \(\bB^+\) of \(\bG_\sepfield\) that contains \(\Cent_\bG(\bS)_\sepfield\).

\subsection{Statements of the hypotheses}
\label{subsec:hyp-statements}

We state below, for convenient reference, the hypotheses we will need in this paper.  Note, however, that we are not yet imposing these hypotheses, only gathering them in one place.  When we mean to impose them, we will say so explicitly.

We have tried to isolate exactly what is needed to make our results `go', but the reader may prefer having practical-to-verify sufficient conditions.  We discuss this in \S\ref{subsec:sufficient}.

Lemma \ref{lem:dual-MP-centre} describes how Moy--Prasad filtrations of the dual Lie algebra interact with the centre-like space \(\Lie^*(G)^{G\conn}\).  It is a slightly souped-up dual-Lie-algebra analogue of \xcite{adler-spice:good-expansions}*{Lemma \xref{lem:depth-in-center}}; see also \xcite{adler-spice:good-expansions}*{Lemma \xref{lem:degen-mod-center}}.  Conceptually, it belongs in \S\ref{subsec:MP}, but it relies on Corollary \ref{cor:dual-centre}.

\begin{lem}
\label{lem:dual-MP-centre}
For every \(r \in \tR\) and every \(x \in \BB(G)\), the intersections with \(\Lie^*(G)^{G\conn}\) of
	\(\sbtl{\Lie^*(G)}x r\),
	\(\sbjtl{\Lie^*(G)}r\),
and	\(\sbjtl{\Lie^*(G)}r + \sbjtl{\Lie^*(G)}r\)
are equal.
\end{lem}

\begin{proof}
Put \(\mf z^* = \Lie^*(G)^{G\conn}\).  The containments
\[
\mf z^* \cap \sbtl{\Lie^*(G)}x r \subseteq
\mf z^* \cap \sbjtl{\Lie^*(G)}r \subseteq
\mf z^* \cap (\sbjtl{\Lie^*(G)}r + \sbjtl{\Lie^*(G)}r)
\]
are obvious, so we need only show that \(\mf z^* \cap (\sbjtl{\Lie^*(G)}r + \sbjtl{\Lie^*(G)}r)\) is contained in \(\sbtl{\Lie^*(G)}x r\).

Fix \(Z^* \in \mf z^* \cap (\sbjtl{\Lie^*(G)}r + \sbjtl{\Lie^*(G)}r)\).  The dual-Lie-algebra version of \cite{adler-debacker:bt-lie}*{Lemma 3.2.1} gives that there are an element \(N^* \in \Lie^*(G)\) and a cocharacter \(\lambda \in \cclat(\bG)\) such that \(Z^*\) belongs to \(\sbjtl{\Lie^*(G)}r + N^*\), and \(\lim_{t \to 0} \Ad^*(\lambda(t))X^*\) equals \(0\) in the analytic topology.  Since \(\sbjtl{\Lie^*(G)}r\) is closed and stabilised by the coadjoint action of \(G\conn\), and since \(Z^*\) is fixed by the coadjoint action of \(G\conn\), we have that \(Z^* = \lim_{t \to 0} \Ad^*(\lambda(t))(Z^* - N^*)\) belongs to \(\sbjtl{\Lie^*(G)}r\).  Let \(y \in \BB(G)\) satisfy \(Z^* \in \sbtl{\Lie^*(G)}y r\), and let \bS be a maximal split torus in \bG such that \(x\) and \(y\) both belong to the apartment of \bS.  Corollary \ref{cor:dual-centre} gives that \(Z^*\) belongs to \(\Lie^*(\Cent_G(\bS))\), hence to \(\sbtl{\Lie^*(G)}y r \cap \Lie^*(\Cent_G(\bS)) = \sbtl{\Lie^*(\Cent_G(\bS))}y r = \sbtl{\Lie^*(\Cent_G(\bS))}x r\), hence to \(\sbtl{\Lie^*(G)}x r\).
\end{proof}

Lemma \ref{lem:MP-cfc} describes an interaction between Moy--Prasad filtrations and the groups defined in \S\ref{sec:funny-centraliser}.  Again, conceptually, it belongs in \S\ref{subsec:MP}, but it could not even be stated until after \S\ref{sec:funny-centraliser}.

\begin{lem}
\label{lem:MP-cfc}
If \(r\) belongs to \(\sbjtl\tR 0\) and \(\gamma\) is a semisimple element of \(\sbjtl G r\)
	(respectively, \(r\) belongs to \tR and \(\gamma\) is a semisimple element of \(\sbjtl{\Lie(G)}r\)),
then \(\CC\bG r(\gamma)\conn\) equals \(\bG\conn\).
\end{lem}

\begin{proof}
The Moy--Prasad domains \(\sbjtl{\Lie(G)}r\) (respectively, \(\sbjtl G r\)) only grow when \field is replaced by a finite, separable extension.
(We run into trouble with Galois descent from non-tame extensions, but there is no such issue for `ascent'.  See, for example, \xcite{adler-spice:good-expansions}*{Lemma \xref{lem:domain-field-ascent}}.)
Thus, upon replacing \field by an appropriate such extension, we may, and do, assume that \(\gamma\) is diagonalisable.
Upon increasing \(r\), which shrinks \(\CC\bG r(\gamma)\), we may, and do, assume that \(r\) is the depth of \(\gamma\), hence belongs to \(\ord(\field\mult)\).

Choose \(x \in \BB(G)\) such that \(\gamma\) belongs to \(\sbtl{\Lie(G)}x r\) (respectively, to \(\sbtl G x r\)).

Write \(\bG'\) for the adjoint quotient of \(\bG\conn\).  Then the adjoint representation \anonmap{\bG'}{\GL(\Lie(\bG))} is faithful.  We have that \(\ad(\gamma)\) (respectively, \(\Ad(\gamma) - 1\)) carries \(\sbtl{\Lie(G)}x i\) into \(\sbtl{\Lie(G)}x{i + r} = \sbjtl\field r\dotm\sbtl{\Lie(G)}x i\) for all \(i \in \R\), so that all eigenvalues of \(\ad(\gamma)\) on \(\Lie(\bG)\) lie in \(\sbjtl\field r\) (respectively, all eigenvalues of \(\Ad(\gamma)\) on \(\Lie(\bG)\) lie in \(\sbjtl{\field\mult}r\)); i.e., \(\CC{\Lie(\bG)}r(\gamma)\) equals \(\Lie(\bG)\).  By Lemma \initref{lem:gp-dfc-unique}\subpref{unique} (respectively, Proposition \initref{prop:Lie-smooth-dfc-facts}\subpref{unique}), we have that \(\CC{\bG'}r(\gamma)\) equals \(\bG'\).  Since \(\CC\bG r(\gamma)\conn\) contains \(\Cent_\bG(\gamma)\conn\), which in turn contains some maximal torus in \(\bG\conn\) and hence contains \(\Zent(\bG\conn)\), and since Proposition \initref{prop:Lie-cfc-facts}\subpref{quotient} (respectively, Proposition \initref{prop:gp-cfc-facts}\subpref{quotient}) gives that \anonmap{\CC\bG r(\gamma)\conn}{\CC{\bG'}r(\gamma)\conn = \bG'} is surjective, it follows that \(\CC\bG r(\gamma)\conn\) equals \(\bG\conn\).
\end{proof}

Because of the improved technology in \S\ref{sec:funny-centraliser}, and a few errors in \cite{spice:asymptotic} (see Appendix \ref{app:errata}), we are able to, and in some cases we must, replace the complicated \xcite{spice:asymptotic}*{Hypotheses \xref{hyp:funny-centraliser} and \xref{hyp:gamma}} by simplified versions.

\begin{defn}
\label{defn:depth-vec}
With the notation of Appendix \ref{app:depth}, if \(\vbG = (\bG^0 \subseteq \dotsb \subseteq \bG^{\ell - 1} \subseteq \bG^\ell = \bG)\) is a nested, tame, twisted Levi sequence in \bG and we put \(S = \tR \cup \sset{-\infty}\) and \(\mc X_0 = \sset{0, \dotsc, \ell}\), then a singly indexed vector \(\vec a \in S^{\mc X_0}\) is called a \term[depth vector|see{vector, singly indexed}]{depth vector} for \vbG.  The singly indexed vector \(\vec a = (a_0, \dotsc, a_\ell)\) is called \term[depth vector!grouplike]{grouplike} if \(a_0\) is positive and \(\vec a\) is concave, in the sense that \(2a_j \ge a_{j'}\) for all \(0 \le j' \le j \le \ell\).
\end{defn}

Most of \xcite{spice:asymptotic}*{Hypothesis \xref{hyp:funny-centraliser}} was actually proven in \S\ref{sec:funny-centraliser}.  In fact, all we are left needing is a statement about buildings.  We formally state Hypothesis \ref{hyp:fc-building} as a replacement for \xcite{spice:asymptotic}*{Hypothesis \xref{hyp:funny-centraliser}}.  Since the statements for group automorphisms and Lie-algebra elements are practically identical, we unify them.  Thus, Hypothesis \ref{hyp:fc-building} involves an element \(\gamma\), which may be either a semisimple automorphism of \bG or a semisimple element of \(\Lie(G)\).

As mentioned in \cite{spice:asymptotic}*{\S3.2, p.~2318}, Hypothesis \ref{hyp:fc-building} holds whenever \(\gamma\) is an element of a tame torus satisfying \xcite{adler-spice:good-expansions}*{Definition \xref{defn:S-is-good}}, as long as \xcite{adler-spice:good-expansions}*{Hypotheses \xref{hyp:reduced}--\xref{hyp:torus-H1-triv}} are satisfied.

More generally, suppose that
\(\gamma\) is a tame element of \(G\) (not necessarily of \(G\conn\)).  Then Lemma \initref{lem:shallow-dfc}\subpref{0+} and \cite{kaletha-prasad:bt-theory}*{Theorem 12.7.1} provide an embedding of buildings \anonmap{\BB(\CCp G 0(\gamma))}{\BB(\CC G 0(\gamma))} and show that part of Hypothesis \ref{hyp:fc-building} is satisfied for that embedding; and one can piece this together with the case of the previous paragraph to show that all of Hypothesis \ref{hyp:fc-building} is satisfied.  We do not go into details here.

The notion of a toral embedding of buildings used in Hypothesis \ref{hyp:fc-building} is as in \cite{landvogt:functorial}*{\S1.3.3}.  Namely, if \bH is a reductive subgroup of \bG and \(\bH'\) is a reductive subgroup of \bH, then we call a map \map i{\BB(H')}{\BB(H)} \term[embedding of buildings!toral]{toral} if it is \(H'(\field)\)-equivariant and, for every maximal split torus \(\bS'\) in \(\bH'\), there is a maximal split torus \bS in \(\Cent_\bH(\bS')\) such that \(i\) carries the apartment of \(S'\), as a subset of \(\BB(H')\), into the apartment of \(S\), as a subset of \(\BB(H)\); and the restriction of \(i\) to the apartment of \(S'\) is equivariant for the translation actions of \(\cclat(\bS') \otimes_\Z \R\) on the apartments of \(S'\) and \(S\).

\label{vGvr-defn}
Remember that
\citelist{
	\cite{adler-spice:good-expansions}*{Definition 5.14}
	\cite{adler-spice:explicit-chars}*{\S1.3, p.~1145}
}
define, for every nested, tame, twisted Levi sequence \(\vbG = (\bG^0 \subseteq \dotsb \subseteq \bG^{\ell - 1} \subseteq \bG^\ell = \bG)\) and depth vector \(\vec a = (a_0, \dotsc, a_{\ell - 1}, a_\ell)\) for \(\vbG\), a sublattice \(\Lie(\vG)_{x, \vec a}\) of \(\Lie(G)\) and, if \(\vec a\) is grouplike, a subgroup \(\vG_{x, \vec a}\) of \(G\).  For consistency with the notation here and in \cite{spice:asymptotic}, we will denote them by \(\sbtl{\Lie(\vG)}x{\vec a}\) and \(\sbtl\vG x{\vec a}\) instead.  If \vbG is \(\gamma\)-stable, then \(\CC\vbG i(\gamma) \ldef (\CC{\bG^0}i(\gamma) \subseteq \dotsb \subseteq \CC{\bG^{\ell - 1}}i(\gamma) \subseteq \CC{\bG^\ell}i(\gamma))\) is a nested, tame, twisted Levi sequence in \(\CC\bG i(\gamma)\), and so we have defined \(\sbtl{\Lie(\CC\vG i(\gamma))}x{\vec a}\) and \(\sbtl{\CC\vG i(\gamma)}x{\vec a}\), for every \(i \in \sbjtl\tR 0\) (if \(\gamma\) is a semisimple automorphism of \bG) or every \(i \in \tR\) (if \(\gamma\) is a semisimple element of \(\Lie(G)\)).

\begin{hyp}
\initlabel{hyp:fc-building}
For every \(i \in \sbjtl\tR 0\) (if \(\gamma\) is a semisimple automorphism of \bG) or every \(i \in \tR\) (if \(\gamma\) is a semisimple element of \(\Lie(G)\)), there is a \(\CCp{G\conn}i(\gamma)\)-equivariant, toral embedding of buildings \anonmap{\BB(\CCp G i(\gamma))}{\BB(\CC G i(\gamma))}
such that, for
every
	\begin{itemize}
	\item \(\gamma\)-stable, nested, tame, twisted Levi sequence \(\vbG = (\bG^0 \subseteq \dotsb \subseteq \bG^{\ell - 1} \subseteq \bG^\ell = \bG)\) in \bG,
	\item \(x \in \BB(\CCp{G^0}i(\gamma))\),
and	\item depth vector \(\vec a\) for \vbG,
	\end{itemize}
the following hold.
	\begin{enumerate}
	\item\sublabel{Lie}
\(\sbtl{\Lie(\CC\vG i(\gamma))}x{\vec a} \cap \Lie(\CCp G i(\gamma))\)
equals
\(\sbtl{\Lie(\CCp\vG i(\gamma))}x{\vec a}\).
	\item\sublabel{gp}
	If \(\vec a\) is grouplike, then
\(\sbtl{\CC\vG i(\gamma)}x{\vec a} \cap \CCp G i(\gamma)\conn\)
equals \(\sbtl{\CCp\vG i(\gamma)}x{\vec a}\).
	\end{enumerate}
\end{hyp}

\begin{rem}
\initlabel{rem:fc-building}
\hfill	\begin{enumerate}
	\item\sublabel{Lie*}
	Dualising
Hypothesis \initref{hyp:fc-building}\subpref{Lie} implies that, with the same notation,
\(\sbtl{\Lie^*(\CC\vG i(\gamma))}x{\vec a} \cap \Lie^*(\CCp\vG i(\gamma))\)
equals
\(\sbtl{\Lie^*(\CCp\vG i(\gamma))}x{\vec a}\).
	\item\sublabel{up-to-G}
If \(\gamma\) is a semisimple element of \(G\), then Lemma \initref{lem:shallow-dfc}\subpref0 gives that \(\CC\bG 0(\gamma)\) is a Levi subgroup of \bG, so that there is a \(\CC{G\conn}0(\gamma)\)-equivariant, toral embedding \anonmap{\BB(\CC G 0(\gamma))}{\BB(G)}.
Chaining this with the various embeddings of Hypothesis \ref{hyp:fc-building} shows that, for each \(i \in \tR\) with \(0 \le i \le r\), we have a \(\CC{G\conn}i(\gamma)\)-equivariant, toral embedding of buildings \anonmap{\BB(\CC G i(\gamma))}{\BB(G)} satisfying the analogues of Hypothesis \initref{hyp:fc-building}(\subref{Lie}, \subref{gp}).
	\end{enumerate}
\end{rem}

Because \xcite{spice:asymptotic}*{Hypothesis \xref{hyp:gamma}} uses the groups \(\CC\bG i(\gamma\pinv)\) with \(i\) negative, Remark \ref{rem:hyp:funny-centraliser} indicates that we need to replace it.  The replacements for \xcite{spice:asymptotic}*{Hypothesis \xref{hyp:gamma}} regarding group elements (Hypothesis \ref{hyp:gp-gamma}) and Lie-algebra elements (Hypothesis \ref{hyp:Lie-gamma}) are similar but have some important differences, so we separate them.

Hypothesis \ref{hyp:gp-gamma} involves a semisimple element \(\gamma \in G\), an element \(x\) of \(\BB(G)\), and a real number \(r\).  Note that we require that \(\gamma\) actually belong to \(G\), not just be a semisimple element of some larger group that normalises \bG; but we do \emph{not} require that \bG be connected.

In the statement of Hypothesis \initref{hyp:gp-gamma}\incpref{building}\subpref0, we use the ``rationalised cocharacter'' \(m_\gamma \in \cclat(\sgen\gamma) \otimes_\Z \Q\) from Notation \ref{notn:ord-gamma}.  By Lemma \initref{lem:shallow-dfc}\subpref0, we have that \(m_\gamma\) lies in \(\cclat(\Zent(\CC\bG 0(\gamma))) \otimes_\Z \Q\), so that the notation \(x - m_\gamma\) makes sense as long as \(x\) belongs to \(\BB(\CC G 0(\gamma))\).  This is the only place in Hypothesis \ref{hyp:gp-gamma} where we require that \(\gamma\) belong to \(G\).  It could probably be avoided by replacing \(m_\gamma\) by some other \(\gamma\)-fixed cocharacter \(\lambda\) of \(\Zent(\CC\bG 0(\gamma))\) such that \(\Cent_\bG(\lambda)\conn\) equals \(\CC\bG 0(\gamma)\conn\), but we have not pursued this.

If the characteristic of \(\sbjat\field 0\) is not \(2\), then,
as mentioned in \xcite{spice:asymptotic}*{\S\xref{sec:depth-matrix} p.~2322}, Hypothesis \ref{hyp:gp-gamma} holds whenever \(\gamma\) belongs to a tame torus satisfying \xcite{adler-spice:good-expansions}*{Definition \xref{defn:S-is-good}}, and \xcite{adler-spice:good-expansions}*{Hypotheses \xref{hyp:reduced}--\xref{hyp:torus-H1-triv}} are satisfied.
Most of Hypothesis \ref{hyp:gp-gamma} is unaffected if we replace \(\gamma\) by a finite power that is relatively prime to the characteristic \(p\) of \(\sbjat\field 0\).  The few exceptions, involving the groups \(\CC\bG i(\gamma)\) with \(i \in \sset{0, \Rp0}\), can be handled by an appeal to Lemma \ref{lem:shallow-dfc}.  Therefore, Hypothesis \ref{hyp:gp-gamma} is also satisfied by \(\gamma\) if it is satisfied by some prime-to-\(p\) power of \(\gamma\); this can show the validity of the hypothesis even for many tame rational points of non-identity components of \bG.

\begin{hyp}
\initlabel{hyp:gp-gamma}
The element \(\gamma\), viewed as an automorphism of \bG, satisfies Hypothesis \ref{hyp:fc-building}.  The real number \(r\) is positive.  For every
	\begin{itemize}
	\item \(i \in \tR\) with \(0 \le i \le r\),
	\item nested, tame, twisted Levi sequence \(\vbG = (\bG^0 \subseteq \dotsb \subseteq \bG^{\ell - 1} \subseteq \bG^\ell = \bG)\) in \bG with \(\gamma \in G^0\) and \(x \in \BB(G^0)\),
and	\item depth vector \(\vec a\) for \vbG,
	\end{itemize}
the following hold.
	\begin{enumerate}
	\item\inclabel{building}
		\begin{enumerate}
		\item\sublabel{pos}
		\(x\) belongs to \(\BB(\CC G r(\gamma))\).
		\item\sublabel0
		\(\gamma\dota x \in \BB(\CC G 0(\gamma))\) equals \(x - m_\gamma\).
		\end{enumerate}
	\item\reinclabel{Lie}
		\(\Ad(\gamma) - 1\) carries \(\sbtl{\Lie(\CC\vG i(\gamma))}x{\vec a}\) into \(\sbtl{\Lie(\CC{\Der\vG}i(\gamma))}x{\vec a + i}\), and induces an isomorphism
\[
\anonmap{\sbat{\Lie(\CC\vG i(\gamma))}x{\vec a}/{\sbat{\Lie(\CCp\vG i(\gamma))}x{\vec a}}}{\sbat{\Lie(\CC\vG i(\gamma))}x{\vec a + i}/{\sbat{\Lie(\CCp\vG i(\gamma))}x{\vec a + i}}}.
\]
	\item\reinclabel{bi-Lie-Lie}
Hypothesis \ref{hyp:fc-building} also holds for \(\gamma^2\).  The map \(\Ad(\gamma) - \Ad(\gamma\inv)\) carries \(\sbtl{\Lie(\CC\vG i(\gamma^2))}x{\vec a}\) into \(\sbtl{\Lie(\CC{\Der\vG}i(\gamma^2))}x{\vec a + i}\), and induces an isomorphism
\[
\anonmap{\sbat{\Lie(\CC\vG i(\gamma^2))}x{\vec a}/{\sbat{\Lie(\CCp\vG i(\gamma^2))}x{\vec a}}}{\sbat{\Lie(\CC\vG i(\gamma^2))}x{\vec a + i}/{\sbat{\Lie(\CCp\vG i(\gamma^2))}x{\vec a + i}}}.
\]
	\item\reinclabel{gp}
		If \(\vec a\) is grouplike, then \(\comm\gamma\anondot\) carries \(\sbtl{\CC\vG i(\gamma)}x{\vec a}\) into \(\sbtl{\CC{\Der\vG}i(\gamma)}x{\vec a + i}\), and induces a bijection
\[
\anonmap{\sbat{\CC\vG i(\gamma)}x{\vec a}/{\sbat{\CCp\vG i(\gamma)}x{\vec a}}}{\sbat{\CC\vG i(\gamma)}x{\vec a + i}/{\sbat{\CCp\vG i(\gamma)}x{\vec a + i}}}.
\]
	\item\reinclabel{orbit}
	If \(g \in \sbtlp{\CC G i(\gamma)}x 0\) is such that
\[
\Int(g)(\gamma\dotm\sbtlp{\CCp G i(\gamma)}x i) \cap
(\gamma\dotm\sbtlp{\CCp G i(\gamma)}x i)
\]
is non-empty, then \(g\) belongs to \(\sbtlp{\CCp G i(\gamma)}x 0\).
	\end{enumerate}
\end{hyp}

\begin{lem}
\label{lem:gp-orbit-0}
Suppose that Hypothesis \ref{hyp:gp-gamma} is satisfied for \((\gamma, x, \Rp0)\).  If \(g \in \sbtlp G x 0\) is such that \(\Int(g)(\gamma\dotm\sbtlp{\CC G 0(\gamma)}x 0) \cap (\gamma\dotm\sbtlp{\CC G 0(\gamma)}x 0)\) is non-empty, then \(g\) belongs to \(\sbtlp{\CC G 0(\gamma)}x 0\).
	\project[Maybe we can also say something about \(\Int(g)\bigl(\gamma\dotm\sbtlp G x 0\bigr) \cap \gamma\dotm\sbtlp G x 0\) \emph{and} \(\Int(g)\bigl(\gamma\dotm\sbtlp G x 0\bigr) \cap \gamma\dotm\sbtlp G x 0\) being non-empty?]
\end{lem}

\begin{proof}
Lemma \initref{lem:shallow-dfc}\subpref0 gives that \(\bM \ldef \CC\bG 0(\gamma)\) is a Levi subgroup of \bG.  Since \(\gamma\) fixes the image of \(x\) in the reduced building of \bM by Hypothesis \initref{hyp:gp-gamma}\incpref{building}\subpref0, we have that \(\gamma\) normalises \(\sbtlp M x 0\).  For every positive integer \(n\), it follows that \(\Int(g)\bigl(\gamma^n\dotm\sbtlp M x 0\bigr) \cap \gamma^n\sbtlp M x 0\) is non-empty; and we have by Lemma \initref{lem:gp-dfc-power}\subpref{contain} (or by Lemma \initref{lem:shallow-dfc}\subpref0 again) that \(\CC\bG 0(\gamma^n)\) equals \(\CC\bG 0(\gamma)\).  We thus may, and do, assume, upon replacing \(\gamma\) by \(\gamma^n\) with \(n = \card{\pi_0(\sgen\gamma)}\), that \(\gamma\) belongs to \(G\conn\).  Since \(\gamma\) also belongs to \(M\), it follows that \(\gamma\) belongs to \(M \cap G\conn\), which equals \(M\conn\) since \bM is a Levi subgroup of \bG.

Write \(g = u^- u^+ m\) with \(u^\pm \in \Prad^\pm \cap \sbtlp G x 0\) and \(m \in \sbtlp M x 0\), and let \(m_1, m_2 \in \sbtlp{\CC G 0(\gamma)}x 0\) be such that \(\Int(g)(\gamma m_1)\) equals \(\gamma m_2\).  Put \(\tilde m_1 
= \comm{\gamma\inv}m\dotm\Int(m)m_1\), which belongs to \(\sbtlp M x 0\).  Then \(\Int(u^+)\bigl(\Int(m)(\gamma m_1)\bigr) = \Int(u^+)(\gamma\tilde m_1)\), which is the product of \(\comm{u^+}{\gamma\tilde m_1} \in \Prad^+\) with \(\gamma\tilde m_1 \in M\), equals \(\Int(u^-)\inv(\gamma m_2)\), which is the product of \(\comm{u\theninv-}{\gamma m_2} \in \Prad^-\) with \(\gamma m_2 \in M\); so \(\comm{u^+}{\Int(m)(\gamma m_1)}\) and \(\comm{u\theninv-}{\gamma m_2}\) both equal \(1\).

In other words, \(\Int(\gamma)\inv u^+\) equals \(\Int(\tilde m_1)u^+\) and \(\Int(m_2)u^-\) equals \(\Int(\gamma)\inv u^-\).  Suppose \(u^+\) is not \(1\).  Then there is some \(a \in \R\) such that \(u^+\) belongs to \(\Prad^+ \cap \sbtl G x a \setminus \sbtlp G x a\).  Since \(\tilde m_1\) belongs to \(\sbtlp M x 0\), also \(\Int(\tilde m_1)u^+\) belongs to \(\Prad^+ \cap \sbtl G x a \setminus \sbtlp G x a\).  On the other hand, we have that \(\Int(\gamma)\inv u^+\) belongs to \(\Prad^+ \cap \sbtl G{\gamma\dota x}a \setminus \sbtlp G{\gamma\dota x}a\), which equals \(\Prad^+ \cap \sbtl G{x - m_\gamma}a \setminus \sbtlp G{x - m_\gamma}a\) by Hypothesis \initref{hyp:gp-gamma}\incpref{building}\subpref0 and so is contained in \(\Prad^+ \cap \sbtlp G x a\).  This is a contradiction.  We reason similarly if \(u^-\) is not \(1\), except now we observe that \(\Int(\gamma)(\Prad^- \cap \sbtl G x b \setminus \sbtlp G x b)\) is contained in \(\Prad^- \setminus \sbtl G x b\).
\end{proof}

Corollary \ref{cor:gp-orbit} is the analogue of \xcite{spice:asymptotic}*{Lemma \initxref{lem:commute-gp}(\subxref{orbit})}.  It follows by applying Lemma \ref{lem:gp-orbit-0} once, and then repeatedly applying Hypothesis \initref{hyp:gp-gamma}\subpref{orbit}.

\begin{cor}
\label{cor:gp-orbit}
Suppose that \((\gamma, x, r)\) satisfies Hypothesis \ref{hyp:gp-gamma}.  If \(g \in \sbtlp G x 0\) is such that \(\Int(g)(\gamma\dotm\sbtl{\CC G r(\gamma)}x r) \cap (\gamma\dotm\sbtl{\CC G r(\gamma)}x r)\) is non-empty, then \(g\) belongs to \(\sbtlp{\CC G r(\gamma)}x 0\).
\end{cor}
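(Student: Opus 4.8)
The plan is to follow the recipe announced just before the statement: invoke Lemma \ref{lem:gp-orbit-0} once to pin $g$ down at depth $0$, and then climb from depth $0$ to depth $r$ by finitely many applications of Hypothesis \initref{hyp:gp-gamma}\subpref{orbit}, one for each jump of the decreasing step function $i \mapsto \CC\bG i(\gamma)$ on $[0, r]$ (this function has only finitely many jumps, being controlled by the finitely many eigenvalues of $\gamma$).

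First I would assemble the filtration inclusions that drive everything. For $0 \le i < r$ we have $\CC\bG r(\gamma) \subseteq \CCp\bG i(\gamma)$ and $\CCp\bG r(\gamma) \subseteq \CCp\bG i(\gamma)$, by monotonicity of $\mc L \mapsto \CC\bG{\mc L}(\gamma)$ (Remark \initref{rem:gp-dfc-facts}\subpref{monotone}) together with the containments $\sbjtl{\field\mult}r \subseteq \sbjtlp{\field\mult}i$ and $\sbjtlp{\field\mult}r \subseteq \sbjtlp{\field\mult}i$. By Hypothesis \initref{hyp:gp-gamma}\incpref{building}\subpref{pos}, $x$ lies in $\BB(\CC G r(\gamma))$, hence — chaining the toral embeddings of Hypothesis \ref{hyp:fc-building} as in Remark \initref{rem:fc-building}\subpref{up-to-G} — in $\BB(\CC G i(\gamma))$ and $\BB(\CCp G i(\gamma))$ for every $i \in \sbjtl\tR 0$ with $i \le r$, and moreover $\sbtl{\CC G r(\gamma)}x r = \sbtl G x r \cap \CC G r(\gamma)\conn \subseteq \sbtl{\CCp G i(\gamma)}x r \subseteq \sbtlp{\CCp G i(\gamma)}x i$ for $i < r$ (the last step because $r > i$), and likewise with $\CCp G r(\gamma)$ in place of $\CC G r(\gamma)$. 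Consequently the hypothesis $\Int(g)\bigl(\gamma\dotm\sbtl{\CC G r(\gamma)}x r\bigr) \cap \gamma\dotm\sbtl{\CCp G r(\gamma)}x r \ne \emptyset$ of the Corollary forces $\Int(g)\bigl(\gamma\dotm\sbtlp{\CCp G i(\gamma)}x i\bigr) \cap \gamma\dotm\sbtlp{\CCp G i(\gamma)}x i \ne \emptyset$ for every $i \in \sbjtl\tR 0$ with $i < r$.

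Taking $i = 0$ and enlarging the left-hand factor to $\gamma\dotm\sbtlp{\CC G 0(\gamma)}x 0$, Lemma \ref{lem:gp-orbit-0} gives $g \in \sbtlp{\CC G 0(\gamma)}x 0$. Now let $0 = i_0 < i_1 < \dotsb$ be the (finitely many) points at which $i \mapsto \CC\bG i(\gamma)$ changes on $[0, r]$, chosen so that $\CCp\bG{i_k}(\gamma) = \CC\bG{i_{k + 1}}(\gamma)$ for each $k$, and let $n$ be least with $i_n \ge r$, so that $\CC\bG r(\gamma) = \CC\bG{i_n}(\gamma)$. I would then show by induction on $k = 0, 1, \dotsc, n$ that $g \in \sbtlp{\CC G{i_k}(\gamma)}x 0$: the base case $k = 0$ is the output of Lemma \ref{lem:gp-orbit-0}, and in the inductive step, given $g \in \sbtlp{\CC G{i_k}(\gamma)}x 0$ with $k < n$ (hence $i_k < r$), Hypothesis \initref{hyp:gp-gamma}\subpref{orbit} at level $i = i_k$, applied to the trivial nested sequence $\vbG = (\bG)$ and with its non-emptiness hypothesis supplied by the previous paragraph, yields $g \in \sbtlp{\CCp G{i_k}(\gamma)}x 0 = \sbtlp{\CC G{i_{k + 1}}(\gamma)}x 0$. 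Taking $k = n$ produces $g \in \sbtlp{\CC G{i_n}(\gamma)}x 0 = \sbtlp{\CC G r(\gamma)}x 0$. (The degenerate case $r \le i_1$, where $\CC\bG r(\gamma) = \CC\bG 0(\gamma)$, is already settled by Lemma \ref{lem:gp-orbit-0}.)

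The main obstacle is not the climbing, which is mechanical, but the bookkeeping for the Moy--Prasad filtrations of the groups $\CC G i(\gamma)$: one must verify that $x$ lies simultaneously and compatibly in all the buildings $\BB(\CC G i(\gamma))$ and $\BB(\CCp G i(\gamma))$ for $0 \le i \le r$, and that the filtration subgroups nest correctly down the tower $\CC G r(\gamma) \subseteq \dotsb \subseteq \CC G 0(\gamma) \subseteq G$ — so that $\sbtl{\CC G r(\gamma)}x r$ is really $\sbtl G x r \cap \CC G r(\gamma)\conn$, and so on. This is precisely what Hypothesis \ref{hyp:fc-building} and its consequence Remark \initref{rem:fc-building}\subpref{up-to-G} are built to deliver, so the work is in citing them carefully rather than in proving anything new.
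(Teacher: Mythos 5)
Your proposal is correct and takes exactly the approach the paper announces: one application of Lemma~\ref{lem:gp-orbit-0} to land $g$ in $\sbtlp{\CC G 0(\gamma)}x 0$, followed by finitely many applications of Hypothesis~\initref{hyp:gp-gamma}\subpref{orbit} to climb the step function $i \mapsto \CC\bG i(\gamma)$ from depth~$0$ up to depth~$r$; the paper's own proof is nothing more than the single sentence preceding the statement, so your fleshed-out induction on jump points is what that sentence intends. One cosmetic remark: your parenthetical about ``the degenerate case $r \le i_1$, where $\CC\bG r(\gamma) = \CC\bG 0(\gamma)$'' sits slightly awkwardly against your declaration that $i_0 = 0$ is itself a jump point -- if $0$ is a jump then $\CC\bG r(\gamma) = \CCp\bG 0(\gamma) \ne \CC\bG 0(\gamma)$ even for $r \le i_1$, and that case is already handled by the $k = 0$ step of your induction rather than by Lemma~\ref{lem:gp-orbit-0} alone -- but the main inductive argument covers every case, so nothing breaks.
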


Hypothesis \ref{hyp:Lie-gamma} involves a semisimple element \(\gamma\) of \(\Lie(G)\), an element \(x\) of \(\BB(G)\), and a real number \(r\), not necessarily positive.
As for Hypothesis \ref{hyp:gp-gamma}, as long as \xcite{adler-spice:good-expansions}*{Hypotheses \xref{hyp:reduced}--\xref{hyp:torus-H1-triv}} are satisfied, Hypothesis \ref{hyp:Lie-gamma} holds whenever \(\gamma\) belongs to the Lie algebra of a tame torus satisfying \xcite{adler-spice:good-expansions}*{Definition \xref{defn:S-is-good}}.

\begin{hyp}
\initlabel{hyp:Lie-gamma}
For every
	\begin{itemize}
	\item \(i \in \tR\),
	\item nested, tame, twisted Levi sequence \(\vbG = (\bG^0 \subseteq \dotsb \subseteq \bG^{\ell - 1} \subseteq \bG^\ell = \bG)\) in \bG with \(\gamma \in G^0\) and \(x \in \BB(G^0)\),
and	\item depth vector \(\vec a\) for \vbG,
	\end{itemize}
the following hold.
	\begin{enumerate}
	\item\sublabel{building}
	\(x\) belongs to \(\BB(\CC G r(\gamma))\).
	\item\sublabel{Lie}
	\(\ad(\gamma)\) carries \(\sbtl{\Lie(\CC\vG i(\gamma))}x{\vec a}\) into \(\sbtl{\Lie(\CC{\Der\vG}i(\gamma))}x{\vec a + i}\), and induces an isomorphism
\[
\anonmap{\sbat{\Lie(\CC\vG i(\gamma))}x{\vec a}/{\sbat{\Lie(\CCp\vG i(\gamma))}x{\vec a}}}{\sbat{\Lie(\CC\vG i(\gamma))}x{\vec a + i}/{\sbat{\Lie(\CCp\vG i(\gamma))}x{\vec a + i}}}.
\]
	\item\sublabel{gp}
	If \(\vec a\) is grouplike, then the map \((\Ad(\anondot) - 1)\gamma\) carries \(\sbtl{\CC\vG i(\gamma)}x{\vec a}\) into \(\sbtl{\Lie(\CC{\Der\vG}i(\gamma))}x{\vec a + i}\), and induces a bijection
\[
\anonmap{\sbat{\CC\vG i(\gamma)}x{\vec a}/{\sbat{\CCp\vG i(\gamma)}x{\vec a}}}{\sbat{\Lie(\CC\vG i(\gamma))}x{\vec a + i}/{\sbat{\Lie(\CCp\vG i(\gamma))}x{\vec a + i}}}.
\]
	\item\sublabel{orbit} If \(g \in \sbtlp{\CC G i(\gamma)}x 0\) is such that
\[
\Ad(g)(\gamma + \sbtlp{\CCp{\Lie(G)}i(\gamma)}x i)
\cap (\gamma + \sbtlp{\CCp{\Lie(G)}i(\gamma)}x i)
\]
is non-empty, then \(g\) belongs to \(\sbtlp{\CCp G i(\gamma)}x 0\).
	\end{enumerate}
\end{hyp}

%

Lemma \ref{lem:Lie-orbit} is the analogue for \(\gamma \in \Lie(G)\) of \xcite{spice:asymptotic}*{Lemma \initxref{lem:commute-gp}(\subxref{orbit})}.  As with Corollary \ref{cor:gp-orbit}, it follows by applying Hypothesis \initref{hyp:Lie-gamma}\subpref{orbit} repeatedly.

\begin{lem}
\label{lem:Lie-orbit}
Suppose that \((\gamma, x, r)\) satisfies Hypothesis \ref{hyp:Lie-gamma}.  If \(g \in \sbtlp G x 0\) is such that \(\Ad(g)(\gamma + \sbtl{\CC G r(\gamma)}x r) \cap (\gamma + \sbtlp{\CC G r(\gamma)}x r)\) is non-empty, then \(g\) belongs to \(\sbtlp{\CC G r(\gamma)}x 0\).
\end{lem}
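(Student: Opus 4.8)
The plan is to derive the statement by repeatedly applying Hypothesis~\initref{hyp:Lie-gamma}\subpref{orbit}, descending through the finitely many depths at which the group \(\CC\bG i(\gamma)\) shrinks. Since \(\ad(\gamma)\) has only finitely many nonzero eigenvalues on \(\Lie(\bG)\) over a separable closure, the group \(\CC\bG i(\gamma)\) depends only on which of their valuations are \(\ge i\); hence \(i \mapsto \CC\bG i(\gamma)\) is a non-increasing step function of \(i \in \tR\) that equals \bG once \(i\) is small enough, and there are only finitely many \(i < r\) at which \(\CCp\bG i(\gamma)\) is properly smaller than \(\CC\bG i(\gamma)\). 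I would enumerate those as \(i_1 < \dotsb < i_k\), put \(\bH_0 = \bG\) and \(\bH_t = \CCp\bG{i_t}(\gamma)\) for \(1 \le t \le k\), and note that then \(\CC\bG{i_t}(\gamma) = \bH_{t - 1}\), that \(\bG = \bH_0 \supseteq \dotsb \supseteq \bH_k\), and that — since \(\CC\bG i(\gamma)\) is constant for \(i \in (i_k, r]\) — we have \(\bH_k = \CC\bG r(\gamma)\), and in particular \(\CCp\bG r(\gamma) \subseteq \bH_k\) as well. The point \(x\) is regarded as lying in each of the nested buildings \(\BB(\bH_t)\) by means of the toral embeddings furnished by Hypothesis~\ref{hyp:fc-building} (chained as in Remark~\ref{rem:fc-building}), which also make the Moy--Prasad filtrations on \(\Lie(\bH_t)\) the restrictions of those on \(\Lie(G)\).

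I would then show by induction on \(t = 0, 1, \dotsc, k\) that \(g \in \sbtlp{\bH_t}x 0\); the case \(t = k\) is exactly the assertion of the lemma, because \(\bH_k = \CC G r(\gamma)\). The case \(t = 0\) is the hypothesis \(g \in \sbtlp G x 0\). For the step from \(t - 1\) to \(t\), suppose \(g \in \sbtlp{\bH_{t - 1}}x 0 = \sbtlp{\CC G{i_t}(\gamma)}x 0\). By the hypothesis of the lemma there are \(Y \in \sbtl{\CC G r(\gamma)}x r\) and \(Z \in \sbtlp{\CCp G r(\gamma)}x r\) with \(\Ad(g)(\gamma + Y) = \gamma + Z\). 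Since \(\CC G r(\gamma)\) and \(\CCp G r(\gamma)\) are both contained in \(\bH_t = \CCp G{i_t}(\gamma)\) and \(r > i_t\), the filtration compatibility recalled above gives \(Y, Z \in \sbtlp{\CCp{\Lie(G)}{i_t}(\gamma)}x{i_t}\); hence \(\gamma + Z\) lies in both \(\Ad(g)\bigl(\gamma + \sbtlp{\CCp{\Lie(G)}{i_t}(\gamma)}x{i_t}\bigr)\) and \(\gamma + \sbtlp{\CCp{\Lie(G)}{i_t}(\gamma)}x{i_t}\). Applying Hypothesis~\initref{hyp:Lie-gamma}\subpref{orbit} at level \(i = i_t\) now yields \(g \in \sbtlp{\CCp G{i_t}(\gamma)}x 0 = \sbtlp{\bH_t}x 0\), completing the induction.

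No analogue of Lemma~\ref{lem:gp-orbit-0} is needed in this Lie-algebra case — in contrast with the group version in Corollary~\ref{cor:gp-orbit} — because Hypothesis~\ref{hyp:Lie-gamma} is available at \emph{every} real level \(i\), in particular below the smallest jump, where \(\CC\bG i(\gamma) = \bG\); this lets the induction be seeded directly from \(g \in \sbtlp G x 0\). The only step that requires any thought is the observation in the inductive step that a single orbit equivalence at depth \(r\) restricts to one at each shallower depth \(i_t\); this uses the monotonicity \(\CC\bG r(\gamma) \subseteq \CCp\bG{i_t}(\gamma)\) for \(i_t < r\) together with the compatibility of Moy--Prasad filtrations under these twisted-Levi inclusions, which is precisely where Hypothesis~\ref{hyp:fc-building} is invoked. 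Everything else is routine bookkeeping with the finite jump set \(i_1 < \dotsb < i_k\).
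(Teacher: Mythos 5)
Your proof is correct and follows exactly the route the paper indicates: the paper's entire justification for Lemma \ref{lem:Lie-orbit} is the one-line remark preceding it, namely that it ``follows by applying Hypothesis \initref{hyp:Lie-gamma}\subpref{orbit} repeatedly,'' and your induction over the finite jump set \(i_1 < \dotsb < i_k\) is just that iteration spelled out, including the observation (made implicitly by the paper through the contrast with Corollary \ref{cor:gp-orbit}) that no analogue of Lemma \ref{lem:gp-orbit-0} is needed because Hypothesis \ref{hyp:Lie-gamma} applies at every real \(i\).
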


Hypothesis \ref{hyp:MP-ad} involves a semisimple element \(\gamma \in \Lie(G)\).  It is an obvious modification of \xcite{spice:asymptotic}*{Hypothesis \xref{hyp:MP-ad}}, and is needed in Proposition \ref{prop:Gauss-to-Weil}, hence also in the remainder of \S\ref{subsec:dist-g-to-g'} and \S\S\ref{subsec:quantitative}, \ref{sec:unwind}.  If \(\gamma\) belongs to the Lie algebra of some tame torus, then, as in the proof of \cite{kaletha-prasad:bt-theory}*{Theorem 13.5.1}, we may piece together the Moy--Prasad isomorphism for tame tori \cite{kaletha-prasad:bt-theory}*{Proposition B.6.9} with root-space-by-root-space computations over a splitting field, and then construct the maps \(\sbat{\ol\mexp}x{\vec\jmath}\) of the hypothesis by tame descent.

\begin{hyp}
\initlabel{hyp:MP-ad}
Let \(\bG\adform\) be the adjoint quotient of \bG.  For each
	\begin{itemize}
	\item nested, tame, twisted Levi sequence \(\vec\bG = (\bG^0 \subseteq \dotsb \subseteq \bG^\ell = \bG)\) in \bG with \(\gamma \in \Lie(G^0)\) and \(x \in \BB(G^0)\),
and	\item grouplike depth vector \(\vec\jmath\) satisfying \(\vec\jmath \cvee \vec\jmath \ge \Rp{\vec\jmath}\),
	\end{itemize}
there is an isomorphism
\[
	\map[\mapisoarrow]{\matnotn{expxj}{\sbat{\ol\mexp}x{\vec\jmath}}}
	{\sbat{\Lie(\vG\adform)}x{\vec\jmath}}
	{\sbat{(\vG\adform)}x{\vec\jmath}}.
\]
	\begin{enumerate}
	\item\sublabel{ad} These isomorphisms satisfy \xcite{spice:asymptotic}*{Hypothesis \initxref{hyp:MP-ad}(\subxref{ad})}.
	\item\sublabel{Ad} For all \(i \in \tR_{\ge 0}\), grouplike depth vectors \(\vec\jmath\) satisfying \(\vec\jmath \cvee \vec\jmath \ge \Rp{\vec\jmath}\), and elements \(Y \in \sbtl{\Lie(\CC{\vG\adform}i(\gamma))}x{\vec\jmath}\), there is an element \(v \in \sbat{\ol\mexp}x{\vec\jmath}(Y) \cap \sbtl{\CC{\vG\adform}i(\gamma)}x{\vec\jmath}\) so that
\[
(\Ad(v) - 1)\gamma - \comm Y\gamma
\qtextq{belongs to}
\sbtlp{\Lie(\vG\adform)}x{i + \vec\jmath}.
\]
	\item\sublabel{refine} For all grouplike depth vectors \(\vec\jmath_m\) satisfying \(\vec\jmath_m \cvee \vec\jmath_m \ge \Rp{\vec\jmath_m}\), for all \(m \in \sset{1, 2}\), if \(\Rp{\vec\jmath_1}\) and \(\Rp{\vec\jmath_2}\) are equal, then the diagram
\[\xymatrix{
\sbat{\Lie(\vG\adform)}x{\max \sset{\vec\jmath_1, \vec \jmath_2}} \ar[r]\ar[d] & \sbat{\Lie(\vG\adform)}x{\vec\jmath_1} \ar[d] \\
\sbat{\Lie(\vG\adform)}x{\vec\jmath_2} \ar[r] & \sbat{(\vG\adform)}x{\min \sset{\vec\jmath_1, \vec\jmath_2}}
}\]
commutes.
	\end{enumerate}
\end{hyp}

Hypothesis \ref{hyp:mexp} involves a Levi subgroup \bM of \bG, a reductive subgroup \bJ of \bM, and a positive real number \(R_{-1}\).  It introduces a mock-exponential map on \(\sbjtl{\Lie(J)}{R_{-1}}\) having certain properties.  This map is needed in \S\S\ref{sec:characters}, \ref{subsec:char-unwind}.

As long as \(p\) also does not divide the order of the fundamental group of \(\Der\bG\), then \citelist{
	\cite{adler-roche:intertwining}*{Proposition 3.2}
	\cite{kaletha:regular-sc}*{Lemma 3.3.2(1, 4)}
} show that Hypothesis \initref{hyp:mexp}\subpref{MP-ad} follows from Hypothesis \initref{hyp:mexp}(\subref{coset}, \subref{elt}).

As mentioned in \xcite{spice:asymptotic}*{\S\xref{sec:dual-blob}, p.~2337}, the exponential map (when it converges on a suitably large domain), the Cayley transform (for classical groups, when the characteristic of \(\sbjat\field 0\) is not \(2\)), and the ``\(1 +{}\)'' map (for general linear groups) all satisfy Hypothesis \ref{hyp:mexp}.

When \field has characteristic \(0\), there is a sufficient condition for the convergence of the exponential map on \(\sbjtlp{\Lie(G)}0\), and hence for the validity of Hypothesis \ref{hyp:mexp} for \((\bM, \bJ, R_{-1}) = (\bG, \bG, \Rp0)\), in \cite{jkim:thesis}*{Proposition 3.1.1}.  We digress to describe it.

Suppose that \anonmap\bG{\GL_n} is a faithful representation of \bG such that, for every \(x \in \BB(G)\), there is some \(y \in \BB(\GL_n, \field)\) such that the representation carries \(\sbtl{\Lie(G)}x r\) into \(\sbtl{\mathfrak{gl}_n(\field)}y r\) for every \(r \in \tR\).  For example, if \bG is adjoint, then we may take the adjoint representation, so that \(n\) equals \(\dim(\bG)\).

For every point \(y \in \BB(\GL_n, \field)\), if \(C\) is a chamber whose closure contains \(y\), then we have the containments
\[
\sbtlp{\mathfrak{gl}_n(\field)}y 0 \subseteq \sbtlp{\mathfrak{gl}_n(\field)}C 0 = \sbtl{\mathfrak{gl}_n(\field)}C{1/n} \subseteq \sbtl{\mathfrak{gl}_n(\field)}C 0 \subseteq \sbtl{\mathfrak{gl}_n(\field)}y 0.
\]
It follows that, for every \(r \in \sbjtlp\tR 0\), the product of \(n\) elements of \(\sbtl{\mathfrak{gl}_n(\field)}y r\) is contained in \(\sbtl{\mathfrak{gl}_n(\field)}C 1 = \sbjtl\field 1\dotm\sbtl{\mathfrak{gl}_n(\field)}C 0 \subseteq \sbjtlp\field 0\dotm\sbtl{\mathfrak{gl}_n(\field)}y 0\), so that the product of \(n + 1\) elements is contained in \(\sbjtlp\field 0\dotm\sbtl{\mathfrak{gl}_n(\field)}y r\).

Therefore, still assuming that \field has characteristic \(0\), \cite{jkim:thesis}*{Proposition 3.1.1} shows that the inequality \(\frac1{n + 1} > \frac{2\ord(p)}{p - 1}\cdot\frac p{p - 1} + \frac1{p - 1}\) is enough to give \cite{jkim-murnaghan:charexp}*{\S1.4, (H\field)}.  An appeal to the Baker--Campbell--Hausdorff formula similar to the one in the proof of \cite{jkim:thesis}*{proof of Proposition 3.1.1, p.~1010}  shows that the same inequality is enough to give Hypothesis \initref{hyp:mexp}(\subref{coset}, \subref{elt}) below.

\begin{hyp}
\initlabel{hyp:mexp}
There is a homeomorphism \map\mexp{\sbjtl{\Lie(J)}{R_{-1}}}{\sbjtl J{R_{-1}}} such that the following hold.
	\begin{enumerate}
	\item\sublabel{coset} We have that \xcite{spice:asymptotic}*{Hypothesis \initxref{hyp:mexp}(\subxref{coset})} holds, with \bJ in place of \bH and \(R_{-1}\) in place of \(r\), for every \(x \in \BB(J)\).
	\item\inclabel{elt}
	The following hold for every index \(i \in \sbjtl\tR{R_{-1}}\) and every semisimple \(Y \in \sbjtl{\Lie(J)}{R_{-1}}\).
		\begin{enumerate}
		\item\sublabel{fc}
		We have that \(\CC{\Lie(\bJ)}i(Y)\) equals \(\CC{\Lie(\bJ)}i(\mexp(Y))\).
		\item\sublabel{ad-Ad}
		For every \((x, a) \in \BB(H) \times \tR\), we have that \(1 + \ad(Y) - \Ad(\mexp(Y))\) carries \(\sbtl{\CC{\Lie(\bJ)}i(Y)}x a\) into \(\sbtlp{\CC{\Lie(\bJ)}i(Y)}x{a + i}\).
		\end{enumerate}
	\item\initnolabel{hyp:mexp}\sublabel{MP-ad}
	We have that \xcite{spice:asymptotic}*{Hypothesis \xref{hyp:MP-ad}} holds (with a few typos corrected as in Remark \ref{rem:spice:asymptotic:hyp:MP-ad}), guaranteeing the existence of certain maps \(\sbat{\ol\mexp}x{\vec\jmath}\),
and the diagram
\[\xymatrix{
\sbtl{\Lie(J)}x r        \ar[r]^\mexp\ar[d]          & \sbtl J x r          \ar[d] \\
\sbat{\Lie(M)}x r        \ar[r]^{\text{Yu}}\ar[d]    & \sbat M x r          \ar[d] \\
\sbat{\Lie(M\adform)}x r \ar[r]^{\sbat{\ol\mexp}x r} & \sbat{(M\adform)}x r
}\]
commutes for all \(x \in \BB(J)\), where the map labelled `Yu' is the one used in \cite{yu:supercuspidal}*{\S9} to discuss representability of characters.
	\end{enumerate}
\end{hyp}

\begin{rem}
\label{rem:mexp-in-hyps}
Together with Lemma \initref{lem:gp-dfc-unique}\subpref{unique} and \initref{prop:Lie-smooth-dfc-facts}\subpref{unique}, Hypothesis \initref{hyp:mexp}\incpref{elt}\subpref{fc} gives that \(\CC\bJ i(Y)\conn\) equals \(\CC\bJ i(\mexp(Y))\conn\) for all \(i \in \sbjtl\tR{R_{-1}}\) and all semisimple \(Y \in \sbjtl{\Lie(J)}{R_{-1}}\).

As mentioned in Remark \ref{rem:spice:asymptotic:hyp:mexp(elt)}, \xcite{spice:asymptotic}*{Hypothesis \initxref{hyp:mexp}(\subxref{elt})} is used only for semisimple \(Y\).  For such elements, our Hypothesis \initref{hyp:mexp}\subpref{elt} is stronger.

Indeed, for \(i\) sufficiently large, we have that \(\Cent_\bJ(Y)\conn\) equals \(\CC\bJ i(Y)\conn\) and \(\CC\bJ i(\mexp(Y))\conn\) equals \(\Cent_\bJ(\mexp(Y))\conn\), so that \(\Cent_\bJ(Y)\conn = \CC\bJ i(Y)\conn\) equals \(\CC\bJ i(\mexp(Y))\conn = \Cent_\bJ(\mexp(Y))\conn\).

Since \(\CC{\Lie(\bJ)}{R_{-1}}(Y)\conn\) equals \(\Lie(\bJ)\) by Lemma \ref{lem:MP-cfc}, we have as in \xcite{debacker-spice:stability}*{Remark \xref{rem:disc:roots}} that the valuation of \(\redD_\bJ(Y)\) is \(\sum_{i \in \sbjtl\R{R_{-1}}} i\dim(\CC{\Lie(\bJ)}i(Y)/\CCp{\Lie(\bJ)}i(Y))\), and similarly for \(\redD_\bJ(\mexp(Y))\), which establishes \xcite{spice:asymptotic}*{Hypothesis \initxref{hyp:mexp}(\incxref{elt})\subxref{disc}}.
\end{rem}

\subsection{Sufficient conditions}
\label{subsec:sufficient}

We discuss a sufficient condition under which both the hypotheses explicitly stated in this paper, and those inherited from the other papers whose results we use, are satisfied.

Write \(p\) for the characteristic of \(\sbjat\field 0\).

Currently, because of our dependence on \cite{jkim-murnaghan:charexp} and our own rather strong Hypothesis \ref{hyp:mexp}, the only sufficient conditions that we know that work for groups of all types begin with assuming that \field has characteristic \(0\), although we emphasise that this is not directly a hypothesis of our paper.  It seems almost certain that, as long as Hypothesis \ref{hyp:mexp} is satisfied---which it is for classical groups and general linear groups---one can drop the assumption on the characteristic of \field once  \(p\) is sufficiently large.

\begin{rem}
\label{rem:black-box}
We emphasise that, throughout this paper, we are treating the Kim--Murnaghan result \cite{jkim-murnaghan:charexp}*{Theorem 3.1.7} as a black box; as long as that theorem, a general result on asymptotic expansions, and the result \xcite{spice:asymptotic}*{Lemma \xref{lem:asymptotic-check}} that allows us to compute the coefficients in an expansion that is already known to exist, are available, we no longer care about the characteristic of \field.

Similarly, our main results, Theorems \ref{thm:orb-asymptotic-exists}, \ref{thm:orb-to-orb'}, \ref{thm:char-asymptotic-exists}, \ref{thm:pi-to-pi'}, \ref{thm:orb-unwind}, and \ref{thm:char-unwind}, will also assume this key Kim--Murnaghan result as a hypothesis.  The significance of stating the result this way, rather than citing \cite{jkim-murnaghan:charexp}*{Theorem 3.1.7(1, 5)} directly, is that the latter imposes some stringent hypotheses (see \cite{jkim-murnaghan:charexp}*{\S1.4}).  Again, since we treat the result as a black box, we are able to avoid directly imposing those hypotheses as long as we know that the concluson holds.
\end{rem}

We assume for the remainder of \S\ref{subsec:sufficient}, but \emph{not} in the rest of the paper, that
\begin{itemize}
\item Hypothesis \ref{hyp:mexp} is satisfied by the exponential map with \((\bM, \bJ, R_{-1}) = (\bG, \bG, \Rp0)\);
\item \(p\) does not divide the order of the 
component group of \bG;
and
\item \(p + 1\) is at least \(272\), and also at least \(6\) times the absolute rank of \bG.
\end{itemize}
Our strong version of Hypothesis \ref{hyp:mexp} implies \cite{jkim-murnaghan:charexp}*{\S1.4, (H\field)}.
Since \(p - 1\) is at least the absolute rank of \(\bG\), it follows that every torus in \bG has dimension strictly less than \(p\), hence is tame.

We first make a note about the formation of the subgroups \(\CC{\bG'}i(\gamma)\) where \(\bG'\) is a twisted Levi subgroup of \(\bG\),
\(\gamma\) is a semisimple automorphism of \(\bG'\), and
\(i\) belongs to \(\sbjtl\tR 0\).
Lemma \ref{lem:shallow-dfc} gives that there is a Levi subgroup \(\bM'\) of \(\bG\primeconn\) such that, for every \(i \in \sset{0, \Rp0}\), the group \(\CC{\bG'}i(\gamma)\conn\) is the identity component of the fixed-point subgroup of an automorphism of \(\bM'\) of prime-to-\(p\) order.
Since \(p\) is not a torsion prime for the absolute root system of \(\bG'\), Remark \ref{rem:concrete-dfc} allows us to run an argument similar to that of \cite{yu:supercuspidal}*{Proposition 7.3} to see that \(\CC{\CCp{\bG'}0(\gamma)\conn}i(\gamma)\) is a twisted Levi subgroup of \(\CCp{\bG'}0(\gamma)\conn\), and, in particular, is connected, for every \(i \in \sbjtlp\tR 0\).
Similar observations apply if \(\gamma\) is a semisimple element of \(\Lie(G')\), except that now we simply conclude that \(\CC{\bG\primeconn}i(\gamma)\) is a 
twisted Levi subgroup of \(\bG\primeconn\) for every \(i \in \tR\).

We note now that certain subgroups of \bG still have component groups of prime-to-\(p\) order.  First, a twisted Levi subgroup of \(\bG\conn\) is connected.  Second, by \cite{steinberg:endomorphisms}*{Theorems 7.5 and 9.1}, if \(\bM'\) is a twisted Levi subgroup of \(\bG\conn\), then the fixed-point group of an automorphism of \(\bM'\) of prime-to-\(p\) order has component group of order dividing the product of the order of the automorphism, and the order of the fundamental group of the absolute root system of \bM.  In particular, this order is \emph{not} divisible by \(p\).
Thus, whenever \(\bG'\) is a twisted Levi subgroup of \bG, \(\gamma\) is a semisimple automorphism of \(\bG'\) (respectively, a semisimple element of \(\Lie(G')\)), and \(i\) belongs to \(\sbjtl\tR 0\) (respectively, to \tR), we have that \(p\) does not divide the order of the component group of \(\CC{\bG'}i(\gamma)\).

The somewhat strange condition on \(p + 1\) that it be at least \(272\) and \(6\) times the absolute rank of \bG is an indirect version of the condition we really want that ensures a similar heredity.  Namely, it is shown in
\cite{waldspurger:endoscopie}*{Lemme 1.1}
that the bound \(p > 3(h - 1)\) is satisfied with \(h\) the Coxeter number not just of the absolute root system of \bG, but of every root system whose rank does not exceed that of the absolute root system of \bG.  This is the condition that we really want.
Note that it imples in turn that \(p\) is greater than \(h\), hence relatively prime to the order of the absolute Weyl group of any connected, reductive subgroup of \bG (say, by \cite{humphreys:reflection}*{Theorem 3.19}).

These two heredity statements together mean that any consequences of our assumptions that we deduce for \bG apply equally well to any subgroup of \bG of the form \(\CC{\bG'}i(\gamma)\), where \(\bG'\) is a twisted Levi subgroup of \bG, \(\gamma\) is a semisimple element of \(G'\) (respectively, \(\Lie(G')\)), and \(i\) belongs to \(\sbjtl\tR 0\) (respectively, \tR).

Since \(p\) does not divide the order of the absolute Weyl group of \(\bG\conn\), we have
by \cite{fintzen:tame-tori}*{Theorem 3.3} that \cite{jkim-murnaghan:charexp}*{\S1.4, (HGT)} is satisfied, and by \cite{fintzen:tame-tori}*{Theorem 3.6} that every maximal torus in \bG satisfies \xcite{adler-spice:good-expansions}*{Definition \xref{defn:S-is-good}}.  It is shown in \xcite{adler-spice:good-expansions}*{Remark \xref{rem:when-hyps-hold}} (also using \cite{fintzen:tame-tori}*{Lemma 3.2(1)}) that \xcite{adler-spice:good-expansions}*{Hypotheses \xref{hyp:reduced}--\xref{hyp:torus-H1-triv}} are satisfied.  As observed in \cite{adler-roche:intertwining}*{\S1, p.~452}, the constant \(k(\bG)\cdot\card{\pi_1(\Der\bG)}\) appearing in \cite{adler-roche:intertwining}*{Proposition 4.1} divides the order of the centre of the simply connected cover of \(\Der\bG\), so that \cite{fintzen:tame-tori}*{Lemma 3.2(2)} implies that the hypotheses of \cite{adler-roche:intertwining}*{Proposition 4.1(2)} are satisfied (unless \(\bG\conn\) is a torus and \(p\) euals \(2\), in which case the conclusion of \cite{adler-roche:intertwining}*{Proposition 4.1} still holds).  That is, \cite{jkim-murnaghan:charexp}*{\S1.4, (HB)} is satisfied.

If \bG were connected, then we would already have shown that every semisimple element of \(G\) was tame (because each such lies in a tame torus).  To handle the disconnected case, recall that the component group of \(\Cent_\bG(\gamma)\) has prime-to-\(p\) order (because it equals \(\CC\bG i(\gamma)\) for \(i\) sufficiently large), so that, in particular, some prime-to-\(p\) power of \(\gamma\) lies in a maximal torus \bS in \(\Cent_\bG(\gamma)\).  In particular, \bS is centralised by \(\gamma\).  We have already observed that any torus in \bG, including \bS, is tame; and Lemma \initref{lem:gamma-large}(\subref{bijection}, \subref{quass}) gives that there is a \(\gamma\)-stable Borel subgroup \(\bB^+\) of \(\bG_\sepfield\) containing \(\Cent_\bG(\bS)_\sepfield\).  That is, \(\gamma\) is tame.

Showing the hypothesis \cite{jkim-murnaghan:charexp}*{\S1.4, (H\(\mathcal N\))} is more involved, since it is the conjunction of \cite{debacker:nilp}*{Hypotheses 4.2.1, 3, 4, 5, 7}.

First, we have by \cite{adler:thesis}*{\S\S1.3 and 1.5, and Proposition 1.6.3} that \cite{debacker:nilp}*{Hypothesis 4.2.7} is satisfied.

For the rest, as observed in \cite{debacker:nilp}*{\S4.2}, many of these hypotheses are implied by the inequality \(p > 3(h - 1)\), where \(h\) is the Coxeter number of the absolute root system of \bG.
Namely, this bound implies the inequality \(p > 2h\), as discussed on \cite{carter:finite}*{\S5.5, p.~154}.  Then \cite{carter:finite}*{Proposition 5.5.2} gives \cite{debacker:nilp}*{Hypothesis 4.2.3}.  Next, since the exponential map \(\exp\) converges on \(\sbjtlp{\Lie(G)}0\) (by Hypothesis \ref{hyp:mexp}), it converges in particular at all nilpotent elements \cite{adler-debacker:bt-lie}*{Lemmas 3.3.2 and 4.1.5}.  It may be taken to be the map denoted by \(\exp_t\) in \cite{debacker:nilp}*{Hypothesis 4.2.4}.  Since \(\exp\) is an injection, and carries \(G\)-conjugacy classes to \(G\)-conjugacy classes, it remains an injection from the set of nilpotent \(G\)-conjugacy classes to the set of unipotent \(G\)-conjugacy classes.  Since these two sets have the same finite cardinality \cite{carter:finite}*{\S1.15}, we have that \(\exp\) actually induces a bijection from the nilpotent to the unipotent conjugacy classes, and hence is itself a surjection onto the set of unipotent elements.  Since finally, as we have already observed, \(\exp\) is an injection, we have that it is a bijection from the set of nilpotent to the set of unipotent elements.  That is, \cite{debacker:nilp}*{Hypothesis 4.2.4} is satisfied.

We can combine \cite{carter:finite}*{Proposition 5.3.2, Proposition 5.5.2, Lemma 5.5.4(iii), the discussion on pp.~154--155, and Proposition 5.5.10} to see that \cite{debacker:nilp}*{Hypothesis 5.2.5} is satisfied.  Finally, \cite{waldspurger:endoscopie}*{Lemme 7.2.1} establishes \cite{debacker:nilp}*{Hypothesis 4.2.1}.

The results of \cite{spice:asymptotic}, and hence the hypotheses it imposes, are needed throughout
the paper, so we show that they, too, follow from the conditions that we have imposed.

As discussed in \S\ref{subsec:hyp-statements}, we have proved most of \xcite{spice:asymptotic}*{Hypotheses \xref{hyp:funny-centraliser} and \xref{hyp:gamma}}, and isolated what remains as Hypotheses \ref{hyp:fc-building} and \ref{hyp:gp-gamma}, which are now satisfied for all semisimple elements \(\gamma \in G\).

We have by \xcite{spice:asymptotic}*{Remark \xref{rem:X*}} that \loccit*{Hypothesis \xref{hyp:X*}} is always satisfied for elements near an element satisfying \xcite{spice:asymptotic}*{Hypothesis \xref{hyp:Z*}}.  When \bG is connected, we have by \cite{yu:supercuspidal}*{Lemmas 8.1 and 8.3} that \xcite{spice:asymptotic}*{Hypothesis \xref{hyp:Z*}} follows from \cite{yu:supercuspidal}*{\S8, \textbf{GE1}}.  Combining the identification of \(\Lie(G)\) with \(\Lie^*(G)\) \textit{via} \cite{jkim-murnaghan:charexp}*{\S1.4, (HB)}, and \cite{fintzen:tame-tori}*{Theorem 3.3}, shows that every Moy--Prasad coset in \(\Lie^*(T)\), where \bT is a maximal torus in \bG, contains an element satisfying \cite{yu:supercuspidal}*{\S8, \textbf{GE1}}.

Our Hypothesis \ref{hyp:mexp} replaces \xcite{spice:asymptotic}*{Hypothesis \xref{hyp:mexp}}.

The condition \xcite{spice:asymptotic}*{Hypothesis \xref{hyp:depth}} concerns a reductive subgroup \bH of \bG, and involves depths of elements in \bH and in \bG, and an analogue for depths of characters.
Our full-strength Hypothesis \ref{hyp:mexp} (with \((\bM, \bJ, R_{-1}) = (\bG, \bG, \Rp)\)) shows that the statement \loccit*{Hypothesis \initxref{hyp:depth}(\subxref{char})} for characters follows from the statement \loccit*{Hypothesis \initxref{hyp:depth}(\subxref{coset})} for elements, so it suffices to prove the latter.  As discussed in \loccit*{\S\xref{sec:dual-blob}, p.~2338}, the part \loccit*{Hypothesis \initxref{hyp:depth}(\subxref{coset})} about depths of elements holds when \bH is a tame, twisted Levi subgroup.  More generally, by combining this case with \cite{kaletha-prasad:bt-theory}*{Theorem 12.7.1}, we see that it holds when \bH is a tame, twisted Levi subgroup of the group of fixed points of an element of \bG that has prime-to-\(p\) order modulo the centre.  Since every \(\CC\bG r(\gamma)\) arises as a tame, twisted Levi subgroup of the fixed-point group in a Levi subgroup, we have that \xcite{spice:asymptotic}*{Hypothesis \initxref{hyp:depth}(\subxref{coset})} is satisfied.

The condition \xcite{spice:asymptotic}*{Hypothesis \xref{hyp:gamma-central}} may be viewed as a restriction on the range in which the main character formula \loccit*{Theorem \xref{thm:asymptotic-pi-to-pi'}} holds.  It essentially says that we need to restrict ourselves to elements near a `most singular' element in a tame torus.  Lemma \initref{lem:shallow-dfc} and \cite{fintzen:tame-tori}*{Proposition 2.1 and Theorem 3.6} show that this is true for arbitrary semisimple elements \(\gamma\).

The condition \xcite{spice:asymptotic}*{Hypothesis \xref{hyp:MP-ad}} involves a semisimple element \(\gamma\), and discusses the existence of a Moy--Prasad map satisfying certain properties.  As discussed before Hypothesis \ref{hyp:MP-ad}, since \(\gamma\) is tame, we can prove \xcite{spice:asymptotic}*{Hypothesis \xref{hyp:MP-ad}} using the argument of \cite{kaletha-prasad:bt-theory}*{Theorem 13.5.1}.
Further, as discussed in \xcite{spice:asymptotic}*{\S\xref{sec:asymptotic}, p.~2343}, the condition \loccit*{Hypothesis \xref{hyp:K-type}} is mainly meant to work around the absence of a suitable Moy--Prasad isomorphism, and, if such an isomorphism exists---for example, if \(\bG_\tamefield\) is split---then the hypothesis is satisfied by a character with an appropriate dual blob.  The same applies to \loccit*{Hypothesis \xref{hyp:phi}}.

\numberwithin{thm}{section}
\section{Compact, open subgroups}
\label{sec:subgps}

Throughout \S\ref{sec:subgps}, we continue to use the
	non-Archimedean local field \field
and	reductive group \bG
from \S\ref{sec:hyps}.
Let
	\matnotn{gamma}\gamma be a semisimple element of \(G\) satisfying Hypothesis \ref{hyp:fc-building},
	\mnotn r a positive real number,
and	\mnotn x a point of \(\BB(\CC G r(\gamma))\),
such that \((\gamma, x, r)\) satisfies Hypothesis \ref{hyp:gp-gamma}.
All the results in this section have natural analogues for \(\gamma\) a semisimple element of \(\Lie(G)\), with Hypothesis \ref{hyp:Lie-gamma} in place of Hypothesis \ref{hyp:gp-gamma} and with the requirement that \(r\) be positive removed, whose proofs are similar but easier; but we prefer to focus on the case \(\gamma \in G\) in order to highlight the difficulties and how we address them.  To be specific, the key difference is that, for \(\gamma\) a semisimple element of \(G\), we must treat differently the compact and non-compact eigenvalues of \(\gamma\); whereas, for \(\gamma\) a semisimple element of \(\Lie(G)\), all eigenvalues can be treated uniformly.  We indicate other differences as they arise.

Put \(\matnotn M\bM = \CC\bG 0(\gamma)\) and \(\matnotn H\bH = \CC\bG r(\gamma)\).
Let \(m_\gamma \in \cclat(\sgen\gamma) \otimes_\Z \Q\) be the ``rationalised cocharacter'' of Notation \ref{notn:ord-gamma}.
Recall from Lemma \initref{lem:shallow-dfc}\subpref0 that \bM equals \(\Cent_\bG(m_\gamma)\), and so is a Levi subgroup of \bG.
Let
\justnotn[U]{\bPrad^+}\justnotn[U]{\bPrad^-}\(\bPrad^\pm\)
be the subgroups \(\operatorname U_\bG(\pm m_\gamma)\) strictly contracted (respectively, dilated) by \(m_\gamma\), so that every weight \(a\) of the split part of \(\Zent(\bM\conn)\) on \(\Lie(\bPrad^+)\) (respectively, \(\Lie(\bPrad^-)\)) satisfies \(\pair a{m_\gamma} > 0\) (respectively, \(\pair a{m_\gamma} < 0\)).
(If \(\gamma\) is a semisimple element of \(\Lie(G)\), then we put \(\bM = \bG\) and \(m_\gamma = 0\), and let \(\bPrad^\pm\) be trivial.)

In \xcite{spice:asymptotic}*{\S\xref{sec:subgps}}, certain compact, open subgroups of \(G\) were constructed using subgroups \(\CC\bG i(\gamma\pinv)\conn\) defined for all \(i \in \tR \cup \sset{-\infty}\).  Unfortunately, such subgroups rarely exist for negative \(i\); see Remark \ref{rem:hyp:funny-centraliser}.  For \(i\) in \(\sbjtl\tR 0\), the subgroups \(\CC\bG i(\gamma\pinv)\) of \xcite{spice:asymptotic}*{Hypothesis \xref{hyp:funny-centraliser}} equal \(\CC\bG i(\gamma)\), and coincide with the subgroups \(\CC\bG i(\gamma)\conn\) defined in Definition \ref{defn:gp-dfc}.

Because the subgroups \(\CC\bG i(\gamma\pinv)\conn\) for negative \(i\) are no longer available, we can no longer use \xcite{spice:asymptotic}*{Definition \xref{defn:vGvr}} directly as written, so we need a substitute, Definition \ref{defn:vGvr}.  Since this error in \xcite{spice:asymptotic}*{\S\xref{sec:depth-matrix}} does not affect the definitions or results of \xcite{spice:asymptotic}*{\S\xref{sec:concave}}, or \xcite{spice:asymptotic}*{Definition \xref{defn:tame-Levi}} (although see Remark \ref{rem:defn:tame-Levi}), we still may, and do, use those freely.
In particular, if \bT is a maximal torus that contains a maximal \tamefield-split torus in \bG and satisfies \(x \in \BB(T)\), and if \map f{\Weight(\bG_\sepfield, \bT_\sepfield)}{\tR \cup \sset{-\infty}} is any function, then \xcite{adler-spice:good-expansions}*{Definition \xref{defn:vGvr}} defines sublattices \(\lsub\bT{\sbtl{\Lie(G)}x f}\) and \(\lsub\bT{\sbtl{\Lie^*(G)}x f}\) of \(\Lie(G)\) and \(\Lie^*(G)\); and, if \(f\) is concave, in the sense of \cite{bruhat-tits:reductive-groups-1}*{\S6.4.3}, a subgroup \(\lsub\bT{\sbtl G x f}\) of \(G\).

Let \(\vbG = (\bG^0 \subseteq \dotsb \subseteq \bG^{\ell - 1} \subseteq \bG^\ell = \bG)\) be a nested, tame, twisted Levi sequence in \bG such that \(\gamma\) belongs to \(G^0\) and \(x\) belongs to \(\BB(G^0)\).

For each index \(i \in \tR\) with \(0 \le i \le r\), Proposition \initref{prop:gp-cfc-facts}\subpref{Levi} gives that \(\matnotn{Cent}{\CC\vbG i(\gamma)} \ldef (\CC{\bG^0}i(\gamma) \subseteq \dotsb \subseteq \CC{\bG^{\ell - 1}}i(\gamma) \subseteq \CC\bG i(\gamma))\) is a \(\gamma\)-stable, nested, tame, twisted Levi sequence in \(\CC\bG i(\gamma)\).
Recall from \S\ref{sec:hyps}, p.~\ref{vGvr-defn}, the definition of
sublattices \(\sbtl{\Lie(\CC\vG i(\gamma))}x{\vec a}\) and
subgroups \(\sbtl{\CC\vG i(\gamma)}x{\vec a}\)
associated to appropriate depth vectors \(\vec a\).

For each index \(j \in \sset{0, \dotsc, \ell}\), put \(\bM^j = \bM \cap \bG^j\) and \(\bH^j = \bH \cap \bG^j\).  (Note that \(\bM^0 = \bM \cap \bG^0\) and \(\bH^0 = \bH \cap \bG^0\) will rarely be the same as the identity components \(\bM\conn\) and \(\bH\conn\) of \bM and \bH.)  Then, in particular, \(\vbM \ldef (\bM^0 \subseteq \dotsb \subseteq \bM^{\ell - 1} \subseteq \bM^\ell = \bM) = \CC\vbG 0(\gamma)\) and \(\vbH \ldef (\bH^0 \subseteq \dotsb \subseteq \bH^{\ell - 1} \subseteq \bH^\ell = \bH) = \CC\vbG r(\gamma)\) are nested, tame, twisted Levi sequences in \bM and \bH.

\begin{defn}
\label{defn:depth-spec}
With the notation of Appendix \ref{app:depth}, put \(S = \tR \cup \sset{-\infty}\), \(\mc X_0 = \sset{0, \dotsc, \ell}\), \(\mc X_1 = \set{i \in \R}{0 \le i \le r}\), and \(\mc X_2 = \R\).
(If \(\gamma\) is a semisimple element of \(\Lie(G)\), then we put \(\mc X_1 = \set{i \in \R}{i \le r}\).)
With these choices, we call a triply indexed vector \(\tau \in S^{\mc X_2 \times \mc X_1 \times \mc X_0}\) a \term[depth specification|see{vector, triply indexed}]{depth specification}.
We say that the depth specification \(\tau = (t_{v i j})_{v, i, j}\) is \term[depth specification!grouplike]{grouplike} if it is monotone and concave, and \(t_{0 r 0}\) is positive.
\end{defn}

Definition \ref{defn:vMvr} specialises to \xcite{spice:asymptotic}*{Definition \xref{defn:vGvr}} when \bG equals \(\bM = \CC\bG 0(\gamma)\).
We describe in Definition \ref{defn:vGvr} how to handle the absence of groups \(\CC\bG i(\gamma\pinv)\) when \(i\) is negative.

\begin{defn}
\label{defn:vMvr}
Let \(\tau = (t_{v i j})_{v, i, j}\) be a depth specification.

We define \(\lsub\gamma{\sbtl{\Lie(\vM)}x\tau}\) to be
\[
\bigoplus_{0 \le i < r} \bigl(
	\Lie(\CCp G i(\gamma))^\perp \cap \sbtl{\Lie(\CC\vG i(\gamma))}x{\vec t_{0 i}}
\bigr) \oplus \sbtl{\Lie(\CC G r(\gamma))}x{\vec t_{0 r}},
\]
where, for each \(0 \le i \le r\), the notation \(\Lie(\CCp G i(\gamma))^\perp\) is as in Definition \ref{defn:embed-dual} (using Remark \ref{rem:embed-dual} to see that the resulting sublattice does not depend on the choice of maximal torus in \(\CCp\bG i(\gamma)\)); we write \(\vec t_{0 i}\) for the vector \((t_{0 i 0}, \dotsc, t_{0 i \ell})\); and the notation \(\sbtl{\Lie(\CC\vG i(\gamma))}x{\vec t_{0 i}}\) is as in \xcite{spice:asymptotic}*{Definition \xref{defn:tame-Levi}}.  Define \(\lsub\gamma{\sbtl{\Lie^*(\vM)}x\tau}\) similarly.

If \(\tau\) is grouplike, then each \(\vec t_{0i}\) is grouplike, in the sense of \xcite{spice:asymptotic}*{Definition \xref{defn:tame-Levi}} (see Remark \ref{rem:depth-ops}),
and we write \(\lsub\gamma{\sbtl\vM x\tau}\) for the group generated by the various \(\sbtl{\CC\vG i(\gamma)}x{\vec t_{0i}}\) with \(i \in \R\) satisfying \(0 \le i \le r\).

(If \(\gamma\) is a semisimple element of \(\Lie(G)\), then we drop the requirement \(i \ge 0\).)
\end{defn}

Choose a maximal split torus \bS in \(\bH^0\) such that \(x\) belongs to the apartment of \bS, and a maximally unramified maximal torus \matnotn T\bT containing \bS.
(That is, we require that \(\bT_\unfield\) contain, not just \(\bS_\unfield\), but also a maximal split torus in \(\bH^0_\unfield\).  Since \(\bH^0_\unfield\) is quasi-split, this implies that \(\bT_E\) contains a maximal split torus in \(\bH^0_E\) for every extension \(E/\unfield\).)
Since \(\gamma\) belongs to \(H^0\), the rationalised cocharacter \(m_\gamma \in \cclat(\sgen\gamma) \otimes_\Z \Q\) belongs to \(\cclat(\bH^0) \otimes_\Z \Q\), so
there is some \(h^0 \in H\thenconn 0\) such that \(m_\gamma\) belongs to \(\cclat(\Int(h^0)\inv\bS) \otimes_\Z \Q\).

Definition \ref{defn:vGvr} now defines \(\sbtl\vG x\tau\) in general, without assuming that \bG equals \(\CC\bG 0(\gamma)\).
(If \(\gamma\) is a semisimple element of \(\Lie(G)\), then \bM equals \bG, and we have already defined \(\sbtl\vM x\tau = \sbtl\vG x\tau\).)

\begin{defn}
\label{defn:vGvr}
Preserve the notation of Definition \ref{defn:vMvr}.
Then we define the function \(f_\tau\) as in Definition \ref{defn:depth-to-concave}, using \(\lambda = \Int(h^0) \circ m_\gamma\) and \(\Root^j = \Root(\bG^j_\sepfield, \bT_\sepfield)\) for each \(j \in \sset{0, \dotsc, \ell}\).
Put
\[
\matnotn[\sbtl{\Lie(\vG)}x\tau]{Gxt}{\lsub\gamma{\sbtl{\Lie(\vG)}x\tau}} = \lsub\gamma{\sbtl{\Lie(\vM)}x\tau} \oplus \bigl(\lsub\bT{\sbtl{\Lie(G)}x{f_\tau}} \cap \Lie(M)^\perp\bigr),
\]
and similarly for \matnotn[\sbtl{\Lie^*(\vG)}x\tau]{Gxt}{\lsub\gamma{\sbtl{\Lie^*(\vG)}x\tau}}.

If \(\tau\) is grouplike, then Lemma \initref{lem:depth-vs-concave-vee}\subpref{concave} gives that \(f_\tau\) is concave, and we write \matnotn[\sbtl\vG x\tau]{Gxt}{\lsub\gamma\sbtl\vG x\tau} for the group generated by \(\lsub\gamma\sbtl\vM x\tau\) and \(\lsub\bT{\sbtl G x{f_\tau}}\).

We will usually omit the pre-subscript \(\gamma\) when it is understood.
\end{defn}

Lemma \ref{lem:filtration} is our version of \xcite{spice:asymptotic}*{Lemma \xref{lem:filtration}}, adjusted to take account of Remark \ref{rem:hyp:funny-centraliser}.
The operation \(\matnotn{concave}\cvvv\) is as in Definition \ref{defn:depth-ops}.

\begin{lem}
\initlabel{lem:filtration}
Let \(\tau = (t_{v i j})_{v, i, j}\) and \(\tau' = (t'_{v i j})_{v, i, j}\) be depth specifications.  Then the following properties hold.
	\begin{enumerate}
	\item\sublabel{Lie-Lie} If \(Y\) belongs to \(\sbtl{\Lie(\vG)}x\tau\) and \(Y'\) belongs to \(\sbtl{\Lie(\vG)}x{\tau'}\), then \(\comm Y{Y'}\) belongs to \(\sbtl{\Lie(\Der\vG)}x{\tau \cvvv \tau'}\).
	\item\sublabel{gp-Lie} If, in addition to \locpref{Lie-Lie}, we have that \(\tau\) is grouplike and \(g\) belongs to \(\sbtl\vG x\tau\), then \((\Ad(g) - 1)Y'\) belongs to \(\sbtl{\Lie(\Der\vG)}x{\tau \cvvv \tau'}\); and analogously on the dual Lie algebra.
	\item\sublabel{gp-gp} If, in addition to \locpref{gp-Lie}, we have that \(\tau'\) is grouplike and \(g'\) belongs to \(\sbtl\vG x{\tau'}\), then \(\comm g{g'}\) belongs to \(\sbtl{\Der\vG}x{\tau \cvvv \tau'}\).
	\end{enumerate}
\end{lem}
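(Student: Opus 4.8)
The plan is to follow \xcite{spice:asymptotic}*{proof of Lemma \xref{lem:filtration}}, the one genuinely new feature being the summand $\lsub\bT{\sbtl{\Lie(G)}x{f_\tau}} \cap \Lie(M)^\perp$ of Definition \ref{defn:vGvr} (together with its dual and group analogues), which here takes over the role played in \cite{spice:asymptotic} by the groups $\CC\bG i(\gamma\pinv)$ with $i$ negative. By bilinearity of the bracket and multilinearity of the group operations, it suffices to prove all three assertions when the inputs are \emph{homogeneous}, i.e. each lies in one of the direct summands of Definitions \ref{defn:vMvr} and \ref{defn:vGvr}: one of the $\bM$-layers $\Lie(\CCp G i(\gamma))^\perp \cap \sbtl{\Lie(\CC\vG i(\gamma))}x{\vec t_{0 i}}$ with $0 \le i < r$, the top layer $\sbtl{\Lie(\CC\vG r(\gamma))}x{\vec t_{0 r}}$, or the $\bU$-part $\lsub\bT{\sbtl{\Lie(G)}x{f_\tau}} \cap \Lie(M)^\perp$ (and, for the group assertions, the analogous factorisation of $\sbtl\vG x\tau$ into its $\bM$-part $\lsub\gamma{\sbtl\vM x\tau}$ and the factor $\lsub\bT{\sbtl G x{f_\tau}}$). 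Recall that $\bM = \CC\bG 0(\gamma)$ is a Levi subgroup, $\Lie(M)^\perp = \Lie(\bU^-) \oplus \Lie(\bU^+)$, that $\bigl[\Lie(\bM), \Lie(\bU^\pm)\bigr]$ lies in $\Lie(\bU^\pm)$, and that $\bigl[\Lie(\bU^+), \Lie(\bU^-)\bigr]$ can meet $\Lie(\bM)$.

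First I would dispose of the brackets --- and, for the second and third assertions, the actions of $\Ad(g) - 1$, $\Ad^*(g) - 1$, and the commutators $\comm g{g'}$ --- both of whose inputs lie in the $\bM$-directions. Since, for $0 \le i \le r$, the subgroups $\CC\bG i(\gamma)$ coincide with the groups used in \cite{spice:asymptotic}, these are handled exactly as the whole of \xcite{spice:asymptotic}*{Lemma \xref{lem:filtration}} was handled there: the $\bM$-part is filtered by the $\CC\vG i(\gamma)$; the bracket of an $i$-layer element with an $i'$-layer element lies in $\Lie(\CC{\Der\vG}{\min(i, i')}(\gamma))$ by the arithmetic of the $\gamma$-eigenvalues (which multiply, respectively add, under the bracket) and is correspondingly deeper in Moy--Prasad depth, by \xcite{spice:asymptotic}*{Lemma \xref{lem:filtration}} applied to $\CC\bG{\min(i, i')}(\gamma)$ with the tame, twisted Levi sequence $\CC\vbG{\min(i, i')}(\gamma)$ (legitimate by Proposition \initref{prop:gp-cfc-facts}\subpref{Levi}), the compatibility of the relevant Moy--Prasad filtrations being furnished by iterating Hypothesis \initref{hyp:fc-building}\subpref{Lie}; throughout, Hypotheses \ref{hyp:gp-gamma} and \ref{hyp:Lie-gamma} and the structural results of \S\ref{sec:funny-centraliser} are used in place of \xcite{spice:asymptotic}*{Hypotheses \xref{hyp:funny-centraliser} and \xref{hyp:gamma}}. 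For the group assertions one factors the $\bM$-part of $g$ (and $g'$) through the various $\sbtl{\CC\vG i(\gamma)}x{\vec t_{0 i}}$ and expands $\Ad(g) - 1$ and $\comm g{g'}$ into sums of iterated brackets, reducing to the first assertion and to the group part of \xcite{spice:asymptotic}*{Lemma \xref{lem:filtration}} for each $\CC\bG i(\gamma)$; the dual form of the second assertion is then immediate, the Moy--Prasad filtration on $\Lie^*$ being dual to that on $\Lie$ \cite{moy-prasad:k-types}*{\S3.5, p.~400}.

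Next I would treat every bracket, action, or commutator with at least one input in the $\bU$-part. The key point is that the function $f_\tau$ of Definition \ref{defn:depth-to-concave} is concave, by Lemma \initref{lem:depth-vs-concave-vee}\subpref{concave}, so that the lattices $\lsub\bT{\sbtl{\Lie(G)}x{f_\tau}}$, $\lsub\bT{\sbtl{\Lie^*(G)}x{f_\tau}}$ and the group $\lsub\bT{\sbtl G x{f_\tau}}$ obey the commutator estimates of \cite{bruhat-tits:reductive-groups-1}*{\S6.4.3}. Moreover the $\bM$-part of $\sbtl{\Lie(\vG)}x\tau$ is itself contained in $\lsub\bT{\sbtl{\Lie(G)}x{f_\tau}}$ (and similarly for the dual and the group), so that all of $\sbtl{\Lie(\vG)}x\tau$ is; hence any such bracket or commutator lies in $\lsub\bT{\sbtl{\Lie(\Der G)}x{f''}}$, respectively $\lsub\bT{\sbtl{\Der G}x{f''}}$, where $f''$ is the inf-convolution of $f_\tau$ and $f_{\tau'}$, and $\lsub\bT{\sbtl{\Lie(\Der G)}x{f''}} \subseteq \lsub\bT{\sbtl{\Lie(\Der G)}x{f_{\tau \cvvv \tau'}}}$ because $f_{\tau \cvvv \tau'}$ is dominated by $f''$. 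Projecting onto the summands $\Lie(\bU^\pm)$ of $\Lie(M)^\perp$ places the corresponding components in the $\bU$-part of the target; the $\Lie(\bM)$-component of a bracket $\bigl[\Lie(\bU^+), \Lie(\bU^-)\bigr]$ lies in $\lsub\bT{\sbtl{\Lie(\Der G)}x{f_{\tau \cvvv \tau'}}} \cap \Lie(M)$, which is contained in the $\bM$-summand of $\sbtl{\Lie(\Der\vG)}x{\tau \cvvv \tau'}$; all of these comparisons between $f_\tau$ and $\tau$ come from the constructions of Appendix \ref{app:depth} and \xcite{adler-spice:good-expansions}*{Definition \xref{defn:vGvr}}. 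For the group versions one uses in addition the generalised Iwahori factorisation of $\sbtl\vG x\tau$ into its $\bM$-part and its $\bU^\pm$-parts (a consequence of the corresponding factorisation of $\lsub\bT{\sbtl G x{f_\tau}}$), expands $\Ad(g) - 1$, $\Ad^*(g) - 1$, and $\comm g{g'}$ over the factors, and applies the cases already treated.

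I expect that no one of these steps is the real difficulty; rather, the main obstacle will be the bookkeeping that glues them, namely verifying that the interplay between the ``close-eigenvalue layer'' index $i$ of $\CC\vG i(\gamma)$ and the Moy--Prasad depth vectors $\vec t_{0 i}$ --- which the torus-level function $f_\tau$ does not, by itself, record --- is captured faithfully by the operation $\cvvv$ on depth specifications. Concretely, I would need to check that the layer-$j$ component (for $j \ge \min(i, i')$) of the bracket of an $i$-layer element at depth $\vec t_{0 i}$ with an $i'$-layer element at depth $\vec t'_{0 i'}$ lands at the depth prescribed by the $j$-th slot of $\tau \cvvv \tau'$, and --- the subtler point --- that the $\Lie(\bM)$-components of the mixed $\bU^+$-versus-$\bU^-$ brackets really do land in the correct layers of $\sbtl{\Lie(\Der\vG)}x{\tau \cvvv \tau'}$, and at the correct depths there, despite $f_\tau$'s ignorance of the layer decomposition of $\Lie(\bM)$. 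This is precisely the point at which the index-juggling of Definitions \ref{defn:depth-spec} and \ref{defn:vMvr} has to mesh with the vector operations of Appendix \ref{app:depth}, and where the subtleties flagged in the introduction live.
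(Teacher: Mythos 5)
Your strategy---reduce to generators or homogeneous components, then case-split on whether the inputs live in the layered $\bM$-part of $\sbtl{\Lie(\vG)}x\tau$ or in its concave-function part---is the same as the paper's, and your treatment of the all-$\bM$ case matches its first case. But there is a genuine error in your $\bU$-step. You claim that ``the $\bM$-part of $\sbtl{\Lie(\vG)}x\tau$ is itself contained in $\lsub\bT{\sbtl{\Lie(G)}x{f_\tau}}$, so that all of $\sbtl{\Lie(\vG)}x\tau$ is''; this containment is reversed. By Remark~\ref{rem:filtration}, $\lsub\bT{\sbtl{\Lie(M)}x{f_\tau}}$ equals $\sbtl{\Lie(\vM)}x{\vec t_{0 0}}$, and, since a grouplike $\tau$ is in particular monotone, $\vec t_{0 0}$ is the \emph{deepest} of the $\vec t_{0 i}$, so this is the \emph{smallest} of the $\bM$-lattices: it is contained in the $\bM$-part of $\sbtl{\Lie(\vG)}x\tau$, not the other way around. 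An $i$-layer element of the $\bM$-part with $i > 0$ lies at depth only $\ge \vec t_{0 i} \le \vec t_{0 0}$, so it need not belong to $\lsub\bT{\sbtl{\Lie(G)}x{f_\tau}}$ at all; it embeds only in the coarser lattice for the deflated depth specification $\tau''$ of Remark~\ref{rem:filtration} (with $t''_{0 i j} = t_{0 r j}$), and the inf-convolution of two deflated concave functions is too weak to dominate $f_{\tau \cvvv \tau'}$.

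So the case your concave-function estimate fails to control is the genuinely mixed one: an $i$-layer $\bM$-input with $i > 0$ bracketed against a $\bU^\pm$-input. You have correctly located the danger---$f_\tau$'s ignorance of the layer decomposition of $\Lie(\bM)$---but the inclusion you rely on to bridge it runs the wrong way. Your other worry, about the $\Lie(\bM)$-component of a pure $[\bU^+, \bU^-]$ bracket, is actually a non-issue: both such inputs do lie in $\lsub\bT{\sbtl{\Lie(G)}x{f_\tau}}$ and $\lsub\bT{\sbtl{\Lie(G)}x{f_{\tau'}}}$, so the bracket lands in $\lsub\bT{\sbtl{\Lie(\Der G)}x{f_{\tau \cvvv \tau'}}}$ by Lemma~\initref{lem:depth-vs-concave-vee}\subpref{vee}, which is contained in $\sbtl{\Lie(\Der\vG)}x{\tau \cvvv \tau'}$.
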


\begin{proof}
We give the argument for \locpref{gp-gp}; the others are similar, but easier.

Since \(\tau \cvvv (\tau \cvvv \tau')\) equals \((\tau \cvvv \tau) \cvvv \tau'\) by Lemma \ref{lem:vee-assoc} and therefore majorises \(\tau \cvvv \tau'\) (because \(\tau\) is grouplike), it suffices, as in the proof of \xcite{spice:asymptotic}*{Lemma \xref{lem:der-master-comm}}, to prove the result as \(g\) and \(g'\) range over sets of generators for \(\sbtl\vG x\tau\) and \(\sbtl\vG x{\tau'}\).

There are three cases.
\begin{enumerate}
\item There are indices \(i, i' \in \R\) with \(0 \le i, i' \le r\) such that \(g\) belongs to \(\sbtl{\CC\vbG i(\gamma)}x{\vec t_{0 i}}\) and \(g'\) belongs to \(\sbtl{\CC\vbG{i'}(\gamma)}x{\vec t^{\,\prime}_{0 i'}}\).  We may assume, by swapping \(g\) and \(g'\) if necessary, that \(i\) is greater than or equal to \(i'\).  Then \(\CC\bG{i'}(\gamma)\) contains \(\CC\bG i(\gamma)\) and \(\vec t_{0 i}\) is majorised by \(\vec t_{0 i'}\), by monotonicity; and the singly indexed component of \(\tau \cvvv \tau'\) at \((0, i')\) is majorised by \(\vec t_{0 i'} \cvvv \vec t^{\,\prime}_{0 i'}\), by Definition \ref{defn:depth-ops}; so \xcite{spice:asymptotic}*{Lemma \xref{lem:der-master-comm}} gives that \(\comm g{g'}\) belongs to \(\sbtl{\CC{\Der\vG}{i'}(\gamma)}x{\vec t_{0 i'} \cvvv_2 \vec t^{\,\prime}_{0 i'}} \subseteq \sbtl{\Der\vG}x{\tau \cvvv \tau'}\).
\item\sublabel{outside-M}
\(g\) belongs to \(\lsub\bT{\sbtl G x{f_\tau}}\) and \(g'\) belongs to \(\lsub\bT{\sbtl G x{f_{\tau'}}}\).  We have by \xcite{spice:asymptotic}*{Lemma \xref{lem:der-master-comm}} that \(\comm g{g'}\) belongs to \(\lsub\bT{\sbtl{\Der G}x{f_\tau \vee f_{\tau'}}}\), and by Lemma \ref{lem:depth-vs-concave-vee} that \(\lsub\bT{\sbtl{\Der G}x{f_\tau \vee f_{\tau'}}}\) is contained in \(\lsub\bT{\sbtl{\Der G}x{f_{\tau \cvvv \tau'}}} \subseteq \sbtl{\Der\vG}x{\tau \cvvv \tau'}\).
\item After swapping \(g\) and \(g'\) if necessary, there is an index \(i \in \R\) with \(0 \le i \le r\) such that \(g\) belongs to \(\sbtl{\CC\vG i(\gamma)}x{\vec t_{0 i}}\) and \(g'\) belongs to \(\lsub\bT{\sbtl G x{f_{\tau'}}}\).  Then \(\CC\bG 0(\gamma)\) contains \(\CC\bG i(\gamma)\) and \(\vec t_{0 i}\) is majorised by \(\vec t_{0 0}\), by monotonicity, so \(g\) belongs to \(\lsub\bT{\sbtl G x{f_\tau}}\).  We have now reduced to case \locpref{outside-M}.\qedhere
\end{enumerate}
\end{proof}

\begin{rem}
\label{rem:filtration}
Suppose that \(\tau = (t_{v i j})_{v, i, j}\) is grouplike.
Put \justnotn[Uxtau]{\sbtl{\vPrad^+}x\tau}\justnotn[Uxtau]{\sbtl{\vPrad^-}x\tau}\(\sbtl{\vPrad^\pm}x\tau = \lsub\bT{\sbtl{\Prad^\pm}x{f_\tau}}\).

If \(\tau' = (t'_{v i j})_{v, i, j}\)
is the depth specification defined by \(t'_{v i j} = 
t_{v i j}\) if \(v \ne 0\), and \(t'_{0 i j} = t_{0 r j}\),
for all \(i \in \R\) with \(0 \le i \le r\) and all \(j \in \sset{0, \dotsc, \ell}\), then \(\tau'\) is grouplike and satisfies \(\tau' \le \tau
\).

We have that \(\lsub\bT{\sbtl{\Prad^\pm}x{f_\tau}} = \sbtl{\vPrad^\pm}x\tau\) equals \(\lsub\bT{\sbtl{\Prad^+}x{f_{\tau'}}} = \sbtl{\vPrad^+}x{\tau'}\); \(\lsub\bT{\sbtl M x{f_\tau}}\) equals \(\sbtl\vM x{\vec t_{0 0}} \subseteq \sbtl\vM x\tau\); and \(\sbtl\vM x\tau\) is contained in \(\sbtl\vM x{\vec t_{0 r}} = \lsub\bT{\sbtl M x{f_{\tau'}}}\).
In particular,
\(\lsub\bT{\sbtl G x{f_{\tau'}}}\) contains \(\sbtl\vM x\tau\) and \(\lsub\bT{\sbtl G x{f_\tau}}\), and therefore contains \(\sbtl\vG x\tau\);
and
\(\sbtl\vG x\tau\) is generated by \(\sbtl{\vPrad^-}x\tau\), \(\sbtl\vM x\tau\), and \(\sbtl{\vPrad^+}x\tau\).  See Lemma \ref{lem:gen-Iwahori-factorisation} for a sharpened version of this last statement.
\end{rem}

\begin{rem}
\label{rem:fc-Iwahori}
Suppose that \(\tau\) is grouplike.  Then Lemma \ref{lem:filtration} shows that \(\sbtl\vM x\tau\) is the set product \(\prod_{0 \le i \le r} \sbtl{\CC\vG i(\gamma)}x{\vec t_{0 i}}\).  In particular, for every \(i \in \R\) with \(0 \le i \le r\), we have that \(\sbtl\vM x\tau \cap \CC G i(\gamma)\conn\) is contained in
\begin{multline*}
\Bigl(\prod_{0 \le i' \le i} \sbtl{\CC\vG{i'}(\gamma)}x{\vec t_{0 i'}} \cap \CC G i(\gamma)\conn\Bigr)\dotm\prod_{0 \le i < i' \le r} \sbtl{\CC\vG{i'}(\gamma)}x{\vec t_{0 i'}} \\
\subseteq \bigl(\sbtl\vG x{\vec t_{0 i}} \cap \CC G i(\gamma)\conn\bigr)\dotm\prod_{0 \le i < i' \le r} \sbtl{\CC\vG{i'}(\gamma)}x{\vec t_{0 i'}},
\end{multline*}
which, by Remark \initref{rem:fc-building}\subpref{up-to-G}, equals
\[
\sbtl{\CC\vG i(\gamma)}x{\vec t_{0 i}}\dotm\prod_{0 \le i < i' \le r} \sbtl{\CC\vG{i'}(\gamma)}x{\vec t_{0 i'}} = \sbtl{\CC\vG i(\gamma)}x\tau.
\]
The reverse containment is clear, so we have equality.

Working with eigenspace decompositions relative to \(\gamma\) shows that, similarly, \(\sbtl{\Lie(\vG)}x\tau \cap \Lie(\CC G i(\gamma))\) equals \(\sbtl{\Lie(\CC\vG i(\gamma))}x\tau\) and \(\sbtl{\Lie^*(\vG)}x\tau \cap \Lie^*(\CC G i(\gamma))\) equals \(\sbtl{\Lie^*(\CC\vG i(\gamma))}x\tau\), whether or not \(\tau\) is grouplike.

(If \(\gamma\) is a semisimple element of \(\Lie(G)\), then we drop the requirements \(i \ge 0\) and \(i' \ge 0\).)
\end{rem}

\begin{lem}
\label{lem:gen-Iwahori-factorisation}
If \(\tau\) is grouplike, then the multiplication map \anonmap{\sbtl{\vPrad^+}x\tau \times \sbtl\vM x\tau \times \sbtl{\vPrad^-}x\tau}{\sbtl\vG x\tau} is a bijection.
\end{lem}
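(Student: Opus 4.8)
The plan is to prove that the image
$\mc S \ldef \sbtl{\vec U^+}x\tau\dotm\sbtl\vM x\tau\dotm\sbtl{\vec U^-}x\tau$
of the multiplication map is a subgroup of $\sbtl\vG x\tau$. Since, by Remark~\ref{rem:filtration}, the group $\sbtl\vG x\tau$ is generated by the sets $\sbtl{\vec U^\pm}x\tau$ and $\sbtl\vM x\tau$, each of which lies in $\mc S$, once $\mc S$ is shown to be a group it must equal $\sbtl\vG x\tau$, proving surjectivity. Injectivity is the easy half: as $\sbtl{\vec U^\pm}x\tau \subseteq U^\pm$ and $\sbtl\vM x\tau \subseteq M$, it follows from the injectivity of the big-cell multiplication map $\anonmap{\bU^+\times\bM\times\bU^-}\bG$, which remains valid for disconnected $\bG$ (reduce to $\bG\conn$, using that $\bM$ normalises $\bU^\pm$ and translating by component representatives of $\bM$).

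To show that $\mc S$ is a group, I would extract two facts from the classical Iwahori factorisation of the groups $\lsub\bT{\sbtl G x f}$ attached to a concave function $f$ (as in \cite{bruhat-tits:reductive-groups-1} or \cite{adler-spice:good-expansions}), taken with respect to the Levi subgroup $\bM = \Cent_\bG(m_\gamma)$ and the opposite parabolics with unipotent radicals $\bU^\pm = \operatorname U_\bG(\pm m_\gamma)$. This is the right parabolic pair for the torus-based formalism, because $h^0 \in M$, so the cocharacter $\lambda = \Int(h^0)\circ m_\gamma$ satisfies $\Cent_\bG(\lambda) = \bM$ and $\operatorname U_\bG(\pm\lambda) = \bU^\pm$; hence $f_\tau$ and $f_{\tau'}$, which are concave because $\tau$ and therefore $\tau'$ are grouplike, are constructed from precisely this parabolic, and the factorisation yields, for $f = f_\tau$ or $f_{\tau'}$,
\[
\lsub\bT{\sbtl G x f} = \lsub\bT{\sbtl{U^+}x f}\dotm\lsub\bT{\sbtl M x f}\dotm\lsub\bT{\sbtl{U^-}x f},
\qquad
\lsub\bT{\sbtl G x f}\cap\bU^\pm = \lsub\bT{\sbtl{U^\pm}x f},
\qquad
\lsub\bT{\sbtl G x f}\cap\bM = \lsub\bT{\sbtl M x f}.
\]
Feeding in the identifications of Remark~\ref{rem:filtration}, namely $\sbtl{\vec U^\pm}x\tau = \lsub\bT{\sbtl{U^\pm}x{f_\tau}} = \lsub\bT{\sbtl{U^\pm}x{f_{\tau'}}}$, the chain $\lsub\bT{\sbtl M x{f_\tau}} = \sbtl\vM x{\vec t_{0 0}} \subseteq \sbtl\vM x\tau \subseteq \sbtl\vM x{\vec t_{0 r}} = \lsub\bT{\sbtl M x{f_{\tau'}}}$, and $\sbtl\vG x\tau \subseteq \lsub\bT{\sbtl G x{f_{\tau'}}}$, I obtain: (i) $\sbtl{\vec U^-}x\tau\dotm\sbtl{\vec U^+}x\tau \subseteq \lsub\bT{\sbtl G x{f_\tau}} = \sbtl{\vec U^+}x\tau\dotm\lsub\bT{\sbtl M x{f_\tau}}\dotm\sbtl{\vec U^-}x\tau \subseteq \mc S$; and (ii) $\sbtl\vM x\tau$ normalises $\sbtl{\vec U^\pm}x\tau$, since $\sbtl\vM x\tau \subseteq \lsub\bT{\sbtl G x{f_{\tau'}}}\cap M$ and every element of that set normalises $\lsub\bT{\sbtl G x{f_{\tau'}}}\cap\bU^\pm = \sbtl{\vec U^\pm}x\tau$ (it lies in the group $\lsub\bT{\sbtl G x{f_{\tau'}}}$ and in $M \subseteq \gNorm_\bG(\bU^\pm)$).

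Granting (i) and (ii), a routine reordering finishes the proof that $\mc S$ is a group: in a product $(u_1^+ m_1 u_1^-)(u_2^+ m_2 u_2^-)$, one rewrites the middle $u_1^- u_2^+$ using (i) and then brings the two $\bM$-factors together past the flanking unipotent pieces using (ii), so that $\mc S\dotm\mc S \subseteq \mc S$; the same moves give $\mc S\inv = \mc S$. Hence $\mc S$ is a subgroup of $\sbtl\vG x\tau$, so $\mc S = \sbtl\vG x\tau$; together with injectivity this shows the multiplication map is a bijection. The main obstacle I anticipate is purely organisational: matching the ``dynamic'' subgroups $\operatorname U_\bG(\pm m_\gamma)$ to the $\bT$-indexed concave-function machinery of \cite{adler-spice:good-expansions} via the conjugation by $h^0$, and then threading the Remark~\ref{rem:filtration} identifications through (i) and (ii) correctly — all the substantive group theory is already packaged in the Iwahori factorisation of the $\lsub\bT{\sbtl G x f}$.
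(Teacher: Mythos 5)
Your proof is correct and follows the same route as the paper's: reduce to showing that the image $\sbtl{\vec U^+}x\tau\dotm\sbtl\vM x\tau\dotm\sbtl{\vec U^-}x\tau$ is a subgroup, control $\sbtl{\vec U^-}x\tau\dotm\sbtl{\vec U^+}x\tau$ by the Iwahori factorisation of $\lsub\bT{\sbtl G x{f_\tau}}$, and rearrange products using that $\sbtl\vM x\tau$ normalises $\sbtl{\vec U^\pm}x\tau$. The only divergence is that you obtain the normalisation softly, from the group structure of $\lsub\bT{\sbtl G x{f_{\tau'}}}$ and the intersection identities supplied by the factorisation, whereas the paper deduces it from the quantitative commutator estimate via Lemma~\ref{lem:depth-vs-concave-vee} and its prequel companion; this is a minor streamlining resting on the same underlying factorisation lemma.
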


\begin{proof}
Since the multiplication map \anonmap{\Prad^+ \times M \times \Prad^-}G is a bijection onto its image, it suffices to show that the image of \(\sbtl{\vPrad^+}x\tau \times \sbtl\vM x\tau \times \sbtl{\vPrad^-}x\tau\) is \(\sbtl\vG x\tau\).

By Remark \ref{rem:filtration}, it suffices to show that the image of \(\sbtl{\vPrad^+}x\tau \times \sbtl\vM x\tau \times \sbtl{\vPrad^-}x\tau\) is a group, for which it suffices to show that it is closed under multiplication.

Let \(\tau'\) be as in Remark \ref{rem:filtration}.
We have by \xcite{spice:asymptotic}*{Lemma \xref{lem:der-master-comm}} and Lemma \ref{lem:depth-vs-concave-vee} that the commutator of \(\sbtl{\vPrad^\pm}x\tau = \lsub\bT{\sbtl{\Prad^\pm}x{f_{\tau'}}}\) with \(\sbtl\vM x\tau \subseteq \lsub\bT{\sbtl M x{f_{\tau'}}}\) lies in \(\lsub\bT{\sbtl{\Prad^\pm}x{f_{\tau'}}} = \sbtl{\vPrad^\pm}x\tau\).  Therefore,
\[
\sbtl{\vPrad^+}x\tau\dotm\sbtl\vM x\tau\dotm\sbtl{\vPrad^-}x\tau\dotm\sbtl{\vPrad^+}x\tau\dotm\sbtl\vM x\tau\dotm\sbtl{\vPrad^-}x\tau
\]
equals
\[
\sbtl{\vPrad^+}x\tau\dotm\sbtl{\vPrad^-}x\tau\dotm\sbtl{\vPrad^+}x\tau\dotm\sbtl\vM x\tau\dotm\sbtl{\vPrad^-}x\tau
\]
and so is contained in
\[
\lsub\bT{\sbtl G x{f_\tau}}\dotm\sbtl\vM x\tau\dotm\sbtl{\vPrad^-}x\tau,
\]
which, by \xcite{adler-spice:good-expansions}*{Lemma \xref{lem:gen-iwahori-factorization}}, equals
\begin{multline*}
\sbtl{\vPrad^+}x\tau\dotm\sbtl\vM x{\vec t_{0 0}}\dotm\sbtl{\vPrad^-}x\tau\dotm\sbtl\vM x\tau\dotm\sbtl{\vPrad^-}x\tau \\
= \sbtl{\vPrad^+}x\tau\dotm\sbtl\vM x{\vec t_{0 0}}\dotm\sbtl\vM x\tau\dotm\sbtl{\vPrad^-}x\tau
= \sbtl{\vPrad^+}x\tau\dotm\sbtl\vM x\tau\dotm\sbtl{\vPrad^-}x\tau.
\end{multline*}
This shows that \(\sbtl{\vPrad^+}x\tau\dotm\sbtl\vM x\tau\dotm\sbtl{\vPrad^-}x\tau\) is closed under multiplication, and so completes the proof.
\end{proof}

\begin{rem}
\label{rem:vGvr-Mperp}
Preserve the notation of Definition \ref{defn:vGvr}.


The doubly indexed vector \(T_v = (t_{v i j})_{i, j}\) does not affect \(\lsub\gamma\sbtl{{\Lie(\vG)}}x\tau\), \(\lsub\gamma\sbtl{{\Lie^*(\vG)}}x\tau\), or \(\lsub\gamma\sbtl{\vG}x\tau\) unless \(v\) is the valuation of some weight of \(\gamma\) on \(\Lie(\bG)\).  If \(\tau\) is grouplike and we define \(\tilde t_{v i j} = t_{v i j}\) or \(\tilde t_{v i j} = \infty\) according as \(v\) is, or is not, the valuation of a weight of \(\gamma\) on \(\Lie(\bG)\), then \((\tilde t_{v i j})_{v, i, j}\) is still grouplike.
(If \(\gamma\) is a semisimple element of \(\Lie(G)\), then the doubly indexed vector \(T_v\) does not affect \(\lsub\gamma\sbtl{{\Lie(\vG)}}x\tau\) unless \(v\) equals \(0\).  Therefore, we may carry out the analogous construction, now setting \(\tilde t_{v i j} = \infty\) for all non-\(0\) \(v\).)

If \(v \in \R\) is non-\(0\), then only \(\sup \set{\vec t_{v i}}{i \in \mc X_1}\), not any individual \(\vec t_{v i}\), affects the definitions of \(\lsub\gamma\sbtl{{\Lie(\vG)}}x\tau\), \(\lsub\gamma\sbtl{{\Lie^*(\vG)}}x\tau\), and \(\lsub\gamma\sbtl{\vG}x\tau\).  In this sense, our depth specifications carry too much information, and could be pruned down to just \(f\supd_v = \sup \set{\vec t_{v i}}{i \in \mc X_1}\) and \(f\supc_v = \sup \set{\vec t_{(-v)i}}{i \in \mc X_1}\) for every \(v \in \mc X_2\) with \(v < 0\), as well as \(f\supn_i = \vec t_{0i}\) for every \(i \in \mc X_1\).  This connects our notion of a depth specification to the notion of a depth matrix in \xcite{spice:asymptotic}*{Definition \xref{defn:vGvr}}.  The extra information in a depth specification could be useful to define a finer filtration associated to \emph{two} commuting, semisimple elements \(\gamma_1\) and \(\gamma_2\) satisfying certain conditions, but we do not have any use for such a filtration here, and so do not pursue this.

Therefore, every group of the form \(\sbtl\vG x\tau\) can, if desired, be realised with \(\tau\) skew-constant, in the sense of Definition \ref{defn:depth-ops}.  Lemma \ref{lem:vee-monotone} shows that the restriction to skew-constant \(\tau\) interacts well with the operation \(\cvvv\) on depth specifications.
%
\end{rem}

\begin{notn}
\label{notn:ord-gamma-spec}
Put \(\matnotn{ord}{\ord_{\gamma - 1}} = (a_{v i j})_{v, i, j}\),
\(\matnotn{ord}{\ord_{\gamma\inv - 1}} = (a_{(-v)i j})_{v, i, j}\),
\(\matnotn{ord}{\ord_\gamma} = (b_{v i j})_{v, i, j}\), and
\(\matnotn{ord}{\ord_{\gamma\inv}} = (b_{(-v)i j})_{v, i, j}\),
where
\(a_{v i j} = 0\), \(a_{(-v)i j} = -v\), and \(b_{v i j} = v\) for all \(v \in \sbjtlp\R 0\), and
\(a_{0 i j} = i\) and \(b_{0 i j} = 0\),
for all \(i \in \R\) with \(0 \le i \le r\) and all \(j \in \sset{0, \dotsc, \ell}\).

(If \(\gamma\) is a semisimple element of \(\Lie(G)\), then instead put \(\matnotn{ord}{\ord_\gamma} = (a_{v i j})_{v, i, j}\), where, for all \(i \in \R\) with \(i \le r\) and all \(j \in \sset{0, \dotsc, \ell}\), we put \(a_{0 i j} = i\) and \(a_{v i j} = \infty\) for all non-\(0\) \(v \in \R\).)
\end{notn}

\begin{rem}
\label{rem:ord-gamma-spec}
Use Notation \ref{notn:ord-gamma-spec}.

The function \(f_{\ord_\gamma}\) of Definitions \ref{defn:vGvr} and \ref{defn:depth-to-concave} is, up to conjugation, just pairing with the cocharacter \(m_\gamma\) of Notation \ref{notn:ord-gamma}.  Note that \(\ord_\gamma\) is monotone and concave, and not grouplike, although \(\tau + \ord_\gamma\) is grouplike whenever \(\tau\) is; and that \(\ord_{\gamma - 1}\) is monotone but not concave, although there are easy modifications (setting \(a_{v i j} = \infty\) whenever \(v\) is negative, or else whenever \(v\) is positive) that will convert it to a grouplike depth specification.

If we write \(\ord_{\gamma - 1} = (a_{v i j})_{v, i, j}\) and \(\ord_{\gamma\inv - 1} = (a'_{v i j})_{v, i, j}\), then, for all \(i \in \R\) with \(0 \le i \le r\) and all \(j \in \sset{0, \dotsc, \ell}\), we have that \(a_{0 i j} = a'_{0 i j}\), and \(\max \sset{a_{v i j}, a'_{v i j}}\) equals \(0\) for all non-\(0\) \(v \in \R\).
\end{rem}

Lemma \ref{lem:commute-gp} is the analogue of \xcite{spice:asymptotic}*{Lemma \xref{lem:commute-gp}}, except for \xcite{spice:asymptotic}*{Lemma \initxref{lem:commute-gp}(\subxref{orbit})}, which is our Corollary \ref{cor:gp-orbit}.
(If \(\gamma\) is a semisimple element of \(\Lie(G)\), then all occurrences of \(\ord_{\gamma - 1}\) and \(\ord_{\gamma\inv - 1}\) should be replaced by \(\ord_\gamma\).  The result Lemma \initref{lem:commute-gp}\subpref{bi-up} has no analogue in this case.)

\begin{lem}
\initlabel{lem:commute-gp}
Let \(\tau\) and \(\tilde\tau\) be depth specifications.
	\begin{enumerate}
	\item\sublabel{down-Lie}
	\(\Ad(\gamma) - 1\) carries \(\sbtl{\Lie(\vG)}x\tau\) into \(\sbtl{\Lie(\Der\vG)}x{\tau + \ord_{\gamma - 1}}\).
	\item\sublabel{down-gp}
	Suppose \(\tau\) and \(\tilde\tau\) are grouplike, and we have \(\tilde\tau \le \tau \cvvv \tilde\tau\) and \(\tilde\tau \le \tau + \ord_{\gamma - 1}\).  Then \(\comm\gamma\anondot\) carries \(\sbtl\vG x\tau\) into \(\sbtl{\Der\vG}x{\tilde\tau}\).
	\item\sublabel{up-Lie}
	The pre-image of \(\sbtl{\Lie(\vG)}x{\tau + \ord_{\gamma - 1}} + \Lie(H)\) under \(\Ad(\gamma) - 1\) is contained in \(\sbtl{\Lie(\vG)}x\tau + \Lie(H)\).
	\item\sublabel{up-gp}
	If \(\tau\) is grouplike, then the pre-image in \(\sbtlp M x 0\) of \(\sbtl\vM x{\tau + \ord_{\gamma - 1}}\dotm\sbtlp H x 0\) under \(\comm\gamma\anondot\) is contained in \(\sbtl\vM x\tau\dotm\sbtlp H x 0\).
	\item\sublabel{bi-up}
	If the characteristic of \(\sbjat\field 0\) is not \(2\), then the pre-image of \(\Lie(\CC G r(\gamma)) + \sbtl{\Lie(\vG)}x{\tau + \min \sset{\ord_{\gamma - 1}, \ord_{\gamma\inv - 1}}}\) under \(\Ad(\gamma) - \Ad(\gamma)\inv\) is contained in \(\Lie(\CC G r(\gamma^2)) + \sbtl{\Lie(\vG)}x\tau\).
	\item\sublabel{onto}
	Suppose that \(\tau\) and \(\tilde\tau\) are grouplike, and satisfy the inequalities \(\tilde\tau \le \tau \cvvv \tilde\tau\) and \(\tilde\tau \ge \tau + \max \sset{\ord_{\gamma - 1}, \ord_{\gamma\inv - 1}}\).  Fix \(h \in \sbtl\vH x\tau \cap \sbtl H x r\).  Then \(\Int(\sbtl{(\vM, \vG)}x{(\tau, \tilde\tau)})(\sbtl\vH x{\tilde\tau}\dotm h\gamma)\) contains \(\sbtl\vG x{\tilde\tau}\dotm h\gamma\sbtl\vG x{\tilde\tau}\).
	\end{enumerate}
\end{lem}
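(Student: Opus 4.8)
The plan is to follow the proof of \xcite{spice:asymptotic}*{Lemma \xref{lem:commute-gp}} essentially line for line, with the subgroups \(\CC\bG i(\gamma)\) of \S\ref{sec:funny-centraliser} in the role played there by the groups \(\CC\bG i(\gamma\pinv)\conn\) (for \(i \in \sbjtl\tR 0\) these are precisely the replacements), and with Hypotheses \ref{hyp:fc-building} and \ref{hyp:gp-gamma} supplying the structural input that \xcite{spice:asymptotic}*{Hypotheses \xref{hyp:funny-centraliser} and \xref{hyp:gamma}} supplied before. For \locpref{down-Lie} and \locpref{down-gp} I would reduce to a set of generators: by Definitions \ref{defn:vMvr} and \ref{defn:vGvr} and Remarks \ref{rem:filtration} and \ref{rem:fc-iwahori}, \(\sbtl{\Lie(\vG)}x\tau\) is spanned by the layers \(\Lie(\CCp G i(\gamma))^\perp \cap \sbtl{\Lie(\CC\vG i(\gamma))}x{\vec t_{0 i}}\) together with \(\sbtl{\Lie(\CC G r(\gamma))}x{\vec t_{0 r}}\), while \(\sbtl\vG x\tau\) is generated by these (through \(\sbtl\vM x\tau\), which Remark \ref{rem:fc-iwahori} presents as the ordered product of the \(\sbtl{\CC\vG i(\gamma)}x{\vec t_{0 i}}\)) together with \(\sbtl{\vec U^\pm}x\tau\). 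On each such generator the claim is exactly Hypothesis \initref{hyp:gp-gamma}(\subref{Lie}, \subref{gp}), and the errors produced in passing from generators to the whole group land deeply enough by Lemma \ref{lem:filtration}; the relevant inequalities on depth specifications (e.g.\ \(\tilde\tau \le \tau \cvvv \tilde\tau\) in \locpref{down-gp}) are precisely what guarantee this, and the bookkeeping is routine once one has the \(\cvvv\)-calculus of Appendix \ref{app:depth} in hand.

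The ``preimage'' parts \locpref{up-Lie}, \locpref{up-gp}, and \locpref{bi-up} I would handle by successive approximation. The surjectivity halves of the isomorphisms and bijection displayed in Hypothesis \initref{hyp:gp-gamma}(\subref{Lie}, \subref{gp}, \subref{bi-Lie-Lie}) say that \(\Ad(\gamma) - 1\), \(\comm\gamma\anondot\), and \(\Ad(\gamma) - \Ad(\gamma\inv)\) become surjective on each graded quotient once one kills the deeper (primed) layer; so, given an element whose image lies in \(\sbtl{\Lie(\vG)}x{\tau + \ord_\gamma} + \Lie(H)\) (respectively in \(\sbtl\vM x{\tau + \ord_\gamma}\dotm\sbtlp H x 0\), respectively in \(\Lie(\CC G r(\gamma^2)) + \sbtl{\Lie(\vG)}x{\tau + \min\sset{\ord_\gamma, \ord_{\gamma\inv}}}\)), one successively splits off the centraliser part and a preimage of the leading remaining term, and the iteration converges because \(\sbtl{\Lie(\vG)}x{\tau + n}\), respectively \(\sbtl\vG x{\tau + n}\), contracts into the relevant centraliser layer as \(n \to \infty\). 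For \locpref{bi-up} I would first write \(\Ad(\gamma) - \Ad(\gamma\inv) = \Ad(\gamma\inv)\bigl(\Ad(\gamma) - 1\bigr)\bigl(\Ad(\gamma) + 1\bigr)\) and use the hypothesis that the residual characteristic is not \(2\) to see that \(\Ad(\gamma) + 1\) stabilises the relevant lattices and is bijective on their graded pieces, reducing to the analysis of \(\Ad(\gamma) - 1\); Hypothesis \ref{hyp:fc-building} for \(\gamma^2\) (also granted by Hypothesis \initref{hyp:gp-gamma}\subpref{bi-Lie-Lie}) and Lemma \initref{lem:gp-dfc-power}\subpref{contain} keep the passage between \(\gamma\) and \(\gamma^2\) under control.

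Part \locpref{onto} is the one I expect to be the main obstacle, and I would prove it last. Given \(g_1, g_2 \in \sbtl\vG x{\tilde\tau}\), one must produce \(g \in \sbtl\vG x\tau\) and \(h' \in \sbtl{\CC\vG r(\gamma)}x{\tilde\tau}\) with \(\Int(g)(h'h\gamma) = g_1 h\gamma g_2\); conjugating across and cancelling the trailing \(\gamma\), this is a surjectivity statement for a twisted-conjugation-type map on \(\sbtl\vG x\tau\) (with \(h'\) free to vary in \(\sbtl{\CC\vG r(\gamma)}x{\tilde\tau}\)). I would run it on the Iwahori factorisation \(\sbtl{\vec U^+}x\tau\dotm\sbtl\vM x\tau\dotm\sbtl{\vec U^-}x\tau\) of Lemma \ref{lem:gen-Iwahori-factorisation}, treating the unipotent and Levi factors separately. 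On the \(\vec U^\pm\)-factors, Hypothesis \initref{hyp:gp-gamma}\incpref{building}\subpref0 (the identity \(\gamma\dota x = x - m_\gamma\)) makes \(\Int(\gamma)\) contract \(\sbtl{\vec U^+}x\tau\) and \(\Int(\gamma\inv)\) contract \(\sbtl{\vec U^-}x\tau\), so the map is bijective there by the standard geometric-series estimate; on the \(\sbtl\vM x{\tilde\tau}\)-factor, the choice \(\tilde\tau = \tau + \max\sset{\ord_\gamma, \ord_{\gamma\inv}}\) (which shifts depths only in the Levi \(\bM\)-direction) together with \(h \in \sbtl{\CC\vG r(\gamma)}x\tau\) and the graded bijectivity of \(\comm\gamma\anondot\) from Hypothesis \initref{hyp:gp-gamma}\subpref{gp} lets me solve layer by layer, the hypothesis \(\tau < \tau \cvvv \tau\) ensuring that every commutator correction --- the cross-terms between the three Iwahori factors as well as the internal corrections within \(\sbtl\vM x\tau\) --- drops strictly, so that the iteration converges. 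The two delicate points I anticipate are: first, checking via Lemma \ref{lem:filtration} and Remark \ref{rem:filtration} that reassembling the three factors keeps everything inside \(\sbtl\vG x{\tilde\tau}\); and second, confirming that the absorbed element \(h'\) genuinely lies in \(\sbtl{\CC\vG r(\gamma)}x{\tilde\tau}\) and not merely in \(\sbtl\vG x{\tilde\tau}\), for which I would appeal to the orbit rigidity of Corollary \ref{cor:gp-orbit}.
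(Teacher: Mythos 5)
Most of your proposal tracks the paper's proof reasonably well.  Parts \locpref{down-Lie} and \locpref{down-gp} are indeed reduced to generators and then handled by Hypothesis \initref{hyp:gp-gamma}(\subref{Lie}, \subref{gp}) on the \(\sbtl\vM\)-pieces and by Hypothesis \initref{hyp:gp-gamma}\incpref{building}\subpref0 on the \(\sbtl{\vec U^\pm}\)-pieces (your sketch is a little elliptical about the latter, but the mechanism you describe is the right one).  For \locpref{up-Lie} and \locpref{up-gp} the paper runs a ``pick the extreme failing layer and derive a contradiction'' argument rather than an explicit successive approximation, but these are the same estimate read in opposite directions, so no harm done.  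Your discussion of \locpref{onto} is also essentially the paper's two-phase Iwahori iteration; I would only caution that your proposed appeal to Corollary \ref{cor:gp-orbit} at the end is not needed --- the approximant in the \(\sbtl\vM\)-direction lands in \(\sbtl{\CCp\vG i(\gamma)}x{\tilde\tau}\dotm h\gamma\) by construction, and letting \(i \to r\) already places it where it should be.

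The genuine gap is in \locpref{bi-up}.  Your plan is to factor
\(\Ad(\gamma) - \Ad(\gamma\inv) = \Ad(\gamma\inv)\bigl(\Ad(\gamma) - 1\bigr)\bigl(\Ad(\gamma) + 1\bigr)\)
and then to argue, from \(p \neq 2\), that \(\Ad(\gamma) + 1\) is bijective on the relevant graded pieces, reducing the problem to the \(\Ad(\gamma) - 1\) analysis of \locpref{up-Lie}.  But \(\Ad(\gamma) + 1\) is \emph{not} bijective on graded pieces: on any close eigenspace of \(\gamma\) whose eigenvalue reduces to \(-1\), the operator \(\Ad(\gamma) + 1\) is topologically nilpotent and in particular is the zero map on the associated graded.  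Odd residual characteristic prevents \(+1\) and \(-1\) from coinciding, but it does not prevent \(-1\) from occurring.  These near-\((-1)\) directions are exactly the difference between \(\Lie(\CC G r(\gamma^2))\) (which appears in the conclusion of \locpref{bi-up}) and \(\Lie(\CC G r(\gamma))\) (which your factorization argument would produce); so if the argument as written went through, it would prove the wrong statement, with \(\gamma\) in place of \(\gamma^2\).  The paper sidesteps this by treating \(\Ad(\gamma) - \Ad(\gamma\inv)\) as a single operator and invoking Hypothesis \initref{hyp:gp-gamma}\subpref{bi-Lie-Lie}, which asserts precisely the graded bijectivity of that operator modulo the \(\gamma^2\)-centraliser on the filtration attached to \(\CC\vbG i(\gamma^2)\); the proof of \locpref{bi-up} then runs \textit{literatim} as in \locpref{up-Lie} with that hypothesis swapped in.  One could rescue your factorization by first splitting into near-\((+1)\), near-\((-1)\), and far pieces and checking bijectivity of the appropriate factor on each, but at that point you have re-derived the content of \initref{hyp:gp-gamma}\subpref{bi-Lie-Lie}, so it is simpler to use it directly.
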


\begin{proof}
Although we usually do not use the depth specification \(\ord_\gamma\) (defined as in Notation \ref{notn:ord-gamma-spec}) when \(\gamma\) is a semisimple automorphism of \(G\), rather than a semisimple element of \(\Lie(G)\), we will need it a few times in this proof.  The key points are that, by Hypothesis \initref{hyp:gp-gamma}\incpref{building}\subpref0, conjugation by \(\gamma\) carries \(\sbtl\vG x\tau\) to \(\sbtl\vG{x - m_\gamma}\tau = \sbtl\vG x{\tau + \ord_\gamma}\), and similarly on the Lie algebra and for other depth specifications; and that replacing \(\ord_{\gamma - 1}\) by \(\ord_\gamma\), or conversely, has no effect on subgroups of \(\vPrad^-\).

As in the proof of Lemma \ref{lem:filtration}, using the inequality \(\tilde\tau \le \tau \cvvv \tilde\tau\) for \locpref{down-gp}, it suffices to check (\locref{down-Lie}, \locref{down-gp}) on sets of generators for \(\sbtl{\Lie(\vG)}x\tau\) and \(\sbtl\vG x\tau\).

For each \(i \in \R\) with \(0 \le i \le r\), we have by Hypothesis \initref{hyp:gp-gamma}\subpref{Lie} that \(\Ad(\gamma) - 1\) carries \(\Lie(\CCp G i(\gamma))^\perp \cap \sbtl{\Lie(\CC\vG i(\gamma))}x{\vec t_{0 i}}\) into \(\Lie(\CCp G i(\gamma))^\perp \cap \sbtl{\Lie(\CC{\Der\vG}i(\gamma))}x{\vec t_{0 i} + i} \subseteq \sbtl{\Lie(\Der\vG)}x{\tau + \ord_{\gamma - 1}}\).
We have by Hypothesis \initref{hyp:gp-gamma}\incpref{building}\subpref0 that \(\Ad(\gamma)\) restricts to isomorphisms
\anonmap
	{\sbtl{\Lie(\vPrad^\pm)}x\tau}
	{
	\sbtl{\Lie(\vPrad^\pm)}x{\tau + \ord_\gamma}},
so that \(\Ad(\gamma) - 1\) carries \(\sbtl{\Lie(\vPrad^+)}x\tau\) into \(\sbtl{\Lie(\vPrad^+)}x\tau = \sbtl{\Lie(\vPrad^+)}x{\tau + \ord_{\gamma - 1}}\) and \(\sbtl{\Lie(\vPrad^-)}x\tau\) into \(\sbtl{\Lie(\vPrad^-)}x{\tau + \ord_\gamma} = \sbtl{\Lie(\vPrad^-)}x{\tau + \ord_{\gamma - 1}}\), both of which are contained in \(\sbtl{\Lie(\Der\vG)}x{\tau + \ord_{\gamma - 1}}\).
This establishes \locpref{down-Lie}.

Similarly, using Hypothesis \initref{hyp:gp-gamma}\subpref{gp} in place of Hypothesis \initref{hyp:gp-gamma}\subpref{Lie}, we have that \(\comm\gamma\anondot\) carries \(\sbtl{\vPrad^+}x\tau\) into \(\sbtl{\vPrad^+}x\tau = \sbtl{\vPrad^+}x{\tau + \ord_{\gamma - 1}}\) and \(\sbtl{\vPrad^-}x\tau\) into \(\sbtl{\vPrad^-}x{\tau + \ord_\gamma} = \sbtl{\vPrad^-}x{\tau + \ord_{\gamma - 1}}\), both of which are contained in \(\sbtl{\Der\vG}x{\tilde\tau}\), and each \(\sbtl{\CC\vG i(\gamma)}x{\vec t_{0 i}}\) into \(\sbtl{\CC{\Der\vG}i(\gamma)}x{\vec t_{0 i} + i}\), which is contained in \(\sbtl{\Der G \cap \vM}x{\tau + \ord_{\gamma - 1}}\), and hence in \(\sbtl{\Der\vG}x{\tilde\tau}\) by monotonicity of \(\tau\).  This establishes \locpref{down-gp}.

For \locpref{up-gp}, suppose that \(g \in \sbtlp M x 0\) satisfies \(\comm\gamma g \in \sbtl\vM x{\tau + \ord_{\gamma - 1}}\dotm\sbtlp H x 0\), but \(g\) does not belong to \(\sbtl\vM x\tau\dotm\sbtlp H x 0\).  Let \(i_0\) be the greatest index \(i\) such that \(g\) belongs to \(\sbtl\vM x\tau\dotm\sbtlp{\CC G i(\gamma)}x 0\), let \(j_0\) be the least index \(j\) such that \(g\) belongs to \(\sbtl\vM x\tau\dotm\sbtlp{\CC{G^j}{i_0}(\gamma)}x 0\), and let \(a_0\) be the greatest index \(a\) such that \(g\) belongs to \(\sbtl\vM x\tau\dotm\sbtl{\CC\vG{i_0}(\gamma)}x{(\Rp0, \dotsc, \Rp0, a, \infty, \dotsc, \infty)}\) (where \(a\) is in the \(j\)th slot).  Then \(i_0\) is strictly less than \(r\) and \(a_0\) is strictly less than \(t_{0 i_0 j_0}\).  Put \(\vec a = (\Rp0, \dotsc, \Rp0, a_0, \infty, \dotsc, \infty)\), and write \(g = g_+g_-\), with \(g_+ \in \sbtl\vM x\tau\) and \(g_- \in \sbtl{\CC\vG{i_0}(\gamma)}x{\vec a}\).  By \locpref{down-gp}, we have that \(\comm\gamma{g_+}\) belongs to \(\sbtl\vM x{\tau + \ord_{\gamma - 1}}\); and then Lemma \initref{lem:filtration}\subpref{gp-gp} gives that
\begin{multline*}
\comm\gamma{g_-} = \Int(g_+)\inv(\comm\gamma{g_+}\inv\comm\gamma{g_+ g_-})
\quad\text{belongs to}\quad \\
\sbtlp{\CC{G^{j_0}}{i_0}(\gamma)}x 0 \cap \sbtl\vM x{\tau + \ord_{\gamma - 1}}\dotm\sbtlp H x 0,
\end{multline*}
which is contained in \(\sbtl{\CC{\vG \cap G^{j_0}}{i_0}(\gamma)}x{\vec t_{0 i_0} + i_0}\dotm\sbtlp H x 0\) by Remark \ref{rem:fc-Iwahori}.  Thus \(\comm\gamma{g_-}\) belongs to \(\sbtlp{\CC\vG{i_0}(\gamma)}x{\vec a + i_0}\dotm\sbtlp H x 0\).  Hypothesis \initref{hyp:gp-gamma}\subpref{gp} gives that \(g_-\) belongs to \(\sbtlp{\CC\vG{i_0}(\gamma)}x{\vec a}\dotm\sbtlp H x 0\), which is a contradiction of the maximality of \(a_0\).  This establishes \locpref{up-gp}.

For \locpref{up-Lie}, suppose instead that \(X \in \Lie(G)\) satisfies \((\Ad(\gamma) - 1)X \in \sbtl{\Lie(\vM)}x{\tau + \ord_{\gamma - 1}} + \Lie(H)\).  Reasoning as in the group case shows that the \bM-equivariant projection of \(X\) on \(\Lie(M)\) belongs to \(\sbtl{\Lie(\vM)}x\tau + \Lie(H)\).  Write \(X^\pm\) for the \bM-equivariant projection of \(X\) on \(\Lie(\Prad^\pm)\).  The \bM-equivariant projection \((\Ad(\gamma) - 1)X^-\) of \((\Ad(\gamma) - 1)X\) on \(\Lie(\Prad^-)\) lies in \(\sbtl{\Lie(\vPrad^-)}x{\tau + \ord_{\gamma - 1}}\).  Suppose that \(X^-\) does not belong to \(\sbtl{\Lie(\vPrad^-)}x\tau\).  Let \(j_0\) be the least index \(j\) such that \(X^-\) belongs to \(\sbtl{\Lie(\vPrad^-)}x\tau + \Lie(\vPrad^- \cap G^j)\).  For each \(v_0 \in \R\) and \(a \in \tR\), define a depth specification \(\tau'_{v_0 a} = (t'_{v i j})_{v, i, j}\) by \(t'_{v i j} = -\infty\) if \(v \ne v_0\) or \(j \ne j_0\), and \(t'_{v_0 i j_0} = a\), for all \(i \in \R\) with \(0 \le i \le r\).
Let \(a_0\) be the greatest index \(a\) such that \(X^-\) belongs to \(\sbtl{\Lie(\vPrad^-)}x\tau + \sbtl{\Lie(\vPrad^-)}x{\tau'_{v_0 a}}\).  Then \(a_0\) is strictly less than \(t_{v_0 i j_0}\) for some \(v_0 \in \R\) and \(i \in \R\) with \(0 \le i \le r\).  Write \(X^- = X^-_+ + X^-_-\), with \(X^-_+ \in \sbtl{\Lie(\vPrad^-)}x\tau\) and \(X^-_- \in \sbtl{\Lie(\vPrad^-)}x{\tau'_{v_0 a_0}}\).  Then \locpref{down-Lie} gives that \((\Ad(\gamma) - 1)X^-_+\) lies in \(\sbtl{\Lie(\vPrad^-)}x{\tau + \ord_{\gamma - 1}}\), so
\begin{multline*}
(\Ad(\gamma) - 1)X^-_- = (\Ad(\gamma) - 1)X^- - (\Ad(\gamma) - 1)X^-_+
\quad\text{lies in}\quad \\
\sbtl{\Lie(\vPrad^-)}x{\tau + \ord_{\gamma - 1}} = \sbtl{\Lie(\vPrad^-)}x{\tau + \ord_\gamma} = \sbtl{\Lie(\vPrad^-)}{\gamma\dota x}\tau.
\end{multline*}
Since, for every index \(v \in \R\) and \(j \in \sset{0, \dotsc, \ell}\), we have that \((\tau'_{v_0 a_0})_{v i j}\) is strictly less than \(\tau_{v i j}\) for some \(i \in \R\) with \(0 \le i \le r\), we have by Definitions \ref{defn:vGvr} and \ref{defn:depth-to-concave} that \(\sbtl{\Lie(\vPrad^-)}{\gamma\dota x}\tau\), and so \((\Ad(\gamma) - 1)X^-_-\), is contained in \(\sbtlp{\Lie(\vPrad^-)}{\gamma\dota x}{\tau'_{v_0 a_0}}\).  Since \(\Ad(\gamma) - 1\) agrees with \(\Ad(\gamma)\), and is therefore an isomorphism, as a map \anonmap{\sbat{\Lie(\vPrad^-)}x{\tau'_{v_0 a_0}}}{\sbat{\Lie(\vPrad^-)}{\gamma\dota x}{\tau'_{v_0 a_0}} = \sbat{\Lie(\vPrad^-)}x{\tau'_{v_0 a_0} + \ord_\gamma}}, the fact that \((\Ad(\gamma) - 1)X^-_-\) belongs to \(\sbtlp{\Lie(\vPrad^-)}{\gamma\dota x}{\tau'_{v_0 a_0}}\) means that \(X^-_-\) belongs to \(\sbtlp{\Lie(\vPrad^-)}x{\tau'_{v_0 a_0}}\), which equals \(\sbtl{\Lie(\vPrad^-)}x{(\tau'_{v_0 a_0}) + \varepsilon} = \sbtl{\Lie(\vPrad^-)}x{\tau'_{v_0(a_0 + \varepsilon)}}\) for all \(\varepsilon > 0\) sufficiently small.  This is a contradiction of the maximality of \(a_0\).

The argument for \(\Lie(\Prad^+)\) is similar, except that now \(\Ad(\gamma) - 1\) equals \(-1\), not \(\Ad(\gamma)\), as maps \anonmap{\sbat{\Lie(\vPrad^+)}x{\tau_{v_0 a_0}}}{\sbat{\Lie(\vPrad^+)}x{\tau_{v_0 a_0}}}.  This establishes \locpref{up-Lie}.

The argument for \locpref{bi-up} is as in \locpref{up-Lie}, using Hypothesis \initref{hyp:gp-gamma}\subpref{bi-Lie-Lie} instead of Hypothesis \initref{hyp:gp-gamma}\subpref{Lie}.  (We use the fact that the residual characteristic of \field is not \(2\) to see that, for every eigenvalue \(\lambda\) of \(\Ad(\gamma)\) on \(\Lie(\bG)_\sepfield\) that belongs to \(\sbjtlp{\field\twosup\sepsup\multsup}0\), the eigenvalue \(\lambda - \lambda\inv\) of \(\Ad(\gamma) - \Ad(\gamma)\inv\) has the same depth as \(\lambda - 1\) in the filtration of \(\sepfield\).)

For \locpref{onto}, our strategy is simpler conceptually than notationally, so we outline it before beginning.  We deal with the pieces (in terms of the decomposition described in Lemma \ref{lem:gen-Iwahori-factorisation}) in \(\Prad^-\), \(\Prad^+\), and \(M\) separately.  For the \(\Prad^-\) and \(\Prad^+\) pieces, working in turn, we show inductively that we can conjugate our original element \(g\) in such a way that that piece is pushed deeper and deeper, until, in the limit, we get rid of it entirely.  The difference for \(M\) is that there is no separate ``\(\CC\vG i(\gamma)\) piece'' and ``\(\CCp\vG i(\gamma)\) piece''; so, rather than getting rid of the \(M\) piece entirely, we push it closer and closer to \(\sbtl\vH x{\tilde\tau}\), until, in the limit, it lies in \(\sbtl\vH x{\tilde\tau}\).
We will use Lemma \initref{lem:filtration}\subpref{gp-gp} repeatedly, without explicit mention.

We begin by using \xcite{adler-spice:good-expansions}*{Lemma \xref{lem:concave-in-R}} to replace \(\tau\) and \(\tilde\tau\) by real-valued functions.
Note that, since \(\max \sset{\ord_{\gamma - 1}, \ord_{\gamma\inv - 1}}\) is everywhere positive, we have the inequality \(\tilde\tau \ge \tau\).

Fix \(h \in \sbtl\vG x\tau \cap \sbtl H x r\) and \(g \in \sbtl\vG x{\tilde\tau}\dotm h\gamma\sbtl\vG x{\tilde\tau}\).
Note that \(h\) normalises \(\sbtl{\vPrad^+}x{\tilde\tau}\), \(\sbtl{\vec M}x{\tilde\tau}\), and \(\sbtl{\vPrad^-}x{\tilde\tau}\) by Lemma \initref{lem:filtration}\subpref{gp-gp} and the inequality \(\tilde\tau \le \tau \cvvv \tilde\tau\); and that \(\Int(\gamma)\sbtl{\vPrad^+}x{\tilde\tau}\) is contained in \(\sbtl{\vPrad^+}x{\tilde\tau}\), \(\Int(\gamma)\sbtl{\vec M}x{\tilde\tau}\) equals \(\sbtl{\vec M}x{\tilde\tau}\), and \(\Int(\gamma)\inv\sbtl{\vPrad^-}x{\tilde\tau}\) is contained in \(\sbtl{\vPrad^-}x{\tilde\tau}\) by Hypothesis \initref{hyp:gp-gamma}\incpref{building}\subpref0.  Therefore, Lemma \ref{lem:gen-Iwahori-factorisation} gives that \(g\) belongs to \(\sbtl{\vPrad^+}x{\tilde\tau}\dotm\sbtl\vM x{\tilde\tau}\dotm h\gamma\sbtl{\vPrad^-}x{\tilde\tau}\).

We get rid of the \(\Prad^-\) piece first.
Starting with the base case \(\sbjhd{u^-}0 = 1\), suppose that \(a\) is a non-negative real number, and that there are \(\sbjhd{u^-}a \in \sbtl{\vPrad^-}x{\tilde\tau}\), \(m \in \sbtl\vM x{\tilde\tau}\), and \(\sbjat{u^-}a \in \sbtl{\vPrad^-}x{\tilde\tau + a}\) such that \(\Int(\sbjhd{u^-}a)g\) belongs to \(\sbtl{\vPrad^+}x{\tilde\tau}\dotm m h\gamma\sbjat{u^-}a\).  Put \(\sbjhdp{u^-}a = \sbjat{u^-}a\sbjhd{u^-}a\).  Then
\begin{multline*}
\Int(\sbjhdp{u^-}a)g
\quad\text{belongs to}\quad \\
\Int(\sbjat{u^-}a)(\sbtl{\vPrad^+}x{\tilde\tau}\dotm m h\gamma\sbjat{u^-}a)
= \Int(\sbjat{u^-}a)\sbtl{\vPrad^+}x{\tilde\tau}\dotm m h\gamma\dotm\Int(m h\gamma)\inv\sbjat{u^-}a.
\end{multline*}
The inequality \(\tilde\tau + a \le (\tau \cvvv \tilde\tau) + a = \tau \cvvv (\tilde\tau + a)\) gives that \(\sbtl\vM x\tau\) normalises \(\sbtl{\vPrad^-}x{\tilde\tau + a}\), so that \(\Int(m h)\inv\sbjat{u^-}a\) is contained in \(\sbtl{\vPrad^-}x{\tilde\tau + a}\).  Then Hypothesis \initref{hyp:gp-gamma}\incpref{building}\subpref0 gives that \(\Int(m h\gamma)\inv\sbjat{u^-}a\) is contained in \(\sbtl{\vPrad^-}x{\tilde\tau - \ord_\gamma + a} \subseteq \sbtlp{\vPrad^-}x{\tilde\tau + a}\).
The inequality \(\tilde\tau \ge \tau + \ord_{\gamma - 1}\) implies that \(\sbjat{u^-}a \in \sbtl{\vPrad^-}x{\tilde\tau + a}\) is contained in \(\sbtlp{\vPrad^-}x{\tau + a}\), so that its commutator with \(\sbtl{\vPrad^+}x{\tilde\tau}\) is contained in \(\sbtlp\vG x{(\tau + a) \cvvv \tilde\tau} \subseteq \sbtlp\vG x{\tilde\tau + a}\).  This gives that
\(\Int(\sbjat{u^-}a)\sbtl{\vPrad^+}x{\tilde\tau}\) is contained in \(\sbtl{\vPrad^+}x{\tilde\tau}\dotm\sbtlp\vG x{\tilde\tau + a}\).
Another application of Lemma \ref{lem:gen-Iwahori-factorisation} shows that \(\sbtlp\vG x{\tilde\tau + a}\) equals \(\sbtlp{\vPrad^+}x{\tilde\tau + a}\dotm\sbtlp\vM x{\tilde\tau + a}\dotm\sbtlp{\vPrad^-}x{\tilde\tau + a}\), so that \(\sbtlp\vG x{\tilde\tau + a}\dotm m h\gamma\) is contained in \(\sbtlp{\vPrad^+}x{\tilde\tau + a}\dotm\sbtlp\vM x{\tilde\tau + a}\dotm m h \gamma\sbtlp{\vPrad^-}x{\tilde\tau + a}\) and hence
\[
\Int(\sbjhdp{u^-}a)g
\quad\text{belongs to}\quad
\sbtl{\vPrad^+}x{\tilde\tau}\dotm\sbtl\vM x{\tilde\tau}\dotm h\gamma\sbtlp{\vPrad^-}x{\tilde\tau + a}.
\]
We replace \(\sbjhd{u^-}a\) by \(\sbjhdp{u^-}a\), and then \(a\) by \(a + \varepsilon\) for \(\varepsilon\) a sufficiently small positive real number, and iterate.  This process yields a Cauchy sequence in \(\sbtl{\vPrad^-}x{\tilde\tau}\), which has a limit by completeness, and whose limit \(u^-\) satisfies \(\Int(u^-)g \in \sbtl{\vPrad^+}x{\tilde\tau}\dotm\sbtl\vM x{\tilde\tau}\dotm h\gamma\).
The same reasoning, with \(\Prad^+\) in place of \(\Prad^-\), shows that there is an element \(u^+ \in \sbtl{\vPrad^+}x{\tilde\tau}\) such that \(\Int(u^+ u^-)g\) belongs to \(\sbtl\vM x{\tilde\tau}\dotm h\gamma\).

Now we may, and do, replace \(g\) by \(\Int(u^+ u^-)g\), and so assume that \(g\) belongs to \(\sbtl\vM x{\tilde\tau}\dotm h\gamma\).

We are at the final stage.  Starting with the base case \(\sbjhd m 0 = 1\), suppose that we have a non-negative real number \(a\) and an element \(\sbjhd m a \in \sbtl\vM x\tau\) such that \(\Int(\sbjhd m a)g\) belongs to \(\sbtl\vM x{\tilde\tau + a}\dotm\sbtl\vH x{\tilde\tau}\dotm h\gamma\).  If \(\Int(\sbjhd m a)g\) belongs to \(\sbtlp\vM x{\tilde\tau + a}\dotm\sbtl\vH x{\tilde\tau}\dotm h\gamma\), then put \(\sbjhdp m a = \sbjhd m a\).  Otherwise, we may, and do,  assume, after replacing \(\sbjhd m a\) by a suitable left \(\sbtl\vM x{\tau + a}\)-translate if needed, that there is some index \(i_0 \in \sbjtl\R 0\) such that
\(\Int(\sbjhd m a)g\) belongs to
\(\sbtlp\vM x{\tilde\tau + a}\dotm\sbtl\vH x{\tilde\tau}\dotm\sbtl{\CC\vG{i_0}(\gamma)}x{\tilde\tau + a}\dotm h\gamma\),
but there is no \(\sbjhdp m a \in \sbtl\vM x{\tau + a}\dotm\sbjhd m a\) such that \(\Int(\sbjhdp m a)g\) belongs to \(\sbtlp\vM x{\tilde\tau + a}\dotm\sbtl\vH x{\tilde\tau}\dotm\sbtl{\CCp\vG{i_0}(\gamma)}x{\tilde\tau + a}\dotm h\gamma\).  We will use this to obtain a contradiction.  (It therefore seems that this iteration, unlike the previous ones, never makes progress, since we either put \(\sbjhdp m a = \sbjhd m a\) or obtain a contradiction.  The progress is made when we adjust our choice of representative of the left \(\sbtl\vM x{\tau + a}\)-coset through \(\sbjhd m a\) to maximise the index \(i_0\).)

The inequalities \(\tilde\tau \ge \tau + \ord_{\gamma - 1}\) and \(\tilde\tau \le \tau \cvvv \tilde\tau\) imply that \(\tilde\tau + a - i_0\) is a grouplike depth specification for \(\CC\vbG{i_0}(\gamma)\) (though it might not be so for all of \bG).  Therefore, by Hypothesis \initref{hyp:gp-gamma}\subpref{gp}, there is an element \(\sbjat m a\) in \(\sbtl{\CC\vG{i_0}(\gamma)}x{\tilde\tau + a - i_0}\) such that
\[
\Int(\sbjhd m a)g\quad\text{belongs to}\quad
\sbtlp\vM x{\tilde\tau + a}\dotm\sbtl\vH x{\tilde\tau}\dotm\sbtl{\CCp\vG{i_0}(\gamma)}x{\tilde\tau + a}\dotm\comm\gamma{\sbjat m a\inv}\inv\dotm h\gamma.
\]
Put \(\sbjhdp m a = \sbjat m a\dotm\sbjhd m a\).

Another appeal to the inequality \(\tilde\tau \ge \tau + \ord_{\gamma - 1}\) gives that \(\sbjat m a\) belongs to \(\sbtl{\CC\vG{i_0}(\gamma)}x{\tau + a}\), hence to \(\sbtl\vM x{\tau + a}\).  In particular, \(\sbjat m a\) normalises \(\sbtlp\vM x{\tilde\tau + a}\), \(\sbtl\vH x{\tilde\tau}\), and \(\sbtl{\CCp\vG{i_0}(\gamma)}x{\tilde\tau + a}\).  Similarly, since \(i_0\) is less than \(r\), we have that the commutator of \(\sbjat m a \in \sbtl{\CC\vG{i_0}(\gamma)}x{\tilde\tau + a - i_0}\) with \(h \in \sbtl H x r\) lies in \(\sbtl{\CC\vG{i_0}(\gamma)}x{\tilde\tau + a - i_0 + r}\), hence in \(\sbtlp\vM x{\tilde\tau + a}\).  Therefore
\begin{multline*}
\Int(\sbjhdp m a)g = \Int(\sbjat m a)(\Int(\sbjhd m a)g)
\quad\text{belongs to}\quad \\
\sbtlp\vM x{\tilde\tau + a}\dotm\sbtl\vH x{\tilde\tau}\dotm\sbtl{\CCp\vG{i_0}(\gamma)}x{\tilde\tau + a}\dotm\comm\gamma{\sbjat m a}\dotm\sbtlp\vM x{\tilde\tau + a}h\dotm\Int(\sbjat m a)\gamma.
\end{multline*}
Since \(\comm\gamma{\sbjat m a}\) and \(\sbtl\vH x{\tilde\tau}\dotm\sbtl{\CCp\vG{i_0}(\gamma)}x{\tilde\tau + a}\) normalise \(\sbtlp\vM x{\tilde\tau + a}\), and since the commutator of \(\comm\gamma{\sbjat m a}\) with \(h\) lies in \(\sbtlp\vM x{\tilde\tau + a}\), we have that
\begin{multline*}
\Int(\sbjhdp m a)g\quad\text{belongs to}\quad \\
\sbtlp\vM x{\tilde\tau + a}\dotm\sbtl\vH x{\tilde\tau}\dotm\sbtl{\CCp\vG{i_0}(\gamma)}x{\tilde\tau + a}\dotm h\dotm\comm\gamma{\sbjat m a}\dotm\Int(\sbjat m a)\gamma,
\end{multline*}
that is, to
\(\sbtlp\vM x{\tilde\tau + a}\dotm\sbtl\vH x{\tilde\tau}\dotm\sbtl{\CCp\vG{i_0}(\gamma)}x{\tilde\tau + a}\dotm h\gamma\).  This is a contradiction of our assumption regarding the maximality of \(i_0\).

Thus, after all, we were in the case where we defined an element \(\sbjhdp m a\) such that \(\Int(\sbjhdp m a)g\) belonged to \(\sbtlp\vM x{\tilde\tau + a}\dotm\sbtl\vH x{\tilde\tau}\dotm h\gamma\).  Once again, we replace \(\sbjhd m a\) by \(\sbjhdp m a\) and then \(a\) by \(a + \varepsilon\), for \(\varepsilon\) a sufficiently small positive real number, and iterate.  This process yields a Cauchy sequence in \(\sbtl\vM x\tau\), which has a limit by completeness, and whose limit \(m\) satisfies \(\Int(m)g \in \sbtl\vH x{\tilde\tau}\dotm h\gamma\).
\end{proof}

\section{Asymptotic expansions of orbital integrals}
\label{sec:orbits}
\def\xtopic{spice:asymptotic}

Throughout \S\ref{sec:orbits}, we continue to use the
	non-Archimedean local field \field
and	reductive group \bG
from \S\ref{sec:hyps}.

Because our focus in \S\ref{sec:orbits} is almost entirely on the Lie algebra \(\Lie(\bG)\), we assume throughout \S\ref{sec:orbits} that \bG is connected.
	\project[But it makes perfectly good sense to consider orbital integrals for disconnected groups.  Worth saying?]

Because of issues such as the one identified in Example \ref{exa:not-most-singular}, we need to assume that \(\bG_\tamefield\) is split.

We assume that the characteristic of \(\sbjat\field 0\) is not \(2\),
although it seems that the only obstruction to handling the case of residual characteristic \(2\) is the requirement in \xcite{spice:asymptotic}*{Definition \xref{defn:Weil-index}} that \(L^\bullet\) be contained in \(2L\), and its effect on our main tool Proposition \ref{prop:lattice-orth} for dealing with Weil indices.

\numberwithin{thm}{subsection}
\subsection{Existence of asymptotic expansions}
\label{subsec:orbital-exists}

We begin by stating some Lie-algebra analogues of group results in \cite{spice:asymptotic}.  The proofs of the relevant group results in \cite{spice:asymptotic} proceeded by reduction to the Lie algebra, so the proofs in the Lie-algebra case are the same, \textit{mutatis mutandis}, where they are not easier.  Where there is no significant difference in the proof, we indicate only the statement.  Where the proof involves a significant simplification, we sketch that simplification.  The one exception is Theorem \ref{thm:sample-orb-to-orb'}, which is analogous to \xcite{spice:asymptotic}*{Theorem \xref{thm:isotypic-pi-to-pi'} and Corollary \xref{cor:isotypic-pi-to-pi'}} but, as directly involving Fourier transforms of orbital integrals rather than characters, requires a geometric rather than spectral approach.  We therefore give its proof in full.

We choose
	\begin{itemize}
	\item a real number \mnotn r,
	\item an element \(\matnotn{Gamma}\Gamma \in \Lie^*(G)\),
and	\item a semisimple element \(\matnotn{gamma}\gamma \in \Lie(G)\).
	\end{itemize}

We require throughout \S\ref{sec:orbits} that \(\gamma\) satisfy Hypothesis \ref{hyp:fc-building}, and that \xcite{spice:asymptotic}*{Hypothesis \initxref{hyp:Z*}(\subxref{central}, \subxref{good})} be satisfied with \(\Gamma\) in place of \(Z^*_o\).  This gives, in particular, that \(\matnotn{Gprime}{\bG\primeconn} \ldef \Cent_\bG(\Gamma)\conn\) is a tame, twisted Levi subgroup of \(\bG\conn\).  The same group is denoted by \(\bG'\) in \loccit, but we prefer to save that notation for a possibly larger group admitting \(\bG\primeconn\) as its identity component.  See Definition \ref{defn:G'}.  With an eye towards that eventual notation, we shall feel free to use \(\bG'\) in any context where the dependence is only through the identity component; for example, we shall write \(\Lie^*(\bG')\) in place of \(\Lie^*(\bG\primeconn)\).

Put \(\bH = \CC\bG r(\gamma)\).
We require throughout \S\ref{subsec:orbital-exists} that \(\bH\conn\) equal \(\Cent_\bG(\gamma)\conn\), but we will drop this assumption for much of \S\ref{subsec:dist-g-to-g'};
and
we require throughout \S\S\ref{subsec:orbital-exists}, \ref{subsec:dist-g-to-g'} that \(r\) be positive, but we will drop this assumption in \S\ref{subsec:quantitative}.

\begin{rem}
\label{rem:Lie-symmetry}
Since \(\Lie(\bH)\) equals \(\Cent_{\Lie(\bG)}(\gamma)\), and \(\Lie(\bG')\) equals \(\Cent_{\Lie(\bG)}(\Gamma)\), the containments \(\gamma \in \Lie(G')\) and \(\Gamma \in \Lie^*(H)\) are equivalent to \(\ad^*(\gamma)\Gamma = 0\), hence to each other.
\end{rem}

\begin{rem}
\label{rem:Gamma=0}
We may choose \(\Gamma = 0\), in which case \(\bG'\) equals \bG and \(r\) can be any (positive, in \S\S\ref{subsec:orbital-exists}, \ref{subsec:dist-g-to-g'}) real number for which \(\CC\bG r(\gamma)\conn\) equals \(\Cent_\bG(\gamma)\conn\).
\end{rem}

Note that \xcite{spice:asymptotic}*{Hypothesis \initxref{hyp:Z*}(\subxref{central}, \subxref{good})} are `algebraic', i.e., their validity is unaffected by base change to an algebraic closure.  Since \(\bG_\tamefield\), hence \(\bG\primeconn_\tamefield\), is split, Lemma \initref{lem:dual-MP-ascent}\subpref{tame-or-split} shows that any lower bounds on \(\Gamma + \sbjtlpp{\Lie^*(G')}{-r}\) are also algebraic, in the sense that this domain only grows if we replace \field by an algebraic extension of finite ramification degree.  We thus can, and often do, replace \field by a convenient extension whenever this does not affect the conclusion or the other hypotheses of a result.

\begin{lem}
\initlabel{lem:ss-of-dual-blob}
Fix \(X^* \in \Gamma + \sbjtlpp{\Lie^*(G')}{-r}\), and write \((X^*\semi, X^*\nilp)\) for a Jordan decomposition of \(X^*\), in the sense of Definition \ref{defn:dual-Jordan}.
\begin{enumerate}
\item\sublabel{full-rk}
\(X^*\semi\) is a semisimple element of \(\Lie^*(\bG')(\algfield)\).
\item\sublabel{fc}
\(\Cent_{\bG_\algfield}(X^*\semi)\conn\) is contained in \(\bG\primeconn_\algfield\).
\item\sublabel{roots}
There is a finite extension \(E/\field\) such that
	\begin{itemize}
	\item \(\Cent_{\bG\primeconn_E}(X^*\semi)\) contains a split, maximal torus \bT in \(\bG_E\),
	\item \(X^*\semi\) belongs to \(\Gamma + \sbjtlpp{\Lie^*(\bT)(E)}{-r}\),
and	\item \(\Root(\bG'_E, \bT)\) equals \(\set{\alpha \in \Root(\bG_E, \bT)}{\pair{X^*\semi}{\upd\alpha^\vee(1)} \in \sbjtlpp E{-r}}\).
	\end{itemize}
\end{enumerate}
\end{lem}

\begin{proof}
We may, and do, replace \field by finite extensions as appropriate.

We begin by showing that there is \emph{some} Jordan decomposition of \(X^*\) satisfying the desired properties, and then that \emph{all} semisimple parts of Jordan decompositions of \(X^*\) are \(\bG\primeconn(\algfield)\)-conjugate.

Put \(X\primedual = X^* - \Gamma\), so that \(X\primedual\) belongs to \(\sbjtlpp{\Lie^*(G')}{-r}\).  By
\cite{spice-tsai:jordan}*{Remark 3.4},
there is a Jordan decomposition \((X\primedual\semi, X\primedual\nilp)\) of \(X\primedual\) with \(X\primedual\semi\) a semisimple element of \(\Lie^*(\bG')(\algfield)\), so that \locpref{full-rk} holds.  In particular, \(\Cent_{\bG'_\algfield}(X\primedual\semi)\) contains a torus \(\bT_1\) that is maximal in \(\bG'_\algfield\), hence in \(\bG_\algfield\).  We may, and do, assume that \(X\primedual\semi\) belongs to \(\Lie^*(G')\), and that \(\bT_1\) descends to a split torus \bT over \field.

Lemma \ref{lem:dual-centre} gives that \((\Gamma + X\primedual\semi, X\primedual\nilp)\) is a Jordan decomposition of \(X^*\).  By Lemma \ref{lem:dual-contract}, there is a cocharacter \(\lambda\) of \(\Cent_{\bG\primeconn}(X\primedual\semi)_\algfield\) such that the limit \(\lim_{t \to 0} \Ad^*(\lambda(t))X\primedual\nilp\) in the Zariski topology equals \(0\).  We may, and do, assume that \(\lambda\) is a cocharacter of \(\Cent_{\bG\primeconn}(X\primedual\semi)\).  Then also the limit \(\lim_{t \to 0} \Ad^*(\lambda(t))X\primedual\nilp\) in the analytic topology equals \(0\).
Thus the limit \(\lim_{t \to 0} \Ad^*(\lambda(t))X\primedual\) in the analytic topology equals \(X\primedual\semi\).

In particular, \(X\primedual\semi\) belongs to the closure, in the analytic topology, of the \(\Cent_{G\primeconn}(X\primedual)\)-coadjoint orbit of \(X\primedual\).  Then the proof of \cite{adler-debacker:bt-lie}*{Lemma 3.3.8} carries through unchanged, after replacing \cite{adler-debacker:bt-lie}*{Corollary 3.2.6} by its dual-Lie-algebra analogue, to show that the depth of \(X\primedual\semi\) equals that of \(X\primedual\), and hence is strictly greater than \(-r\).  That is, \(X\primedual\semi\) belongs to \(\sbjtlpp{\Lie^*(G')}{-r}\).

Since \bT is a Levi subgroup of \(\bG\primeconn\), the dual-Lie-algebra version of \cite{adler-debacker:bt-lie}*{Lemma 3.5.3} gives that \(\Lie^*(T) \cap \sbjtlpp{\Lie^*(G')}{-r}\), which contains \(X\primedual\semi\), equals \(\sbjtlpp{\Lie^*(T)}{-r}\).

For every \(\alpha \in \Root(\bG, \bT)\), Lemma \initref{lem:dual-MP-by-cclat} gives that \(\pair{X\primedual\semi}{\upd\alpha^\vee(-1)}\) belongs to \(\sbjtlpp\field{-r}\).  It follows that the set \(\set{\alpha \in \Root(\bG, \bT)}{\pair{\Gamma + X\primedual\semi}{\upd\alpha^\vee(1)} \in \sbjtlpp\field{-r}}\), and its analogue constructed using \(\Gamma\) in place of \(X^*\semi\), are equal; but \xcite{spice:asymptotic}*{Hypothesis \initxref{hyp:Z*}(\subxref{good})} says that this latter set is precisely \(\Root(\bG', \bT)\).  This shows \locpref{roots} for our specific choice of Jordan decomposition of \(X^*\).

Finally, since \cite{borel:linear}*{Proposition 13.20} says that \(\Cent_\bG(X^*\semi)\conn\) is generated by \bT and those root subgroups correspoding to roots in \(\Root(\Cent_\bG(X^*\semi), \bT)\), and analogously for \(\bG\primeconn\), and since \locpref{roots} gives that \(\Root(\Cent_\bG(X^*\semi), \bT)\) is contained in \(\Root(\bG', \bT)\), we have shown \locpref{fc} for our specific choice of Jordan decomposition of \(X^*\).

In particular, \locpref{fc} shows that \(\Cent_\bG(\Gamma + X\primedual\semi)\conn\) is centralized by \(\Zent(\bG\primeconn)\), and \locpref{roots} shows that \(\Cent_\bG(\Gamma + X\primedual\semi)\conn\) contains \bT, hence \(\Zent(\bG\primeconn)\).  Thus \(\Zent(\bG\primeconn)\) is contained in \(\mc Z \ldef \Zent(\Cent_\bG(\Gamma + X\primedual\semi)\conn)\), so that \(\Cent_\bG(\mc Z)\conn\) is contained in \(\Cent_\bG(\Zent(\bG\primeconn))\conn = \bG\primeconn\).

So far we have worked with a preferred Jordan decomposition \((\Gamma + X\primedual\semi, X\primedual\nilp)\) of \(X^*\).  Now we need to handle the arbitrary Jordan decomposition \((X^*\semi, X^*\nilp)\).
We have by
\cite{spice-tsai:jordan}*{Theorem 8.4} that \(X^*\semi\) is \(\Cent_\bG(\mc Z)\conn(\algfield)\)-conjugate, hence \(\bG\primeconn(\algfield)\)-conjugate, to \(\Gamma + X\primedual\semi\); and it is easy to see that the desired properties are unaffected by such conjugacy.
\end{proof}

\begin{lem}
\label{lem:unique-approx*}
Suppose that \(\Gamma'\) is another element of \(\Lie^*(G)\) satisfying \xcite{spice:asymptotic}*{Hypothesis \initxref{hyp:Z*}(\subxref{central}, \subxref{good})}, with associated group \(\bG\twosup{\prime\prime}\connsup \ldef \Cent_\bG(\Gamma')\conn\).  If \((\Gamma + \sbjtlpp{\Lie^*(G')}{-r}) \cap (\Gamma' + \sbjtlpp{\Lie^*(G'')}{-r})\) is non-empty, then \(\bG\primeconn\) equals \(\bG\twosup{\prime\prime}\connsup\).
\end{lem}

\begin{proof}
Fix \(X^* \in (\Gamma + \sbjtlpp{\Lie^*(G')}{-r}) \cap (\Gamma' + \sbjtlpp{\Lie^*(G'')}{-r})\).
We may, and do, assume, upon replacing \field by a finite extension, that there is a Jordan decomposition \((X^*\semi, X^*\nilp)\) of \(X^*\) such that \(X^*\semi\) belongs to \(\Lie^*(G)\) and \(\Cent_\bG(X^*\semi)\) contains a maximal split torus \bT.
Lemma \initref{lem:ss-of-dual-blob}(\subref{fc}, \subref{roots}) shows that \bT is contained in \(\bG\primeconn \cap \bG\twosup{\prime\prime}\connsup\), and that \(\Root(\bG', \bT)\) and \(\Root(\bG'', \bT)\) both equal \(\set{\alpha \in \Root(\bG, \bT)}{\pair{X^*\semi}{\upd\alpha^\vee(1)} \in \sbjtlpp\field{-r}}\).  Since \(\bG\primeconn\) and \(\bG\twosup{\prime\prime}\connsup\) are both generated by \bT and the root subgroups for \bT in \bG corresponding to roots in this common set \cite{borel:linear}*{Proposition 13.20}, they are equal, as desired.
\end{proof}

Lemma \ref{lem:Lie*-orbit} discusses analogues of \xcite{spice:asymptotic}*{Hypothesis \initxref{hyp:Z*}(\subxref{orbit})}.

\begin{lem}
\initlabel{lem:Lie*-orbit}\hfill
\begin{enumerate}
\item\sublabel{ad*}
If \(g \in \Lie(G)\) is such that
\[
\ad^*(g)(\Gamma + \sbjtlpp{\Lie^*(G')}{-r}) \cap (\Gamma + \sbjtlpp{\Lie^*(G')}{-r})
\]
is non-empty, then \(g\) belongs to \(\Lie(G')\).
\item\sublabel{Ad*}
The groups
\[
\gNorm_G(\Gamma + \sbjtlpp{\Lie^*(G')}{-r})
\qandq
\gNorm_G(\Lie^*(G')^{G\primeconn} \cap (\Gamma + \sbjtlpp{\Lie^*(G')}{-r}))
\]
are both contained in \(\gNorm_G(\bG\primeconn)\), and both equal the set of elements \(g \in G\) such that
\[
\Ad^*(g)(\Gamma + \sbjtlpp{\Lie^*(G')}{-r}) \cap (\Gamma + \sbjtlpp{\Lie^*(G')}{-r})
\]
is non-empty.
\end{enumerate}
\end{lem}

\begin{proof}
Put \(\UU' = \Gamma + \sbjtlpp{\Lie^*(G')}{-r}\) and \(\mf z\primedual = \Lie^*(G')^{G\primeconn}\).

For \locpref{Ad*}, let us say that \(g \in G\) intertwines \(\UU'\) when \(\Ad^*(g)\UU' \cap \UU'\) is non-empty.  Certainly, every element of \(\gNorm_G(\UU')\) and every element of \(\gNorm_G(\mf z\primedual \cap \UU')\) intertwines \(\UU'\).

If \(g \in G\) intertwines \(\UU'\), then we have by Lemma \ref{lem:unique-approx*} that \(\Int(g)\bG\primeconn = \Cent_\bG(\Ad^*(g)\Gamma)\conn\) equals \(\bG\primeconn\); that is, \(g\) belongs to \(\gNorm_G(\bG\primeconn)\), and so normalises \(\sbjtlpp{\Lie^*(G')}{-r}\) and \(\mf z\primedual\).  Therefore, \((\Ad^*(g)\Gamma + \sbjtlpp{\Lie^*(G')}{-r}) \cap (\Gamma + \sbjtlpp{\Lie^*(G')}{-r}) = \Ad^*(g)\UU' \cap \UU'\) is non-empty, so \(\Ad^*(g)\Gamma - \Gamma \in \mf z\primedual\) belongs to \(\sbjtlpp{\Lie^*(G)}{-r} + \sbjtlpp{\Lie^*(G')}{-r}\).  In particular, for every \(x \in \BB(G')\), we have by Lemma \ref{lem:dual-MP-centre} that \(\Ad^*(g)\Gamma - \Gamma\) belongs to \(\sbtlpp{\Lie^*(G')}x{-r}\), and hence that \(\Ad^*(g)\Gamma + \sbtlpp{\Lie^*(G')}x{-r}\) equals \(\Gamma + \sbtlpp{\Lie^*(G')}x{-r}\).  Therefore, \(\Ad^*(g)\Gamma + \sbjtlpp{\Lie^*(G')}{-r} = \Ad^*(g)\UU'\) equals \(\UU'\).  That is, \(g\) belongs to \(\gNorm_G(\UU')\), and therefore, since it normalises \(\mf z\primedual\), also to \(\gNorm_G(\mf z\primedual \cap \UU')\).
This shows \locpref{Ad*}.

For \locpref{ad*}, we may, and do, assume, upon passing to a tame extension, that \(\bG\primeconn\) is a Levi subgroup of \(\bG\conn\).
Fix \(X^*\) in \(\Gamma + \sbjtlpp{\Lie^*(G')}{-r}\) such that \(\ad^*(g)X^*\) also belongs to \(\Gamma + \sbjtlpp{\Lie^*(G')}{-r}\).  In particular, \(\ad^*(g)X^*\) belongs to \(\Lie^*(G')\).

Write \(g = u^+ + g' + u^-\), where \(u^\pm\) belongs to \(\Lie(\Prad^\pm)\).  Then \(\ad^*(u^\pm)(\Gamma + \sbjtlpp{\Lie^*(G')}{-r})\) is contained in \(\Lie^*(\Prad^\pm)\), but \(\ad^*(u^+)X^* + \ad^*(g')X^* + \ad^*(u^-)X^* \in \ad^*(u^+)X^* + \ad^*(u^-)X^* + \Lie^*(G')\) belongs to \(\Lie^*(G')\); so \(\ad^*(u^\pm)X^*\) equal \(0\).

Suppose that \(u^+\) does not equal \(0\).  Choose \(x \in \BB(G')\) such that \(X^*\) belongs to \(\Gamma + \sbtlpp{\Lie^*(G')}x{-r}\), and write \(d = \depth_x(u^+)\) for the depth of \(u^+\) with respect to \(x\).  Then \(\ad^*(u^+)X^* = 0\) belongs to \(\ad^*(u^+)\Gamma + \sbtlpp{\Lie^*(\Prad^+)}x{d - r}\), so that \(\ad^*(u^+)\Gamma\) belongs to \(\sbtlpp{\Lie^*(\Prad^+)}x{d - r}\).  Remember that \(\bG'\) is split.  Let \bT be a split, maximal torus in \(\bG'\).  For every \(\alpha \in \Root(\bP^+, \bT) \setminus \Root(\bG', \bT)\), we have that the \(x\)-depth of the \(\alpha\)-component of \(u^+\) is the \(x\)-depth of the (\(-\alpha\))-component of \(\ad^*(u^+)\Gamma\), which is strictly greater than \(d - r\), minus the valuation of \(\pair\Gamma{\upd\alpha(1)}\), which is precisely \(-r\) by \xcite{spice:asymptotic}*{Hypothesis \initxref{hyp:Z*}(\subxref{good})}.  That is, each component of \(u^+\) lives at \(x\)-depth greater than \(d\), so \(u^+\) itself lives at \(x\)-depth greater than \(d\), which is a contradiction.

A similar argument shows that \(u^-\) equals \(0\).
\end{proof}

\begin{cor}
\label{cor:connect-G'}
The group \(G\primeconn\) stabilises \(\Gamma + \sbjtlpp{\Lie^*(G')}{-r}\).
\end{cor}

Lemma \ref{lem:disconnect-G'} provides the key to defining a suitable group \(\bG'\) whose identity component is \(\Cent_\bG(\Gamma)\conn\).  In fact, the actual definition (Definition \ref{defn:G'}) is anti-climactic, and just defines \(\bG'\) to be the group \(\bJ'\) already defined in Lemma \ref{lem:disconnect-G'}.  The only reason to wait to make the definition is that we want to know for sure that the identity component of the group that we want to denote by \(\bG'\) is the group already denoted by \(\bG\primeconn\).

\begin{lem}
\label{lem:disconnect-G'}
Let \bT be a maximal torus in \(\bG\primeconn\), and write \(\bar\Gamma\) for the element of \(\Hom_\Z(\cclat(\bT_\sepfield), \sepfield/\sbjtlpp{\field\sep}{-r})\) defined by \(\pair{\bar\Gamma}\lambda = \pair\Gamma{\upd\lambda(1)}\) for all \(\lambda \in \cclat(\bT_\sepfield)\).  There is a natural action of \(\Weyl(\bG, \bT)(\sepfield)\) on \(\Hom_\Z(\cclat(\bT_\sepfield), \sepfield/\sbjtlpp{\field\sep}{-r})\) by pre-composition.  Let \(\bJ'\) be the descent to \field of the subgroup of \(\bG_\sepfield\) generated by \(\bG\primeconn_\sepfield\) and the pullback to \(\gNorm_\bG(\bT)(\sepfield)\) of the subgroup of \(\Weyl(\bG, \bT)(\sepfield)\) that fixes \(\bar\Gamma\).

Then \(\bJ'\) is independent of the choice of \bT, has identity component \(\bG\primeconn\), and satisfies \(J' = \gNorm_G(\Gamma + \sbjtlpp{\Lie^*(G')}{-r})\).
\end{lem}

\begin{proof}
Independence of \bT follows from the fact that any two choices are \(\bG\primeconn(\sepfield)\)-conjugate.

Since everything is defined by descent from \sepfield, we may, and do, assume, upon replacing \field by a finite, separable extension, that \bT is split.  Then every element of \(\Weyl(\bG, \bT)(\sepfield)\) admits a lift to \(\gNorm_G(\bT)\).

Lemma \ref{lem:dual-MP-by-cclat} gives that the fixer in \(\gNorm_G(\bT)\) of \(\bar\Gamma\) is the stabiliser there of \(\Gamma + \sbjtlpp{\Lie^*(T)}{-r}\).

Put \(\UU' = \Gamma + \sbjtlpp{\Lie^*(G')}{-r}\).  Since \(\sbjtlpp{\Lie^*(T)}{-r}\) is contained in \(\sbjtlpp{\Lie^*(G')}{-r}\), and since Corollary \ref{cor:dual-centre} implies that \(\UU'\) equals \((\Lie^*(\bG')^{\bG\primeconn}(\field) \cap (\Gamma + \sbjtlpp{\Lie^*(G')}{-r})) + \sbjtlpp{\Lie^*(T)}{-r}\), Lemma \initref{lem:Lie*-orbit}\subpref{Ad*} gives that \(n \in \gNorm_G(\bT)\) fixes \(\bar\Gamma\) if and only if it belongs to \(\gNorm_G(\UU')\), in which case \(n\) belongs to \(\gNorm_G(\bG\primeconn)\).

That is, \(\bJ'\) is generated by \(\gNorm_G(\UU')\), which is contained in \(\gNorm_G(\bG\primeconn)\), and \(\bG\primeconn\).  In particular, \(\bJ'\) is contained in \(\gNorm_\bG(\bG\primeconn)\).  Since this latter group admits \(\bG\primeconn\) as its identity component (because \(\bG\primeconn\) contains a maximal torus in \bG), so does \(\bJ'\).  Thus every component of \(\bJ'\) contains an element of \(\gNorm_G(\UU')\), so \(J'\) is generated by \(\gNorm_G(\UU')\) and \(H\primeconn = G\primeconn\).  Finally, Corollary \ref{cor:connect-G'} gives that \(J'\) equals \(\gNorm_G(\UU')\).
\end{proof}

For almost all of \S\ref{sec:orbits}, we do not need the group \(\bG'\) defined in Definition \ref{defn:G'}; it is usually used only through its identity component \(\bG\primeconn = \Cent_\bG(\Gamma)\conn\).  Its only use here is for ensuring (thanks to Lemma \ref{lem:disconnect-G'}) that sums like the one appearing in Theorem \ref{thm:orb-to-orb'} have no `overlap'; that is, in the notation of that theorem, that no \(H\conn\)-orbit can occur as a summand corresponding to two different \((G', H\conn)\)-double cosets.

\begin{defn}
\label{defn:G'}
Write \(\bG'\) for the group \(\bJ'\) defined in Lemma \ref{lem:disconnect-G'}.
\end{defn}

In many situations, the element \(\bar\Gamma\) of Lemma \ref{lem:disconnect-G'} will take values in \(\sbjat{\field\sep}{-r}\).  Example \ref{exa:GE2} shows that this need not always happen.

\begin{exa}
\label{exa:GE2}
Let \(p\) be a prime, possibly \(p = 2\).  Let \field be an unramified extension of \(\Q_p\) that is sufficiently large that there are \(p\) elements \(a_1, \dotsc, a_p\) in \(\sbjtl\field 0\) whose sum is \(-1\), and whose images in \(\sbjat\field 0\) are distinct.  (If \(p\) equals \(2\), then we can take \(\field = \Q_2\).  Otherwise, any proper unramified extension will do.)  Put \(E = \field[\sqrt[p]p]\).

Label the weights of the diagonal maximal torus in \(\GL_{p, E}\) in the defining representation by \(\epsilon_1, \dotsc, \epsilon_p\), in the obvious order, and write \(\epsilon_1^\vee, \dotsc, \epsilon_p^\vee\) for the elements of the dual basis of cocharacters of the diagonal maximal torus.

Now write \bG for the Weil restriction \(\operatorname{Res}_{E/\field}\PGL_{p, E}\), \(\bB^+\) for the upper-triangular Borel subgroup of \bG, and \bT for the diagonal maximal torus in \bG.  Upon choosing an almost-simple component \(\bG_1\) of \(\bG_E\), we may, and do, regard \(\epsilon_1^\vee, \dotsc, \epsilon_p^\vee\) as cocharacters of \(\bG_1 \cap \bT_E\), no longer \Z-linearly independent but now satisfying \(\epsilon_1^\vee + \dotsb + \epsilon_p^\vee = 0\), that generate \(\cclat(\bT_E)\) as a \(\Z[\Gal(E/\field)]\)-module.

There is a unique element \(\Gamma\) of \(\Lie^*(T)\) such that \(\pair\Gamma{\upd\epsilon_i^\vee(1)}\) equals \(p\inv + a_i\) for all \(i \in \sset{1, \dotsc, p}\).  Then \(\pair\Gamma{\upd(\epsilon_i^\vee - \epsilon_j^\vee)(1)}\) equals \(a_i - a_j\), and hence has valuation \(0\), for every \(i, j \in \sset{1, \dotsc, p}\) with \(i \ne j\).  That is, \(\Gamma\) satisfies \xcite{spice:asymptotic}*{Hypothesis \initxref{hyp:Z*}(\subxref{central}, \subxref{good})}, with associated group \(\bG\primeconn = \bT\) (with \(r = 0\), or this example can be adapted so that \(r\) is strictly positive by replacing \(\Gamma\) by an appropriate scalar multiple).

Write \(\tr_{E/\field}\) for the natural map \anonmap{\Lie(\bT)(E)}{\Lie(T)}.
Since \bT is induced, there is only one choice of admissible filtration \cite{kaletha-prasad:bt-theory}*{Proposition B.10.5}.  Namely, for all \(d \in \tR\), we have that \(\sbjtl{\Lie^*(T)}d\) is spanned, as a \(\sbjtl\field 0\)-module, by \(\bigcup_{i = 1}^p \tr_{E/\field}(\upd\epsilon_i^\vee(\sbjtl E d))\).  In particular, since \(\pair\Gamma{\tr_{E/\field}(\upd\epsilon_i^\vee(t))}\) equals \(\tr_{E/\field}((p\inv + a_i)t)\), which lies in \(\sbjtlp\field 0\) for all \(t \in \sbjtlp E 0\) and equals \(1 + a_i p \notin \sbjtlp\field 0\) for \(t = 1 \in \sbjtl E 0\), we have that \(\Gamma\) belongs to \(\sbjtl{\Lie^*(T)}0 \setminus \sbjtlp{\Lie^*(T)}0\).

There is no contradiction of \cite{yu:supercuspidal}*{Lemma 8.1} in the fact that \(\Gamma\) does not also satisfy \cite{yu:supercuspidal}*{\S8, \textbf{GE2}}, since the residual characteristic \(p\) of \field is a torsion prime for the dual absolute root datum of \bG.  However, we note that \cite{yu:supercuspidal}*{\S8, \textbf{GE2}} cannot even be \emph{stated}, since, upon taking \(\varpi_r = 1\), we have that the image in \(\clat(\bT_\sepfield) \otimes_{\mathbb Z} \field\) of \(\varpi_r\Gamma = \Gamma \in \Lie^*(T)\) does not belong to \(\clat(\bT_\sepfield) \otimes_{\mathbb Z} \sbjtl{\field\alg}0\) (because, for all \(i \in \sset{1, \dotsc, p}\), we have that \(\pair\Gamma{\upd\epsilon_i^\vee(1)} = p\inv + a_i\) does not lie in \(\sbjtl{\field\alg}0\)).
\end{exa}

\begin{rem}
\label{rem:GE2}
Preserve the notation of Lemma \ref{lem:disconnect-G'}.
If \(\bar\Gamma\) takes values in \(\sbjat{\field\sep}{-r}\) (for example, if \bG is simply connected), then we may choose a valuation-\(r\) element \(\varpi_r \in \sepfield\), and consider \(\varpi_r\bar\Gamma\) as an element of \(\Hom_\Z(\cclat(\bT_\sepfield), \sbjat{\field\sep}0)\).  Then we are in the setting of \cite{yu:supercuspidal}*{\S8}, so that, for example, \loccit*{\textbf{GE2}} holds if and only if \(\bJ'\) is connected.
\end{rem}

%

For Theorem \ref{thm:orb-asymptotic-exists}, we will have to assume that
Hypothesis \ref{hyp:Lie-gamma} is satisfied by \((\gamma, x, r)\)
for \emph{all} \(x \in \BB(H)\).
Until then, we fix
a point \(\mnotn x \in \BB(H)\) such that
\((\gamma, x, r)\) satisfies Hypothesis \ref{hyp:Lie-gamma}.

\begin{lem}[\xcite{spice:asymptotic}*{Lemma \xref{lem:fa-explicit}}]
\initlabel{lem:fa-explicit}
For every
	\begin{itemize}
	\item nested, tame, twisted Levi sequence \(\vbG = (\bG^0 \subseteq \dotsb \subseteq \bG^{\ell - 1} \subseteq \bG^\ell = \bG)\) in \bG such that \(\gamma\) belongs to \(\Lie(G^0)\) and \(x\) belongs to \(\BB(G^0)\),
	\item concave depth vector \(\vec a\),
and	\item \(h \in \sbtl{\Lie(H)}x r\),
	\end{itemize}
we have that
\begin{align*}
\uint_{\gamma + h + \sbtl{\Lie(\vG)}x{\vec a + r}} &F(g)\upd g \\
\intertext{equals}
\uint_{\sbtl{(\vH, \vG)}x{(\Rp{\vec a}, \vec a + r - \ord_\gamma)}} \uint_{\gamma + h + \sbtl{\Lie(\vH)}x{\vec a + r}} &F(\Int(g)m)\upd m\,\upd g
\end{align*}
for all \(F \in \Hecke(\Lie(G))\).
\end{lem}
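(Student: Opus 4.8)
The plan is to follow the proof of \xcite{spice:asymptotic}*{Lemma \xref{lem:fa-explicit}}, exploiting the simplifications the Lie-algebra setting affords. The first, preparatory step is to record that, since $\bH\conn = \Cent_\bG(\gamma)\conn$, Lemma \ref{lem:Lie-cent} gives $\Lie(\bH) = \Cent_{\Lie(\bG)}(\gamma)$, so $\ad(\gamma)$ is invertible on the $\gamma$-stable (hence Galois-stable, hence $\field$-rational) complement $\Lie(\bH)^\perp$; moreover all eigenvalues of $\ad(\gamma)$ on $\Lie(\bH)^\perp$ have valuation $< r$, for otherwise the definition of $\CC\bG r(\gamma)$ would place the offending eigenspace in $\Lie(\bH)$. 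After a harmless finite tame base change we may diagonalise $\ad(\gamma)$, and the key observation appears: because $\gamma$ lies in the \emph{Lie algebra}, the eigenvalues of $\ad(\gamma)$ on the $\alpha$- and $(-\alpha)$-parts of $\Lie(\bH)^\perp$ have equal valuation (they are $\pm\pair\alpha\gamma$), so $\ad(\gamma)$ restricts to an isomorphism of the transverse part of $\sbtl{(\vH, \vG)}x{(\Rp{\vec a}, \vec a + r - \ord_\gamma)}$ — the summand indexed by $\vec a + r - \ord_\gamma$ — onto $\Lie(\bH)^\perp \cap \sbtl{\Lie(\vG)}x{\vec a + r}$. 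This single $\ord_\gamma$, rather than the pair $\ord_\gamma$, $\ord_{\gamma\inv}$ forced on the group side, is exactly what dictates the transverse depth in the statement. Compatibly I would record $\sbtl{\Lie(\vG)}x{\vec a + r} = \sbtl{\Lie(\vH)}x{\vec a + r} \oplus \bigl(\Lie(\bH)^\perp \cap \sbtl{\Lie(\vG)}x{\vec a + r}\bigr)$, the second summand adapted to $\vG$ exactly as in \xcite{spice:asymptotic}*{Definition \xref{defn:tame-Levi}}, so that the twisted-Levi-sequence refinement is handled layer by layer, as in \cite{spice:asymptotic}.

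Next I would analyse the conjugating subgroup $K \ldef \sbtl{(\vH, \vG)}x{(\Rp{\vec a}, \vec a + r - \ord_\gamma)}$, which factors as the product of its transverse part and $K_\bH \ldef \sbtl{\vH}x{\Rp{\vec a}}$. Since $K_\bH \subseteq \Cent_\bG(\gamma)\conn$ fixes $\gamma$ under $\Ad$, and $\Ad(K_\bH)$ moves $h$ and the lattice $\sbtl{\Lie(\vH)}x{\vec a + r}$ only within that lattice, $K_\bH$ stabilises the coset $\gamma + h + \sbtl{\Lie(\vH)}x{\vec a + r}$. Conversely, writing a coset-stabilising $g \in K$ as $g_\perp m$ with $m \in K_\bH$, one has $(\Ad(g) - 1)\gamma = (\Ad(g_\perp) - 1)\gamma$, whose transverse leading term is $-\ad(\gamma)\log g_\perp$, sitting $\ord_\gamma$ below the depth of $\log g_\perp$; because $r > \ord_\gamma$ the remaining terms (higher commutators, and the contributions of $h$ and of the $\sbtl{\Lie(\vH)}x{\vec a + r}$-part) are strictly deeper, so a downward successive approximation forces $\log g_\perp = 0$, i.e.\ $g \in K_\bH$. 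Thus $K_\bH$ is precisely the stabiliser in $K$ of the coset — the easy case of an orbit statement like Lemma \ref{lem:Lie-orbit}, available here without Hypothesis \ref{hyp:Lie-gamma} thanks to $\bH\conn = \Cent_\bG(\gamma)\conn$. The same leading-term analysis, now run for surjectivity, shows that $g \mapsto \Int(g)m$ sweeps $\gamma + h + \sbtl{\Lie(\vH)}x{\vec a + r}$ onto $\gamma + h + \sbtl{\Lie(\vG)}x{\vec a + r}$ as $g$ ranges over the transverse part of $K$ (the successive-approximation step is as in the proof of Lemma \ref{lem:commute-gp}, but simpler, $\ad(\gamma)$ replacing $\Ad(\gamma) - 1$); combining this with the stabiliser computation, $(g, m) \mapsto \Int(g)m$ is a bijection from the transverse part of $K$ times $\gamma + h + \sbtl{\Lie(\vH)}x{\vec a + r}$ onto $\gamma + h + \sbtl{\Lie(\vG)}x{\vec a + r}$.

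Finally I would match the normalising constants. By Fubini and the factorisation of $K$, the right-hand side equals $\meas(K_\bH)\inv$ times the iterated integral over the transverse part of $K$ and over $\gamma + h + \sbtl{\Lie(\vH)}x{\vec a + r}$; this $\meas(K_\bH)\inv$ cancels the volume contributed by the stabiliser in the bijection above. The residual change of variables $g \mapsto \Int(g)m$ is inner, hence unimodular on $\Lie(G)$, and its Jacobian in the $g$-direction is $\ad(\gamma)|_{\Lie(\bH)^\perp}$ up to strictly deeper corrections; since $\ad(\gamma)$ carries the transverse part of $K$ isomorphically onto $\Lie(\bH)^\perp \cap \sbtl{\Lie(\vG)}x{\vec a + r}$, one gets $\meas(\text{transverse part of }K)\cdot\abs{\det \ad(\gamma)|_{\Lie(\bH)^\perp}} = \meas(\sbtl{\Lie(\vG)}x{\vec a + r})/\meas(\sbtl{\Lie(\vH)}x{\vec a + r})$, which is exactly the constant needed for the two normalised integrals to agree, as in the group case. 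I expect the real work to be concentrated here — confirming that $K_\bH$ is the \emph{full} stabiliser and that the Jacobian and volume factors cancel to $1$ — together with checking that the twisted-Levi-sequence decompositions are compatible with all of the above; but, as the paper's overall strategy predicts, each of these is either formal or strictly easier than its counterpart in \xcite{spice:asymptotic}*{Lemma \xref{lem:fa-explicit}}.
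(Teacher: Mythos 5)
Your proposal is correct and follows exactly the approach the paper intends: it transcribes the group-case proof of \xcite{spice:asymptotic}*{Lemma \xref{lem:fa-explicit}} to the Lie algebra, and the central observation---that on the Lie algebra the $\alpha$- and $(-\alpha)$-eigenvalues of $\ad(\gamma)$ have equal valuation, so a single depth shift $\ord_\gamma$ suffices and no auxiliary Levi $\bM = \CC\bG 0(\gamma)$ is needed---is precisely the simplification the paper flags when saying the Lie-algebra proofs are ``the same, \textit{mutatis multandis}, where they are not easier.'' The stabiliser/surjectivity and Jacobian steps are sketched at the right level of detail and match the group-case argument (with $\ad(\gamma)$ replacing $\Ad(\gamma) - 1$), so no substantive gap remains.
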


Although we are assuming in \S\ref{sec:orbits} that \bG is connected, we state Lemma \ref{lem:centre} in terms of \(G\conn\) in order better to parallel \xcite{spice:asymptotic}*{Lemma \xref{lem:centre}}.
The notation \(\sbtl{\Lie(\vH, \vG)}x{(\Rp{\vec a}, \vec a) + r}\) in Lemma \ref{lem:centre} indicates the sublattice associated to the depth specification \((t_{v i j})_{v, i, j}\), where, for all \(v \in \R\) and \(j \in \sset{0, \dotsc, \ell}\), we put \(t_{v i j} = a_j\) for all \(i \in \R\) with \(i < r\), and \(t_{v r j} = \Rp{a_j}\).

\begin{lem}[\xcite{spice:asymptotic}*{Lemma \xref{lem:centre}}]
\initlabel{lem:centre}
If \(T\) is a \(G\conn\)-invariant distribution on \(\Lie(G)\), then there is a unique \(H\conn\)-invariant distribution \(T_\gamma\) on \(\Lie(H)\), supported by \(\gamma + \sbjtl{\Lie(H)}r\), such that, for all tame, twisted Levi sequences \vbG containing \(\gamma\) and all concave depth vectors \(\vec a\), we have that
\[
T_\gamma\bigl(\gamma + h + \chrc{\sbtlp{\Lie(\vH)}x{\vec a + r}}\bigr)
\qeqq
T\bigl(\gamma + g + h + \chrc{\sbtlp{\Lie(\vG)}x{\vec a + r}}\bigr)
\]
for all \(h \in \sbtl{\Lie(H)}x r\) and \(g \in \sbtl{\Lie(\vH, \vG)}x{(\Rp{\vec a}, \vec a) + r}\).
\end{lem}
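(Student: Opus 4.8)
The plan is to transfer the proof of \xcite{spice:asymptotic}*{Lemma \xref{lem:centre}} to the Lie algebra, where it runs with no new ideas and, the relevant conjugation being the honestly linear \(\Ad\) rather than a mock-exponential, with a little less friction. First I would dispose of \emph{uniqueness}. As \(h\) ranges over \(\sbtl{\Lie(H)}x r\), \(\vec a\) over concave depth vectors, and \(\vbG\) over nested, tame, twisted Levi sequences containing \(\gamma\) with \(x \in \BB(G^0)\), the lattices \(\sbtlp{\Lie(\vH)}x{\vec a + r}\) shrink to \(0\) as \(\vec a \to \infty\), so the normalised indicators \(\gamma + h + \chrc{\sbtlp{\Lie(\vH)}x{\vec a + r}}\), together with their \(H\conn\)-translates, span the subspace of \(\Hecke(\Lie(H))\) consisting of functions supported by \(\gamma + \sbjtl{\Lie(H)}r\): any such function has compact support, hence is a finite linear combination of indicators of cosets of small lattices, and every such coset is an \(H\conn\)-translate of one based at \(x\). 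Thus an \(H\conn\)-invariant distribution supported by \(\gamma + \sbjtl{\Lie(H)}r\) is determined by its values on the functions \(\gamma + h + \chrc{\sbtlp{\Lie(\vH)}x{\vec a + r}}\), and the displayed conditions pin those down.

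For \emph{existence} I would \emph{define} \(T_\gamma\) to vanish on functions supported off \(\gamma + \sbjtl{\Lie(H)}r\), and to take the value \(T(\gamma + g + h + \chrc{\sbtlp{\Lie(\vG)}x{\vec a + r}})\) on \(\gamma + h + \chrc{\sbtlp{\Lie(\vH)}x{\vec a + r}}\), and then check that this is consistent and extends linearly. The substantive point is well-definedness: the right-hand side must not depend on the choice of \(g \in \sbtl{\Lie(\vH, \vG)}x{(\Rp{\vec a}, \vec a) + r}\), and two presentations of the same function — obtained by refining \(\vbG\) or enlarging \(\vec a\) — must give the same value. Both are handled by Lemma \ref{lem:fa-explicit} together with the \(G\conn\)-invariance of \(T\): Lemma \ref{lem:fa-explicit} expresses integration over the tube \(\gamma + h + \sbtlp{\Lie(\vG)}x{\vec a + r}\) as an average, over a compact family of \(\Int(\anondot)\)-conjugators drawn from \(\sbtl{(\vH, \vG)}x{(\Rp{\vec a}, \vec a + r - \ord_\gamma)} \subseteq G\conn\), of integration over the slice \(\gamma + h + \sbtlp{\Lie(\vH)}x{\vec a + r}\); since \(T\) is constant on \(G\conn\)-orbits, pairing \(T\) against these identities collapses the average and shows that the prescribed value is unchanged under the allowed variation of \(g\) and under passage to a common refinement. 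Linearity of \(T_\gamma\) on the spanning set, hence on all of \(\Hecke(\Lie(H))\), is then formal, and \(T_\gamma\) is a distribution, manifestly supported by \(\gamma + \sbjtl{\Lie(H)}r\); running the same analysis with an arbitrary point of \(\BB(H)\) in place of \(x\) shows, moreover, that \(T_\gamma\) does not depend on the auxiliary choice of \(x\).

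Granting that independence, \(H\conn\)-invariance of \(T_\gamma\) is immediate: for \(h_0 \in H\conn \subseteq \Cent_G(\gamma)\conn\), conjugation by \(h_0\) fixes \(\gamma\) and carries the slice and tube lattices based at \(x\) to those based at \(h_0\dota x\), so the recipe computing \(T_\gamma^{\Int(h_0)}\) at \(x\) is the recipe computing \(T_\gamma\) at \(h_0\dota x\), which returns \(T_\gamma\). The \textbf{main obstacle} is the well-definedness in the middle paragraph: verifying that Lemma \ref{lem:fa-explicit} really does sweep the tube \(\gamma + h + \sbtlp{\Lie(\vG)}x{\vec a + r}\) out of conjugated slices with matching measures, and that refinements of \(\vbG\) and \(\vec a\) are compatible, is where all the filtration arithmetic lives — but it is exactly the argument used on the group in \cite{spice:asymptotic}, transported verbatim.
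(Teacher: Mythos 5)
Your proposal is correct and follows the same route the paper intends: the paper explicitly declares (beginning of \S\ref{subsec:orbital-exists}) that Lemma \ref{lem:centre} and its neighbours are proved by transporting the arguments of \cite{spice:asymptotic} to the Lie algebra \textit{mutatis mutandis}, and your argument does precisely that — uniqueness via the fact that the normalised indicators of the slice lattices (together with $H\conn$-translates, shrinking $\vec a \to \infty$) span the relevant test-function space, existence via well-definedness furnished by Lemma \ref{lem:fa-explicit} and the $G\conn$-invariance of $T$, and $H\conn$-invariance via independence of the auxiliary point $x$. You also correctly flag that the conjugators of Lemma \ref{lem:fa-explicit} lie in $G\conn$ (automatic, since Moy--Prasad filtration subgroups sit inside the parahoric, hence in $G\conn$, and in fact \bG is assumed connected throughout \S\ref{sec:orbits}), and that the linearity of $\Ad$ removes the mock-exponential friction present on the group.
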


\begin{cor}[\xcite{spice:asymptotic}*{Corollary \xref{cor:centre}}]
\label{cor:centre}
With the notation of Lemma \ref{lem:centre}, for every \(a \in \tR\) with \(r \le a < \infty\) and every \(X^* \in \sbtl{\Lie^*(H)}x{-a}\), we have that
\[
T_\gamma\bigl(\gamma + h + \chrc{\sbtl{\Lie(H)}x a, \contra{\AddChar_{X^*}}}\bigr)
\qeqq
T\bigl(\gamma + h + \chrc{\sbtl{\Lie(G)}x a, \contra{\AddChar_{X^*}}}\bigr)
\]
for all \(h \in \sbtl{\Lie(H)}x r\).
\end{cor}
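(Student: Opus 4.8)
The argument is the Lie-algebra transcription of that of \xcite{spice:asymptotic}*{Corollary \xref{cor:centre}}: expand each twisted characteristic function into a finite linear combination of translates of characteristic functions of Moy--Prasad lattices, and apply Lemma \ref{lem:centre} term by term. Since $X^*$ lies in $\sbtl{\Lie^*(H)}x{-a}$, its extension by $0$ across $\Lie(H)^\perp$ lies in $\sbtl{\Lie^*(G)}x{-a}$ by the compatibility of Moy--Prasad filtrations on dual Lie algebras of tame, twisted Levi subgroups (Remark \initref{rem:fc-building}\subpref{Lie*}); hence $\AddChar_{X^*}$ is constant on cosets of $\sbtlp{\Lie(G)}x a$, and it is \emph{trivial} on $\Lie(H)^\perp$, since $X^*$ annihilates $\Lie(H)^\perp$.

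Choose coset representatives $R = R_H + R_\perp$ for $\sbat{\Lie(G)}x a$ inside $\sbtl{\Lie(G)}x a$, where $R_H \subseteq \sbtl{\Lie(H)}x a$ maps bijectively onto $\sbat{\Lie(H)}x a$ and $R_\perp \subseteq \Lie(H)^\perp \cap \sbtl{\Lie(G)}x a$ maps bijectively onto the image of $\Lie(H)^\perp$ in $\sbat{\Lie(G)}x a$; such a choice exists because the Moy--Prasad filtrations respect the decomposition $\Lie(G) = \Lie(H) \oplus \Lie(H)^\perp$, which is exactly the structure already underlying Lemma \ref{lem:centre}. Taking the trivial twisted Levi sequence $\vG = (\bG)$ and the concave depth vector $\vec a = a - r \ge 0$, so that $\sbtlp{\Lie(G)}x a = \sbtlp{\Lie(\vG)}x{\vec a + r}$, one has
\[
\chrc{\sbtl{\Lie(G)}x a, \contra{\AddChar_{X^*}}}
= \frac1{\card{\sbat{\Lie(G)}x a}}\sum_{Y \in R} \contra{\AddChar_{X^*}}(Y)\,\chrc{Y + \sbtlp{\Lie(G)}x a},
\]
together with the analogous identity on $\Lie(H)$ with $R$ replaced by $R_H$ and $\bG$ by $\bH = \CC\bG r(\gamma)$.

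Translating by $\gamma + h$ and applying $T$ to the $G$-identity, one evaluates each term by Lemma \ref{lem:centre}: for $Y = Y_H + Y_\perp \in R$, the element $h + Y_H$ lies in $\sbtl{\Lie(H)}x r$ (as $a \ge r$) and $Y_\perp$ lies in the complementary piece $\sbtl{\Lie(\vH, \vG)}x{(\Rp{\vec a}, \vec a) + r}$, so with this $\vec a$, with $g = Y_\perp$, and with $h + Y_H$ in the role of $h$, Lemma \ref{lem:centre} gives $T(\gamma + h + Y + \chrc{\sbtlp{\Lie(G)}x a}) = T_\gamma(\gamma + h + Y_H + \chrc{\sbtlp{\Lie(H)}x a})$, whose value depends on $Y$ only through $Y_H$. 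Summing against $\contra{\AddChar_{X^*}}(Y)$ and grouping by $Y_H$, the inner sum over $Y_\perp \in R_\perp$ equals $\card{R_\perp}\,\contra{\AddChar_{X^*}}(Y_H)$ because $\AddChar_{X^*}$ is trivial on $\Lie(H)^\perp$; since $\card{\sbat{\Lie(G)}x a} = \card{\sbat{\Lie(H)}x a}\cdot\card{R_\perp}$, the prefactor collapses to $1/\card{\sbat{\Lie(H)}x a}$, and comparing with the expansion of $\chrc{\sbtl{\Lie(H)}x a, \contra{\AddChar_{X^*}}}$ yields the asserted equality.

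The only points requiring care are the dual-Lie-algebra filtration compatibility used to place the extension of $X^*$ in $\sbtl{\Lie^*(G)}x{-a}$, the index and measure bookkeeping across $\Lie(G) = \Lie(H) \oplus \Lie(H)^\perp$, and checking that the elements fed into Lemma \ref{lem:centre} land in the prescribed filtration ranges; none is a genuine obstacle, the whole argument being word-for-word that of the group case \xcite{spice:asymptotic}*{Corollary \xref{cor:centre}}.
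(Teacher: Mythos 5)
Your proof is correct, and it is the expected Lie-algebra transcription of the group-case argument for \xcite{spice:asymptotic}*{Corollary \xref{cor:centre}}: expand the weighted characteristic function into a finite sum over coset representatives of $\sbat{\Lie(G)}x a$ chosen compatibly with the decomposition $\Lie(G) = \Lie(H) \oplus \Lie(H)^\perp$ (which is available by Hypothesis \ref{hyp:fc-building} and Remark \initref{rem:fc-building}\subpref{Lie}), apply Lemma \ref{lem:centre} with the trivial sequence $\vbG = (\bG)$, concave depth vector $a - r$, $g = Y_\perp$, and $h + Y_H$ in the role of $h$, and then collapse the sum over $Y_\perp$ using that $X^*$ annihilates $\Lie(H)^\perp$. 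The index bookkeeping $\card{\sbat{\Lie(G)}x a} = \card{\sbat{\Lie(H)}x a} \cdot \card{R_\perp}$ closes the argument exactly as you say.
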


\begin{defn}[\xcite{spice:asymptotic}*{Definition \xref{defn:centred-char}}]
\label{defn:centred-orbit}
If \(\xi\) belongs to \(\Lie^*(H)\), then we define \matnotn{Ocheck}{\widecheck\muhat^G_{\xi, \gamma}} and \matnotn{Ocheck}{\widecheck\muhat^G_{\xi, \gamma, \Gamma}} to be the distributions on \(\Lie^*(H)\) given for all \(f^* \in \Hecke(\Lie^*(H))\) by
\begin{align*}
\widecheck\muhat^G_{\xi, \gamma}(f^*)        & {}= \muhat^G_{\xi, \gamma}\bigl(\gamma + \widecheck f^*) \\
\intertext{and}
\widecheck\muhat^G_{\xi, \gamma, \Gamma}(f^*) & {}= \widecheck\muhat^G_{\xi, \gamma}(f^*_\Gamma),
\end{align*}
where we have introduced the \textit{ad hoc} notation \(f^*_\Gamma\) for the function that vanishes outside \(\Lie^*(H) \cap \Ad^*(H\conn)(\Gamma + \sbjtlpp{\Lie^*(G')}{-r})\), and agrees with \(f^*\) on that domain; and where \(\muhat^G_{\xi, \gamma}\) is the distribution \(T_\gamma\) deduced from \(T = \muhat^G_\xi\) in Lemma \ref{lem:centre}.
\end{defn}

\begin{rem}
\label{rem:wrong-descent}
With the notation of Definition \ref{defn:centred-orbit}, it might seem natural to guess that \(T_\gamma = \Ohat^G_{\xi, \gamma}\) should equal \(\Ohat^{H\conn}_\xi\), possibly up to some minor adjustments, but this is not true.  Notice that \(\Ohat^G_{\xi, \gamma}\) is represented on the set of regular, semisimple elements in \(\gamma + \sbjtl{\Lie(H)}r\) by the restriction of the function representing \(\Ohat^G_\xi\).  If, for example, \(\bG = \operatorname{SL}_2\) and \(\gamma\) is a split, regular, semisimple element, so that \bH is a split, maximal torus in \bG, then \(\Ohat^H_\xi\) is represented by \(\AddChar_\xi\).  The explicit formul{\ae} of \cite{spice:sl2-mu-hat}*{Theorem 11.3} (which is a special case of our Theorem \ref{thm:orb-unwind}) show that there is no obvious modification of the representing function for \(\Ohat^G_{\xi, \gamma}\) that will make them agree.
\end{rem}

\begin{lem}[\xcite{spice:asymptotic}*{Lemma \xref{lem:sample}}]
\label{lem:orb-sample}
With the notation of Definition \ref{defn:centred-orbit}, if
	\(a \in \tR\) satisfies \(r \le a < \infty\)
and	
	\(X^*\) belongs to \(\sbtl{\Lie^*(H)}x{-a}\),
then
\begin{align*}
&\widecheck\muhat^G_{\xi, \gamma}\bigl(X^* + \chrc{\sbtlpp{\Lie^*(H)}x{-r}}\bigr) \\
\intertext{equals}
\meas(\sbtl{\Lie(H)}x a)\sum_{h \in \sbtl{\Lie(H)}x r/{\sbtl{\Lie(H)}x a}} \contra{\AddChar_{X^*}}(h)&
\muhat^G_\xi(\gamma + h + \chrc{\sbtl{\Lie(G)}x a, \contra{\AddChar_{X^*}}}).
\end{align*}
\end{lem}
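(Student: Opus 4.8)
The plan is to imitate the proof of \xcite{spice:asymptotic}*{Lemma \xref{lem:sample}}, reducing the statement to the inverse Fourier transform of the indicator of a lattice coset followed by an application of Corollary \ref{cor:centre}. First I would unwind Definition \ref{defn:centred-orbit}: writing \(f^* = X^* + \chrc{\sbtlpp{\Lie^*(H)}x{-r}}\), the left-hand side equals \(\muhat^G_{\xi, \gamma}\bigl(\gamma + \widecheck f^*\bigr)\), where, by Lemma \ref{lem:centre}, the distribution \(\muhat^G_{\xi, \gamma}\) is the \(H\conn\)-invariant distribution \(T_\gamma\) on \(\Lie(H)\) deduced from \(T = \muhat^G_\xi\). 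Since \(f^*\) is \(\meas(\sbtlpp{\Lie^*(H)}x{-r})\inv\) times the indicator of the coset \(X^* + \sbtlpp{\Lie^*(H)}x{-r}\), a standard computation, using that \AddChar is trivial on \(\sbjtlp\field 0\) but not on \(\sbjtl\field 0\) and that the Moy--Prasad filtration on \(\Lie^*(H)\) is dual to that on \(\Lie(H)\) \cite{moy-prasad:k-types}*{\S3.5, p.~400, (\(*\))}, shows that \(\widecheck f^*\) agrees with \(\contra{\AddChar_{X^*}}\) on \(\sbtl{\Lie(H)}x r\) and vanishes outside that lattice. Thus \(\gamma + \widecheck f^*\) is supported on \(\gamma + \sbtl{\Lie(H)}x r\), where it sends \(Y\) to \(\contra{\AddChar_{X^*}}(Y - \gamma)\).

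Next I would partition \(\gamma + \sbtl{\Lie(H)}x r\) into the cosets \(\gamma + h + \sbtl{\Lie(H)}x a\), for \(h\) running over \(\sbtl{\Lie(H)}x r/{\sbtl{\Lie(H)}x a}\), and use the identity \(\contra{\AddChar_{X^*}}(Y - \gamma) = \contra{\AddChar_{X^*}}(h)\contra{\AddChar_{X^*}}(Y - \gamma - h)\) to write
\[
\gamma + \widecheck f^* = \meas(\sbtl{\Lie(H)}x a)\sum_{h \in \sbtl{\Lie(H)}x r/{\sbtl{\Lie(H)}x a}} \contra{\AddChar_{X^*}}(h)\bigl(\gamma + h + \chrc{\sbtl{\Lie(H)}x a, \contra{\AddChar_{X^*}}}\bigr).
\]
Applying the linear functional \(\muhat^G_{\xi, \gamma} = T_\gamma\) term by term, and invoking Corollary \ref{cor:centre} for each \(h\) (legitimate since \(r \le a < \infty\), \(X^* \in \sbtl{\Lie^*(H)}x{-a}\), and \(h \in \sbtl{\Lie(H)}x r\)) to replace \(T_\gamma\bigl(\gamma + h + \chrc{\sbtl{\Lie(H)}x a, \contra{\AddChar_{X^*}}}\bigr)\) by \(\muhat^G_\xi\bigl(\gamma + h + \chrc{\sbtl{\Lie(G)}x a, \contra{\AddChar_{X^*}}}\bigr)\), I obtain exactly the asserted identity.

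I expect the only point requiring care to be the Fourier-transform bookkeeping of the first paragraph: keeping track of the dual Haar measure normalisation and identifying \(\bigl(\sbtlpp{\Lie^*(H)}x{-r}\bigr)^\perp\) with \(\sbtl{\Lie(H)}x r\). Everything after that is a formal manipulation of distributions, and there is no new difficulty beyond the group case treated in \cite{spice:asymptotic}.
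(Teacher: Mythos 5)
Your proof is correct and follows the approach implicit in the paper (the paper itself states this as a Lie-algebra analogue of \xcite{spice:asymptotic}*{Lemma \xref{lem:sample}} and omits the proof). The Fourier-transform bookkeeping in your first paragraph is right: the inverse Fourier transform of \(\chrc{\sbtlpp{\Lie^*(H)}x{-r}}\) with respect to the dual Haar measure is the indicator of the annihilator lattice \(\sbtl{\Lie(H)}x r\), so \(\widecheck f^*\) is \(\contra{\AddChar_{X^*}}\) cut off to \(\sbtl{\Lie(H)}x r\), and the rest is the coset decomposition plus Corollary \ref{cor:centre}. One small observation: the statement as printed contains a typo, \(\contra{\AddChar_{X^*}}(Y)\) where it should read \(\contra{\AddChar_{X^*}}(h)\), and your derivation correctly produces the factor \(\contra{\AddChar_{X^*}}(h)\).
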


For the remainder of \S\ref{subsec:orbital-exists}, we impose Hypothesis \ref{hyp:mexp}, with \((\bM, \bJ, R_{-1}) = (\bG, \bG, \Rp0)\); and
\xcite{spice:asymptotic}*{Hypothesis \xref{hyp:depth}}.

\begin{cor}[\xcite{spice:asymptotic}*{Corollary \xref{cor:sample}}]
\initlabel{cor:orb-sample}
With the notation and hypotheses of Lemma \ref{lem:orb-sample}, if \(\xi\) lies in \(\Gamma + \sbjtlpp{\Lie^*(G)}{-r}\), then
\[
\widecheck\muhat^G_{\xi, \gamma}\bigl(X^* + \chrc{\sbtlpp{\Lie^*(H)}x{-r}}\bigr)
\]
vanishes unless \(\Ad^*(G)\xi\) intersects \(X^* + \sbtlpp{\Lie^*(H)}x{-a}\); in particular, unless we have \(a > r\) and \(X^* + \sbtlpp{\Lie^*(H)}x{-a}\) is degenerate, or we have \(a = r\) and there is some \(g \in G\) so that \(X^*\) belongs to \(\Ad^*(g)\inv\Gamma + \sbjtlpp{\Lie^*(H \cap \Int(g)\inv G')}{-r}\).
\end{cor}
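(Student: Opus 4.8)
The plan is to follow the proof of the group analogue \xcite{spice:asymptotic}*{Corollary \xref{cor:sample}}, with the spectral input there replaced by the elementary Fourier analysis of Moy--Prasad lattices. First I would apply Lemma \ref{lem:orb-sample}, taking its free parameter to be the given \(a\), to write \(\widecheck\muhat^G_{\xi, \gamma}\bigl(X^* + \chrc{\sbtlpp{\Lie^*(H)}x{-r}}\bigr)\) as \(\meas(\sbtl{\Lie(H)}x a)\) times the sum, over \(h \in \sbtl{\Lie(H)}x r/{\sbtl{\Lie(H)}x a}\), of \(\contra{\AddChar_{X^*}}(h)\muhat^G_\xi\bigl(\gamma + h + \chrc{\sbtl{\Lie(G)}x a, \contra{\AddChar_{X^*}}}\bigr)\). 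Unwinding \(\muhat^G_\xi(F) = \mu^G_\xi(\hat F)\), each summand is \(\mu^G_\xi\) paired with the Fourier transform of \(\gamma + h + \chrc{\sbtl{\Lie(G)}x a, \contra{\AddChar_{X^*}}}\); since the Moy--Prasad filtration on the dual Lie algebra is the dual filtration (Remark \initref{rem:fc-building}\subpref{Lie*}), the Fourier transform of \(\chrc{\sbtl{\Lie(G)}x a, \contra{\AddChar_{X^*}}}\) is the indicator function of \(X^* + \sbtlpp{\Lie^*(G)}x{-a}\), and translating by \(\gamma + h\) only multiplies this indicator by a unitary character of the dual variable. Carrying out the sum over \(h\) before integrating over \(\Ad^*(G)\xi\), those characters coalesce into a character of \(\sbtl{\Lie(H)}x r/{\sbtl{\Lie(H)}x a}\) in the orbit variable, whose orthogonality relations confine the support of the resulting integrand to \(X^* + \bigl((\sbtl{\Lie(H)}x r)^\perp \cap \sbtlpp{\Lie^*(G)}x{-a}\bigr)\).

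Next I would sharpen this from \(\Lie^*(G)\) down to \(\Lie^*(H)\). Because \(X^*\) lies in \(\Lie^*(H)\) and the Moy--Prasad lattices of \(\Lie^*(\bG)\) at \(x\) restrict to those of \(\Lie^*(\bH)\) along the tame, twisted Levi \(\bH\) (the dual-Lie-algebra version of the descent compatibility recorded in Remark \ref{rem:fc-iwahori}), the part of the coset just described that can meet \(\Ad^*(G)\xi\) is \(X^* + \sbtlpp{\Lie^*(H)}x{-a}\), \emph{provided} one knows that \(\Ad^*(G)\xi\), if it meets the coset at all, meets it inside \(\Lie^*(H)\); that last point I would extract from the hypothesis \(\xi \in \Gamma + \sbjtlpp{\Lie^*(G)}{-r}\) by means of Lemma \ref{lem:Lie*-orbit}, together with the identification \(\Lie^*(H) = \Cent_{\Lie^*(G)}(\gamma)\) of Remark \ref{rem:Lie-symmetry}. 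This yields the first assertion: \(\widecheck\muhat^G_{\xi, \gamma}\bigl(X^* + \chrc{\sbtlpp{\Lie^*(H)}x{-r}}\bigr)\) vanishes unless \(\Ad^*(G)\xi\) meets \(X^* + \sbtlpp{\Lie^*(H)}x{-a}\).

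For the ``in particular'', suppose \(\Ad^*(g)\xi = X^* + Z^*\) with \(Z^* \in \sbtlpp{\Lie^*(H)}x{-a}\), and split \(\Ad^*(g)\xi = \Ad^*(g)\Gamma + \Ad^*(g)(\xi - \Gamma)\) with \(\xi - \Gamma \in \sbjtlpp{\Lie^*(G)}{-r}\), recalling that \(\Gamma\) is good of depth \(-r\) by \xcite{spice:asymptotic}*{Hypothesis \xref{hyp:Z*}}. If \(a > r\), then the whole ``good'' part of \(\Ad^*(g)\xi\) sits at depth \(-r > -a\), hence below the truncation level \(-a\), so the image of \(X^*\) in \(\sbat{\Lie^*(H)}x{-a}\) cannot be a good element; that is, \(X^* + \sbtlpp{\Lie^*(H)}x{-a}\) is degenerate. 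If instead \(a = r\), then \(X^* + \sbtlpp{\Lie^*(H)}x{-r}\) meets \(\Ad^*(g)\Gamma + \sbjtlpp{\Lie^*(G)}{-r}\); intersecting with \(\Lie^*(H)\) and then running the orbit argument for \(\bG' = \Cent_\bG(\Gamma)\) — the content of \xcite{spice:asymptotic}*{Hypothesis \initxref{hyp:Z*}(\subxref{orbit})}, of which Lemma \ref{lem:Lie*-orbit} is the Lie-algebra shadow — produces \(g' \in G\) with \(X^* \in \Ad^*(g')\inv\Gamma + \sbjtlpp{\Lie^*(H \cap \Int(g')\inv G')}{-r}\). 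The step I expect to be the real obstacle is exactly this last one: the bare Fourier-support computation naturally lives in \(\Lie^*(G)\), and upgrading it to the sharp statement on \(\Lie^*(H)\) and correctly teasing out the \(a = r\) versus \(a > r\) dichotomy demands careful bookkeeping of Moy--Prasad depths on the dual Lie algebra across the two descents, from \bG to \(\CC\bG r(\gamma)\) and from \bG to \(\Cent_\bG(\Gamma)\), together with a clean notion of a degenerate coset at depth \(-a\); the rest is routine once Lemma \ref{lem:orb-sample} is in hand.
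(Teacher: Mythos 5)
Your overall architecture matches the paper's: start from Lemma~\ref{lem:orb-sample}, use the Fourier-transform support of the test functions to force \(\Ad^*(G)\xi\) to meet the relevant coset, and then split into the \(a > r\) (degeneracy) and \(a = r\) (orbit-tracking) cases. The paper's proof is brief precisely because it delegates the \(a = r\) bookkeeping verbatim to the group-level argument in \cite{spice:asymptotic}*{Corollary~\xref{cor:sample}}, whereas you spell out the orthogonality-in-\(h\) computation; that extra explicitness is fine in itself.

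The genuine gap is exactly at the spot you flag as the ``real obstacle'', and it is a mis-identification of the tool. Your orthogonality computation does \emph{not} confine the support to \(X^* + \sbtlpp{\Lie^*(H)}x{-a}\), but only to the larger set of \(\xi'\) in \(X^* + \sbtlpp{\Lie^*(G)}x{-a}\) whose restriction to \(\sbtl{\Lie(H)}x r\) is close to that of \(X^*\); elements of this set may still have nontrivial components in \(\Lie^*(H)^\perp\). To finish the confinement you reach for Lemma~\ref{lem:Lie*-orbit} and ``Remark~\ref{rem:Lie-symmetry}'', but both of those are statements about the pair \((\Gamma, \bG' = \Cent_\bG(\Gamma))\); Lemma~\ref{lem:Lie*-orbit} is \(\Gamma\)-orbit rigidity relative to \(\bG'\), and it says nothing about meeting the \(\gamma\)-centraliser. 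What this step actually needs is orbit rigidity relative to \(\gamma\) and \(\bH = \CC\bG r(\gamma)\) --- the content of Hypothesis~\initref{hyp:Lie-gamma}\subpref{orbit} and its consequence Lemma~\ref{lem:Lie-orbit}, together with the Iwahori-type factorisation for \(\bH\) inside \(\bG\), which is precisely what the group-level proof of \cite{spice:asymptotic}*{Corollary~\xref{cor:sample}} employs and what the present paper invokes by reference. By contrast, Lemma~\ref{lem:Lie*-orbit} and Hypothesis~\initref{hyp:Z*}\subpref{orbit} \emph{are} the right tools for your \(a = r\) step (landing in \(\Ad^*(g')\inv\Gamma + \sbjtlpp{\Lie^*(H \cap \Int(g')\inv G')}{-r}\)), and that part of your argument is essentially what the paper does. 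So: same skeleton, correct handling of \(a > r\) and \(a = r\), but the wrong lemma cited for the \(\Lie^*(G) \to \Lie^*(H)\) descent, which is the one place where a real argument (as opposed to Fourier bookkeeping) is required.
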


\begin{proof}
For each \(h \in \sbjtl{\Lie(H)}r\), the Fourier transform of \(\gamma + h + \chrc{\sbtl{\Lie(G)}x a, \contra{\AddChar_{X^*}}}\) is 
supported by \(X^* + \sbtlpp{\Lie^*(G)}x{-a}\); so, by Lemma \ref{lem:orb-sample}, the sampled value vanishes unless \(\Ad^*(G)\xi\) intersects \(X^* + \sbtlpp{\Lie^*(H)}x{-a}\).

If \(a\) is strictly greater than \(r\), then, since \(\xi\) lies in \(\sbjtl{\Lie^*(G)}{-r}\), we have by \xcite{spice:asymptotic}*{Remark \xref{rem:depth}} that \(X^* + \sbtlpp{\Lie^*(H)}x{-a}\) is degenerate.  If \(a\) equals \(r\), then there is some \(g \in G\) such that \(X^* + \sbtlpp{\Lie^*(G)}x{-r}\) intersects \(\Ad^*(g)\inv(\Gamma + \sbjtlpp{\Lie^*(G)}{-r})\), and it follows exactly as in the proof of \xcite{spice:asymptotic}*{Corollary \xref{cor:sample}} (see also Remark \ref{rem:asymptotic:cor:sample}) that \(X^*\) belongs to \(\Ad^*(g)\inv\Gamma + \sbjtlpp{\Lie^*(H \cap \Int(g)\inv G')}{-r}\).
\end{proof}

We now drop our choice of a particular point \(x \in \BB(H)\)
made before Lemma \ref{lem:fa-explicit}, and require that
Hypothesis \ref{hyp:Lie-gamma} is satisfied by \((\gamma, x, r)\)
for \emph{all} \(x \in \BB(H)\).

Theorem \ref{thm:orb-asymptotic-exists} is, among other things, a generalisation of the Shalika-germ expansion \cite{shalika:germs}*{Theorem 2.1.1} that simultaneously makes precise its range of validity, and allows us to re-centre it at semisimple elements other than the origin.  Indeed, we may take \(\Gamma = 0\) and \(\bG' = \bG\) to find that, for \(\xi \in \sbjtlpp{\Lie^*(G)}{-r}\), the Shalika-germ expansion for \(\Ohat^G_\xi\) centred at \(\gamma\) is valid on \(\sbjtl{\Lie(H)}r\).  The case \(\gamma = 0\) of Theorem \ref{thm:orb-asymptotic-exists} is \cite{jkim-murnaghan:gamma-asymptotic}*{Corollary 9.2.6}.

As with \xcite{spice:asymptotic}*{Theorem \xref{thm:asymptotic-exists}}, and Theorems \ref{thm:orb-to-orb'}, \ref{thm:char-asymptotic-exists}, \ref{thm:pi-to-pi'}, \ref{thm:orb-unwind}, and \ref{thm:char-unwind} later, we impose \cite{jkim-murnaghan:charexp}*{Theorem 3.1.7(1, 5)} as a hypothesis of Theorem \ref{thm:orb-asymptotic-exists}.  See Remark \ref{rem:black-box}.

\begin{thm}[\xcite{spice:asymptotic}*{Theorem \xref{thm:asymptotic-exists}}]
\label{thm:orb-asymptotic-exists}
Suppose that
\cite{jkim-murnaghan:charexp}*{Theorem 3.1.7(1, 5)} is satisfied, and
all of the relevant orbital integrals converge.  For every \(\xi \in \Gamma + \sbjtlpp{\Lie^*(G')}{-r}\), there is a finitely supported, \(\OO^{H\conn}(\Ad^*(G)\Gamma)\)-indexed vector \(b(\xi, \gamma)\) so that
\[
\Ohat^G_\xi(\gamma + Y) \qeqq
\sum_{\OO \in \OO^{H\conn}(\Ad^*(G)\Gamma)}
	b_\OO(\xi, \gamma)\Ohat^{H\conn}_\OO(Y)
\]
for all \(Y \in \Lie(H)\rss \cap \sbjtl{\Lie(H)}r\).
\end{thm}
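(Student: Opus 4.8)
The plan is to follow the proof of \xcite{spice:asymptotic}*{Theorem \xref{thm:asymptotic-exists}} \emph{mutatis mutandis}, working on the Lie algebra throughout, where the argument is, if anything, slightly simpler; I will only indicate the main steps. The first step is to reduce the statement to one about the descended distribution $\muhat^G_{\xi, \gamma}$ of Definition \ref{defn:centred-orbit}, namely the distribution $T_\gamma$ of Lemma \ref{lem:centre} attached to $T = \muhat^G_\xi$. This $\muhat^G_{\xi, \gamma}$ is an $H\conn$-invariant distribution on $\Lie(H)$, supported by $\gamma + \sbjtl{\Lie(H)}r$, and by Remark \ref{rem:wrong-descent} it is represented on the regular semisimple locus by the restriction of the function representing $\muhat^G_\xi$; moreover, for $Y \in \Lie(H)\rss \cap \sbjtl{\Lie(H)}r$ the element $\gamma + Y$ is regular semisimple in $\Lie(G)$, since $\bH\conn = \Cent_\bG(\gamma)\conn = \CC\bG r(\gamma)\conn$ forces the roots of $\bG$ lying outside $\bH$ to be non-vanishing on $\gamma + Y$. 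It therefore suffices to expand $\muhat^G_{\xi, \gamma}$ near $\gamma$ in terms of Fourier transforms of coadjoint orbital integrals of $H\conn$.

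Next I would invoke the general existence theory for asymptotic expansions --- \cite{jkim-murnaghan:charexp}*{Theorem 3.1.7}, used as a black box, together with \xcite{spice:asymptotic}*{Lemma \xref{lem:asymptotic-check}} to pin down the coefficients --- applied to $\muhat^G_{\xi, \gamma}$, producing a finitely supported vector $c(\xi, \gamma)$, indexed by coadjoint orbits of $H\conn$, such that $\muhat^G_{\xi, \gamma}$ agrees with $\sum_\OO c_\OO(\xi, \gamma)\muhat^{H\conn}_\OO$ on a neighbourhood of $\gamma$. Since $\xi$ lies in $\Gamma + \sbjtlpp{\Lie^*(G')}{-r}$, the depth hypothesis \xcite{spice:asymptotic}*{Hypothesis \xref{hyp:depth}}, together with the homogeneity of these Fourier transforms of orbital integrals, then upgrades the agreement to all of $\gamma + \sbjtl{\Lie(H)}r$; this is exactly as in \cite{spice:asymptotic}, using the convergence assumed in the statement.

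To see that $c_\OO(\xi, \gamma)$ vanishes unless $\OO \in \OO^{H\conn}(\Ad^*(G)\Gamma)$, I would express the sampled values $\widecheck\muhat^G_{\xi, \gamma}\bigl(X^* + \chrc{\sbtlpp{\Lie^*(H)}x{-r}}\bigr)$ in terms of $\muhat^G_\xi$ via Lemma \ref{lem:orb-sample}, and then apply Corollary \ref{cor:orb-sample}, which says these vanish unless $X^* + \sbtlpp{\Lie^*(H)}x{-a}$ is degenerate (when $a > r$) or meets a $G$-conjugate of $\Gamma + \sbjtlpp{\Lie^*(H')}{-r}$ (when $a = r$). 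Feeding this into \xcite{spice:asymptotic}*{Lemma \xref{lem:asymptotic-check}}, which reconstructs $c_\OO(\xi, \gamma)$ from precisely this kind of sampled data, and recalling that $\OO^{H\conn}(-)$ consists of the orbits meeting every neighbourhood of a given set, yields the desired vanishing; again this repeats the argument of \cite{spice:asymptotic}, now with the easier Lie-algebra sampling lemmas.

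Finally I would pass from $\muhat$ to the normalised functions $\Ohat$. For $Y \in \Lie(H)\rss$ near $0$, the quantity $\redD_G(\gamma + Y)$ equals $\redD_H(Y)$ times a nonzero factor depending only on $\gamma$: this follows from Lemma \ref{lem:disc-as-det} and Definition \ref{defn:dual-disc}, since $\ad(\gamma)$ is invertible on $\Lie(G)/\Lie(H)$ and its determinant there does not vary with $Y$. Absorbing these $Y$-independent constants, and the $\redD_H(\OO)$ versus $\redD_G(\OO)$ factors, into the coefficients converts $c(\xi, \gamma)$ into the desired $b(\xi, \gamma)$ and the $\muhat$-identity into the stated $\Ohat$-identity. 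The step I expect to be the main obstacle is the upgrade, in the second paragraph, of the expansion from a neighbourhood of $\gamma$ to all of $\gamma + \sbjtl{\Lie(H)}r$: this is precisely where the depth and homogeneity bookkeeping must be carried out with care, and it is why \xcite{spice:asymptotic}*{Hypotheses \xref{hyp:Z*} and \xref{hyp:depth}} and the sampling machinery behind Corollary \ref{cor:orb-sample} are all needed.
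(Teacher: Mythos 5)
Your overall plan follows the right path at the level of the big picture, but it misdescribes the crucial middle step, and the misdescription is a genuine gap rather than a cosmetic one.

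The paper does not apply \cite{jkim-murnaghan:charexp}*{Theorem 3.1.7} directly to $\muhat^G_{\xi,\gamma}$ and then ``upgrade by homogeneity.''  It first passes to the Fourier-transformed version $\widecheck\muhat^G_{\xi,\gamma}$ of Definition \ref{defn:centred-orbit}, and then decomposes it as a finite sum of pieces $\widecheck\muhat^G_{\xi,\gamma,\Ad^*(g)\inv\Gamma}$, one for each $g \in G'\bslash G/H\conn$ with $\Ad^*(g)\inv\Gamma \in \Lie^*(H)$, plus an error term $\widecheck\muhat^{G\,0}_{\xi,\gamma}$.  Corollary \ref{cor:orb-sample} is then used to show that the error term annihilates the test-function space $\mc D^{-r}_{\Rpp{-r}}$, and that each piece $\widecheck\muhat^G_{\xi,\gamma,\Ad^*(g)\inv\Gamma}$ lies in the distribution space $\mc J^{\Ad^*(g)\inv\Gamma}_{\Rpp{-r}}$; Kim--Murnaghan's Theorem 3.1.7(5) then produces an expansion for \emph{each piece}, with the desired range of validity built in (that is precisely what membership in $\mc J^{\Ad^*(g)\inv\Gamma}_{\Rpp{-r}}$ buys you), and Theorem 3.1.7(1,2) combines them.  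Your version never does this decomposition.  That matters: without it, $\widecheck\muhat^G_{\xi,\gamma}$ need not lie in any single $\mc J^{\Gamma'}_{\Rpp{-r}}$, so the Kim--Murnaghan theorem does not apply as you invoke it, and there is no ``homogeneity'' argument that recovers the range of validity post hoc.  (A scaling argument does appear in this paper, but in Theorem \ref{thm:orb-to-orb'}, and for a different purpose.)  You yourself flag the ``upgrade'' step as the main obstacle; the resolution is that there is no such step --- the decomposition replaces it.

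Two smaller misattributions.  First, \xcite{spice:asymptotic}*{Lemma \xref{lem:asymptotic-check}} is not used in this proof; it only enters later, in Theorem \ref{thm:orb-to-orb'}, where one already knows an expansion exists and wants to verify a proposed formula for the coefficients.  Here the coefficients are produced, not checked, by the Kim--Murnaghan machinery.  Second, the passage between $\muhat$ and $\Ohat$ is handled by Remark \ref{rem:which-orb}, not by a discriminant computation; your argument via Lemma \ref{lem:disc-as-det} is true in spirit but reproves something the paper records separately.  Finally, your restriction of the indexing set to $\OO^{H\conn}(\Ad^*(G)\Gamma)$ via Corollary \ref{cor:orb-sample} plus a coefficient-checking lemma is the wrong mechanism: in the paper that restriction is automatic from the fact that each piece $\widecheck\muhat^G_{\xi,\gamma,\Ad^*(g)\inv\Gamma}$ yields an expansion indexed by orbits near $\Ad^*(g)\inv\Gamma$, so that summing over the finitely many $g$ gives exactly orbits in $\OO^{H\conn}(\Ad^*(G)\Gamma)$.
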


\begin{rem}
\label{rem:which-orb}
Put \(\bH' = \bG' \cap \bH\) and \(\UU\primedual = \Gamma + \sbjtlpp{\Lie^*(H')}{-r}\).

For a given finitely supported \(\OO^{H\conn}(\Ad^*(G)\UU\primedual)\)-indexed vector \(b(\xi, \gamma)\) and element \(Y \in \Lie(H)\rss \cap \sbjtl{\Lie(H)}r\), the equations
\[
\Ohat^G_\xi(\gamma + Y)
= \sum_{\OO \in \OO^{H\conn}(\Ad^*(G)\UU\primedual)}
	b_\OO(\xi, \gamma)\Ohat^{H\conn}_\OO(Y)
\]
and
\begin{multline*}
\abs{\Disc_{G/H}(\gamma)}^{1/2}\abs{\redD_G(\xi)}^{1/2}\muhat^G_\xi(\gamma + Y) \\
= \sum_{\substack
	{g \in G'\bslash G/H\conn \\
	\Ad^*(g)\inv\Gamma \in \Lie^*(H)}
} \sum_{\OO \in \OO^{H\conn}(\Ad^*(g)\inv\UU\primedual)}
	\abs{\redD_H(\OO)}^{1/2}
	b_\OO(\xi, \gamma)\muhat^{H\conn}_\OO(Y)
\end{multline*}
are equivalent.
(This is one of the few occasions when we genuinely need to work with the group \(\bG'\) of Definition \ref{defn:G'}, not just \(\bG\primeconn = \Cent_\bG(\Gamma)\conn\), to ensure that there is no `overlap' in the latter sum.)

We thus need not distinguish between the \emph{existence} of asymptotic expansions of \(\Ohat^G_\xi\) and of \(\muhat^G_\xi\) (although, of course, the coefficients will differ).
\end{rem}

\begin{proof}
We use Remark \ref{rem:which-orb} to allow us to work with \(\muhat^G_\xi\) and \(\muhat^{H\conn}_\OO\) rather than \(\Ohat^G_\xi\) and \(\Ohat^{H\conn}_\OO\) (with different coefficients).

As in, and with the notation of, the proof of \xcite{spice:asymptotic}*{Theorem \xref{thm:asymptotic-exists}} (and Definition \ref{defn:centred-orbit}), we use Corollary \ref{cor:orb-sample} to show that
\[
\widecheck\muhat^{G\,0}_{\xi, \gamma}
\ldef \widecheck\muhat^G_{\xi, \gamma}
- \sum_{\substack
	{g \in G'\bslash G/H\conn \\
	\Ad^*(g)\inv\Gamma \in \Lie^*(H)}
}
	\widecheck\muhat^G_{\xi, \gamma, \Ad^*(g)\inv\Gamma}
\]
vanishes on the function space \(\mc D^{-r}_{\Rpp{-r}}\) of \cite{jkim-murnaghan:charexp}*{Definition 3.1.1}, adapted from \(\Lie(G)\) to \(\Lie^*(H)\); and that \(\widecheck\muhat^G_{\xi, \gamma}\) and each \(\widecheck\muhat^G_{\xi, \gamma, \Ad^*(g)\inv\Gamma}\) lie in the distribution space \(\mc J^{\Ad^*(g)\inv\Gamma}_{\Rpp{-r}}\) of \cite{jkim-murnaghan:charexp}*{Definition 3.1.2(2)}, similarly adapted.
(As in Remark \ref{rem:which-orb}, we genuinely need to work with the group \(\bG'\) of Definition \ref{defn:G'}, not just \(\bG\primeconn = \Cent_\bG(\Gamma)\conn\), to ensure that there is no `overlap' in the sum defining \(\widecheck\muhat^{G\,0}_{\xi,\gamma}\).)
We then apply \cite{jkim-murnaghan:charexp}*{Theorem 3.1.7(5)} to find that each \(\widecheck\muhat^G_{\xi, \gamma, \Ad^*(g)\inv\Gamma}\) has an asymptotic expansion in terms of orbital integrals \(\mu^{H\conn}_\OO\) on the function space \(\mc D_{\Rpp{-r}}\) of \cite{jkim-murnaghan:charexp}*{Definition 3.1.1}, and then \cite{jkim-murnaghan:charexp}*{Theorem 3.1.7(1)} to conclude that \(\widecheck\muhat^G_{\xi, \gamma}\) equals the sum of these various asymptotic expansions.  Then, again, exactly as in the proofs of \cite{debacker:homogeneity}*{Theorem 3.5.2} and \cite{jkim-murnaghan:charexp}*{Theorem 5.3.1}, the desired equality of representing functions follows upon taking Fourier transforms.
\end{proof}

\subsection{Matching distributions on algebras and subalgebras}
\label{subsec:dist-g-to-g'}

Throughout \S\ref{subsec:dist-g-to-g'}, we continue to use the
	\begin{itemize}
	\item real number \(r\),
	\item element \(\Gamma \in \Lie^*(G)\) satisfying \xcite{spice:asymptotic}*{Hypothesis \initxref{hyp:Z*}(\subxref{central}, \subxref{good})},
	\item subgroup \(\bG'\) of Definition \ref{defn:G'},
	\item semisimple element \(\gamma \in \Lie(G)\) satisfying Hypothesis \ref{hyp:fc-building},
	\item group \(\bH = \CC\bG r(\gamma)\),
and	\item point \(x \in \BB(H)\) such that \((\gamma, x, r)\) satisfies Hypothesis \ref{hyp:Lie-gamma}
	\end{itemize}
from \S\ref{subsec:orbital-exists}.
Put \(\mnotn s = r/2\) and \(\mnotn{\bH'} = \bH \cap \bG'\).

Although we will eventually (in \S\ref{subsec:quantitative}) remove the requirement, the strategy of argument from \cite{spice:asymptotic}*{\S\xref{sec:dist}} requires that \(r\) be positive beginning with Lemma \ref{lem:r-to-s+}.  In Theorem \ref{thm:sample-orb-to-orb'}, we will also require that \(\gamma\) lie in \(\sbtl{\Lie(G)}x 0\).  Both of these assumptions will be removed in \S\ref{subsec:quantitative}.  We will soon (before Lemma \ref{lem:dist-g-to-g'}) require that \(\bH\conn\) equal \(\Cent_\bG(\gamma)\conn\), but, unlike in \S\ref{subsec:orbital-exists}, we do not do so yet.

Also fix an element \(X^* \in \Lie^*(G)\).  We will eventually (before Remark \ref{rem:disc-Gamma}) require that \(\gamma\) belong to \(\Lie(G')\) and \(X^*\) and \(x\) satisfy \xcite{spice:asymptotic}*{Hypothesis \xref{hyp:X*}}, but we do not do so yet.

\begin{notn}[\xcite{spice:asymptotic}*{Notation \xref{notn:Gauss}}]
\label{notn:Gauss}
We define
\[
\matnotn{bXgamma}{b_{X^*, \gamma}}(Y_1, Y_2) = \pair[\big]{X^*}{\comm{Y_1}{\comm{Y_2}\gamma}}
\qandq
\matnotn{qXgamma}{q_{X^*, \gamma}}(Y) = b_{X^*, \gamma}(Y, Y)
\]
for all \(Y, Y_1, Y_2 \in \Lie(G)\).  We write \matnotn{Gauss}{\Gauss_G(X^*, \gamma)} for the Weil index of the pairing \(q_{X^*, \gamma}\) on \(\Lie(G)\) \xcite{spice:asymptotic}*{Definition \xref{defn:Weil-index}}.  If \(X^*\) belongs to \(\Lie^*(H)\), then we put \(\matnotn{Gauss}{\Gauss_{G/H}(X^*, \gamma)} = \Gauss_G(X^*, \gamma)/\Gauss_H(X^*, \gamma)\).
\end{notn}

\begin{rem}
\label{rem:Lie-Gauss}
We have that
\begin{multline*}
b_{X^*, \gamma}(Y_1, Y_2) - b_{X^*, \gamma}(Y_2, Y_1)
\qeqq \\
\pair[\big]{X^*}{\comm{Y_1}{\comm{Y_2}\gamma}}
	+ \pair[\big]{X^*}{\comm{Y_2}{\comm\gamma{Y_1}}}
= \pair[\big]{\ad^*(\gamma)X^*}{\comm{Y_1}{Y_2}}
\end{multline*}
for all \(Y_1, Y_2 \in \Lie(G)\).  In particular, unlike in the group case \xcite{spice:asymptotic}*{Definition \xref{notn:Gauss}}, we have that \(b_{X^*, \gamma}\) is symmetric if \(\gamma\) centralises \(X^*\) (\ie, if \(\ad^*(\gamma)X^*\) equals \(0\)).  In general, we have that
\begin{multline*}
b_{X^*, \gamma}(Y_1, \anondot) + b_{X^*, \gamma}(\anondot, Y_1)
\qeqq \\
\bigl(-\ad^*(\gamma)\ad^*(Y_1) + \ad^*(\comm{Y_1}\gamma)\bigr)X^*
= \bigl(-2\ad^*(\gamma)\ad^*(Y_1) + \ad^*(Y_1)\ad^*(\gamma)\bigr)X^*
\end{multline*}
for all \(Y_1 \in \Lie(G)\).
\end{rem}

For the remainder of \S\ref{subsec:dist-g-to-g'}, we require that
\(\gamma\) belong to \(\Lie(G')\), and
\((X^*, x)\) satisfy \xcite{spice:asymptotic}*{Hypothesis \xref{hyp:X*}}.
The requirement that \(\gamma\) belong to \(\Lie(G')\) will be dropped in \S\ref{subsec:quantitative}.

\begin{rem}
\label{rem:disc-Gamma}
Let \(\xi\) be any element satisfying \xcite{spice:asymptotic}*{Hypothesis \xref{hyp:X*}}; for example, \(X^*\), or any element of \(\Gamma + \sbjtlpp{\Lie^*(G)}{-r}\) \xcite{spice:asymptotic}*{Remark \xref{rem:X*}}, including \(\Gamma\) itself.

For every \(a \in \R\), we have by \xcite{spice:asymptotic}*{Hypothesis \initxref{hyp:X*}(\subxref{Lie}) and Remark \xref{rem:X*}} that \(\bigl(\ad^*(\anondot)\Gamma\bigr)\inv \circ \bigl(\ad^*(\anondot)\xi\bigr)\), viewed as an endomorphism of \(\sbat{\Lie(G)}x a/{\sbat{\Lie(G')}x a}\), is the identity; so a standard limiting argument, as in, for example, \cite{jkim-murnaghan:charexp}*{Lemma 2.3.4}, shows that \(\bigl(\ad^*(\anondot)\Gamma\bigr)\inv \circ \bigl(\ad^*(\anondot)\xi\bigr)\), now viewed as an endomorphism of the lattice \(\sbtl{\Lie(G)}x 0/{\sbtl{\Lie(G')}x 0}\), is an isomorphism, and hence that its determinant is a unit in \(\sbjtl\field 0\).  Thus, by Lemma \ref{lem:disc-as-det}, we have that \(\abs{\Disc_{G/G'}(\xi)}\) equals \(\abs{\Disc_{G/G'}(\Gamma)}\), which, by \xcite{spice:asymptotic}*{Hypothesis \initxref{hyp:Z*}(\subxref{good})}, equals \(q_{G/G'}^r\), where we put \(q_G = q^{\dim(\bG)}\), \(q_{G'} = q^{\dim(\bG')}\), and \(q_{G/G'} = q_G/q_{G'}\), as before \xcite{spice:asymptotic}*{Lemma \initxref{lem:index-to-disc-X*}}.  In particular, \(\Disc_{G/G'}(\xi)\) is non-\(0\), and so equals \(\redD_G(\xi)\redD_{G'}(\xi)\inv\).  Similarly, \(\Disc_{H/H'}(\xi)\) equals \(\redD_H(\xi)/\redD_{H'}(\xi)\inv\), and has absolute value \(q_{H/H'}^r\), with the analogous notation.


We will use this to re-write results in \xcite{spice:asymptotic} involving quantities such as \(q_G^s\) and \(q_{G'}^s\), or \(q_H^s\) and \(q_{H'}^s\), in terms of the analogous quantities \(\abs{\redD_G(\xi)}^{1/2}\) and \(\abs{\redD_{G'}(\xi)}^{1/2}\), or \(\abs{\redD_H(\xi)}^{1/2}\) and \(\abs{\redD_{H'}(\xi)}^{1/2}\).  (Note that we are \emph{not} asserting that \(q_G^s\) equals \(\abs{\redD_G(\xi)}^{1/2}\), or the analogous fact on \(G'\), only that the ratios \(q_{G/G'}^s\) and \(\abs{\redD_G(\xi)}^{1/2}\abs{\redD_G(\xi)}\inv[1/2]\) are equal, and similarly for \(H\) and \(H'\).)

Keeping in mind that we may use these equalities for any element \(\xi\) satisfying \xcite{spice:asymptotic}*{Hypothesis \xref{hyp:X*}}, including \(\xi = \Gamma\) and \(\xi = X^*\),
compare
Lemma \ref{lem:r-to-s+} to \xcite{spice:asymptotic}*{Lemma \xref{lem:r-to-s+}};
Proposition \ref{prop:Gauss-appears} to \xcite{spice:asymptotic}*{Proposition \xref{prop:Gauss-appears}};
Proposition \ref{prop:dist-r-to-s+} to \xcite{spice:asymptotic}*{Proposition \xref{prop:dist-r-to-s+}};
Theorem \ref{thm:dist-g-to-g'} to \xcite{spice:asymptotic}*{Theorem \xref{thm:dist-G-to-G'}};
and
Lemma \ref{lem:dist-g-to-g'} to \xcite{spice:asymptotic}*{Lemma \xref{lem:dist-G-to-G'}}.
\end{rem}

\begin{prop}[\xcite{spice:asymptotic}*{Proposition \xref{prop:lattice-orth}}]
\label{prop:lattice-orth}
We have that \(\sbtl{\Lie(H, G)}x{(\Rp0, (r - \ord_\gamma)/2)}\) pairs \via \(b_{X^*, \gamma}\) with itself into \(\sbjtl\field 0\), and with \(\sbtl{\Lie(H, G)}x{(\Rp0, \Rpp{(r - \ord_\gamma)/2})}\) on the right or left into \(\sbjtlp\field 0\).

Further, the \(b_{X^*, \gamma}\)-orthogonal modulo \(\sbjtlp\field 0\) of
\[
\sbtl{\Lie(H, G', G)}x{(\infty, \infty, (r - \ord_\gamma)/2)}
\]
in \(\Lie(H)^\perp \cap \Lie(G')^\perp\) is
\[
\sbtlp{\Lie(H, G', G)}x{(\infty, \infty, (r - \ord_\gamma)/2)}.
\]
\end{prop}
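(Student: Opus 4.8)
The plan is to transcribe the proof of \xcite{spice:asymptotic}*{Proposition \xref{prop:lattice-orth}}, which carries over with only the cosmetic changes forced by replacing the group commutator with the Lie bracket, together with the simplification recorded in Remark~\ref{rem:Lie-Gauss}: the pairing $b_{X^*, \gamma}$ of Notation~\ref{notn:Gauss} is symmetric as soon as $\ad^*(\gamma)X^* = 0$, and in general differs from a symmetric pairing only by $\anonmapto{(Y_1, Y_2)}{\pair[\big]{\ad^*(\gamma)X^*}{\comm{Y_1}{Y_2}}}$, which is harmless in the ranges at issue because $\ad(\gamma)$ raises Moy--Prasad depth (by at least $\ord_\gamma$) on the directions transverse to $\Lie(H) = \Cent_{\Lie(G)}(\gamma)$.

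For the integrality statements of the first paragraph, I would decompose each argument along the $\gamma$-stable splitting $\Lie(G) = \Lie(H) \oplus \Lie(H)^\perp$ of Definition~\ref{defn:embed-fc-dual}, write $b_{X^*, \gamma}(Y_1, Y_2) = \pair[\big]{X^*}{\comm{Y_1}{\comm{Y_2}\gamma}}$, and note that $\comm{\anondot}\gamma$ sees only the $\Lie(H)^\perp$-component of its argument, on which $\ad(\gamma)$ is injective (semisimplicity of $\gamma$) and raises depth by at least $\ord_\gamma$ --- the quantitative form of this being the assertion of Hypothesis~\ref{hyp:Lie-gamma} about $\ad(\gamma)$, bootstrapped from its graded form. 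Feeding this into the standard facts that the bracket respects the Moy--Prasad filtration, that the filtration on $\Lie^*(G)$ is dual to that on $\Lie(G)$ \cite{moy-prasad:k-types}*{\S3.5}, and that $X^*$ has depth $-r$ at $x$ (by \xcite{spice:asymptotic}*{Hypothesis \xref{hyp:X*}}) places $b_{X^*, \gamma}(Y_1, Y_2)$ in $\sbjtl\field 0$; replacing one argument by an element of the deeper lattice raises the bracket one step and lands the value in $\sbjtlp\field 0$. This is filtration bookkeeping identical to the group case once the depth behaviour of $\ad(\gamma)$ is in hand.

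The substantive point is the non-degeneracy encoded by the second paragraph. The integrality just proved shows that $q_{X^*, \gamma}$, through its polar bilinear form (which in our residual characteristic differs from the symmetrisation of $b_{X^*, \gamma}$ only by a factor of $2$), induces a well-defined $\sbjat\field 0$-valued pairing on the finite-dimensional quotient $\sbat{\Lie(H, G', G)}x{(\infty, \infty, (r - \ord_\gamma)/2)}$, and the claim is exactly that this reduced pairing is non-degenerate. Using Remark~\ref{rem:Lie-Gauss} to rewrite $b_{X^*, \gamma}(Z, Y)$, up to a symmetric term, as $\pm\pair[\big]{\ad^*(\gamma)\ad^*(Z)X^*}{Y}$, non-degeneracy becomes the statement that $\anonmapto Z{\ad^*(\gamma)\ad^*(Z)X^*}$ is, on the relevant graded piece, an isomorphism onto the dual graded piece: $\ad^*(\gamma)$ supplies an isomorphism in the $\Lie(H)^\perp$-directions (again by semisimplicity of $\gamma$, depths controlled by Hypothesis~\ref{hyp:Lie-gamma}), and the remaining Jacobian is a unit precisely because $\Disc_{G/G'}(X^*)$ and $\Disc_{H/H'}(X^*)$ are non-zero of the expected absolute value --- this is Remark~\ref{rem:disc-Gamma}, resting on Lemma~\ref{lem:disc-as-det}. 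I expect this to be the main obstacle: one must align the twisted-Levi data $(\bH, \bG', \bG)$ with the $\ad(\gamma)$-eigenspace decomposition carefully enough that the genericity input of Remark~\ref{rem:disc-Gamma} is applied in the right directions, and the standing hypothesis that the residual characteristic is not $2$ is essential here, as it is what permits passage between the quadratic form $q_{X^*, \gamma}$ and its polarisation $b_{X^*, \gamma}$ (compare \xcite{spice:asymptotic}*{Definition \xref{defn:Weil-index}}, whose requirement $L^\bullet \subseteq 2L$ is the source of that hypothesis).
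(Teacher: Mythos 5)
Your proposal is correct and takes the same approach as the paper, which gives no proof here at all: consistent with the stated policy of \S\ref{subsec:orbital-exists} (``Where there is no significant difference in the proof, we indicate only the statement''), the paper simply cites \xcite{spice:asymptotic}*{Proposition \xref{prop:lattice-orth}} and leaves the transcription to the reader. Your sketch of what that transcription involves --- substituting Hypothesis~\ref{hyp:Lie-gamma} for \xcite{spice:asymptotic}*{Hypothesis \xref{hyp:gamma}} to control the depth behaviour of $\ad(\gamma)$ on $\Lie(H)^\perp$, invoking Remark~\ref{rem:Lie-Gauss} to observe that $b_{X^*, \gamma}$ is already symmetric (or nearly so) on the Lie algebra, and feeding Lemma~\ref{lem:disc-as-det} and Remark~\ref{rem:disc-Gamma} into the non-degeneracy of the induced $\sbjat\field 0$-valued form --- is an accurate account of where the genuine content lies and where it simplifies relative to the group case, and your identification of the odd-residual-characteristic hypothesis as the obstacle to polarisation agrees with the remark the paper itself makes at the head of \S\ref{sec:orbits}.
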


\begin{cor}[\xcite{spice:asymptotic}*{Corollary \xref{cor:lattice-orth}}]
\label{cor:lattice-orth}
We have that
\begin{gather*}
\frac{\Gauss_{G/H}(X^*, \gamma)}{\Gauss_{G'/H'}(X^*, \gamma)} \\
\intertext{equals}
\card{\sbat{\Lie(H, G', G)}x{(\Rp0, \Rp0, (r - \ord_\gamma)/2)}}^{-1/2}\sum_Y \AddChar_{1/2}(q_{X^*, \gamma}(Y)),
\end{gather*}
where the sum over \(Y\) runs over
\[
\sbtl{\Lie(H, G', G)}x{(\Rp0, r - \ord_\gamma, (r - \ord_\gamma)/2)}
	/
{\sbtl{\Lie(H, G', G)}x{(\Rp0, r - \ord_\gamma, \Rpp{(r - \ord_\gamma)/2})}}.
\]
\end{cor}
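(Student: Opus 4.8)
The plan is to mimic the proof of \xcite{spice:asymptotic}*{Corollary \xref{cor:lattice-orth}}, feeding in Proposition \ref{prop:lattice-orth} and Remark \ref{rem:disc-Gamma} wherever the original uses \xcite{spice:asymptotic}*{Proposition \xref{prop:lattice-orth}} and ratios of powers of $q_{G/G'}$. The first step is to unwind $\Gauss_{G/H}(X^*, \gamma)$: since $q_{X^*, \gamma}$ vanishes identically on $\Lie(H) = \Cent_{\Lie(G)}(\gamma)$ (on which $\gamma$ is central), this quantity is the Weil index of $q_{X^*, \gamma}$ on a $\gamma$-stable complement $\Lie(H)^\perp$ of $\Lie(H)$, equivalently $\Gauss_G(X^*, \gamma)$ itself, and similarly $\Gauss_{G'/H'}(X^*, \gamma) = \Gauss_{G'}(X^*, \gamma)$; in particular the ratio does not depend on $i$, so there is a single identity to establish. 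Using the first assertion of Proposition \ref{prop:lattice-orth}, $\Gauss_G(X^*, \gamma)$ is computed as the finite sum of $\AddChar_{1/2}\circ q_{X^*, \gamma}$ over the lattice $\sbtl{\Lie(H, G)}x{(\Rp0, (r - \ord_\gamma)/2)}$, which pairs into $\sbjtl\field0$ with itself, divided by the square root of the order of the associated graded quotient (this is how the Weil index of \xcite{spice:asymptotic}*{Definition \xref{defn:Weil-index}} is evaluated); likewise $\Gauss_{G'}(X^*, \gamma)$ is computed over the corresponding lattice inside $\Lie(G')$.

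Next I would exploit that the difference between these two lattices lives in the ``$\Lie(G')^\perp$-slot''. Writing $\Lie(G) = \Lie(G') \oplus \Lie(G')^\perp$ for the simultaneous $\gamma$-$\Gamma$-stable splitting (recall that $\ad^*(\gamma)\Gamma = 0$), Proposition \ref{prop:lattice-orth} shows both that the lattice for $G$ pairs under $b_{X^*, \gamma}$ into $\sbjtl\field0$ and that, on $\Lie(H)^\perp \cap \Lie(G')^\perp$, the $q_{X^*, \gamma}$-radical modulo $\sbjtlp\field0$ of $L \ldef \sbtl{\Lie(H, G', G)}x{(\infty, \infty, (r - \ord_\gamma)/2)}$ is $L^\dagger \ldef \sbtlp{\Lie(H, G', G)}x{(\infty, \infty, (r - \ord_\gamma)/2)}$. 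Combined with a depth estimate showing that the $b_{X^*, \gamma}$-pairings between the $\Lie(G')$- and $\Lie(G')^\perp$-slots of the lattices in play land in $\sbjtlp\field0$ — this uses that bracketing with $\gamma$ raises $x$-depth by $\ord_\gamma$, that $\Gamma$ has ``good'' depth $-r$ by \xcite{spice:asymptotic}*{Hypothesis \xref{hyp:Z*}}, and that $X^*$ agrees with $\Gamma$ up to \xcite{spice:asymptotic}*{Hypothesis \xref{hyp:X*}} — the finite Gauss sum for $\Gauss_G(X^*, \gamma)$ factors as the product of the one for $\Gauss_{G'}(X^*, \gamma)$ and $\card{L/L^\dagger}^{-1/2}\sum_{Y \in L/L^\dagger} \AddChar_{1/2}(q_{X^*, \gamma}(Y))$. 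The same estimate lets one replace $L/L^\dagger$ by the summation range displayed in the statement, whose $\Lie(H)$- and $(\Lie(H)^\perp \cap \Lie(G'))$-slots are merely widened to $\Rp0$ and $r - \ord_\gamma$ and hence do not affect $\AddChar_{1/2}\circ q_{X^*, \gamma}$, and identifies $\card{L/L^\dagger}$ with $\card{\sbat{\Lie(H, G', G)}x{(\Rp0, \Rp0, (r - \ord_\gamma)/2)}}$; this yields the claimed formula.

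The step I expect to be the main obstacle is exactly the depth bookkeeping of the second paragraph: checking that every cross-pairing between the $\Lie(G')$- and $\Lie(G')^\perp$-contributions, and every interaction of the widened $\Lie(H)$- and $(\Lie(H)^\perp\cap\Lie(G'))$-slots with the $\Lie(G')^\perp$-slot, is confined to $\sbjtlp\field0$, so that the Gauss sum genuinely factors and is insensitive to the widening, and confirming that the normalising cardinality matches exactly. There is also the mild new wrinkle, absent in the group case treated in \cite{spice:asymptotic}, that $b_{X^*, \gamma}$ is only approximately symmetric; but by Remark \ref{rem:Lie-Gauss} its antisymmetric part is the pairing of $\ad^*(\gamma)X^*$ against a commutator, and since $\ad^*(\gamma)X^*$ is small this defect is again swallowed by $\sbjtlp\field0$ on the relevant lattices, so it poses no real difficulty — just as the passage between powers of $q_{G/G'}$ and $\abs{\redD_G(X^*)}^{1/2}$, handled as in Remark \ref{rem:disc-Gamma}, is harmless here even though those factors do not literally appear in the present statement.
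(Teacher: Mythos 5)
The paper gives no proof of this corollary at all: the bracketed citation in the statement and the blanket disclaimer at the start of \S\ref{subsec:orbital-exists} are the entire ``proof,'' the author's point being that the group-level argument in \cite{spice:asymptotic} already proceeds by reduction to the Lie algebra and so transfers verbatim or becomes easier.  Your proposal is a reasonable reconstruction of what that argument must be, and the structure — reduce $\Gauss_{G/H}$ and $\Gauss_{G'/H'}$ to finite Gauss sums over the lattices supplied by Proposition~\ref{prop:lattice-orth}, then factor the $G$-sum against the $G'$-sum using the $\gamma$- and $\Gamma$-stable splitting $\Lie(G) = \Lie(G') \oplus \Lie(G')^\perp$ and kill the cross-pairings by depth estimates — is exactly the strategy the paper is pointing at.

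One correction of emphasis: you describe the approximate symmetry of $b_{X^*,\gamma}$ as a ``mild new wrinkle, absent in the group case.''  Remark~\ref{rem:Lie-Gauss} says precisely the reverse: it is the Lie-algebra pairing that is exactly symmetric when $\gamma$ centralises $X^*$, \emph{unlike} in the group case of \cite{spice:asymptotic}.  So the Lie-algebra situation is strictly better here, not subject to a new difficulty; your subsequent handling of the defect (its antisymmetric part is $\pair{\ad^*(\gamma)X^*}{\comm{Y_1}{Y_2}}$, which is absorbed by $\sbjtlp\field 0$ on the relevant lattices since $\ad^*(\gamma)X^*$ is small but possibly non-zero) is still the right move, but the framing reverses who inherits the harder problem.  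Beyond that, you flag the depth bookkeeping as the main obstacle without carrying it out; since the paper defers wholesale to \cite{spice:asymptotic} for exactly this, that is the part where a genuine check against the predecessor's proof would be needed, but the outline is sound.
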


\begin{cor}[\xcite{spice:asymptotic}*{Corollary \xref{cor:Gauss-const}}]
\label{cor:Gauss-const}
The quantity
\[
\frac{\Gauss_{G/H}(X^*, \gamma)}{\Gauss_{G'/H'}(X^*, \gamma)}
\]
does not change if we replace \(X^*\) by a translate under \(\sbtlpp{\Lie^*(H')}x{-r}\) and \(\gamma\) by a translate under \(\sbtl{\Lie(H')}x r\).
\end{cor}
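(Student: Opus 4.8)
The plan is to read this off the explicit finite Gauss sum of Corollary~\ref{cor:lattice-orth}, after verifying that that sum is insensitive to the two translations; the argument follows \xcite{spice:asymptotic}*{Corollary \xref{cor:Gauss-const}}. First I would dispose of the reduction to $\Gauss_{G/H}(X^*, \gamma)/\Gauss_{G'/H'}(X^*, \gamma)$. Reading each slash as a quotient of Weil indices of $q_{X^*, \gamma}$ (Notation~\ref{notn:Gauss}), a rearrangement of these ratios gives
\[
\frac{\Gauss_{G/H}(X^*, \gamma)}{\Gauss_{G'/H}(X^*, \gamma)}
= \frac{\Gauss_{G/H}(X^*, \gamma)}{\Gauss_{G'/H'}(X^*, \gamma)}\cdot\frac{\Gauss_H(X^*, \gamma)}{\Gauss_{H'}(X^*, \gamma)},
\]
so it is enough to prove the claimed invariance for each factor on the right (and if one reads the denominator of the corollary as $\Gauss_{G'/H'}$ rather than $\Gauss_{G'/H}$, only the first factor is present). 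The factor $\Gauss_H/\Gauss_{H'}$ is the benign one: since $\Lie(\bH) = \CC{\Lie(\bG)}r(\gamma)$, the form $q_{X^*, \gamma}\colon Y \mapsto \pair{X^*}{\comm{Y}{\comm{Y}{\gamma}}}$ is ``shallow'' on $\Lie(H)$ --- it carries a fixed integral lattice into $\sbjtlp\field 0$, because $\ad(\gamma)$ raises $x$-depth by at least $r$ there (a consequence of Hypothesis~\ref{hyp:Lie-gamma}) --- and translating $\gamma$ by $\sbtl{\Lie(H')}x r$ or $X^*$ by $\sbtlpp{\Lie^*(H')}x{-r}$ perturbs this form only within $\sbjtlp\field 0$ on that lattice; as the residual characteristic is odd, the Weil index of such a form is read off a fixed finite quotient lattice, hence unchanged.

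For the main factor, Corollary~\ref{cor:lattice-orth} writes $\Gauss_{G/H}(X^*, \gamma)/\Gauss_{G'/H'}(X^*, \gamma)$ as a normalising factor times $\sum_Y \AddChar_{1/2}(q_{X^*, \gamma}(Y))$, with $Y$ running over the quotient of $\sbtl{\Lie(H, G', G)}x{(\Rp0, r - \ord_\gamma, (r - \ord_\gamma)/2)}$ by $\sbtl{\Lie(H, G', G)}x{(\Rp0, r - \ord_\gamma, \Rpp{(r - \ord_\gamma)/2})}$. I would then observe two things. (i) These lattices do not involve $X^*$ at all, and involve $\gamma$ only through $\ord_\gamma$ and the subgroups $\CC\bG i(\gamma)$, $\CC{\bG'}i(\gamma)$, $\CC\bH i(\gamma)$; replacing $\gamma$ by $\gamma + \delta$ with $\delta \in \sbtl{\Lie(H')}x r$ (and, as the statement implicitly requires, with $\gamma + \delta$ again meeting the standing hypotheses) changes none of these, since $\delta$ of $x$-depth $\ge r$ contributes to $\ad(\gamma + \delta)$ only topologically nilpotent summands of depth $\ge r$, so the nearby-equality clauses of Proposition~\ref{prop:Lie-dfc-facts}, together with Lemma~\ref{lem:MP-dfc}, applied inside $\bG$, inside $\bG'$, and inside $\bH$, give $\CC\bG i(\gamma + \delta) = \CC\bG i(\gamma)$, $\CC{\bG'}i(\gamma + \delta) = \CC{\bG'}i(\gamma)$, and $\CC\bH i(\gamma + \delta) = \CC\bH i(\gamma)$, while $\ord_{\gamma + \delta} = \ord_\gamma$ because $\delta$ is deeper than $\gamma$. (ii) The individual summands do not change: by bilinearity of $q$ in its two arguments,
\[
q_{X^* + \delta^*,\, \gamma + \delta}(Y) - q_{X^*, \gamma}(Y)
= \pair[\big]{X^*}{\comm{Y}{\comm{Y}{\delta}}} + \pair[\big]{\delta^*}{\comm{Y}{\comm{Y}{\gamma + \delta}}},
\]
and for $Y$ in the summation lattice both terms lie in $\sbjtlp\field 0$; since $\AddChar_{1/2}$ is trivial on $\sbjtlp\field 0$, the sum, and hence the ratio, is unchanged.

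The hard part will be step (ii): the depth count showing that $\pair{X^*}{\comm{Y}{\comm{Y}{\delta}}}$ and $\pair{\delta^*}{\comm{Y}{\comm{Y}{\gamma + \delta}}}$ land in $\sbjtlp\field 0$ uniformly over the summation lattice. Conceptually this is just the orthogonality estimate of Proposition~\ref{prop:lattice-orth} run again, but one must check that the extra room supplied by the perturbing elements --- $\delta$ of depth $\ge r$, and $\delta^*$ of depth $> -r$ and supported on $\Lie(H')$ rather than on all of $\Lie(G)$ --- is enough to upgrade the integral containment $\sbjtl\field 0$ that Proposition~\ref{prop:lattice-orth} furnishes to the strict containment $\sbjtlp\field 0$. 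This is exactly where the precise lattice exponents $r - \ord_\gamma$ and $(r - \ord_\gamma)/2$ appearing in Corollary~\ref{cor:lattice-orth} are needed.
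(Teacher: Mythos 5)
The paper gives no proof of this corollary --- it is stated as a direct Lie-algebra analogue of the group-case result, with the blanket remark at the start of \S\ref{subsec:orbital-exists} that those proofs carry over \emph{mutatis mutandis}. Your strategy of reading the invariance off the explicit lattice Gauss sum of Corollary~\ref{cor:lattice-orth}, and checking that both the summation lattice and the summands are unchanged modulo $\sbjtlp\field 0$, is the natural reconstruction and surely the intended argument; and you are right to suspect a typo in the denominator, which should read $\Gauss_{G'/H'}$ rather than $\Gauss_{G'/H}$, as one sees both from the parallel with Corollary~\ref{cor:lattice-orth} and from how the corollary is invoked in the proof of Theorem~\ref{thm:orb-unwind}, so that the auxiliary $\Gauss_H/\Gauss_{H'}$ factor in your decomposition is in fact not needed.
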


We now impose Hypothesis \ref{hyp:MP-ad}.  This is needed in Proposition \ref{prop:Gauss-to-Weil}, and so the remainder of \S\ref{subsec:dist-g-to-g'} and \S\S\ref{subsec:quantitative}, \ref{subsec:orb-unwind}.

\begin{prop}[\xcite{spice:asymptotic}*{Proposition \xref{prop:Gauss-to-Weil}}]
\label{prop:Gauss-to-Weil}
We have that
\begin{align*}
&\card{\sbat{(H, G', G)}x{(\Rp0, \Rp0, (r - \ord_\gamma)/2)}}^{1/2}\times{} \\
&\qquad\uint_{\sbtl{(H, G', G)}x{(\Rp0, r - \ord_\gamma, (r - \ord_\gamma)/2)}}
	\AddChar_{X^*}((\Ad(v) - 1)\gamma)\upd v \\
\intertext{equals}
&\frac{\Gauss_{G/H}(X^*, \gamma)}{\Gauss_{G'/H'}(X^*, \gamma)}.
\end{align*}
\end{prop}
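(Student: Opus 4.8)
The plan is to run the argument for the group analogue \xcite{spice:asymptotic}*{Proposition \xref{prop:Gauss-to-Weil}} \textit{mutatis mutandis}, so as to convert the conjugation integral into the Gauss sum already evaluated in Corollary \ref{cor:lattice-orth}. Put $K \ldef \sbtl{(H, G', G)}x{(\Rp0, r - \ord_\gamma, (r - \ord_\gamma)/2)}$, $\mf k \ldef \sbtl{\Lie(H, G', G)}x{(\Rp0, r - \ord_\gamma, (r - \ord_\gamma)/2)}$, and $\mf k^+ \ldef \sbtl{\Lie(H, G', G)}x{(\Rp0, r - \ord_\gamma, \Rpp{(r - \ord_\gamma)/2})}$. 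First I would pass to logarithmic coordinates: since the residual characteristic is not $2$ and we are at strictly positive depth, $\mexp$ carries $\mf k$ onto $K$ and $\mf k^+$ onto its corresponding subgroup, compatibly with Haar measure, and in particular $\card{\sbat{(H, G', G)}x{(\Rp0, \Rp0, (r - \ord_\gamma)/2)}} = \card{\sbat{\Lie(H, G', G)}x{(\Rp0, \Rp0, (r - \ord_\gamma)/2)}} = \bigl[\mf k : \mf k^+\bigr]$, so it suffices to work with the latter index. Under this identification the Taylor expansion gives $(\Ad(\mexp Y) - 1)\gamma = \ad(Y)\gamma + \tfrac12\ad(Y)^2\gamma + (\text{terms of order} \geq 3\text{ in }Y)$ for $Y \in \mf k$, so that, recalling Notation \ref{notn:Gauss}, $\AddChar_{X^*}\bigl((\Ad(\mexp Y) - 1)\gamma\bigr) = \AddChar\bigl(\pair{X^*}{\ad(Y)\gamma}\bigr)\,\AddChar_{1/2}\bigl(q_{X^*, \gamma}(Y)\bigr)$ \emph{provided} one knows that $\pair{X^*}{\ad(Y)^k\gamma}$ lies in $\sbjtlp\field 0$ for all $k \geq 3$ and $Y \in \mf k$. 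This truncation uses the filtration estimates of Hypothesis \ref{hyp:Lie-gamma} (controlling how $\ad(\gamma)$ moves the graded pieces of $\mf k$) together with the bilinear estimate of Proposition \ref{prop:lattice-orth}, which already places $q_{X^*, \gamma}(Y) = \pair{X^*}{\ad(Y)^2\gamma}$ in $\sbjtl\field 0$; one tracks the $\Lie(H)$-, $\Lie(G')\cap\Lie(H)^\perp$-, and $\Lie(G)\cap\Lie(G')^\perp$-components of each further bracket with $Y$ and checks that they land deep enough to force the pairing into $\sbjtlp\field 0$.

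What then remains is to show that $\uint_K \AddChar_{X^*}\bigl((\Ad(v) - 1)\gamma\bigr)\,\upd v$ collapses to the normalized Gauss sum $\bigl[\mf k : \mf k^+\bigr]^{-1}\sum_Y \AddChar_{1/2}\bigl(q_{X^*, \gamma}(Y)\bigr)$, the sum running over $\sbtl{\Lie(H, G', G)}x{(\Rp0, r - \ord_\gamma, (r - \ord_\gamma)/2)} / \sbtl{\Lie(H, G', G)}x{(\Rp0, r - \ord_\gamma, \Rpp{(r - \ord_\gamma)/2})}$; multiplying by $\card{\sbat{\Lie(H, G', G)}x{(\Rp0, \Rp0, (r - \ord_\gamma)/2)}}^{1/2} = [\mf k : \mf k^+]^{1/2}$ and invoking Corollary \ref{cor:lattice-orth} then yields the asserted ratio $\Gauss_{G/H}(X^*, \gamma)/\Gauss_{G'/H'}(X^*, \gamma)$. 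The collapse comes from the orthogonality of Proposition \ref{prop:lattice-orth}: the linear term $Y \mapsto \pair{X^*}{\ad(Y)\gamma} = -\pair{\ad^*(\gamma)X^*}{Y}$ (cf.\ Remark \ref{rem:Lie-Gauss}) is trivial on the $\Lie(H)$-direction and on the $\Lie(G')\cap\Lie(H)^\perp$-direction of $\mf k$ by the behaviour of $\ad(\gamma)$ there (Hypothesis \ref{hyp:Lie-gamma}) and the good-element estimate $\abs{\Disc_{G/G'}(X^*)} = q_{G/G'}^r$ (Remark \ref{rem:disc-Gamma}), while integration over the remaining $\Lie(G)\cap\Lie(G')^\perp$-direction kills everything except the $q_{X^*, \gamma}$-orthogonal complement isolated in the second half of Proposition \ref{prop:lattice-orth}, with the surviving volume exactly $\meas(\mf k^+)$.

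The main obstacle is the bookkeeping around the linear term in the second paragraph: disentangling the contribution of $\pair{\ad^*(\gamma)X^*}{Y}$ across the three-step filtration $(H, G', G)$ and verifying that the $\Lie(G')$-direction pieces of $\mf k$ neither obstruct the reduction nor contribute spuriously. This is exactly the delicate step in the group case, and here it rests on combining the behaviour of $\ad(\gamma)$ on the Moy--Prasad filtrations of the funny-centraliser subgroups (Hypothesis \ref{hyp:Lie-gamma}), the good-element estimate of Remark \ref{rem:disc-Gamma}, and the precise orthogonality of Proposition \ref{prop:lattice-orth}. The higher-order truncation in the first paragraph, by contrast, requires the same filtration input but introduces no idea not already present in \cite{spice:asymptotic}.
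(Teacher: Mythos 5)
Your sketch correctly identifies Corollary \ref{cor:lattice-orth} as the target and the quadratic-form/truncation computation as the heart of the matter, and the overall shape (collapse the conjugation integral to a Gauss sum, invoke lattice orthogonality) matches what the group-case proof in \cite{spice:asymptotic} does. But the mechanism you substitute for the crucial first step is not the one the paper's hypotheses supply, and it has a genuine gap.

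You begin by ``passing to logarithmic coordinates'' and Taylor-expanding $(\Ad(\mexp Y) - 1)\gamma = \ad(Y)\gamma + \tfrac12\ad(Y)^2\gamma + \dotsb$. In \S\ref{sec:orbits} there is no exponential map available: Hypothesis \ref{hyp:mexp} is introduced only in \S\ref{sec:characters}, and even there it is carefully stated \emph{not} to guarantee Taylor-expandability or multiplicativity. Your justification ``since the residual characteristic is not $2$ and we are at strictly positive depth, $\mexp$ carries $\mf k$ onto $K$'' is not correct as stated; for general positive depths and residue characteristic $p$, the exponential series need not converge, and Moy--Prasad gives only an abelian identification of the graded quotients, not a globally Taylor-expandable map. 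The paper instead leans on Hypothesis \initref{hyp:Lie-gamma}\subpref{gp}, which is \emph{precisely} a custom-built replacement for the exponential: it asserts that $v \mapsto (\Ad(v) - 1)\gamma$ induces a bijection on graded pieces of the relevant filtrations. Combined with the exact algebraic identity
\[
(\Ad(v_1 v_2) - 1)\gamma \qeqq (\Ad(v_1) - 1)(\Ad(v_2) - 1)\gamma + (\Ad(v_1) - 1)\gamma + (\Ad(v_2) - 1)\gamma,
\]
which requires no series, Hypothesis \initref{hyp:Lie-gamma}\subpref{gp} produces the same cocycle you would get from the Taylor expansion, with the cocycle's value modulo $\sbjtlp\field 0$ identified with $b_{X^*,\gamma}$ on the graded images. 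From there your Iwahori-factorization-and-orthogonality collapse to Corollary \ref{cor:lattice-orth} is the right endgame. Your route would work in characteristic $0$ with the genuine exponential, but it smuggles in an assumption the section does not grant.

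Two secondary remarks. First, the truncation of cubic-and-higher terms and the vanishing of the linear term on the $\Lie(H)$- and $\Lie(G')\cap\Lie(H)^\perp$-directions are asserted (``one tracks the components and checks'') rather than carried out; you flag this yourself as the delicate part, and it is exactly where Hypothesis \ref{hyp:Lie-gamma}, not the Taylor series, must carry the load. Second, a small sign slip: with the conventions used in Remark \ref{rem:Lie-Gauss}, one has $\pair{X^*}{\ad(Y)\gamma} = \pair{\ad^*(\gamma)X^*}{Y}$, not its negative.
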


For the remainder of \S\ref{subsec:dist-g-to-g'}, we require that \(r\) be positive.

\begin{lem}[\xcite{spice:asymptotic}*{Lemma \xref{lem:r-to-s+}}]
\label{lem:r-to-s+}
We have that
\begin{align*}
\abs{\redD_G(X^*)}\inv[1/2]
\card{\sbat{\Lie(G)}x 0}^{1/2}
\card{\sbat{\Lie(G)}x s}^{1/2}
&\chrc{\sbtl{\Lie(G)}x r, \contra{\AddChar_{X^*}}} \\
\intertext{equals}
\abs{\redD_{G'}(X^*)}\inv[1/2]
\card{\sbat{\Lie(G')}x 0}^{1/2}
\card{\sbat{\Lie(G')}x s}^{1/2}
&\uint_{\sbtlp G x 0} g\inv\chrc{\sbtl{\Lie(G', G)}x{(r, \Rp s)}, \contra{\AddChar_{X^*}}}g\,\upd g.
\end{align*}
\end{lem}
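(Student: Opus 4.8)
The plan is to transcribe the proof of \xcite{spice:asymptotic}*{Lemma \xref{lem:r-to-s+}} to the Lie algebra, the one genuinely new feature being that, as flagged in Remark \ref{rem:disc-Gamma}, the powers of $q_G$ and $q_{G'}$ occurring there must now be rewritten in terms of the intrinsic quantities $\abs{\redD_G(X^*)}^{1/2}$ and $\abs{\redD_{G'}(X^*)}^{1/2}$.  Upon replacing \field by a finite, tame extension, I may and do assume that $\bG' = \Cent_\bG(\Gamma)$ is a Levi subgroup of $\bG$ (harmless here by the tame-descent properties of Moy--Prasad filtrations and of the twisted-Levi lattices of \xcite{adler-spice:good-expansions}*{Definition \xref{defn:vGvr}}).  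Fix opposite parabolic subgroups $\bP^\pm = \bG'\dotm\bU^\pm$ with common Levi component $\bG'$ and with $x$ in the apartment of a maximal split torus of $\bG'$.  This provides the Iwahori-type factorisation $\sbtlp G x 0 = \sbtlp{U^-}x 0\dotm\sbtlp{G'}x 0\dotm\sbtlp{U^+}x 0$ of \xcite{adler-spice:good-expansions}*{Lemma \xref{lem:gen-iwahori-factorization}} and the triangular decomposition $\Lie(G) = \Lie(U^-)\oplus\Lie(G')\oplus\Lie(U^+)$, compatibly with which $L \ldef \sbtl{\Lie(G)}x r$ decomposes as $\sbtl{\Lie(G')}x r\oplus\sbtl{\Lie(U^+)}x r\oplus\sbtl{\Lie(U^-)}x r$ and $L' \ldef \sbtl{\Lie(G', G)}x{(r, \Rp s)}$ as $\sbtl{\Lie(G')}x r\oplus\sbtlp{\Lie(U^+)}x s\oplus\sbtlp{\Lie(U^-)}x s$; note that $L \subseteq L'$ because $r > s$.

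The core is a pointwise computation.  Evaluating both functions at an arbitrary $Y \in \Lie(G)$ and using the factorisation, I write the value of the right-hand integral at $Y$ as an iterated integral over $\sbtlp{U^-}x 0$, $\sbtlp{G'}x 0$, and $\sbtlp{U^+}x 0$.  If $Y$ lies in $L$, then $L \subseteq L'$ is stable under $\Ad(\sbtlp G x 0)$, so every cutoff in the integrand equals $1$; and \xcite{spice:asymptotic}*{Hypothesis \xref{hyp:X*}} (with \xcite{spice:asymptotic}*{Remark \xref{rem:X*}}) shows that $\contra{\AddChar_{X^*}}$ is invariant under $\Ad(\sbtlp G x 0)$ on $L$, so the integrand is constant and the integral collapses to $\meas(L')\inv\contra{\AddChar_{X^*}}(Y)$.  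If $Y$ does not lie in $L$, I must show the integral vanishes: decomposing $Y$ along the triangular decomposition, one of its three components sits at too shallow a depth to be conjugated into $L'$, and integrating over the matching factor ($\sbtlp{U^-}x 0$ or $\sbtlp{U^+}x 0$ for a $\Lie(U^\pm)$-component, $\sbtlp{G'}x 0$ for the $\Lie(G')$-component) produces a sum of a nontrivial additive character over a compact group, hence $0$.  The quantitative input is that the pairing $\anonmapto{(Y_1, Y_2)}{\pair{X^*}{\comm{Y_1}{Y_2}}}$ on $\Lie(U^+) \times \Lie(U^-)$ is non-degenerate of exactly the level forced by \xcite{spice:asymptotic}*{Hypothesis \initxref{hyp:Z*}(\subxref{good})}: Lemma \ref{lem:disc-as-det} and Remark \ref{rem:disc-Gamma} give $\abs{\Disc_{G/G'}(X^*)} = \abs{\Disc_{G/G'}(\Gamma)} = q_{G/G'}^r$, so $X^*$ serves exactly as well as $\Gamma$ itself.

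It remains to identify the constants.  The computation above shows that the right-hand side equals
\[
\abs{\redD_{G'}(X^*)}\inv[1/2]\card{\sbat{\Lie(G')}x 0}^{1/2}\card{\sbat{\Lie(G')}x s}^{1/2}\,[L' : L]\inv\,\chrc{L, \contra{\AddChar_{X^*}}},
\]
so the lemma reduces to the combinatorial identity
\[
[L' : L] = \Bigl(\frac{\abs{\redD_G(X^*)}\,\card{\sbat{\Lie(G')}x 0}\,\card{\sbat{\Lie(G')}x s}}{\abs{\redD_{G'}(X^*)}\,\card{\sbat{\Lie(G)}x 0}\,\card{\sbat{\Lie(G)}x s}}\Bigr)^{1/2}.
\]
Both sides are powers of $\card{\sbjat\field 0}$: the left unwinds, through the triangular decomposition, into $[\sbtlp{\Lie(U^+)}x s : \sbtl{\Lie(U^+)}x r]\dotm[\sbtlp{\Lie(U^-)}x s : \sbtl{\Lie(U^-)}x r]$, and the right into an exponent counting affine roots of $\bG$ outside $\bG'$.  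Their agreement follows from $\abs{\redD_G(X^*)}\abs{\redD_{G'}(X^*)}\inv = \abs{\Disc_{G/G'}(X^*)} = q_{G/G'}^r$ (Lemma \ref{lem:disc-as-det}, Remark \ref{rem:disc-Gamma}) together with the duality between $\sbtl{\Lie(U^+)}x\anondot$ and $\sbtl{\Lie(U^-)}x\anondot$ furnished by the non-degenerate pairing above; it is the $\Lie(G)$-transcription of the bookkeeping in \xcite{spice:asymptotic}*{Lemma \xref{lem:r-to-s+}}.

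The main obstacle is this last step.  No individual move is hard, but threading the intrinsic factors $\abs{\redD_G(X^*)}^{1/2}$ and $\abs{\redD_{G'}(X^*)}^{1/2}$ through in place of the naive powers of $q$ --- the whole reason Remark \ref{rem:disc-Gamma} exists --- forces one to track simultaneously how the level of the pairing $\pair{X^*}{\comm\anondot\anondot}$ controls every lattice index $[\sbtl{\Lie(U^\pm)}x a : \sbtl{\Lie(U^\pm)}x b]$ at once.  By contrast, the oscillatory cancellation in the second step is the usual Gauss-sum argument over a pro-$p$ unipotent group and poses no real difficulty.
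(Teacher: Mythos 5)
Your proposal is correct and follows exactly the route the paper intends: the lemma is stated without proof precisely because it is a \emph{mutatis mutandis} transcription of the group-side result in \cite{spice:asymptotic}, with the $q$-powers replaced by the intrinsic factors $\abs{\redD_G(X^*)}^{1/2}$, $\abs{\redD_{G'}(X^*)}^{1/2}$ exactly as explained in Remark \ref{rem:disc-Gamma}.  Your index identity $[L':L] = \bigl(\abs{\redD_G(X^*)}\card{\sbat{\Lie(G')}x 0}\card{\sbat{\Lie(G')}x s}\big/\abs{\redD_{G'}(X^*)}\card{\sbat{\Lie(G)}x 0}\card{\sbat{\Lie(G)}x s}\bigr)^{1/2}$ checks out, and the cancellation argument for $Y \notin L$ --- though stated somewhat loosely (one should be careful that the cutoff $\chrc{L'}$ does not break the oscillation, which is where Hypothesis \xref{hyp:X*} and the non-degeneracy at level $r$ of $\pair{X^*}{\comm\anondot\anondot}$ really earn their keep) --- is the standard Gauss-sum step, so the proof is sound.
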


\begin{prop}[\xcite{spice:asymptotic}*{Proposition \xref{prop:Gauss-appears}}]
\label{prop:Gauss-appears}
For every \(f \in \Hecke(\Lie(G)\sslash{\sbtl{\Lie(G', G)}x{(r, \Rp s)}, \AddChar_{X^*}})\), we have that
\begin{align*}
&\frac{\abs{\redD_G(X^*)}^{1/2}}{\abs{\redD_H(X^*)}^{1/2}}
\abs{\Disc_{G/H}(\gamma)}^{1/2}
\indx{\sbat{\Lie(G)}x 0}{\sbat{\Lie(H)}x 0}\inv[1/2]
\Gauss_{G/H}(X^*, \gamma)\inv\times{} \\
&\qquad\uint_{\sbtlp G x 0} f(\Int(g)\gamma)\upd g \\
\intertext{equals}
&\abs{\Disc_{G'/H'}(\gamma)}^{1/2}
\indx{\sbat{\Lie(G')}x 0}{\sbat{\Lie(H')}x 0}^{-1/2}
\Gauss_{G'/H'}(X^*, \gamma)\inv\times{} \\
&\qquad\uint_{\sbtl{(G', G)}x{(\Rp0, s)}} f(\Int(j)\gamma)\upd j.
\end{align*}
\end{prop}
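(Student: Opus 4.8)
The plan is to follow the proof of \xcite{spice:asymptotic}*{Proposition \xref{prop:Gauss-appears}} essentially verbatim, making two systematic substitutions: the mock-exponential map there is replaced by the identity map on $\Lie(G)$ (so that none of the multiplicativity bookkeeping of \xcite{spice:asymptotic}*{\S\xref{sec:cuspidal}} is required), and Remark \ref{rem:disc-Gamma} is invoked throughout to rewrite the powers of $q$ attached to $G$, $G'$, $H$, $H'$ in \cite{spice:asymptotic} as the discriminant factors $\abs{\redD_G(X^*)}^{1/2}$, $\abs{\redD_{G'}(X^*)}^{1/2}$, $\abs{\redD_H(X^*)}^{1/2}$, $\abs{\redD_{H'}(X^*)}^{1/2}$ appearing in the statement, using $\abs{\Disc_{G/G'}(X^*)} = q_{G/G'}^r$ and $\abs{\Disc_{H/H'}(X^*)} = q_{H/H'}^r$.

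Concretely, I would start from the left-hand side and insert the identity $f = f * \chrc{\sbtl{\Lie(G', G)}x{(r, \Rp s)}, \contra{\AddChar_{X^*}}}$ coming from the bi-invariance of $f$; the key structural input here is Lemma \ref{lem:r-to-s+}, which replaces the idempotent at level $r$ by an average over $\sbtlp G x 0$ of conjugates of the thinner idempotent supported on $\sbtl{\Lie(G', G)}x{(r, \Rp s)}$. Combining this with the generalised Iwahori factorisation (Lemma \ref{lem:gen-Iwahori-factorisation}, together with \xcite{adler-spice:good-expansions}*{Lemma \xref{lem:gen-iwahori-factorization}}) decomposes $\uint_{\sbtlp G x 0} f(\Int(g)\gamma)\upd g$ as an iterated integral over the unipotent radicals of a pair of opposite parabolic subgroups with Levi component $\bG'$: an inner integral over $\sbtl{(G', G)}x{(\Rp0, s)}$ — which, after the change of variables $j \mapsto \Int(j)\gamma$ whose Jacobian is computed via Lemma \ref{lem:fa-explicit} and Hypothesis \ref{hyp:Lie-gamma} in terms of $\abs{\Disc_{G/H}(\gamma)}$ and $\abs{\Disc_{G'/H'}(\gamma)}$ — is the right-hand side of the asserted identity; and an outer integral, over coset representatives $v$ for $\sbtlp G x 0$ modulo $\sbtl{(G', G)}x{(\Rp0, s)}$, of $\contra{\AddChar_{X^*}}\bigl((\Ad(v) - 1)\gamma\bigr)$. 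The point of the last assertion is that, writing $\Int(v)\Int(j)\gamma = \Int(j)\gamma + (\Ad(v) - 1)\gamma + (\Ad(v) - 1)(\Ad(j) - 1)\gamma$, Proposition \ref{prop:lattice-orth} and Hypothesis \ref{hyp:Lie-gamma} push the last summand into the lattice modulo which $f$ transforms by $\AddChar_{X^*}$, collapsing the contribution of $v$ to that scalar.

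The outer sum is then (up to complex conjugation, which is harmless as the relevant Weil indices are unitary) exactly the exponential sum evaluated in Proposition \ref{prop:Gauss-to-Weil}, hence equals $\Gauss_{G/H}(X^*, \gamma)/\Gauss_{G'/H'}(X^*, \gamma)\inv$ up to the index factor displayed there; by Corollary \ref{cor:Gauss-const} this ratio is unaffected by the translates of $\gamma$ and $X^*$ introduced in the inner change of variables, so it may be pulled outside. Collecting the index factor from Proposition \ref{prop:Gauss-to-Weil}, the factors $\indx{\sbat{\Lie(G)}x 0}{\sbat{\Lie(H)}x 0}$ and $\indx{\sbat{\Lie(G')}x 0}{\sbat{\Lie(H')}x 0}$, the measure of the coset space, and the Jacobian discriminant factors, and converting every power of $q$ by Remark \ref{rem:disc-Gamma}, yields the stated identity.

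\textbf{Main obstacle.} No single step is difficult; the genuinely delicate part is the simultaneous bookkeeping across the four groups $H' \subseteq H \subseteq G$ and $H' \subseteq G' \subseteq G$, checking that the powers of $q$ produced by the coset-space measure, by the index factor of Proposition \ref{prop:Gauss-to-Weil}, and by the change-of-variables Jacobians assemble — after the substitutions of Remark \ref{rem:disc-Gamma} — into precisely the ratio of $\abs{\redD_G(X^*)}^{1/2}\abs{\redD_H(X^*)}^{-1/2}$ against $\abs{\redD_{G'}(X^*)}^{1/2}\abs{\redD_{H'}(X^*)}^{-1/2}$ demanded by the statement. This is the same computation carried out in \xcite{spice:asymptotic}*{Proposition \xref{prop:Gauss-appears}}, and it transcribes without change once the two substitutions above are made.
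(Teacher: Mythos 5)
Your proposal takes exactly the route the paper intends: it does not re-prove this result from scratch, but points to \xcite{spice:asymptotic}*{Proposition \xref{prop:Gauss-appears}} and notes (at the start of \S\ref{subsec:orbital-exists} and in Remark \ref{rem:disc-Gamma}) that the group-case proof transcribes to the Lie algebra once the mock-exponential is replaced by the identity and the ratios of \(q\)-powers \(q_{G/G'}^s\), \(q_{H/H'}^s\) are re-expressed \textit{via} Remark \ref{rem:disc-Gamma} as ratios of \(\abs{\redD_\bullet(X^*)}^{1/2}\)'s. You correctly reconstruct the skeleton of that group-case argument (bi-invariance plus Lemma \ref{lem:r-to-s+} to spread the idempotent over \(\sbtlp G x 0\), the Iwahori factorisation, Proposition \ref{prop:lattice-orth} to collapse cross-terms, and Propositions \ref{prop:Gauss-to-Weil} and Corollary \ref{cor:Gauss-const} to identify the exponential sum with the Weil-index ratio), and you explicitly flag that Remark \ref{rem:disc-Gamma} only gives equality of \emph{ratios}, not of individual \(q\)-powers and discriminants, which is the subtlety that matters.

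One small imprecision: you describe the passage to the inner integral \(\uint_{\sbtl{(G', G)}x{(\Rp0, s)}} f(\Int(j)\gamma)\upd j\) as a ``change of variables \(j \mapsto \Int(j)\gamma\) whose Jacobian is computed via Lemma \ref{lem:fa-explicit}''. This isn't really a change of variables (both sides of the identity are integrals of \(f \circ \Int(\anondot)\gamma\) over group domains, not integrals over subsets of \(\Lie(G)\)); the discriminant factors \(\abs{\Disc_{G/H}(\gamma)}^{1/2}\) and \(\abs{\Disc_{G'/H'}(\gamma)}^{1/2}\) enter through the comparison of measures and indices against the Weil-index/index data of Proposition \ref{prop:Gauss-to-Weil}, not as a Jacobian of a map into the Lie algebra. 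This does not affect the correctness of the strategy, only the narration of one step.
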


\begin{prop}[\xcite{spice:asymptotic}*{Proposition \xref{prop:dist-r-to-s+}}]
\label{prop:dist-r-to-s+}
If \(T\) is an invariant distribution on \(\Lie(G)\), then
\begin{align*}
&\abs{\redD_H(X^*)}\inv[1/2]
\abs{\Disc_{G/H}(\gamma)}^{1/2}
\card{\sbat{\Lie(H)}x 0}^{1/2}
\card{\sbat{\Lie(G)}x s}^{1/2}
\Gauss_{G/H}(X^*, \gamma)\inv\times{} \\
&\qquad T\bigl(\gamma + \chrc{\sbtl{\Lie(G)}x r, \contra{\AddChar_{X^*}}}\bigr) \\
\intertext{equals}
&\abs{\redD_{H'}(X^*)}\inv[1/2]
\abs{\Disc_{G'/H'}(\gamma)}^{1/2}
\card{\sbat{\Lie(H')}x 0}^{1/2}
\card{\sbat{\Lie(G')}x s}^{1/2}
\Gauss_{G'/H'}(X^*, \gamma)\inv\times{} \\
&\qquad T\bigl(\gamma + \chrc{\sbtl{\Lie(G', G)}x{(r, \Rp s)}, \contra{\AddChar_{X^*}}}\bigr).
\end{align*}
\end{prop}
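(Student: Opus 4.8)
The plan is to argue exactly as for \xcite{spice:asymptotic}*{Proposition \xref{prop:dist-r-to-s+}}: feed the invariant distribution \(T\) into the identity of functions supplied by Lemma \ref{lem:r-to-s+}, correct it by Proposition \ref{prop:Gauss-appears}, and use Remark \ref{rem:disc-Gamma} to put the resulting constants into the stated shape. Write \(F = \chrc{\sbtl{\Lie(G', G)}x{(r, \Rp s)}, \contra{\AddChar_{X^*}}}\). First I would translate \(T\) by \(\gamma\) and apply it to the equality of functions in Lemma \ref{lem:r-to-s+}. On the left this produces \(\abs{\redD_G(X^*)}\inv[1/2]\card{\sbat{\Lie(G)}x 0}^{1/2}\card{\sbat{\Lie(G)}x s}^{1/2}\) times \(T\bigl(\gamma + \chrc{\sbtl{\Lie(G)}x r, \contra{\AddChar_{X^*}}}\bigr)\). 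On the right, I pull the (normalised) integral outside \(T\) by continuity and then use the \(G\)-invariance of \(T\) to rewrite \(T(\gamma + g\inv F g) = T(\Ad(g)\gamma + F)\), obtaining \(\abs{\redD_{G'}(X^*)}\inv[1/2]\card{\sbat{\Lie(G')}x 0}^{1/2}\card{\sbat{\Lie(G')}x s}^{1/2}\) times \(\uint_{\sbtlp G x 0} T(\Ad(g)\gamma + F)\,\upd g\).

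Next I would apply Proposition \ref{prop:Gauss-appears} to \(\uint_{\sbtlp G x 0} T(\Ad(g)\gamma + F)\,\upd g\). The locally constant function \(f\colon Z \mapsto T(Z + F)\) on \(\Lie(G)\) transforms under the lattice \(\sbtl{\Lie(G', G)}x{(r, \Rp s)}\) via \(\AddChar_{X^*}\), because \((Z + \ell) + F\) and \(Z + F\) differ by the scalar \(\AddChar_{X^*}(\ell)\) for \(\ell \in \sbtl{\Lie(G', G)}x{(r, \Rp s)}\); after replacing \(f\) by a compactly supported truncation agreeing with it on a neighbourhood of the compact set \(\Ad(\sbtlp G x 0)\gamma \cup \Ad(\sbtl{(G', G)}x{(\Rp0, s)})\gamma\) (a harmless change), it lies in \(\Hecke(\Lie(G)\sslash{\sbtl{\Lie(G', G)}x{(r, \Rp s)}, \AddChar_{X^*}})\). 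Then \(\uint_{\sbtlp G x 0} T(\Ad(g)\gamma + F)\,\upd g = \uint_{\sbtlp G x 0} f(\Int(g)\gamma)\,\upd g\), and Proposition \ref{prop:Gauss-appears} equates this, up to the quotient of its two displayed coefficients, with \(\uint_{\sbtl{(G', G)}x{(\Rp0, s)}} T(\Ad(j)\gamma + F)\,\upd j\). Finally I would collapse this last integral: invariance of \(T\) gives \(\uint_{\sbtl{(G', G)}x{(\Rp0, s)}} T(\Ad(j)\gamma + F)\,\upd j = T\bigl(\gamma + \uint_{\sbtl{(G', G)}x{(\Rp0, s)}} j\inv F j\,\upd j\bigr)\), and \(\uint_{\sbtl{(G', G)}x{(\Rp0, s)}} j\inv F j\,\upd j = F\) because \(\sbtl{\Lie(G', G)}x{(r, \Rp s)}\) is stable under conjugation by \(\sbtl{(G', G)}x{(\Rp0, s)}\) and, since \(s = r/2\), the character \(\AddChar_{X^*}\) is trivial on the commutator of that lattice with \(\sbtl{(G', G)}x{(\Rp0, s)}\) — the same kind of Moy--Prasad depth bookkeeping that underlies Proposition \ref{prop:lattice-orth}, here using \xcite{spice:asymptotic}*{Hypothesis \xref{hyp:X*}}.

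It then remains to multiply out the three scalars that have appeared — the coefficient \(\abs{\redD_H(X^*)}\inv[1/2]\abs{\Disc_{G/H}(\gamma)}^{1/2}\card{\sbat{\Lie(H)}x 0}^{1/2}\card{\sbat{\Lie(G)}x s}^{1/2}\Gauss_{G/H}(X^*, \gamma)\inv\) from the left of the claimed identity, the ratio coming from Lemma \ref{lem:r-to-s+}, and the quotient coming from Proposition \ref{prop:Gauss-appears} — and to invoke Remark \ref{rem:disc-Gamma}, which lets me replace the quantities \(q_{G/G'}^s\) and \(q_{H/H'}^s\) (the form in which the Proposition \ref{prop:Gauss-appears} coefficients are stated in \cite{spice:asymptotic}) by \(\abs{\redD_G(X^*)}^{1/2}\abs{\redD_{G'}(X^*)}\inv[1/2]\) and \(\abs{\redD_H(X^*)}^{1/2}\abs{\redD_{H'}(X^*)}\inv[1/2]\). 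After cancelling \(\card{\sbat{\Lie(G)}x s}\), the \(G\)- and \(H\)-discriminant factors, the \(G/H\)-Gauss sum, and the factors \(\card{\sbat{\Lie(G)}x 0}\), \(\card{\sbat{\Lie(H)}x 0}\) against those supplied by \(\indx{\sbat{\Lie(G)}x 0}{\sbat{\Lie(H)}x 0}\inv[1/2]\) in Proposition \ref{prop:Gauss-appears} and by Lemma \ref{lem:r-to-s+}, what survives is exactly \(\abs{\redD_{H'}(X^*)}\inv[1/2]\abs{\Disc_{G'/H'}(\gamma)}^{1/2}\card{\sbat{\Lie(H')}x 0}^{1/2}\card{\sbat{\Lie(G')}x s}^{1/2}\Gauss_{G'/H'}(X^*, \gamma)\inv\) applied to \(T\bigl(\gamma + \chrc{\sbtl{\Lie(G', G)}x{(r, \Rp s)}, \contra{\AddChar_{X^*}}}\bigr)\), the right-hand side of the Proposition. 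No single step is the real difficulty; the work is in keeping the constant bookkeeping honest, which is exactly what Remark \ref{rem:disc-Gamma} and the choice \(s = r/2\) are designed to make routine. The only genuinely technical inputs are the harmless truncation needed to apply Proposition \ref{prop:Gauss-appears} to \(Z \mapsto T(Z + F)\), and the conjugation-stability of \(F\) under \(\sbtl{(G', G)}x{(\Rp0, s)}\).
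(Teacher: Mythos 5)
Your proof is correct and reconstructs exactly the argument the paper intends (the Lie-algebra analogue of the cited result in \cite{spice:asymptotic}): feed the invariant distribution into Lemma \ref{lem:r-to-s+}, collapse the outer integral with Proposition \ref{prop:Gauss-appears} applied to a suitable truncation of \(Z \mapsto T(Z + F)\), use conjugation-invariance of \(F\) under \(\sbtl{(G', G)}x{(\Rp0, s)}\) (which follows from the depth estimate \((\Rp0, s) \cvvv (r, \Rp s) \ge (\Rp r, \Rp{\Rp s})\) together with the fact that \(X^*\) annihilates \(\Lie(G')^\perp\)), and check the scalar bookkeeping. One minor remark: since Proposition \ref{prop:Gauss-appears} and Lemma \ref{lem:r-to-s+} are already stated in this paper with the \(\abs{\redD_\bullet(X^*)}^{\pm 1/2}\) normalisations that Remark \ref{rem:disc-Gamma} produces, that remark need not be invoked again here; once the index \(\indx{\sbat{\Lie(G)}x 0}{\sbat{\Lie(H)}x 0}\inv[1/2]\) is written as \(\card{\sbat{\Lie(H)}x 0}^{1/2}\card{\sbat{\Lie(G)}x 0}\inv[1/2]\), the constants on each side collapse identically.
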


\begin{thm}[\xcite{spice:asymptotic}*{Theorem \xref{thm:dist-G-to-G'}}]
\label{thm:dist-g-to-g'}
Suppose that
	\begin{itemize}
	\item \(T'\) is an invariant distribution on \(\Lie(G')\),
	\item \(c(T', \gamma)\) is a finitely supported, \(\OO^{H\primeconn}(\Lie^*(H'))\)-indexed vector of complex numbers such that \(\abs{\Disc_{G'/H'}(\gamma)}^{1/2}T'(\gamma + \chrc{\sbtl{\Lie(G')}x r, \contra{\AddChar_{X^*}}})\) equals
\[
\abs{\redD_{H'}(\Gamma)}^{1/2}\Gauss_{G'/H'}(X^*, \gamma)
\sum_{\OO' \in \OO^{H\primeconn}(\Lie^*(H'))}
	c_{\OO'}(T', \gamma)\muhat^{H\primeconn}_{\OO'}\bigl(\chrc{\sbtl{\Lie(H')}x r, \AddChar_{X^*}}\bigr),
\]
and	\item \(T\) is an invariant distribution on \(\Lie(G)\) such that
\begin{align*}
&
\card{\sbat{\Lie(G)}x s}\inv[1/2]
T\bigl(\gamma + \chrc{\sbtl{\Lie(G', G)}x{(r, \Rp s)}, \contra{\AddChar_{X^*}}}\bigr) \\
\intertext{equals}
&
\card{\sbat{\Lie(G')}x s}\inv[1/2]
T'\bigl(\gamma + \chrc{\sbtl{\Lie(G')}x r, \contra{\AddChar_{X^*}}}\bigr).
\end{align*}
	\end{itemize}
Then \(\abs{\Disc_{G/H}(\gamma)}^{1/2}T(\gamma + \chrc{\sbtl{\Lie(G)}x r, \contra{\AddChar_{X^*}}})\) equals
\[
\abs{\redD_H(\Gamma)}^{1/2}
\Gauss_{G/H}(X^*, \gamma)
\sum_{\OO' \in \OO^{H\primeconn}(\Lie^*(H'))}
	c_{\OO'}(T', \gamma)\muhat^{H\conn}_{\OO'}\bigl(\chrc{\sbtl{\Lie(H)}x r, \AddChar_{X^*}}\bigr).
\]
\end{thm}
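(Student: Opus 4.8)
The plan is to run a short chain of substitutions: insert, in turn, Proposition \ref{prop:dist-r-to-s+}, the third hypothesis of the theorem, and the asymptotic-expansion hypothesis for \(T'\); watch the normalisation factors cancel in pairs; and then recognise what is left over as the Levi descent of Fourier transforms of orbital integrals from \(H\) to \(H'\).

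First I would apply Proposition \ref{prop:dist-r-to-s+} to the invariant distribution \(T\) on \(\Lie(G)\) and solve the resulting equality for \(\abs{\Disc_{G/H}(\gamma)}^{1/2}T(\gamma + \chrc{\sbtl{\Lie(G)}x r, \contra{\AddChar_{X^*}}})\); this presents that quantity as the product of \(T(\gamma + \chrc{\sbtl{\Lie(G', G)}x{(r, \Rp s)}, \contra{\AddChar_{X^*}}})\) with
\[
\frac{\abs{\redD_H(X^*)}^{1/2}}{\abs{\redD_{H'}(X^*)}^{1/2}}\cdot\frac{\card{\sbat{\Lie(H')}x 0}^{1/2}}{\card{\sbat{\Lie(H)}x 0}^{1/2}}\cdot\frac{\card{\sbat{\Lie(G')}x s}^{1/2}}{\card{\sbat{\Lie(G)}x s}^{1/2}}\cdot\frac{\Gauss_{G/H}(X^*, \gamma)}{\Gauss_{G'/H'}(X^*, \gamma)}\cdot\abs{\Disc_{G'/H'}(\gamma)}^{1/2}.
\]
Next I would substitute the third hypothesis, which replaces \(T(\gamma + \chrc{\sbtl{\Lie(G', G)}x{(r, \Rp s)}, \contra{\AddChar_{X^*}}})\) by \(\card{\sbat{\Lie(G)}x s}^{1/2}\card{\sbat{\Lie(G')}x s}^{-1/2}\,T'(\gamma + \chrc{\sbtl{\Lie(G')}x r, \contra{\AddChar_{X^*}}})\); the two powers of \(\card{\sbat{\Lie(G')}x s}^{1/2}\) and of \(\card{\sbat{\Lie(G)}x s}^{1/2}\) cancel. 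Finally I would substitute the asymptotic-expansion hypothesis, rewriting \(\abs{\Disc_{G'/H'}(\gamma)}^{1/2}T'(\gamma + \chrc{\sbtl{\Lie(G')}x r, \contra{\AddChar_{X^*}}})\) as \(\abs{\redD_{H'}(\Gamma)}^{1/2}\Gauss_{G'/H'}(X^*, \gamma)\sum_{\OO'} c_{\OO'}(T', \gamma)\muhat^{H\primeconn}_{\OO'}(\chrc{\sbtl{\Lie(H')}x r, \AddChar_{X^*}})\); now the two powers of \(\Gauss_{G'/H'}(X^*, \gamma)\) cancel, leaving the sole residual trace of \(G'\) the factor \(\abs{\redD_{H'}(\Gamma)}^{1/2}\).

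What remains of the claimed equality is then the local identity, for each \(\OO' \in \OO^{H\primeconn}(\Lie^*(H'))\),
\[
\abs{\redD_{H'}(\Gamma)}^{1/2}\cdot\frac{\abs{\redD_H(X^*)}^{1/2}}{\abs{\redD_{H'}(X^*)}^{1/2}}\cdot\frac{\card{\sbat{\Lie(H')}x 0}^{1/2}}{\card{\sbat{\Lie(H)}x 0}^{1/2}}\cdot\muhat^{H\primeconn}_{\OO'}\bigl(\chrc{\sbtl{\Lie(H')}x r, \AddChar_{X^*}}\bigr)
= \abs{\redD_H(\Gamma)}^{1/2}\cdot\muhat^{H\conn}_{\OO'}\bigl(\chrc{\sbtl{\Lie(H)}x r, \AddChar_{X^*}}\bigr),
\]
where \(\muhat^{H\conn}_{\OO'}\) is read as \(\muhat^{H\conn}_{\Ad^*(H\conn)\OO'}\) and \(\Ad^*(H\conn)\OO' = \Ad^*(H\conn)\xi'\) is a single \(H\conn\)-orbit for any \(\xi' \in \OO'\). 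This is the Levi-descent relation for Fourier transforms of orbital integrals attached to the Levi subgroup \(\bH' = \bH \cap \bG'\) of \bH. The cleanest self-contained way to get it is to re-run Proposition \ref{prop:dist-r-to-s+} with \((H, H', \muhat^{H\conn}_{\OO'})\) in place of \((G, G', T)\): this is legitimate because \(\ad(\gamma)\) annihilates \(\Lie(\bH) = \Cent_{\Lie(\bG)}(\gamma)\), so \(\CC\bH r(\gamma) = \bH\) and \(\bH\conn = \Cent_\bH(\gamma)\conn\), and because the form \(b_{X^*, \gamma}\) then vanishes identically on \(\Lie(H)\), so that \(\Gauss_{H/H'}(X^*, \gamma)\) is trivial and no Gauss sum intervenes; alternatively one may simply cite the Lie-algebra analogue of \xcite{spice:asymptotic}*{Lemma \xref{lem:dist-G-to-G'}}.

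I expect the only real obstacle to be the normalisation bookkeeping---confirming that the residue-field cardinalities \(\card{\sbat{\Lie(H)}x 0}\), \(\card{\sbat{\Lie(H')}x 0}\) and the discriminant ratios exactly convert the \(\muhat^{H\primeconn}_{\OO'}\)-expansion on \(\BB(H')\) into the \(\muhat^{H\conn}_{\OO'}\)-expansion on \(\BB(H)\), and correctly reading \(\muhat^{H\conn}_{\OO'}\) for an \(H\primeconn\)-orbit \(\OO'\)---for which Remark \ref{rem:disc-Gamma} (identifying \(\abs{\Disc_{H/H'}(X^*)}\) with \(q_{H/H'}^r\)) and Proposition \ref{prop:lattice-orth} are the essential tools; everything else is a matter of keeping track of which hypothesis is being invoked.
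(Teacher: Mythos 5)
Your chain of substitutions is exactly right, and it does produce the residual identity you write down; after cancelling the discriminant ratios \textit{via} Remark~\ref{rem:disc-Gamma} (since $\abs{\Disc_{H/H'}(X^*)}$ and $\abs{\Disc_{H/H'}(\Gamma)}$ both equal $q_{H/H'}^r$), what remains is
\[
\card{\sbat{\Lie(H')}x 0}^{1/2}\muhat^{H\primeconn}_{\OO'}\bigl(\chrc{\sbtl{\Lie(H')}x r, \AddChar_{X^*}}\bigr)
= \card{\sbat{\Lie(H)}x 0}^{1/2}\muhat^{H\conn}_{\OO'}\bigl(\chrc{\sbtl{\Lie(H)}x r, \AddChar_{X^*}}\bigr).
\]
This is indeed the form the argument takes; the paper simply declares the proof the same as in \cite{spice:asymptotic}, and your unwinding of the reduction tracks that proof.

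The gap is in how you propose to establish this last identity. Proposition~\ref{prop:dist-r-to-s+} applied with $(H, H', \muhat^{H\conn}_{\OO'})$ in place of $(G, G', T)$ relates the value of the \emph{same} distribution $\muhat^{H\conn}_{\OO'}$ on two different test functions (the $\sbtl{\Lie(H)}x r$-level one and the $\sbtl{\Lie(H',H)}x{(r,\Rp s)}$-level one, both shifted by $\gamma$ and using $\contra{\AddChar_{X^*}}$); it never trades $\muhat^{H\conn}$ for $\muhat^{H\primeconn}$. The thing that actually changes the group level is the $\mu$-matching lemma --- the Lie-algebra analogue of \xcite{spice:asymptotic}*{Lemma~\xref{lem:mu-G-to-G'}}, whose generalisation here is Theorem~\ref{thm:sample-orb-to-orb'} --- applied with $(H, H')$ in place of $(G, G')$ and a representative $\xi' \in \OO'$ in place of $\xi$. (The observations you make about $\CC\bH r(\gamma) = \bH$, $b_{X^*,\gamma}$ vanishing on $\Lie(H)$, and so on are the right justification that this application is legitimate; they just belong to that lemma rather than to Proposition~\ref{prop:dist-r-to-s+}.) Your fallback of citing the Lie-algebra analogue of \xcite{spice:asymptotic}*{Lemma~\xref{lem:dist-G-to-G'}} --- namely Lemma~\ref{lem:dist-g-to-g'} --- is circular: that lemma is the per-orbit refinement of the very theorem you are proving, and is deduced from it.
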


We now re-impose the requirement from \S\ref{subsec:orbital-exists} that \(\bH\conn\) equals \(\Cent_\bG(\gamma)\conn\).

\begin{lem}[\xcite{spice:asymptotic}*{Lemma \xref{lem:dist-G-to-G'}}]
\label{lem:dist-g-to-g'}
Put \(\UU\primedual = \Gamma + \sbjtlpp{\Lie^*(H')}{-r})\).  Suppose that
	\begin{itemize}
	\item \(T'\) is an invariant distribution on \(\Lie(G')\),
	\item \(c(T', \gamma)\) is a finitely supported, \(\OO^{H\primeconn}(\Ad^*(G')\UU\primedual)\)-indexed vector of complex numbers such that \(\abs{\Disc_{G'/H'}(\gamma)}^{1/2}T'(\gamma + \chrc{\sbtl{\Lie(G')}x r, \contra{\AddChar_{X^*}}})\) equals
\[
\sum_{\OO' \in \OO^{H\primeconn}(\Ad^*(G')\UU\primedual)}
{}	\Gauss_{G'/H'}(\OO', \gamma)
	c_{\OO'}(T', \gamma)
	\abs{\redD_{H'}(\OO')}^{1/2}
	\muhat^{H\primeconn}_{\OO'}\bigl(\chrc{\sbtl{\Lie(H')}x r, \contra{\AddChar_{X^*}}}\bigr),
\]
and	\item \(T\) is an invariant distribution on \(\Lie(G)\) such that
\begin{align*}
&
\card{\sbat{\Lie(G)}x s}\inv[1/2]
T\bigl(\gamma + \chrc{\sbtl{\Lie(G', G)}x{(r, \Rp s)}, \contra{\AddChar_{X^*}}}\bigr) \\
\intertext{equals}
&
\card{\sbat{\Lie(G')}x s}\inv[1/2]
T'\bigl(\gamma + \chrc{\sbtl{\Lie(G')}x r, \contra{\AddChar_{X^*}}}\bigr).
\end{align*}
	\end{itemize}
Then \(\abs{\Disc_{G/H}(\gamma)}^{1/2}T(\gamma + \chrc{\sbtl{\Lie(G)}x r, \contra{\AddChar_{X^*}}})\) equals
\[
\sum_{\OO' \in \OO^{H\primeconn}(\UU\primedual)}
	\Gauss_{G/H}(\OO', \gamma)
	c_{\OO'}(T', \gamma)
	\abs{\redD_H(\OO')}^{1/2}
	\muhat^{H\conn}_{\OO'}\bigl(\chrc{\sbtl{\Lie(H)}x r, \contra{\AddChar_{X^*}}}\bigr).
\]
\end{lem}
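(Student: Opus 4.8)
The plan is to derive this statement as the Lie-algebra transcription of \xcite{spice:asymptotic}*{Lemma \xref{lem:dist-G-to-G'}}, with Theorem \ref{thm:dist-g-to-g'} playing the role of \xcite{spice:asymptotic}*{Theorem \xref{thm:dist-G-to-G'}} and the dictionary of Remark \ref{rem:disc-Gamma} applied throughout: wherever the group argument manipulates quantities of the shape \(q_G^s\), \(q_{G'}^s\), \(q_H^s\), or \(q_{H'}^s\), I would substitute \(\abs{\redD_G(X^*)}^{1/2}\), \(\abs{\redD_{G'}(X^*)}^{1/2}\), \(\abs{\redD_H(X^*)}^{1/2}\), and \(\abs{\redD_{H'}(X^*)}^{1/2}\) respectively (or the analogous quantities attached to \(\Gamma\), or to an orbit representative of the relevant \(\OO'\), since only ratios enter), the legitimacy of all these substitutions being precisely Remark \ref{rem:disc-Gamma} together with Lemma \ref{lem:disc-as-det}. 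The skeleton is then: rewrite the hypothesis on \(T'\) into the shape demanded by Theorem \ref{thm:dist-g-to-g'}, apply that theorem to \(T\), and rewrite its conclusion on \(G\) back into the form asserted here.

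For the rewriting on the \(G'\)-side I would start from the displayed identity for \(T'\), which is indexed by \(\OO^{H\primeconn}(\Ad^*(G')\UU\primedual)\) and carries the factors \(\Gauss_{G'/H'}(\OO', \gamma)\) and \(\abs{\redD_{H'}(\OO')}^{1/2}\) inside the sum, and put it into the Theorem's shape, where a single factor \(\abs{\redD_{H'}(\Gamma)}^{1/2}\Gauss_{G'/H'}(X^*, \gamma)\) stands outside a sum over \(\OO^{H\primeconn}(\Lie^*(H'))\). Two observations make this possible. First, of all the orbits in \(\OO^{H\primeconn}(\Ad^*(G')\UU\primedual)\) only those in \(\OO^{H\primeconn}(\UU\primedual)\) contribute once one samples at \(\contra{\AddChar_{X^*}}\): the Fourier transform of \(\chrc{\sbtl{\Lie(H')}x r, \contra{\AddChar_{X^*}}}\) is supported on a translate of \(\UU\primedual\) by \xcite{spice:asymptotic}*{Hypothesis \xref{hyp:X*}}, so \(\muhat^{H\primeconn}_{\OO'}(\chrc{\sbtl{\Lie(H')}x r, \contra{\AddChar_{X^*}}})\) vanishes unless the coadjoint orbit \(\OO'\) meets that translate, exactly as in the proof of Corollary \ref{cor:orb-sample}. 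Second, for the surviving \(\OO'\), Corollary \ref{cor:Gauss-const} together with Remark \ref{rem:disc-Gamma} shows that \(\Gauss_{G'/H'}(\OO', \gamma)\abs{\redD_{H'}(\OO')}^{1/2}\) differs from \(\Gauss_{G'/H'}(X^*, \gamma)\abs{\redD_{H'}(\Gamma)}^{1/2}\) only by a factor depending on \(\OO'\) alone; absorbing that factor into the coefficients, and reconciling \(\contra{\AddChar_{X^*}}\) with \(\AddChar_{X^*}\) by the Fourier-inversion bookkeeping used in \xcite{spice:asymptotic}*{Lemma \xref{lem:dist-G-to-G'}}, yields an \(\OO^{H\primeconn}(\Lie^*(H'))\)-indexed vector \(c^\sharp(T', \gamma)\) satisfying the hypothesis of Theorem \ref{thm:dist-g-to-g'}.

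Applying Theorem \ref{thm:dist-g-to-g'} to \(T\), \(T'\), and \(c^\sharp\) would then express \(\abs{\Disc_{G/H}(\gamma)}^{1/2}T(\gamma + \chrc{\sbtl{\Lie(G)}x r, \contra{\AddChar_{X^*}}})\) as \(\abs{\redD_H(\Gamma)}^{1/2}\Gauss_{G/H}(X^*, \gamma)\) times the sum of \(c^\sharp_{\OO'}(T', \gamma)\muhat^{H\conn}_{\OO'}(\chrc{\sbtl{\Lie(H)}x r, \AddChar_{X^*}})\) over \(\OO^{H\primeconn}(\Lie^*(H'))\), and I would finish by running the previous paragraph in reverse on the \(G\)-side: again only \(\OO' \in \OO^{H\primeconn}(\UU\primedual)\) survive, and undoing the coefficient substitution and the character reconciliation, now via Corollary \ref{cor:Gauss-const} for the pair \((G, H)\), turns the sum into \(\sum_{\OO' \in \OO^{H\primeconn}(\UU\primedual)} \Gauss_{G/H}(\OO', \gamma)c_{\OO'}(T', \gamma)\abs{\redD_H(\OO')}^{1/2}\muhat^{H\conn}_{\OO'}(\chrc{\sbtl{\Lie(H)}x r, \contra{\AddChar_{X^*}}})\), which is the assertion. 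The one step I expect to require genuine care, and the only real point of difference from the group argument, is verifying that the Gauss-sum and discriminant normalisations match orbit by orbit across the two index-set changes (down from \(\OO^{H\primeconn}(\Ad^*(G')\UU\primedual)\) to \(\OO^{H\primeconn}(\UU\primedual)\), and back through \(\OO^{H\primeconn}(\Lie^*(H'))\)); this is where Corollaries \ref{cor:lattice-orth} and \ref{cor:Gauss-const} and Proposition \ref{prop:Gauss-to-Weil}, read through Remark \ref{rem:disc-Gamma}, do the work, and it is precisely the bookkeeping that the group case of \cite{spice:asymptotic} was arranged to handle.
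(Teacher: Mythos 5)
Your proposal correctly identifies both the intended logic and the machinery: the paper does not reproduce a proof here but instead marks the statement as the Lie-algebra transcription of \xcite{spice:asymptotic}*{Lemma \xref{lem:dist-G-to-G'}}, to be carried out \textit{mutatis mutandis}, and the transcription is exactly what you describe --- pass from the hypothesis on \(T'\) to the shape required by Theorem \ref{thm:dist-g-to-g'} by restricting to the orbits that survive the \(\contra{\AddChar_{X^*}}\)-sampling and absorbing the \(\OO'\)-dependent normalisations \(\Gauss_{G'/H'}(\OO', \gamma)\abs{\redD_{H'}(\OO')}^{1/2}\) into the coefficients, apply the theorem, and then undo the substitution on the \(G\)-side using Corollary \ref{cor:Gauss-const} and Remark \ref{rem:disc-Gamma} to convert \(\abs{\redD_H(\Gamma)}^{1/2}\Gauss_{G/H}(X^*, \gamma)\) back into the orbit-dependent factors.

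One point worth stating more sharply, since it is where the argument actually closes: after defining the modified coefficient vector \(c^\sharp\) by the ratio \(\frac{\Gauss_{G'/H'}(\OO', \gamma)\abs{\redD_{H'}(\OO')}^{1/2}}{\Gauss_{G'/H'}(X^*, \gamma)\abs{\redD_{H'}(\Gamma)}^{1/2}}c_{\OO'}(T', \gamma)\) on the surviving orbits (and \(0\) elsewhere), what Corollary \ref{cor:Gauss-const} together with Remark \ref{rem:disc-Gamma} gives you is that, for each such \(\OO'\), the ratio \(\frac{\Gauss_{G/H}(\anondot, \gamma)}{\Gauss_{G'/H'}(\anondot, \gamma)}\) and the ratio \(\frac{\abs{\redD_H(\anondot)}^{1/2}}{\abs{\redD_{H'}(\anondot)}^{1/2}}\) take the \emph{same} values at \(X^*\), at \(\Gamma\), and at any representative of \(\OO'\) lying in \(X^* + \sbtlpp{\Lie^*(H')}x{-r}\); it is this equality of ratios, not constancy of the individual Gauss sums or discriminants, that lets you cancel the prefactors \(\abs{\redD_H(\Gamma)}^{1/2}\Gauss_{G/H}(X^*, \gamma)\) coming out of the theorem against the denominators in \(c^\sharp\) and land exactly on \(\Gauss_{G/H}(\OO', \gamma)\abs{\redD_H(\OO')}^{1/2}c_{\OO'}(T', \gamma)\). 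With that clarification your sketch is the proof the paper intends.
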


In addition to being the geometric analogue of the spectral result \xcite{spice:asymptotic}*{Corollary \xref{cor:isotypic-pi-to-pi'}}, Theorem \ref{thm:sample-orb-to-orb'} generalises \xcite{spice:asymptotic}*{Lemma \xref{lem:mu-G-to-G'}} (or, rather, the combination of that lemma and Proposition \ref{prop:dist-r-to-s+}).

We have been assuming since Lemma \ref{lem:r-to-s+} that \(r\) is positive, which, given that all root values of \(\Gamma\) have depth \(-r\) \xcite{spice:asymptotic}*{Hypothesis \initxref{hyp:Z*}(\subxref{good})}, is a way of saying that \(\Gamma\) is not too close to the identity.  In Theorem \ref{thm:sample-orb-to-orb'}, we also impose the requirement that \(\gamma\) is not too \emph{far} from the identity.  We will remove this requirement in Theorem \ref{thm:orb-to-orb'}.

\begin{thm}[\xcite{spice:asymptotic}*{Theorem \xref{thm:isotypic-pi-to-pi'} and Corollary \xref{cor:isotypic-pi-to-pi'}}]
\initlabel{thm:sample-orb-to-orb'}
Suppose that \(\gamma\) lies in \(\sbtl{\Lie(G)}x 0\).  Then we have for every \(\xi \in \Gamma + \sbjtlpp{\Lie^*(G')}{-r}\) that
\begin{align*}
\card{\sbat{\Lie(G)}x s}\inv[1/2]
&\abs{\redD_G(\xi)}^{1/2}
\muhat^G_\xi(\gamma + \chrc{\sbtl{\Lie(G', G)}x{(r, \Rp s)}, \contra{\AddChar_{X^*}}})
\intertext{equals}
\card{\sbat{\Lie(G')}x s}\inv[1/2]&
\abs{\redD_{G'}(\xi)}^{1/2}
\muhat^{G'}_\xi(\gamma + \chrc{\sbtl{\Lie(G')}x r, \contra{\AddChar_{X^*}}}).
\end{align*}
\end{thm}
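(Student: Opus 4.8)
The plan is to compute both sides explicitly as integrals over pieces of the coadjoint orbit $\Ad^*(G)\xi$, and then to match them using the orbit-separation packaged in \xcite{spice:asymptotic}*{Hypothesis \xref{hyp:Z*}} together with the hypothesis that $\gamma$ is integral at $x$. This is the geometric counterpart of the spectral computation behind \xcite{spice:asymptotic}*{Corollary \xref{cor:isotypic-pi-to-pi'}}, and it simultaneously generalises \xcite{spice:asymptotic}*{Lemma \xref{lem:mu-G-to-G'}}.

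First I would unfold the Fourier transform on each side. Put $L_G = \sbtl{\Lie(G', G)}x{(r, \Rp s)}$ and $L_{G'} = \sbtl{\Lie(G')}x r$. Since $\muhat^G_\xi$ represents the Fourier transform of the orbital-integral distribution $\mu^G_\xi$ (and is represented by a locally integrable function, so the identity extends to all of $\Hecke(\Lie(G))$), evaluating it on the translated test function $Y \mapsto \chrc{L_G, \contra{\AddChar_{X^*}}}(Y - \gamma)$ is the same as evaluating $\mu^G_\xi$ on the Fourier transform of that function; completing the square in the exponent, this Fourier transform is the function that equals $\AddChar(Z^*(\gamma))$ on the coset $X^* + L_G^*$ and vanishes off it, where $L_G^*$ is the lattice dual to $L_G$. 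By Remark \initref{rem:fc-building}\subpref{Lie*} (Moy--Prasad self-duality of the dual-Lie-algebra filtration) one has $L_G^* = \sbtlp{\Lie^*(G')}x{-r} \oplus \bigl(\Lie^*(G')^\perp \cap \sbtl{\Lie^*(G)}x{-s}\bigr)$, and similarly $L_{G'}^* = \sbtlp{\Lie^*(G')}x{-r}$. Hence, up to the explicit volume prefactors already present in the statement, the two sides become
\[
\int_{\Ad^*(G)\xi\,\cap\,(X^* + L_G^*)} \AddChar(Z^*(\gamma))\,\upd\mu^G_\xi(Z^*)
\qquad\text{and}\qquad
\int_{\Ad^*(G')\xi\,\cap\,(X^* + L_{G'}^*)} \AddChar(Z^*(\gamma))\,\upd\mu^{G'}_\xi(Z^*).
\]

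The core step is to descend the first integral to $G'$. Because $r$ is positive and $X^* - \Gamma$ has depth exceeding $-r$ by \xcite{spice:asymptotic}*{Hypothesis \xref{hyp:X*}}, the coset $X^* + L_G^*$ lies inside $\Gamma + \sbtlpp{\Lie^*(G)}x{-r}$; so, exactly as in the proof of Corollary \initref{cor:orb-sample} (using the group orbit-separation \xcite{spice:asymptotic}*{Hypothesis \initxref{hyp:Z*}(\subxref{orbit})}, its Lie-algebra variant Lemma \ref{lem:Lie*-orbit}, and the building relations of \S\ref{sec:funny-centraliser}), every point of $\Ad^*(G)\xi$ that meets this coset is $\sbtlp G x 0$-conjugate into $\Lie^*(G')$, and, after passing to the parabolic with Levi component $\bG' = \Cent_\bG(\Gamma)$, the transporters form a single coset of $\sbtl{(G', G)}x{(\Rp 0, s)}$ in $\sbtlp G x 0$. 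Inserting this into the Harish-Chandra parabolic-descent formula for $\mu^G_\xi$ rewrites the $G$-side integral as the integral over that coset of the $G'$-side integrand. The hypothesis on $\gamma$ is used precisely here: since $\gamma$ is integral at $x$, the factor $\AddChar(Z^*(\gamma))$ is unchanged along the fibres of the projection $\Lie^*(P') \to \Lie^*(G')$ and under $\sbtlp G x 0$-conjugation, so it pulls out of the integral and agrees on the two sides. What survives is the volume of $\sbtl{(G', G)}x{(\Rp 0, s)}$; a routine comparison of Haar-measure normalisations, using the discriminant identity $\abs{\redD_G(\xi)} = q_{G/G'}^r\abs{\redD_{G'}(\xi)}$ from Remark \ref{rem:disc-Gamma} together with the index relating $\card{\sbat{\Lie(G)}x s}$ to $\card{\sbat{\Lie(G')}x s}$, then identifies this volume with the ratio of prefactors appearing in the statement.

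I expect the middle paragraph to be the main obstacle: arranging the parabolic descent of $\mu^G_\xi$ so that the integration over $G' \backslash G$ is genuinely supported on a compact-open coset of a group of the shape $\sbtl{(G',G)}x{(\Rp 0, s)}$, on which $\AddChar(\anondot(\gamma))$ is constant. This requires feeding the orbit-separation hypothesis into an Iwahori-type factorisation relative to the parabolic attached to $\Gamma$, in the style of Lemma \ref{lem:gen-Iwahori-factorisation} and the proof of Lemma \initref{lem:fa-explicit}; it is the one place where genuine geometry, rather than a formal manipulation of Fourier transforms, is needed. Once it is in hand, verifying the normalising constants is routine given Remark \ref{rem:disc-Gamma}.
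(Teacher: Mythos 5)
Your strategy is the paper's: unfold the Fourier transforms to reduce both sides to integrals over orbit pieces, descend from $G$ to $G'$ using Hypotheses Z* and X*, use $\gamma \in \sbtlp{\Lie(G)}x 0 \cap \Lie(G')$ to make $\AddChar_\gamma$ insensitive to the $\sbtl{(G',G)}x{(r,s)}$-factor of the Iwahori factorisation $\sbtl G x s = \sbtl{G'}x s\dotm\sbtl{(G',G)}x{(r,s)}$, and match prefactors via Remark~\ref{rem:disc-Gamma}. That is exactly the structure of the argument given in the paper.

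Two imprecisions in the middle step are worth naming. First, ``the transporters form a single coset of $\sbtl{(G',G)}x{(\Rp 0,s)}$ in $\sbtlp G x 0$'' is not the right picture: the transporter set $\mc C = \set{g \in G}{\Ad^*(g)\xi \in X^* + \sbtl{\Lie^*(G',G)}x{(\Rpp{-r},-s)}}$ is in general a union of many double cosets $\sbtl G x s\bslash\mc C/\Cent_G(\xi)$. What \xcite{spice:asymptotic}*{Hypothesis \initxref{hyp:X*}(\subxref{gp})} actually gives is $\mc C = \sbtl G x s\dotm\mc C'$, which, together with $\Cent_\bG(\xi) = \Cent_{\bG'}(\xi)$, yields a \emph{bijection} $\sbtl{G'}x s\bslash\mc C'/\Cent_{G'}(\xi) \to \sbtl G x s\bslash\mc C/\Cent_G(\xi)$ rather than a collapse to one coset; both orbital integrals are then compared double coset by double coset. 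Second, the one genuinely geometric claim --- that $\Ad^*(k^\perp g_0')\xi$ lies in $\Ad^*(g_0')\xi + \bigl(\Lie^*(G')^\perp \cap \sbtl{\Lie^*(G)}x{-s}\bigr) + \sbtl{\Lie^*(G)}x 0$ for every $k^\perp \in \sbtl{(G',G)}x{(r,s)}$, which is what makes $\AddChar_\gamma$ constant on the $\sbtl{(G',G)}x{(r,s)}$-fibres --- you correctly flag as the main obstacle but do not supply. The paper proves it by observing that the set of good $k^\perp$ is a group, passing to a tame extension where $\bG'$ becomes a Levi, and checking on the generators $\sbtl{G'}x r$ and $N^\pm \cap \sbtl G x s$. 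With these two items filled in, the argument runs as you describe and the volume bookkeeping is routine.
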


\begin{proof}
Put \(\mc C = \set{g \in G}{\Ad^*(g)\xi \in X^* + \sbtl{\Lie^*(G', G)}x{(\Rpp{-r}, -s)}}\) and \(\mc C' = \set{g' \in G'}{\Ad^*(g')\xi \in X^* + \sbtlpp{\Lie^*(G')}x{-r}}\).  It is clear that \(\sbtl G x s\dotm\mc C'\) is contained in \(\mc C\).  For the reverse containment, if \(g\) belongs to \(\mc C\), then combining \xcite{spice:asymptotic}*{Hypothesis \initxref{hyp:X*}(\subxref{gp})} with a successive-approximation argument, as in \cite{adler:thesis}*{Lemma 2.3.2}, allows us to assume, after replace \(g\) by an element of \(\sbtl G x s\), that \(\Ad^*(g)\xi\) belongs to \(X^* + \sbtlpp{\Lie^*(G')}x{-r}\); and then, since \(\xi\) also belongs to \(X^* + \sbjtlpp{\Lie^*(G')}{-r}\), \xcite{spice:asymptotic}*{Hypothesis \initxref{hyp:X*}(\subxref{orbit})} gives that \(g\) belongs to \(G'\), hence to \(\mc C'\).

Since the Fourier transform of \(\gamma + \chrc{\sbtl{\Lie(G', G)}x{(r, \Rp s)}, \contra{\AddChar_{X^*}}}\) is
\[
\meas(\sbtl{\Lie^*(G', G)}x{(\Rpp{-r}, -s)})\AddChar_{X^*}(\gamma)(X^* + \chrc{\sbtl{\Lie^*(G', G)}x{(\Rpp{-r}, -s)}, \AddChar_\gamma}),
\]
where we write \(\AddChar_\gamma\) for the additive character \anonmapto{Y^*}{\AddChar(\pair{Y^*}\gamma)} of \(\Lie^*(G)\), we have that
\[
\muhat^G_\xi(\gamma + \chrc{\sbtl{\Lie(G', G)}x{(r, \Rp s)}, \contra{\AddChar_{X^*}}})
\]
equals
\begin{align}
\tag{$*$}
\sublabel{eq:mu-G-pre}
& \meas(\sbtl{\Lie^*(G', G)}x{(\Rpp{-r}, -s)})\AddChar_{X^*}(\gamma)\mu^G_\xi(X^* + \chrc{\sbtl{\Lie^*(G', G)}x{(\Rpp{-r}, -s)}, \AddChar_\gamma}) \\
\nonumber
={} & \sum_{g_0' \in \sbtl G x s\bslash\mc C/{\Cent_G(\xi)}}
	\int_{\sbtl G x s g_0'\Cent_G(\xi)/{\Cent_G(\xi)}} \AddChar_\gamma(\Ad^*(g)\xi)\upd g.
\end{align}
Similarly,
\[
\muhat^{G'}_\xi(\gamma + \chrc{\sbtl{\Lie(G')}x r, \AddChar_{X^*}})
\]
equals
\begin{equation}
\tag{$**$}
\sublabel{eq:mu-G'}
\sum_{g_0' \in \sbtl{G'}x s\bslash\mc C'/{\Cent_{G'}(\xi)}}
	\meas(\Int(g_0')\inv\sbtl{G'}x s \cap \Cent_{G'}(\xi))\inv
	\int_{\sbtl{G'}x s} \AddChar_\gamma(\Ad^*(h'g_0')\xi)\upd{h'}.
\end{equation}

In particular, there is nothing to prove (since both sides vanish) unless \(\mc C'\) is non-empty, in which case, upon replacing \(\xi\) by its conjugate under some element of \(\mc C'\), we may, and do, assume that \(\xi\) lies in \(X^* + \sbtlpp{\Lie^*(G')}x{-r}\).  In particular, we have that \(\Cent_\bG(\xi)\) equals \(\Cent_{\bG'}(\xi)\) \xcite{spice:asymptotic}*{Hypothesis \initxref{hyp:X*}(\subxref{orbit})}.  Then \(\mc C'\) is closed under left translation by \(\sbtl{G'}x s\), and so, since \(\sbtl G x s \cap G'\) equals \(\sbtl{G'}x s\), the natural map \(\anonmap{\sbtl{G'}x s\bslash\mc C'/{\Cent_{G'}(\xi)}}{\sbtl G x s\bslash\mc C/{\Cent_G(\xi)}}\) is a bijection.
Thus, we may re-write \loceqref{eq:mu-G-pre} as
\begin{equation}
\tag{$*'$}
\sublabel{eq:mu-G}
\sum_{g_0' \in \sbtl{G'}x s\bslash\mc C'/{\Cent_{G'}(\xi)}}
	\meas(\Int(g_0')\inv\sbtl G x s \cap \Cent_G(\xi))\inv
	\int_{\sbtl G x s} \AddChar_\gamma(\Ad^*(h g_0')\xi)\upd h.
\end{equation}

Now consider the summand in \loceqref{eq:mu-G} corresponding to the \((\sbtl{G'}x s, \Cent_{G'}(\xi))\)-double coset in \(\mc C'\) containing \(g_0'\).  Note that \(\Int(g_0')\inv\sbtl G x s \cap \Cent_G(\xi)\) equals \(\Int(g_0')\inv\sbtl G x s \cap \Cent_{G'}(\xi) = \Int(g_0')\inv\sbtl{G'}x s \cap \Cent_{G'}(\xi)\).

We claim that \(\Ad^*(h^\perp g_0')\xi\) lies in \(\Ad^*(g_0')\xi + (\Lie^*(G')^\perp \cap \sbtl{\Lie^*(G)}x{-s}) + \sbtl{\Lie^*(G)}x 0\) for every \(h^\perp \in \sbtl{(G', G)}x{(r, s)}\).  Indeed, it suffices to prove this after passing to a tame extension; so, by \xcite{spice:asymptotic}*{Hypothesis \xref{hyp:Z*}}, we may, and do, assume that \(\bG'\) is a Levi subgroup of \bG.  Notice that the set of \(h^\perp \in \sbtl{(G', G)}x{(r, s)}\) for which the statement holds is a semigroup.  Let \(\bN^\pm\) be the unipotent radicals of two opposite parabolic subgroups of \bG with Levi subgroup \(\bG'\).  We have that \(\Ad^*(\sbtl{G'}x r g_0')\xi\) is contained in \(\Ad^*(g_0')\xi + \sbtl{\Lie(G')}x 0\), and \(\Ad^*((N^\pm \cap \sbtl G x s)g_0')\xi\) is contained in \(\Ad^*(g_0')\xi + (\mf n\thendual\mp \cap \sbtl{\Lie^*(G)}x{-s})\), i.e., that the claim holds if \(h^\perp\) is in \(\sbtl{G'}x r\) or \(N^\pm \cap \sbtl G x s\).  Since \(\sbtl{(G', G)}x{(r, s)}\) is generated as a semigroup by \(\sbtl{G'}x r\) and \(N^\pm \cap \sbtl G x s\) \xcite{adler-spice:good-expansions}*{Proposition \xref{prop:heres-a-gp}}, the claim follows in general.

Therefore, we have for every \(h' \in \sbtl{G'}x s\) and \(h^\perp \in \sbtl{(G', G)}x{(r, s)}\) that \(\Ad^*(h'h^\perp g_0')\xi\) lies in \(\Ad^*(h'g_0')\xi + \Lie^*(G')^\perp + \sbtl{\Lie^*(G)}x 0\).  Since \(\gamma\) lies in \(\sbtl{\Lie(G)}x r \subseteq \sbtlp{\Lie^*(G)}x 0\), so that \(\pair{\Lie^*(G')^\perp}\gamma\) is trivial and \(\pair{\sbtl{\Lie^*(G)}x 0}\gamma\) is contained in \(\sbjtlp\field 0\), we have that \(\AddChar_\gamma(\Ad^*(h'h^\perp g_0')\xi)\) equals \(\AddChar_\gamma(\Ad^*(h'g_0')\xi)\).  Since \(\sbtl G x s\) equals \(\sbtl{G'}x s\dotm\sbtl{(G', G)}x{(r, s)}\) \xcite{adler-spice:good-expansions}*{Proposition \xref{prop:heres-a-gp}}, we have that
\[
\int_{\sbtl G x s} \AddChar_\gamma(\Ad^*(h g_0')\xi)\upd h
= \int_{\sbtl{G'}x s\bslash\sbtl G x s} \int_{\sbtl{G'}x s}
	\AddChar_\gamma(\Ad^*(h'h^\perp g_0')\xi)\upd{h'}\upd{h^\perp}
\]
equals
\[
\frac{\meas(\sbtl G x s)}{\meas(\sbtl{G'}x s)}\int_{\sbtl{G'}x s}
	\AddChar_\gamma(\Ad^*(h'g_0')\xi)\upd{h'}.
\]
Since we are the Haar measure which assigns to \(\sbtlp G x 0\) and \(\sbtlp{G'}x 0\) masses \(\card{\sbat{\Lie(G)}x 0}^{-1/2}\) and \(\card{\sbat{\Lie(G')}x 0}^{-1/2}\), respectively, we have that \(\frac
	{\meas(\sbtl G x s)}
	{\meas(\sbtl{G'}x s)}\) equals
\[
\dfrac
	{\card{\sbat{\Lie(G)}x 0}^{-1/2}}
	{\card{\sbat{\Lie(G')}x 0}^{-1/2}}
\Bigl(\dfrac
	{\indx{\sbtlp G x 0}{\sbtl G x s}}
	{\indx{\sbtlp{G'}x 0}{\sbtl G x s}}
\Bigr)\inv,
\]
which, by \xcite{spice:asymptotic}*{Lemma \xref{lem:index-to-disc-X*}} and Remark \ref{rem:disc-Gamma}, equals
\begin{equation}
\tag{$\dag$}
\sublabel{eq:factor}
\dfrac
	{q_G\inv[s]}
	{q_{G'}\inv[s]}\dotm
\dfrac
	{\card{\sbat{\Lie(G)}x s}^{1/2}}
	{\card{\sbat{\Lie(G')}x s}^{1/2}}
= \dfrac
	{\abs{\redD_G(\xi)}\inv[1/2]}
	{\abs{\redD_{G'}(\xi)}\inv[1/2]}
\dotm\dfrac
	{\card{\sbat{\Lie(G)}x s}^{1/2}}
	{\card{\sbat{\Lie(G')}x s}^{1/2}}.
\end{equation}

That is, each summand in \loceqref{eq:mu-G} equals \loceqref{eq:factor} times the corresponding summand in \loceqref{eq:mu-G'}.  Since \(\muhat^G_\xi(\gamma + \chrc{\sbtl{\Lie(G', G)}x{(r, \Rp s)}, \contra{\AddChar_{X^*}}})\) equals \loceqref{eq:mu-G} and \(\muhat^{G'}_\xi(\gamma + \chrc{\sbtl{\Lie(G')}x r, \contra{\AddChar_{X^*}}})\) equals \loceqref{eq:mu-G'}, we are done.
\end{proof}

\subsection{Computation of asymptotic expansions}
\label{subsec:quantitative}

Throughout \S\ref{subsec:quantitative}, we continue to use the
	\begin{itemize}
	\item real number \(r\),
	\item element \(\Gamma \in \Lie^*(G)\) satisfying \xcite{spice:asymptotic}*{Hypothesis \initxref{hyp:Z*}(\subxref{central}, \subxref{good})},
	\item subgroup \(\bG'\) of Definition \ref{defn:G'},
	\item semisimple element \(\gamma \in \Lie(G)\) satisfying Hypothesis \ref{hyp:fc-building},
and	\item group \(\bH = \CC\bG r(\gamma)\)
	\end{itemize}
from \S\ref{subsec:orbital-exists}.
We require that
Hypothesis \ref{hyp:MP-ad} be satisfied,
\(\bH\conn\) equal \(\Cent_\bG(\gamma)\conn\),
and
\((\gamma, x, r)\) satisfy Hypothesis \ref{hyp:Lie-gamma} for \emph{every} point \(x \in \BB(H)\).

In \S\ref{subsec:quantitative}, we require neither that \(r\) be positive nor that \(\gamma\) belong to \(\sbjtl{\Lie^*(G)}0\).  We thank Cheng-Chiang Tsai for his observation that an appropriate scaling, as in the proof of Theorem \ref{thm:orb-to-orb'}, allows us to remove these requirements simultaneously while still using the results of \S\ref{subsec:dist-g-to-g'}, and particularly Theorem \ref{thm:sample-orb-to-orb'}.
Unlike in \S\ref{subsec:dist-g-to-g'}, it is important for our vanishing result Corollary \ref{cor:orb-vanish} that we do \emph{not} require that \(\gamma\) belong to \(\Lie(G')\) (although our proof of Theorem \ref{thm:orb-to-orb'} proceeds by reduction to this case).

Fix \(\matnotn{xi}\xi \in \Gamma + \sbjtlpp{\Lie^*(G')}{-r}\).

\begin{thm}[\xcite{spice:asymptotic}*{Theorem \xref{thm:asymptotic-pi-to-pi'}}]
\label{thm:orb-to-orb'}
For every \(g \in G\), put \(\xi_g = \Ad^*(g)\inv\xi\), \(\Gamma_g = \Ad^*(g)\inv\Gamma\), \(\bG'_g = \Int(g)\inv\bG'\), \(\UU\primedual_g = \Gamma_g + \sbjtlpp{\Lie^*(G'_g)}{-r}\), and, if \(\Gamma_g\) belongs to \(\Lie^*(H)\), also \(\bH'_g = \bH \cap \bG'_g\).

Suppose that all of the relevant orbital integrals converge and that, for every \(g \in G'\bslash G/H\conn\) for which \(\Gamma_g\) belongs to \(\Lie^*(H)\), \cite{jkim-murnaghan:charexp}*{Theorem 3.1.7(1, 5)} is satisfied, and there is a finitely supported, \(\OO^{H\primeconn_g}(\UU\primedual_g)\)-indexed vector \(c(\xi_g, \gamma)\) such that
\[
\Ohat^{G'_g}_{\xi_g}(\gamma + Y')
\qeqq
\sum_{\OO' \in \OO^{H\primeconn_g}(\UU\primedual_g)}
	\Gauss_{G'_g/H'_g}(\OO', \gamma)c_{\OO'}(\xi_g, \gamma)\Ohat^{H\primeconn_g}_{\OO'}(Y')
\]
for all \(Y' \in \Lie(H'_g)\rss \cap \sbjtl{\Lie(H'_g)}r\).  Then we have that
\[
\Ohat^G_\xi(\gamma + Y) \qeqq
\sum_{\substack{
	g \in G'\bslash G/H\conn \\
	\Gamma_g \in \Lie^*(H)
}} \sum_{\OO'}
	\Gauss_{G/H}(\OO', \gamma)c_{\OO'}(\xi_g, \gamma)\Ohat^{H\conn}_{\OO'}(Y)
\]
for all \(Y \in \Lie(H)\rss \cap \sbjtl{\Lie(H)}r\).
\end{thm}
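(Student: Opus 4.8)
The plan is to follow the proof of \xcite{spice:asymptotic}*{Theorem \xref{thm:asymptotic-pi-to-pi'}}, with the spectral inputs there replaced by the geometric ones of \S\ref{subsec:dist-g-to-g'}, and to reduce first to the case $r > 0$, $\gamma \in \sbjtl{\Lie^*(G)}0$ by the scaling trick that this author learned from Tsai.  Fix $t \in \field\mult$ with $\ord t$ large and consider the substitution $(\gamma, \Gamma, r) \mapsto (t\gamma, t\Gamma, r + \ord t)$.  Since the eigenvalues of $\ad(t\gamma)$ on $\Lie(\bG)$ are $t$ times those of $\ad(\gamma)$, one has $\CC\bG{r + \ord t}(t\gamma) = \CC\bG r(\gamma) = \bH$; likewise $\Cent_\bG(t\Gamma) = \bG'$, each $\bH \cap \bG'_g$ is unchanged, \xcite{spice:asymptotic}*{Hypothesis \xref{hyp:Z*}} holds for $t\Gamma$ with parameter $r + \ord t$, and Hypothesis \ref{hyp:Lie-gamma} holds for $(t\gamma, x)$; and for $\ord t$ large we get $r + \ord t > 0$ and $t\gamma \in \sbjtl{\Lie^*(G)}0$.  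One records once the elementary behaviour of $\abs{\redD_\bG}$, $\abs{\Disc_{\bG/\bH}}$, the normalised transforms $\Ohat$, and the Weil indices $\Gauss$ under $Y \mapsto tY$, and checks that the asserted identity for the scaled data is equivalent to the original one.  Henceforth assume $r > 0$ and $\gamma \in \sbjtl{\Lie^*(G)}0$.

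Applying Theorem \ref{thm:orb-asymptotic-exists} to $\bG$, we know that $\Ohat^G_\xi(\gamma + \anondot)$ admits a finitely supported expansion $\sum_\OO b_\OO\,\Ohat^{H\conn}_\OO$ on $\Lie(H)\rss \cap \sbjtl{\Lie(H)}r$; by \xcite{spice:asymptotic}*{Lemma \xref{lem:asymptotic-check}} the coefficients $b_\OO$ are determined by the sampled values $\widecheck\muhat^G_{\xi, \gamma}(X^* + \chrc{\sbtlpp{\Lie^*(H)}x{-r}})$ as $x$ ranges over $\BB(H)$ and $X^*$ over $\sbtl{\Lie^*(H)}x{-r}$, so it suffices to compute these.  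By Lemma \ref{lem:orb-sample} with $a = r$ they are, up to explicit measure factors, the values $\muhat^G_\xi(\gamma + \chrc{\sbtl{\Lie(G)}x r, \contra{\AddChar_{X^*}}})$, and Remark \ref{rem:which-orb} converts freely between $\muhat$ and $\Ohat$.  By Corollary \ref{cor:orb-sample} such a value vanishes unless $X^*$ lies in $\Ad^*(g)\inv\Gamma + \sbjtlpp{\Lie^*(H \cap \Int(g)\inv\bG')}{-r}$ for some $g$, the relevant $g$ running over the (finitely many, by finiteness of the support of $b(\xi, \gamma)$) double cosets in $G'\bslash G/H\conn$ with $\Gamma_g \in \Lie^*(H)$; by Remark \ref{rem:Lie-symmetry} this last condition is equivalent to $\gamma \in \Lie(G'_g)$, and then $\bH'_g = \bH \cap \bG'_g = \CC{\bG'_g}r(\gamma)$ by Proposition \initref{prop:Lie-dfc-facts}\subpref{sub}, so $\bH'_g$ plays for $\bG'_g$ exactly the role $\bH'$ plays for $\bG'$ in \S\ref{subsec:dist-g-to-g'}.

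Now fix such a $g$ and apply Lemma \ref{lem:dist-g-to-g'} with $\bG'$ replaced by $\bG'_g$, with $T = \muhat^G_\xi$ and $T' = \muhat^{G'_g}_{\xi_g}$ (each normalised by the appropriate power of $\abs{\redD}$ as dictated by Remark \ref{rem:disc-Gamma}, so that the statements line up).  Its $s$-level matching hypothesis, relating $T$ evaluated on $\chrc{\sbtl{\Lie(G'_g, G)}x{(r, \Rp s)}, \contra{\AddChar_{X^*}}}$ to $T'$ evaluated on $\chrc{\sbtl{\Lie(G'_g)}x r, \contra{\AddChar_{X^*}}}$, is precisely Theorem \ref{thm:sample-orb-to-orb'} (this is where the reduction $\gamma \in \sbjtl{\Lie^*(G)}0$ is used); and its $\bG'_g$-side hypothesis is the assumed asymptotic expansion of $\Ohat^{G'_g}_{\xi_g}$, rewritten via Remark \ref{rem:which-orb} in the form $\sum_{\OO'}\Gauss_{G'_g/H'_g}(\OO', \gamma)\,c_{\OO'}(\xi_g, \gamma)\,\abs{\redD_{H'_g}(\OO')}^{1/2}\,\muhat^{H\primeconn_g}_{\OO'}(\chrc{\sbtl{\Lie(H'_g)}x r, \contra{\AddChar_{X^*}}})$.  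The conclusion of Lemma \ref{lem:dist-g-to-g'} then expresses the $g$-contribution $\abs{\Disc_{G/H}(\gamma)}^{1/2}\muhat^G_\xi(\gamma + \chrc{\sbtl{\Lie(G)}x r, \contra{\AddChar_{X^*}}})$ as $\sum_{\OO'}\Gauss_{G/H}(\OO', \gamma)\,c_{\OO'}(\xi_g, \gamma)\,\abs{\redD_H(\OO')}^{1/2}\,\muhat^{H\conn}_{\OO'}(\chrc{\sbtl{\Lie(H)}x r, \contra{\AddChar_{X^*}}})$.  Summing over the finitely many contributing $g$, converting back to $\Ohat$ via Remark \ref{rem:which-orb}, and comparing with the sampled values of $\sum_{g, \OO'}\Gauss_{G/H}(\OO', \gamma)\,c_{\OO'}(\xi_g, \gamma)\,\Ohat^{H\conn}_{\OO'}$ computed on the $H$-side, \xcite{spice:asymptotic}*{Lemma \xref{lem:asymptotic-check}} forces $b_\OO = \sum_g \Gauss_{G/H}(\OO, \gamma)\,c_\OO(\xi_g, \gamma)$, which is the assertion.

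Since the genuinely new analytic content---passing from invariant distributions on $\Lie(G)$ to invariant distributions on $\Lie(G')$---is already packaged in Lemma \ref{lem:dist-g-to-g'} and Theorem \ref{thm:sample-orb-to-orb'}, what remains is organisational, and I expect two points of friction.  The first is the scaling step: one must confirm that \emph{every} standing hypothesis (convergence of the relevant orbital integrals, Hypothesis \ref{hyp:Lie-gamma} for all $x$, the structural facts about the subgroups $\CC\bG{\anondot}(\anondot)$) survives the substitution $(\gamma, \Gamma, r) \mapsto (t\gamma, t\Gamma, r + \ord t)$, and that the $\Gauss$-factors transport correctly.  The second, which I expect to be the main obstacle, is the double-coset bookkeeping: one must verify that the pieces of $\muhat^G_\xi$ supported near $\gamma$ are indexed cleanly and without overlap by the $g \in G'\bslash G/H\conn$ with $\Gamma_g \in \Lie^*(H)$, with the correct descent group $\bH'_g = \bH \cap \bG'_g$ (which, as the remark preceding the statement warns, is generally \emph{not} $\Int(g)\inv(\bH \cap \bG')$), so that the sum that emerges is honestly finite and carries exactly the coefficients $\sum_g \Gauss_{G/H}(\OO, \gamma)\,c_\OO(\xi_g, \gamma)$.
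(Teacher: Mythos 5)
Your outline reproduces the structure of the paper's own proof: scale to reduce to the case $r > 0$, $\gamma \in \sbjtl{\Lie(G)}0$; invoke Theorem \ref{thm:orb-asymptotic-exists} for the existence of some expansion; determine coefficients by sampling via \xcite{spice:asymptotic}*{Lemma \xref{lem:asymptotic-check}} and Lemma \ref{lem:orb-sample}; and conclude by Lemma \ref{lem:dist-g-to-g'} with Theorem \ref{thm:sample-orb-to-orb'} supplying the $s$-level matching hypothesis.  Your concern about the double-coset bookkeeping is legitimate but is already packaged in the precise formulation of Lemma \ref{lem:dist-g-to-g'}, which is stated directly in terms of the $g \in G'\bslash G/H\conn$ with $\Gamma_g \in \Lie^*(H)$ and the descent groups $\bH'_g = \bH \cap \bG'_g$.

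The genuine problem is the sign in your scaling.  You propose $(\gamma, \Gamma) \mapsto (t\gamma, t\Gamma)$ with $r \mapsto r + \ord t$ for $\ord t$ large.  The substitution $\gamma \mapsto t\gamma$ does give $\CC\bG{r + \ord t}(t\gamma) = \CC\bG r(\gamma)$, since the eigenvalues of $\ad(\gamma)$ gain $\ord t$ in valuation.  But $\Gamma \mapsto t\Gamma$ also raises the valuations of the root values $\pair\Gamma{\upd\alpha(1)}$ that \xcite{spice:asymptotic}*{Hypothesis \initxref{hyp:Z*}(\subxref{good})} pins at depth exactly $-r$; they move to depth $-r + \ord t = -(r - \ord t)$, so that hypothesis holds for $t\Gamma$ with parameter $r - \ord t$ rather than $r + \ord t$.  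For $\ord t$ large and positive---which you need in order to make $t\gamma$ integral---this is negative, and the two sides of your rescaling produce incompatible radii, so the reduction to $r > 0$ collapses.  The correct scaling, as in the paper, sends $(\Gamma, \xi) \mapsto (\lambda^{-1}\Gamma, \lambda^{-1}\xi)$ while sending $(\gamma, Y, Y') \mapsto (\lambda\gamma, \lambda Y, \lambda Y')$; then the dual depth of $\Gamma$ and the eigenvalue depth of $\gamma$ both shift by $+\ord\lambda$, and one checks that the normalised Fourier transforms $\Ohat$ and the Weil indices $\Gauss$ on both sides of the asserted identity are unchanged under this asymmetric rescaling.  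With this correction, the rest of your argument is the paper's own.
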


\begin{note}
See Remark \ref{rem:black-box} regarding our imposition of \cite{jkim-murnaghan:charexp}*{Theorem 3.1.7(1, 5)} as a hypothesis.

The group \(\bH'_g\) usually does \emph{not} equal \(\Int(g)\inv(\bH \cap \bG')\).
\end{note}

\begin{proof}
Since replacing \((\Gamma, \gamma, \xi, Y', Y)\) by \((\lambda\inv\Gamma, \lambda\gamma, \lambda\inv\xi, \lambda Y', \lambda Y)\), where \(\lambda\) belongs to \(\field\mult\), and hence each \(\xi_g\) by \(\lambda\inv\xi_g\) and each \(\OO' \in \OO^{H\primeconn_g}(\UU\primedual_g)\) by \(\lambda\inv\OO'\), changes none of \(\Ohat^{G'_g}_{\xi_g}(\gamma + Y')\), \(\Gauss_{G'_g/H'_g}(\OO', \gamma)\), \(\Ohat^{H\primeconn_g}_{\OO'}(Y')\), \(\Ohat^G_\xi(\gamma + Y)\), \(\Gauss_{G/H}(\OO', \gamma)\), and \(\Ohat^{H\conn}_{\OO'}(Y)\), and since this transformation replaces \(r\) by \(r + \ord(\lambda)\), we may assume, upon choosing \(\lambda\) sufficiently close to \(0\), that \(r\) is positive and \(\gamma\) belongs to \(\sbtl{\Lie^*(G)}x 0\).  We do so now.

By Theorem \ref{thm:orb-asymptotic-exists}, there is \emph{some} asymptotic expansion for \(\Ohat^G_\xi\) in terms of \(\OO^{H\conn}(\Ad^*(G)\Gamma) \subseteq \OO^{H\conn}(\UU\primedual)\).  By Remark \ref{rem:which-orb} and \xcite{spice:asymptotic}*{Lemma \xref{lem:asymptotic-check}}, to check that the proposed expansion in the statement is correct, it suffices to show that, whenever \(g_0 \in G\) is such that \(\Ad^*(g_0)\inv\Gamma\) belongs to \(\Lie^*(H)\), we have for all \(x \in \BB(H'_{g_0})\) and \(X^* \in \sbtl{\Lie^*(H)}x{-r} \cap \UU_{g_0}\primedual\) that
\[
\sum_{\OO'}
	\abs{\redD_H(\OO')}^{1/2}
	\Gauss_{G/H}(\OO', \gamma)
	c_{\OO'}(\xi_{g_0}, \gamma)
	\muhat^{H\conn}_{\OO'}(\chrc{\sbtl{\Lie(H)}x r, \contra{\AddChar_{X^*}}})
\]
equals
\[
\int_{\Lie(H)}
	\abs{\Disc_{G/H}(\gamma)}^{1/2}
	\abs{\redD_G(\xi)}^{1/2}
	\muhat^G_\xi(\gamma + Y)
	\chrc{\sbtl{\Lie(H)}x r, \contra{\AddChar_{X^*}}}(Y)
\upd Y,
\]
which, by Lemma \ref{lem:orb-sample}, equals
\[
\abs{\Disc_{G/H}(\gamma)}^{1/2}\abs{\redD_G(\xi)}^{1/2}\muhat^G_\xi(\gamma + \chrc{\sbtl{\Lie(G)}x r, \contra{\AddChar_{X^*}}}).
\]

We assume for notational convenience that \(g_0\) is the identity.  Put \(\bH' = \bH'_{g_0}\) and \(\UU' = \UU'_{g_0}\).  By the Lie-algebra analogue of \xcite{spice:asymptotic}*{Lemma \xref{lem:in-H'}} and the fact that \(\bH\conn\) equals \(\Cent_\bG(\gamma)\conn\), if there is anything to test, which is to say if \(\Lie^*(H) \cap \UU\primedual\) is non-empty, then \(\gamma\) belongs to \(\Lie(H')\) and \(\Gamma\) belongs to \(\Lie^*(H')\).

By Remark \ref{rem:which-orb}, (the other direction of) \xcite{spice:asymptotic}*{Lemma \xref{lem:asymptotic-check}}, and Lemma \ref{lem:orb-sample}, we have the desired equality ``with primes''; that is to say, we know that
\[
\sum_{\OO' \in \OO^{H\primeconn}(\UU')}
	\abs{\redD_{H'}(\OO')}^{1/2}
	\Gauss_{G'/H'}(\OO', \gamma)
	c_{\OO'}(\xi, \gamma)
	\muhat^{H\primeconn}_{\OO'}(\chrc{\sbtl{\Lie(H')}x r, \contra{\AddChar_{X^*}}})
\]
equals
\[
\abs{\Disc_{G'/H'}(\gamma)}^{1/2}
\abs{\redD_{G'}(\xi)}^{1/2}
\muhat^{G'}_\xi(\gamma + \chrc{\sbtl{\Lie(G')}x r, \contra{\AddChar_{X^*}}}).
\]
The desired equality thus follows from Theorem \ref{thm:sample-orb-to-orb'} and Lemma \ref{lem:dist-g-to-g'}.
\end{proof}

\begin{cor}
\label{cor:onestep-orb-vanish}
With the notation and hypotheses of Theorem \ref{thm:orb-to-orb'}, the function \(\Ohat^G_\xi\) vanishes on \(\gamma + (\Lie(H)\rss \cap \sbjtl{\Lie(H)}r)\) unless \(\gamma\) lies in \(\Ad(G)\Lie(G')\).
\end{cor}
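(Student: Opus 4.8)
The plan is to read the statement directly off the asymptotic expansion of Theorem~\ref{thm:orb-to-orb'}. Under the hypotheses of that theorem, for every \(Y \in \Lie(H)\rss \cap \sbjtl{\Lie(H)}r\) the value \(\Ohat^G_\xi(\gamma + Y)\) is a finite linear combination, over those double cosets \(g \in G'\bslash G/H\conn\) for which \(\Gamma_g = \Ad^*(g)\inv\Gamma\) lies in \(\Lie^*(H)\) (and over \(\OO' \in \OO^{H\primeconn_g}(\UU_g\primedual)\)), of the terms \(\Gauss_{G/H}(\OO', \gamma)c_{\OO'}(\xi_g, \gamma)\Ohat^{H\conn}_{\OO'}(Y)\). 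Consequently, if \(\Ohat^G_\xi\) does not vanish identically on \(\gamma + (\Lie(H)\rss \cap \sbjtl{\Lie(H)}r)\), the index set of the outer sum cannot be empty, so there is at least one \(g \in G\) with \(\Gamma_g \in \Lie^*(H)\). (One could instead run the same argument from Theorem~\ref{thm:orb-asymptotic-exists} together with the emptiness of \(\OO^{H\conn}(\anondot)\) on the empty set: non-vanishing forces \(\OO^{H\conn}(\Ad^*(G)\Gamma) = \OO^{H\conn}(\Ad^*(G)\Gamma \cap \Lie^*(H))\) to be non-empty, hence \(\Ad^*(G)\Gamma \cap \Lie^*(H) \ne \emptyset\), and any element of that intersection has the form \(\Ad^*(g)\inv\Gamma\) with \(\Ad^*(g)\inv\Gamma \in \Lie^*(H)\).)

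Next I would convert the condition \(\Gamma_g \in \Lie^*(H)\) into membership of \(\gamma\) in \(\Ad(G)\Lie(G')\), using the \(\ad^*\)-symmetry of Remark~\ref{rem:Lie-symmetry}. Since \(\bG'_g = \Int(g)\inv\bG' = \Cent_\bG(\Gamma_g)\), Remark~\ref{rem:Lie-symmetry} applied with \(\Gamma_g\) and \(\bG'_g\) in place of \(\Gamma\) and \(\bG'\) says that \(\Gamma_g \in \Lie^*(H)\) is equivalent to \(\ad^*(\gamma)\Gamma_g = 0\), hence to \(\gamma \in \Cent_{\Lie(G)}(\Gamma_g) = \Lie(G'_g)\). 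But \(\Lie(G'_g) = \Ad(g)\inv\Lie(G')\), so \(\gamma\) lies in \(\Ad(g)\inv\Lie(G') \subseteq \Ad(G)\Lie(G')\), which is precisely the assertion.

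I do not expect a real obstacle: the corollary is a bookkeeping consequence of the fact that every coadjoint orbit appearing in the expansion lies in \(\Lie^*(H)\), together with the elementary equivalence \(\ad^*(\gamma)X^* = 0 \iff X^* \in \Lie^*(H)\) recorded in Remark~\ref{rem:Lie-symmetry}. The one point deserving an explicit sentence is the degenerate case in which the index set of the sum is empty: there the expansion reads \(\Ohat^G_\xi(\gamma + Y) = 0\) on the set in question --- which is exactly the vanishing being asserted --- so the contrapositive is immediate, and one uses only the definition of \(\OO^{H\conn}(\anondot)\), which assigns no orbits to the empty set.
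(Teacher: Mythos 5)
Your argument is correct, and it is essentially the only reasonable way to read the corollary off Theorem~\ref{thm:orb-to-orb'}; the paper gives no proof precisely because this is the intended bookkeeping. The key observation is exactly the one you give: the outer sum in the expansion is indexed by those \(g \in G'\bslash G/H\conn\) with \(\Gamma_g = \Ad^*(g)\inv\Gamma \in \Lie^*(H)\), and if no such \(g\) exists the right-hand side is the empty sum, which is \(0\). The translation from ``\(\Gamma_g \in \Lie^*(H)\) for some \(g\)'' to ``\(\gamma \in \Ad(G)\Lie(G')\)'' via Remark~\ref{rem:Lie-symmetry} applied to the conjugated pair \((\Gamma_g, \bG'_g)\) is also exactly right, using \(\bG'_g = \Cent_\bG(\Gamma_g)\), \(\Lie^*(\bH) = \ker\ad^*(\gamma)\), and \(\Lie(\bG'_g) = \ker\ad^*(\Gamma_g)\), so that \(\Gamma_g \in \Lie^*(H) \iff \ad^*(\gamma)\Gamma_g = 0 \iff \gamma \in \Lie(G'_g) = \Ad(g)\inv\Lie(G')\).

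Two very small remarks. First, your parenthetical alternative via Theorem~\ref{thm:orb-asymptotic-exists} is a useful sanity check, since the corollary is stated only ``with the notation of'' Theorem~\ref{thm:orb-to-orb'}, not ``under the hypotheses of''; but in practice Theorem~\ref{thm:orb-asymptotic-exists} guarantees that the needed asymptotic expansions for the \(G'_g\) exist, so Theorem~\ref{thm:orb-to-orb'} applies unconditionally anyway (subject to the standing convergence and Hypothesis assumptions of the section). Second, the phrase ``every coadjoint orbit appearing in the expansion lies in \(\Lie^*(H)\)'' is slightly loose --- the orbits always lie in \(\Lie^*(H)\) by definition; the real point is that the outer index set (or, in the alternative route, \(\OO^{H\conn}(\Ad^*(G)\Gamma \cap \Lie^*(H))\)) is empty --- but the substance of your argument is not affected.
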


\section{Supercuspidal representations}
\label{sec:cuspidal}

Throughout \S\ref{sec:cuspidal}, we continue to use the
	non-Archimedean local field \field
and	reductive group \bG from \S\ref{sec:hyps}.
As in \S\ref{sec:orbits}, we assume that \bG is connected, \(\bG_\tamefield\) is split, and the characteristic of \(\sbjat\field 0\) is not \(2\).

\numberwithin{thm}{subsection}
\subsection{Notation}
\label{subsec:cuspidal-notn}

This subsection has no new results; it only fixes for later use the notation, which is as in \xcite{spice:asymptotic}*{\S\xref{sec:cuspidal}} except that it considers a full Yu datum rather than the `one-step' version there.  Specifically, we will use this notation in some, but not all, of \S\ref{sec:characters}, and in all of \S\ref{subsec:char-unwind}, but not in \S\ref{subsec:orb-unwind}.

Suppose we have a Yu datum \((\vbG, o, \vec r, \rho_0', \vec\phi)\) as in \cite{yu:supercuspidal}*{\S3}, which, for us, constitutes
	\begin{itemize}
	\item a nested, tame, twisted Levi sequence \justnotn[G]{\bG^j}\(\vbG = (\bG^0 \subseteq \dotsb \subseteq \bG^{\ell - 1} \subseteq \bG^\ell = \bG)\),
	\item a point \mnotn o in \(\BB(G^0)\),
	\item a vector \justnotn[r]{r_j}\((r_0 < \dotsb < r_{\ell - 1} \le r_\ell)\) of real numbers,
	\item a representation \(\rho_0'\) of the stabiliser \matnotn{Go}{G^0_o} of the image of \(o\) in the reduced building of \(G^0\),
and	\item a vector \justnotn[phi]{\phi_j}\(\vec\phi = (\phi_0, \dotsc, \phi_{\ell - 1}, \phi_\ell)\), with \(\phi_j\) a character of \(G^j\) for each index \(j\).
	\end{itemize}
For each index \(j \in \sset{0, \dotsc, \ell}\), we write \matnotn{pij}{\pi_j} for the representation of \(G^j\) constructed from the truncated datum \(((\bG^0 \subseteq \dotsb \subseteq \bG^j), o, (r_0 < \dotsb < r_{j - 1} \le r_\ell), \rho_0', (\phi_0, \dotsc, \phi_{j - 1}, \prod_{i = j}^\ell \phi_i))\) in \cite{yu:supercuspidal}*{\S4}, modified as in \xcite{fintzen-kaletha-spice:twist}*{\S\xref{sub:Yu}}.  In particular, note that the inducing datum for \(\pi_j\) \emph{includes} all characters \(\phi_i\) with \(i \ge j\), and hence has depth \(r_\ell\); so, for example, \(\pi_0\) equals \(\Ind_{G^0_x}^{G^0} \rho_0' \otimes \prod_{j = 0}^\ell \phi_j\) and is usually of positive depth, not of depth \(0\).  Put \(\matnotn{pi}\pi = \pi_\ell\) and \(\matnotn{piprime}{\pi'} = \pi_{\ell - 1}\).

For each index \(j \in \sset{0, \dotsc, \ell - 1}\), we write \matnotn{phihat}{\hat\phi_j} for the character of \(\sbtl{G^{j + 1}}o{(r_j, \Rpp{r_j/2})}\) obtained by extending \(\phi_j\) trivially across \(\sbtlp{(G^j, G^{j + 1})}o{(r_j, r_j/2)}\).

The components of the datum satisfy certain conditions \cite{yu:supercuspidal}*{\S3, p.~590}, which we mostly do not re-capitulate here; but a crucial one is that, for each index \(j < \ell\), there is a \(\bG^j\)-fixed element \(\matnotn{Gammaj}{\Gamma^j} \in \sbjtl{\Lie^*(G^j)}{-r_j}\) that represents \(\phi_j\) on \(\sbtl{G^j}o{r_j}\), in the sense of \cite{yu:supercuspidal}*{\S5, p.~594}, and that is generic in the sense of \cite{yu:supercuspidal}*{\S8, \textbf{GE1} and \textbf{GE2}}.  Since \bG is connected, this genericity condition implies that \xcite{spice:asymptotic}*{Hypothesis \xref{hyp:Z*}} is satisfied with \((\bG^{j + 1}, \Gamma^j)\) in place of \((\bG, Z^*_o)\).  Because \bG is tame, so that Moy--Prasad isomorphisms are always available, \xcite{spice:asymptotic}*{Hypothesis \xref{hyp:K-type}} with \((\sbtl{G^{j + 1}}o{r_j}, \hat\phi_j)\) in place of \((\sbtl G o r, \hat\phi_o)\) follows from the definition of ``representing'' \cite{yu:supercuspidal}*{\S5}.

For every index \(j \in \sset{0, \dotsc, \ell - 1}\) and every \(x \in \BB(G^j)\), we have that \(\phi_j\) equips \(V^j_x \ldef \sbtl{(G^j, G^{j + 1})}x{(r_j, r_j/2)}/\sbtl{(G^j, G^{j + 1})}x{(r_j, \Rpp{r_j/2})}\) with the structure of a symplectic space over \(\mathbb F_p\), where \(p\) is the residual characteristic of \field.  Write \(G^j_x\) for the stabiliser in \(G^j\) of the image of \(x\) in the reduced building of \bG.  Then \cite{yu:supercuspidal}*{Proposition 11.4} constructs a canonical symplectic action, in the sense of \cite{yu:supercuspidal}*{p.~601}, of \(G^j_x \ltimes \sbtl{(G^j, G^{j + 1})}x{(r_j, r_j/2)}/{\ker(\hat\phi_j)}\) on \(V^j_x\), and we write \matnotn[\tilde\phi_x]{phitilde}{(\phi_j)\sptilde_x} for the pull-back to \(G^j_x \ltimes \sbtl{(G^j, G^{j + 1})}x{(r_j, r_j/2)}\) of the Weil representation of the Weil--Heisenberg group of \(V^j_x\) whose central character is the restriction to \(\sbtl{(G^j, G^{j + 1})}x{(r_j, \Rpp{r_j/2})}\) of \(\hat\phi_j\).  Note that we consider this Weil-type representation for \emph{every} point \(x\) of \(\BB(G^j)\), not just for \(x = o\).  However, unlike in \cite{yu:supercuspidal}*{\S4}, it is \emph{not} the bare Weil representation \(\tilde\phi_x\), but its twist by the character \(\epsilon^{G^{j + 1}/G^j}_x\) constructed in \xcite{fintzen-kaletha-spice:twist}*{Theorem \xref{thm:main}}, that we use when producing inducing data for the supercuspidal \(\pi_j\).

\subsection{The twisted Weil representation}
\label{subsec:Weil-rep}

We continue using the notation of \ref{subsec:cuspidal-notn}, except that we put \(\matnotn{Gprime}{\bG'} = \bG^{\ell - 1}\) and will often drop a subscript \(\ell - 1\), so that \(\matnotn{phi}\phi\) means \(\phi_{\ell - 1}\), \(\matnotn{phihat}{\hat\phi}\) means \(\hat\phi_{\ell - 1}\), and, for every \(x \in \BB(G')\), \(\matnotn{phitilde}{\tilde\phi_x}\) means \((\phi_{\ell - 1})\sptilde_x\).

We need a small technical result on the interaction between the character of the Weil representation and the Jordan decomposition in a symplectic group.  Lemma \ref{lem:Weil-Jordan} is probably well known, but we were not able to find a reference.

\begin{lem}
\label{lem:Weil-Jordan}
Let \((V, \anonpair)\) be a finite-dimensional, non-degenerate symplectic space over \(\sbjat\field 0\), and suppose that \(s, u \in \Sp(V)\) are commuting elements such that \(s\) is semisimple and \(u\) is unipotent.  Then the character of the Weil representation of \(V\) at \(s u\) is \(\card{\Cent_V(s)}\inv[1/2]\) times the product of the characters of the Weil representations of \(V\) at \(s\), and of \(\Cent_V(s)\) at \(u\).
\end{lem}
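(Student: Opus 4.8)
The plan is to reduce everything to the known formula for the character of the Weil representation at a single element in terms of Weil indices of quadratic forms, and then to verify the multiplicativity by comparing those quadratic forms directly. First I would recall the standard fact (due to Maktouf, or deducible from Th\'omas and from Gurevich--Howe) that, up to a sign governed by a Weil index that one can make explicit, the character of the Weil representation $\omega$ of $\Sp(V)$ at $g$ is
\[
\Theta_\omega(g) = \frac{\gamma_\psi\bigl(q_g\bigr)}{\abs{\det(g - 1)_{V/V^g}}^{1/2}},
\]
where $q_g$ is the Cayley-type quadratic form $x \mapsto \pair{(g+1)(g-1)^{-1}x}{x}$ on $V/V^g$ and $\gamma_\psi$ is the Weil index attached to $\psi$; here $V^g = \Cent_V(g)$. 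One has to be slightly careful that $g - 1$ is invertible on $V/V^g$, which holds when $g$ is semisimple, and that the unipotent contribution is handled separately, so it is cleanest to treat $s$ and $u$ asymmetrically from the start.

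The key observation is that $\Cent_V(su)$ equals $\Cent_V(s) \cap \Cent_V(u) = \Cent_{\Cent_V(s)}(u)$, using that $s$ and $u$ commute and that $s u$ has $s$ as its semisimple part and $u$ as its unipotent part (so an element fixed by $s u$ is fixed by each of $s$, $u$). This immediately shows the spaces match: $V/V^{su} \cong \bigl(V/V^s\bigr) \oplus \bigl(V^s/(V^s)^u\bigr)$ as $\sbjat\field 0$-vector spaces, and moreover $s$ acts as $1$ on the second summand while $u$ acts as $1$ on the first. The next step is to check that the quadratic form $q_{su}$ on $V/V^{su}$ is the orthogonal direct sum of $q_s$ on $V/V^s$ and $q_u$ on $V^s/(V^s)^u$. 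On the summand $V/V^s$, since $u$ acts trivially there, $(su + 1)(su-1)^{-1}$ restricts to $(s+1)(s-1)^{-1}$; on $V^s/(V^s)^u$, since $s$ acts trivially, it restricts to $(u+1)(u-1)^{-1}$; and the cross terms in the symplectic pairing vanish because $V/V^s$ and $V^s/(V^s)^u$ are symplectically orthogonal (the pairing of a vector in $V^s$ with $(g-1)w$ for $w \in V$, pushed through the decomposition, sees only the $s$-isotypic pieces). Then multiplicativity of $\gamma_\psi$ under orthogonal direct sums of quadratic forms gives $\gamma_\psi(q_{su}) = \gamma_\psi(q_s)\gamma_\psi(q_u)$, and multiplicativity of the determinant over the block decomposition handles the normalizing factor, so $\Theta_\omega(su) = \Theta_\omega(s)\cdot\Theta_{\omega'}(u)$ where $\omega'$ is the Weil representation of $\Sp(\Cent_V(s))$.

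The main obstacle I anticipate is not the linear algebra of the quadratic forms but pinning down the precise normalization and the auxiliary Weil-index/sign factors in the character formula, since different sources package the formula differently (some in terms of $\gamma_\psi(1)$-normalized indices, some with an extra Cayley-transform discrepancy, some only on the metaplectic cover). I would either cite a single clean reference for the character formula and carry its normalization throughout, or---probably safer given that the paper controls its own conventions for Weil indices in \xcite{spice:asymptotic}*{Definition \xref{defn:Weil-index}}---give a short self-contained derivation of the needed case via the explicit oscillator realization, reducing to the claim that the partial Fourier transform defining $\omega(su)$ factors as a tensor product over the $s$-eigenspace decomposition of $V$ (on which $u$ acts), which again comes down to the orthogonal decomposition above. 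The unipotent piece $\Theta_{\omega'}(u)$ is then a positive real number (a power of $q^{1/2}$ up to sign), consistent with the fact that $q_u$ is a split form, so no subtle root-of-unity factor is lost there.
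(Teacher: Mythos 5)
Your strategy is broadly parallel to the paper's---decompose $V$ along the $s$-fixed subspace, use that the Weil representation is multiplicative over an orthogonal decomposition, and then compare the two contributions on each piece---but you package the comparison on the piece $(V^s)^\perp \cong V/V^s$ via the Cayley-transform character formula, whereas the paper invokes Thomas's simpler $\det(g-1)$ formula there and compares $s$ and $su$ directly. Your version has a genuine gap at exactly the step where you need to do that comparison.

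The claim that ``$u$ acts trivially on $V/V^s$'' is false in general, and it is what you are leaning on to conclude that $(su+1)(su-1)^{-1}$ restricts on that summand to $(s+1)(s-1)^{-1}$. A commuting unipotent element can act nontrivially on $(V^s)^\perp$. For instance, take $V$ of dimension $4$ with $s = \operatorname{diag}(a,a,a^{-1},a^{-1})$ for $a \neq \pm 1$, so that $V^s = 0$ and $(V^s)^\perp = V$; the centralizer of $s$ in $\Sp(V)$ is a $\GL_2$ which contains plenty of nontrivial unipotents. More generally, $u$ need only preserve the generalized $s$-eigenspaces, not act as the identity on those with eigenvalue $\neq 1$. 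So the two Cayley operators on $(V^s)^\perp$ are in general distinct, and you cannot identify the two quadratic forms pointwise.

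What saves the argument---and is the actual content of the paper's proof---is that the two forms nonetheless have the same Weil index, because they have the same dimension and the same discriminant. Indeed, on $(V^s)^\perp$ the operator $su \pm 1 = (s \pm 1) + s(u-1)$ has semisimple part $s \pm 1$ (the two summands commute and $s(u-1)$ is nilpotent), so $\det(su \pm 1) = \det(s \pm 1)$, hence $\det\bigl((su+1)(su-1)^{-1}\bigr) = \det\bigl((s+1)(s-1)^{-1}\bigr)$; over a finite field of odd characteristic that equality of discriminants (together with equal dimension) forces the two Cayley forms to be isomorphic, so their Weil indices agree. This is precisely the observation ``$s - 1$ is the semisimple part of $su - 1$'' on which the paper's proof turns; you need it too, in place of the false triviality claim. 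With that substitution your argument closes, though at that point it is essentially a restatement of the paper's, with the Cayley formula adding bookkeeping rather than saving any.
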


\begin{proof}
There is a unique \(s\)-stable complement \(\Cent_V(s)^\perp\) to \(\Cent_V(s)\) in \(V\), and \(\Cent_V(s)\) is orthogonal to \(\Cent_V(s)^\perp\).  Therefore, the character of the Weil representation of \(V\) at \(s u\) is the product of the characters of the Weil representations of \(\Cent_V(s)^\perp\) and of \(\Cent_V(s)\) at \(s u\), and similarly for \(s\); and the character of the Weil representation of \(\Cent_V(s)\) at \(s\) is \(\card{\Cent_V(s)}^{1/2}\).  Since \(s u\) and \(u\) have the same action on \(\Cent_V(s)\), it remains only to show that the character of the Weil representation of \(\Cent_V(s)^\perp\) has the same values at \(s\) and \(s u\).  Using, and with the notation of, \cite{thomas:weil}*{\S1.1.1, p.~221, and Remark 1.3}, we have that the relevant characters are \(\gamma(1)^{\dim(\Cent_V(s)^\perp)}\det(s - 1)\) and \(\gamma(1)^{\dim(\Cent_V(s)^\perp)}\det(s u - 1)\), which are equal since \(s - 1\) is the semisimple part of \(s u - 1\).
\end{proof}

Suppose that \(\gamma\) is a semisimple element of \(G'\) that is compact modulo the centre (of \bG or of \(\bG'\) does not matter, by \cite{yu:supercuspidal}*{\S3, p.~590, \textbf{D1}}), that \(\gamma\) satisfies Hypothesis \ref{hyp:fc-building}, and that \((\gamma, x, \Rp0)\) satisfies Hypothesis \ref{hyp:gp-gamma} for every \(x \in \BB(\CCp{\bG'}0(\gamma))\).

Notation \ref{notn:e-} is temporary, introduced only to recall a collection of notation from other papers that we need to compare in Proposition \ref{prop:e-}.

\begin{notn}
\label{notn:e-}
Fix a maximal tame torus \bT in \(\CCp{\bG'}0(\gamma)\), and a point \(x \in \BB(T)\).

Write \(G'_x\) for the stabiliser of the image of \(x\) in the reduced building of \bG, and let \(\epsilon_x^{G/G'}\) be the character of \(G'_x\) constructed in \xcite{fintzen-kaletha-spice:twist}*{Theorem \xref{thm:main} and Lemma \xref{lem:diff}} using the element \(\Gamma\) of \(\Cent_{\Lie^*(G')}(\bG')\) \xcite{fintzen-kaletha-spice:twist}*{Remark \xref{rem:generic}}.

Recall the functions \(\tilde e(G/G', T, \anondot)\) and \(\tilde e(\CCp G 0(\gamma)/\CCp{G'}0(\gamma), \anondot)\), defined in \xcite{debacker-spice:stability}*{Definition \xref{defn:signs}}, and the function \(\epsilon_{\sharp, x}^{G/G'}\) on \((\bT/{\Zent(\bG)})(\field)\subb\), defined in \xcite{fintzen-kaletha-spice:twist}*{Definition \xref{dfn:lstk}}.
(Technically, \(\tilde e(G, T, \anondot)\) is defined only on \(T\subb\); but its definition is in terms only of the projections of root values to \(\sbjat\field 0\), hence makes sense as a function on \((\bT/{\Zent(\bG)})(\field)\subb\).)
\end{notn}

\begin{rem}
\label{rem:G'-tame}
By Lemma \initref{lem:shallow-dfc}\subpref{0+}, Remark \ref{rem:shallow-dfc}, and \xcite{spice:jordan}*{Corollary \xref{cor:abs-F-ss-tame}}, every maximal tame torus in \(\CCp{\bG'}0(\gamma)\) is maximal tame in \(\bG'\).  Since \(\bG'\) is a tame, twisted Levi subgroup of \bG, every maximal tame torus in \(\bG'\) is maximal tame in \bG; and, since \bG splits over a tame extension of \field, in fact the maximal tame tori in \bG are maximal tori in \bG.
\end{rem}

Although the formula for \(\tr \tilde\phi_x(\gamma)\) in Proposition \ref{prop:e-} is perhaps of some interest on its own, the main interest for us is in the independence of \(\epsilon_x^{G/G'}(\gamma)\frac{\tr \tilde\phi_x(\gamma)}{\abs{\tr \tilde\phi_x(\gamma)}}\) from the parameters that seem to go into its construction.

\begin{prop}
\label{prop:e-}
Use Notation \ref{notn:e-} and the notation \(\sbjat\gamma 0\) of Remark \ref{rem:shallow-dfc}.
For every maximal tame torus \bT in \(\CCp{\bG'}0(\gamma)\)
(which is a maximal torus in \bG, by Remark \ref{rem:G'-tame})
and every \(x \in \BB(T)\), we have that
\[
\tr \tilde\phi_x(\gamma)
\qeqq
\tr \tilde\phi_x(\sbjat\gamma 0)
= \indx{\sbat{\CC G 0(\gamma)}x s}{\sbat{\CC{G'}0(\gamma)}x s}^{1/2}
\tilde e(\pi', \sbjat\gamma 0)
\epsilon_{\sharp, x}^{G/G'}(\sbjat\gamma 0);
\]
and
\(\epsilon_x^{G/G'}(\gamma)
\frac
	{\tr \tilde\phi_x(\gamma)}
	{\abs{\tr \tilde\phi_x(\gamma)}}
\) depends only on \bG, \(\bG'\), \(r\), and \(\sbjat\gamma 0\), not on \bT, \(x\), \(\gamma\), or \(\Gamma\).
\end{prop}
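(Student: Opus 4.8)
The plan is to establish the explicit formula for $\tr\tilde\phi_x(\gamma)$ and then deduce the independence assertion from it.  For the equality $\tr\tilde\phi_x(\gamma) = \tr\tilde\phi_x(\sbjat\gamma 0)$, write $\gamma = \sbjat\gamma 0\dotm\sbjtlp\gamma 0$, with $\sbjat\gamma 0$ the absolutely semisimple part (as in Remark \ref{rem:gamma-ts}) and $\sbjtlp\gamma 0 = \sbjat\gamma 0\inv\gamma$ the topologically unipotent part.  Since $\gamma$ and $\sbjat\gamma 0$ both fix $x$, so does $\sbjtlp\gamma 0$, which is therefore a topologically unipotent element of $\sbtlp{G'}x 0$ and so, by the commutator estimates for the Moy--Prasad filtrations, acts trivially on the symplectic $\sbjat\field 0$-space underlying $\tilde\phi_x$.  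Applying Lemma \ref{lem:Weil-Jordan} to $\tilde\phi_x$ with these commuting semisimple and unipotent parts --- the unipotent part being trivial, hence contributing the trivial factor --- gives $\tr\tilde\phi_x(\gamma) = \tr\tilde\phi_x(\sbjat\gamma 0)$.

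For the second equality I would feed the semisimple element $\sbjat\gamma 0$ into the explicit character formula for a Weil representation at a semisimple element (Thomas's formula, as recalled in the proof of Lemma \ref{lem:Weil-Jordan}): it expresses $\tr\tilde\phi_x(\sbjat\gamma 0)$ as $\card{\Cent_V(\sbjat\gamma 0)}^{1/2}$ times the Weil index of the Cayley-transform quadratic form cut out on the $\sbjat\gamma 0$-stable complement $\Cent_V(\sbjat\gamma 0)^\perp$ by $\sbjat\gamma 0$ and the symplectic form $b(Y_1, Y_2) = \pair\Gamma{\comm{Y_1}{Y_2}}$, where $V$ is the symplectic space underlying $\tilde\phi_x$.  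Using Lemma \initref{lem:shallow-dfc}\subpref{0+} together with Remark \initref{rem:fc-building}\subpref{up-to-G}, the fixed space $\Cent_V(\sbjat\gamma 0)$ is identified with the reductive quotient $\sbat{(\CCp{G'}0(\gamma), \CCp G 0(\gamma))}x{(r, s)}$, whose cardinality is $\indx{\sbat{\CC G 0(\gamma)}x s}{\sbat{\CC{G'}0(\gamma)}x s}$; this produces the first factor of the asserted formula.  It then remains to match the residual Weil index, root subspace by root subspace of $\bT$ in $\Lie(G)/\Lie(G')$, with the product $\tilde e(\pi', \sbjat\gamma 0)\epsilon_{\sharp, x}^{G/G'}(\sbjat\gamma 0)$, by unwinding the definitions of $\tilde e$ in \xcite{debacker-spice:stability}*{Definition \xref{defn:signs}} and of $\epsilon_{\sharp, x}^{G/G'}$ in \xcite{fintzen-kaletha-spice:twist}*{Definition \xref{dfn:lstk}}.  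This reconciliation of the Gauss-sum conventions of \S\ref{subsec:Weil-rep} with the normalisations of \cite{fintzen-kaletha-spice:twist}, which contributes one quadratic-residue factor for each $\sbjat\gamma 0$-moved pair of root spaces and one Weil-index factor for each $\sbjat\gamma 0$-fixed root space, is the step I expect to be the main obstacle.

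Granting the formula, the independence assertion follows formally.  Passing to $\tr\tilde\phi_x(\gamma)/\abs{\tr\tilde\phi_x(\gamma)}$ discards the positive real factor $\indx{\sbat{\CC G 0(\gamma)}x s}{\sbat{\CC{G'}0(\gamma)}x s}^{1/2}$, leaving only the phases of $\tilde e(\pi', \sbjat\gamma 0)$ and $\epsilon_{\sharp, x}^{G/G'}(\sbjat\gamma 0)$.  The sign $\tilde e(\pi', \sbjat\gamma 0)$ is, by its definition as a product of quadratic-residue symbols of root values --- which, since the residual characteristic is odd, are unchanged when $\sbjat\gamma 0$ is replaced by any element with the same absolutely semisimple part --- a function of $\bG$, $\bG'$, $r$, and $\sbjat\gamma 0$ alone.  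Since $\epsilon_x^{G/G'}$ is a quadratic character and the residual characteristic is odd, it kills the pro-$p$ element $\sbjtlp\gamma 0$, so that $\epsilon_x^{G/G'}(\gamma) = \epsilon_x^{G/G'}(\sbjat\gamma 0)$; and the product of $\epsilon_x^{G/G'}(\sbjat\gamma 0)$ with the phase of $\epsilon_{\sharp, x}^{G/G'}(\sbjat\gamma 0)$ is independent of $x$ and of the generic element $\Gamma$ by the comparison of the two sign functions established in \xcite{fintzen-kaletha-spice:twist}*{Lemma \xref{lem:diff}} --- which is exactly the reason the intrinsic variant $\epsilon_{\sharp, x}^{G/G'}$ was introduced.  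Assembling these observations shows that $\epsilon_x^{G/G'}(\gamma)\tr\tilde\phi_x(\gamma)/\abs{\tr\tilde\phi_x(\gamma)}$ depends only on $\bG$, $\bG'$, $r$, and $\sbjat\gamma 0$, as claimed.
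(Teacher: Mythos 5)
The first step of your argument contains a genuine error.  You assert that \(\sbjtlp\gamma 0 = \sbjat\gamma 0\inv\gamma\) lies in \(\sbtlp{G'}x 0\) because it is topologically unipotent and fixes \(x\), and therefore acts trivially on the symplectic space underlying \(\tilde\phi_x\); but a topologically unipotent element of \(G'_x\) need not lie in the pro-unipotent radical \(\sbtlp{G'}x 0\).  Since \(\gamma\) generally has non-trivial image in the finite group \(G'_x/\sbtlp{G'}x 0\), the \(p\)-part of that image is typically a non-trivial unipotent element, and so \(\sbjtlp\gamma 0\) acts as a non-trivial unipotent symplectic transformation on \(V_x\).  (The filtration estimate you have in mind shows that \(\sbtlp{G'}x 0\) acts trivially on \(V_x\), which is correct; the error is that \(\sbjtlp\gamma 0\) need not lie in \(\sbtlp{G'}x 0\).)  As written, your appeal to Lemma \ref{lem:Weil-Jordan} is then vacuous, with \(u = 1\).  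The actual content of the paper's argument is precisely what you have short-circuited:  the images of \(\sbjat\gamma 0\) and \(\sbjtlp\gamma 0\) in \(\Sp(V_x)\) are the semisimple and unipotent Jordan components of the image of \(\gamma\); the element \(\sbjtlp\gamma 0\) is shown to act trivially not on all of \(V_x\) but only on the fixed subspace \(\Cent_{V_x}(\sbjat\gamma 0)\), which requires identifying that subspace with \(\sbtl{(\CCp{G'}0(\gamma), \CCp G 0(\gamma))}x{(r, s)}/\sbtl{(\CCp{G'}0(\gamma), \CCp G 0(\gamma))}x{(r, \Rp s)}\) via Lemma \initref{lem:shallow-dfc}\subpref{0+} and then appealing to \xcite{spice:asymptotic}*{Hypothesis \initxref{hyp:gamma}(\subxref{gp})}; and it is exactly this partial triviality that Lemma \ref{lem:Weil-Jordan} exploits, since the \(\Cent_V(s)\)-factor it produces is the character of a Weil representation on which \(u\) acts trivially, hence equals the relevant dimension.

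Two smaller remarks.  For the explicit formula, you flag the reconciliation of Gauss-sum normalisations as ``the main obstacle'' without resolving it; the paper handles it by citing \cite{adler-spice:explicit-chars}*{Proposition \xref{prop:theta-tilde-phi}} for \(\tr\tilde\phi_x(\sbjat\gamma 0) = \card{\Cent_{V_x}(\sbjat\gamma 0)}^{1/2}\varepsilon(\phi, \sbjat\gamma 0)\), then \cite{debacker-spice:stability}*{Proposition \xref{prop:root}} to rewrite \(\varepsilon(\phi, \sbjat\gamma 0)\) as \(\tilde e(\pi', \sbjat\gamma 0)\varepsilon^\text{ram}(\pi', \sbjat\gamma 0)\varepsilon_{\text{symm}, \text{ram}}(\pi', \sbjat\gamma 0)\) and observing that the Gauss-sum factor \(\mf G\) and the factor \(\varepsilon_{\text{symm}, \text{ram}}\) are both trivial at an absolutely semisimple argument, and finally \cite{fintzen-kaletha-spice:twist}*{Definition \xref{dfn:lstk}} to identify \(\varepsilon^\text{ram}\) with \(\epsilon_{\sharp, x}^{G/G'}\); so this step is citation rather than new computation.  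For the independence statement, your factor-by-factor analysis glosses over the dependence of \(\tilde e\) and \(\epsilon_{\sharp, x}^{G/G'}\) on \(\bT\).  The paper argues differently: the combination \(\epsilon_x^{G/G'}(\gamma)\tr\tilde\phi_x(\gamma)/\abs{\tr\tilde\phi_x(\gamma)}\) is \(\bT\)-independent by construction and depends only on \(\sbjat\gamma 0\) by the explicit formula plus triviality of \(\epsilon_x^{G/G'}\) on pro-\(p\) elements, while the combination \(\epsilon_x^{G/G'}(\gamma)\epsilon_{\sharp, x}^{G/G'}(\gamma)\tilde e(\pi', \gamma)\) is shown to be independent of \(x\) and \(\Gamma\) via \cite{fintzen-kaletha-spice:twist}*{Definition \xref{dfn:lstk} and Theorem \xref{thm:main}} and \cite{debacker-spice:stability}*{Remark \xref{rem:signs}}; the two observations together give all the claimed independences without needing each factor to be insensitive separately.
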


\begin{proof}
Write \(V_x = \sbtl{(G', G)}x{(r, s)}/\sbtl{(G', G)}x{(r, \Rp s)}\) for the symplectic space underlying the Weil representation \(\tilde\phi_x\).

We continue to denote the images of \(\gamma\), \(\sbjat\gamma 0\), and \(\sbjtlp\gamma 0 \ldef \sbjat\gamma 0\inv\gamma\) in \(\Sp(V_x)\) by the same symbols.  Since (the images of) \(\sbjat\gamma 0\) and \(\sbjtlp\gamma 0\) have prime-to-\(p\) and \(p\)-power order, where \(p\) is the residual characteristic of \field, it follows that they are the semisimple and unipotent parts of (the image of) \(\gamma\).

As in the proof of \xcite{adler-spice:explicit-chars}*{Proposition \xref{prop:theta-tilde-phi}}, and using Lemma \initref{lem:shallow-dfc}\subpref{0+}, we find that
\[
\anonmap
	{
	\sbtl{(\CCp{G'}0(\gamma), \CCp G 0(\gamma))}x{(r, s)}/\sbtl{(\CCp{G'}0(\gamma), \CCp G 0(\gamma))}x{(r, \Rp s)}
	}
	{\Cent_{V_x}(\sbjat\gamma 0)}
\]
is an isomorphism.  In particular, by Hypothesis \initref{hyp:gp-gamma}\subpref{gp}, we have that the action of \(\gamma\) on \(\Cent_{V_x}(\sbjat\gamma 0)\) is trivial, so that the character of the Weil representation of \(\Cent_{V_x}(\sbjat\gamma 0)\) at \(\gamma\) is \(\card{\Cent_{V_x}(\sbjat\gamma 0)}^{1/2}\).  By Lemma \ref{lem:Weil-Jordan}, we have that \(\tr \tilde\phi_x(\gamma)\) equals \(\tr \tilde\phi_x(\sbjat\gamma 0)\), which, in the notation of \xcite{adler-spice:explicit-chars}*{Proposition \xref{prop:theta-tilde-phi}}, is the product of
\begin{multline*}
\card{\sbtl{(\CCp{G'}0(\gamma), \CCp G 0(\gamma))}x{(r, s)}/\sbtl{(\CCp{G'}0(\gamma), \CCp G 0(\gamma))}x{(r, \Rp s)}}^{1/2} \\
= \indx{\sbat{\CCp G 0(\gamma)}x s}{\sbat{\CCp{G'}0(\gamma)}x s}^{1/2}
\end{multline*}
with \(\varepsilon(\phi, \sbjat\gamma 0)\).

In the notation of \xcite{adler-spice:explicit-chars}*{Definition \xref{defn:gauss}} (\emph{not} of \xcite{spice:asymptotic}*{Notation \xref{notn:Gauss}} or Notation \ref{notn:Gauss}), we have that \(\mf G(\phi, \sbjat\gamma 0)\) equals \(1\), since \(\sbjat\gamma 0\) is absolutely semisimple; and \xcite{debacker-spice:stability}*{Proposition \xref{prop:root}} gives, in the notation of \xcite{debacker-spice:stability}*{Definition \xref{defn:signs}}, that \(\varepsilon(\phi, \sbjat\gamma 0) = \varepsilon(\phi, \sbjat\gamma 0)\mf G(\phi, \sbjat\gamma 0)\) equals \(\tilde e(\pi', \sbjat\gamma 0)\dotm\varepsilon^\text{ram}(\pi', \sbjat\gamma 0)\varepsilon_{\text{symm}, \text{ram}}(\pi', \sbjat\gamma 0)\).  As observed in \xcite{debacker-spice:stability}*{proof of Proposition \xref{prop:root}, p.~20} (specifically, the demonstration that \(\Xi_\text{symm} = \Xi_{\text{symm}, \text{unram}}\)), it follows again from the fact that \(\sbjat\gamma 0\) is absolutely semisimple that \(\varepsilon_{\text{symm}, \text{ram}}(\pi', \sbjat\gamma 0)\) also equals \(1\).  By \xcite{fintzen-kaletha-spice:twist}*{Definition \xref{dfn:lstk}} (see also \cite{fintzen-kaletha-spice:twist}*{Remark \xref{rem:compare-notn}}), the character \(\varepsilon^\text{ram}(\pi', \anondot)\) of \cite{debacker-spice:stability} is the same as the character \(\epsilon_{\sharp, x}^{G/G'}\) of \cite{fintzen-kaletha-spice:twist}.  We have thus established the desired equality.

By construction, \(\epsilon_x^{G/G'}(\gamma)\frac{\tr \tilde\phi_x(\gamma)}{\card{\tr \tilde\phi_x(\gamma)}}\) depends only on \bG, \(\bG'\), \(\gamma\), \(\Gamma\), and \(x\), not on \bT; and the explicit computation of \(\tr \tilde\phi_x(\gamma)\), together with the fact (as observed in \xcite{fintzen-kaletha-spice:twist}*{Lemma \xref{lem:p-vs-2}}) that \(\epsilon_x^{G/G'}(\sbjtlp\gamma 0)\) is trivial, show that its dependence on \(\gamma\) is only \textit{via} \(\sbjat\gamma 0\).  On the other hand, \xcite{fintzen-kaletha-spice:twist}*{Definition \xref{dfn:lstk} and Theorem \xref{thm:main}} and \xcite{debacker-spice:stability}*{Remark \xref{rem:signs}} show that \(\epsilon_x^{G/G'}(\gamma)\epsilon_{\sharp, x}^{G/G'}(\gamma)\tilde e(\pi', \gamma)\) depends only on \bG, \(\bG'\), \(r\), \(\gamma\), and \bT, not on \(x\) or \(\Gamma\).  (In fact the details of the proof of \xcite{fintzen-kaletha-spice:twist}*{Theorem \xref{thm:main}} show that also \(\epsilon_x^{G/G'}\) does not depend on \(\Gamma\), but we do not need this.)
\end{proof}

Notation \ref{notn:twisted-Gauss} does not really specify \(\wtilde\Gauss_G(X^*, \gamma)\) and \(\wtilde\Gauss_{G'}(X^*, \gamma)\) individually; instead, it specifies them up to simultaneous scaling by a non-\(0\) complex number.  This is perfectly acceptable in Theorem \ref{thm:pi-to-pi'} (and analogously in Theorem \ref{thm:char-unwind}), where all we are trying to do is to compare character formul{\ae} involving these two quantities.  There would be no harm in taking, for example, \(\wtilde\Gauss_{G'}(X^*, \gamma)\) to be \(1\) and \(\wtilde\Gauss_G(X^*, \gamma)\) to be \(\wtilde\Gauss_{G/G'}(X^*, \gamma)\).  However, we phrase our definition in this form in the hopes of suggesting an `absolute' definition of \(\wtilde\Gauss_G(X^*, \gamma)\), analogous to that of \(\Gauss_G(X^*, \gamma)\) in \xcite{spice:asymptotic}*{Notation \xref{notn:Gauss}}, from which our \(\wtilde\Gauss_{G/G'}(X^*, \gamma)\) can be recovered.  We do not yet know such a definition.

\begin{notn}
\label{notn:twisted-Gauss}
With the notation of Proposition \ref{prop:e-}, for every \(X^* \in \Gamma + \sbjtlpp{\Lie^*(G')}{-r}\), put
\[
\wtilde\Gauss_{G/G'}(X^*, \gamma)
= \epsilon_x^{G/G'}(\gamma)
\frac
	{\tr \tilde\phi_x(\gamma)}
	{\abs{\tr \tilde\phi_x(\gamma)}}
\Gauss_{\CCp G 0(\gamma)/{\CCp{G'}0(\gamma)}}(X^*, \gamma)
\]
for any \(x \in \BB(\CCp{G'}0(\gamma))\), and let \matnotn{Gauss}{\wtilde\Gauss_G(X^*, \gamma)} and \(\wtilde\Gauss_{G'}(X^*, \gamma)\) be any two non-\(0\) complex numbers whose quotient is \(\wtilde\Gauss_{G/G'}(X^*, \gamma)\).
\end{notn}

\numberwithin{thm}{section}
\section{Asymptotic expansions of characters redux}
\label{sec:characters}

Throughout \S\ref{sec:characters}, we continue to use the
	non-Archimedean local field \field
and	reductive group \bG
from \S\ref{sec:hyps}.  We do \emph{not} yet impose the notation and all the assumptions of \S\ref{sec:cuspidal}, but we \emph{do} assume that \(\bG_\tamefield\) is split and the residual characteristic of \field is not \(2\).

In \S\ref{sec:characters}, we re-cast the one-step results of \xcite{spice:asymptotic}*{\S\xref{sec:cuspidal}}, which relate positive-depth, supercuspidal characters of \bG to smaller-depth, supercuspidal characters of tame, twisted Levi subgroups, in a way that can be iterated to relate positive-depth, supercuspidal characters of \bG to depth-\(0\), supercuspidal characters of tame, twisted Levi subgroups.  We will carry out the `unwinding' step in \S\ref{sec:unwind}.

One consequence of our approach is that, instead of specifying a `centre' \(\gamma\) and evaluating characters at elements of the form \(\gamma\dotm\mexp(Y)\) where \(Y\) is an element that commutes with \(\gamma\) and lies sufficiently close to the origin, instead we let our `centre' be of the form \(\gamma\dotm\mexp(\sbjhd Y r)\), and evaluate characters at elements of the form \(\gamma\dotm\mexp(\sbjhd Y r + \sbjtl Y r)\), where \(\sbjtl Y r\) commutes with \(\gamma\) and \(\sbjtl Y r\) and lies sufficiently close to the origin.
If \field has characteristic \(0\) and \mexp is the exponential map, or more generally if \mexp is multiplicative on sums of commuting elements, then there is no need for the new results of this section, since in that case \(\gamma\dotm\mexp(\sbjhd Y r + \sbjtl Y r)\) equals \(\gamma\dotm\mexp(\sbjhd Y r)\mexp(\sbjtl Y r)\) for all \(\sbjtl Y r \in \sbjtl{\Lie(H)}r\), and our new `re-centred' expansion depending on \(\gamma\) and \(\sbjhd Y r\)
(as in Theorem \ref{thm:char-asymptotic-exists})
is the same as the expansion of \xcite{spice:asymptotic}*{Theorem \xref{thm:asymptotic-pi-to-pi'}} centred at \(\gamma\dotm\mexp(\sbjhd Y r)\).  Since the effort required to deal with the extra generality afforded by a `mock' exponential map in place of the genuine exponential map is very small, we have chosen to do so.

We replace \xcite{spice:asymptotic}*{Hypothesis \xref{hyp:mexp}} by Hypothesis \ref{hyp:mexp}, and change the statements of some results of \xcite{spice:asymptotic}*{\S\xref{sec:cuspidal}} slightly.  The proofs require no new techniques, only minor, obvious changes, which we leave to the reader.

To be precise, we choose
	\begin{itemize}
	\item numbers \(\matnotn{Rminus}{R_{-1}} \in \sbjtlp\tR 0\) and \(\mnotn r \in \sbjtlp\R 0\) with \(R_{-1} \le r\),
	\item a semisimple element \(\matnotn{gamma}\gamma \in G\) satisfying Hypothesis \ref{hyp:fc-building},
	\item an element \(\matnotn{Gamma}\Gamma \in \Lie^*(G)\) satisfying \xcite{spice:asymptotic}*{Hypothesis \xref{hyp:Z*}},
and	\item a semisimple element \(\matnotn{Yr}{\sbjhd Y r} \in \Lie(\CC\bG{R_{-1}}(\gamma))\) satisfying Hypothesis \ref{hyp:fc-building}, with \(\CC\bG{R_{-1}}(\gamma)\) in place of \bG and \(\sbjhd Y r\) in place of the Lie-algebra element \(\gamma\) in that hypothesis.
	\end{itemize}
In particular, we have by \xcite{spice:asymptotic}*{Hypothesis \xref{hyp:Z*}} that \(\matnotn{Gprime}{\bG'} \ldef \Cent_\bG(\Gamma)\) is a tame, twisted Levi subgroup of \bG.  (Because we have imposed the full force of \xcite{spice:asymptotic}*{Hypothesis \xref{hyp:Z*}} here, including \loccit*{Hypothesis \initxref{hyp:Z*}(\subxref{orbit})}, we do not need the more delicate construction of \(\bG'\) from Definition \ref{defn:G'}.)
Note that we can choose \(R_{-1} = \Rp0\).

We will eventually (before Theorem \ref{thm:pi-to-pi'}) assume that \(\gamma\) is compact modulo centre, but we do not do so yet.
Put \(\mnotn s = r/2\), \(\matnotn J\bJ = \CC\bG{R_{-1}}(\gamma)\), and \(\matnotn H\bH = \CC\bJ r(\sbjhd Y r)\).  We require that
	\begin{itemize}
	\item \((\gamma, x, R_{-1})\) satisfy Hypothesis \ref{hyp:gp-gamma} for all \(x \in \BB(J)\),
	\item \(\gamma\) centralise \(\bJ\conn\),
	\item \((\sbjhd Y r, x, r)\) satisfy Hypothesis \ref{hyp:Lie-gamma} for all \(x \in \BB(H)\),
	\item \(\sbjhd Y r\) belong to \(\sbjtl{\Lie(H)}{R_{-1}}\),
	\item \(\sbjhd Y r\) centralise \(\Lie(\bH)\).
	\end{itemize}
In particular, \(\bJ\conn\) equals \(\Cent_\bG(\gamma)\conn\) and \(\bH\conn\) equals \(\Cent_\bJ(\sbjhd Y r)\conn = \Cent_\bG(\gamma, \sbjhd Y r)\conn\).

We impose Hypothesis \ref{hyp:mexp} for \((\CC\bG 0(\gamma), \bJ, R_{-1})\), which guarantees the existence of a homeomorphism \map{\matnotn{exp}\mexp}{\sbjtl{\Lie(J)}{R_{-1}}}{\sbjtl J{R_{-1}}}.  Write \matnotn{log}\mlog for the inverse homeomorphism \anonmap{\sbjtl J{R_{-1}}}{\sbjtl{\Lie(J)}{R_{-1}}}.

\begin{rem}
\label{rem:mexp:steps}
By Hypothesis \ref{hyp:fc-building},
which implies that \(\sbjtl{\Lie(J)}{R_{-1}}\) is contained in \(\sbjtl{\Lie(G)}{R_{-1}}\) and \(\sbjtl J{R_{-1}}\) is contained in \(\sbjtl G{R_{-1}}\), and Lemma \ref{lem:MP-cfc}, we have that \(\CC\bG{R_{-1}}(\sbjhd Y r)\conn\) and \(\CC\bG{R_{-1}}(\mexp(\sbjhd Y r))\conn\) equal \bG.

If \(i\) is strictly less than \(R_{-1}\), then Lemma \initref{lem:gp-dfc-nearby}\subpref{equal} gives that \(\CC\bG i(\gamma\dotm\mexp(\sbjhd Y r))\) equals \(\CC\bG i(\gamma)\).
If \(i\) is at least \(R_{-1}\), then Lemma \initref{lem:gp-dfc-unique}\subpref{nearby-sub} gives that \(\CC\bG i(\gamma\dotm\mexp(\sbjhd Y r))\conn\) is the identity component of \(\Cent_\bG(\gamma) \cap \CC\bG i(\mexp(\sbjhd Y r))\), hence also of 
\(\bJ \cap \CC\bG i(\sbjhd Y r) = \CC\bJ i(\sbjhd Y r)\).

In particular, \(\gamma\dotm\mexp(\sbjhd Y r)\) satisfies Hypothesis \ref{hyp:fc-building}, and \((\gamma\dotm\mexp(\sbjhd Y r), x, r)\) satisfies Hypothesis \ref{hyp:gp-gamma} for all \(x \in \BB(H)\); and \(\bH\conn\) is the identity component of \(\Cent_\bG(\gamma) \cap \CC\bG r(\sbjhd Y r)\), hence equals \(\CC\bJ r(\sbjhd Y r)\conn = \Cent_\bG(\gamma, \sbjhd Y r)\conn = \Cent_\bG(\gamma, \mexp(\sbjhd Y r))\conn = \Cent_\bG(\gamma\dotm\mexp(\sbjhd Y r))\conn\).
%
\end{rem}

\begin{rem}
\label{rem:gp-symmetry}
Since \(\bJ\conn\) equals \(\Cent_\bG(\gamma)\conn\), and \(\bG'\) equals \(\Cent_\bG(\Gamma)\), the containments \(\gamma \in \Lie(G')\) and \(\Gamma \in \Lie^*(J)\) are equivalent to \(\ad^*(\gamma)\Gamma = 0\), hence to each other.

Similarly, the containments \(\Gamma \in \Lie^*(H)\) and \(\gamma\dotm\mexp(\sbjhd Y r) \in G'\) are equivalent to each other, hence, by Remark \ref{rem:mexp:steps}, to the conjunction of the containments \(\gamma \in G'\) and \(\sbjhd Y r \in \Lie(G')\).
\end{rem}

Lemma \ref{lem:gp-Gauss-combine} is not needed until the proof of Theorem \ref{thm:char-unwind}, but it seems natural to put it here as an illustration of the use of Hypothesis \ref{hyp:mexp}.

\begin{lem}
\label{lem:gp-Gauss-combine}
Suppose that
	\begin{itemize}
	\item \(\gamma\) belongs to \(G'\),
	\item \(\sbjhd Y r\) belongs to \(\Lie(G')\),
and	\item \(\CCp\bG 0(\gamma)\conn\) equals \bG.
	\end{itemize}
Put \(\bJ' = \bJ \cap \bG'\) and \(\bH' = \bH \cap \bG'\).
For every \(X^* \in \Gamma + \sbtlpp{\Lie^*(G')}x{-r}\), we have that
{
\newcommand\WQpiece[3]{\Gauss_{#1/#2}(X^*, #3)}
\newcommand\WQ[3]{\frac{\WQpiece{#1}{#2}{#3}}{\WQpiece{#1'}{#2'}{#3}}}
\[
\WQ G H{\gamma\dotm\mexp(\sbjhd Y r)}
\qeqq
\WQ G J\gamma\dotm\WQ J H{\sbjhd Y r}.
\]
}
\end{lem}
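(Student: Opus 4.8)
The plan is to prove Lemma \ref{lem:gp-Gauss-combine} by reducing the group-level multiplicativity of the twisted Gauss sums to an honest Lie-algebra computation, where everything decomposes as a direct sum. First I would unwind Notation \ref{notn:twisted-Gauss}, which writes each ratio $\wtilde\Gauss$ as a product of an $\epsilon$-sign (times a normalized Weil-representation character) and an ordinary Gauss-sum ratio $\Gauss_{\CCp{G}0(\anondot)/\CCp{G'}0(\anondot)}(X^*,\anondot)$ attached to the $0^+$-truncated centralizers. Under the hypothesis $\CCp\bG 0(\gamma)\conn = \bG$, the centralizer subgroups attached to $\gamma$ degenerate, so that $\CCp\bG 0(\gamma\dotm\mexp(\sbjhd Y r))\conn$ should coincide with $\CCp\bJ 0(\sbjhd Y r)\conn$ by Remark \initref{rem:mexp}\subpref{steps} together with Hypothesis \initref{hyp:mexp}\incpref{elt}\subpref{fc}; this identifies the ``$G/H$'' ordinary Gauss ratio with the ``$J/H$'' one, so the identity we must prove collapses to a statement purely about the $\epsilon$-sign and Weil-character factors, plus matching the ordinary-Gauss factor on $G/J$ against the trivial (degenerate) factor there.

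Next I would handle the two sign-and-character factors separately. The key tool is Proposition \ref{prop:e-}, which computes $\tr\tilde\phi_x(\gamma)$ in terms of the index $\indx{\sbat{\CC G 0(\gamma)}x s}{\sbat{\CC{G'}0(\gamma)}x s}^{1/2}$, a DeBacker--Spice sign $\tilde e(\pi',\sbjat\gamma 0)$, and $\epsilon^{G/G'}_{\sharp,x}$, and, crucially, asserts that $\epsilon_x^{G/G'}(\gamma)\tfrac{\tr\tilde\phi_x(\gamma)}{\abs{\tr\tilde\phi_x(\gamma)}}$ depends only on $\bG$, $\bG'$, $r$, and $\sbjat\gamma 0$. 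For the element $\gamma\dotm\mexp(\sbjhd Y r)$, Hypothesis \initref{hyp:mexp}\incpref{elt}\subpref{fc} and Remark \ref{rem:mexp} give $\Cent_\bJ(\sbjhd Y r)\conn = \Cent_\bJ(\mexp(\sbjhd Y r))\conn$ and the absolutely-semisimple part of $\gamma\dotm\mexp(\sbjhd Y r)$ is $\gamma$ itself (since $\gamma$ is already absolutely semisimple here and $\mexp(\sbjhd Y r)$ is topologically unipotent, hence prime-to-$p$ versus $p$-power parts). So the $G/G'$ sign factor at $\gamma\dotm\mexp(\sbjhd Y r)$ equals the one at $\gamma$, supplying the first factor on the right. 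For the remaining piece one applies the same Notation \ref{notn:twisted-Gauss}/Proposition \ref{prop:e-} machinery relative to the pair $(\bJ,\bJ')$ and the element $\sbjhd Y r$, whose absolutely semisimple part in $\CC\bJ 0(\sbjhd Y r)$ is trivial, so that the Weil-character and $\epsilon$-factors collapse to $1$ and the whole $\wtilde\Gauss_{J/H}(X^*,\sbjhd Y r)$ reduces to the ordinary $\Gauss_{\CCp\bJ 0(\sbjhd Y r)/\CCp{\bJ'}0(\sbjhd Y r)}(X^*,\sbjhd Y r)$; matching this against the surviving ordinary-Gauss factor coming from the left-hand side is then a direct-sum decomposition of the quadratic form $q_{X^*,\anondot}$ of Notation \ref{notn:Gauss} along the $\gamma$-eigenspace splitting, using the multiplicativity of the Weil index under orthogonal direct sums.

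The cleanest route for the last matching is to apply Corollary \ref{cor:lattice-orth} and Proposition \ref{prop:Gauss-to-Weil} (or, equivalently, an explicit version of the statement that $\Gauss_{G/H}/\Gauss_{G'/H'}$ is the Weil index of $q_{X^*,\gamma}$ restricted to the perpendicular lattice): under $\CCp\bG 0(\gamma)\conn=\bG$, the lattice $\Lie(\bG)$ decomposes $\gamma$-equivariantly, hence $q_{X^*,\gamma}$-orthogonally, as $\Lie(\bJ)\oplus\Lie(\bJ)^\perp$, and correspondingly the pieces of $\Lie(\bG')$, so that $\Gauss_{G/H}(X^*,\gamma\dotm\mexp(\sbjhd Y r))/\Gauss_{G'/H'}(X^*,\gamma\dotm\mexp(\sbjhd Y r))$ factors as the product of a $\gamma$-part (matching $\Gauss_{G/J}(X^*,\gamma)/\Gauss_{G'/J'}(X^*,\gamma)$ — but one must check that the $\gamma$-part of $q_{X^*,\gamma\dotm\mexp(\sbjhd Y r)}$ on $\Lie(\bJ)^\perp$ agrees with $q_{X^*,\gamma}$ there, which follows from Hypothesis \initref{hyp:mexp}\incpref{elt}\subpref{ad-Ad} modulo $\sbjtlp\field 0$) and a $\sbjhd Y r$-part on $\Lie(\bJ)$. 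The main obstacle is precisely this last bookkeeping: cleanly extracting from Hypothesis \initref{hyp:mexp}\incpref{elt}\subpref{ad-Ad} and Hypothesis \initref{hyp:gp-gamma}\reinclabel{bi-Lie-Lie} that replacing $\gamma$ by $\gamma\dotm\mexp(\sbjhd Y r)$ changes the relevant quadratic form only by error terms that land in $\sbjtlp\field 0$ and hence do not affect the Weil index, and doing so uniformly across the $\gamma$-eigenspace decomposition; everything else is formal manipulation of the definitions in Notation \ref{notn:twisted-Gauss} and Proposition \ref{prop:e-}.
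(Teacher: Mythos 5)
The central issue is that the lemma is a statement about the \emph{ordinary} Gauss sums \(\Gauss\) of Notation \ref{notn:Gauss} and its group analogue \xcite{spice:asymptotic}*{Notation \xref{notn:Gauss}}, not the \emph{twisted} Gauss sums \(\wtilde\Gauss\) of Notation \ref{notn:twisted-Gauss}.  Your first two paragraphs --- unwinding Notation \ref{notn:twisted-Gauss}, appealing to Proposition \ref{prop:e-}, and matching \(\epsilon\)-signs against normalised Weil-representation characters --- address a problem that never arises here; in the one place the lemma is used (the proof of Theorem \ref{thm:char-unwind}), the twist factors are peeled off separately by a direct appeal to Proposition \ref{prop:e-}, and the present lemma is then applied only to what remains.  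As a symptom of the misreading, the identification you propose in paragraph one is false: by Remark \initref{rem:mexp}\subpref{steps} and the running hypothesis \(\CCp\bG 0(\gamma)\conn = \bG\), the group \(\CCp\bG 0(\gamma\dotm\mexp(\sbjhd Y r))\conn\) equals \bG, whereas \(\CCp\bJ 0(\sbjhd Y r)\conn\) equals \bJ by Lemma \ref{lem:MP-dfc} (since \(\sbjhd Y r\) has depth at least \(R_{-1} > 0\)), and these generally differ.

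Your third paragraph does contain the right argument, and it is essentially the proof in the paper.  One realises each ratio as a Weil index \textit{via} the group analogue \xcite{spice:asymptotic}*{Corollary \xref{cor:lattice-orth}} for the side involving the group element \(\delta = \gamma\dotm\mexp(\sbjhd Y r)\), and its Lie-algebra version Corollary \ref{cor:lattice-orth} for the \(\sbjhd Y r\)-side; decomposes the relevant residue lattice into \(q\)-orthogonal summands indexed by the filtration level \(i\) of \(\CC\bG i(\delta)\); and invokes multiplicativity of the Weil index under orthogonal sums (\cite{ranga-rao:weil}*{Theorem A.2}).  The real work --- which you do correctly identify as ``the main obstacle'' --- is the error-term bookkeeping: one shows that the group pairing \(b_{X^*, \delta}\) agrees modulo \(\sbjtlp\field 0\) with \(b_{X^*, \gamma}\) on the pieces with \(i < R_{-1}\), using Hypothesis \initref{hyp:gp-gamma}\incpref{building}\subpref0 to control how \(\gamma\) moves the point \(x\) (rather than Hypothesis \initref{hyp:gp-gamma}\subpref{bi-Lie-Lie}, as you suggest); and agrees with the Lie-algebra pairing \(b_{X^*, \sbjhd Y r}\) on the pieces with \(i \ge R_{-1}\), using Hypothesis \initref{hyp:mexp}\incpref{elt}\subpref{ad-Ad} as you correctly predicted.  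If you drop the first two paragraphs and develop the third with this level of detail, you recover the proof.
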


\begin{proof}
We have that \(\bJ'\) is a tame, twisted Levi subgroup of \bJ by Proposition \initref{prop:gp-cfc-facts}\subpref{Levi}, and so that \(\bH' = \bH \cap \bJ'\) is a tame, twisted Levi subgroup of \bH by Proposition \initref{prop:Lie-cfc-facts}\subpref{Levi}.  In particular, we have that \(\sbjtl{\Lie(H)}{R_{-1}} \cap \Lie(J')\), which contains \(\sbjhd Y r\), equals \(\sbjtl{\Lie(H')}{R_{-1}}\).  Choose \(x \in \BB(H')\) such that \(\sbjhd Y r\) belongs to \(\sbtl{\Lie(H')}x{R_{-1}}\).
	(In fact, since \(\sbjhd Y r\) is central in \(\Lie(H)\), hence in \(\Lie(H')\), \emph{any} \(x \in \BB(H')\) will do.)

Put \(\delta = \gamma\dotm\mexp(\sbjhd Y r)\).  Recall the definition of the pairing
\[
\mapto{b_{X^*, \gamma}}{(Z_1, Z_2)}{\pair[\big]{X^*}{\comm{Z_1}{(1 - \Ad(\gamma))Z_2}}}
\]
from \xcite{spice:asymptotic}*{Notation \xref{notn:Gauss}}, and the pairing
\[
\mapto{b_{X^*, \sbjhd Y r}}{(Z_1, Z_2)}{\pair[\big]{X^*}{\comm{Z_1}{\comm{Z_2}{\sbjhd Y r}}}}
\]
from Notation \ref{notn:Gauss}.  We define \(b_{X^*, \delta}\) analogously to \(b_{X^*, \gamma}\).

Our strategy is as follows.  The space
\begin{multline*}
\sbtl{\Lie(\CC G r(\delta), \CCp{G'}0(\delta), \CCp G 0(\delta))}x{(\Rp0, r - \ord_{\delta - 1}, (r - \ord_{\delta - 1})/2)}/{} \\
{\sbtl{\Lie(\CC G r(\delta), \CCp{G'}0(\delta), \CCp G 0(\delta))}x{(\Rp0, r - \ord_{\delta - 1}, \Rpp{(r - \ord_{\delta - 1})/2})}}
\end{multline*}
of \xcite{spice:asymptotic}*{Corollary \xref{cor:lattice-orth}} is spanned by the various
\[
\sbat{\Lie(\CC G i(\delta))}x{(r - i)/2}/{\sbat{\Lie(\CCp G i(\delta), \CC{G'}i(\delta), \CC G i(\delta))}x{((r - i)/2, (r - i)/2, \Rpp{(r - i)/2})}},
\]
which are orthogonal for the pairing \(b_{X^*, \delta}\) (viewed as a \(\sbjat\field 0\)-valued, not a \field-valued, pairing), and analogously for \(\gamma\) and \(\sbjhd Y r\).
By \cite{ranga-rao:weil}*{Theorem A.2(3, 5)}, we have that the Weil index of the orthogonal sum is the product of the Weil indices for the images of each \(\sbtl{\Lie(\CC G i(\delta))}x{(r - i)/2}\).  We will show that the contribution from \(\delta\) at index \(i\) matches up with the contribution from \(\gamma\) or from \(Y\) according as \(i\) is, or is not, less than \(R_{-1}\).
Since \(\CC\bJ i(Y)\conn\) equals \(\CCp\bJ i(Y)\conn\) for every \(i\) strictly less than \(R_{-1}\) by Lemma \ref{lem:MP-cfc},
and \(\CC\bG i(\gamma)\conn\) and \(\CCp\bG i(\gamma)\conn\) both equal \(\Cent_\bG(\gamma)\conn\) for every \(i\) greater than or equal to \(R_{-1}\), this will establish the desired equality.


Fix \(i \in \R\) with \(0 < i < r\).  Put \(s_i^\pm = (r \pm i)/2\) and \(L_i = \sbtl{\Lie(\CC G i(\delta))}x{s_i^-}\).

If \(i\) is strictly less than \(R_{-1}\), then Remark \ref{rem:mexp:steps} gives that \(\CC\bG i(\delta)\) equals \(\CC\bG i(\gamma)\), and in particular contains \bJ.  Since \(\sbjhd Y r\) lies in \(\sbtl{\Lie(J)}x{R_{-1}} \subseteq \sbtlp{\Lie(\CC G i(\delta))}x i\), it follows from \xcite{spice:asymptotic}*{Hypothesis \initxref{hyp:mexp}(\incxref{coset})\subxref{coset}} that \(\mexp(\sbjhd Y r)\) lies in \(\sbtl J x{R_{-1}} \subseteq \sbtl{\CCp G i(\delta))}x i\).  Thus \((1 - \Ad(\mexp(\sbjhd Y r)))L_i\) lies in \(\sbtlpp{\Lie(\CC G i(\delta))}x{s_i^+}\).  Hypothesis \initref{hyp:gp-gamma}\incpref{building}\subpref0 gives that \(\gamma\) fixes the image of \(x\) in the reduced building of \(\CC\bG i(\gamma)\), and so normalises \(\sbtlpp{\Lie(\CC G i(\gamma))}x{s_i^+} = \sbtlpp{\Lie(\CC G i(\delta))}x{s_i^+}\).  Thus, \((1 - \Ad(\delta))Z_2\) lies in \((1 - \Ad(\gamma))Z_2 + \sbtlpp{\Lie(\CC G i(\delta))}x{s_i^+}\), so that \(b_{X^*, \delta}(Z_1, Z_2)\) lies in \(b_{X^*, \gamma}(Z_1, Z_2) + \sbjtlp\field 0\), for all \(Z_1, Z_2 \in L_i\).

If \(i\) is greater than or equal to \(R_{-1}\), then Remark \ref{rem:mexp:steps} gives that \(L_i\) equals \(\sbtl{\Lie(\CC J i(\sbjhd Y r))}x{s_i^-}\), and in particular is contained in \(\Lie(J) = \Cent_{\Lie(G)}(\gamma)\), so that \((1 - \Ad(\delta))Z_2\) equals \((1 - \Ad(\mexp(\sbjhd Y r)))Z_2\).  Hypothesis \initref{hyp:mexp}\incpref{elt}\subpref{ad-Ad} gives that \(1 - \ad(\sbjhd Y r) - \Ad(\mexp(\sbjhd Y r))\) carries \(L_i = \sbtl{\Lie(\CC J i(Y))}x{s_i^-}\) into \(\sbtlpp{\Lie(\CC J i(Y))}x{s_i^+} = \sbtlpp{\Lie(\CC G i(\delta))}x{s_i^+}\).  Thus, \((1 - \Ad(\delta))Z_2 = (1 - \Ad(\mexp(\sbjhd Y r)))Z_2\) lies in \(-\ad(\sbjhd Y r)Z_2 + \sbtlpp{\Lie(\CC G i(\delta))}x{s_i^+}\), so that \(b_{X^*, \delta}(Z_1, Z_2)\) lies in \(b_{X^*, Y}(Z_1, Z_2) + \sbjtlp\field 0\), for all \(Z_1, Z_2 \in L_i\).
\end{proof}

Let \mnotn o be a point of \(\BB(G')\), and \matnotn{phio}{\phi_o} a character of \(\sbat{G'}o r\).  Write \matnotn{phihato}{\hat\phi_o} for the extension of \(\phi_o\) trivially across \(\sbtlp{(G', G)}o{(r, s)}\) to \(\sbtl{(G', G)}o{(r, \Rp s)}\).  We require that \xcite{spice:asymptotic}*{Hypothesis \xref{hyp:K-type}} be satisfied by \(\hat\phi_o\), with \(\Gamma\) in place of \(Z^*_o\).  Recall from the discussion preceding \xcite{spice:asymptotic}*{Hypothesis \xref{hyp:K-type}} that this hypothesis is automatically satisfied if \((\sbtl{G'}o r, \phi_o)\) is the character with dual blob \(\Gamma + \sbtlpp{\Lie^*(G')}o{-r}\).
We will eventually (before Theorem \ref{thm:pi-to-pi'}) require this.  Although we do not make that requirement yet, we see that, for the purposes of the main results of \S\ref{sec:characters} (Theorem \ref{thm:pi-to-pi'}) and of this paper (Theorems \ref{thm:orb-unwind} and \ref{thm:char-unwind}), the assumption that \((\hat\phi_o, \Gamma)\) satisfies \xcite{spice:asymptotic}*{Hypothesis \xref{hyp:K-type}} need not be separately imposed.

\begin{defn}
\label{defn:centred-char}
If \(\pi\) is an admissible representation of \(G\), then we define \matnotn{Thetacheck}{\widecheck\Theta_{\pi, \gamma, \sbjhd Y r}} and \matnotn{Thetacheck}{\widecheck\Theta_{\pi, \gamma, \sbjhd Y r, \Gamma}} to be the distributions on \(\Lie^*(H)\) given for every \(f^* \in \Hecke(\Lie^*(H))\) by
\begin{align*}
\widecheck\Theta_{\pi, \gamma, \sbjhd Y r}(f^*)        & {}= \Theta_{\pi, \gamma\dotm\mexp(\sbjhd Y r)}\bigl(\check f^*_{\gamma, \sbjhd Y r}) \\
\intertext{and}
\widecheck\Theta_{\pi, \gamma, \sbjhd Y r, \Gamma}(f^*) & {}= \widecheck\Theta_{\pi, \gamma, \sbjhd Y r}(f^*_\Gamma),
\end{align*}
where we have introduced the \textit{ad hoc} notations
	\begin{itemize}
	\item \(\check f^*_{\gamma, \sbjhd Y r}\) for the function that vanishes outside \(\gamma\dotm\mexp(\sbjhd Y r)\sbjtl H r = \gamma\dotm\mexp(\sbjhd Y r + \sbjtl{\Lie(H)}r)\) \xcite{spice:asymptotic}*{Hypothesis \initxref{hyp:mexp}(\incxref{coset})\subxref{coset}}, and is given on that domain by \anonmapto{\gamma\dotm\mexp(\sbjhd Y r + \sbjtl Y r)}{\check f^*(\sbjtl Y r)},
and	\item \(f^*_\Gamma\) for the function that vanishes outside \(\Lie^*(H) \cap \Ad^*(H\conn)(\Gamma + \sbjtlpp{\Lie^*(G')}{-r})\), and agrees with \(f^*\) on that domain;
	\end{itemize}
and where
	\begin{itemize}
	\item \(\Theta_{\pi, \gamma\dotm\mexp(\sbjhd Y r)}\) is the distribution \(T_{\gamma\dotm\mexp(\sbjhd Y r)}\) deduced from \(T = \Theta_\pi\) in \xcite{spice:asymptotic}*{Lemma \xref{lem:centre}}.
	\end{itemize}
\end{defn}

Lemma \ref{lem:char-sample} is of interest mainly because of its use in Corollary \ref{cor:char-sample}.  Recall the notion of the dual blob of a character from \xcite{spice:asymptotic}*{Definition \xref{defn:dual-blob}}.

\begin{lem}[\xcite{spice:asymptotic}*{Lemma \xref{lem:sample}}]
\label{lem:char-sample}
With the notation of Definition \ref{defn:centred-char}, if
	\begin{itemize}
	\item \(a \in \tR\) satisfies \(r \le a < \infty\),
and	\item \(X^*\) belongs to \(\sbtl{\Lie^*(H)}x{-a}\),
	\end{itemize}
then
\begin{align*}
&\widecheck\Theta_{\pi, \gamma, \sbjhd Y r}\bigl(X^* + \chrc{\sbtlpp{\Lie^*(H)}x{-r}}\bigr) \\
\intertext{equals}
\meas(\sbtl{\Lie(H)}x a)\sum_{\sbjtl Y r \in \sbtl{\Lie(H)}x r/{\sbtl{\Lie(H)}x a}} \contra{\AddChar_{X^*}}(\sbjtl Y r)&
\tr \pi(\gamma\dotm\mexp(\sbjhd Y r + \sbjtl Y r)\chrc{\sbtl G x a, \contra{\hat\phi}}),
\end{align*}
where \(\hat\phi\) is the character of \(\sbtl G x a\) with dual blob \(X^* + \sbtlpp{\Lie^*(G)}x{-a}\).
\end{lem}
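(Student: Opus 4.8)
The plan is to deduce this from its counterpart \xcite{spice:asymptotic}*{Lemma \xref{lem:sample}}. The key point is Remark \initref{rem:mexp}\subpref{steps}: the element \(\gamma\dotm\mexp(\sbjhd Y r)\) satisfies Hypotheses \ref{hyp:fc-building} and \ref{hyp:gp-gamma} with \(\gamma\dotm\mexp(\sbjhd Y r)\) in place of \(\gamma\), and \(\Cent_\bG(\gamma\dotm\mexp(\sbjhd Y r))\conn\) equals \(\bH\conn\); so the entire apparatus surrounding \xcite{spice:asymptotic}*{Lemma \xref{lem:sample}} applies with this element in the role of the centre, and the only genuinely new bookkeeping is the non-multiplicative passage between the filtration on \(\Lie(H)\) and the family of cosets \(\gamma\dotm\mexp(\sbjhd Y r + \sbjtl Y r)\), which is governed by \xcite{spice:asymptotic}*{Hypothesis \initxref{hyp:mexp}(\incxref{coset})\subxref{coset}}.

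First I would unwind Definition \ref{defn:centred-char}: for \(f^* = X^* + \chrc{\sbtlpp{\Lie^*(H)}x{-r}}\) the left-hand side is \(\Theta_{\pi, \gamma\dotm\mexp(\sbjhd Y r)}\bigl(\check f^*_{\gamma, \sbjhd Y r}\bigr)\), where \(\Theta_{\pi, \gamma\dotm\mexp(\sbjhd Y r)}\) is the descent \(T_{\gamma\dotm\mexp(\sbjhd Y r)}\) of \(T = \Theta_\pi\) from \xcite{spice:asymptotic}*{Lemma \xref{lem:centre}} (which is legitimate by the previous paragraph). Next I would compute \(\check f^*\). Since the Moy--Prasad filtration on \(\Lie^*(H)\) is the dual filtration to that on \(\Lie(H)\) \cite{moy-prasad:k-types}*{\S3.5, p.~400, (\(*\))}, and the dual Haar measure is normalised so that Fourier inversion holds, the inverse Fourier transform of \(\chrc{\sbtlpp{\Lie^*(H)}x{-r}}\) is the normalised characteristic function of the annihilator \(\sbtl{\Lie(H)}x r\) of \(\sbtlpp{\Lie^*(H)}x{-r}\), and translating the argument by \(X^*\) multiplies it by \(\contra{\AddChar_{X^*}}\). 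Using the coset identity \(\gamma\dotm\mexp(\sbjhd Y r)\sbjtl H r = \gamma\dotm\mexp(\sbjhd Y r + \sbjtl{\Lie(H)}r)\) of \xcite{spice:asymptotic}*{Hypothesis \initxref{hyp:mexp}(\incxref{coset})\subxref{coset}} together with the inclusion \(\sbtl{\Lie(H)}x r \subseteq \sbjtl{\Lie(H)}r\), this identifies \(\check f^*_{\gamma, \sbjhd Y r}\) as the function supported on \(\gamma\dotm\mexp(\sbjhd Y r + \sbtl{\Lie(H)}x r)\) and given there by \(\gamma\dotm\mexp(\sbjhd Y r + \sbjtl Y r) \mapsto \contra{\AddChar_{X^*}}(\sbjtl Y r)\).

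Finally I would partition \(\sbtl{\Lie(H)}x r\) into cosets of \(\sbtl{\Lie(H)}x a\) (recall \(r \le a < \infty\)). Because \(X^*\) lies in \(\sbtl{\Lie^*(H)}x{-a}\), the function \(\contra{\AddChar_{X^*}}\) is constant on each such coset, so on the piece indexed by a representative \(\sbjtl Y r\) the function \(\check f^*_{\gamma, \sbjhd Y r}\) is \(\contra{\AddChar_{X^*}}(\sbjtl Y r)\) times \(\meas(\sbtl{\Lie(H)}x a)\) times the normalised indicator of \(\gamma\dotm\mexp(\sbjhd Y r + \sbjtl Y r + \sbtl{\Lie(H)}x a)\); applying the descent property of \(T_{\gamma\dotm\mexp(\sbjhd Y r)}\) in the form used in the proof of \xcite{spice:asymptotic}*{Lemma \xref{lem:sample}} (that is, \xcite{spice:asymptotic}*{Corollary \xref{cor:centre}}) converts the value of \(\Theta_{\pi, \gamma\dotm\mexp(\sbjhd Y r)}\) on that piece into the value of \(T = \Theta_\pi\) on \(\gamma\dotm\mexp(\sbjhd Y r + \sbjtl Y r)\chrc{\sbtl G x a, \contra{\hat\phi}}\), with \(\hat\phi\) the character of \(\sbtl G x a\) having dual blob \(X^* + \sbtlpp{\Lie^*(G)}x{-a}\); and since \(\Theta_\pi\) represents the distribution character of \(\pi\), this value is \(\tr\pi\bigl(\gamma\dotm\mexp(\sbjhd Y r + \sbjtl Y r)\chrc{\sbtl G x a, \contra{\hat\phi}}\bigr)\). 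Summing over the cosets gives the asserted formula. I expect the second step — correctly identifying \(\check f^*_{\gamma, \sbjhd Y r}\) — to be the only delicate point, since it is where the mock-exponential coordinates interact with the self-duality of the Moy--Prasad filtration; but with Remark \initref{rem:mexp}\subpref{steps} in hand it is no harder than in \cite{spice:asymptotic}, and the whole argument is essentially a transcription of the proof there.
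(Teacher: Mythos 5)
There is a genuine gap. You assert that, because \(X^*\) lies in \(\sbtl{\Lie^*(H)}x{-a}\), the function \(\contra{\AddChar_{X^*}}\) is constant on cosets of \(\sbtl{\Lie(H)}x a\). That is false: by the duality of Moy--Prasad filtrations, an element of \(\sbtl{\Lie^*(H)}x{-a}\) pairs \(\sbtl{\Lie(H)}x a\) only into \(\sbjtl\field 0\), on which \(\AddChar\) is \emph{not} trivial. Constancy on cosets of \(\sbtl{\Lie(H)}x a\) would require \(X^* \in \sbtlpp{\Lie^*(H)}x{-a}\), a strictly stronger hypothesis than what is given. Your error also contradicts the form of the conclusion: if your constancy claim held, the right-hand side of the lemma would carry the plain normalised indicator of \(\sbtl G x a\), yet the lemma explicitly shows the non-trivial character \(\contra{\hat\phi}\) with dual blob \(X^* + \sbtlpp{\Lie^*(G)}x{-a}\). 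That character survives precisely because \(\contra{\AddChar_{X^*}}\) is not constant on the cosets you try to factor it out of.

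The correct route, which is the one the paper takes, is to keep the character inside the small piece — that is, to expand \(\check f^*\) as \(\meas(\sbtl{\Lie(H)}x a)\sum_{\sbjtl Y r} \contra{\AddChar_{X^*}}(\sbjtl Y r)\bigl(\sbjtl Y r + \chrc{\sbtl{\Lie(H)}x a, \contra{\AddChar_{X^*}}}\bigr)\), with the character \emph{left in} the translated test function, not pulled out — and then to verify by hand that the pullback of \(Z + \chrc{\sbtl{\Lie(H)}x a, \contra{\AddChar_{X^*}}}\) along \(\gamma\dotm\mexp(\sbjhd Y r + \sbjtl Y r) \mapsto \sbjtl Y r\) equals \(\gamma\dotm\mexp(\sbjhd Y r + Z)\chrc{\sbtl H x a, \contra{\hat\phi}}\). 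This is exactly where the non-multiplicativity of \(\mexp\) bites, and the needed input is Hypothesis \initref{hyp:mexp}\incpref{coset}\textup{(iso)}, \emph{not} just the coset identity: one must show that \(\mexp(\sbjhd Y r + Z)\inv\mexp(\sbjhd Y r + Z + W)\) lies in \(\mexp(W)\sbtlp H x a\) for \(W \in \sbtl{\Lie(H)}x a\), so that \(\contra{\hat\phi}\) evaluated on that group element agrees with \(\contra{\AddChar_{X^*}}(W)\). You flagged the identification of \(\check f^*_{\gamma, \sbjhd Y r}\) as the delicate point, but that is just the definition; the delicacy is the pullback matching just described, which your proposal elides.

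The rest of your outline — invoking Remark \initref{rem:mexp}\subpref{steps} to justify applying the descent, computing the inverse Fourier transform by self-duality, and finishing with \xcite{spice:asymptotic}*{Corollary \xref{cor:centre}} — is aligned with the paper's argument.
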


\begin{proof}
Note that, for every \(Z \in \sbtl{\Lie(H)}x r\) and \(W \in \sbtl{\Lie(H)}x a\), we have that \(\mexp(\sbjhd Y r + Z)\inv\mexp(\sbjhd Y r + Z + W)\) lies in \(\mexp(W)\sbtlp H x a\) by \xcite{spice:asymptotic}*{Hypothesis \initxref{hyp:mexp}(\incxref{coset})\subxref{iso}}, so that \(\contra\phi(\mexp(\sbjhd Y r + Z)\inv\mexp(\sbjhd Y r + Z + W))\) equals \(\contra\phi(\mexp(W)) = \contra{\AddChar_{X^*}}(W)\).  That is, the pullback of \(Z + \chrc{\sbtl{\Lie(H)}x a, \contra{\AddChar_{X^*}}}\) \textit{via} the map \(\anonmapto{\gamma\dotm\mexp(\sbjhd Y r + \sbjtl Y r)}{\sbjtl Y r}\) is \(\gamma\dotm\mexp(\sbjhd Y r + Z)\chrc{\sbtl H x a, \contra\phi}\).

Since \((X^* + \chrc{\sbtlpp{\Lie^*(H)}x{-r}})\spcheck\) equals
\[
\meas(\sbtl{\Lie(H)}x a)\sum_{\sbjtl Y r \in \sbtl{\Lie(H)}x r/{\sbtl{\Lie(H)}x a}} \contra{\AddChar_{X^*}}(\sbjtl Y r)(\sbjtl Y r + \chrc{\sbtl{\Lie(H)}x a, \contra{\AddChar_{X^*}}}),
\]
its pullback \textit{via} the above map is
\[
\meas(\sbtl H x a)\sum_{\sbjtl Y r \in \sbtl{\Lie(H)}x r/{\sbtl{\Lie(H)}x a}} \contra{\AddChar_{X^*}}(\sbjtl Y r)\bigl(\gamma\dotm\mexp(\sbjhd Y r + \sbjtl Y r)\chrc{\sbtl H x a, \contra\phi}\bigr).
\]
The result now follows from \xcite{spice:asymptotic}*{Corollary \xref{cor:centre}}.
\end{proof}

We now impose \xcite{spice:asymptotic}*{Hypothesis \xref{hyp:depth}}.  Corollary \ref{cor:char-sample} is the key tool for dealing with our `re-centred' expansions, depending on both \(\gamma\) and \(\sbjhd Y r\), in Theorem \ref{thm:char-asymptotic-exists}.

\begin{cor}[\xcite{spice:asymptotic}*{Corollary \xref{cor:sample}}]
\label{cor:char-sample}
With the notation and hypotheses of Lemma \ref{lem:char-sample}, suppose further that \(\pi\) is irreducible and contains \((\sbtl G o r, \hat\phi_o)\).  Then
\[
\widecheck\Theta_{\pi, \gamma, \sbjhd Y r}\bigl(X^* + \chrc{\sbtlpp{\Lie^*(H)}x{-r}}\bigr)
\]
vanishes unless we have \(a > r\) and \(X^* + \sbtlpp{\Lie^*(H)}x{-a}\) is degenerate, or we have \(a = r\) and there is some \(g \in G\) so that \(X^*\) belongs to \(\Ad^*(g)\inv\Gamma + \sbjtlpp{\Lie^*(H \cap \Int(g)\inv G')}{-r}\).
\end{cor}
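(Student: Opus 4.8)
The plan is to deduce Corollary~\ref{cor:char-sample} from Lemma~\ref{lem:char-sample} by arguing that the individual terms $\tr\pi(\gamma\dotm\mexp(\sbjhd Y r)\chrc{\sbtl G x a, \contra{\hat\phi}})$ on the right-hand side of Lemma~\ref{lem:char-sample} already vanish unless the asserted geometric condition on $X^*$ holds, so that the whole sampled value vanishes. Since $\pi$ is irreducible and contains $(\sbtl G o r, \hat\phi_o)$, the standard Mackey-theory argument (exactly as in the proof of \xcite{spice:asymptotic}*{Corollary \xref{cor:sample}}) shows that $\tr\pi(\gamma\dotm\mexp(\sbjhd Y r)\chrc{\sbtl G x a, \contra{\hat\phi}})$ vanishes unless the $K$-type $(\sbtl G x a, \hat\phi)$, with dual blob $X^* + \sbtlpp{\Lie^*(G)}x{-a}$, is ``linked'' to $(\sbtl G o r, \hat\phi_o)$, with dual blob $\Gamma + \sbtlp{\Lie^*(G')}o{-r}$; concretely, there must be some $g \in G$ carrying one refined datum into a datum compatible with the other. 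Here I would invoke \xcite{spice:asymptotic}*{Hypothesis \xref{hyp:K-type}} (imposed just before the statement) to translate this linkage into a containment of the corresponding dual blobs up to conjugacy, exactly as that hypothesis is used in \cite{spice:asymptotic}.

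The next step is the depth dichotomy. If $a > r$, then the dual blob $X^* + \sbtlpp{\Lie^*(G)}x{-a}$ has ``radius'' strictly larger than $r$, while every $G$-conjugate of $\Gamma + \sbtlp{\Lie^*(G')}o{-r}$ meets $\sbjtlpp{\Lie^*(G)}{-r}$; since $\Gamma$ is generic and by \xcite{spice:asymptotic}*{Remark \xref{rem:depth}}, nonvanishing then forces $X^* + \sbtlpp{\Lie^*(H)}x{-a}$ to be degenerate. This is precisely the first case in the statement, and is identical to the corresponding step in \xcite{spice:asymptotic}*{Corollary \xref{cor:sample}} (and in our Corollary~\ref{cor:orb-sample}), so I would cite that reasoning rather than repeat it. If instead $a = r$, then nonvanishing forces $X^* + \sbtlpp{\Lie^*(G)}x{-r}$ to meet some $G$-conjugate of $\Gamma + \sbjtlpp{\Lie^*(G)}{-r}$; again exactly as in the proof of \xcite{spice:asymptotic}*{Corollary \xref{cor:sample}} (compare also the $a = r$ case of our Corollary~\ref{cor:orb-sample}), one descends this to the statement that $X^*$ lies in $\Ad^*(g)\inv\Gamma + \sbjtlpp{\Lie^*(H \cap \Int(g)\inv G')}{-r}$ for a suitable $g \in G$. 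The descent here uses \xcite{spice:asymptotic}*{Hypothesis \xref{hyp:Z*}} (genericity of $\Gamma$) to pin down the intersection.

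The main obstacle, such as it is, is bookkeeping rather than substance: one must check that the passage from Lemma~\ref{lem:char-sample} (whose terms are traces of $\pi$ against characteristic functions twisted by $\hat\phi$, sitting at the point $\gamma\dotm\mexp(\sbjhd Y r)$ rather than at a point commuting with $\gamma$) does not disturb the $K$-type comparison. This is exactly where Hypothesis~\ref{hyp:mexp} and the identities established in the proof of Lemma~\ref{lem:char-sample} (relating the pullback of $\sbjtl Y r + \chrc{\sbtl{\Lie(H)}x a, \contra{\AddChar_{X^*}}}$ to $\gamma\dotm\mexp(\sbjhd Y r + Z)\chrc{\sbtl H x a, \contra\phi}$) do their work: once the sampled value has been expressed as a sum of honest traces $\tr\pi(\gamma\dotm\mexp(\sbjhd Y r)\chrc{\sbtl G x a, \contra{\hat\phi}})$, the vanishing criterion is a purely local statement about the $K$-type $(\sbtl G x a, \hat\phi)$, and the location $\gamma\dotm\mexp(\sbjhd Y r)$ of the element no longer matters. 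So the proof reduces to: (i) quote the trace formula of Lemma~\ref{lem:char-sample}; (ii) apply the Mackey/$K$-type vanishing argument of \xcite{spice:asymptotic}*{Corollary \xref{cor:sample}} term by term, using \xcite{spice:asymptotic}*{Hypotheses \xref{hyp:Z*} and \xref{hyp:K-type}}; (iii) split into the cases $a > r$ and $a = r$ as above. I expect the write-up to be only a few lines, mostly pointers to the cited proof, since the genuinely new content (the re-centring) was already absorbed into Lemma~\ref{lem:char-sample}.
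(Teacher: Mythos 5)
Your proposal is correct and takes essentially the same route the paper intends: Lemma~\ref{lem:char-sample} reduces the sampled value to a sum of traces, the $K$-type/Mackey vanishing argument of \xcite{spice:asymptotic}*{Corollary \xref{cor:sample}} applies term-by-term, and the dichotomy between $a > r$ (degeneracy via \xcite{spice:asymptotic}*{Remark \xref{rem:depth}}) and $a = r$ (conjugacy of dual blobs, then descent using genericity of $\Gamma$) finishes the argument. The paper leaves this proof unwritten because, as you correctly identify, the genuinely new re-centring content is already absorbed into Lemma~\ref{lem:char-sample} via Hypothesis~\ref{hyp:mexp}, and the structure exactly parallels the paper's explicit proof of the Lie-algebra analogue Corollary~\ref{cor:orb-sample}.
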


\begin{thm}[\xcite{spice:asymptotic}*{Theorem \xref{thm:asymptotic-exists}}]
\label{thm:char-asymptotic-exists}
Suppose that \cite{jkim-murnaghan:charexp}*{Theorem 3.1.7(1, 5)} is satisfied, and all of the relevant orbital integrals converge.  For every irreducible representation \(\pi\) containing \((\sbtl G o r, \hat\phi_o)\), there is a finitely supported, \(\OO^{H\conn}(\Ad^*(G)\Gamma)\)-indexed vector \(b(\pi, \gamma, \sbjhd Y r)\) so that
\[
\Phi_\pi(\gamma\dotm\mexp(\sbjhd Y r + \sbjtl Y r))
\qeqq
\sum_{\OO \in \OO^{H\conn}(\Ad^*(G)\Gamma)}
	b_\OO(\pi, \gamma, \sbjhd Y r)\Ohat^{H\conn}_\OO(\sbjtl Y r)
\]
for all \(\sbjtl Y r \in \Lie(H)\rss \cap \sbjtl{\Lie(H)}r\).
\end{thm}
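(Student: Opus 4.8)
The plan is to adapt, almost \textit{verbatim}, the proof of \xcite{spice:asymptotic}*{Theorem \xref{thm:asymptotic-exists}} --- equivalently, the argument sketched above for Theorem \ref{thm:orb-asymptotic-exists} --- replacing Corollary \ref{cor:orb-sample} and Lemma \ref{lem:orb-sample} by Corollary \ref{cor:char-sample} and Lemma \ref{lem:char-sample}. Concretely, with the notation of Definition \ref{defn:centred-char}, I would form the ``error distribution''
\[
\widecheck\Theta^0_{\pi, \gamma, \sbjhd Y r}
\ldef \widecheck\Theta_{\pi, \gamma, \sbjhd Y r}
- \sum_{\substack{g \in G'\bslash G/H\conn \\ \Ad^*(g)\inv\Gamma \in \Lie^*(H)}}
	\widecheck\Theta_{\pi, \gamma, \sbjhd Y r, \Ad^*(g)\inv\Gamma}
\]
on \(\Lie^*(H)\), and aim to prove that it annihilates the Kim--Murnaghan test-function space \(\mc D^{-r}_{\Rpp{-r}}\) of \cite{jkim-murnaghan:charexp}*{Definition 3.1.1}, adapted from \(\Lie(G)\) to \(\Lie^*(H)\), while \(\widecheck\Theta_{\pi, \gamma, \sbjhd Y r}\) and each summand \(\widecheck\Theta_{\pi, \gamma, \sbjhd Y r, \Ad^*(g)\inv\Gamma}\) lie in the distribution spaces \(\mc J^{\Ad^*(g)\inv\Gamma}_{\Rpp{-r}}\) of \cite{jkim-murnaghan:charexp}*{Definition 3.1.2(2)}, similarly adapted.

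For the vanishing, as in the cited proofs it suffices to control the sampled values \(\widecheck\Theta^0_{\pi, \gamma, \sbjhd Y r}\bigl(X^* + \chrc{\sbtlpp{\Lie^*(H)}x{-r}}\bigr)\) for \(x \in \BB(H)\), \(a \ge r\), and \(X^* \in \sbtl{\Lie^*(H)}x{-a}\); here the irreducibility and \(K\)-type hypotheses on \(\pi\) let us apply Corollary \ref{cor:char-sample}, which says that \(\widecheck\Theta_{\pi, \gamma, \sbjhd Y r}\bigl(X^* + \chrc{\sbtlpp{\Lie^*(H)}x{-r}}\bigr)\) vanishes unless either \(a > r\) and \(X^* + \sbtlpp{\Lie^*(H)}x{-a}\) is degenerate, or \(a = r\) and \(X^*\) lies in some \(\Ad^*(g)\inv\Gamma + \sbjtlpp{\Lie^*(H \cap \Int(g)\inv G')}{-r}\). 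Since we have subtracted exactly one centred contribution \(\widecheck\Theta_{\pi, \gamma, \sbjhd Y r, \Ad^*(g)\inv\Gamma}\) per double coset \(g\) with \(\Ad^*(g)\inv\Gamma \in \Lie^*(H)\), the ``\(a=r\)'' terms cancel and the ``\(a>r\), degenerate'' terms already lie in \(\mc D^{-r}_{\Rpp{-r}}\), exactly as in \xcite{spice:asymptotic}*{Corollary \xref{cor:sample}}; this is where Lemma \ref{lem:char-sample} --- and, through it, \xcite{spice:asymptotic}*{Hypothesis \initxref{hyp:mexp}(\incxref{coset})\subxref{iso}}, governing the cosets \(\mexp(\sbjhd Y r + \sbjtl Y r)\) --- does the real work. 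The membership statements in the \(\mc J\)-spaces follow just as in the companion paper from the definitions of \(\widecheck\Theta_{\pi,\gamma,\sbjhd Y r}\) and \(\widecheck\Theta_{\pi,\gamma,\sbjhd Y r,\Gamma}\).

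With these two facts in hand, I would invoke the Kim--Murnaghan black box: \cite{jkim-murnaghan:charexp}*{Theorem 3.1.7(5)} furnishes each \(\widecheck\Theta_{\pi, \gamma, \sbjhd Y r, \Ad^*(g)\inv\Gamma}\) with an asymptotic expansion on \(\mc D_{\Rpp{-r}}\) in terms of orbital integrals \(\mu^{H\conn}_\OO\), \(\OO \in \OO^{H\conn}(\Ad^*(g)\inv\Gamma)\), and \cite{jkim-murnaghan:charexp}*{Theorem 3.1.7(1, 2)} then shows that \(\widecheck\Theta_{\pi, \gamma, \sbjhd Y r}\) is the sum of these; collecting the various \(\OO^{H\conn}(\Ad^*(g)\inv\Gamma)\) inside \(\OO^{H\conn}(\Ad^*(G)\Gamma)\) gives a single finitely supported coefficient vector. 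Taking Fourier transforms, as in \cite{debacker:homogeneity}*{Theorem 3.5.2} and \cite{jkim-murnaghan:charexp}*{Theorem 5.3.1}, turns this into an equality of representing functions on \(\Lie(H)\rss \cap \sbjtl{\Lie(H)}r\); unwinding Definition \ref{defn:centred-char} (the relation of \(\widecheck\Theta_{\pi, \gamma, \sbjhd Y r}\) to \(\Theta_{\pi, \gamma\dotm\mexp(\sbjhd Y r)}\), hence, by \xcite{spice:asymptotic}*{Hypothesis \initxref{hyp:mexp}(\incxref{coset})\subxref{coset}} and \xcite{spice:asymptotic}*{Lemma \xref{lem:centre}}, to the restriction of \(\Theta_\pi\) to \(\gamma\dotm\mexp(\sbjhd Y r + \sbjtl{\Lie(H)}r)\)) and then replacing \(\Theta_\pi\) by \(\Phi_\pi\) and \(\muhat^{H\conn}_\OO\) by \(\Ohat^{H\conn}_\OO\) using the standard discriminant and orbit-discriminant normalizations produces the asserted formula for \(\Phi_\pi(\gamma\dotm\mexp(\sbjhd Y r + \sbjtl Y r))\), after a harmless rescaling of the coefficients into a vector \(b(\pi, \gamma, \sbjhd Y r)\).

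I do not expect a genuinely new difficulty here. The one subtlety in this paper's setup --- that, because \(\mexp\) need not be multiplicative on commuting elements, a re-centred expansion based at \(\gamma\dotm\mexp(\sbjhd Y r)\) must control \(\Theta_\pi\) at \(\gamma\dotm\mexp(\sbjhd Y r + \sbjtl Y r)\) rather than at \(\gamma\dotm\mexp(\sbjhd Y r)\mexp(\sbjtl Y r)\) --- has already been packaged into Definition \ref{defn:centred-char}, Lemma \ref{lem:char-sample}, and Corollary \ref{cor:char-sample}, whose proofs dispatch precisely this bookkeeping. The only step that warrants care is the Fourier-transform bookkeeping at the end, where one must keep straight which of the spaces \(\mc D_{\Rpp{-r}}\), \(\mc D^{-r}_{\Rpp{-r}}\) and which normalization (\(\Phi_\pi\) versus \(\Theta_\pi\), \(\Ohat^{H\conn}_\OO\) versus \(\muhat^{H\conn}_\OO\)) is in force; but this is purely formal, and is handled exactly as in the proofs of Theorem \ref{thm:orb-asymptotic-exists} and \xcite{spice:asymptotic}*{Theorem \xref{thm:asymptotic-exists}}.
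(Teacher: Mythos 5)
Your proposal is correct and follows exactly the route the paper intends: the paper does not write out a separate proof of Theorem \ref{thm:char-asymptotic-exists}, instead invoking (in the opening of \S\ref{sec:characters}) the principle that the arguments of \xcite{spice:asymptotic}*{\S\xref{sec:cuspidal}} carry over once the re-centred sampling machinery of Definition \ref{defn:centred-char}, Lemma \ref{lem:char-sample}, and Corollary \ref{cor:char-sample} is in place, and the template you mirror is precisely the explicitly given proof of the Lie-algebra analogue, Theorem \ref{thm:orb-asymptotic-exists}. Your accounting of the \(\mc D\)- and \(\mc J\)-spaces, of \cite{jkim-murnaghan:charexp}*{Theorem 3.1.7(1, 2, 5)}, of the Fourier-transform step, and of the \(\Theta_\pi\)/\(\Phi_\pi\) and \(\muhat\)/\(\Ohat\) normalisations all match the paper's argument.
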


\begin{note}
See Remark \ref{rem:black-box} regarding our imposition of \cite{jkim-murnaghan:charexp}*{Theorem 3.1.7(1, 5)} as a hypothesis.
\end{note}

We now adopt the notation and assumptions of \S\ref{sec:cuspidal}.  In particular, we assume that \bG is connected and that \(\pi\) is supercuspidal, and define a supercuspidal representation \(\pi'\) of \(G'\) (that \emph{includes} a twist by the character \(\phi_{\ell - 1}\phi_\ell\), so that the depth of \(\pi'\) is \(r_\ell\), not \(r_{\ell - 1}\), unless \(r_{\ell - 1}\) equals \(r_\ell\)).

Since our goal is to compute supercuspidal characters (Theorem \ref{thm:pi-to-pi'}), and since the character of a supercuspidal representation is supported on elements that are compact modulo the centre \cite{deligne:support}*{p.~156, Th\'eor\`eme}, there is no loss of generality in requiring that \(\gamma\) be compact modulo centre, so we do so.

The statement of \xcite{spice:asymptotic}*{Theorem \xref{thm:asymptotic-pi-to-pi'}} imposed a condition on asymptotic expansions (that all orbits occurring in the asymptotic expansion had semisimple part conjugate to some fixed element \(X^*_o\)).  This was relevant for the intended applications of that result, but was never used in the proof.  For our applications, we need to remove the condition from the statement of Theorem \ref{thm:pi-to-pi'}.  This does not affect the proof.

\begin{thm}[\xcite{spice:asymptotic}*{Theorem \xref{thm:asymptotic-pi-to-pi'}}]
\label{thm:pi-to-pi'}
For every \(g \in G\), put \(\Gamma_g = \Ad^*(g)\inv\Gamma\), \(\bG'_g = \Int(g)\inv\bG'\), \(\UU\primedual_g = \Gamma_g + \sbjtlpp{\Lie^*(G'_g)}{-r}\), and, if \(\Gamma_g\) belongs to \(\Lie^*(H)\), also \(\bH'_g = \bH \cap \bG'_g\).

Suppose that, for every \(g \in G'\bslash G/H\conn\) for which \(\Gamma_g\) belongs to \(\Lie^*(H)\), \cite{jkim-murnaghan:charexp}*{Theorem 3.1.7(1, 5)} is satisfied, and there is a finitely supported, \(\OO^{H\primeconn_g}(\UU\primedual_g)\)-indexed vector \(c(\pi\primethen g, \gamma, \sbjhd Y r)\) such that
\begin{multline*}
\phi_\ell^g(\mexp(\sbjhd Y r + \sbjtl{Y'}r))\inv
\Phi_{\pi\primethen g}(\gamma\dotm\mexp(\sbjhd Y r + \sbjtl{Y'}r)) \qeqq \\
\sum_{\OO' \in \OO^{H\primeconn_g}(\UU\primedual_g)}
	\wtilde\Gauss_{G'_g/H'_g}(\OO', \gamma\dotm\mexp(\sbjhd Y r))
	c_{\OO'}(\pi^{\prime\,g}, \gamma, \sbjhd Y r)
	\Ohat^{H\primeconn_g}_{\OO'}(\sbjtl{Y'}r)
\end{multline*}
for all \(\sbjtl{Y'}r \in \Lie(H'_g)\rss \cap \sbjtl{\Lie(H'_g)}r\).  Then we have that
\begin{multline*}
\phi_\ell(\mexp(\sbjhd Y r + \sbjtl Y r))\inv
\Phi_\pi(\gamma\dotm\mexp(\sbjhd Y r + \sbjtl Y r)) \qeqq \\
\sum_{\substack{
	g \in G'\bslash G/H\conn \\
	\Gamma_g \in \Lie^*(H)
}}
\sum_{\OO'}
{}
	\wtilde\Gauss_{G/H}(\OO', \gamma\dotm\mexp(\sbjhd Y r))
	c_{\OO'}(\pi^{\prime\,g}, \gamma, \sbjhd Y r)
	\Ohat^{H\conn}_{\OO'}(\sbjtl Y r)
\end{multline*}
for all \(\sbjtl Y r \in \Lie(H)\rss \cap \sbjtl{\Lie(H)}r\), where \(\wtilde\Gauss\) is as in Notation \ref{notn:twisted-Gauss}.
\end{thm}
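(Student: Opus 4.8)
The plan is to run the proof of \xcite{spice:asymptotic}*{Theorem \xref{thm:asymptotic-pi-to-pi'}} almost verbatim, with the one-step objects there replaced by the re-centred ones of \S\ref{sec:characters} and the Gauss sums $\Gauss$ replaced throughout by the twisted Gauss sums $\wtilde\Gauss$ of Notation \ref{notn:twisted-Gauss}. First, Theorem \ref{thm:char-asymptotic-exists} shows that $\sbjtl Y r \mapsto \phi_\ell(\mexp(\sbjhd Y r + \sbjtl Y r))\inv\Phi_\pi(\gamma\dotm\mexp(\sbjhd Y r + \sbjtl Y r))$ already admits \emph{some} asymptotic expansion in terms of the $\Ohat^{H\conn}_\OO$ with $\OO \in \OO^{H\conn}(\Ad^*(G)\Gamma)$; so, by \xcite{spice:asymptotic}*{Lemma \xref{lem:asymptotic-check}}, to identify the proposed expansion as the correct one it suffices to check that the two sides have the same ``sampled'' values against $\chrc{\sbtl{\Lie(H)}x r, \contra{\AddChar_{X^*}}}$, for every $g_0 \in G$ with $\Gamma_{g_0} \in \Lie^*(H)$, every $x \in \BB(H'_{g_0})$, and every $X^* \in \sbtl{\Lie^*(H)}x{-r} \cap \UU\primedual_{g_0}$. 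As usual we reduce to $g_0$ the identity, whence $\gamma \in G'$, $\sbjhd Y r \in \Lie(G')$, and $\Gamma \in \Lie^*(H')$ by Remark \ref{rem:gp-symmetry}.

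Second, Lemma \ref{lem:char-sample} expresses the sampled value of $\widecheck\Theta_{\pi, \gamma, \sbjhd Y r}$ as a weighted sum of traces $\tr\pi(\gamma\dotm\mexp(\sbjhd Y r)\chrc{\sbtl G x r, \contra{\hat\phi}})$, and likewise for $\pi' \otimes \phi_\ell\inv$ on $G'$; the failure of $\mexp$ to be multiplicative is harmless here, since only the coset $\gamma\dotm\mexp(\sbjhd Y r)\sbjtl H r = \gamma\dotm\mexp(\sbjhd Y r + \sbjtl{\Lie(H)}r)$ enters, as guaranteed by Hypothesis \ref{hyp:mexp}. The core is then the $G$-to-$G'$ comparison of these traces, which is the group (spectral) counterpart of Theorem \ref{thm:sample-orb-to-orb'}: from Yu's construction of $\pi$ and of $\pi' \otimes \phi_\ell\inv$, the Mackey-theoretic computation of \xcite{spice:asymptotic}*{Theorem \xref{thm:isotypic-pi-to-pi'} and Corollary \xref{cor:isotypic-pi-to-pi'}} relates $\tr\pi(\gamma\dotm\mexp(\sbjhd Y r)\chrc{\sbtl G x r, \contra{\hat\phi}})$ to $\tr(\pi' \otimes \phi_\ell\inv)(\gamma\dotm\mexp(\sbjhd Y r)\chrc{\sbtl{G'}x r, \contra{\hat\phi}})$ up to Moy--Prasad index factors and the trace $\epsilon_x^{G/G'}(\gamma)\tr\tilde\phi_x(\gamma)$ of the twisted Weil representation. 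Proposition \ref{prop:e-} evaluates $\tr\tilde\phi_x(\gamma)$ explicitly and, crucially, identifies $\epsilon_x^{G/G'}(\gamma)\,\tr\tilde\phi_x(\gamma)/\abs{\tr\tilde\phi_x(\gamma)}$ as the $x$-independent transcendental part of $\wtilde\Gauss_{G/G'}(X^*, \gamma)$; combined with Remark \ref{rem:disc-Gamma}, which rewrites the index factors as the discriminant ratios $\abs{\redD_G(X^*)}^{1/2}\abs{\redD_{G'}(X^*)}\inv[1/2]$, and with Proposition \ref{prop:Gauss-to-Weil}, this turns the comparison into one involving exactly $\wtilde\Gauss_{G/G'}$.

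Third, running Lemma \ref{lem:char-sample} backwards on $G'$ converts the $G'$-hypothesis of the theorem into a formula for the sampled values of the $G'$-side expansion; feeding the trace comparison of the previous paragraph into the group analogue \xcite{spice:asymptotic}*{Lemma \xref{lem:dist-G-to-G'}} of Lemma \ref{lem:dist-g-to-g'} then produces the sampled values of the $G$-side expansion, with the coefficients $c_{\OO'}(\pi^{\prime\,g}, \gamma, \sbjhd Y r)$ carried through unchanged and the twisted Gauss factor promoted from $G'$ to $G$. Since our centraliser is built in two stages, $\bH = \CC\bJ r(\sbjhd Y r)$ with $\bJ = \CC\bG{R_{-1}}(\gamma)$, the factorisation $\wtilde\Gauss_{G/H} = \wtilde\Gauss_{G/J}\cdot\wtilde\Gauss_{J/H}$ needed to match the two sides is supplied by Lemma \ref{lem:gp-Gauss-combine}, together with Remark \initref{rem:mexp}\subpref{steps}, which identifies $\bH\conn$ with $\Cent_\bG(\gamma\dotm\mexp(\sbjhd Y r))\conn$. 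Summing over $g \in G'\bslash G/H\conn$ with $\Gamma_g \in \Lie^*(H)$ gives the asserted formula; as noted before the statement, dropping the condition of \xcite{spice:asymptotic}*{Theorem \xref{thm:asymptotic-pi-to-pi'}} that all occurring orbits have semisimple part conjugate to a fixed element affects nothing in the proof.

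The main obstacle is not any single step but the simultaneous bookkeeping of the normalising data: one must track the Moy--Prasad indices $\card{\sbat{\Lie(G)}x s}$, $\card{\sbat{\Lie(H)}x s}$ and their $G'$-, $H'$-analogues, the discriminants $\abs{\redD_\bullet(X^*)}^{1/2}$ and $\abs{\Disc_{\bullet/\bullet}(\gamma)}^{1/2}$, the $\phi_\ell$-twist $\phi_\ell(\mexp(\sbjhd Y r + \sbjtl Y r))$ (which, since $\mexp$ is not multiplicative, cannot merely be split off as $\phi_\ell(\mexp(\sbjhd Y r))\phi_\ell(\mexp(\sbjtl Y r))$), and the twisted Gauss sums, and verify that all of these collapse so that the coefficients really are the $c_{\OO'}(\pi^{\prime\,g}, \gamma, \sbjhd Y r)$. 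This is exactly where Remark \ref{rem:disc-Gamma}, Corollary \ref{cor:Gauss-const}, Proposition \ref{prop:Gauss-to-Weil}, Lemma \ref{lem:gp-Gauss-combine}, and Proposition \ref{prop:e-} must be deployed in concert; no idea beyond them is needed, but the combinatorics of the factors is what makes Theorem \ref{thm:pi-to-pi'}, like its near-identical sibling Theorem \ref{thm:char-unwind}, delicate.
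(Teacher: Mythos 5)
The overall shape of your argument is right: the paper deliberately gives no fresh proof of Theorem \ref{thm:pi-to-pi'}, instead asserting that the proof of \xcite{spice:asymptotic}*{Theorem \xref{thm:asymptotic-pi-to-pi'}} carries over once the re-centring at \(\gamma\dotm\mexp(\sbjhd Y r)\) and the twisted Gauss sums of Notation \ref{notn:twisted-Gauss} are substituted in, and your steps (existence via Theorem \ref{thm:char-asymptotic-exists}, reduction to sampled values via \xcite{spice:asymptotic}*{Lemma \xref{lem:asymptotic-check}} and Lemma \ref{lem:char-sample}, the Mackey-theoretic trace comparison, Proposition \ref{prop:e-} to turn the twisted Weil character into part of \(\wtilde\Gauss\), the discriminant rewriting of Remark \ref{rem:disc-Gamma}, and the group analogue of Lemma \ref{lem:dist-g-to-g'} to propagate the expansion) match that programme.

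However, your third paragraph introduces a genuine error: you claim that because \(\bH = \CC\bJ r(\sbjhd Y r)\) with \(\bJ = \CC\bG{R_{-1}}(\gamma)\) is built in two stages, the factorisation \(\wtilde\Gauss_{G/H} = \wtilde\Gauss_{G/J}\cdot\wtilde\Gauss_{J/H}\) from Lemma \ref{lem:gp-Gauss-combine} is ``needed to match the two sides.'' It is not. The theorem compares \(\wtilde\Gauss_{G/H}(\OO', \gamma\dotm\mexp(\sbjhd Y r))\) with \(\wtilde\Gauss_{G'_g/H'_g}(\OO', \gamma\dotm\mexp(\sbjhd Y r))\) --- a single \(G\)-to-\(G'\) step with the same element \(\gamma\dotm\mexp(\sbjhd Y r)\) on both sides --- and Remark \initref{rem:mexp}\subpref{steps} precisely guarantees that \(\bH\conn\) is the connected centraliser of that single element and that Hypotheses \ref{hyp:fc-building} and \ref{hyp:gp-gamma} hold for it, so the Gauss-sum comparison for \(\gamma\dotm\mexp(\sbjhd Y r)\) goes through directly without any factorisation across \(\bJ\). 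Indeed the paper states explicitly, just before Lemma \ref{lem:gp-Gauss-combine}, that it is ``not needed until the proof of Theorem \ref{thm:char-unwind}''; you have conflated the one-step comparison of Theorem \ref{thm:pi-to-pi'} with the multi-step unwinding of Theorem \ref{thm:char-unwind}, where the \(G/J\), \(J/H\) split (and hence Lemma \ref{lem:gp-Gauss-combine}) really does enter.
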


\begin{note}
See Remark \ref{rem:black-box} regarding our imposition of \cite{jkim-murnaghan:charexp}*{Theorem 3.1.7(1, 5)} as a hypothesis.

The group \(\bH'_g\) usually does \emph{not} equal \(\Int(g)\inv(\bH \cap \bG')\).

The representation \(\pi'\) of \xcite{spice:asymptotic}*{Theorem \xref{thm:asymptotic-pi-to-pi'}} is our \(\pi' \otimes \phi_\ell\inv\).

If \(\phi_\ell(\mexp(\sbjhd Y r + \sbjtl Y r))\) equals \(\phi_\ell(\mexp(\sbjhd Y r))\phi_\ell(\mexp(\sbjtl Y r))\) for all \(\sbjtl Y r \in \Lie(H)\rss \cap \sbjtl{\Lie(H)}r\), then, in Theorem \ref{thm:pi-to-pi'}, we can absorb the factor \(\phi_\ell(\mexp(\sbjhd Y r))\) into the constants \(c_{\OO'}(\pi\primethen g, \gamma, \sbjhd Y r)\).
\end{note}

\begin{rem}
\label{rem:represent-orbit}
We thank Sandeep Varma for pointing out that, if \(\Ohat^G_\xi\) (respectively, \(\Phi_\pi\)) is representable on all of \(\Lie(G)\) (respectively, \(G\)), and each \(\Ohat^{H\conn}_\OO\) is representable on all of \(\Lie(H)\)---as happens, for example, when \field has characteristic \(0\) \cite{hc:queens}*{Theorems 4.4 and 16.3}---then we may take the representing functions to vanish off their respective regular semisimple sets; so, since \(\gamma + Y\) is regular semisimple in \(\Lie(G)\) (respectively, \(\gamma\dotm\mexp(Y)\) is regular semisimple in \(G\)) if and only if \(Y\) is regular semisimple in \(\Lie(H)\), the equalities involving \(\Ohat^G_\xi\) in Theorems \ref{thm:orb-asymptotic-exists} and \ref{thm:orb-to-orb'} (respectively, those involving \(\Phi_\pi\) in Theorems \ref{thm:char-asymptotic-exists} and \ref{thm:pi-to-pi'}) hold for all \(Y \in \sbjtl{\Lie(H)}r\).
\end{rem}

\section{Unwinding the induction}
\label{sec:unwind}

Throughout \S\ref{sec:unwind}, we continue to use the
	local, non-Archimedean field \field
and	reductive group \bG
from \S\ref{sec:hyps}.
As in \S\S\ref{sec:orbits}--\ref{sec:characters}, we assume that \(\bG_\tamefield\) is split, and the characteristic of \(\sbjat\field 0\) is not \(2\).

Since Theorem \ref{thm:orb-unwind} applies only to orbital integrals on connected groups, and since the results of \cite{yu:supercuspidal} that we use in \S\ref{subsec:char-unwind} also require connectedness, we assume that \bG is connected.

Let \matnotn{Gamma}\Gamma be an element of \(\Lie^*(G)\).  Unlike in \S\S\ref{sec:orbits}, \ref{sec:characters}, we do not assume that \(\Gamma\) itself satisfies \xcite{spice:asymptotic}*{Hypothesis \xref{hyp:Z*}}.  Instead, we suppose that we have
	a non-negative integer \(\ell\),
	a sequence \(r_{-1} < r_0 < \dotsb < r_{\ell - 1}\) of real numbers,
and	a decomposition \(\Gamma = \sum_{j = -1}^{\ell - 1} \matnotn{Gammaj}{\Gamma^j}\).  Then we put \(\bG^\ell = \bG\), assume inductively for all \(j \in \sset{-1, \dotsc, \ell - 1}\) that \(\Gamma^j\) satisfies \xcite{spice:asymptotic}*{Hypothesis \initxref{hyp:Z*}(\subxref{central}, \subxref{good})}, with \(r_j\) and \(\bG^{j + 1}\) in place of \(r\) and \bG, and write \(\matnotn{Gj}{\bG^j} = \Cent_{\bG^{j + 1}}(\Gamma^j)\) for the associated tame, twisted Levi subgroup of \(\bG^{j + 1}\).  Thus, \(\matnotn{Gvec}\vbG = (\bG^0 \subseteq \dotsb \subseteq \bG^{\ell - 1} \subseteq \bG^\ell = \bG)\) is a nested, tame, twisted Levi sequence in \bG.  Note that there is a (potentially) still smaller tame, twisted Levi subgroup \(\bG^{-1}\) of \bG that we could have included, but did not include, in \vbG.

Put \(\mnotn r = r_{\ell - 1}\) and \(\matnotn{Gprime}{\bG'} = \bG^{\ell - 1}\).

\label{page:common:sec:unwind}
We refer to all this as the common notation of \S\ref{sec:unwind}.

\begin{rem}
\label{rem:Gamma-1=0}
As in Remark \ref{rem:Gamma=0}, we may, but need not, take \(\Gamma^{-1} = 0\) and \(r_{-1} = r_0\).
\end{rem}

\numberwithin{thm}{subsection}
\subsection{Orbital integrals}
\label{subsec:orb-unwind}

In addition to the common notation of \S\ref{sec:unwind}, let \(\gamma\) be a semisimple element of \(\Lie(G)\) satisfying Hypothesis \ref{hyp:fc-building}.

We require that Hypothesis \ref{hyp:MP-ad} be satisfied.

Put \(\matnotn J\bJ = \CC\bG{r_{-1}}(\gamma)\).  We require that
	\begin{itemize}
	\item \((\gamma, x, r_{-1})\) satisfy Hypothesis \ref{hyp:Lie-gamma} for all \(x \in \BB(J)\)
and	\item \(\gamma\) centralise \(\Lie(\bJ)\).
	\end{itemize}
Put \(\sbjhd Y{r_{-1}} = 0\).  For \(j \in \sset{0, \dotsc, \ell - 1}\), we now inductively choose a semisimple element \(Y^{(j)}\) of \(\Lie(\CC J{r_{j - 1}}(\sbjhd Y{r_{j - 1}}))\) satisfying Hypothesis \ref{hyp:fc-building}, with \(\CC\bJ{r_{j - 1}}(\sbjhd Y{r_{j - 1}})\) in place of \bG, put \(\matnotn{Yr}{\sbjhd Y{r_j}} = \sbjhd Y{r_{j - 1}} + Y^{(j)}\), and require that
	\begin{itemize}
	\item \((Y^{(j)}, x, r_j)\) satisfy Hypothesis \ref{hyp:Lie-gamma} for all \(x \in \BB(\CC J{r_j}(\sbjhd Y{r_j}))\),
	\item \(Y^{(j)}\) belong to \(\sbjtl{\Lie(\CC J{r_j}(\sbjhd Y{r_j}))}{r_{j - 1}}\),
and	\item \(Y^{(j)}\) centralise \(\Lie(\CC\bJ{r_j}(\sbjhd Y{r_j}))\).
	\end{itemize}
Note the rough similarity of these conditions to the definition of a normal approximation in \xcite{adler-spice:good-expansions}*{Definition \xref{defn:r-approx}}.
Put \(\matnotn H\bH = \CC\bJ r(\sbjhd Y r)\).

\begin{rem}
\label{rem:deep-Lie-symmetry}
The natural analogues of Remarks \ref{rem:Lie-symmetry} and Remark \ref{rem:mexp:steps} (without reference to \mexp) hold; for example, we have that \(\bH\conn\) equals both \(\Cent_\bG(\gamma + \sbjhd Y r)\conn\) and \(\Cent_\bG(\gamma, Y^{(0)}, \dotsc, Y^{(\ell - 1)})\conn\), and (so) that \(\Gamma^{\ell - 1}\) belongs to \(\Lie^*(H)\) if and only if \(\gamma\) and each \(Y^{(j)}\) belong to \(\Lie(G')\).
\end{rem}

The proof of Lemma \ref{lem:Lie-Gauss-combine} is similar to, but easier than, that of Lemma \ref{lem:gp-Gauss-combine}, with \xcite{spice:asymptotic}*{Corollary \xref{cor:Gauss-const}} replaced by Corollary \ref{cor:Gauss-const}, so we omit it.  Note that we do not require that \(\CCp\bG 0(\gamma)\) equal \bG.

\begin{lem}
\label{lem:Lie-Gauss-combine}
Suppose that \(\gamma\) and \(\sbjhd Y r\) belong to \(\Lie(G')\).  Put \(\bJ' = \bJ \cap \bG'\) and \(\bH' = \bH \cap \bG'\).  Then, for every \(X^* \in \Gamma + \sbjtlpp{\Lie^*(G')}{-r}\), we have that
{
\newcommand\WQpiece[3]{\Gauss_{#1/#2}(X^*, #3)}
\newcommand\WQ[3]{\frac{\WQpiece{#1}{#2}{#3}}{\WQpiece{#1'}{#2'}{#3}}}
\[
\WQ G H{\gamma + \sbjhd Y r}
\qeqq
\WQ G J\gamma\dotm\WQ J H{\sbjhd Y r}.
\]
}
\end{lem}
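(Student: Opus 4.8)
The strategy is to prove the identity by an inductive argument on the length of the twisted-Levi sequence, modeled on the proof of Lemma~\ref{lem:gp-Gauss-combine}, but replacing every appeal to group-theoretic results by their Lie-algebra analogues. Since we want to relate the Weil index attached to $\gamma+\sbjhd Y r$ on $\Lie(G)$ to the product of the Weil index attached to $\gamma$ and the one attached to $\sbjhd Y r$, the natural approach is to decompose the ambient lattice on which the quadratic form is computed into a direct sum of pieces indexed by the ``jumps'' of $\gamma$ (respectively, of $\sbjhd Y r$), show that these pieces are mutually orthogonal for the relevant bilinear form, and then identify the contribution at each jump with the corresponding contribution coming from $\gamma$ or from $\sbjhd Y r$ according as the index $i$ is below $r_{-1}$ or at least $r_{-1}$.

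Concretely, first I would fix a point $x\in\BB(\CC{J'}r(\sbjhd Y r))$ (any such point works since $\sbjhd Y r$ is central in $\Lie(\CC\bJ r(\sbjhd Y r))$, using Proposition~\initref{prop:Lie-cfc-facts}\subpref{Levi} to see that $\CC{\bJ'}r(\sbjhd Y r)$ is a tame, twisted Levi subgroup of $\CC\bJ r(\sbjhd Y r)$). Next, following Corollary~\ref{cor:lattice-orth}, I would write the relevant quotient lattice attached to $\gamma+\sbjhd Y r$ as the span of the pieces $\sbat{\Lie(\CC G i(\gamma+\sbjhd Y r))}x{(r-i)/2}$ modulo the appropriate sub-lattices, one piece for each $i\in\sbjtlp\R0$ with $i<r$, and observe via Proposition~\ref{prop:lattice-orth} that these pieces are $b_{X^*,\gamma+\sbjhd Y r}$-orthogonal; then I would invoke \cite{ranga-rao:weil}*{Theorem~A.2(3,5)} to reduce the computation of the Weil index to a product over $i$ of the Weil indices of the individual pieces. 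The heart of the argument is then the local comparison at each index $i$: for $i<r_{-1}$ one uses the natural analogue of Remark~\initref{rem:mexp}\subpref{steps} (no $\mexp$ needed here, since $\sbjhd Y r$ lives on the Lie algebra) to identify $\CC\bG i(\gamma+\sbjhd Y r)$ with $\CC\bG i(\gamma)$, uses Hypothesis~\initref{hyp:Lie-gamma}\subpref{building} to see that $\gamma$ normalizes the relevant Moy--Prasad lattice at $x$, and checks that $\ad(\gamma+\sbjhd Y r)$ and $\ad(\gamma)$ agree modulo a deeper lattice on the piece in question, so the two bilinear forms $b_{X^*,\gamma+\sbjhd Y r}$ and $b_{X^*,\gamma}$ agree modulo $\sbjtlp\field0$ there; for $i\ge r_{-1}$ one instead identifies the piece with $\sbat{\Lie(\CC J i(\sbjhd Y r))}x{(r-i)/2}$, which is contained in $\Cent_{\Lie(G)}(\gamma)$, so that $\ad(\gamma+\sbjhd Y r)$ agrees with $\ad(\sbjhd Y r)$ there, and $b_{X^*,\gamma+\sbjhd Y r}$ agrees with $b_{X^*,\sbjhd Y r}$ modulo $\sbjtlp\field0$. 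Finally, since $\CC\bJ i(\sbjhd Y r)$ equals $\CCp\bJ i(\sbjhd Y r)$ for $i<r_{-1}$ by Lemma~\ref{lem:MP-dfc} (so that the piece attached to $\sbjhd Y r$ at such $i$ is trivial), and $\CC\bG i(\gamma)\conn$ equals $\CCp\bG i(\gamma)\conn$ for $i\ge r_{-1}$ (so that the piece attached to $\gamma$ at such $i$ is trivial), the product over all $i$ splits exactly as the product of the Weil index for $\gamma$ on $G/J$ and the Weil index for $\sbjhd Y r$ on $J/H$, with the matching primed quantities behaving identically on $G'$, $J'$, $H'$; dividing gives the claimed formula.

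The main obstacle, I expect, is the bookkeeping at the boundary index $i=r_{-1}$ and the verification that the orthogonality and lattice identifications are compatible with passing to the primed subgroups $G'$, $J'=\bJ\cap\bG'$, $H'=\bH\cap\bG'$ simultaneously; one needs that the decomposition of the quotient lattice for $(G,H)$ restricts to the corresponding decomposition for $(G',H')$, which should follow from the fact that $\bG'$ is a tame, twisted Levi subgroup together with Remark~\initref{rem:fc-building}\subpref{up-to-G} and the eigenspace description of the funny centralizers. Since the excerpt explicitly states that the proof is ``similar to, but easier than, that of Lemma~\ref{lem:gp-Gauss-combine}'' (the relevant difference being only the replacement of \cite{spice:asymptotic}*{Corollary \xref{cor:Gauss-const}} by Corollary~\ref{cor:Gauss-const}), I would in fact simply record these observations and refer to that earlier proof, which is what the paper does.
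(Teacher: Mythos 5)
Your proposal is essentially correct and matches the paper's own treatment, which is to refer to the proof of Lemma~\ref{lem:gp-Gauss-combine} and omit the Lie-algebra version. One small inaccuracy worth flagging: your appeal to Hypothesis~\ref{hyp:Lie-gamma} ``to see that $\gamma$ normalizes the relevant Moy--Prasad lattice'' imports a group-theoretic step that has no meaning for $\gamma\in\Lie(G)$, and is in any case unnecessary. Since $b_{X^*,\gamma}$ (Notation~\ref{notn:Gauss}) is \emph{linear} in $\gamma$, one has the exact identity $b_{X^*,\gamma+\sbjhd Y r} = b_{X^*,\gamma} + b_{X^*,\sbjhd Y r}$, so the comparison at each jump $i$ reduces to a direct depth count showing $b_{X^*,\sbjhd Y r}$ is $\sbjtlp\field 0$-valued on the piece $L_i$ when $i<r_{-1}$ (as $2\cdot\tfrac{r-i}{2}+r_{-1}>r$), together with the observation that $b_{X^*,\gamma}$ vanishes identically on $L_i\subseteq\Lie(J)\subseteq\Cent_{\Lie(G)}(\gamma)$ when $i\ge r_{-1}$; this removal of the $\mexp$-manipulations and building-action argument of the group case is exactly why the paper calls the Lie-algebra proof ``easier.''
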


%
%

Fix \(\matnotn{xi}\xi \in \Gamma + \sbjtlpp{\Lie^*(G^{-1})}{-r_{-1}}\).

We have `partially' unwound the induction in Theorem \ref{thm:orb-unwind}, in the sense that there is a (potentially) still smaller tame, twisted Levi subgroup \(\bG^{-1}\) that we could use, but have not used, in our unwinding.  The fully unwound result follows just from shifting the indexing, but we find it convenient for our intended application in Theorem \ref{thm:char-unwind} to state the result in this form.

\begin{thm}
\initlabel{thm:orb-unwind}
For every \(g \in G\) and every \(j \in \sset{-1, \dotsc, \ell - 1}\), put \(\xi_g = \Ad^*(g)\inv\xi\), \(\Gamma_g = \Ad^*(g)\inv\Gamma\) and \(\Gamma^j_g = \Ad^*(g)\inv\Gamma^j\), \(\bG^j_g = \Int(g)\inv\bG^j\), \(\UU\thendual{-1}_g = \Gamma_g + \sbjtlpp{\Lie^*(G^{-1}_g)}{-r_{-1}}\), and, if \(\Gamma^j_g\) belongs to \(\Lie^*(J)\), also \(\bJ^j_g = \bJ \cap \bG^j_g\).


Suppose that all of the relevant orbital integrals converge; and, for every \(g \in G^{-1}\bslash G/J\conn\) for which \(\Gamma_g\) belongs to \(\Lie^*(J)\) and every \(j \in \sset{0, \dotsc, \ell - 1}\), that \cite{jkim-murnaghan:charexp}*{Theorem 3.1.7(1, 5)} holds with \((\bG^{j + 1}_g, \Gamma^j_g)\), \((\bJ^{j + 1}_g, \Gamma^j_g)\), or \((\bJ^j_g, \Gamma^j_g)\) in place of \((\bG, \Gamma)\), and there is a finitely supported, \(\OO^{J\thenconn 0_g}(\UU\thendual{-1}_g)\)-indexed vector \(c(\xi_g, \gamma)\) such that
\begin{multline*}
\Ohat^{G^0_g}_{\xi_g}(\gamma + \sbjtl{Y^0}{r_{-1}}) \qeqq \\
\sum_{\substack{
	g^0 \in G^{-1}_g\bslash G^0_g/J\thenconn 0_g \\
	\Gamma_{g g^0} \in \Lie^*(J^0_g)
}}
\sum_{\OO^0 \in \OO^{J\thenconn 0_g}(\UU\thendual{-1}_{g g^0})}
	\Gauss_{G^0_g/J^0_g}(\OO^0, \gamma)c_{\OO^0}(\xi_{g g^0}, \gamma)\Ohat^{J\thenconn 0_g}_{\OO^0}(\sbjtl{Y^0}{r_{-1}})
\end{multline*}
for all \(\sbjtl{Y^0}{r_{-1}} \in \Lie(J^0_g)\rss \cap \sbjtl{\Lie(J^0_g)}{r_{-1}}\).
Then we have that
\[
\Ohat^G_\xi(\gamma + \sbjhd Y r + \sbjtl Y r) \qeqq
\sum_{\substack{
	g \in G^{-1}\bslash G/J\conn \\
	\Gamma_g \in \Lie^*(J)
}} \sum_{\OO^0}
{}
	\Gauss_{G/J}(\OO^0, \gamma)c_{\OO^0}(\xi_g, \gamma)\Ohat^{J\conn}_{\OO^0}(\sbjhd Y r + \sbjtl Y r)
\]
for all \(\sbjtl Y r \in \Lie(H)\rss \cap \sbjtl{\Lie(H)}r\).
\end{thm}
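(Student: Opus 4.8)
The plan is to argue by induction on $\ell$. The base case $\ell = 0$ will be a tautology: then $\bG^0$ equals $\bG$, the sum $\sbjhd Y r$ is empty, Lemma \ref{lem:MP-dfc} gives $\bH = \CC\bJ{r_{-1}}(0) = \bJ$, and $\bJ^0_g = \CC{\bG^0_g}{r_{-1}}(\gamma) = \bJ$ for every $g$, so that the hypothesis at $g$ the identity is verbatim the asserted conclusion.

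For the inductive step I would peel off the deepest twisted Levi layer $\bG^{\ell - 1} = \bG' \subseteq \bG^\ell = \bG$ by a single application of Theorem \ref{thm:orb-to-orb'}, taking for its semisimple element the \emph{full} approximation $\gamma + \sbjhd Y r$ (this is semisimple, since $\gamma$ and the $Y^{(k)}$ pairwise commute and all lie in $\Lie(J)$), for its generic element $\Gamma^{\ell - 1}$, and for its radius $r = r_{\ell - 1}$. First I would check that this is legitimate: repeated use of Lemma \ref{lem:MP-dfc} and the ``nearby'' results \initref{prop:Lie-dfc-facts}\subpref{nearby-equal} and \initref{prop:Lie-smooth-dfc-facts}\subpref{nearby-equal}---absorbing each $Y^{(k)}$, of depth at least $r_{-1} \le r$, into $\gamma$, which centralises $\Lie(\bJ) = \CC{\Lie(\bG)}{r_{-1}}(\gamma)$---will yield $\CC\bG r(\gamma + \sbjhd Y r)\conn = \CC\bJ r(\sbjhd Y r)\conn = \bH\conn = \Cent_\bG(\gamma + \sbjhd Y r)\conn$, so that the standing requirements of \S\ref{subsec:quantitative} hold; Hypotheses \ref{hyp:fc-building} and \ref{hyp:Lie-gamma} for $\gamma + \sbjhd Y r$ follow from those for $\gamma$ and for each $Y^{(k)}$ by way of Remark \ref{rem:deep-Lie-symmetry}; and $\xi$ lies in $\Gamma^{\ell - 1} + \sbjtlpp{\Lie^*(G')}{-r}$, because $\Gamma - \Gamma^{\ell - 1} = \sum_{j = -1}^{\ell - 2} \Gamma^j$ lies in $\Lie^*(G')$ at depth exceeding $-r$ while $\sbjtlpp{\Lie^*(G^0)}{-r_{-1}} \subseteq \sbjtlpp{\Lie^*(G')}{-r}$. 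The conclusion of Theorem \ref{thm:orb-to-orb'} will then express $\Ohat^G_\xi(\gamma + \sbjhd Y r + \sbjtl Y r)$, for $\sbjtl Y r \in \Lie(H)\rss \cap \sbjtl{\Lie(H)}r$, as the sum over those $g \in G'\bslash G/J\conn$ with $\Ad^*(g)\inv\Gamma^{\ell - 1} \in \Lie^*(H)$ of $\sum_{\OO'} \Gauss_{G/H}(\OO', \gamma + \sbjhd Y r)\,c_{\OO'}(\xi_g, \gamma + \sbjhd Y r)\,\Ohat^{H\conn}_{\OO'}(\sbjtl Y r)$, \emph{provided} that for each such $g$ I supply the one-step expansion of $\Ohat^{G'_g}_{\xi_g}((\gamma + \sbjhd Y r) + Y')$ on $\Lie(H'_g)\rss \cap \sbjtl{\Lie(H'_g)}r$ in terms of the $\Ohat^{H'_g\conn}_{\OO'}(Y')$.

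These one-step expansions I would obtain from the inductive hypothesis applied inside each conjugate $\bG'_g = \bG^{\ell - 1}_g$. For the $g$ in question, Remark \ref{rem:deep-Lie-symmetry} gives that $\gamma$ and every $Y^{(k)}$ lie in $\Lie(G'_g)$, so that, by the descent identity \initref{prop:Lie-dfc-facts}\subpref{sub}, the nested sequence $\bG^0_g \subseteq \dotsb \subseteq \bG^{\ell - 1}_g = \bG'_g$, the element $\gamma$, the layers $Y^{(0)}, \dotsc, Y^{(\ell - 2)}$, and $\xi_g - \Ad^*(g)\inv\Gamma^{\ell - 1}$ in place of $\xi$ are again data of the kind treated by the theorem, now with $\ell - 1$ in place of $\ell$; here I would use that $\Ad^*(g)\inv\Gamma^{\ell - 1}$, being $\bG^{\ell - 1}$-fixed, is \emph{central} in $\Lie^*(G'_g)$, so that $\Ohat^{G'_g}_{\xi_g}$ differs from $\Ohat^{G'_g}_{\xi_g - \Ad^*(g)\inv\Gamma^{\ell - 1}}$ only by the scalar character $\AddChar_{\Ad^*(g)\inv\Gamma^{\ell - 1}}$ (the discriminant factors being unchanged), which is how the extra deepest layer gets removed. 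The inductive hypothesis then produces an asymptotic expansion of $\Ohat^{G'_g}_{\xi_g}$ centred at $\gamma$, evaluated on perturbations $\sbjhd Y{r_{\ell - 2}} + Z$ with $Z \in \Lie(\CC{\bJ'_g}{r_{\ell - 2}}(\sbjhd Y{r_{\ell - 2}}))\rss$ of depth at least $r_{\ell - 2}$, in terms of Fourier transforms of coadjoint orbits of $\bJ'_g\conn$; specialising to $Z = Y^{(\ell - 1)} + Y'$ and re-expanding each resulting $\Ohat^{\bJ'_g\conn}_{\OO^0}(\sbjhd Y r + Y')$ about $\sbjhd Y r$---using Theorem \ref{thm:orb-asymptotic-exists} to produce some expansion in terms of orbits of $\Cent_{\bJ'_g\conn}(\sbjhd Y r)\conn = \bH'_g\conn$, and \xcite{spice:asymptotic}*{Lemma \xref{lem:asymptotic-check}} to pin down its coefficients---will give the required one-step expansion, the Gauss factors being reconciled by Lemma \ref{lem:Lie-Gauss-combine}.

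Feeding these back into Theorem \ref{thm:orb-to-orb'} yields $\Ohat^G_\xi(\gamma + \sbjhd Y r + \sbjtl Y r)$ in the $\Ohat^{H\conn}$-form above; to bring it into the stated $\Ohat^{J\conn}$-form I would perform the \emph{same} re-expansion of each $\Ohat^{\bJ\conn}_{\OO^0}(\sbjhd Y r + \sbjtl Y r)$ about $\sbjhd Y r$ (organised into $\ell$ successive steps governed by the layers $Y^{(0)}, \dotsc, Y^{(\ell - 1)}$, each step an instance of Theorem \ref{thm:orb-asymptotic-exists} plus coefficient matching, legitimate because each $Y^{(k)}$ centralises $\Lie(\CC\bJ{r_k}(\sbjhd Y{r_k}))$), together with a further use of Lemma \ref{lem:Lie-Gauss-combine} to recombine $\Gauss_{G/H}$, the inner Gauss factors, and those produced by the re-expansion into $\Gauss_{G/J}$, and a matching of the double-coset spaces $G'\bslash G/J\conn$ and $G^{-1}\bslash G/J\conn$. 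I expect the main obstacle to be precisely this last round of bookkeeping: keeping straight the several families of generalised centralisers $\CC\bG i(\anondot)$ attached to $\gamma$ and to the partial sums $\sbjhd Y{r_j}$ and their conjugates, and verifying that \xcite{spice:asymptotic}*{Hypothesis \xref{hyp:Z*}}, Hypotheses \ref{hyp:fc-building} and \ref{hyp:Lie-gamma}, and the genericity constraints descend to every conjugate $\bG'_g$---for which the key inputs will be the descent-compatibility of the $\CC\bG i(\anondot)$ (Remark \initref{rem:gp-dfc-facts}\subpref{sub} and its Lie-algebra analogue \initref{prop:Lie-dfc-facts}\subpref{sub}) together with Remark \ref{rem:deep-Lie-symmetry}.
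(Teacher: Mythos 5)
Your proposal is correct and follows essentially the same strategy as the paper's proof: induction on $\ell$; the inductive hypothesis applied inside each conjugate $\bG'_g$, followed by Theorem \ref{thm:orb-asymptotic-exists} (with coefficients pinned down \textit{via} \xcite{spice:asymptotic}*{Lemma \xref{lem:asymptotic-check}}) to re-expand from $\bJ\primeconn_g$- to $\bH\primeconn_g$-orbits, with Corollary \ref{cor:Gauss-const} and Lemma \ref{lem:Lie-Gauss-combine} reconciling the Gauss factors; then Theorem \ref{thm:orb-to-orb'} to pass from $\bG'$ up to $\bG$; and finally the double-coset re-organisation. Two small points of comparison: the paper's proof of this theorem applies the inductive hypothesis with $\xi$ and $\Gamma$ unchanged (absorbing the now-central $\Gamma^{\ell - 1}$ into the decomposition for $\bG'$), whereas your shift-and-restore by $\Gamma^{\ell - 1}_g$ is precisely the device used in the parallel Theorem \ref{thm:char-unwind}; and the passage from the $\Ohat^{H\conn}$-form to the $\Ohat^{J\conn}$-form is effected in the paper by a \emph{single} further application of Theorem \ref{thm:orb-to-orb'}, with $(\bJ\conn, \OO^0, \Gamma^{\ell - 1}_g)$ in place of $(\bG, \xi, \Gamma)$ and the already-established $\bJ\primeconn_g$-level re-expansion as its one-step input, so the $\ell$-step layer-by-layer unwinding you sketch for that last conversion, while it should work, is more than is needed.
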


\begin{note}
See Remark \ref{rem:black-box} regarding our imposition of \cite{jkim-murnaghan:charexp}*{Theorem 3.1.7(1, 5)} as a hypothesis.  Note that, for the pair \((\bJ^j_g, \Gamma^j_g)\), the result \cite{jkim-murnaghan:charexp}*{Theorem 3.1.7(1, 5)} becomes \cite{debacker:homogeneity}*{Theorem 2.1.5(1, 3)}, since \(\Gamma^j_g\) is centralised by \(\bJ^j_g\).

The group \(\bJ^0_g = \bJ \cap \Int(g)\inv\bG^0\) usually does \emph{not} equal \(\Int(g)\inv(\bJ \cap \bG^0)\); and we must distinguish between \(\bJ^0_g = \CC{\bG^0_g}{r_{-1}}(\gamma)\), which is a possibly disconnected subgroup of \(\bG^0_g\), and \(\bJ\conn\), which is the identity component of \bJ.
\end{note}

\begin{proof}
We proceed by induction on the height \(\ell\) of the twisted Levi sequence \vbG.
If \(\ell\) equals \(0\), then the hypothesis is the conclusion, and we are done.
Now suppose that \(\ell\) is positive, and that we have proven the result for \((\bG', \Gamma, \xi)\) instead of \((\bG, \Gamma, \xi)\), hence also for every \(G\)-conjugate of \((\bG', \Gamma, \xi)\).


Suppose that \(g \in G^{-1}\bslash G/H\conn\) is such that \(\Gamma^{\ell - 1}_g\) belongs to \(\Lie^*(H)\).  Put \(\bG'_g = \bG^{\ell - 1}_g = \Int(g)\inv\bG'\).

By Remark \ref{rem:deep-Lie-symmetry}, we have that \(\gamma + \sbjhd Y r\) belongs to \(\Lie(G'_g)\).  Put \(\bJ'_g = \bJ^{\ell - 1}_g = \bJ \cap \bG'_g = \CC{\bG'_g}{r_{-1}}(\gamma)\) and \(\bH'_g = \bH \cap \bG'_g = \CC{\bG'_g}r(\gamma + \sbjhd Y r)\).  Then we have by induction that \(\Ohat^{G'_g}_{\xi_g}(\gamma + \sbjhd Y r + \sbjtl{Y'}r)\) equals
\[
\sum_{\substack{
	g' \in G^{-1}_g\bslash G'_g/J\primeconn_g \\
	\Gamma_{g g'} \in \Lie^*(J'_{g g'})
}}
\sum_{\OO^0 \in \OO^{J\thenconn 0_{g g'}}(\UU\thendual{-1}_{g g'})}
	\Gauss_{G'_g/J'_g}(\OO^0, \gamma)
	c_{\OO^0}(\xi_{g g'}, \gamma)
	\Ohat^{J\primeconn_g}_{\OO^0}(\sbjhd Y r + \sbjtl{Y'}r)
\]
for all \(\sbjtl{Y'}r \in \Lie^*(H'_g)\rss \cap \sbjtl{\Lie^*(H'_g)}r\).  Since \(\Gamma_{g g'} = \Ad^*(g')\inv\Ad^*(g)\inv\Gamma\),
which certainly belongs to \(\Ad^*(g')\inv\Lie^*(G'_g) = \Lie^*(G'_g)\),
belongs to \(\Lie^*(J'_{g g'}) = \Lie^*(J) \cap \Lie^*(G'_{g g'})\) if and only if it belongs to \(\Lie^*(J)\), we have that
\begin{multline}
\tag{$*$}
\sublabel{eq:mu-G'g}
\Ohat^{G'_g}_{\xi_g}(\gamma + \sbjhd Y r + \sbjtl{Y'}r) \qeqq \\
\sum_{\substack{
	g' \in G^{-1}_g\bslash G'_g/J\primeconn_g \\
	\Gamma_{g g'} \in \Lie^*(J)
}}
\sum_{\OO^0 \in \OO^{J\thenconn 0_{g g'}}(\UU\thendual{-1}_{g g'})}
	\Gauss_{G'_g/J'_g}(\OO^0, \gamma)
	c_{\OO^0}(\xi_{g g'}, \gamma)
	\Ohat^{J\primeconn_g}_{\OO^0}(\sbjhd Y r + \sbjtl{Y'}r)
\end{multline}
for all \(\sbjtl{Y'}r \in \Lie(H'_g)\rss \cap \sbjtl{\Lie(H'_g)}r\).  Now remember that \(\Gamma^{\ell - 1}_g = \Ad^*(g)\inv\Gamma^{\ell - 1}\) is centralised by \(\Int(g)\inv\bG' = \bG'_g\).  For every \(g' \in G^{-1}_g\bslash G'_g/J\primeconn_g\)
such that \(\Gamma_{g g'}\) belongs to \(\Lie^*(J)\),
we have by Theorem \ref{thm:orb-asymptotic-exists}, applied with \((\bJ\primeconn_{g g'} = \bJ\primeconn_g, \xi_{g g'}, \Gamma^{\ell - 1}_{g g'} = \Gamma^{\ell - 1}_g)\) in place of \((\bG, \xi, \Gamma)\), that, for every \(\OO^0 \in \OO^{J\thenconn 0_{g g'}}(\UU\thendual{-1}_{g g'})\), there is some finitely supported vector \(c^{J\primeconn_g}(\OO^0, \sbjhd Y r)\) indexed by \(\OO^{H\primeconn_g}(\Ad^*(J\primeconn_g)\inv\Gamma^{\ell - 1}_g) = \OO^{H\primeconn_g}(\Gamma^{\ell - 1}_g)\) such that
\begin{equation}
\tag{$**$}
\sublabel{eq:mu-J'g}
\Ohat^{J\primeconn_g}_{\OO^0}(\sbjhd Y r + \sbjtl{Y'}r)
\qeqq
\sum_{\OO' \in \OO^{H\primeconn_g}(\Gamma^{\ell - 1}_g)}
	\Gauss_{J'_g/H'_g}(\OO', \sbjhd Y r)
	c^{J\primeconn_g}_{\OO'}(\OO^0, \sbjhd Y r)
	\Ohat^{H\primeconn_g}_{\OO'}(\sbjtl{Y'}r)
\end{equation}
for all \(\sbjtl{Y'}r \in \Lie(H'_g)\rss \cap \sbjtl{\Lie(H'_g)}r\).  Thus, combining \loceqref{eq:mu-G'g} and \loceqref{eq:mu-J'g} gives that
\begin{multline}
\tag{$\dag$}
\sublabel{eq:mu-G'g-deep}
\Ohat^{G'_g}_{\xi_g}(\gamma + \sbjhd Y r + \sbjtl{Y'}r) \qeqq \\
\sum_{\OO' \in \OO^{H\primeconn_g}(\Gamma^{\ell - 1}_g)}
	\Gauss_{G'_g/H'_g}(\OO', \gamma + \sbjhd Y r)
	c_{\OO'}(\xi_g, \gamma + \sbjhd Y r)
	\Ohat^{H\primeconn_g}_{\OO'}(\sbjtl{Y'}r)
\end{multline}
for all \(\sbjtl{Y'}r \in \Lie(H'_g)\rss \cap \sbjtl{\Lie(H'_g)}r\), where, for every \(\OO' \in \OO^{H\primeconn_g}(\Gamma^{\ell - 1}_g)\), we define \(c_{\OO'}(\xi_g, \gamma + \sbjhd Y r)\) to be
\[
\sum_{\substack{
	g' \in G^{-1}_g\bslash G'_g/J\primeconn_g \\
	\Gamma_{g g'} \in \Lie^*(J)
}}
\sum_{\OO^0 \in \OO^{J\thenconn 0_{g g'}}(\UU\thendual{-1}_{g g'})}
	\frac
		{\Gauss_{G'_g/J'_g}(\OO^0, \gamma)\Gauss_{J'_g/H'_g}(\OO', \sbjhd Y r)}
		{\Gauss_{G'_g/H'_g}(\OO', \gamma + \sbjhd Y r)}
	c_{\OO^0}(\xi_{g g'}, \gamma)
	c^{J\primeconn_g}_{\OO'}(\OO^0, \sbjhd Y r).
\]
Now fix \(\OO' \in \OO^{H\primeconn_g}(\Gamma^{\ell - 1}_g)\).  By the definition of \(\OO^{H\primeconn_g}(\Gamma^{\ell - 1}_g)\) (see \S\ref{subsec:p-adic}, p.~\pageref{OO-defn}), we have that \(\OO'\) contains an element of \(\Gamma^{\ell - 1}_g + \sbjtlpp{\Lie^*(H')}{-r}\).  Then Corollary \ref{cor:Gauss-const} gives that
\begin{multline*}
\frac
	{\Gauss_{J/H}(\OO', \sbjhd Y r)}
	{\Gauss_{G/H}(\OO', \gamma + \sbjhd Y r)}
\Bigl(\frac
	{\Gauss_{J'_g/H'_g}(\OO', \sbjhd Y r)}
	{\Gauss_{G'_g/H'_g}(\OO', \gamma + \sbjhd Y r)}
\Bigr)\inv
\qeqq \\
\frac
	{\Gauss_{J/H}(\Gamma^{\ell - 1}_g, \sbjhd Y r)}
	{\Gauss_{G/H}(\Gamma^{\ell - 1}_g, \gamma + \sbjhd Y r)}
\Bigl(\frac
	{\Gauss_{J'_g/H'_g}(\Gamma^{\ell - 1}_g, \sbjhd Y r)}
	{\Gauss_{G'_g/H'_g}(\Gamma^{\ell - 1}_g, \gamma + \sbjhd Y r)}
\Bigr)\inv;
\end{multline*}
and, since \(\OO^0\) contains an element of \(\Lie^*(J) \cap \UU\thendual{-1}_{g g'}\), which equals \(\Gamma_{g g'} + \sbjtlpp{\Lie(J_{g g'} \cap G^{-1}_{g g'})}{-r_{-1}}\) by \cite{adler-debacker:bt-lie}*{Lemma 3.5.3} (and tame descent) and hence is contained in
\[
\Gamma^{\ell - 1}_g + \sbjtlpp{\Lie(\Zent(J_{g g'} \cap G^{-1}_{g g'}))}{-r} + \sbjtlpp{\Lie(J_{g g'} \cap G^{-1}_{g g'})}{-r_{-1}} \subseteq \Gamma^{\ell - 1}_g + \sbjtlpp{\Lie(J'_{g g'})}{-r},
\]
that
\[
\frac
	{\Gauss_{G/J}(\OO^0, \gamma)}
	{\Gauss_{G'_g/J'_g}(\OO^0, \gamma)}
\qeqq
\frac
	{\Gauss_{G/J}(\Gamma^{\ell - 1}_{g g'}, \gamma)}
	{\Gauss_{G'_g/J'_g}(\Gamma^{\ell - 1}_{g g'}, \gamma)}.
\]
It follows that
\begin{gather*}
\frac
	{\Gauss_{G/J}(\OO^0, \gamma)\Gauss_{J/H}(\OO', \sbjhd Y r)}
	{\Gauss_{G/H}(\OO', \gamma + \sbjhd Y r)}
\Bigl(\frac
	{\Gauss_{G'_g/J'_g}(\OO^0, \gamma)\Gauss_{J'_g/H'_g}(\OO', \sbjhd Y r)}
	{\Gauss_{G'_g/H'_g}(\OO', \gamma + \sbjhd Y r)}
\Bigr)\inv
\intertext{equals}
\frac
	{\Gauss_{G/J}(\Gamma^{\ell - 1}_g, \gamma)\Gauss_{J/H}(\Gamma^{\ell - 1}_{g g'}, \sbjhd Y r)}
	{\Gauss_{G/H}(\Gamma^{\ell - 1}_g, \gamma + \sbjhd Y r)}
\Bigl(\frac
	{\Gauss_{G'_g/J'_g}(\Gamma^{\ell - 1}_g, \gamma)\Gauss_{J'_g/H'_g}(\Gamma^{\ell - 1}_{g g'}, \sbjhd Y r)}
	{\Gauss_{G'_g/H'_g}(\Gamma^{\ell - 1}_g, \gamma + \sbjhd Y r)}
\Bigr)\inv,
\end{gather*}
which, by Lemma \ref{lem:Lie-Gauss-combine}, is \(1\).  Thus, \(c_{\OO'}(\xi_g, \gamma + \sbjhd Y r)\) equals
\begin{equation}
\tag{$\S$}
\sublabel{eq:cO}
\sum_{\substack{
	g' \in G^{-1}_g\bslash G'_g/J\primeconn_g \\
	\Gamma_{g g'} \in \Lie^*(J)
}}
\sum_{\OO^0 \in \OO^{J\thenconn 0_{g g'}}(\UU\thendual{-1}_{g g'})}
	\frac
		{\Gauss_{G/J}(\OO^0, \gamma)\Gauss_{J/H}(\OO', \sbjhd Y r)}
		{\Gauss_{G/H}(\OO', \gamma + \sbjhd Y r)}
	c_{\OO^0}(\xi_{g g'}, \gamma)
	c^{J\primeconn_g}_{\OO'}(\OO^0, \sbjhd Y r).
\end{equation}

Now fix \(\sbjtl Y r \in \Lie(H)\rss \cap \sbjtl{\Lie(H)}r\), and, for convenience, write \(Y\) for \(\sbjhd Y r + \sbjtl Y r\).  It follows from Theorem \ref{thm:orb-to-orb'} and \loceqref{eq:mu-G'g-deep} that \(\Ohat^G_\xi(\gamma + Y)\) equals
\[
\sum_{\substack{
	g \in G'\bslash G/H\conn \\
	\Gamma^{\ell - 1}_g \in \Lie^*(H)
}}
\sum_{\OO' \in \OO^{H\primeconn_g}(\Gamma^{\ell - 1}_g)}
	\Gauss_{G/H}(\OO', \gamma + \sbjhd Y r)
	c_{\OO'}(\xi_g, \gamma + \sbjhd Y r)
	\Ohat^{H\conn}_{\OO'}(\sbjtl Y r),
\]
which, by \loceqref{eq:cO}, equals
\begin{align*}
\sum_g
\sum_{\OO'}
\sum_{\substack{
	g' \in G^{-1}_g\bslash G'_g/J\primeconn_g \\
	\Gamma_{g g'} \in \Lie^*(J'_g)
}}
\sum_{\OO^0 \in \OO^{J\thenconn 0_{g g'}}(\UU\thendual{-1}_{g g'})}
{}	& \Gauss_{G/J}(\OO', \gamma)c_{\OO^0}(\xi_{g g'}, \gamma)\times{} \\
	& \quad\times\Gauss_{J/H}(\OO^0, \sbjhd Y r)c^{J\primeconn_g}_{\OO'}(\OO^0, \sbjhd Y r)
	\Ohat^{H\conn}_{\OO'}(\sbjtl Y r).
\end{align*}

The next step looks complicated, but is mostly just re-ordering and re-grouping the sums.
Recall that the sums over \(\OO'\) and \(\OO^0\) are finitely supported.  Upon
	\begin{itemize}
	\item moving the sum over \(\OO'\) to the inside,
	\item re-writing
\begin{align*}
\sum_{\substack{
	g_1 \in G'\bslash G/H\conn \\
	\Gamma^{\ell - 1}_{g_1} \in \Lie^*(H)
}}
\sum_{\substack{
	g'_1 \in G^{-1}_{g_1}\bslash G'_{g_1}/J\primeconn_{g_1} \\
	\Gamma_{g_1g'_1} \in \Lie^*(J)
}}
={} & \sum_{g_2 \in G'\bslash G/J\conn}
\sum_{\substack{
	j_2 \in G'_{g_2}\bslash J\conn/H\conn \\
	\Gamma^{\ell - 1}_{g_2j_2} \in \Lie^*(H)
}}
\sum_{\substack{
	g'_1 \in G^{-1}_{g_2j_2}\bslash G'_{g_2j_2}/J\primeconn_{g_2j_2} \\
	\Gamma_{g_2j_2g'_1} \in \Lie^*(J)
}} \\
={} & \sum_{g_2 \in G'\bslash G/J\conn}
\sum_{\substack{
	j_2 \in G'_{g_2}\bslash J\conn/H\conn \\
	\Gamma^{\ell - 1}_{g_2j_2} \in \Lie^*(H)
}}
\sum_{\substack{
	g'_3 \in G^{-1}_{g_2}\bslash G'_{g_2}/J\primeconn_{g_2} \\
	\Gamma_{g_2g'_3} \in \Lie^*(J)
}}
= \sum_{\substack{
	g_4 \in G^{-1}\bslash G/J\conn \\
	\Gamma_{g_4} \in \Lie^*(J)
}}
\sum_{\substack{
	j_2 \in G'_{g_4}\bslash J\conn/H\conn \\
	\Gamma^{\ell - 1}_{g_4j_2} \in \Lie^*(H)
}},
\end{align*}
where \(g_1 = g_2j_2\), \(g'_3 = \Int(j_2)g'_1\), and \(g_4 = g_2g'_3\), so that \(g_1g'_1 = g_4j_2\),
	\item noting that \(\bJ\primeconn_{g_1}\) equals \(\bJ\primeconn_{g_1 g'_1} = \bJ\primeconn_{g_4 j_2} = \Int(j_2)\inv\bJ\primeconn_{g_4}\) and similarly that \(\bH\primeconn_{g_1}\) equals \(\Int(j_2)\inv\bH\primeconn_{g_4}\) and \(\Gamma^{\ell - 1}_{g_1}\) equals \(\Gamma^{\ell - 1}_{g_4j_2}\), and that \(\bJ\thenconn 0_{g_1g'_1}\) equals \(\Int(j_2)\inv\bJ\thenconn 0_{g_4}\), \(\xi_{g_1g'_1}\) equals \(\Ad^*(j_2)\inv\xi_{g_4}\), and \(\UU\thendual{-1}_{g_1g'_1}\) equals \(\Ad^*(j_2)\inv\UU\thendual{-1}_{g_4}\), so that \(\OO^{H\primeconn_{g_4j_2}}(\Gamma^{\ell - 1}_{g_4j_2})\) equals \(\OO^{H\primeconn_{g_1}}(\Gamma^{\ell - 1}_{g_1})\) and the map \anonmap{\OO^{J\conn i_{g_4}}(\UU\thendual{-1}_{g_4})}{\OO^{J\thenconn 0_{g_1g'_1}}(\UU\thendual{-1}_{g_1g'_1})} given by \anonmapto{\OO^0_4}{\Ad^*(j_2)\inv\OO^0_4} is a bijection,
	\item switching the order of the sums over \(j_2\) and \(\OO^0\),
and	\item using the fact that \(\gamma\) centralises \(\Lie(J)\), hence is centralised by \(J\conn\), to show that \(\Gauss_{G/J}(\Ad^*(j_2)\inv\OO^0_4, \gamma)\) equals \(\Gauss_{G/J}(\OO^0_4, \Ad(j_2)\gamma) = \Gauss_{G/J}(\OO^0_4, \gamma)\) and \(c_{\Ad^*(j_2)\inv\OO^0_4}(\xi_{g_1g'_1}, \gamma)\) equals \(c_{\OO^0_4}(\Ad^*(j_2)\xi_{g_1g'_1}, \Ad(j_2)\gamma) = c_{\OO^0_4}(\xi_{g_4}, \gamma)\),
	\end{itemize}
we see that \(\Ohat^G_\xi(\gamma + Y)\) equals
\begin{align*}
\sum_{\substack{
	g \in G^{-1}\bslash G/J\conn \\
	\Gamma_g \in \Lie^*(J)
}}
\sum_{\OO^0 \in \OO^{J\thenconn 0_g}(\UU\thendual{-1}_g)}
{}	& \Gauss_{G/J}(\OO^0, \gamma)
	c_{\OO^0}(\xi_g, \gamma)\times{} \\
        &\quad\times\underbrace{
		\sum_{\substack{
			j \in J\primeconn_g\bslash J\conn/H\conn \\
			\Gamma^{\ell - 1}_{g j} \in \Lie^*(H)
		}}
		\sum_{\OO' \in \OO^{H\primeconn_{g j}}(\Gamma^{\ell - 1}_{g j})}
			\Gauss_{J/H}(\Ad^*(j)\inv\OO^0, \sbjhd Y r)\times{}
	}_{(\P)} \\
        &\quad\hphantom{
		\sum_{\substack{
			j \in J\primeconn_g\bslash J\conn/H\conn \\
			\Gamma^{\ell - 1}_{g j} \in \Lie^*(H)
		}}
		\sum_{\OO' \in \OO^{H\primeconn_{g j}}(\Gamma^{\ell - 1}_{g j})}
	}\quad\times\underbrace{
			c^{J\primeconn_{g j}}_{\OO'}(\Ad^*(j)\inv\OO^0, \sbjhd Y r)
			\Ohat^{H\conn}_{\OO'}(\sbjtl Y r)
	}_{(\P)}.
\end{align*}
Finally, Theorem \ref{thm:orb-to-orb'}, with \((\bJ\conn, \OO^0, \Gamma^{\ell - 1}_g)\) in place of \((\bG, \xi, \Gamma)\), and \loceqref{eq:mu-J'g} show that (\(\P\)) equals \(\Ohat^{J\conn}_{\OO^0}(Y)\), so we are done.
\end{proof}

\begin{cor}
\label{cor:orb-vanish}
Preserve the notation of Theorem \ref{thm:orb-unwind}.
We have that \(\Ohat^G_\xi(\gamma + \sbjhd Y r + \sbjtl Y r)\) vanishes for all \(\sbjtl Y r \in \sbjtl{\Lie(H)}r\) unless \(\gamma\) lies in \(\Ad(G)\Lie(G^{-1})\).
\end{cor}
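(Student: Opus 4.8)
The plan is to read the desired vanishing off Theorem \ref{thm:orb-unwind} by observing that its conclusion is a (finite) sum indexed by the double cosets $g \in G^{-1}\bslash G/J\conn$ for which $\Gamma_g = \Ad^*(g)\inv\Gamma$ lies in $\Lie^*(J)$, and that this index set is empty exactly when $\gamma \notin \Ad(G)\Lie(G^{-1})$. Concretely, suppose $\gamma \notin \Ad(G)\Lie(G^{-1})$. I would first check that then no $g \in G$ satisfies $\Gamma_g \in \Lie^*(J)$; granting this, the set over which the hypothesis of Theorem \ref{thm:orb-unwind} quantifies is empty, so that hypothesis is vacuously satisfied, and the theorem's conclusion reads $\Ohat^G_\xi(\gamma + \sbjhd Y r + \sbjtl Y r) = 0$ (an empty sum) for all $\sbjtl Y r \in \Lie(H)\rss \cap \sbjtl{\Lie(H)}r$. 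This is exactly the contrapositive of Corollary \ref{cor:orb-vanish}; to obtain the statement for all $\sbjtl Y r \in \sbjtl{\Lie(H)}r$ as literally written, I would invoke Remark \ref{rem:represent-orbit} (representability), just as in Theorems \ref{thm:orb-asymptotic-exists} and \ref{thm:orb-to-orb'}.

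So the substantive content is the equivalence ``$\Gamma_g \in \Lie^*(J)$ for some $g \in G$'' $\iff$ ``$\gamma \in \Ad(G)\Lie(G^{-1})$''. For this I would first record that, since $\gamma$ centralises $\Lie(\bJ) = \CC{\Lie(\bG)}{r_{-1}}(\gamma)$ and this space always contains $\Cent_{\Lie(\bG)}(\gamma)$, we in fact have $\Lie(\bJ) = \Cent_{\Lie(\bG)}(\gamma)$, hence $\bJ\conn = \Cent_\bG(\gamma)\conn$, and, with the embedding convention for $\Lie^*(\bJ)$ (the Lie-algebra analogue of Definition \ref{defn:embed-fc-dual}), $\Lie^*(J) = \set{X^* \in \Lie^*(G)}{\ad^*(\gamma)X^* = 0}$. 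Then, as in the symmetry reasoning of Remarks \ref{rem:Lie-symmetry} and \ref{rem:deep-Lie-symmetry}, $\Gamma_g \in \Lie^*(J)$ is equivalent to $\ad^*(\gamma)(\Ad^*(g)\inv\Gamma) = 0$, and transporting by $\Ad^*(g)$ this is equivalent to $\ad^*(\Ad(g)\gamma)\Gamma = 0$, i.e.\ to $\Ad(g)\gamma \in \Cent_{\Lie(G)}(\Gamma)$. Thus it remains to identify $\Cent_{\Lie(G)}(\Gamma) = \Lie(G^{-1})$.

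For the latter, the inclusion $\supseteq$ is immediate: $\bG^{-1} \subseteq \bG^j$ fixes each $\bG^j$-fixed element $\Gamma^j$, hence fixes $\Gamma = \sum_{j=-1}^{\ell-1}\Gamma^j$. For $\subseteq$ I would descend through the tower $\bG = \bG^\ell \supseteq \bG^{\ell-1} \supseteq \dotsb \supseteq \bG^{-1}$: since $\Gamma^{\ell-1}$ satisfies \xcite{spice:asymptotic}*{Hypothesis \xref{hyp:Z*}} with $r_{\ell-1}$ and $\bG$ in place of $r$ and $\bG$, $\Cent_\bG(\Gamma^{\ell-1})\conn = \bG^{\ell-1}\conn$; and the remaining pieces $\Gamma^{\ell-2}, \dotsc, \Gamma^{-1}$ lie in $\Lie^*(G^{\ell-1})$ with strictly shallower depths $-r_j > -r_{\ell-1}$, so on each root space of $\bG^{\ell-1}$ outside $\bG^{\ell-2}$ the operator $\ad(\Gamma^{\ell-2})$ already acts bijectively at the relevant Moy--Prasad depth while the still-shallower $\Gamma^{\ell-3}, \dotsc, \Gamma^{-1} \in \Lie^*(G^{\ell-2})$ are too small to cancel it, giving $\Cent_{\bG^{\ell-1}}(\Gamma^{\ell-2}+\dotsb+\Gamma^{-1})\conn = \bG^{\ell-2}\conn$; iterating yields $\Cent_\bG(\Gamma)\conn = \bG^{-1}\conn$ and therefore $\Cent_{\Lie(G)}(\Gamma) = \Lie(\bG^{-1})$. (This is the standard centraliser computation for a ``good'' sum of generic elements of strictly increasing depth; one could instead cite the corresponding facts about normal approximations from \xcite{adler-spice:good-expansions}.) Combining the three displays gives ``$\exists g\colon \Gamma_g \in \Lie^*(J)$'' $\iff$ ``$\exists g\colon \Ad(g)\gamma \in \Lie(G^{-1})$'' $\iff$ ``$\gamma \in \Ad(G)\Lie(G^{-1})$'', completing the argument.

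I expect the main obstacle to be this last identification $\Cent_\bG(\Gamma) = \bG^{-1}$: it is morally routine but is the only place where the decomposition $\Gamma = \sum_j \Gamma^j$ is used in an essential rather than purely notational way, and it genuinely relies on the genericity of the $\Gamma^j$ together with the strict nesting $r_{-1} < r_0 < \dotsb < r_{\ell-1}$; everything else is a short formal deduction from Theorem \ref{thm:orb-unwind} and the defining properties of $\bJ$.
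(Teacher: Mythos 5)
Your proposal is correct and matches the paper's (unstated) argument: Corollary \ref{cor:orb-vanish} is exactly what one gets from Theorem \ref{thm:orb-unwind} upon observing that the indexing set $\set{g \in G^{-1}\bslash G/J\conn}{\Gamma_g \in \Lie^*(J)}$ is non-empty iff $\gamma \in \Ad(G)\Lie(G^{-1})$, which in turn reduces to the centraliser computation $\Cent_{\Lie(G)}(\Gamma) = \Lie(G^{-1})$ (together with the dual-Lie-algebra identification $\Lie^*(J) = \Cent_{\Lie^*(G)}(\gamma)$ forced by the standing requirement that $\gamma$ centralise $\Lie(\bJ)$). Your filling in of the $\Cent_{\Lie(G)}(\Gamma) = \Lie(G^{-1})$ step via the genericity of the $\Gamma^j$ at strictly increasing depths is the right substance; for the corollary only the inclusion $\Cent_{\Lie(G)}(\Gamma) \subseteq \Lie(G^{-1})$ is actually needed, which is the direction you argue.
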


\begin{proof}
The set of \(g \in G^{-1}\backslash G/J^\circ\) for which \(\Gamma_g\) belongs to \(\Lie^*(J)\) is empty, so the hypotheses of Theorem \ref{thm:orb-unwind} are vacuously satisfied; and the sum computing \(\widehat O^G_\xi(\gamma + \sbjhd Y r + \sbjtl Y r)\) is over an empty set and hence vanishes.
\end{proof}

Corollary \ref{cor:orb-lc} may be viewed as a local constancy result for Fourier transforms of orbital integrals.
Part \initnoref{cor:orb-lc}\incpref{general} generalises and quantifies \cite{waldspurger:loc-trace-form}*{Proposition VIII.1} by dropping the requirement that \(\xi\) (there \(X\)) be regular, and re-centring at a possibly non-\(0\) element \(\gamma\);
and
part \initnoref{cor:orb-lc}\incpref{near-0} generalises \cite{jkim-murnaghan:gamma-asymptotic}*{Corollary 9.2.4} by dropping the requirement that \(\xi\) (there \(\Gamma'\)) be regular or elliptic.

\begin{cor}
\initlabel{cor:orb-lc}
Preserve the notation and hypotheses of Theorem \ref{thm:orb-unwind}.
Suppose further that \(\Cent_\bG(\xi)\conn\) equals \(\bG^0\).
\begin{enumerate}
\item\sublabel{general}
We have that
\[
\Ohat^G_\xi(\gamma + \sbjhd Y r + \sbjtl Y r)
\qeqq
\sum_{\substack{
	g \in G^0\bslash G/J\conn \\
	\Gamma_g \in \Lie^*(J)
}}
	\frac
		{\Gauss_{G/J}(\Gamma_g, \gamma)}
		{\Gauss_{G^0_g/J^0_g}(\Gamma_g, \gamma)}
	\AddChar_{\xi_g}(\gamma)
	\Ohat^{J\conn}_{\Gamma_g}(\sbjhd Y r + \sbjtl Y r)
\]
for all \(\sbjtl Y r \in \Lie(H)\rss \cap \sbjtl{\Lie(H)}r\).
\item\sublabel{near-0}
If \(\gamma\) equals \(0\), then \(\Ohat^G_\xi(\sbjhd Y r + \sbjtl Y r)\) equals \(\Ohat^G_\Gamma(\sbjhd Y r + \sbjtl Y r)\) for all \(\sbjtl Y r \in \Lie(H)\rss \cap \sbjtl{\Lie(H)}r\).
\item\sublabel{regular}
If \(\gamma\) lies in \(\Lie(G)\rss\), then
\[
\Ohat^G_\xi(\gamma + \sbjhd Y r + \sbjtl Y r)
\qeqq
\sum_{\substack{
	g \in G^0\bslash G/J\conn \\
	\Gamma_g \in \Lie^*(J)
}}
	\Gauss_{G/G^0_g}(\Gamma_g, \gamma)
	\AddChar_{\xi_g}(\gamma + \sbjhd Y r + \sbjtl Y r)
\]
for all \(\sbjtl Y r \in \Lie(H)\rss \cap \sbjtl{\Lie(H)}r\).
\end{enumerate}
\end{cor}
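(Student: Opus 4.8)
The plan is to deduce all three assertions from Theorem~\ref{thm:orb-unwind}, applied to the nested, tame, twisted Levi sequence $\vbG$ and with the subgroup ``$\bG^{-1}$'' there taken to be $\bG^0$ itself; this is legitimate because, for $\xi \in \Gamma + \sbjtlpp{\Lie^*(G^0)}{-r_{-1}}$, the descent theory for good semisimple elements shows that $\Cent_\bG(\xi)\conn$ is $\Cent_{\bG^0}(\Gamma^{-1})\conn = \bG^{-1}$, so our hypothesis $\Cent_\bG(\xi)\conn = \bG^0$ means precisely that $\bG^{-1} = \bG^0$ (equivalently, we may take $\Gamma^{-1} = 0$). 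With this choice, the inner double-coset sum over $g^0 \in G^{-1}_g\bslash G^0_g/J\thenconn 0_g$ in the hypothesis of Theorem~\ref{thm:orb-unwind} reduces to the single class of the identity. So, for the first assertion, what must be verified is that, for each $g \in G^0\bslash G/J\conn$ with $\Gamma_g \in \Lie^*(J)$, the function $\Ohat^{G^0_g}_{\xi_g}$, restricted to $\gamma + \bigl(\Lie(J^0_g)\rss \cap \sbjtl{\Lie(J^0_g)}{r_{-1}}\bigr)$, admits the single-term asymptotic expansion supported on the coadjoint orbit $\{\Gamma_g\}$ of $G^0_g$, with coefficient $\abs{\Disc_{G^0_g/J^0_g}(\gamma)}^{1/2}\AddChar_{\xi_g}(\gamma)\Gauss_{G^0_g/J^0_g}(\Gamma_g, \gamma)\inv$.

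The point is that $\Cent_\bG(\xi)\conn = \bG^0$, together with the standing connectedness of $\bG$ (hence of every $\bG^j$), forces $G^0$ to fix $\xi$; and each $\Gamma^j$ is fixed by $\bG^j$ (part of \xcite{spice:asymptotic}*{Hypothesis \xref{hyp:Z*}}), so $G^0$ fixes $\Gamma$ as well. Conjugating, $G^0_g$---and a fortiori $J\thenconn 0_g$---fixes both $\xi_g$ and $\Gamma_g$. Hence the orbits $\Ad^*(G^0_g)\xi_g$ and $\Ad^*(J\thenconn 0_g)\Gamma_g$ are single points, so $\muhat^{G^0_g}_{\xi_g}$ and $\muhat^{J\thenconn 0_g}_{\Gamma_g}$ are represented by the additive characters $\AddChar_{\xi_g}$ and $\AddChar_{\Gamma_g}$; and, since a $\bG^0_g$-fixed functional pairs trivially with every coroot of $\bG^0_g$, the discriminants $\redD_{G^0_g}(\xi_g)$ and $\redD_{J^0_g}(\Gamma_g)$ equal $1$. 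Thus $\Ohat^{G^0_g}_{\xi_g}$ is represented by $Z \mapsto \abs{\redD_{G^0_g}(Z)}^{1/2}\AddChar_{\xi_g}(Z)$, and $\Ohat^{J\thenconn 0_g}_{\{\Gamma_g\}}$ by $Z \mapsto \abs{\redD_{J^0_g}(Z)}^{1/2}\AddChar_{\Gamma_g}(Z)$. Evaluating at $\gamma + \sbjtl{Y^0}{r_{-1}}$ and using the multiplicativity $\redD_{G^0_g} = \redD_{J^0_g}\dotm\Disc_{G^0_g/J^0_g}$ on this locus (Definition~\ref{defn:dual-disc}), the fact that $\abs{\Disc_{G^0_g/J^0_g}(\anondot)}$ is constant along $\gamma + \sbjtl{\Lie(J^0_g)}{r_{-1}}$ (since $\bJ^0_g = \CC{\bG^0_g}{r_{-1}}(\gamma)$, the factors of $\Disc_{G^0_g/J^0_g}$ have depth $< r_{-1}$ at $\gamma$ and so are insensitive to deeper perturbations), and that $\AddChar_{\xi_g} \equiv \AddChar_{\Gamma_g}$ on $\sbjtl{\Lie(J^0_g)}{r_{-1}}$ (their difference lying in $\sbjtlpp{\Lie^*(G^0_g)}{-r_{-1}}$), one reads off the claimed single-term expansion with the indicated coefficient. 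Substituting this coefficient into the conclusion of Theorem~\ref{thm:orb-unwind} gives the first assertion, once the factor $\abs{\Disc_{G^0_g/J^0_g}(\gamma)}^{1/2}\Gauss_{G^0_g/J^0_g}(\Gamma_g, \gamma)\inv$ is absorbed into the remaining Gauss sums---this is exactly the discriminant-versus-Weil-index bookkeeping carried out in Corollary~\ref{cor:lattice-orth} and Proposition~\ref{prop:Gauss-to-Weil}. I expect this last normalisation step, keeping track of how the degenerate Gauss sums carry discriminant factors, to be the main obstacle.

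The other two assertions are specialisations of the first. If $\gamma = 0$, then $\bJ = \CC\bG{r_{-1}}(0) = \bG$ by Lemma~\ref{lem:MP-dfc}, so $G^0\bslash G/J\conn$ consists of the single class of the identity, $\Gamma_g = \Gamma$ lies in $\Lie^*(J) = \Lie^*(G)$, the quotient $\Gauss_{G/J}(\Gamma, 0)/\Gauss_{G^0_g/J^0_g}(\Gamma, 0)$ is $1$ (here $\bJ = \bG$ and $\bJ^0_g = \bG^0$, and $q_{\Gamma, 0}$ vanishes identically anyway), and $\AddChar_\Gamma(0) = 1$; so the first assertion collapses to $\Ohat^G_\xi(\sbjhd Y r + \sbjtl Y r) = \Ohat^G_\Gamma(\sbjhd Y r + \sbjtl Y r)$, which is the second assertion. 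If instead $\gamma$ is regular semisimple and $\sbjhd Y r = 0$, then the standing requirement that $\gamma$ centralise $\Lie(\bJ)$ forces $\Lie(\bJ) = \Cent_{\Lie(\bG)}(\gamma) = \Lie(\bT)$, where $\bT = \Cent_\bG(\gamma)$ is a maximal torus, so $\bJ\conn = \bH\conn = \bT$; over the torus $\bT$ we have $\Ohat^{J\conn}_{\Gamma_g} = \AddChar_{\Gamma_g}$ (no roots, point orbit), whence $\AddChar_{\xi_g}(\gamma)\Ohat^{J\conn}_{\Gamma_g}(\sbjtl Y r) = \AddChar_{\xi_g}(\gamma + \sbjtl Y r) = \AddChar_{\xi_g}(\gamma + \sbjhd Y r + \sbjtl Y r)$; and, since $\Lie(\bJ) = \Lie(\bJ^0_g) = \Lie(\bT)$, additivity of Weil indices identifies $\Gauss_{G/J}(\Gamma_g, \gamma)/\Gauss_{G^0_g/J^0_g}(\Gamma_g, \gamma)$ with $\Gauss_{G/G^0_g}(\Gamma_g, \gamma)$, producing the stated formula. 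Throughout, the passage to representing functions on the regular semisimple sets is as in Remark~\ref{rem:represent-orbit}.
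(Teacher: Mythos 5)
Your approach coincides with the paper's: the paper's proof also starts by observing that the hypothesis $\Cent_\bG(\xi)\conn = \bG^0$ forces $\bG^{-1} = \bG^0$, so that only the trivial double coset survives in the hypothesis of Theorem~\ref{thm:orb-unwind}, and then verifies the base-case expansion by exploiting centrality of $\xi_g$ and $\Gamma_g$ in $\bG^0_g$: for $y \in \BB(G^0_g)$, one has $\xi_g - \Gamma_g \in \sbtlpp{\Lie^*(G^0_g)}y{-r_{-1}}$, so $\AddChar_{\xi_g}(\gamma + \sbjtl{Y^0}{r_{-1}}) = \AddChar_{\xi_g}(\gamma)\AddChar_{\Gamma_g}(\sbjtl{Y^0}{r_{-1}})$; then the conclusion of Theorem~\ref{thm:orb-unwind} is invoked, and parts \subref{near-0} and \subref{regular} are read off as specialisations exactly as you do (for \subref{regular}, using that $\bJ = \bJ^0_g$ is a maximal torus and that $\Gauss_{G/J}/\Gauss_{G^0_g/J^0_g}$ collapses to $\Gauss_{G/G^0_g}$). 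So the substance is identical.

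Where you differ is purely in exposition. You write out the verifying coefficient $c_{\Gamma_g}(\xi_g,\gamma)$ explicitly, while the paper simply displays the identity $\Ohat^{G^0_g}_{\xi_g}(\gamma + \sbjtl{Y^0}{r_{-1}}) = \AddChar_{\xi_g}(\gamma)\Ohat^{G^0_g}_{\Gamma_g}(\sbjtl{Y^0}{r_{-1}})$ and treats this as immediately verifying the hypothesis of Theorem~\ref{thm:orb-unwind}. Your instinct that the discriminant bookkeeping deserves attention is not wrong: the paper's displayed chain of equalities works cleanly only at the level of $\muhat$ (where the displayed equalities are exact), and the translation back to the $\Ohat$-normalisation required by Theorem~\ref{thm:orb-unwind} is left tacit; you make this step explicit. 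However, I would caution that your proposed way of discharging this — "absorbing" the factor via Corollary~\ref{cor:lattice-orth} and Proposition~\ref{prop:Gauss-to-Weil} — is not really what is happening, since those results concern relative Gauss sums of non-degenerate reductions of $q_{X^*,\gamma}$ rather than $\lvert\Disc\rvert^{1/2}$ normalisations; a cleaner route is simply to track the identity $\abs{\redD_{G^0_g}(\gamma+Y^0)}^{1/2} = \abs{\Disc_{G^0_g/J^0_g}(\gamma)}^{1/2}\abs{\redD_{J^0_g}(Y^0)}^{1/2}$, valid because $\gamma$ is central in $\Lie(\bJ^0_g)$ and $\bJ^0_g = \CC{\bG^0_g}{r_{-1}}(\gamma)$, and to compare against the corresponding translation in Remark~\ref{rem:which-orb}. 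With that observation your coefficient computation closes, and the argument agrees with the paper's.
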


\begin{proof}
Note that \(\Cent_\bG(\xi)\conn = \Cent_{\bG^0}(\xi)\conn\), which equals \(\bG^0\) by hypothesis, is contained in \(\Cent_{\bG^0}(\Gamma)\) by \xcite{spice:asymptotic}*{Hypothesis \initxref{hyp:Z*}(\subxref{orbit})}, so also \(\bG^{-1} = \Cent_\bG(\Gamma) = \Cent_{\bG^0}(\Gamma)\) equals \(\bG^0\).  Fix \(g \in G\) such that \(\Gamma_g\) belongs to \(\Lie^*(J^0)\).
Since \(\xi_g\) and \(\Gamma_g\) are central in \(\bG^0_g\),
we have that
\[
\Ohat^{G^0_g}_{\xi_g}(\gamma + \sbjtl{Y^0}{r_{-1}}) = \AddChar_{\xi_g}(\gamma + \sbjtl{Y^0}{r_{-1}})
\qeqq
\AddChar_{\xi_g}(\gamma)\AddChar_{\Gamma_g}(\sbjtl{Y^0}{r_{-1}}) = \AddChar_{\xi_g}(\gamma)\Ohat^{G^0_g}_{\Gamma_g}(\sbjtl{Y^0}{r_{-1}})
\]
for all \(\sbjtl{Y^0}{r_{-1}} \in \Lie(J^0_g)\rss \cap \sbjtl{\Lie(J^0_g)}{r_{-1}}\).
Since \(g \in G\) 
was arbitrary, \locpref{general} follows from Theorem \ref{thm:orb-unwind}, and then \locpref{near-0} is an immediate consequence.

For \locpref{regular}, note that \(\bJ = \Cent_\bG(\gamma)\conn\) is a torus, so, for each \(g \in G\) with \(\Gamma_g \in \Lie^*(J)\), the tame, twisted Levi subgroup \(\bJ^0_g\) of \bJ is all of \bJ, and hence \(\Gauss_{G/J}(\Gamma_g, \gamma)\Gauss_{G^0_g/J^0_g}(\Gamma_g, \gamma)\inv\) equals \(\Gauss_{G/G^0_g}(\Gamma_g, \gamma)\).
\end{proof}

\subsection{Characters}
\label{subsec:char-unwind}


Our notation in \S\ref{subsec:char-unwind} is analogous to, but in important ways different from, that of \S\ref{subsec:orb-unwind}.  In addition to the common notation of \S\ref{sec:unwind} (see p.~\pageref{page:common:sec:unwind}), let \matnotn{gamma}\gamma be a semisimple element of \(G\) satisfying Hypothesis \ref{hyp:fc-building}.

We require that \(r_{-1}\) be non-negative, and put \(\matnotn{Rminus}{R_{-1}} = \max \sset{r_{-1}, \Rp0}\).
Put \(\matnotn J\bJ = \CC\bG{R_{-1}}(\gamma)\).  We require that
	\begin{itemize}
	\item \((\gamma, x, R_{-1})\) satisfy Hypothesis \ref{hyp:gp-gamma} for all \(x \in \BB(J)\)
and	\item \(\gamma\) centralise \(\Lie(\bJ)\).
	\end{itemize}
Put \(\sbjhd Y{r_{-1}} = 0\).  For \(j \in \sset{0, \dotsc, \ell - 1}\), we now inductively choose a semisimple element \(Y^{(j)}\) of \(\Lie(\CC J{r_{j - 1}}(\sbjhd Y{r_{j - 1}}))\) satisfying Hypothesis \ref{hyp:fc-building}, with \(\CC\bJ{r_{j - 1}}(\sbjhd Y{r_{j - 1}})\) in place of \bG, put \(\matnotn{Yr}{\sbjhd Y{r_j}} = \sbjhd Y{r_{j - 1}} + Y^{(j)}\), and require that
	\begin{itemize}
	\item \((Y^{(j)}, x, r_j)\) satisfy Hypothesis \ref{hyp:Lie-gamma} for all \(x \in \BB(\CC J{r_j}(\sbjhd Y{r_j}))\),
	\item \(Y^{(j)}\) belong to \(\sbjtl{\Lie(\CC J{r_j}(\sbjhd Y{r_j}))}{\max \sset{r_{j - 1}, \Rp0}}\),
and	\item \(Y^{(j)}\) centralise \(\Lie(\CC\bJ{r_j}(\sbjhd Y{r_j}))\).
	\end{itemize}
(Since \(r_{j - 1}\) is strictly positive when \(j\) is not \(0\), we can replace \(\max \sset{r_{j - 1}, \Rp0}\) by \(r_{j - 1}\) in that case; but we write it this way to have a uniform statement even when \(j\) equals \(0\).)
Put \(\matnotn H\bH = \CC\bJ r(\sbjhd Y r)\).

We now re-impose the notation of \S\ref{subsec:cuspidal-notn}, and Hypothesis \ref{hyp:mexp} for \((\CC\bG 0(\gamma), \bJ, R_{-1})\), as in \S\ref{sec:characters}.
In particular, we have inverse homeomorphisms \map{\matnotn{exp}\mexp}{\sbjtl{\Lie(J)}{R_{-1}}}{\sbjtl J{R_{-1}}} and \map{\matnotn{log}\mlog}{\sbjtl J{R_{-1}}}{\sbjtl{\Lie(J)}{R_{-1}}}, and supercuspidal representations \(\pi_j\) of each \(G^j\), so that \(\pi = \pi_\ell\) is a supercuspidal representation of \(G\).
We now seem to have two competing meanings for the elements \(\Gamma^j\) and the groups \(\bG^j\), but we require that they be compatible.  See Theorem \ref{thm:char-unwind}.

\begin{rem}
\label{rem:deep-gp-symmetry}
The natural analogues of Remarks \ref{rem:gp-symmetry} and Remark \initref{rem:mexp:steps} hold; for example, we have that \(\bH\conn\) equals both \(\Cent_\bG(\gamma\dotm\mexp(\sbjhd Y r))\conn\) and \(\Cent_\bG(\gamma, Y^{(0)}, \dotsc, Y^{(\ell - 1)})\conn\), and (so) that \(\Gamma^{\ell - 1}\) belongs to \(\Lie^*(H)\) if and only if \(\gamma\) belongs to \(G'\) and each \(Y^{(j)}\) belongs to \(\Lie(G')\).
\end{rem}

The strong representability requirement on \(\phi_j \circ \Int(g) \circ \mexp\) in Theorem \ref{thm:char-unwind} is closely related to \cite{jkim-murnaghan:gamma-asymptotic}*{Definition 4.1.3(2)}.  It is explained in \cite{fintzen-kaletha-spice:twist}*{proof of Lemma 4.3.3} that this representability can be arranged as long as each \(\phi_j\) is trivial on \(\Der G^j\) and the exponential map converges on \(\sbjtlp{(\bG^j/\Der\bG^j)(\field)}0\).

\begin{thm}
\initlabel{thm:char-unwind}
For every \(g \in G\) and every \(j \in \sset{-1, \dotsc, \ell - 1}\), put \(\Gamma_g = \Ad^*(g)\inv\Gamma\) and \(\Gamma^j_g = \Ad^*(g)\inv\Gamma_j\), \(\bG^j_g = \Int(g)\inv\bG^j\), \(\UU\thendual{-1}_g = \Gamma_g + \sbjtlpp{\Lie^*(G^{-1}_g)}{-r_{-1}}\), and, if \(\Gamma^j_g\) belongs to \(\Lie^*(J)\), also \(\bJ^j_g = \bJ \cap \bG^j_g\).

Suppose first,
for every index \(j \in \sset{0, \dotsc, \ell - 1}\) and every \(g \in G\) such that \(\Gamma^j_g\) belongs to \(\Lie^*(J)\), that \(\phi_j \circ \Int(g) \circ \mexp\) equals \(\AddChar_{\Gamma^j_g}\) on \(\sbjtl{\Lie(J^j_g)}{R_{-1}}\).


Suppose next that all relevant orbital integrals converge; and, for every \(g \in G^{-1}\bslash G/J\conn\) for which \(\Gamma_g\) belongs to \(\Lie^*(J)\) and every \(j \in \{0, \dotsc, \ell - 1\}\), that \cite{jkim-murnaghan:charexp}*{Theorem 3.1.7(1, 5)} holds with \((\bG^{j + 1}_g, \Gamma^j_g)\), \((\bJ^{j + 1}_g, \Gamma^j_g)\), or \((\bJ^j_g, \Gamma^j_g)\) in place of \((\bG, \Gamma)\), and there is a finitely supported, \(\OO^{J\thenconn 0_g}(\UU\thendual{-1}_g)\)-indexed vector \(c(\pi_0^g, \gamma)\) such that
\begin{multline*}
\phi_\ell^g(\mexp(\sbjtl{Y^0}{R_{-1}}))\inv
\Phi_{\pi_0^g}(\gamma\dotm\mexp(\sbjtl{Y^0}{R_{-1}})) \qeqq \\
\sum_{\substack{
	g^0 \in G^{-1}_g\backslash G^0_g/J\thenconn 0_g \\
	\Gamma_{g g^0} \in \Lie^*(J^0_g)
}}
\sum_{\OO^0 \in \OO^{J\thenconn 0_g}(\UU\thendual{-1}_{g g^0})}
	\wtilde\Gauss_{G^0_g/J^0_g}(\OO^0, \gamma)
	c_{\OO^0}(\pi_0^g, \gamma)
	\Ohat^{J\thenconn 0_g}_{\OO^0}(\sbjtl{Y^0}{R_{-1}})
\end{multline*}
for all \(\sbjtl{Y^0}{R_{-1}} \in \Lie(J^0_g)\rss \cap \sbjtl{\Lie(J^0_g)}{R_{-1}}\).  Then we have that
\begin{multline*}
\phi_\ell(\mexp(\sbjhd Y r + \sbjtl Y r))\inv
\Phi_\pi(\gamma\dotm\mexp(\sbjhd Y r + \sbjtl Y r)) \qeqq \\
\sum_{\substack{
	g \in G^{-1}\bslash G/J\conn \\
	\Gamma_g \in \Lie^*(J)
}} \sum_{\OO^0}
{}
	\wtilde\Gauss_{G/J}(\OO^0, \gamma)
	c_{\OO^0}(\pi_0^g, \gamma)
	\Ohat^{J\conn}_{\OO^0}(\sbjhd Y r + \sbjtl Y r)
\end{multline*}
for all \(\sbjtl Y r \in \Lie(H)\rss \cap \sbjtl{\Lie(H)}r\), where \(\wtilde\Gauss\) is as in Notation \ref{notn:twisted-Gauss}.
\end{thm}

\begin{note}
See Remark \ref{rem:black-box} regarding our imposition of \cite{jkim-murnaghan:charexp}*{Theorem 3.1.7(1, 5)} as a hypothesis.  As in Theorem \ref{thm:orb-unwind}, for the pair \((\bJ^j_g, \Gamma^j_g)\), the result \cite{jkim-murnaghan:charexp}*{Theorem 3.1.7(1, 5)} becomes \cite{debacker:homogeneity}*{Theorem 2.1.5(1, 3)}, since \(\Gamma^j_g\) is centralised by \(\bJ^j_g\).
\end{note}

\begin{proof}
Our proof is similar to Theorem \ref{thm:orb-unwind}, with some, but not all, references to Fourier transforms of orbital integrals replaced by references to characters of supercuspidal representations.

We proceed by induction on the height \(\ell\) of the twisted Levi sequence \vbG.
If \(\ell\) equals \(0\), then the hypothesis is the conclusion, and we are done.
Now suppose that \(\ell\) is positive,
and put \(\Gamma' = \Gamma - \Gamma^{\ell - 1}\).  Note that replacing the Yu datum \(((\bG^0 \subseteq \dotsb \subseteq \bG^{\ell - 1} \subseteq \bG^\ell), o, (r_0 < \dotsb < r_{\ell - 1} \le r_{\ell}), \rho_0', (\phi_0, \dotsc, \phi_{\ell - 1}, \phi_\ell))\) by \((\bG^0 \subseteq \dotsb \subseteq \bG^{\ell - 1}), o, (r_0 < \dotsb < r_{\ell - 1}), \rho_0', (\phi_0, \dotsc, \phi_{\ell - 2}, \phi_{\ell - 1}\phi_\ell))\) replaces \(\pi\) by \(\pi'\) and \(\Gamma\) by \(\Gamma'\).
Suppose that we have proven the result for \((\bG', \Gamma', \pi')\) instead of \((\bG, \Gamma, \pi)\), hence also for every \(G\)-conjugate of \((\bG', \Gamma', \pi')\).

Suppose that \(g \in G^{-1}\bslash G/H\conn\) is such that \(\Gamma^{\ell - 1}_g\) belongs to \(\Lie^*(H)\).  Put \(\bG'_g = \bG^{\ell - 1}_g = \Int(g)\inv\bG'\) and \(\pi' = \pi_{\ell - 1}\).

By Remark \ref{rem:deep-gp-symmetry}, we have that \(\gamma\dotm\mexp(\sbjhd Y r)\) belongs to \(G'_g\).  Put \(\bH'_g = \bH \cap \bG'_g = \CC{\bG'_g}r(\gamma\dotm\mexp(\sbjhd Y r))\).
For each \(g' \in \bG'_g\), we have that \(\Gamma^{\ell - 1}_g = \Gamma^{\ell - 1}_{g g'}\) is centralised by \(\bG'_{g g'}\), and in particular by \(\bJ\thenconn 0_{g g'}\), so that there is a bijection \anonmap{\OO^{J\thenconn 0_{g g'}}(\UU\thendual{-1}_{g g'})}{\OO^{J\thenconn 0_{g g'}}(\Ad^*(g g')\inv(\Gamma' + \sbjtlpp{\Lie^*(G^{-1})}{-r_{-1}}))} given by \anonmapto{\OO^0}{\OO^0 - \Gamma^{\ell - 1}_g};
and, if \(\Gamma_{g g'}\) belongs to \(\Lie^*(J^0_{g g'})\), then
\[
\wtilde\Gauss_{G^0_{g g'}/J^0_{g g'}}(\OO^0, \gamma)
\qeqq
\wtilde\Gauss_{G^0_{g g'}/J^0_{g g'}}(\OO^0 - \Gamma^{\ell - 1}_g, \gamma)
\]
and
\begin{multline*}
\phi_{\ell - 1}^{g g'}(\mexp(\sbjtl{Y^0}{R_{-1}}))\inv
\Ohat^{J\thenconn 0_{g g'}}_{\OO^0}(\sbjtl{Y^0}{R_{-1}})
\qeqq \\
\AddChar_{\Gamma^{\ell - 1}_{g g'}}(\sbjtl{Y^0}{R_{-1}})\inv
\Ohat^{J\thenconn 0_{g g'}}_{\OO^0}(\sbjtl{Y^0}{R_{-1}})
= \Ohat^{J\thenconn 0_{g g'}}_{\OO^0 - \Gamma^{\ell - 1}_g}(\sbjtl{Y^0}{R_{-1}})
\end{multline*}
for all \(\sbjtl{Y^0}{R_{-1}} \in \Lie(J^0_g)\rss \cap \sbjtl{\Lie(J^0_g)}{R_{-1}}\).
Thus, after multiplying both sides of our formula for
\(\phi_\ell^{g g'}(\mexp(\sbjhd{Y^0}{R_{-1}}))\inv\Phi_{\pi_0^{g g'}}(\gamma\dotm\mexp(\sbjhd{Y^0}{R_{-1}}))\)
by \(\phi_{\ell - 1}^{g g'}(\mexp(\sbjhd{Y^0}{R_{-1}}))\inv\) and applying the result for \((\bG'_g, \pi\primethen g, \Gamma'_g)\),
we find that
\begin{multline}
\tag{$*'$}
\sublabel{eq:too-twisted}
(\phi_{\ell - 1}\phi_\ell)^g(\mexp(\sbjhd Y{r_{\ell - 2}} + \sbjtl{Y'}{r_{\ell - 2}}))\inv\Phi_{\pi\primethen g}(\gamma\dotm\mexp(\sbjhd Y{r_{\ell - 2}} + \sbjtl{Y'}{r_{\ell - 2}}))
\qeqq \\
\sum_{\substack{
	g' \in G^{-1}_g\bslash G'_g/J\primeconn_g \\
	\Gamma_{g g'} \in \Lie^*(J'_{g g'})
}} \sum_{\OO^0}
{}
	\wtilde\Gauss_{G'_g/J'_g}(\OO^0 - \Gamma^{\ell - 1}_g, \gamma)
	c_{\OO^0}(\pi_0^{g g'}, \gamma)
	\Ohat^{J\primeconn_g}_{\OO^0 - \Gamma^{\ell - 1}_g}(\sbjhd Y{r_{\ell - 2}} + \sbjtl{Y'}{r_{\ell - 2}})
\end{multline}
for all \(\sbjtl{Y'}{r_{\ell - 2}} \in \Lie(\CC\bJ{r_{\ell - 2}}(\sbjhd Y{r_{\ell - 2}}))\rss \cap \sbjtl{\Lie(\CC\bJ{r_{\ell - 2}}(\sbjhd Y{r_{\ell - 2}}))}{r_{\ell - 2}}\).
For every \(\sbjtl{Y'}r \in \Lie(H'_g)\rss \cap \sbjtl{\Lie(H'_g)}r\), we can
	put \(\sbjtl{Y'}{r_{\ell - 2}} = Y^{(\ell - 1)} + \sbjtl{Y'}r\),
	multiply both sides of \loceqref{eq:too-twisted} by \(\phi_{\ell - 1}^{g g'}(\mexp(\sbjhd Y r + \sbjtl{Y'}r)) = \phi_{\ell - 1}^g(\mexp(\sbjhd Y r + \sbjtl{Y'}r))\),
and	use that
\[
\Gauss_{G'_g/J'_g}(\OO^0 - \Gamma^{\ell - 1}_g, \gamma)
\qeqq
\Gauss_{G'_g/J'_g}(\OO^0, \gamma)
\]
and
\begin{multline*}
\phi_{\ell - 1}^g(\mexp(\sbjhd Y r + \sbjtl{Y'}r))
\Ohat^{J\primeconn_g}_{\OO^0 - \Gamma^{\ell - 1}_g}(\sbjhd Y r + \sbjtl{Y'}r)
\qeqq \\
\AddChar_{\Gamma^{\ell - 1}_g}(\sbjhd Y r + \sbjtl{Y'}r)
\Ohat^{J\primeconn_g}_{\OO^0 - \Gamma^{\ell - 1}_g}(\sbjhd Y r + \sbjtl{Y'}r)
= \Ohat^{J\primeconn_g}_{\OO^0}(\sbjhd Y r + \sbjtl{Y'}r)
\end{multline*}
to find that
\begin{multline*}
\phi_\ell^g(\mexp(\sbjhd Y r + \sbjtl{Y'}r))\inv\Phi_{\pi\primethen g}(\gamma\dotm\mexp(\sbjhd Y r + \sbjtl{Y'}r))
\qeqq \\
\sum_{\substack{
	g' \in G^{-1}_g\bslash G'_g/J\primeconn_g \\
	\Gamma_{g g'} \in \Lie^*(J'_{g g'})
}} \sum_{\OO^0}
{}
	\wtilde\Gauss_{G'_g/J'_g}(\OO^0, \gamma)
	c_{\OO^0}(\pi_0^{g g'}, \gamma)
	\Ohat^{J\primeconn_g}_{\OO^0}(\sbjhd Y r + \sbjtl{Y'}r)
\end{multline*}
for all \(\sbjtl{Y'}r \in \Lie(H'_g)\rss \cap \sbjtl{\Lie(H'_g)}r\).
Thus, exactly as in the proof of Theorem \ref{thm:orb-unwind}, but now using \xcite{spice:asymptotic}*{Corollary \xref{cor:Gauss-const}} instead of Corollary \ref{cor:Gauss-const}, and Proposition \ref{prop:e-} and Lemma \ref{lem:gp-Gauss-combine} instead of Lemma \ref{lem:Lie-Gauss-combine}, we have that
\begin{multline}
\tag{$\dag$}
\sublabel{eq:pi'-G'g-deep}
\phi_\ell(\mexp(\sbjhd Y r + \sbjtl{Y'}r))\inv\Phi_{\pi\primethen g}(\gamma\dotm\mexp(\sbjhd Y r + \sbjtl{Y'}r)) \qeqq \\
\sum_{\OO' \in \OO^{H\primeconn_g}(\Gamma^{\ell - 1}_g)}
	\wtilde\Gauss_{G'_g/H'_g}(\OO', \gamma\dotm\mexp(\sbjhd Y r))
	\phi_{\ell - 1}^g(\gamma\dotm\mexp(\sbjhd Y r))c_{\OO'}(\pi\primethen g, \gamma, \sbjhd Y r)
	\Ohat^{H\primeconn_g}_{\OO'}(\sbjtl{Y'}r)
\end{multline}
for all \(\sbjtl{Y'}r \in \Lie(H'_g)\rss \cap \sbjtl{\Lie(H'_g)}r\), where, for every \(\OO' \in \OO^{H\primeconn_g}(\Gamma^{\ell - 1}_g)\), we define \(c_{\OO'}(\pi\primethen g, \gamma, \sbjhd Y r)\) to be
\begin{equation}
\tag{$\S$}
\sublabel{eq:cO}
\sum_{\substack{
	g' \in G^{-1}_g\bslash G'_g/J\primeconn_g \\
	\Gamma_{g g'} \in \Lie^*(J)
}}
\sum_{\OO^0 \in \OO^{J\thenconn 0_{g g'}}(\UU\thendual{-1}_{g g'})}
	\frac
		{\wtilde\Gauss_{G/J}(\OO^0, \gamma)\Gauss_{J/H}(\OO', \sbjhd Y r)}
		{\wtilde\Gauss_{G/H}(\OO', \gamma\dotm\mexp(\sbjhd Y r))}
	c_{\OO^0}(\pi_0^{g g'}, \gamma)
	c^{J\primeconn_g}_{\OO'}(\OO^0, \sbjhd Y r).
\end{equation}

Now fix \(\sbjtl Y r \in \Lie(H)\rss \cap \sbjtl{\Lie(H)}r\), and, for convenience, write \(Y\) for \(\sbjhd Y r + \sbjtl Y r\).
Put \(\sbjtl\phi 0 = \prod_{j = 0}^\ell \phi_j\).
It follows from Theorem \ref{thm:pi-to-pi'} and \loceqref{eq:pi'-G'g-deep} that \(\phi_\ell(\mexp(Y))\inv\Phi_\pi(\gamma\dotm\mexp(Y))\) equals
\[
\sum_{\substack{
	g \in G'\bslash G/H\conn \\
	\Gamma^{\ell - 1}_g \in \Lie^*(H)
}}
\sum_{\OO' \in \OO^{H\primeconn_g}(\Gamma^{\ell - 1}_g)}
	\wtilde\Gauss_{G/H}(\OO', \gamma\dotm\mexp(\sbjhd Y r))
	\phi_\ell(\gamma)c_{\OO'}(\pi\thenprime g, \gamma\dotm\mexp(\sbjhd Y r))
	\Ohat^{H\conn}_{\OO'}(\sbjtl Y r),
\]
which, by \loceqref{eq:cO}, equals
\begin{align*}
\sum_g
\sum_{\OO'}
\sum_{\substack{
	g' \in G^{-1}_g\bslash G'_g/J\primeconn_g \\
	\Gamma_{g g'} \in \Lie^*(J'_g)
}}
\sum_{\OO^0 \in \OO^{J\thenconn 0_{g g'}}(\UU\thendual{-1}_{g g'})}
{}	& \wtilde\Gauss_{G/J}(\OO', \gamma)\sbjtl\phi 0^{g g'}(\gamma)c_{\OO^0}(\pi_0^{g g'}, \gamma)\times{} \\
	& \quad\times\Gauss_{J/H}(\OO^0, \sbjhd Y r)c^{J\primeconn_g}_{\OO'}(\OO^0, \sbjhd Y r)
	\Ohat^{H\conn}_{\OO'}(\sbjtl Y r).
\end{align*}
By re-ordering and re-grouping the nested sums and using Theorem \ref{thm:orb-to-orb'} (\emph{not} Theorem \ref{thm:pi-to-pi'}!\@) with \((\bJ\conn, \OO^0, \Gamma^{\ell - 1}_g)\) in place of \((\bG, \xi, \Gamma)\), again exactly as in the proof of Theorem \ref{thm:orb-unwind}, we see that \(\phi_\ell(\mexp(Y))\inv\Phi_\pi(\gamma\dotm\mexp(Y))\) equals
\begin{align*}
\sum_{\substack{
	g \in G^{-1}\bslash G/J\conn \\
	\Gamma_g \in \Lie^*(J)
}}
\sum_{\OO^0 \in \OO^{J\thenconn 0_g}(\UU\thendual{-1}_g)}
{}	& \wtilde\Gauss_{G/J}(\OO^0, \gamma)
	\sbjtl\phi 0^g(\gamma)c_{\OO^0}(\pi_0^g, \gamma)\times{} \\
        &\quad\times\underbrace{
		\sum_{\substack{
			j \in J\primeconn_g\bslash J\conn/H\conn \\
			\Gamma^{\ell - 1}_{g j} \in \Lie^*(H)
		}}
		\sum_{\OO' \in \OO^{H\primeconn_{g j}}(\Gamma^{\ell - 1}_{g j})}
			\Gauss_{J/H}(\Ad^*(j)\inv\OO^0, \sbjhd Y r)\times{}
	}_{(\P)} \\
	&\quad\hphantom{
		\sum_{\substack{
			j \in J\primeconn_g\bslash J\conn/H\conn \\
			\Gamma^{\ell - 1}_{g j} \in \Lie^*(H)
		}}
		\sum_{\OO' \in \OO^{H\primeconn_{g j}}(\Gamma^{\ell - 1}_{g j})}
	}\quad\times\underbrace{
			c^{J\primeconn_{g j}}_{\OO'}(\Ad^*(j)\inv\OO^0, \sbjhd Y r)
			\Ohat^{H\conn}_{\OO'}(\sbjtl Y r)
	}_{(\P)},
\end{align*}
and that (\(\P\)) equals \(\Ohat^{J\conn}_{\OO^0}(Y)\), so we are done.
\end{proof}

\begin{cor}
\label{cor:char-vanish}
Preserve the notation of Theorem \ref{thm:char-unwind}.
We have that \(\Phi_\pi(\gamma\dotm\mexp(\sbjhd Y r + \sbjtl Y r))\) vanishes for all \(\sbjtl Y r \in \sbjtl{\Lie(H)}r\) unless \(\gamma\) belongs to \(\Int(G)(G^{-1})\).
\end{cor}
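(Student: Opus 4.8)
The plan is to deduce the corollary from Theorem \ref{thm:char-unwind}, in the same way that Corollary \ref{cor:orb-vanish} follows from Theorem \ref{thm:orb-unwind}.  The outer sum in the conclusion of Theorem \ref{thm:char-unwind} is indexed by those \(g \in G^{-1}\bslash G/J\conn\) for which \(\Gamma_g = \Ad^*(g)\inv\Gamma\) lies in \(\Lie^*(J)\).  If there is no such \(g\), then the hypothesis of Theorem \ref{thm:char-unwind} holds vacuously (a quantifier over the empty set), and its conclusion reads \(\phi_\ell(\mexp(\sbjhd Y r + \sbjtl Y r))\inv\Phi_\pi(\gamma\dotm\mexp(\sbjhd Y r + \sbjtl Y r)) = 0\) for all \(\sbjtl Y r \in \Lie(H)\rss \cap \sbjtl{\Lie(H)}r\); since \(\phi_\ell(\mexp(\anondot))\) never vanishes, this gives \(\Phi_\pi(\gamma\dotm\mexp(\sbjhd Y r + \sbjtl Y r)) = 0\), as desired.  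So it is enough to prove that the index set above is empty whenever \(\gamma \notin \Int(G)(G^{-1})\).

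The key step is to read off the condition ``\(\Gamma_g \in \Lie^*(J)\)'' purely in terms of \(\gamma\).  By Proposition \initref{prop:gp-dfc-smooth}\subpref{Lie}, \(\Lie(\bJ) = \CC{\Lie(\bG)}{R_{-1}}(\gamma)\) is the \(\sbjtl{\field\mult}{R_{-1}}\)-close \(1\)-eigenspace of \(\Ad(\gamma)\) on \(\Lie(\bG)\); since \(R_{-1} = \max\sset{r_{-1}, \Rp0}\) is positive, \(1\) lies in \(\sbjtl{\field\mult}{R_{-1}}\), so the \(\gamma\)-stable complement \(\Lie(\bJ)^\perp\) of Definition \ref{defn:embed-fc-dual} contains no \(\Ad(\gamma)\)-fixed vector, i.e.\ \(\Ad(\gamma) - 1\) acts invertibly on \(\Lie(\bJ)^\perp\).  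Dualising, the embedded copy of \(\Lie^*(J)\) in \(\Lie^*(G)\) — the annihilator of \(\Lie(\bJ)^\perp\) — is exactly the \(\Ad^*(\gamma)\)-fixed subspace of \(\Lie^*(G)\).  Therefore \(\Gamma_g \in \Lie^*(J)\) if and only if \(\Ad^*(\gamma)\Gamma_g = \Gamma_g\), that is, if and only if \(\gamma \in \Cent_\bG(\Gamma_g) = \Int(g)\inv\Cent_\bG(\Gamma)\).

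It remains to identify \(\Cent_\bG(\Gamma)\) with \(\bG^{-1}\).  This is the standard fact that the centraliser of \(\Gamma = \sum_{j = -1}^{\ell - 1}\Gamma^j\), whose summands are generic of strictly increasing depths \(-r_j\) relative to the tame, twisted Levi tower \((\bG^{-1} \subseteq \bG^0 \subseteq \dotsb \subseteq \bG^\ell = \bG)\), is cut out successively by the leading terms and recovers the smallest group of the tower; it is of the same nature as the normal-approximation estimates of \xcite{adler-spice:good-expansions} and the depth arguments used with \cite{adler-debacker:bt-lie}*{Lemma 3.5.3} in the proof of Theorem \ref{thm:orb-unwind}.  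Granting this, \(\Gamma_g \in \Lie^*(J)\) forces \(\gamma \in \Int(g)\inv G^{-1} \subseteq \Int(G)(G^{-1})\); so, when \(\gamma \notin \Int(G)(G^{-1})\), no such \(g\) exists, the index set is empty, and the corollary follows from Theorem \ref{thm:char-unwind} as explained above.  The one genuinely nontrivial input is the identification \(\Cent_\bG(\Gamma) = \bG^{-1}\); everything else is a direct unwinding of the main theorem.
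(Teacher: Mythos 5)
Your overall strategy is the right one and, since the paper states this corollary (like Corollary \ref{cor:orb-vanish}) without proof, it is essentially the argument the author intends: show that if \(\gamma \notin \Int(G)(G^{-1})\), then the index set \(\set{g \in G^{-1}\bslash G/J\conn}{\Gamma_g \in \Lie^*(J)}\) is empty, so that both the hypothesis of Theorem \ref{thm:char-unwind} holds vacuously and its conclusion becomes an empty sum.

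There is one place where your justification argues the wrong direction. The implication you actually need is
\[
\Gamma_g \in \Lie^*(J) \quad\Longrightarrow\quad \Ad^*(\gamma)\Gamma_g = \Gamma_g.
\]
Your argument from invertibility of \(\Ad(\gamma) - 1\) on \(\Lie(\bJ)^\perp\) establishes the \emph{reverse} containment: that the \(\Ad^*(\gamma)\)-fixed subspace of \(\Lie^*(G)\) is \emph{contained} in the annihilator of \(\Lie(\bJ)^\perp\), i.e.\ in \(\Lie^*(J)\). That is the implication \(\Ad^*(\gamma)\Gamma_g = \Gamma_g \Rightarrow \Gamma_g \in \Lie^*(J)\), which is not the one you use. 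For the forward implication you need to know that \(\Ad^*(\gamma)\) acts \emph{trivially} on the embedded \(\Lie^*(J)\), which follows not from the definition of \(\bJ\) alone but from the standing hypothesis, imposed at the start of \S\ref{subsec:char-unwind}, that \(\gamma\) centralises \(\Lie(\bJ)\). Once you invoke that hypothesis, both containments hold and the identification ``\(\Lie^*(J)\) = \(\Ad^*(\gamma)\)-fixed subspace'' is correct; so the gap is one of justification rather than substance, but the two directions play different roles and should not be conflated.

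The remaining input, \(\Cent_\bG(\Gamma) \subseteq \bG^{-1}\) (in fact equality), you correctly flag as the nontrivial point. The paper treats it as known (it appears in the proof of Corollary \ref{cor:orb-lc}), and it follows by descending induction using \xcite{spice:asymptotic}*{Hypothesis \initxref{hyp:Z*}(\subxref{orbit})} applied to the leading term \(\Gamma^{\ell-1}\), then peeling that term off since it is central in \(\Lie^*(G^{\ell-1})\). Note that only the containment \(\Cent_\bG(\Gamma) \subseteq \bG^{-1}\) is needed for the corollary, though equality does hold.
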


\begin{proof}
The set of \(g \in G^{-1}\backslash G/J^\circ\) for which \(\Gamma_g\) belongs to \(\Lie^*(J)\) is empty, so the hypotheses of Theorem \ref{thm:char-unwind} are vacuously satisfied; and the sum computing \(\Phi_\pi(\gamma\dotm\mexp(\sbjhd Y r + \sbjtl Y r))\) is over an empty set and hence vanishes.
\end{proof}

Corollary \ref{cor:char-lc} is a particularly easy-to-describe special case of Theorem \ref{thm:char-unwind}.  See \xcite{fintzen-kaletha-spice:twist}*{\S\xref{sub:char}} for the more involved and very important case where \(\pi\) is a regular supercuspidal representation.
Corollary \initref{cor:char-lc}\subpref{regular} is related to \cite{kaletha:regular-sc}*{\S4.4} and \cite{chan-oi:unramified-sc}*{Proposition 9.4}, although its hypotheses are more restrictive than either.

\begin{cor}
\initlabel{cor:char-lc}
Preserve the notation and hypotheses of Theorem \ref{thm:char-unwind}.
Suppose that \(\Cent_\bG(\Gamma)\conn\) equals \(\bG^0\).
\begin{enumerate}
\item\sublabel{near-1}
If \(\gamma\) equals \(1\), then
\begin{multline*}
\phi_\ell(\mexp(\sbjhd Y r + \sbjtl Y r))\inv\Phi_\pi(\mexp(\sbjhd Y r + \sbjtl Y r))
\qeqq \\
\sum_{\OO^0 \in \OO^{J\thenconn 0}(\Gamma)} c_{\OO^0}(\pi_0, 1)\Ohat^{J\conn}_{\OO^0}(\sbjhd Y r + \sbjtl Y r)
\end{multline*}
for all \(\sbjtl Y r \in \Lie(H)\rss \cap \sbjtl{\Lie(H)}r\).
\item\sublabel{regular}
If \(\bG^0\) is a torus and \(\gamma\) lies in \(G\rss\), then
\begin{multline*}
\phi_\ell(\mexp(\sbjhd Y r + \sbjtl Y r))\inv\Phi_\pi(\gamma\dotm\mexp(\sbjhd Y r + \sbjtl Y r))
\qeqq \\
\sum_{\substack{
	g \in G^0\bslash G/J\conn \\
	\Gamma_g \in \Lie^*(J)
}}
	\wtilde\Gauss_{G/G^0_g}(\Gamma_g, \gamma)
	\theta^g(\gamma)
	\AddChar_{\Gamma_g}(\sbjhd Y r + \sbjtl Y r)
\end{multline*}
for all \(\sbjtl Y r \in \Lie(H)\rss \cap \sbjtl{\Lie(H)}r\),
where \(\theta\) is the linear character \(\pi_0\) of \(G^0\).
\end{enumerate}
\end{cor}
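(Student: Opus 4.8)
The plan is to derive both parts by specialising Theorem~\ref{thm:char-unwind}, checking that in each case the double cosets and the $\wtilde\Gauss$-factors on both sides of that theorem collapse to what is asserted. Throughout, the hypothesis $\Cent_\bG(\Gamma)\conn = \bG^0$ forces $\bG^{-1}\conn = \bG^0$ (exactly as in the proof of Corollary~\ref{cor:orb-lc}), so that $G^{-1}\bslash G/J\conn$ is governed by $\bG^0$ on the left.

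For part~\locpref{near-1}, take $\gamma = 1$. Then $1$ lies in $\sbjtl G x{R_{-1}}$ for every $x$, so Lemma~\ref{lem:MP-dfc} gives $\bJ = \CC\bG{R_{-1}}(1) = \bG$; hence $J\conn = G$, the double coset space $G^{-1}\bslash G/J\conn$ is a single point represented by $g = 1$, and $\bJ^0_g = \bG \cap \bG^0 = \bG^0$. Since $\wtilde\Gauss_{G^0/G^0}(\OO^0, 1) = 1$, the hypothesis of Theorem~\ref{thm:char-unwind} for this single $g$ reduces to the assertion that $\phi_\ell(\mexp(\anondot))\inv\Phi_{\pi_0}(\mexp(\anondot)) = \Phi_{\pi_0 \otimes \phi_\ell\inv}(\mexp(\anondot))$ admits an asymptotic expansion in terms of the $\Ohat^{G^0}_{\OO^0}$; this is provided by Theorem~\ref{thm:char-asymptotic-exists} (with trivial recentring) applied to the irreducible supercuspidal representation $\pi_0 \otimes \phi_\ell\inv$ of $G^0$, which moreover allows the orbits to be taken in $\OO^{G^0}(\Ad^*(G^0)\Gamma) = \OO^{G^0}(\Gamma)$ since $\Gamma$ is central in $\bG^0$. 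Feeding this into Theorem~\ref{thm:char-unwind}, and using $\wtilde\Gauss_{G/G}(\OO^0, 1) = 1$ and $J\conn = G$ on the output side, gives \locpref{near-1}.

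For part~\locpref{regular}, the twisted Levi $\bG^0$, being a torus, must contain a maximal torus over a separable closure (it is a Levi there), hence is itself a maximal torus; and since $\gamma$ is regular semisimple and centralises $\Lie(\bJ)$, the group $\bJ\conn = \Cent_\bG(\gamma)\conn = \bT$ is also a maximal torus. If $\Gamma_g \in \Lie^*(J)$, i.e.\ $\Gamma_g \in \Lie^*(\bT)$, then $\bT$ fixes $\Gamma_g$ under the coadjoint action (a direct weight-space computation), so $\bT \subseteq \bG^0_g = \Int(g)\inv\bG^0$, and as both are maximal tori $\bG^0_g = \bT = \bJ\conn = \bJ^0_g$; the inner $g^0$-sum then also collapses. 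For such $g$, the representation $\pi_0^g$ is the linear character $\theta^g$, so $\Phi_{\pi_0^g}(\gamma\dotm\mexp(Z)) = \theta^g(\gamma)\theta^g(\mexp(Z))$; strong representability, together with the fact that the depth-zero factor of $\theta$ is trivial on positive depth, gives $\theta^g(\mexp(Z))\cdot\phi_\ell^g(\mexp(Z))\inv = \AddChar_{\Gamma_g}(Z)$, while on the torus $\bT$ one has $\Ohat^{\bT}_{\Gamma_g}(Z) = \AddChar_{\Gamma_g}(Z)$ and $\wtilde\Gauss_{G^0_g/J^0_g} = 1$. Linear independence of additive characters then forces $c_{\Gamma_g}(\pi_0^g, \gamma) = \theta^g(\gamma)$, all other coefficients vanishing, so the hypothesis of Theorem~\ref{thm:char-unwind} holds; its conclusion becomes $\phi_\ell(\mexp(\anondot))\inv\Phi_\pi(\gamma\dotm\mexp(\anondot)) = \sum_g \wtilde\Gauss_{G/J}(\Gamma_g, \gamma)\,\theta^g(\gamma)\,\AddChar_{\Gamma_g}(\anondot)$, and since $\bJ$ and $\bG^0_g$ share the same Lie algebra and the same associated symplectic data we get $\wtilde\Gauss_{G/J}(\Gamma_g, \gamma) = \wtilde\Gauss_{G/G^0_g}(\Gamma_g, \gamma)$, yielding \locpref{regular}.

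The bulk of this is routine double-coset and $\wtilde\Gauss$-bookkeeping. The one point needing genuine care is the verification in \locpref{regular} that the torus case really does satisfy the (rather elaborate) hypothesis of Theorem~\ref{thm:char-unwind}: pinning down the single surviving coefficient as $\theta^g(\gamma)$ via strong representability, and matching $\wtilde\Gauss_{G/J}$ with $\wtilde\Gauss_{G/G^0_g}$. A minor notational caveat is that the ``$\theta(\gamma)$'' appearing in the statement is, strictly, $\theta^g(\gamma)$ for the summand indexed by $g$ (the value differing by a Weyl translate across the cosets).
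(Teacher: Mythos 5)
Your proof is correct and follows essentially the same strategy as the paper: collapse the double cosets using the torus/connectedness hypotheses, verify the base-case hypothesis of Theorem~\ref{thm:char-unwind} by hand, then appeal to Theorem~\ref{thm:char-unwind} and simplify the Gauss-sum factors. The one place you diverge from the paper is in part~\locpref{near-1}, where you derive the base case from Theorem~\ref{thm:char-asymptotic-exists} applied to \(\pi_0 \otimes \phi_\ell\inv\), while the paper instead cites the local character expansion for \(\pi_0 \otimes \phi_\ell\inv\) shifted by the \(\bG^0\)-central element \(\Gamma\) (DeBacker's homogeneity theorem). Both give the needed existence statement; the paper's route is somewhat cleaner because it avoids re-verifying, for the depth-zero-times-character representation, the \(K\)-type containment hypothesis that Theorem~\ref{thm:char-asymptotic-exists} requires. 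For part~\locpref{regular}, your chain of reductions (\(\bG^0\) a maximal torus, \(\bJ\conn = \Cent_\bG(\gamma)\conn\), the collapse \(\bG^0_g = \bJ\conn = \bJ^0_g\), the explicit evaluation of the one-dimensional character, the trivialisation of \(\wtilde\Gauss_{G^0_g/J^0_g}\), and the identification \(\wtilde\Gauss_{G/J} = \wtilde\Gauss_{G/G^0_g}\) via equality of Lie algebras) agrees with the paper's. Your caveat that the printed factor \(\theta(\gamma)\) in the statement should read \(\theta^g(\gamma)\) for the summand indexed by \(g\) is a correct observation: the paper's own proof uses \(\theta^g(\gamma)\) throughout, and \(\gamma\) lies in \(G^0_g\) rather than \(G^0\) in general, so \(\theta(\gamma)\) is not literally defined without the conjugation.
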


\begin{proof}
The proof is similar to that of Corollary \ref{cor:orb-lc}.  As in that proof, we have that \(\bG^{-1}\) equals \(\bG^0\).

For \locpref{near-1}, we have that \(\bJ\) equals \(\bG\), so that \(G^{-1}\bslash G/J\conn\) is a singleton.  Our `base' expansion of \(\phi_\ell\inv\Phi_{\pi_0}\), used as input to Theorem \ref{thm:char-unwind}, is the local character expansion \cite{debacker:homogeneity}*{Theorem 3.5.2}, shifted by the \(\bG^0\)-fixed element \(\Gamma \in \Lie^*(G^0)\).  For \locpref{regular}, we observe that \(\pi_0 = \theta\) is a \(1\)-dimensional representation and \(\bJ\conn = \bH\conn = \bG^0_g = \bJ\thenconn 0_g\) is a torus for all \(g \in G\) such that \(\Gamma_g\) belongs to \(\Lie^*(J)\), and so use the equalities
\begin{gather*}
\phi_\ell^g(\mexp(\sbjtl{Y^0}{R_{-1}}))\inv\Phi_{\pi_0^g}(\gamma\dotm\mexp(\sbjtl{Y^0}{R_{-1}}))
= \theta^g(\gamma)\AddChar_{\Gamma_g}(\sbjtl{Y^0}{R_{-1}})
= \theta^g(\gamma)\Ohat^{J\thenconn 0_g}_{\Gamma_g}(\sbjtl{Y^0}{R_{-1}})
\intertext{and}
\Ohat^{J\conn}_{\Gamma_g}(\sbjhd Y r + \sbjtl Y r)
= \AddChar_{\Gamma_g}(\sbjhd Y r + \sbjtl Y r)
\end{gather*}
for all such \(g\), and all \(\sbjtl{Y^0}{R_{-1}} \in \Lie(J^0_g)\rss \cap \sbjtl{\Lie(J^0_g)}{R_{-1}}\) and \(\sbjtl Y r \in \Lie(H)\rss \cap \sbjtl{\Lie(H)}r\).
\end{proof}

\appendix

\numberwithin{thm}{section}
\section{Errata to \cite{spice:asymptotic}}
\label{app:errata}

\begin{rem}[\xcite{spice:asymptotic}*{Definition \xref{defn:tame-Levi}}]
\label{rem:defn:tame-Levi}
\hfill\begin{itemize}
\item When \bG is disconnected, it is too restrictive to require a tame Levi subgroup of \bG to satisfy \cite{digne-michel:non-connexe}*{D\'efinition 1.4}.  For example, the intersection of the subgroups contracted and dilated by an element \(\gamma \in G\) \cite{deligne:support}*{\S1, p.~151} need not be a Levi subgroup.
(We thank Stephen DeBacker for clarifying this for us.)  Instead, we follow \S\ref{sec:notation} in requiring of a parabolic \bP and a Levi subgroup \bM of \bG only that their intersections with \(\bG\conn\) are a parabolic and a Levi subgroup in the usual sense.  In particular, \bP belongs to \(\gNorm_\bG(\bP \cap \bG\conn)\), hence is contained in a parabolic subgroup of \bG in the sense of \cite{digne-michel:non-connexe}*{D\'efinition 1.4}; and \bM belongs to \(\gNorm_\bG(\bP \cap \bG\conn, \bM \cap \bG\conn)\), hence is contained in a Levi subgroup of \bG in the sense of \cite{digne-michel:non-connexe}*{D\'efinition 1.4}.
\item The definition of a tame Levi sequence \(\vbG = (\bG^0, \dotsc, \bG^\ell)\) incorrectly omits that the sequence be nested, with \(\bG^j \subseteq \bG^{j + 1}\) for all \(j < \ell\).  All tame, twisted Levi sequences actually used in \cite{spice:asymptotic} do satisfy this condition.  In order for \xcite{spice:asymptotic}*{Lemma \xref{lem:filtration}} to work, we need to impose the nesting requirement.  With this requirement in place, there is no need to require separately that there be a maximal \tamefield-split torus in \bG containing the \tamefield-split part of each \(\Zent(\bG\thenconn j)\), as every maximal \tamefield-split torus in \(\bG^0\) satisfies this requirement.
\end{itemize}
\end{rem}

\begin{rem}[\xcite{spice:asymptotic}*{Hypothesis \initxref{hyp:funny-centraliser}}]
\label{rem:hyp:funny-centraliser}
\hfill\begin{itemize}
\item In \xcite{spice:asymptotic}*{Hypothesis \initxref{hyp:funny-centraliser}(\subxref{basic})}, the statement that \(\CCp\bG 0(\gamma) \cap \bG\conn\) is the centraliser in \(\bG\conn\) of the absolutely-semisimple-modulo-\(\Zent(\CC\bG 0(\gamma)\conn)\) part of \(\gamma\) does not make sense literally as written.  It should say that \(\CCp\bG 0(\gamma) \cap \bG\conn\) is the fixed-point group of the absolutely semisimple part of the automorphism of \(\CC\bG 0(\gamma)\conn\) induced by \(\gamma\).  See Lemma \initref{lem:shallow-dfc}\subpref{0+}.
\item In \xcite{spice:asymptotic}*{Hypothesis \initxref{hyp:funny-centraliser}(\subxref{more-vGvr-facts})}, the equality
\[
\sbtl\vG x{\vec a} \cap \CC G i(\gamma\pinv)
\qeqq
\sbtl{(\CC{G^0}i(\gamma\pinv), \dotsc, \CC{G^\ell}i(\gamma\pinv))}x{\vec a}
\]
should involve only \(\CC G i(\gamma\pinv)\conn\) instead of \(\CC G i(\gamma\pinv)\), reading
\[
\sbtl\vG x{\vec a} \cap \CC G i(\gamma\pinv)\conn
\qeqq
\sbtl{(\CC{G^0}i(\gamma\pinv), \dotsc, \CC{G^\ell}i(\gamma\pinv))}x{\vec a}.
\]
(In addition, it should require that \(i\) be non-negative, and so speak of groups \(\CC{\bG^j}i(\gamma)\) instead of \(\CC{\bG^j}i(\gamma\pinv)\).  See the next item.)
\item More importantly, groups \(\CC\bG i(\gamma\pinv)\) and \(\CC\bG i(\gamma)\) with the required Lie algebras \xcite{spice:asymptotic}*{Hypothesis \initxref{hyp:funny-centraliser}(\subxref{Lie})} will rarely exist when \(i < 0\).  We thus regard \(\CC\bG i(\gamma)\) as defined only for those \(i \in \tR\) with \(0 \le i \le r\).  See Definitions \ref{defn:gp-dfc} and \ref{defn:vGvr}.
This seems to affect \xcite{spice:asymptotic}*{Proposition \xref{prop:lattice-orth}}, but actually that object deals only with the Lie-algebra analogues and their duals, where the appropriate vector spaces (not subalgebras) can easily be defined in terms of weight spaces, so that no change is necessary.
Note that \(\sbtl{\Lie(\CC G r(\gamma), G)}x{(\Rp0, (r - \ord_{\gamma^{\pm1}})/2)}\) there would be written in our notation (Notation \ref{notn:ord-gamma-spec}) as \(\sbtl{(H, G)}x{(\Rp0, (r - \max \sset{\ord_{\gamma - 1}, \ord_{\gamma\inv - 1}})/2)}\);
\begin{gather*}
\sbtl{\Lie(\CC G r(\gamma), N^+, G)}x{(\Rp0, (r - \ord_{\gamma\inv})/2, \Rpp{(r - \ord_\gamma)/2})}
\intertext{as, for example,}
\sbtlp{\Lie(H, M)}x{(0, (r - \ord_{\gamma - 1})/2)} + \sbtl{\Lie(\Prad^+)}x{(r - \ord_{\gamma\inv - 1})/2} + \sbtlpp{\Lie(\Prad^-)}x{(r - \ord_{\gamma - 1})/2};
\end{gather*}
and similarly for the other notation.
\end{itemize}
The objects whose existence we had to hypothesise in \xcite{spice:asymptotic}*{Hypothesis \xref{hyp:funny-centraliser}} are now explicitly constructed in \S\ref{sec:funny-centraliser}.  Thus, we are left having only to hypothesise good behaviour of the building, which we do in Hypothesis \ref{hyp:fc-building}.
\end{rem}

\begin{rem}[\xcite{spice:asymptotic}*{Hypothesis \xref{hyp:gamma}}]
In addition to the issues with groups \(\CC\bG i(\gamma\pinv)\) for \(i\) negative (see Remark \ref{rem:hyp:funny-centraliser}), this hypothesis should require that \(x\) belong to \(\BB(\CC{G^0}r(\gamma))\), not just to \(\BB(\vec G)\).  Both of these issues have been corrected in our analogues, Hypotheses \ref{hyp:gp-gamma} and \ref{hyp:Lie-gamma}.
\end{rem}

\begin{rem}[\xcite{spice:asymptotic}*{Lemma \xref{lem:commute-gp}}]
\label{rem:lem:commute-gp}
In addition to the issues with groups \(\CC\bG i(\gamma\pinv)\) for \(i\) negative (see Remark \ref{rem:hyp:funny-centraliser}), we need to impose stronger hypotheses in \xcite{spice:asymptotic}*{Lemma \initxref{lem:commute-gp}(\subxref{onto})}.  In the notation of that result, we should have required \(F < f \vee F\); \(F^\mp \ge f^\mp\)  and \(F\supn \ge f\supn + \ord_\gamma\) instead of just \(F^\mp \ge f^\mp + \ord_{\gamma^{\pm1}}\); and that \(h\) belong to \(\sbtl{\CC G r(\gamma)}x r\) as well as to \(\sbtl{\CC\vG r(\gamma)}x f \cap \sbtl{\CC G r(\gamma)}x r\).  These requirements have been corrected in our analogue, Lemma \initref{lem:commute-gp}\subpref{onto}, where, in the language of depth specifications, they become \(\tilde\tau < \tau \cvvv \tilde\tau\); \(\tilde\tau \ge \tau + \max \sset{\ord_\gamma, \ord_{\gamma\inv}}\); and that \(h\) belongs to \(\sbtl H x r\) as well as \(\sbtl\vH x\tau\).

The affected result \xcite{spice:asymptotic}*{Lemma \initxref{lem:commute-gp}(\subxref{onto})} is used in only two places.  It is used twice in \xcite{spice:asymptotic}*{Lemma \xref{lem:H-perp}}, where we are working inside \(\CC\bG 0(\gamma)\) anyway, and where the extra hypotheses are satisfied, so no harm is done.  It is also used in \xcite{spice:asymptotic}*{Lemma \xref{lem:fa-explicit}}, this time ``outside'' \(\CC\bG 0(\gamma)\).  We must therefore replace the groups \(\mc H_{\vec b}\) there by, in our notation, \(\sbtl{(\vH, \vM, \vG)}x{(\Rp{\vec b}, \vec b + r - \ord_{\gamma - 1}, \vec b + r)}\).  As argued in the proof of \xcite{spice:asymptotic}*{Lemma \xref{lem:fa-explicit}}, this change does not affect \(\indx{\mc H_{\vec a}}{\mc H_{\vec b}\dotm\sbtlp\vH x{\vec a}}\), so the conclusion is unaffected.  (Note that our analogous result Lemma \ref{lem:fa-explicit}, stated for \(\gamma\) a semisimple element of \(\Lie(G)\), does \emph{not} require this modification.)
\end{rem}

\begin{rem}
\label{rem:explicit-tame}
When we wrote \cite{spice:asymptotic}, we were not aware of issues with the dual Moy--Prasad sublattices such as the one described in Example \ref{exa:not-most-singular}.  To be safe, \xcite{spice:asymptotic}*{\S\S\xref{sec:qualitative}, \xref{sec:quantitative}} should have assumed that \(\bG_\tamefield\) is split.  Although we had gone to some length to avoid having to impose this condition directly, in practice the hypotheses we impose are most easily verified in the presence of tameness anyway (see \S\ref{subsec:sufficient}).
\end{rem}

\begin{rem}[\xcite{spice:asymptotic}*{Hypothesis \initxref{hyp:mexp}(\subxref{elt})}]
\label{rem:spice:asymptotic:hyp:mexp(elt)}
The conditions imposed on all \(Y \in \sbjtl{\Lie(H)}r\) in \xcite{spice:asymptotic}*{Hypothesis \initxref{hyp:mexp}(\subxref{elt})} are only needed when \(Y\) is regular semisimple.
\end{rem}

\begin{rem}[\xcite{spice:asymptotic}*{Corollary \xref{cor:sample}}]
\label{rem:asymptotic:cor:sample}
The last sentence of the proof of \xcite{spice:asymptotic}*{Corollary \xref{cor:sample}} claims only, in the notation there, that \(X^* + \sbtlpp{\Lie^*(H')}x{-r}\) intersects \(Z^*_o + \sbjtlpp{\Lie^*(H')}{-r}\).  This is true, but weaker than the condition that \(X^*\) belongs to \(Z^*_o + \sbjtlpp{\Lie^*(H')}{-r}\) claimed in the statement.  Fortunately, the fix is easy; the degeneracy of \((X^* - Z^*_o) + \sbtlpp{\Lie^*(H')}x{-r}\) shown in the penultimate sentence does indeed imply, by the dual-Lie-algebra analogue of \cite{adler-debacker:bt-lie}*{Corollary 3.2.6}, that \(X^* - Z^*_o\) belongs to \(\sbjtlpp{\Lie^*(H')}{-r}\).
\end{rem}

\begin{rem}[\xcite{spice:asymptotic}*{Hypotheses \initxref{hyp:MP-ad}(\subxref{refine}) and \xref{hyp:phi}, and Proposition \xref{prop:Gauss-to-Weil}}]
\label{rem:spice:asymptotic:hyp:MP-ad}
The map \(\sbat{\ol\mexp}x j\) of \xcite{spice:asymptotic}*{Hypothesis \xref{hyp:MP-ad}} should be from \(\sbat{\Lie(\vec M\adform)}x{\vec\jmath}\), not \(\sbat{\Lie(\vec M\adform)}x j\), to \(\sbat{(\vec M\adform)}x{\vec\jmath}\).

Both occurrences of \(\CC{\vec M}i(\gamma)\) in \xcite{spice:asymptotic}*{Hypothesis \initxref{hyp:MP-ad}(\subxref{ad})} should be \(\CC{\vec M\adform}i(\gamma)\).
All occurrences of \(\sbat M x{\vec\jmath}\) and \(\sbat{\Lie(M)}x{\vec\jmath}\) in \xcite{spice:asymptotic}*{Hypothesis \initxref{hyp:MP-ad}(\subxref{refine})} should be \(\sbat{(\vec M\adform)}x{\vec\jmath}\) and \(\sbat{\Lie(\vec M\adform)}x{\vec\jmath}\).
These have been corrected in our Lie-algebra analogue, Hypothesis \ref{hyp:MP-ad}.

Both occurrences of \(\CCp G 0(\gamma)\) in \xcite{spice:asymptotic}*{Hypothesis \xref{hyp:phi}} should be \(\CCp{M\adform}0(\gamma)\).

The proof of \xcite{spice:asymptotic}*{Proposition \xref{prop:Gauss-to-Weil}} should observe, after re-writing \(\uint_{\mc V^\perp} \hat\phi(\mathfrak Q_\gamma(v))\upd v\) as \(\prod_{0 < i < r} \uint_{\sbtl{(\CC{G'}i(\gamma), \CC G i(\gamma))}x{(r - i, (r - i)/2)}} \hat\phi(\mathfrak Q_\gamma(v))\upd v\), that, for all \(i \in \R\) with \(0 < i < r\) and all \(v \in \sbtl{(\CC{G'}i(\gamma), \CC G i(\gamma))}x{(r - i, (r - i)/2)}\), the quantity \(\hat\phi(\mathfrak Q_\gamma(v))\) depends only on the coset of \(v\) modulo \(\sbtl{(\CC{G'}i(\gamma), \CC G i(\gamma))}x{(r - i, \Rpp{(r - i)/2})}\); and that the natural map from
\[
\sbtl{(\CC{G'}i(\gamma), \CC G i(\gamma))}x{(r - i, (r - i)/2)}/\sbtl{(\CC{G'}i(\gamma), \CC G i(\gamma))}x{(r - i, \Rpp{(r - i)/2})}
\]
to
\[
\sbtl{(\CC{M'\adform}i(\gamma), \CC{M\adform}i(\gamma))}x{(r - i, (r - i)/2)}/\sbtl{(\CC{M'\adform}i(\gamma), \CC{M\adform}i(\gamma))}x{(r - i, \Rpp{(r - i)/2})}
\]
is an isomorphism.
\end{rem}

\begin{rem}[\xcite{spice:asymptotic}*{Definition \xref{defn:rho-pi}}]
Contrary to what is suggested by the notation, the quantity denoted by \(e^-_{\CCp G 0(\gamma^2)/{\CCp G 0(\gamma)}}(\gamma)\) in \xcite{spice:asymptotic}*{Definition \xref{defn:rho-pi}}, which, using Notation \ref{notn:e-}, equals
\[
\epsilon_x^{G/G'}(\gamma)\frac{\tr \tilde\phi_x(\gamma)}{\abs{\tr \tilde\phi_x(\gamma)}}\dotm\frac{\Gauss_{G/{\CCp G 0(\gamma^2)}}(X^*, \gamma)}{\Gauss_{G'/{\CCp{G'}0(\gamma^2)}}(X^*, \gamma)},
\]
can be non-trivial even if \(\CCp\bG 0(\gamma)\) equals \(\CCp\bG 0(\gamma^2)\).  In the notation of Remark \ref{rem:shallow-dfc} and \xcite{fintzen-kaletha-spice:twist}*{Definition \xref{dfn:lstk}}, we \emph{do} have that
\[
\epsilon_{\sharp, x}^{G/G'}(\sbjat\gamma 0)\frac{\tr \tilde\phi_x(\gamma)}{\abs{\tr \tilde\phi_x(\gamma)}}\dotm\frac{\Gauss_{G/{\CCp G 0(\gamma^2)}}(X^*, \gamma)}{\Gauss_{G'/{\CCp{G'}0(\gamma^2)}}(X^*, \gamma)}
\]
depends only on \(\CCp\bG 0(\gamma^2)\), and is trivial if \(\CCp\bG 0(\gamma^2)\) equals \(\CCp\bG 0(\gamma)\).
\end{rem}

\section{Depth specifications and concave functions}
\label{app:depth}

{\let\sstar=*
Throughout Appendix \ref{app:depth}, we let \((S, \sstar)\) be a semigroup (i.e., set with associative binary operation), and \(\le\) a partial ordering that is compatible with the semigroup operation (i.e., for which \(a' \le a\) implies \(a' \sstar b \le a \sstar b\) for all \(a, a', b \in S\)) with respect to which all non-empty sets have infima.  We will say for short that \((S, \sstar, \le)\) is a complete, partially ordered semigroup.

Remark \ref{rem:hyp:funny-centraliser} indicates the need for a replacement of the notion of depth matrices discussed in \xcite{spice:asymptotic}*{Definition \ref{defn:vGvr}}.  Our new notion is called a \noterm{depth specification}, although it might be more suggestive just to call it a triply indexed vector of depths.  The definition can be phrased quite generally, but the conditions that we impose on it look rather bizarre without context, so it will help to have in mind the situation of \S\ref{sec:subgps} while reading Appendix \ref{app:depth}.

\numberwithin{thm}{subsection}
\subsection{Depth specifications}
\label{subapp:spec}

Although the new notion of a depth specification (Definition \ref{defn:depth-spec}), which we just call a triply indexed vector in our generality (Definition \ref{defn:depth-ops}), is not much simpler in itself than that of a depth matrix, it does provide a pleasant simplification of the description of the operation \(\cvvv\) that is used to describe the interaction of two compact, open subgroups or sublattices attached to a depth specification.  See Lemma \ref{lem:filtration}.

Lemma \ref{lem:pre-assoc} is a technical result whose only purpose for us is to streamline the proof of Lemma \ref{lem:vee-assoc}.
Since there are two different orders simultaneously under consideration in Lemma \ref{lem:pre-assoc}, and since distinguishing them during the proof is particularly important, we use different symbols for them.  After Lemma \ref{lem:pre-assoc}, to avoid having to come up with a plethora of new order symbols (at least 4 different ones), we will simply denote all orders by \(\le\), and leave it to context to distinguish them.

\newcommand\vstar{\mathbin{\vec\sstar}}
\newcommand\vle{\mathrel{\vec\le}}
{
\makeatletter
\let\@inf=\inf
\edef\inf{\@inf\nolimits}
\let\@max=\max
\edef\max{\@max\nolimits}
\let\up@rrow\uparrow
\def\uparrow{\mathbin\up@rrow}
\makeatother
\newcommand\uuparrow{\mathbin{\uparrow\uparrow}}
\begin{lem}
\initlabel{lem:pre-assoc}
Let \((\mc X, \preceq)\) be a totally ordered set.  Define \(\vstar\) on \(S^{\mc X}\) by putting \((a_x)_{x \in \mc X} \vstar (b_x)_{x \in \mc X} = (c_x)_{x \in \mc X}\), where
\[
c_x = \inf_\le \sett{a_{x_1} \vstar b_{x_2}}{\(x \preceq x_1 = x_2\) or \(x_2 \preceq x = x_1\) or \(x_1 \preceq x_2 = x\)}
\]
for all \(x \in \mc X\).  Define the partial order \(\vle\) on \(S^{\mc X}\) by declaring, for all \(\vec a = (a_x)_{x \in \mc X}, \vec b = (b_x)_{x \in \mc X} \in S^{\mc X}\), that we have \(\vec a \vle \vec b\) if and only if \(a_x \le b_x\) for all \(x \in \mc X\).
Then \((S^{\mc X}, \vstar, \vle)\) is a complete, partially ordered semigroup.  If \(\sstar\) is commutative, then so is \(\vstar\).
\end{lem}

\begin{proof}
It is clear that all non-empty subsets of \(S^{\mc X}\) have infima with respect to \(\vle\), that \(\vle\) is compatible with \(\vstar\), and that \(\vstar\) is commutative if \(\sstar\) is.  (We do \emph{not} assume for the remainder of the proof that \(\sstar\) is commutative.)

For all \(x \in \mc X\), write \(x\uparrow\) for the set of ordered pairs \((x_1, x_2)\) of elements of \mc X such that we have \(x \preceq x_1 = x_2\) or \(x_2 \preceq x = x_1\) or \(x_1 \preceq x_2 = x\), and then \(x\uuparrow\) for the set of ordered triples \((x_1, x_2, x_3)\) of elements of \mc X such that, for some \(x_{23} \in \mc X\), we have \((x_1, x_{23}) \in x\uparrow\) and \((x_2, x_3) \in x_{23}\uparrow\).
Let us call a triple \((x_1, x_2, x_3)\) \emph{redundantly maximised} if at least two of the components are maximal; that is, if \(x_1 \preceq x_2 = x_3\) or \(x_3 \preceq x_1 = x_2\) or \(x_2 \preceq x_1 = x_3\).

If \(\vec a^1\), \(\vec a^2\), and \(\vec a^3\) are three vectors in \(S^{\mc X}\), and if we put \(\vec b^{23} = \vec a^2 \vstar \vec a^3\) and \(\vec c = \vec a^1 \vstar \vec b^{23}\), then, by definition, we have
\begin{align*}
c_x
& {}= \inf_\le \set{a^1_{x_1} \sstar b^{23}_{x_{23}}}
	{(x_1, x_{23}) \in x\uparrow} \\
& {}= \inf_\le \set{a^1_{x_1} \sstar a^2_{x_2} \sstar a^3_{x_3}}{(x_1, x_2, x_3) \in x\uuparrow}
\end{align*}
for all \(x \in \mc X\).  Applying the same definition to the opposite semigroup \((S, \sstar\textsup{op}, \le)\) shows that, for every \(x \in \mc X\), the \(x\)-component of \((\vec a^1 \vstar \vec a^2) \vstar \vec a^3 = \vec a^3 \vstar\textsup{op} (\vec a^2 \vstar\textsup{op} \vec a^1)\) equals
\[
\inf_\le \set{a^1_{x_1} \sstar a^2_{x_2} \sstar a^3_{x_3}}{(x_3, x_2, x_1) \in x\uuparrow}.
\]
It thus suffices to show that \(x\uuparrow\) is stable under the involution \anonmapto{(x_1, x_2, x_3)}{(x_3, x_2, x_1)}.

To show this, it suffices to show that, for all \(x \in \mc X\), the set \(x\uuparrow\) contains precisely those triples \((x_1, x_2, x_3)\) such that
	we have \(x \preceq \max_\preceq \sset{x_1, x_2, x_3}\),
and	\(x = \max_\preceq \sset{x_1, x_2, x_3}\) or \((x_1, x_2, x_3)\) is redundantly maximised.

Fix \(x, x_1, x_2, x_3 \in \mc X\), and put \(m = \max_\preceq \sset{x_1, x_2, x_3}\).

First suppose that \((x_1, x_2, x_3)\) belongs to \(x\uuparrow\), and choose \(x_{23} \in \mc X\) such that \((x_1, x_{23})\) belongs to \(x\uparrow\) and \((x_2, x_3)\) belongs to \(x_{23}\uparrow\).  In particular, we have \(x_{23} \preceq \max_\preceq \sset{x_2, x_3}\) and \(x \preceq \max_\preceq \sset{x_1, x_{23}} \preceq m\).  Suppose that \((x_1, x_2, x_3)\) is not redundantly maximised.

If we have \(x_2, x_3 \prec x_1 = m\), then we have \(x_{23} \preceq \max_\preceq \sset{x_2, x_3} \prec x_1 = m\).  This means that both \(x \preceq x_1 = x_{23}\) and \(x_1 \preceq x_{23} = x\) are false, so we must have \(x_{23} \preceq x = x_1\).  In particular, \(x\) equals \(m\).

If we have \(x_1 \prec m\), and \(x_2 \prec x_3 = m\) or \(x_3 \prec x_2 = m\), then, in the first case, both \(x_{23} \preceq x_2 = x_3\) and \(x_3 \preceq x_{23} = x_2\) are false, so that we have \(x_2 \preceq x_3 = x_{23}\); and, in the second case, similarly we must have \(x_3 \preceq x_{23} = x_2\).  In either case, \(x_{23}\) equals \(m\).  Again, this means that both \(x \preceq x_1 = x_{23}\) and \(x_{23} \preceq x = x_1\) are false, so we must have \(x_1 \preceq x_{23} = x\).  In particular, \(x\) equals \(m\).

Now suppose instead that we have \(x \preceq m\), and that \(x\) equals \(m\) or \((x_1, x_2, x_3)\) is redundantly maximised.  We shall find some \(x_{23} \in \mc X\) such that \((x_1, x_{23})\) belongs to \(x\uparrow\) and \((x_2, x_3)\) belongs to \(x_{23}\uparrow\).

If \(x\) equals \(m\), then we put \(x_{23} = \max_\preceq \sset{x_2, x_3}\).  This suffices since we have \(x_3 \preceq x_{23} = x_2\) or \(x_2 \preceq x_3 = x_{23}\); and since, because of the equality \(x = m = \max_\preceq \sset{x_1, x_{23}}\), we also have \(x_{23} \preceq x = x_1\) or \(x_1 \preceq x_{23} = x\).

If \(x_2\) and \(x_3\) equal \(m\), then we put \(x_{23} = \max_\preceq \sset{x, x_1}\).  This suffices since we have \(x_{23} \preceq x_2 = x_3\), and also \(x \preceq x_1 = x_{23}\) or \(x_1 \preceq x_{23} = x\).

If \(x_1\), and \(x_2\) or \(x_3\), equals \(m\), then we put \(x_{23} = m = x_1 = \max_\preceq \sset{x_2, x_3}\).  This suffices since we have \(x_3 \preceq x_{23} = x_2\) or \(x_2 \preceq x_3 = x_{23}\), and also \(x \preceq x_1 = x_{23}\).
\end{proof}
}

During Appendix \ref{app:depth}, we will have occasion to work with many different orders.  In Lemma \ref{lem:pre-assoc}, we used different symbols for different orders; but now so many different symbols would be needed that, rather than trying for orthographic creativity, we simply denote them all by \(\le\), with one exception.

Let \((\mc X_0, \le)\) and \((\mc X_1, \ge)\) be totally ordered sets, \((\mc X_2, +)\) a monoid (i.e., semigroup with identity), and \(\le\) a total order on \(\mc X_2\) compatible with \(+\).  We will typically denote elements of \(\mc X_0\) by \(j\), elements of \(\mc X_1\) by \(i\), and elements of \(\mc X_2\) by \(v\).
Note that we write the order for \(\mc X_1\) opposite to those for \(\mc X_0\) and \(\mc X_2\); this is not required in our abstract setting, but is relevant for our intended application in \S\ref{sec:subgps}.
Namely, if we have \(i_- \le i_+\) and \(j_- \le j_+\), then, in the notation of \S\ref{sec:subgps}, we have that \(\CC\bG{i_-}(\gamma)\) is a \emph{larger} group than \(\CC\bG{i_+}(\gamma)\), whereas \(\bG^{j_-}\) is a \emph{smaller} group than \(\bG^{j_+}\).
It may help to use Remark \ref{rem:depth-ops} to see the effect of this mirrored notation.

Definition \ref{defn:depth-ops} is an adaptation of \xcite{spice:asymptotic}*{Definition \xref{defn:concave-vee}} to our setting.  It gives a few operations on multiply indexed vectors.
Of these, only \(\cvvv\) is of interest in most of the paper, but we need the others to define it.

\begin{defn}
\label{defn:depth-ops}
Suppose that the operation \(\sstar\) on \(S\) is commutative.

%
A
\noterm{singly indexed vector} is an element of \(S_1 \ldef S^{\mc X_0}\); a
\noterm{doubly indexed vector} is an element of \(S_2 \ldef S_1^{\mc X_1}\);
and a
\noterm{triply indexed vector}
is an element of \(S_2^{\mc X_2}\).
We denote the orders on \(S_1\) and \(S_2\) again by \(\le\).

We say that a doubly indexed vector \(A = (\vec a_i)_{i \in \mc X_1}\) is \term[vector!doubly indexed!monotone]{monotone} if, for every \(i_-, i_+ \in \mc X_1\) with \(i_- \le i_+\), we have \(\vec a_{i_-} \ge \vec a_{i_+}\); and that \(A\) is \term[vector!doubly indexed!constant]{constant} if, for every \(i_-, i_+ \in \mc X_1\), we have \(\vec a_{i_-} = \vec a_{i_+}\).

We say that a triply indexed vector \(\rho = (A_v)_{v \in \mc X_2}\) is \term[vector!triply indexed!monotone]{monotone} if every doubly indexed component \(A_v\) is monotone; and that \(\rho\) is \term[vector!triply indexed!skew-constant]{skew-constant} if every doubly indexed component \(A_v\) corresponding to some \(v \ne 0\) is constant.

We equip singly indexed vectors with the operation \(\cvvv_1 \ldef \vstar\) deduced from \(\sstar\) using Lemma \ref{lem:pre-assoc}, and doubly indexed vectors with the operation \(\cvvv_2 \ldef \overrightarrow{\cvvv_1}\) deduced from \(\cvvv_1\) using Lemma \ref{lem:pre-assoc} (but remembering that we write the order on \(\mc X_1\) in the opposite direction from that on \(\mc X_0\)).
We say that a singly indexed vector \(\vec a\) is \term[vector!singly indexed!concave]{concave} if the inequality \(\vec a \le \vec a \cvvv_1 \vec a\) holds.

We equip triply indexed vectors with a different sort of operation \(\cvvv = \cvvv_3\).
If \(\rho = (A_v)_{v \in \R}\) and \(\sigma = (B_v)_{v \in \R}\) are triply indexed vectors, then we define \(\rho \cvvv_3 \sigma\), or just \matnotn{vee}{\rho \cvvv \sigma}, to be \((C_v)_{v \in \R}\), where \(C_v = \inf \set{A_{v_1} \cvvv_2 B_{v_2}}{v_1 + v_2 = v}\) for all \(v\).
We say that \(\rho\) is \term[vector!triply indexed!concave]{concave} if the inequality \(\rho \le \rho \cvvv \rho\) holds.
\end{defn}

\begin{rem}
\label{rem:depth-ops}
Although we find it helpful, specifically for Lemma \ref{lem:vee-assoc}, to define \(\cvvv\) in stages, it may help to think of all the infima in Definition \ref{defn:depth-ops} as being taken at once.  Namely, using the notation there, if we write \(\rho = (a_{v i j})_{v, i, j}\) and \(\sigma = (b_{v i j})_{v, i, j}\), then, for all \(v\), \(i\), and \(j\), the \((v, i, j)\) entry of \(\rho \cvvv \sigma\) is the infimum of all \(a_{v_1 i_1 j_1} * b_{v_2 i_2 j_2}\), where
	\(v = v_1 + v_2\);
	\(i \ge i_1 = i_2\) or \(i_2 \ge i = i_1\) or \(i_1 \ge i_2 = i\);
and	\(j \le j_1 = j_2\) or \(j_2 \le j = j_1\) or \(j_1 \le j_2 = j\).  In particular, the doubly indexed component of \(\rho \cvvv \sigma\) at \(v\) is majorised by \(A_v \cvvv_2 B_v\), and the singly indexed component of \(\rho \cvvv \sigma\) at \((v, i)\) is majorised by \(\vec a_{v i} \cvvv_1 \vec b_{v i}\).
\end{rem}

Unlike for the three operations \(\bowtie\), \(\rtimes\), and \(\vee\) defined for concave functions in \xcite{spice:asymptotic}*{Lemma \xref{lem:der-master-comm}}, and for depth matrices in \xcite{spice:asymptotic}*{Definition \xref{defn:concave-vee}}, it turns out that we need only the single operation \(\cvvv\) for depth specifications.  We make this precise in Lemma \ref{lem:filtration}, Corollary \ref{cor:matrix-vs-spec-vee}, and Lemma \ref{lem:depth-vs-concave-vee}.

First we prove a simple result, Lemma \ref{lem:vee-monotone}, involving monotonicity and skew-constancy.  This lemma is not directly needed in the other material, but is relevant in the context of Remark \ref{rem:vGvr-Mperp}.

\begin{lem}
\label{lem:vee-monotone}
Preserve the notation and terminology of Definition \ref{defn:depth-ops}.
\begin{enumerate}
\item If \(A\) and \(B\) are monotone, then so is \(A \cvvv_2 B\).  If \(A\) or \(B\) is constant, then so is \(A \cvvv_2 B\), with every singly indexed component equal to \(\inf \set{\vec a_{i_1} + \vec b_{i_2}}{i_1, i_2 \in \mc X_1}\).
\item If \(\rho\) and \(\sigma\) are monotone, then so is \(\rho \cvvv_3 \sigma\).  If \(\rho\) or \(\sigma\) is skew-constant, then, for every \(v \ne 0\), the doubly indexed component of \(\rho \cvvv_3 \sigma\) at \(v\) is \(\inf \sett{\vec a_{v_1 i_1} + \vec a_{v_2 i_2}}{\(v_1 + v_2 = v\) and \(i_1, i_2 \in \mc X_1\)}\).
\end{enumerate}
\end{lem}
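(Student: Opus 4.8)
The plan is to derive everything from Lemma \ref{lem:pre-assoc}, which gives the combinatorial description of $\cvvv_1$ and $\cvvv_2$ (and hence, via the definition of $\cvvv=\cvvv_3$, control of $\cvvv_3$), together with two elementary observations. First, since $*$ is compatible with $\le$, each of $\cvvv_1$, $\cvvv_2$, $\cvvv_3$ is order-preserving for the entrywise order on the relevant vector space: this is immediate from the description of these operations as entrywise infima of $*$-products. Second, an infimum of monotone doubly indexed vectors is again monotone, since if $i_-\le i_+$ in $\mc X_1$ and $\vec a^\alpha_{i_-}\ge\vec a^\alpha_{i_+}$ for all $\alpha$, then $\inf_\alpha \vec a^\alpha_{i_-}\ge\inf_\alpha\vec a^\alpha_{i_+}$. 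Finally, passing to the opposite semigroup $(S,*^{\mathrm{op}},\le)$ interchanges the two arguments of $\cvvv_2$ and of $\cvvv_3$ (the index conditions in Remark \ref{rem:depth-ops} being symmetric under swapping the two factors), so in each ``constant''/``skew-constant'' clause it suffices to treat the case that the \emph{first} factor ($A$, respectively $\rho$) has the stated degeneracy. Part (2) will then be derived formally from part (1) and the two observations.

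For part (1): to see that $A\cvvv_2 B$ is monotone when $A$ and $B$ are, fix $i_-\le i_+$ in $\mc X_1$; by Remark \ref{rem:depth-ops} it suffices to show every term $\vec a_{i_1}\cvvv_1\vec b_{i_2}$ of the infimum defining the $i_-$-entry of $A\cvvv_2 B$ dominates some term of the infimum defining the $i_+$-entry. By Lemma \ref{lem:pre-assoc} (applied to $(\mc X_1,\ge)$), the admissible pairs $(i_1,i_2)$ for an index $i$ are exactly those with $i_1=i_2\le i$, with $i_1=i\le i_2$, or with $i_2=i\le i_1$. A short case check then does it: a pair with $i_1=i_2\le i_-$ is admissible for $i_+$ as well; for a pair $(i_-,i_2)$ with $i_2\ge i_-$ one uses $(i_+,i_2)$ if $i_2\ge i_+$ (invoking $\vec a_{i_-}\ge\vec a_{i_+}$) and $(i_2,i_2)$ if $i_2<i_+$ (invoking $\vec a_{i_-}\ge\vec a_{i_2}$); pairs $(i_1,i_-)$ with $i_1\ge i_-$ are handled symmetrically using monotonicity of $B$ — in each case combining the monotonicity inequality with order-preservation of $\cvvv_1$. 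This case analysis is the one genuinely fiddly point. For the second clause, if $A$ is constant with value $\vec a$, then for any index $i$ and any $i_2\in\mc X_1$ the pair $(i_2,i_2)$ (when $i_2\le i$) or $(i,i_2)$ (when $i_2\ge i$) is admissible, so the infimum defining the $i$-entry of $A\cvvv_2 B$ runs over exactly $\set{\vec a\cvvv_1\vec b_{i_2}}{i_2\in\mc X_1}$, independently of $i$; hence $A\cvvv_2 B$ is constant with value $\inf\set{\vec a_{i_1}\cvvv_1\vec b_{i_2}}{i_1,i_2\in\mc X_1}$, as asserted.

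Part (2) is then formal. The doubly indexed component of $\rho\cvvv_3\sigma$ at $v$ is by definition $\inf\set{A_{v_1}\cvvv_2 B_{v_2}}{v_1+v_2=v}$. If $\rho$ and $\sigma$ are monotone, each $A_{v_1}\cvvv_2 B_{v_2}$ is monotone by part (1), so this infimum is monotone by the second preliminary observation, whence $\rho\cvvv_3\sigma$ is monotone. If $\rho$ is skew-constant and $v\ne 0$, then in each decomposition $v=v_1+v_2$ at least one of $A_{v_1}$, $B_{v_2}$ is constant (using skew-constancy of $\sigma$ to cover the case $v_1=0$), so the second clause of part (1) identifies $A_{v_1}\cvvv_2 B_{v_2}$ with the constant doubly indexed vector of value $\inf\set{\vec a_{v_1 i_1}\cvvv_1\vec b_{v_2 i_2}}{i_1,i_2\in\mc X_1}$; taking the infimum over all decompositions $v=v_1+v_2$ yields the stated formula. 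The only real obstacle in the whole argument is the case analysis for monotonicity of $A\cvvv_2 B$; once that is in place the rest is bookkeeping.
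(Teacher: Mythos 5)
Your argument follows the paper's quite closely: reduce the degeneracy clauses by a symmetry, prove monotonicity of $A \cvvv_2 B$ by exhibiting, for each admissible pair at $i_-$, a dominating term at $i_+$, and derive part (2) formally from part (1) by observing that an infimum of monotone (respectively constant) doubly indexed vectors is monotone (respectively constant). Your three-case split for the monotonicity of $A \cvvv_2 B$ is a variant of the paper's single uniform replacement $(i_{1+}, i_{2+}) = (\max\{i_{1-}, i_+\}, \max\{i_{2-}, i_+\})$ for the two non-diagonal cases, and your passage to $(S, *^{\mathrm{op}}, \le)$ for the symmetry reduction is slightly more scrupulous than the paper's bare assertion that $\cvvv_2$ is ``obviously commutative'' (the abstract $(S, *)$ is not assumed commutative, though it is in the intended application).

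There is, however, a gap in the skew-constant clause of part (2), and it is one you inherited from the paper rather than introduced. Having reduced by symmetry to ``$\rho$ alone is skew-constant,'' you then write ``using skew-constancy of $\sigma$ to cover the case $v_1 = 0$'' --- but you are not entitled to that assumption under your reduced hypothesis, and the statement genuinely needs it. When $v_1 = 0$ and $v_2 = v \ne 0$, neither $A_0$ (which skew-constancy of $\rho$ does not constrain) nor $B_v$ (which nothing constrains) need be constant, so the contribution $A_0 \cvvv_2 B_v$ to $C_v$ need not be constant; its $i$-th singly indexed component is an infimum over only the admissible pairs $(i_1, i_2)$ for $i$, not over all of $\mc X_1 \times \mc X_1$, and nothing forces the decompositions with $v_1 \ne 0$ to make up the difference. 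One can arrange, for instance with $\mc X_0$ a singleton, $\mc X_1 = \{1 < 2\}$, $\mc X_2 = \Z$, $A_0$ and $B_v$ chosen so that $a_{02} * b_{v2}$ is strictly smallest, and all other entries large, that a component of $C_v$ is strictly larger than the claimed value. The paper's own proof makes the same unsupported assertion (``at least one of $A_{v_1}$ or $B_{v_2}$ is constant,'' which is false at $v_1 = 0$), so the ``or'' in the lemma should apparently read ``and,'' consistent with Remark \ref{rem:vGvr-Mperp}; but since you produced your argument blind, you should have noticed and flagged that it actually requires \emph{both} $\rho$ and $\sigma$ to be skew-constant.
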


\begin{proof}
Put \(A \cvvv_2 B = (\vec c_i)_{i \in \mc X_1}\).

Suppose first that \(A\) and \(B\) are monotone, and fix \(i_-, i_+ \in \mc X_1\) with \(i_- \le i_+\).  Fix \(i_{1-}, i_{2-} \in \mc X_1\) satisfying \(i_- \ge i_{1-} = i_{2-}\), respectively \(i_{2-} \ge i_- = i_{1-}\) or \(i_{1-} \ge i_{2-} = i_-\).  If we put \((i_{1+}, i_{2+}) = (i_{1-}, i_{2-})\), respectively \((i_{1+}, i_{2+}) = (\max \sset{i_{1-}, i_+}, \max \sset{i_{2-}, i_+})\), then we have \(i_{1-} \le i_{1+}\) and \(i_{2-} \le i_{2+}\) (in both cases) and \(i_+ \ge i_{1+} = i_{2+}\), respectively \(i_{2+} \ge i_+ = i_{1+}\) or \(i_{1+} \ge i_{2+} = i_+\).  It follows that we have \(\vec c_{i_+} \le \vec a_{v i_{1+}} + \vec b_{v i_{2+}} \le \vec a_{v i_{1-}} + \vec b_{v i_{2-}}\).  Taking the infimum over all \((i_{1-}, i_{2-})\) shows that we have \(\vec c_{i_+} \le \vec c_{i_-}\).  That is, \(A \cvvv_2 B\) is monotone.  It follows immediately that, if \(\rho\) and \(\sigma\) are monotone, then so is \(\rho \cvvv_3 \sigma\).

Now suppose that \(A\) or \(B\) is constant.  Since \(\cvvv_2\) is commutative, we may, and do, suppose that \(A\) is.  Then, for every \(i \in \mc X_1\), we have \(\vec c_i \le \vec a_{\max \sset{i, i_2}} + \vec b_{i_2} = \vec a_{i_1} + \vec b_{i_2}\) for every \(i_1, i_2 \in \mc X_1\), so that \(\vec c_i \le \inf \set{\vec a_{i_1} + \vec b_{i_2}}{i_1, i_2 \in \mc X_1}\).  Since the reverse inequality is obvious, we have equality.

Finally, suppose that \(\rho\) or \(\sigma\) is skew-constant.  Again, since \(\cvvv_3\) is commutative, we may, and do, suppose that \(\rho\) is.  Put \(\rho \cvvv_3 \sigma = (C_v)_{v \in \mc X_2}\).  Fix \(v \in \mc X_2\) with \(v \ne 0\).  For every pair \(v_1, v_2 \in \mc X_2\) with \(v_1 + v_2 = v\), at least one of \(A_{v_1}\) or \(B_{v_2}\) is constant, and so every component of \(A_{v_1} \cvvv_2 B_{v_2}\) equals \(\inf \set{\vec a_{v_1 i_1} + \vec b_{v_2 i_2}}{i_1, i_2 \in \mc X_1}\); so every component of \(C_v = \inf \set{A_{v_1} \cvvv_2 B_{v_2}}{v_1 + v_2 = v}\) equals \(\inf \sett{\vec a_{v_1 i_1} + \vec b_{v_2 i_2}}{\(v_1 + v_2 = v\) and \(i_1, i_2 \in \mc X_1\)}\).
\end{proof}

\begin{lem}
\label{lem:vee-assoc}
Preserve the notation of Definition \ref{defn:depth-ops}.  The operations \(\cvvv_1\), \(\cvvv_2\), and \(\matnotn{concave}\cvvv = \cvvv_3\) are associative.
\end{lem}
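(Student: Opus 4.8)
The plan is to reduce the associativity of all three operations to a single application of Lemma \ref{lem:pre-assoc}, exactly as that lemma was designed to be used. First I would observe that \(\cvvv_1 = \vstar\) is associative immediately: it is the semigroup operation on \(S_1 = S^{\mc X_0}\) produced by Lemma \ref{lem:pre-assoc} applied to the totally ordered set \((\mc X_0, \le)\) and the (associative, by hypothesis) semigroup \((S, *)\). So \((S_1, \cvvv_1)\) is a semigroup. Then \(\cvvv_2 = \overrightarrow{\cvvv_1}\) is, by definition, the semigroup operation on \(S_2 = S_1^{\mc X_1}\) produced by Lemma \ref{lem:pre-assoc} applied to the totally ordered set \((\mc X_1, \ge)\) (note the reversed order, which is exactly what Definition \ref{defn:depth-ops} records) and the semigroup \((S_1, \cvvv_1)\) we have just recognized. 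Hence \((S_2, \cvvv_2)\) is a semigroup, so \(\cvvv_2\) is associative.

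The only genuine content is handling \(\cvvv_3\), because its definition as written does not literally match the shape ``\(\inf\) over maximal triples'' of Lemma \ref{lem:pre-assoc}: it is the honest convolution \(C_v = \inf\set{A_{v_1} \cvvv_2 B_{v_2}}{v_1 + v_2 = v}\) over \(\mc X_2\). So the key step is to check directly that this convolution is associative. I would write \(\rho = (A_v)_v\), \(\sigma = (B_v)_v\), \(\tau = (C_v)_v\), and compute both \((\rho \cvvv_3 \sigma) \cvvv_3 \tau\) and \(\rho \cvvv_3 (\sigma \cvvv_3 \tau)\) componentwise at an arbitrary \(v \in \mc X_2\). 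Using that every non-empty subset of \(S\) (hence of \(S_2\), componentwise) has an infimum, that \(\cvvv_2\) is associative and that it distributes over infima on each side (i.e.\ \(A \cvvv_2 (\inf_\alpha B_\alpha) = \inf_\alpha (A \cvvv_2 B_\alpha)\), which holds because \(\cvvv_2\), being built out of \(+\) on \(\mc X_2\)-indexed copies of \(S\) via \(\inf\), is monotone and infimum-preserving in each argument — this follows by tracing Lemma \ref{lem:pre-assoc}), and that \((\mc X_2, +)\) is a monoid so the two ways of parenthesizing \(v_1 + v_2 + v_3 = v\) agree, both triple products reduce to
\[
\inf_{v_1 + v_2 + v_3 = v} \bigl(A_{v_1} \cvvv_2 B_{v_2} \cvvv_2 C_{v_3}\bigr).
\]
Here I would want to be a little careful that this last expression is well defined, i.e.\ that the double infimum collapses correctly: \(\inf_{w + v_3 = v}\bigl(\inf_{v_1 + v_2 = w}(A_{v_1}\cvvv_2 B_{v_2})\bigr)\cvvv_2 C_{v_3} = \inf_{w+v_3=v}\inf_{v_1+v_2=w}(A_{v_1}\cvvv_2 B_{v_2}\cvvv_2 C_{v_3}) = \inf_{v_1+v_2+v_3=v}(A_{v_1}\cvvv_2 B_{v_2}\cvvv_2 C_{v_3})\), using infimum-preservation of \(\anondot \cvvv_2 C_{v_3}\) for the first equality and the standard fact that a nested infimum over a fibered index set equals the infimum over the total set for the second; the other association is symmetric.

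The main obstacle, such as it is, is precisely the bookkeeping in that last displayed reduction: I need the fact that \(\cvvv_2\) preserves infima separately in each variable, which is not stated as a separate lemma but is implicit in the construction via Lemma \ref{lem:pre-assoc} (each output component is an infimum of sums of input components, and \(+\) on \(S\) need not preserve infima in general — but in our application \((S, *) = (\tR \cup \{-\infty\}, +)\) or similar ordered monoids where \(a + \inf_\alpha b_\alpha = \inf_\alpha(a + b_\alpha)\); more abstractly one should assume, or note that the paper's standing hypothesis on \((S, *, \le)\) gives, that \(*\) preserves infima in each argument, which is needed for \(\cvvv_1\) and \(\cvvv_2\) to be infimum-preserving and hence for the nested-infimum collapse to go through). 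Granting that, every equality above is a short formal manipulation, so I expect the proof to be three or four lines of prose once the infimum-distributivity remark is in place, with no case analysis of the sort that burdened Lemma \ref{lem:pre-assoc}.
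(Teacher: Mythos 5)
Your argument is correct and matches the paper's proof: two applications of Lemma \ref{lem:pre-assoc} (first to \((\mc X_0, \le)\) and \((S, *)\), then to \((\mc X_1, \ge)\) and \((S_1, \cvvv_1)\) --- note that the printed proof has a typo, writing \(\mc X_1\) and \(S_0\) where \(\mc X_0\) and \(S\) are meant), followed by a direct check that both parenthesizations of \(\cvvv_3\) give \(\inf \set{A_{v_1} \cvvv_2 B_{v_2} \cvvv_2 C_{v_3}}{v_1 + v_2 + v_3 = v}\). You are also right to flag that this last reduction quietly uses that \(\cvvv_2\), hence \(*\), preserves infima in each argument; the paper calls the step obvious, and ``partially ordered semigroup in which all non-empty sets have infima'' does not formally entail this, but it holds in the intended application \(S = \tR \cup \sset{-\infty}\), so your caveat is a refinement rather than a correction.
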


\begin{proof}
Associativity of \(\cvvv_1\) follows by applying Lemma \ref{lem:pre-assoc} with \((\mc X, \preceq) = (\mc X_1, \le)\) and \((S, \sstar, \le) = (S_0, +, \le)\); associativity of \(\cvvv_2\) then follows by applying Lemma \ref{lem:pre-assoc} again with \((\mc X, \preceq) = (\mc X_1, \ge)\) and \((S, \sstar, \le) = (S_1, \cvvv_1, \le)\); and associativity of \(\cvvv_3\) is then obvious, since, if we write \(\rho = (A_v)_{v \in \mc X_2}\), \(\sigma = (B_v)_{v \in \mc X_2}\), and \(\tau = (C_v)_{v \in \mc X_2}\), then, for every \(v \in \mc X_2\), the doubly indexed components of \(\rho \cvvv_3 (\sigma \cvvv_3 \tau)\) and \((\rho \cvvv_3 \sigma) \cvvv_3 \tau\) at \(v\) both equal \(\inf \set{A_{v_1} \cvvv_2 B_{v_2} \cvvv_2 C_{v_3}}{v_1 + v_2 + v_3 = v}\).
\end{proof}

\begin{cor}
\label{cor:matrix-vs-spec-vee}
Suppose that \(\mc X_1\) is \(\set{i \in \R}{0 \le i \le r}\).
Let \(A = (\vec a_i)_{i \in \mc X_1}\) and \(B = (\vec b_i)_{i \in \mc X_1}\) be doubly indexed vectors, and define depth matrices \(f\) and \(g\), in the sense of and with the notation of \xcite{spice:asymptotic}*{Definition \xref{defn:vGvr}}, by \(f\textsup n_i = \vec a_i\) and \(g\textsup n_i = \vec b_i\) for all \(i \in \mc X_1\), and \(f\textsup d_i = f\textsup c_i = g\textsup d_i = g\textsup c_i = (\infty, \dotsc, \infty)\) for all \(i \in \tR \cup \sset{-\infty}\) with \(i < 0\).

With the notation of \xcite{spice:asymptotic}*{Definition \xref{defn:concave-vee}}, we have that \((f \bowtie g)\supn\) equals \(A \cvvv_2 B\).  If we have \(A \le A \cvvv_2 A\), then also \((f \rtimes g)\supn\) equals \(A \cvvv_2 B\).  If we also have \(B \le B \cvvv_2 B\), then also \((f \vee g)\supn\) equals \(A \cvvv_2 B\).
\end{cor}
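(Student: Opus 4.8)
The goal of Corollary~\ref{cor:matrix-vs-spec-vee} is to show that the single operation $\cvvv_2$ on doubly indexed vectors recovers the ``close-part'' component of the three operations $\bowtie$, $\rtimes$, $\vee$ on the corresponding depth matrices, once we encode $A$ and $B$ as depth matrices with all dilated and contracted data set to $(\infty,\dotsc,\infty)$.

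\begin{proof}
Write $h = f \bowtie g$, with components $h\supc$, $h\supd$, $h\textsup n$ as in \xcite{spice:asymptotic}*{Definition \xref{defn:concave-vee}}.  By construction, $f\supd_i = f\supc_i = g\supd_i = g\supc_i = (\infty,\dotsc,\infty)$ for all $i < 0$, and the same holds for $h$ by the defining formula for $\bowtie$.  We therefore need only compare $h\supc$ with $A \cvvv_2 B$.  Recall from \xcite{spice:asymptotic}*{Definition \xref{defn:concave-vee}} that, for $i \in \mc X_1$, the vector $h\supc_i$ is the infimum, over triples of indices $i_1, i_2$ for which $i \ge i_1 = i_2$, or $i_2 \ge i = i_1$, or $i_1 \ge i_2 = i$, of the sums $f\textsup n_{i_1} \bowtie_\text{sing} g\textsup n_{i_2}$ built from the singly indexed operation.  (Here I am using that the contracted and dilated data of $f$ and $g$ are all trivial at the relevant indices, so they contribute $(\infty,\dotsc,\infty)$ and drop out of the infimum.)  Since $f\textsup n_i = \vec a_i$ and $g\textsup n_i = \vec b_i$, and since the singly indexed operation used in \xcite{spice:asymptotic}*{Definition \xref{defn:concave-vee}} is precisely $\cvvv_1$ (both are the operation on $S^{\mc X_0}$ deduced from $+$ \textit{via} Lemma \ref{lem:pre-assoc}), this is exactly the formula defining the $i$-component of $A \cvvv_2 B$ in Definition \ref{defn:depth-ops} and Remark \ref{rem:depth-ops}.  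Hence $(f \bowtie g)\supc = A \cvvv_2 B$.

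For the statements about $\rtimes$ and $\vee$, recall that $f \rtimes g$ and $f \vee g$ are defined from $f \bowtie g$ by the recipes in \xcite{spice:asymptotic}*{Definition \xref{defn:concave-vee}}, which modify the close (and/or dilated/contracted) parts using $f \bowtie f$, respectively $f \bowtie f$ and $g \bowtie g$.  By the paragraph above applied with $B = A$, we have $(f \bowtie f)\supc = A \cvvv_2 A$; so the hypothesis $A \le A \cvvv_2 A$ says exactly that the close part of $f$ is majorised by the close part of $f \bowtie f$, i.e.\ that the correction introduced in passing from $\bowtie$ to $\rtimes$ does not change $h\supc$.  (More precisely, one checks from the formula for $\rtimes$ that $(f \rtimes g)\supc_i$ is obtained from $(f \bowtie g)\supc_i$ by taking a further infimum against translates of $(f \bowtie f)\supc$, and that under $A \le A \cvvv_2 A$ these extra terms already dominate the original; this is the same bookkeeping as in the proof of \xcite{spice:asymptotic}*{Lemma \xref{lem:der-master-comm}}.)  Thus $(f \rtimes g)\supc = (f \bowtie g)\supc = A \cvvv_2 B$.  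Finally, applying the same argument once more with the roles of $f$ and $g$ reversed, using the additional hypothesis $B \le B \cvvv_2 B$ and $(g \bowtie g)\supc = B \cvvv_2 B$, shows that the passage from $\rtimes$ to $\vee$ likewise leaves the close part unchanged, so $(f \vee g)\supc = A \cvvv_2 B$.
\end{proof}

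The one step that requires genuine care, rather than unwinding definitions, is the claim that under the concavity hypothesis $A \le A \cvvv_2 A$ the extra infimum introduced in passing from $\bowtie$ to $\rtimes$ (and similarly $\rtimes$ to $\vee$) is vacuous; this is where I would want to reproduce, in the present indexing, the argument already used for the analogous statement in \xcite{spice:asymptotic}*{Lemma \xref{lem:der-master-comm}}, and it is the only place where I expect a real obstacle in writing out full details.
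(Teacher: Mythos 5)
The paper states this corollary without a proof, so there is no authored proof to compare against; you are supplying an argument the author evidently considered routine.  With that caveat, your reconstruction is sound for the \(\bowtie\) case: the constant-\(\infty\) contracted and dilated components are absorbed in any infimum, and what remains is exactly the formula for \(\cvvv_2\) spelled out in Remark~\ref{rem:depth-ops}.  Your identification of the singly-indexed operation inside \xcite{spice:asymptotic}*{Definition \xref{defn:concave-vee}} with \(\cvvv_1\) is the right move; both are the Lemma~\ref{lem:pre-assoc} construction applied to \((S, +, \le)\).

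Where your writeup is loose is in the \(\rtimes\) and \(\vee\) cases, and you have flagged this yourself.  The phrase ``taking a further infimum against translates of \((f \bowtie f)\supc\)'' does not accurately describe the mechanism: the passage from \(\bowtie\) to \(\rtimes\) allows decompositions in which the \(f\)-side index is split into a sum of several pieces, and the concavity hypothesis \(A \le A \cvvv_2 A\) is what lets you collapse a multi-piece decomposition to a single piece without increasing the sum, so that single-piece decompositions already attain the infimum.  The cleanest way to make this rigorous is the induction on the length of the decomposition used in the paper's own Lemma~\initref{lem:depth-vs-concave-vee}(\subref{vev}, \subref{vee}), which proves the precisely analogous reduction of \(\rtimes\) and \(\vee\) to \(\bowtie\) for concave functions under a concavity hypothesis.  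You instead gesture at \xcite{spice:asymptotic}*{Lemma \xref{lem:der-master-comm}}, which is a workable reference but further removed; adapting the in-paper Lemma~\ref{lem:depth-vs-concave-vee} argument would close the gap you acknowledge more directly, since it is already stated in the same \(\cvvv\)-vs-\(\bowtie/\rtimes/\vee\) dictionary.
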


\subsection{Concave functions}
\label{subapp:concave}

Let \Root be a root system in some real vector space \(V\), and put \(\Weight = \Root \cup \sset0\).  Let \(f\) and \(f'\) be two functions \anonmap\Weight S.  Definition \ref{defn:concave-ops} recalls some operations on such functions from \xcite{spice:asymptotic}*{Lemma \xref{lem:der-master-comm}}.

\begin{defn}
\label{defn:concave-ops}
The functions \(f \bowtie f'\), \(f \rtimes f'\), and \(f \vee f'\) from \Weight to \(S\) are defined by
\begin{align*}
(f \bowtie f')(\alpha) & {}= \inf_{a + a' = \alpha} f(a) + f'(a'), \\
(f \rtimes f')(\alpha) & {}= \inf_{\sum_i a_i + a' = \alpha} \sum f(a_i) + f'(a'), \\
\intertext{and}
(f \vee f')(\alpha)    & {}= \inf_{\sum_i a_i + \sum_j a'_j = \alpha} \sum f(a_i) + \sum f'(a'_j)
\end{align*}
for all \(\alpha \in \Weight\).
\end{defn}

Let \(\vec\Root = (\Root^0 \subseteq \dotsb \subseteq \Root^{\ell - 1} \subseteq \Root^\ell = \Root)\) be a tower of closed subsystems of \(\Root\), and \(\lambda\) an element of \(\contra V\).

\begin{defn}
\label{defn:depth-to-concave}
If \(\tau = (t_{v i j})_{v, i, j}\) is a triply indexed vector, in the sense of Definition \ref{defn:depth-ops}, then we define \map{f_{\vec\Root, \lambda, \tau}}\Weight S by
\[
f_{\vec\Root, \lambda, \tau}(\alpha) = \sup \set{t_{v_\alpha i j_\alpha}}{i \in \mc X_1},
\]
where \(v_\alpha\) equals \(\pair\alpha\lambda\) and \(j_\alpha\) is the least index \(j\) such that \(\alpha\) belongs to \(\Weight^j\), for all \(\alpha \in \Weight\).  If \(\vec\Root\) and \(\lambda\) are understood, then we may just write \matnotn{ftau}{f_\tau}.
\end{defn}

The idea of Lemma \ref{lem:depth-vs-concave-vee} was the inspiration for \xcite{spice:asymptotic}*{Definition \xref{defn:concave-vee}}---although in fact, since we are now using depth specifications instead of depth matrices, we actually no longer need \xcite{spice:asymptotic}*{Definition \xref{defn:concave-vee}}.

\begin{lem}
\initlabel{lem:depth-vs-concave-vee}
Let \(\tau = (t_{v i j})_{v, i, j}\) and \(\tau' = (t'_{v i j})_{v, i, j}\) be triply indexed vectors.
\begin{enumerate}
\item\sublabel{concave}
If \(\tau\) is concave, in the sense of Definition \ref{defn:depth-ops}, then \(f_\tau\) is concave, in the sense of \cite{bruhat-tits:reductive-groups-1}*{\S6.4.3}.
\setcounter{tempenumi}{\value{enumi}}
\end{enumerate}
With the notation of Definition \ref{defn:depth-to-concave}, the following inequalities hold.
\begin{enumerate}
\setcounter{enumi}{\value{tempenumi}}
\item\sublabel{vvv} \(f_{\tau \cvvv \tau'} \le f_\tau \bowtie f_{\tau'}\).
\item\sublabel{vev} If \(\tau\) is concave, then also \(f_{\tau \cvvv \tau'} \le f_\tau \rtimes f_{\tau'}\).
\item\sublabel{vee} If \(\tau\) and \(\tau'\) are concave, then also \(f_{\tau \cvvv \tau'} \le f_\tau \vee f_{\tau'}\).
\end{enumerate}
\end{lem}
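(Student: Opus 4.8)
The plan is to prove the three inequalities \subpref{vvv}, \subpref{vev}, \subpref{vee} by unwinding the definitions of the operations \(\bowtie\), \(\rtimes\), \(\vee\) on concave functions (Definition \ref{defn:concave-ops}) and of \(\cvvv\) on triply indexed vectors (Definition \ref{defn:depth-ops}, as clarified by Remark \ref{rem:depth-ops}), together with the concavity part \subpref{concave}. The key observation for \subpref{vvv} is that for a single decomposition \(\alpha = a + a'\) in \(\Weight\), writing \(v_a = \pair a\lambda\), \(v_{a'} = \pair{a'}\lambda\), so that \(v_\alpha = v_a + v_{a'}\), and writing \(j_a, j_{a'}, j_\alpha\) for the least indices with \(a \in \Weight^{j_a}\) etc., closure of the tower \(\vec\Root\) forces \(j_\alpha \le \max \sset{j_a, j_{a'}}\). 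Thus the index triple \((v_\alpha, i, j_\alpha)\) — for any \(i\) — is reachable from \((v_a, i, j_a)\) and \((v_{a'}, i, j_{a'})\) in the sense required by the infimum defining \(\tau \cvvv \tau'\) (one of the two \(j\)-indices is the max, hence \(\ge j_\alpha\), and the other can be taken equal to it by the ``\(j_1 \le j_2 = j\)'' branch; the \(i\)-indices can be taken all equal to \(i\)). Taking the infimum over decompositions and the supremum over \(i\) gives \subpref{vvv}. The only subtlety is the interaction of the \(\sup_i\) in Definition \ref{defn:depth-to-concave} with the \(\inf\) in \(\cvvv\): here one uses that \(t_{(v_\alpha) i j_\alpha} \le t_{(v_a) i j_a} + t_{(v_{a'}) i j_{a'}}\) holds for \emph{each} fixed \(i\), so the supremum over \(i\) on the left is bounded by the supremum over \(i\) of the right-hand side, which is in turn bounded (termwise) by \(f_\tau(a) + f_{\tau'}(a')\).

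For \subpref{vev}, I would first reduce to \subpref{vvv} by induction on the number of summands. A term \(\sum_i f_\tau(a_i) + f_{\tau'}(a')\) indexing \((f_\tau \rtimes f_{\tau'})(\alpha)\) can be grouped as \((f_\tau \bowtie \dotsb \bowtie f_\tau)(\beta) + f_{\tau'}(a')\) where \(\beta = \sum_i a_i\); by iterating \subpref{vvv} with \(\tau' = \tau\) and using associativity of \(\cvvv\) (Lemma \ref{lem:vee-assoc}) one gets \(f_{\tau \cvvv \dotsb \cvvv \tau}(\beta) \le \sum_i f_\tau(a_i)\), and then concavity of \(\tau\) gives \(\tau \le \tau \cvvv \dotsb \cvvv \tau\) (an \(n\)-fold iterate dominates \(\tau\) because \(\tau \le \tau \cvvv \tau\) and \(\cvvv\) is order-preserving in each argument), hence \(f_\tau \le f_{\tau \cvvv \dotsb \cvvv \tau}\) — so \(f_\tau(\beta) \le \sum_i f_\tau(a_i)\), wait, I actually want the reverse: I want \(f_{\tau \cvvv \tau'}(\alpha) \le \sum_i f_\tau(a_i) + f_{\tau'}(a')\). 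The clean way is: \(f_{\tau \cvvv \tau'} = f_{(\tau \cvvv \dotsb \cvvv \tau) \cvvv \tau'}\) is \emph{not} equal, but \(\tau \cvvv \tau' \le (\tau \cvvv \dotsb \cvvv \tau) \cvvv \tau'\) by concavity and monotonicity of \(\cvvv\), so \(f_{\tau \cvvv \tau'} \le f_{(\tau \cvvv \dotsb \cvvv \tau) \cvvv \tau'}\); and the latter is \(\le\) the \((n{+}1)\)-fold \(\bowtie\) of the \(f\)'s by iterating \subpref{vvv} (which only needs \subpref{vvv} itself plus associativity, no concavity), which is exactly bounded by \(\sum_i f_\tau(a_i) + f_{\tau'}(a')\). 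Taking the infimum over all such decompositions gives \subpref{vev}. Then \subpref{vee} follows the same pattern, now splitting \(\alpha = \sum_i a_i + \sum_j a'_j\), grouping into \((f_\tau \rtimes \dotsb) + (\dotsb \ltimes f_{\tau'})\)-style pieces, and using concavity of both \(\tau\) and \(\tau'\) together with \subpref{vvv} and associativity.

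For \subpref{concave} I would argue directly from the definition of concavity in \cite{bruhat-tits:reductive-groups-1}*{\S6.4.3}: one must show \(f_\tau(\alpha + \beta) \le f_\tau(\alpha) + f_\tau(\beta)\) whenever \(\alpha\), \(\beta\), \(\alpha + \beta\) all lie in \(\Weight\). This is the special case of \subpref{vvv} with \(\tau' = \tau\) combined with the hypothesis \(\tau \le \tau \cvvv \tau\), which gives \(f_\tau \le f_{\tau \cvvv \tau} \le f_\tau \bowtie f_\tau\), and \((f_\tau \bowtie f_\tau)(\alpha + \beta) \le f_\tau(\alpha) + f_\tau(\beta)\) by Definition \ref{defn:concave-ops}. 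So in fact \subpref{concave} is a corollary of \subpref{vvv} — I would prove \subpref{vvv} first and then deduce \subpref{concave}, \subpref{vev}, \subpref{vee} in that order, even though they are stated with \subpref{concave} first.

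The main obstacle I anticipate is bookkeeping with the three-layered infimum defining \(\cvvv\) versus the \(\sup_i\) in the definition of \(f_\tau\): one has to be careful that the ``reachability'' conditions on the index triples (the disjunctions ``\(i \ge i_1 = i_2\) or \(i_2 \ge i = i_1\) or \(i_1 \ge i_2 = i\)'', and similarly for \(j\)) are actually met by the choices \(i_1 = i_2 = i\) and \(j_1 = j_2 = \max\sset{j_a, j_{a'}} \ge j_\alpha\) — the last step uses closure of the tower \(\vec\Root\), which guarantees \(j_{a + a'} \le \max\sset{j_a, j_{a'}}\) (if \(a \in \Root^{j_a}\) and \(a' \in \Root^{j_{a'}}\) with, say, \(j_a \le j_{a'}\), then both lie in the closed system \(\Root^{j_{a'}}\), so \(a + a' \in \Weight^{j_{a'}}\)). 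Once that geometric fact is isolated, the rest is a routine, if slightly tedious, chase through infima and suprema, and the monotonicity/associativity lemmas already proved (Lemmas \ref{lem:vee-assoc}, \ref{lem:vee-monotone}) handle the iteration cleanly.
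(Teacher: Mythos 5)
Your treatment of \subpref{vvv} is essentially the paper's proof, and your deduction of \subpref{concave} from \subpref{vvv} and the hypothesis \(\tau \le \tau \cvvv \tau\) also matches (the paper cites \cite{yu:supercuspidal}*{Lemma 1.1} at that point, which is what lets the pairwise inequality \(f_\tau \le f_\tau \bowtie f_\tau\) entail the Bruhat--Tits concavity condition). Proving \subpref{vvv} first and deducing \subpref{concave} from it, despite the stated order, is exactly what the paper does.

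The gap is in the last step of your ``clean way'' for \subpref{vev} (and likewise \subpref{vee}). Your chain gives
\[
f_{\tau \cvvv \tau'} \le f_{(\tau \cvvv \dotsb \cvvv \tau) \cvvv \tau'} \le \bigl(\dotsb (f_\tau \bowtie f_\tau) \bowtie \dotsb \bowtie f_\tau\bigr) \bowtie f_{\tau'},
\]
and you then assert that the right-hand side, evaluated at \(\alpha\), is ``exactly bounded by \(\sum_i f_\tau(a_i) + f_{\tau'}(a')\).'' That does not follow. The iterated \(\bowtie\) at \(\alpha\) is an infimum over \emph{bracketed} decompositions, i.e., decompositions \(\alpha = b_1 + \dotsb + b_n + b'\) in which not only each \(b_k\) and \(b'\), but also every intermediate partial sum forced by the bracketing, lies in \(\Weight\). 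A term of \((f_\tau \rtimes f_{\tau'})(\alpha)\) is indexed by an arbitrary decomposition \(\alpha = \sum_i a_i + a'\) with each \(a_i, a' \in \Weight\) but with no constraint on partial sums; such a decomposition need not correspond to any bracketed one. Indeed, the quantity \(\bigl(f_\tau \bowtie \dotsb \bowtie f_\tau\bigr)(\beta)\) that you propose to insert, with \(\beta = \sum_i a_i\), is not even defined when \(\beta \notin \Weight\). So, as written, there is no comparison between your upper bound and the \(\rtimes\)-term, and the final ``taking the infimum'' step is vacuous.

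To close this you would need a reordering/peeling fact for roots (something to the effect that in any decomposition of \(\alpha \in \Weight\) into elements of \(\Weight\), there is some single summand that can be removed leaving an element of \(\Weight\), and hence, by recursion, a reordering making all partial sums lie in \(\Weight\)). The paper's proof is built precisely around this: after using concavity to obtain \(F \le f \bowtie F\), it inducts on the length of \((a_i)_i\), splitting on whether \(\alpha - a' \in \Weight\) (in which case it uses concavity of \(f\) directly via \subpref{concave}) or else finds an \(i_0\) with \(\alpha - a_{i_0} \in \Weight\) to peel off one summand. That inductive argument is exactly the missing content: your approach is not wrong in spirit, but the claim that the iterated \(\bowtie\) dominates an arbitrary \(\rtimes\)- or \(\vee\)-term is the whole nontrivial point, and it needs the peeling lemma (or equivalent) made explicit. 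The same remark applies verbatim to \subpref{vee}.
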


\begin{proof}
Write \((c_{v i j})_{v, i, j}\) for \(\tau \cvvv \tau'\), and \(f\), \(f'\), and \(F\) for \(f_\tau\), \(f_{\tau'}\), and \(f_{\tau \cvvv \tau'}\).

Suppose that \(\alpha, a, a' \in \Weight\) satisfy \(\alpha = a + a'\).  Put \(v_\alpha = \pair\alpha\lambda\), and let \(j_\alpha\) be the least index \(j\) such that \(\alpha\) belongs to \(\Weight^j\), and similarly for \(a\) and \(a'\).

We have that \(v_\alpha\) equals \(v_a + v_{a'}\).

Since the various \(\Root^j\) are closed, one of the inequalities \(j_\alpha \le j_a = j_{a'}\) or \(j_{a'} \le j_\alpha = j_a\) or \(j_a \le j_{a'} = j_\alpha\) holds.  Therefore, for every index \(i \in \mc X_1\), Remark \ref{rem:depth-ops} gives the inequality \(c_{v_\alpha i j_\alpha} \le t_{v_a i j_a} + t'_{v_{a'}i j_{a'}}\), which, by the definition of \(f\) and \(f'\), implies the inequality \(c_{v_\alpha i j_\alpha} \le f(a) + f'(a')\).  Taking the supremum over all \(i\) gives the inequality \(F(\alpha) \le f(a) + f'(a')\), and then taking the infimum over all \(a\) and \(a'\) gives the inequality \(F(\alpha) \le (f \bowtie f')(\alpha)\).  This establishes \locpref{vvv}, which, by \cite{yu:supercuspidal}*{Lemma 1.1}, establishes \locpref{concave}.

Now suppose that \(\tau\) is concave, so that we have \(\tau \le \tau'\).  Since \(\tau\) is concave, Lemma \ref{lem:vee-assoc} gives the inequality \(\tau \cvvv \tau' \le (\tau \cvvv \tau) \cvvv \tau' = \tau \cvvv (\tau \cvvv \tau')\), so that \locpref{vvv} gives the inequality \(F \le f \bowtie F\).

Suppose that \(\alpha\) and \(a'\) lie in \(\Weight\), and that \((a_i)_i\) is a 
sequence in \(\Weight\) such that \(\sum_i a_i + a'\) equals \(\alpha\).  We prove by induction on the length of \((a_i)_i\) that the inequality \(F(\alpha) \le \sum_i f(a_i) + f'(a')\) holds.  The base case, where \((a_i)_i\) 
is empty, is clear.  We thus may, and do, suppose that \((a_i)_i\) is non-empty.

If \(\alpha - a' = \sum_i a_i\) belongs to \(\Weight\), then \locpref{vvv} gives the inequality \(F(\alpha) \le (f \bowtie f')(\alpha) \le f(\alpha - a') + f'(a')\), which, since \(f\) is concave by \locpref{concave}, implies the inequality \(F(\alpha) \le \sum_i f(a_i) + f'(a')\).  Otherwise, there is some index \(i_0\) such that \(\alpha - a_{i_0} = \sum_{i \ne i_0} a_i + a'\) belongs to \(\Weight\).  Then we have by induction that the inequality \(F(\alpha - a_{i_0}) \le \sum_{i \ne i_0} f(a_i) + f'(a')\) holds, and we deduce the inequality
\[
F(\alpha) \le (f \bowtie F)(\alpha) \le f(a_{i_0}) + F(\alpha - a_{i_0}) \le \sum_i f(a_i) + f'(a').
\]
This completes the induction; and then taking the infimum over all non-empty \((a_i)_i\) and \(a'\) gives \locpref{vev}.

Now suppose that \(\tau\) and \(\tau'\) are both concave.  We proceed similarly.  Once again, \locpref{vvv} gives the inequalities \(F \le f \bowtie F\) and \(F \le F \bowtie f'\).

Suppose that \(\alpha\) belongs to \(\Weight\), and that \((a_i)_i\) and \((a'_j)_j\) are non-empty sequences in \(\Weight\) such that \(\sum_i a_i + \sum_j a'_j\) equals \(\alpha\).  We prove by induction on the sum of the lengths of \((a_i)_i\) and \((a'_j)_j\) that the inequality \(F(\alpha) \le \sum_i f(a_i) + \sum_j f'(a'_j)\) holds.  If \((a_i)_i\) \emph{or} \((a'_j)_j\) has length \(1\), then the desired inequality follows from \locpref{vev}.  This establishes the base case of the induction.

Thus we may, and do, assume that both \((a_i)_i\) and \((a'_j)_j\) have more than \(1\) term.  Upon switching \(\tau\) and \(\tau'\) if necessary, we may, and do, assume that there is some index \(i_0\) such that \(\alpha - a_{i_0} = \sum_{i \ne i_0} a_i + \sum_j a'_j\) belongs to \(\Weight\).  Then induction gives the inequality
\[
F(\alpha) \le f(a_{i_0}) + F(\alpha - a_{i_0}) \le f(a_{i_0}) + \Bigl(\sum_{i \ne i_0} f(a_i) + \sum_j f'(a'_j)\Bigr) = \sum_i f(a_i) + \sum_j f'(a'_j).
\]
This completes the induction, and establishes \locpref{vee}.
\end{proof}
}

\Printindex{\jobname-terminology}{Index of terminology}
\Printindex{\jobname-notation}{Index of notation}

\begin{bibdiv}
\begin{biblist}
\bibselect{references}
\end{biblist}
\end{bibdiv}
\end{document}